\renewcommand {\a}{ \alpha }
\renewcommand{\b}{\beta}
\newcommand{\e}{\epsilon}
\newcommand{\vare}{\varepsilon}
\newcommand{\G}{\Gamma}
\newcommand{\U}{\Upsilon}
\renewcommand{\k}{\kappa}
\newcommand{\vark}{\varkappa}
\renewcommand{\d}{\delta}
\newcommand{\s}{\sigma}
\renewcommand{\l}{\lambda}
\renewcommand{\L}{\Lambda}
\newcommand{\z}{\zeta}
\newcommand{\p}{\partial}
\newcommand{\om}{\omega}
\newcommand{\Om}{\Omega}
\newcommand{\R}{ \mathbb R}
\newcommand {\gb}{\mathfrak b}
\newcommand{\gt}{\mathfrak t}
\newcommand {\GA}{\mathfrak A}
\newcommand {\GB}{\mathfrak B}
\newcommand {\GH}{\mathfrak H}
\newcommand{\GR}{\mathfrak R}
\newcommand {\GS}{\mathfrak S}
\newcommand {\GW}{\mathfrak W}
\newcommand {\GT}{\mathfrak T}
\newcommand {\bb}{\mathbf b}
\newcommand{\bv}{\mathbf v}
\newcommand {\BS}{\mathbf S}
\newcommand {\BO}{\mathbf O}
\newcommand {\bx}{\mathbf x}
\newcommand {\be}{\mathbf e}
\newcommand {\bh}{\mathbf h}
\newcommand {\bk}{\mathbf k}
\newcommand {\bm}{\mathbf m}
\newcommand {\bw}{\mathbf w}
\newcommand {\bz}{\mathbf z}
\newcommand {\by}{\mathbf y}
\newcommand {\bt}{\mathbf t}
\newcommand {\bn}{\mathbf n}
\newcommand{\bzero}{\mathbf 0}
\newcommand{\boldA}{\boldsymbol A}
\newcommand{\boldM}{\boldsymbol M}
\newcommand{\boldB}{\boldsymbol B}
\newcommand{\boldO}{\boldsymbol O}
\newcommand{\boldk}{\boldsymbol k}
\newcommand{\boldE}{\boldsymbol E}
\newcommand{\boldI}{\boldsymbol I}
\newcommand{\SL}{{\sf{\Lambda}}}
\newcommand{\SN}{{\sf{N}}}
\newcommand{\SX}{{\sf{X}}}
\newcommand {\bmu}{\boldsymbol\mu}
\newcommand {\boldeta}{\boldsymbol\eta}
\newcommand {\bxi}{\boldsymbol\xi}
\newcommand{\overc}{\overset{\circ}}
\newcommand{\lu}{\langle}
\newcommand{\ru}{\rangle}
\newcommand{\CR}{\mathcal R}
\newcommand{\CL}{\mathcal L}
\newcommand{\CT}{\mathcal T}
\newcommand{\CX}{\mathcal X}
\newcommand{\CY}{\mathcal Y}
\newcommand{\CM}{\mathcal M}
\newcommand{\CC}{\mathcal C}
\newcommand{\CW}{\mathcal W}
\newcommand{\plainC}[1]{\textup{{\textsf{C}}}^{#1}}
\newcommand{\plainS}{\textup{{\textsf{S}}}}
\newcommand{\plainL}[1]{\textup{{\textsf{L}}}^{#1}}
\DeclareMathOperator{\tr}{{tr}}
\newcommand{\1}
{{\,\vrule depth3pt height9pt}{\vrule depth3pt height9pt}
{\vrule depth3pt height9pt}{\vrule depth3pt height9pt}\,}
\DeclareMathOperator {\re} {{Re}}
\DeclareMathOperator {\dist} {{dist}}
\DeclareMathOperator{\op}{{Op}}
\DeclareMathOperator{\supp}{{supp}}
\DeclareMathOperator{\w}{w}
\newcommand{\Z}{\mathbb Z}
\newtheorem{thm}{Theorem}[section]
\newtheorem{cor}[thm]{Corollary}
\newtheorem{lem}[thm]{Lemma}
\newtheorem{prop}[thm]{Proposition}
\newtheorem{cond}[thm]{Condition}
\theoremstyle{definition}
\newtheorem{defn}[thm]{Definition}
\newtheorem{rem}[thm]{Remark}
\numberwithin{equation}{section}
\begin{document}
\hoffset -4pc

\title
[Quasi-classical asymptotics]
{{Quasi-classical asymptotics for pseudo-differential operators
with discontinuous symbols:\\ Widom's Conjecture}}
\author{A.V. Sobolev}
\address{Department of Mathematics\\ University College London\\
Gower Street\\ London\\ WC1E 6BT UK}
\email{asobolev@math.ucl.ac.uk}
\keywords{Pseudo-differential operators with discontinuous symbols,
quasi-classical asymptotics,
Szeg\H o formula}
\subjclass[2000]{Primary  47G30; Secondary 35S05, 47B10, 47B35}
\date{\today}

\begin{abstract}
Relying on the known two-term quasiclassical asymptotic  formula for
the trace of the function $f(A)$ of a Wiener-Hopf type operator $A$ in dimension one,
in 1982 H. Widom conjectured a multi-dimensional generalization of that formula for
a pseudo-differential operator
$A$ with a symbol $a(\bx, \bxi)$ having jump discontinuities in both variables.
In 1990 he proved the conjecture for the special case when the
jump in any of the two variables
occurs on a hyperplane.
The present paper gives a proof of Widom's Conjecture under the assumption that
the symbol has jumps in both variables on arbitrary smooth bounded surfaces.
\end{abstract}

\maketitle

%
%
%

\tableofcontents


\section{Introduction}

For two domains $\L, \Om\subset\R^d$, $d\ge 1$
consider in $\plainL2(\R^d)$  the operator defined by the formula
\begin{equation}\label{talpha:eq}
(\tilde T_\a(a) u)(\bx) = \biggl(\frac{\a}{2\pi}\biggr)^{d}
\chi_{\L}(\bx)\int_{\Om} \int_{\R^d} e^{i\a \bxi\cdot(\bx-\by)}
a(\bx, \bxi) \chi_{\L}(\by) u(\by) d\by d\bxi, \ \a >0,
\end{equation}
for any Schwartz class function $u$, where
$\chi_{\L}(\ \cdot\  )$ denotes the characteristic function
of $\L$,
and $a(\ \cdot\ , \ \cdot\ )$ is a smooth function,
with an appropriate decay in both variables.  Clearly,
$\tilde T_\a$ is a pseudo-differential operator with a symbol
having discontinuities in both variables.
We are interested in the asymptotics of the trace
$\tr g(\tilde T_\a)$ as $\a\to\infty$ with a smooth function
$g$ such that $g(0) = 0$.
In 1982 H. Widom in \cite{Widom1} conjectured the asymptotic formula
\begin{equation}\label{widom:eq}
\tr g\bigl(\tilde T_\a(a)\bigr)
= \a^d \ \GW_0\bigl(g(a); \L, \Om\bigr)
+ \a^{d-1} \log\a \ \GW_1\bigl(\GA(g; a); \p\L, \p\Om\bigr) + o(\a^{d-1}\log\a),
\end{equation}
with the following coefficients. For any symbol $b = b(\bx, \bxi)$, any
domains $\L, \Om$ and any $\plainC1$-surfaces $S, P$, let
\begin{equation}\label{w0:eq}
\GW_0(b) = \GW_0(b; \L, \Om)
= \frac{1}{(2\pi)^d} \int_{\L}\int_{\Om} b(\bx, \bxi) d\bxi d\bx,
\end{equation}
\begin{equation}\label{w1:eq}
\GW_1(b) = \GW_1(b ; S, P) = \frac{1}{(2\pi)^{d-1}}\int_{S} \int_{P} b(\bx, \bxi)
| \bn_{S}(\bx)\cdot\bn_{P}(\bxi) |  dS_{\bxi} dS_{\bx},
\end{equation}
where $\bn_{S}(\bx)$ and $\bn_P(\bxi)$ denote the exterior unit normals to
$S$ and $P$ at the points $\bx$ and $\bxi$ respectively, and
\begin{equation}\label{GA:eq}
\GA(g; b) = \frac{1}{(2\pi)^2}\int_0^1 \frac{g(bt) - t g(b)}{t(1-t)} dt,\
\GA(g): = \GA(g; 1).
\end{equation}
The main objective of the paper is to prove the formula
\eqref{widom:eq} for a large class of functions $g$ and bounded domains $\L, \Om$.

The interest in the pseudo-differential operators with discontinuous
symbols goes back to the classical Szeg\H o formula for
the determinant of a Toeplitz matrix, see \cite{Sz} and \cite{GrSz}.
There exists a vast body of literature devoted to various non-trivial generalizations of
the Szeg\H o formula in dimension $d=1$,
and it is not our intention to review them here. Instead, we refer
to the monographs by A. B\"ottcher-B. Silbermann \cite{BotSil},
and by N.K. Nikolski \cite{Nikol} for the background reading,
T. Ehrhardt's paper \cite{Ehr} for a review of the pre-2001 results, and
the recent paper by P. Deift, A. Its, I. Krasovsky \cite{DIK}, for the latest results
and references.
A multidimensional generalization of the \textsl{continuous} variant
of the Szeg\H o formula was obtained by I.J. Linnik \cite{Linnik} and
H. Widom \cite{Widom4}, \cite{Widom3}.
%
%
In fact, paper \cite{Widom3} addressed a more general problem:
instead of the determinant, suitable analytic functions
of the operator were considered, and instead of the scalar symbol matrix-valued symbols
were allowed:
for $\Om = \R^d$ and $a(\bx, \bxi) = a(\bxi)$ it was shown that
\begin{equation*}
\tr g(\tilde T_\a(a)) = \a^d V_0 + \a^{d-1} V_1 + o(\a^{d-1}),\ \ 
\end{equation*}
with some explicitly computable coefficients $V_0, V_1$, such that
$V_0 = \GW_0(g(a))$ for the scalar case.
Under some mild extra smoothness assumptions on the boundary $\p\L$,
R. Roccaforte (see \cite{Roc}) found the term of order $\a^{d-2}$
in the above asymptotics of $\tr g(\tilde T_\a(a))$.

The situation changes if we assume that $\L\not = \R^d$ and $\Om\not=\R^d$, i.e.
that the symbol has jump discontinuities in both variables, $\bx$ and $\bxi$.
As conjectured by H. Widom,
in this case the second term should be of
order $\a^{d-1}\log\a$, see formula \eqref{widom:eq}.
For $d=1$ this formula was proved by H. Landau-H. Widom \cite{Land_Wid} and H. Widom
\cite{Widom1}.
For higher dimensions, the asymptotics \eqref{widom:eq} was proved
in \cite{Widom2} under the assumptions that one of the domains
is a half-space, and that $g$  is analytic in a disk of a sufficiently
large radius.
After this paper there have been just a few publications
with partial results. Using
an abstract version of the Szeg\H o formula with a remainder estimate, found
by A. Laptev and Yu. Safarov (see \cite{LS}, \cite{LapSaf}),
D. Gioev (see \cite{G1, G2}) established a sharp bound
\begin{equation}\label{sharp:eq}
\tr g(\tilde T_\a(a)) - \a^d\GW_0(g(a))  = O(\a^{d-1}\log\a).
\end{equation}
In \cite{GiKl} D. Gioev and I.Klich observed a connection between the formula
\eqref{widom:eq} and the behaviour of the entanglement
entropy for free Fermions in the ground state.
As explained in \cite{GiKl}, the studied
entropy is obtained as $\tr h(\tilde T_\a)$
with some bounded domains $\L, \Om$, the symbol $a(\bx, \bxi) = 1$, and
the function
\begin{equation}\label{entropy:eq}
h(t) = -t\log t - (1-t) \log (1-t), t\in (0, 1).
\end{equation}
Since $h(0) = h(1) = 0$, the leading term,
i.e. $\GW_0(h(1))$, vanishes, and the conjecture \eqref{widom:eq} gives
the $\a^{d-1}\log\a$-asymptotics of the trace, which coincides with the
expected quasi-classical behaviour of the entropy. However, the formula
\eqref{widom:eq} is not justified for non-smooth functions, and in particular
for the function \eqref{entropy:eq}.  Instead,
in the  recent paper \cite{HLS}
R. Helling, H. Leschke and W. Spitzer
proved \eqref{widom:eq} for a quadratic $g$. With $g(t) = t - t^2$ this
gives the asymptotics of the particle number variance, which provides
a lower bound of correct order for the entanglement
entropy.

The operators of the form \eqref{talpha:eq} also play a
 role in Signal Processing. Although the main object there is
band-limited functions of one variable, in \cite{Slep} D. Slepian considered
some multi-dimensional generalizations.
In particular, he derived asymptotic formulas
for the eigenvalues and eigenfunctions of $\tilde T_\a(1)$ for the special case
when both $\L$ and $\Om$ are balls in $\R^d$. Some of those results
are used in \cite{Slep1}.
These results, however,
do not allow to study the trace $\tr g(\tilde T_\a(1))$.

The main results of the present paper
are Theorems \ref{main_anal:thm} and \ref{main_s:thm}. They
establish asymptotic formulas of the type \eqref{widom:eq} for
the operator $T(a) = T_\a(a)$, defined in \eqref{tnew:eq}, which is slightly different from
$\tilde T_\a(a)$, but as we shall see later,
the difference does not affect the first two terms of
the asymptotics \eqref{widom:eq}.
Theorem \ref{main_anal:thm}
proves formula \eqref{widom:eq} for functions $g$
analytic in a disk of sufficiently large radius. Theorem \ref{main_s:thm}
proves \eqref{widom:eq} for the real part of $T(a)$ with an arbitrary
$\plainC\infty$-function $g$.

The proof comprises the following main ingredients:
\begin{itemize}
\item[1.]
Trace class estimates for pseudo-differential operators with
discontinuous symbols,
\item[2.]
Analysis of the problem for $d=1$,
\item[3.]
A geometrical estimate,
\item[4.]
Reduction of the initial problem to the case $d=1$.
\end{itemize}
The most important and difficult is Step 4.
Here we divide the domain $\L$ into a \textsl{boundary layer}, which contributes to the first
and second terms in \eqref{widom:eq}, and the \textsl{inner part},
which matters only for the first term.
Then we construct a suitable partition of unity subordinate to this covering, which is given
by the functions $q^{\downarrow} = q^{\downarrow}(\bx)$ and $q^{\uparrow}
= q^{\uparrow}(\bx)$ respectively.
An interesting feature of the problem
is that the thickness of the boundary layer does not depend on $\a$.
The asymptotics of $\tr \bigl(q^{\uparrow} g(T_\a)\bigr), \a\to\infty,$
do not feel the boundary, and
relatively standard quasi-classical considerations lead to the formula
\begin{equation}\label{qup:eq}
\tr \bigl(q^{\uparrow} g(T_\a)\bigr)
= \a^d \GW_0(q^{\uparrow} g(a)) + O(\a^{d-1}), \a\to\infty.
\end{equation}
To handle the trace $\tr \bigl(q^{\downarrow} g(T_\a)\bigr)$
we construct an appropriate covering of the boundary layer
by open sets of a specific shape. For each of these sets the boundary $\p\L$ is approximated
by a hyperplane, which makes it possible to view the operator $T_\a$ as a PDO on the
boundary hyperplane,
whose symbol is an operator of the same type, but in dimension one. This reduction
brings us to Step  2 of the plan.
The $1$-dim situation was studied in H. Widom's paper \cite{Widom1}. Although its results
are not directly applicable, the method developed  there allows us to get the required asymptotics.
These ensure that
\begin{equation}\label{qd:eq}
\tr \bigl(q^{\downarrow} g(T_\a)\bigr)
= \a^d \GW_0(q^{\downarrow} g(a))
+ \a^{d-1}\log\a \ \GW_1\bigl(\GA(g; a)\bigr) + o(\a^{d-1}\log\a), \a\to\infty.
\end{equation}
Adding up \eqref{qup:eq} and \eqref{qd:eq}, gives \eqref{widom:eq}.

In the reduction to the $1$-dim case an important role is played by a result of geometrical
nature, which is listed above as the third ingredient. It is loosely described as follows.
Representing $\bxi\in\R^d$ as $\bxi = (\hat\bxi, t)$,
with $\hat\bxi = (\xi_1, \xi_2, \dots, \xi_{d-1})$, define for each $\hat\bxi\in\R^{d-1}$ the
set
\begin{equation*}
\Om(\hat\bxi) = \{t : (\hat\bxi, t)\in\Om\}\subset\R.
\end{equation*}
If it is non-empty, then it is at most countable
union of disjoint open intervals in $\R$ whose
length we denote by  $\tilde\rho_j$, $j= 1, 2, \dots$.
The important observation is that under appropriate restrictions on the smoothness of
the boundary $\p\Om$, the function
\begin{equation}\label{tildem:eq}
\tilde m_\d(\hat\bxi) = \sum_{j} \tilde \rho_j^{-\d},
\end{equation}
belongs to $\plainL1(\R^{d-1})$ for all $\d\in (0, 2)$. The precise formulation
of this result is given in Appendix 1.

 From the technical viewpoint Steps 2 and 4 are based on the trace class estimates derived
 at Step 1. In order to work with discontinuous symbols
 we also establish convenient estimates for smooth ones. The emphasis is on the estimates
 which allow one to control explicitly the dependence on the parameter $\a$, and on the
 scaling properties of the symbols.

At this point it is appropriate to compare our proof
with H. Widom's paper \cite{Widom2}, where
\eqref{widom:eq} was justified for the case when $\L$ (or $\Om$) was a half-space.
In fact, our four main steps are the same as in \cite{Widom2}. However, in \cite{Widom2}
the relative weight of these ingredients was different. If $\L$ is a half-space,
then the reduction to the $1$-dim case (i.e. Step 4) is almost
immediate whereas in the present paper, for general $\L$ such a reduction is a major issue.
 In \cite{Widom2} at Step 3 it was sufficient to have the geometric estimate
for $\d=1$. In the present paper it is crucial to have such an estimate
for $\d >1$. Moreover, trace class estimates were derived in \cite{Widom2}
under the assumption that $\L$ was a half-space, which
is clearly insufficient for our purposes.
  As far as the $1$-dim asymptotics are concerned (i.e. Step 2), our estimates
  are perhaps somewhat more detailed, since apart from the parameter $\a$,
  they allow one to monitor the dependence on the scaling parameters as well.

The detailed structure of the paper is described at the end of Section \ref{main:sect}.

\vskip 0.5cm

\noindent
\textbf{Some notational conventions.}
We conclude the Introduction by fixing some basic notations which will be used
throughout the paper.
For $\bx\in\R^d$ we denote $\lu \bx\ru = (1+|\bx|^2)^{\frac{1}{2}}$.
Very often we split $\bx=(x_1, x_2, \dots, x_d)$ in its components as follows:
\begin{equation*}
\bx = (\hat\bx, x_d),\ \hat\bx = (x_1, x_2, \dots, x_{d-1}),
\end{equation*}
and for some $l = 1, 2, \dots, d,$
\begin{equation*}
\overc\bx = (x_1, x_2, \dots, x_{l-1}, x_{l+1}, \dots, x_d).
\end{equation*}
The notation $B(\bx, R)$ is used for the open
ball in $\R^d$ of radius $R>0$, centered  at $\bx\in\R^d$.
For some $\rho >0$ let
\begin{equation}\label{cube:eq}
\CC^{(n)}_{\rho} = (-2\rho, 2\rho)^n
\end{equation}
be the $n$-dimensional open cube.

The characteristic function of the domain $\L\subset\R^d, d\ge 1$,
is denoted by $\chi_{\L}=\chi_{\L}(\bx)$. To avoid cumbersome
notation we write
$\chi_{\bz, \ell}(\bx) := \chi_{B(\bz, \ell)}(\bx)$.

For a function $u = u(\bx)$, $\bx\in\R^d$ its Fourier transform is defined as follows:
\[
\hat u(\bxi) = \frac{1}{(2\pi)^{\frac{d}{2}}}
\int e^{-i \bx\cdot\bxi} u(\bx) d\bx.
\]
The integrals without indication of the domain of integration are taken over the
entire Euclidean space $\R^d$.

The notation $\GS_p, 0 < p\le \infty$ is used for the standard Schatten-von Neumann
classes of compact operators in a separable Hilbert space, see e.g.
\cite{BS}, \cite{Simon}. In particular,
$\GS_1$ is the trace class, and $\GS_2$ is the Hilbert-Schmidt class.
 Unless otherwise stated the underlying Hilbert space is assumed to be
 $\plainL2(\R^d)$.

By $C, c$ (with or without indices) we denote various positive constants
whose precise value is of no importance.

\vskip 0.5cm

\noindent

\textbf{Acknowledgments.} Part of this paper was written during my stay at
the Erwin Schr\"odinger Institute, Vienna
in July 2009. I thank the organizers of the Programme
``Topics in Spectral Theory" for the invitation
and to the staff of the  Institute for their hospitality.

I am grateful to H. Leschke, R. Helling and W. Spitzer for useful discussions and their cordial
hospitality at Erlangen in September 2009.
Thanks go to A. Laptev, Yu. Safarov and M. Shubin for
stimulating discussions, and to A. B\"ottcher, P. Deift,  B. Helffer, A. Its, N. Lerner
and G. Rozenblum,
for pointing out to me some useful references. I am grateful to A. B\"ottcher and
J. Oldfield for reading parts of the text and correcting some inconsistencies.

This work was supported in part by EPSRC grants EP/F029721/1 and
EP/D00022X/2.


\section{Main result}\label{main:sect}

\subsection{Definitions and main results}
In this paper we need several types of pseudo-differential operators, depending on the
parameter $\a>0$.
For the  \textit{symbol} $a = a(\bx, \bxi)$,
\textit{amplitude} $p = p(\bx, \by, \bxi)$,
and any function $u$ from the Schwartz class on $\R^d$ we define
\begin{equation}\label{pdoampl:eq}
(\op^a_\a p) u(\bx) = \biggl(\frac{\a}{2\pi}\biggr)^d
\int\int e^{i\a(\bx-\by)\bxi} p(\bx, \by; \bxi) u(\by) d\bxi d\by,
\end{equation}
\begin{equation}\label{pdoleft:eq}
(\op^l_\a a) u(\bx) = \biggl(\frac{ \a}{2\pi}\biggr)^d
\int\int e^{i\a(\bx-\by)\bxi} a(\bx, \bxi) u(\by) d\bxi d\by,
\end{equation}
\begin{equation}\label{pdoright:eq}
(\op^r_\a a) u(\bx) = \biggl(\frac{\a}{2\pi}\biggr)^d
\int\int e^{i\a(\bx-\by)\bxi} a(\by, \bxi) u(\by) d\bxi d\by.
\end{equation}
If the function $a$ depends only on $\bxi$, then the operators
$\op_\a^l(a), \op_\a^r(a)$ and $\op_\a^a(a)$ coincide with each other,
and we simply write $\op_\a(a)$.
Later we formulate conditions on $a$ and $p$ which ensure
boundedness of the above operators uniformly in the parameter $\a\ge 1$.
Let $\L, \Om$ be two domains in $\R^d$, and let $\chi_{\L}(\bx)$, $\chi_{\Om}(\bxi)$
be their characteristic functions. We always use the notation
\begin{equation*}
P_{\Om,\a} = \op_\a(\chi_{\Om}).
\end{equation*}
We study the operator
\begin{equation}\label{tnew:eq}
T_\a(a) = T_\a(a; \L, \Om) = \chi_{\L} P_{\Om, \a}\op^l_\a(a) P_{\Om, \a}\chi_{\L},
\end{equation}
and its symmetrized version:
\begin{equation*}
S_\a(a) = S_\a(a; \L, \Om) = \chi_{\L} P_{\Om, \a}\ \re\op^l_\a(a)\  P_{\Om, \a}\chi_{\L}.
\end{equation*}
Note that $T_\a(a)$ differs from the
operator \eqref{talpha:eq} by the presence of
an extra projection $P_{\Om, \a}$ on the left of
$\op^l_\a(a)$. As we shall see later in Section \ref{nonsmooth1:sect}, this difference
does not affect the first two terms of the asymptotics \eqref{widom:eq}.

Let us now specify the class of symbols and amplitudes used throughout the paper.
We denote by $\BS^{(n_1, n_2, m)}$ the set of all (complex-valued)
functions $p = p(\bx, \by, \bxi)$, which are bounded together with their
partial derivatives up to order
$n_1$ w.r.t. $\bx$, $n_2$ w.r.t. $\by$ and $m$ w.r.t. $\bxi$.
It is convenient to define the norm in this class in the following way.
For arbitrary numbers $\ell>0$ and $\rho >0$ define
\begin{equation}\label{norm:eq}
\SN^{(n_1, n_2, m)}(p; \ell, \rho)
= \underset{\substack
{0\le n\le n_1\\
0\le k\le n_2\\
0\le r\le m}}
\max \ \underset{\bx, \by, \bxi}
\sup \ell^{n+k} \rho^{r}
|\nabla_{\bx}^{n}\nabla_{\by}^k\nabla_{\bxi}^r p(\bx, \by, \bxi)|.
\end{equation}
Here we use the notation
\begin{equation*}
|\nabla^l f(\bt)|^2 =
\sum_{j_1, j_2, \dots, j_l = 1}^r |\p_{j_1} \p_{j_2}\ \cdots\ \p_{j_l} f(\bt)|^2
\end{equation*}
for a function $f$ of the variable $\bt\in \R^r$.
The presence of the parameters $\ell$, $\rho$ allows one to consider
amplitudes with different scaling properties.

In the same way we introduce the classes $\BS^{(n, m)}$ (resp. $\BS^{(m)}$)
of all (complex-valued)
functions $a = a(\bx, \bxi)$ (resp. $a = a(\bxi)$),
which are bounded together with their
partial derivatives up to order
$n$ w.r.t. $\bx$, and $m$ w.r.t. $\bxi$. The norm $\SN^{(n, m)}(a; \ell, \rho)$
(resp. $\SN^{(m)}(a; \rho)$)
is defined in a way similar to \eqref{norm:eq}.
Note the straightforward inequality: if $a\in\BS^{(n, m)}, b\in \BS^{(n, m)}$, then
$ab\in\BS^{(n, m)}$ and
\[
\SN^{(m, n)}(ab; \ell, \rho)\le C_{m, n}
\SN^{(m, n)}(a; \ell, \rho)
\SN^{(m, n)}(b; \ell, \rho).
\]
As we show later, if the amplitude $p$ and/or symbol $a$
belong to an appropriate class $\BS$,
then the PDO's \eqref{pdoampl:eq}-- \eqref{pdoright:eq} are bounded.

Let $\boldM$ be a non-degenerate linear transformation, and let
$\boldk, \boldk_1\in\R^d$ be some vectors.
By $\boldA = (\boldM, \boldk)$ we denote the affine transformation
$\boldA\bx = \boldM + \boldk$.
A special role is played by the \textit{Euclidean isometries},
i.e. by the affine transformations of the form $\boldE = (\boldO, \boldk)$, where
$\boldO$ is an orthogonal transformation.  The set of all Eucledian
isometries on $\R^d$ is denoted by $E(d)$.
Let us point out some useful unitary equivalence for the operators
\eqref{pdoampl:eq} - \eqref{pdoright:eq}.
For the affine transformation $(\boldM, \boldk)$
define the unitary operator
$U = U_{\boldM, \boldk}: \plainL2(\R^d)\to\plainL2(\R^d)$ by
\begin{equation*}
(U_{\boldM, \boldk}u)(\bx) = \sqrt{|\det\boldM|} \
u(\boldM\bx+\boldk), u\in\plainL2(\R^d).
\end{equation*}
It is straightforward to check that for an arbitrary $\boldk_1\in\R^d$,
\begin{equation}\label{orthogonal1:eq}
\begin{cases}
U_{\boldM, \boldk} e^{-i\a\bx\cdot\boldk_1}
\op_\a^a(b) e^{i\a\bx\cdot\boldk_1}
U_{\boldM, \boldk}^{-1} =
\op_\a^a (b_{\boldM, \boldk, \boldk_1}),\\[0.2cm]
b_{\boldM, \boldk, \boldk_1}(\bx, \by, \bxi)
= b\bigl(\boldM\bx+\boldk, \boldM\by+\boldk, (\boldM^T)^{-1} \bxi + \boldk_1\bigr),
\end{cases}
\end{equation}
and
\begin{equation}\label{chilambda:eq}
\begin{cases}
U_{\boldM, \boldk} \chi_{\L} U_{\boldM, \boldk}^{-1} = \chi_{\L_{\boldM, \boldk}},\
U_{\boldM, \boldk} e^{ -i\a\bx\cdot\boldk_1}
P_{\Om, \a} e^{ i\a\bx\cdot\boldk_1} U_{\boldM, \boldk}^{-1}
= P_{\Om_{\boldM, \boldk_1}^T, \a},\\[0.2cm]
\L_{\boldM, \boldk} = \boldM^{-1}(\L-\boldk),\
\Om_{\boldM, \boldk_1}^T = \boldM^T(\Om-\boldk_1).
\end{cases}
\end{equation}
For the operator $T(a)$ this implies that
\begin{equation}\label{unitary_equivalence:eq}
U_{\boldM, \boldk} e^{ -i\a\bx\cdot\boldk_1}
T(a; \L, \Om)e^{i\a\bx\cdot\boldk_1} U_{\boldM, \boldk}^{-1}
= T(a_{\boldM, \boldk, \boldk_1}; \L_{\boldM, \boldk}, \Om_{\boldM, \boldk_1}^T).
\end{equation}
Note that the asymptotic coefficients
$\GW_0$ and $\GW_1$ are invariant with respect to
affine transformations, i.e.
\begin{equation}\label{gw_invariance:eq}
\begin{cases}
\GW_0(b; \L, \Om) =
\GW_0(b_{\boldM, \boldk, \boldk_1};
\L_{\boldM, \boldk}, \Om^T_{\boldM, \boldk_1}),\\[0.2cm]
\GW_1(b; \p\L, \p\Om) =
\GW_1(b_{\boldM, \boldk, \boldk_1}; \p\L_{\boldM, \boldk},
\p\Om^T_{\boldM, \boldk_1}).
\end{cases}
\end{equation}
The first of
the relations \eqref{gw_invariance:eq} is immediately checked by changing variables
under the integral \eqref{w0:eq}. The second one is proved in Appendix 4,
Lemma \ref{linear_invariance:lem}.

As one particular type of a linear transformation, it is useful to single out the case
when $\boldM = \ell\boldI$ for some $\ell>0$, and $\boldk = \bzero$,
i.e. $\boldM$ is a scaling transformation.
In this situation we denote
\begin{equation*}
\bigl(W_{\ell}u\bigr)(\bx)
= \bigl(U_{\boldM, \bzero} u\bigr)(\bx)
= \ell^{\frac{d}{2}} u(\ell \bx).
\end{equation*}
Then a straightforward calculation gives for any $\rho >0$:
\begin{equation}\label{unitary:eq}
W_{\ell}\op_\a^a (b) W_{\ell}^{-1} = \op^a_{\b}(b_{\ell, \rho}), \
b_{\ell, \rho}(\bx, \by, \bxi) = b(\ell\bx, \ell\by, \rho\bxi),\
\b = \a\ell\rho.
\end{equation}
In particular,
\begin{equation}\label{unitary_domain:eq}
W_{\ell} \chi_{\L} W_{\ell}^{-1} = \chi_{\L_{\ell}},\
\L_{\ell} = \ell^{-1}\L,\
\end{equation}
It is important that the  norm \eqref{norm:eq} is invariant
under certain linear transformations.
Note first of all that
\begin{equation}\label{scale:eq}
\SN^{(n_1, n_2, m)}(p; \ell, \rho)
= \SN^{(n_1, n_2, m)}(p_{\ell_1, \rho_1}; \ell\ell_1^{-1}, \rho\rho_1^{-1}),
\end{equation}
for arbitrary positive $\ell, \ell_1, \rho, \rho_1$.
Moreover, the norm is also invariant under Eucledian isometries:
\begin{equation}\label{orthogonal:eq}
\SN^{(n_1, n_2, m)}(p; \ell, \rho)
= \SN^{(n_1, n_2, m)}(p_{\BO, \boldk, \boldk_1}; \ell, \rho).
\end{equation}
Sometimes we refer to $\ell$ and $\rho$ as \textsl{scaling parameters}.

Now we can specify the classes of domains which we study.
We always assume that $\L$ and $\Om$ are domains with
smooth boundaries in the standard sense.  However, for the reference convenience and
to specify the precise conditions on the objects involved, we state our assumptions
explicitly.

\begin{defn}\label{domains:defn}
We say that a domain $\G\subset\R^d, d\ge 2$,
is \textsl{a $\plainC{m}$-graph-type domain}, with some $m\ge 1$,
if one can find a real-valued
function $\Phi\in\plainC{m}(\R^{d-1})$,
 with the properties
\begin{equation}\label{propertyphi:eq}
\begin{cases}
 \Phi(\hat{\mathbf{0}}) = 0,\\
\nabla\Phi\ \ \textup{is uniformly bounded on}\ \  \R^{d-1},\\
\nabla\Phi \ \ \textup{is uniformly continuous on}\ \  \R^{d-1},
\end{cases}
\end{equation}
and some transformation
$\boldE = (\boldO, \boldk)\in E(d)$ such that
\begin{equation*}
\boldE^{-1}\G = \{\bx: x_d > \Phi(\hat\bx)\},\
\hat\bx = (x_1, x_2, \dots, x_{d-1}).
\end{equation*}
In this case we write $\G = \G(\Phi; \boldO, \boldk)$ or
$\G = \G(\Phi)$, if the omission of the
dependence on $\boldE$ does not lead to confusion.
\end{defn}

We often use the notation
\begin{equation}\label{mphi:eq}
M_{\Phi} = \|\nabla \Phi\|_{\plainL\infty}.
\end{equation}
It is clear that  $\G(\Phi; \boldO, \boldk) = \boldE \G(\Phi; \boldI, \mathbf 0)$
with $\boldE = (\boldO, \boldk)$. The point $\boldk$ is on the boundary of the domain
$\G(\Phi; \boldO, \boldk)$.

If $\L = \G(\Phi; \boldO, \boldk)$, then the domain $\L_{\ell}$ (see definition
\eqref{unitary_domain:eq}) has the form
\begin{equation}\label{gamma_lambda:eq}
\L_{\ell} = \G(\Phi_\ell; \boldO, \ell^{-1}\boldk),\
\Phi_{\ell}(\hat\bx) = \ell^{-1} \Phi(\ell\hat\bx).
\end{equation}
Note that the value \eqref{mphi:eq} is invariant
under scaling:
\begin{equation}\label{scale_domain:eq}
\|\nabla\Phi\|_{\plainL\infty} = \|\nabla\Phi_\ell\|_{\plainL\infty}.
\end{equation}

In what follows we extensively use the relations
\eqref{orthogonal1:eq}, \eqref{chilambda:eq}
and \eqref{unitary_equivalence:eq} in order reduce the domains or symbols to a more
convenient form. For these purposes, let us make a note of the following
elementary property for the domain $\L = \G(\Phi; \boldI, \bzero)$. According to
\eqref{chilambda:eq}, for any $\boldk\in\p\L$ we have
\begin{equation}\label{trans:eq}
\L_{\boldI, \boldk} = \G(\Phi_{\boldk}; \boldI, \bzero),\
\textup{where}\ \L = \G(\Phi; \boldI, \bzero),\
\Phi_{\boldk}(\hat\bx) = \Phi(\hat\bx+\hat\boldk) - k_d.
\end{equation}
Clearly, $\Phi_{\boldk}(\hat\bzero) = 0$ and
$\|\nabla\Phi\|_{\plainL\infty} = \|\nabla\Phi_{\boldk}\|_{\plainL\infty}$.

In the next definition we introduce general $\plainC{m}$-domains.
Let $B(\bx, r) = \{\bv\in\R^d: |\bx-\bv|<r\}$ be
the ball of radius $r>0$ centered at $\bx$.

\begin{defn}\label{local_domains:defn}
Let $\L\subset\R^d$, $d\ge 2$
be a domain, $\bw\in\R^d$ be a vector and $R>0$ be a number.

\begin{enumerate}
\item
For a $\bw\in\p\L$ we say that in the ball $B(\bw, R)$ the domain $\L$
is represented by the $\plainC{m}$-graph-type
domain $\G = \G(\Phi; \boldO, \bw)$, $m\ge 1$,  if
there is a number $R = R_{\bw}>0$, such that
\begin{equation}\label{local_graph:eq}
\L\cap B(\bw, R) = \G\cap B(\bw, R).
\end{equation}
\item
For a $\bw\in \L$ we say that in the ball $B(\bw, R)$ the domain $\L$
is represented by $\R^d$, if
there is a number $R = R_{\bw}>0$, such that
\begin{equation*}
\L\cap B(\bw, R) = B(\bw, R).
\end{equation*}
\item
The domain $\L$, is said to be $\plainC{m}, m\ge 1$, if
for each point $\bw\in\p\L$ there is a number $R = R_{\bw}>0$,
such that in the ball $B(\bw, R)$ the domain $\L$ is represented by
a $\plainC{m}$-graph-type domain $\G(\Phi; \boldO, \bw)$ with some
 $\plainC{m}$-function
$\Phi=\Phi_{\bw}:\R^{d-1}\to\R$, satisfying \eqref{propertyphi:eq},
and some orthogonal transformation
$\boldO = \boldO_\bw$.
In this case we also say that the boundary
$\p\L$ is a $\plainC{m}$-surface.
\end{enumerate}
\end{defn}

\underline{The next two theorems represent the main results of the paper:}

\begin{thm}\label{main_anal:thm}
Let $\L, \Om\subset\R^d$, $d\ge 2$ be bounded
domains in $\R^d$ such that
$\L$ is $\plainC1$  and $\Om$ is $\plainC3$.
Let  $a = a(\bx, \bxi)$ be a symbol with the property
\begin{equation}\label{dersym:eq}
\max_{\substack{0\le n\le d+2\\ 0\le m\le d+2}}
\sup_{\bx, \bxi}
|\nabla_{\bx}^n \nabla_{\bxi}^m a(\bx, \bxi)|
< \infty,
\end{equation}
supported on the set $B(\bz, \ell)\times B(\bmu, \rho)$
with some $\bz, \bmu\in\R^d$ and $\ell, \rho >0$.
Let $g$ be a function analytic in the disk of radius
$R = C_1\SN^{(d+2, d+2)}(a; \ell, \rho)$, such that $g(0) = 0$.
If the constant $C_1$ is sufficiently large, then
\begin{align}\label{main_anal:eq}
\tr  g(T(a))
= &\ \a^d \GW_0(g(a); \L, \Om)\notag\\[0.2cm]
&\ + \a^{d-1} \log\a\  \GW_1(\GA(g; a); \p\L, \p\Om) + o(\a^{d-1}\log\a),
\end{align}
as $\a\to\infty$.
\end{thm}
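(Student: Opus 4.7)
The plan is to follow the four-step roadmap sketched in the introduction. The principal reduction is a partition of unity $\chi_{\L} = q^{\uparrow} + q^{\downarrow}$ subordinate to $\L$, where $q^{\downarrow}$ is supported in an $\a$-independent boundary layer around $\p\L$ and $q^{\uparrow}$ in the deep interior; by linearity of the trace this splits $\tr g(T_\a(a))$ into $\tr \bigl(q^{\uparrow} g(T_\a(a))\bigr)$ and $\tr \bigl(q^{\downarrow} g(T_\a(a))\bigr)$, which are analyzed separately to give \eqref{qup:eq} and \eqref{qd:eq}. Because $g$ is analytic with $g(0) = 0$ on a disk of radius exceeding $\|T_\a(a)\|$ uniformly in $\a$, I expand $g(z) = \sum_{k \geq 1} g_k z^k$; establishing both asymptotics for each monomial $z^k$ with remainder bounds that decay geometrically in $k$ (controlled by the Schatten-norm estimates of Step~1) and then resumming reduces the analytic case to the polynomial one.

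For the interior contribution the cutoff $q^{\uparrow}$ annihilates the boundary discontinuity of $\chi_{\L}$, so each product $q^{\uparrow} T_\a(a)^k$ admits a smooth symbolic expansion. Standard quasi-classical calculus for PDOs with $\cb$-symbols shows that the leading symbol $q^{\uparrow}(\bx)\bigl(\chi_{\Om}(\bxi) a(\bx,\bxi)\bigr)^k$ gives the $\a^d$-term while all subsequent terms contribute at most $O(\a^{d-1})$ to the trace via the estimates of Step~1. Summing over $k$ produces \eqref{qup:eq}, with no logarithm and no surface contribution.

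For the boundary contribution I cover the layer by finitely many open sets $U_j$ of $\a$-independent diameter, each so small that $\p\L\cap U_j$ is well approximated by its tangent hyperplane. Invoking the unitary equivalence \eqref{unitary_equivalence:eq} and the invariance \eqref{gw_invariance:eq}, I rotate and translate each piece so that the approximating hyperplane becomes $\{x_d = 0\}$. Modulo $\GS_1$-errors of lower order (controlled by Step~1), the localized operator becomes $\chi_{\{x_d > 0\}} P_{\Om, \a} \op_\a^l(a) P_{\Om, \a} \chi_{\{x_d > 0\}}$; by viewing $\op_\a^l(a)$ as a PDO in $\hat\bx$ with operator-valued symbol acting in the $x_d$-variable, each $\hat\bxi$-slice is the 1-dim Wiener--Hopf type operator on the section $\Om(\hat\bxi)$ analyzed in \cite{Widom1}. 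Inserting the 1-dim two-term asymptotics on each slice, summing over the disjoint subintervals of lengths $\tilde\rho_j(\hat\bxi)$ and integrating in $\hat\bx, \hat\bxi$ produces \eqref{qd:eq}; the factor $|\bn_{\p\L}(\bx)\cdot\bn_{\p\Om}(\bxi)|$ in $\GW_1$ emerges from the change of variables relating the rotated normals to the original ones.

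The hardest step, and the main obstacle, is controlling the errors in this reduction. Two sources must simultaneously be shown to be $o(\a^{d-1}\log\a)$: (i) replacing $\p\L\cap U_j$ by its tangent hyperplane introduces symbol errors of size $O(\diam(U_j)^2)$ which, multiplied by a local leading coefficient of order $\log\a$, must still sum over the covering; and (ii) intermediate estimates on each $\hat\bxi$-slice involve negative powers of the interval lengths $\tilde\rho_j(\hat\bxi)$. This is exactly why the geometric estimate of Step~3 must provide $\tilde m_\d \in \plainL{1}(\R^{d-1})$ for some $\d > 1$, strictly stronger than the $\d = 1$ case that sufficed in \cite{Widom2} for a half-space $\L$; the $\plainC{3}$-regularity of $\p\Om$ is precisely the hypothesis that guarantees this stronger integrability. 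Once both types of error are tamed, adding \eqref{qup:eq} and \eqref{qd:eq} yields \eqref{main_anal:eq}.
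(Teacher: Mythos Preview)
Your outline of the four-step reduction for polynomial $g$ is a fair high-level sketch of what the paper actually does for Theorem~\ref{main:thm}. The genuine gap is in your passage from polynomials to analytic $g$.

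You propose to prove the two-term asymptotics for each monomial $z^k$ ``with remainder bounds that decay geometrically in $k$'' and then resum. But the four-step machinery does \emph{not} deliver such remainders. The error term for each fixed $k$ comes out only as $o(\a^{d-1}\log\a)$, obtained through a triple limit in auxiliary parameters (the $\CW$-sequence apparatus of Section~\ref{partition:sect} and Lemmas~\ref{alphaa_down:lem}, \ref{alphaa_up:lem}); there is no uniform-in-$k$ rate, let alone a geometric one, and the trace-class commutator bounds of Step~1 (which are $O(\a^{d-1})$, no logarithm) do not control the $o(\a^{d-1}\log\a)$ piece at all. So the series $\sum_k |g_k|\cdot o_k(\a^{d-1}\log\a)$ cannot be summed.

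The paper closes this gap by a completely different device, absent from your proposal: an abstract Szeg\H o-with-remainder inequality of Laptev--Safarov type (Lemma~\ref{LSG:lem}, specifically \eqref{LSG:eq}) combined with the Hilbert--Schmidt estimate \eqref{HSestim:eq}. This yields the sharp bound of Lemma~\ref{gioev0:lem},
\[
\limsup_{\a\to\infty}\frac{1}{\a^{d-1}\log\a}\bigl|\tr g(T(a))-\a^d\GW_0(g(a))\bigr|\le C\,g^{(2)}(t_0),
\]
valid for any analytic $g$ with $g(0)=0$. One then writes $g=g_p+r_p$ with $g_p$ a partial sum, applies Theorem~\ref{main:thm} to $g_p$, and uses the sharp bound on $r_p$ together with $r_p^{(1)}(t_0)+r_p^{(2)}(t_0)<\vare$ (and the estimate \eqref{GA_est_anal:eq} for $\GA(r_p;a)$) to make the tail contribution smaller than any prescribed $\vare$. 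This is the missing idea: you need an \emph{a priori} bound on $\tr g(T(a))$ that is linear in a seminorm of $g$ which vanishes on affine functions, and that is exactly what the $\GS_2$-based estimate provides.
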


For the self-adjoint operator $S(a)$ we have a wider choice of
functions $g$:

\begin{thm}\label{main_s:thm}
Let $\L, \Om\subset\R^d$, $d\ge 2$ be bounded
domains in $\R^d$ such that
$\L$ is $\plainC1$  and $\Om$ is $\plainC3$.
Let  $a = a(\bx, \bxi)$ be a symbol satisfying \eqref{dersym:eq}
with a compact support
in both variables.
Then for any function $g\in\plainC\infty(\R)$, such that
$g(0) = 0$, one has
\begin{align}\label{main_s:eq}
\tr  g(S(a))
= &\ \a^d \GW_0(g(\re a); \L, \Om)\notag\\[0.2cm]
&\ + \a^{d-1} \log\a\  \GW_1(\GA(g; \re a); \p\L, \p\Om) + o(\a^{d-1}\log\a),
\end{align}
as $\a\to\infty$.
\end{thm}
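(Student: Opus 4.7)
The plan is to deduce Theorem \ref{main_s:thm} from an analytic-case analogue of Theorem \ref{main_anal:thm} for $S_\a(a)$ via polynomial approximation of $g$. Since $S_\a(a)$ is self-adjoint and bounded uniformly in $\a\ge 1$, its spectrum lies in a fixed interval $[-M,M]$, and by Weierstrass every $g\in\plainC\infty(\R)$ with $g(0)=0$ is approximated on $[-M,M]$ in $\plainC{k}$-norm (for any fixed $k$) by polynomials $P_n$ with $P_n(0)=0$. Each $P_n$ is entire, so an analytic-case version of the asymptotic formula applied to $P_n$ produces the two-term expansion with symbol $\re a$ in the coefficients. This analytic version for $S_\a(a)$ would be obtained by running the same four-step reduction as for Theorem \ref{main_anal:thm}, the only change being that the operation of taking the real part in $\re\op_\a^l(a)$ replaces the symbol by $\re a$ in every leading-order calculation; the geometric cutoffs and the reduction to dimension one are insensitive to this symmetrization.

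The key technical ingredient enabling the polynomial-to-$\plainC\infty$ extension is a quantitative remainder estimate of the form
\[
\bigl|\tr h(S_\a(a)) - \a^d\GW_0(h(\re a);\L,\Om)\bigr|
\le C\,\|h\|_{\plainC{k}([-M,M])}\,\a^{d-1}\log\a,
\]
valid for all $h\in\plainC\infty(\R)$ with $h(0)=0$ and for a fixed $k$ independent of $h$. I would establish this via an almost-analytic (Helffer--Sj\"ostrand) extension $\tilde h$ of $h$, vanishing at order $k$ on the real axis, to represent $h(S_\a(a))$ as a contour integral of resolvents $(z-S_\a(a))^{-1}$ against $\bar\p\tilde h$. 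The Schatten-class bounds for PDOs with discontinuous symbols (ingredient 1 of the four-step scheme) combined with Gioev's sharp bound \eqref{sharp:eq} provide the uniform resolvent estimates needed to make the integral converge and to yield the $\plainC{k}$-linear control.

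With these pieces in hand, a standard three-$\varepsilon$ argument closes the proof: choose $P_n$ with $\|g-P_n\|_{\plainC{k}}<\varepsilon$, apply the analytic-case formula to $P_n$, estimate $\tr(g-P_n)(S_\a(a))$ by the displayed remainder bound, and use that $\GW_0(g(\re a);\L,\Om)$ and $\GW_1(\GA(g;\re a);\p\L,\p\Om)$ depend continuously on $g$ in $\plainC{1}([-M,M])$, as visible from their definitions \eqref{w0:eq}--\eqref{GA:eq}. Letting $\a\to\infty$ first and $\varepsilon\to 0$ afterwards yields \eqref{main_s:eq}. The main obstacle is the quantitative trace-norm estimate: one has to propagate the $\a^{d-1}\log\a$ behaviour of \eqref{sharp:eq} through the Helffer--Sj\"ostrand representation while retaining an $\a$-uniform constant $C$ and a finite, $h$-independent $k$; this requires careful Schatten-norm resolvent bounds away from the real axis for the self-adjoint $S_\a(a)$, leveraging the technical toolbox that underlies Theorem \ref{main_anal:thm}.
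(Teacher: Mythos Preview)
Your overall three-$\varepsilon$ architecture—polynomial approximation plus a quantitative remainder bound linear in a derivative norm of the error—is exactly the paper's. The differences are in the tools chosen for the two pieces.

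For the polynomial case, you propose re-running the four-step reduction for the symmetrized operator. The paper avoids this entirely: Theorem \ref{main:thm} already includes the statement for $S_\a(a)$, and its proof (see \eqref{tos:eq}) simply bounds $\|g_p(T(\re a)) - g_p(S(a))\|_{\GS_1}\le C\a^{d-1}$ via the identity $\re\op_\a^l(a)-\op_\a^l(\re a)=\tfrac12(\op_\a^r(\overline a)-\op_\a^l(\overline a))$ and \eqref{quantisation_symbol:eq}. So the polynomial asymptotics for $S_\a(a)$ are inherited from those for $T(\re a)$ without revisiting any geometry.

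For the quantitative remainder, you propose Helffer--Sj\"ostrand with Schatten resolvent bounds. The paper instead invokes the Laptev--Safarov inequality (Proposition \ref{LapSaf:prop}): for self-adjoint $A$ and an orthogonal projection $P$,
\[
\bigl|\tr\bigl(\psi(PAP)-P\psi(A)P\bigr)\bigr|\le \tfrac12\,\|\psi''\|_{\plainL\infty}\,\|PA(I-P)\|_{\GS_2}^2.
\]
Applied with $P=\chi_\L$, $A=P_{\Om,\a}\,\re\op_\a^l(a)\,P_{\Om,\a}$, and the Hilbert--Schmidt bound \eqref{HSestim:eq} (which is precisely where the $\a^{d-1}\log\a$ enters), this gives Lemma \ref{gioev:lem} with the remainder controlled by $\|\phi''\|_{\plainL\infty}$ alone—so $k=2$ suffices, and the paper approximates $g''$ (not $g$) by a polynomial in sup-norm. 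Your Helffer--Sj\"ostrand route is viable in principle, but propagating the $\log\a$ through the resolvent integral with an $\a$-uniform constant and a fixed $k$ requires extra bookkeeping you have not supplied; moreover the Gioev bound \eqref{sharp:eq} you cite as input is itself proved in \cite{G1,G2} via Laptev--Safarov, so you would be re-deriving it the long way. The paper's route is shorter, self-adjoint-specific, and yields the sharp dependence on $\phi''$ directly.
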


\begin{rem}
Let us make some comments on the integral \eqref{GA:eq}, which enters
the coefficient $\GW_1$ in the above theorems.
Note that the integral \eqref{GA:eq} is finite for any smooth function $g$
such that $g(0) = 0$ (see Lemmas \ref{GA_est_anal:lem}, \ref{GA_est_smooth:lem} for
the estimates).
Clearly, $\GA$ is linear in $g$, i.e.
$\GA(g +\tilde g; b) = \GA(g; b) + \GA(\tilde g; b)$.
For the function $g_p(t) = t^p$, $p=1, 2, \dots$, we have
$\GA(g_1; b) = 0$ and $\GA(g_p,; b) = b^p \GA(g_p)$.
\end{rem}

\begin{rem}
It would be natural to expect that the variables $\bx$ and $\bxi$ in the operator
$T(a)$ have ``equal rights". Indeed, it was shown in \cite{Widom2}, p. 173,  by an
elementary calculation, that the roles of $\bx, \bxi$ are interchangeable.
On the other hand, the conditions  on $\L$ and $\Om$ in the main theorems
above, are clearly asymmetric.
At present it is not clear how to rectify this drawback.
\end{rem}

\begin{rem}
Denote by $n(\l_1, \l_2; \a)$ with $\l_1\l_2 >0, \l_1 < \l_2$ the number of eigenvalues of
the operator $S(a)$ which are greater than $\l_1$ and less than $\l_2$.  In other words,
\[
n(\l_1, \l_2; \a) = \tr \chi_I(S(a)),\ \ I = (\l_1, \l_2).
\]
Since the interval $I$ does not contain the point $0$, this quantity is finite.
Theorem \ref{main_s:thm} can be used to find the leading term of the asymptotics of
the counting function $n(\l_1, \l_2; \a)$,
by approximating the characteristic function $\chi_I$ with smooth functions $g$.
Suppose for instance that $a_-< a(\bx, \bxi)<a_+$, $\bx\in\L, \bxi\in\Om$,
with some positive constants $a_-, a_+$, and that $[a_-, a_+]\subset I$.
Then it follows from Theorem \ref{main_s:thm} that
\begin{equation*}
n(\l_1, \l_2; \a) = \biggl(\frac{\a}{2\pi}\biggr)^d |\L| \ |\Om|
+ \a^{d-1} \log\a  \  \GW_1\bigl(\GA(\chi_I; a)\bigr)  + o(\a^{d-1}\log\a).
\end{equation*}
A straightforward calculation shows that
\[
\GA\bigl(\chi_I; a(\bx, \bxi)\bigr) = \frac{1}{(2\pi)^2}
\log \biggl(\frac{a(\bx, \bxi)}{\l_1} - 1\biggr).
\]
Another interesting case is when $[\l_1, \l_2]\subset (0, a_-)$.
This guarantees that $\GW_0(\chi_I(a)) = 0$, and the asymptotics of $n(\l_1, \l_2; \a)$
are described by the second term in \eqref{main_s:eq}:
\[
n(\l_1, \l_2; \a)
=  \a^{d-1}\log\a \ \GW_1\bigl(\GA(\chi_I; a)\bigr)  + o(\a^{d-1}\log\a),\  \a\to\infty.
\]
An elementary calculation gives:
\[
\GA\bigl(\chi_I; a(\bx, \bxi)\bigr) = \frac{1}{(2\pi)^2}
\log \frac{\l_2(a(\bx, \bxi)-\l_1)}{\l_1(a(\bx, \bxi)-\l_2)}.
\]
\end{rem}

\vskip 0.3cm

The proof of Theorems \ref{main_anal:thm} and
\ref{main_s:thm} splits in two unequal parts. The crucial
and the most difficult part is to justify the asymptotics for a polynomial $g$:

\begin{thm}\label{main:thm}
Let $\L, \Om\subset\R^d$, $d\ge 2$ be bounded
domains in $\R^d$ such that
$\L$ is $\plainC1$  and $\Om$ is $\plainC3$.
Let  $a = a(\bx, \bxi)$ be a symbol satisfying \eqref{dersym:eq}
with a compact support
in both variables. Then for $g_p(t) = t^p, p= 1, 2,  \dots, $
\begin{align}\label{main:eq}
\tr  g_p(T(a))
= &\ \a^d \GW_0(g_p(a); \L, \Om)\notag\\[0.2cm]
&\ + \a^{d-1} \log\a\  \GW_1(\GA(g_p; a); \p\L, \p\Om) + o(\a^{d-1}\log\a),
\end{align}
as $\a\to\infty$.
If $T(a)$ is replaced with $S(a)$, then the same formula holds
with the symbol $a$ replaced by $\re a$ on the right-hand side.
\end{thm}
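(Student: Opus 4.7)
The plan is to prove \eqref{main:eq} for $g_p(t)=t^p$ by following the four-step strategy laid out in the Introduction. The starting point is a partition of unity $1 = q^{\uparrow}(\bx) + q^{\downarrow}(\bx)$ subordinate to the splitting of $\L$ into an inner part (supporting $q^{\uparrow}$, at distance bounded away from $\p\L$) and a boundary layer of thickness $\vare$ (supporting $q^{\downarrow}$), where $\vare$ is small but $\a$-independent. By the cyclicity of the trace it suffices to prove \eqref{qup:eq} and \eqref{qd:eq} separately and add them; the same scheme with $\re a$ in place of $a$ handles the $S(a)$ version.

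For \eqref{qup:eq}, on $\supp q^{\uparrow}$ one is a fixed distance from $\p\L$, so the outer $\chi_\L$-factors in the cyclic trace of $(\chi_\L P_{\Om,\a}\op_\a^l(a)P_{\Om,\a}\chi_\L)^p$ can be dropped at the cost of commutators $[\chi_\L, P_{\Om,\a}]$ whose $\GS_1$-norms are $O(\a^{d-1})$ by the Step~1 estimates. What remains is a genuine smooth PDO in both variables, for which the classical quasi-classical calculus yields the leading term $\a^d\GW_0(q^{\uparrow}g_p(a))$ with remainder $O(\a^{d-1})$.

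The heart of the proof is \eqref{qd:eq}. Cover the boundary layer by a finite family of sets $U_\nu$ of size $\vare$ centred at points $\bw_\nu\in\p\L$; by \eqref{unitary_equivalence:eq} and \eqref{chilambda:eq} one may assume inside each $U_\nu$ that $\bw_\nu=\bzero$ and that $\p\L\cap U_\nu$ is the graph $x_d=\Phi(\hat\bx)$ with small $\|\nabla\Phi\|_{\plainL\infty}$. The localized operator is then viewed as a $(d-1)$-dimensional PDO in $\hat\bx$ whose operator-valued symbol is, for each $\hat\bxi$, a one-dimensional operator of precisely the same Wiener--Hopf type $T_\a$, acting on the slice $\Om(\hat\bxi)\subset\R$. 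Applying Widom's one-dimensional result \cite{Widom1} pointwise in $\hat\bxi$ (reworked in Step~2 to keep explicit control of the scaling parameters $\ell,\rho$ of the symbol and of the lengths $\tilde\rho_j$ of the components of $\Om(\hat\bxi)$) yields a local $\log\a$-coefficient equal to $\GA(g_p;a)$; integration in $\hat\bx$ and $\hat\bxi$ then assembles exactly $\a^{d-1}\log\a\,\GW_1(\GA(g_p;a);\p\L,\p\Om)$. The interchange of limit and integration is justified by the geometric estimate \eqref{tildem:eq} of Appendix~1 for some $\d\in(1,2)$, which furnishes an $\plainL1(\R^{d-1})$-dominating function summing the local contributions over the components of $\Om(\hat\bxi)$.

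The main obstacle will be this Step~4 reduction. Two approximations must be controlled with $o(\a^{d-1}\log\a)$ precision, not merely $O(\a^{d-1}\log\a)$: first, replacing $\p\L\cap U_\nu$ by its tangent hyperplane, which generates commutators $[\chi_\L,\,\cdot\,]$ whose $\GS_1$-norms must be $o(\a^{d-1}\log\a)$ uniformly in the covering (handled by choosing $\vare$ small so that the quadratic deviation of $\Phi$ is negligible, and then invoking Step~1); and second, freezing the $\hat\bxi$-dependence of the effective one-dimensional symbol, for which one needs trace-class bounds that jointly track the scaling parameters $\ell,\rho$ and $\a$---precisely the motivation for the scaled norms $\SN^{(n_1,n_2,m)}(\,\cdot\,;\ell,\rho)$ and their invariances \eqref{scale:eq}, \eqref{orthogonal:eq}, \eqref{unitary:eq}. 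Both ingredients rest on the sharp trace-class machinery developed in Step~1. Combining \eqref{qup:eq} and \eqref{qd:eq} then yields \eqref{main:eq} for $T(a)$, and the argument for $S(a)$ is completely analogous with $\re a$ in place of $a$.
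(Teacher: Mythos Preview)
Your overall outline matches the paper's four-step strategy, and the treatment of $q^{\uparrow}$ and of the $S(a)$ reduction is essentially right. The gap is in your implementation of Step~4 for $q^{\downarrow}$.

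You propose a \emph{single-scale} covering of the boundary layer by patches $U_\nu$ of fixed size $\vare$ centred on $\p\L$, then flatten $\p\L$ to its tangent hyperplane on each $U_\nu$ and invoke the one-dimensional result to extract $\log\a$ directly. But the flattening estimate (Lemma~\ref{flat_boundary:lem}) requires the localising ball $B(\bz,\ell)$ to satisfy $B(\bz,4\ell)\subset\L\cap\Pi$, i.e.\ to sit \emph{strictly inside} $\L$ at distance $\gtrsim\ell$ from the boundary; and the one-dimensional asymptotics (Theorem~\ref{widom1:thm}) requires the cutoff $\psi$ to be supported in $(1,4r)$, i.e.\ \emph{away} from the boundary point. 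Neither applies to a patch centred on $\p\L$. Indeed, the model kernel $K_\pm$ satisfies $K_\pm(x,x)=\GA(g)/x$, so a cutoff touching $x=0$ gives a divergent diagonal integral: the $\log\a$ is not captured by a single application of the one-dimensional theorem but must be assembled from many scales.

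The paper's mechanism is different and essential. It decomposes the boundary layer into $K\sim\log\a/\log r$ \emph{dyadic shells} at distances $\a^{-1}r^k$ from $\p\L$, $k=0,\dots,K$ (the functions $q_{k,\bm}$ of \eqref{qkm:eq}). On each shell the localisation balls are strictly inside $\L$, so Lemma~\ref{flat_boundary:lem} controls the flattening error; after rescaling $x_d=r^k\a^{-1}t$ the one-dimensional problem becomes $T_\b$ with $\b=r^k$ and cutoff in $(1,4r)$, yielding $\log r$ per shell (Lemmas~\ref{No1:lem}--\ref{No2:lem}). Summing over $k\le K$ gives $\log\a$. The errors are organised through the auxiliary parameters $r,A$ and the notion of a $\CW$-sequence (Definition~\ref{w:defn}), with the result obtained via the triple limit $\lim_{r\to\infty}\limsup_{A\to\infty}\limsup_{\a\to\infty}$ in Lemmas~\ref{alphaa_down:lem}--\ref{alphaa_up:lem}. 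Your claim that the flattening error is ``handled by choosing $\vare$ small and invoking Step~1'' does not survive: the commutator estimates of Step~1 give $O(\a^{d-1})$, not $o(\a^{d-1}\log\a)$, and the $\vare$-smallness has no leverage without the dyadic structure separating the localisation scale from the distance-to-boundary scale.
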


Once this theorem is proved, the asymptotics can be closed
with the help of the sharp bounds \eqref{gioev0:eq} and \eqref{gioev:eq},
which were derived in \cite{G1}, \cite{G2} using the abstract version of the
Szeg\H o formula with a remainder estimate
obtained in \cite{LS} (see also \cite{LapSaf}).

\subsection{Asymptotic coefficient $\GA(g; b)$}
Here we provide some simple estimates for
the coefficient $\GA$ defined in \eqref{GA:eq}.

\begin{lem}\label{GA_est_anal:lem}
Suppose that the analytic function $g$ is given by the series
\begin{equation*}
g(z) = \sum_{m=0}^\infty \om_m z^m
\end{equation*}
with a radius of convergence $R >0$. Let
\begin{equation*}
g^{(1)}(t) =  \sum_{m=2}^\infty(m-1)|\om_m| t^{m-1},\ |t|\le R.
\end{equation*}
Then  for any $b, |b|<R$ the following estimate holds:
\begin{equation}\label{GA_est_anal:eq}
|\GA(g; b)|\le \frac{1}{(2\pi)^2} |b| g^{(1)}(|b|).
\end{equation}
\end{lem}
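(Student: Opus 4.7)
The plan is to reduce the problem to monomials by expanding $g$ in its power series, then estimate each term and sum up. Since the series $\sum_m \om_m z^m$ converges absolutely on compact subsets of $\{|z|<R\}$, and in particular uniformly for $z$ on the segment $\{bt : t\in[0,1]\}$ whenever $|b|<R$, one may interchange summation and integration in the definition \eqref{GA:eq}, provided the termwise integrals behave well near the endpoints $t=0$ and $t=1$. Hence it suffices to compute $\GA(z^m; b)$ for each $m\ge 0$ and then sum.

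For $m=0$ the integrand $(1-t)/[t(1-t)] = 1/t$ is non-integrable at $0$, so the statement tacitly presumes $\om_0 = 0$ (consistent with the context $g(0)=0$ in the main theorems). For $m=1$ the numerator $bt - tb$ vanishes identically, so $\GA(z;b) = 0$, consistent with the fact that the sum defining $g^{(1)}$ starts at $m=2$. For $m\ge 2$, factor
\[
\frac{(bt)^m - t b^m}{t(1-t)} = \frac{b^m(t^m - t)}{t(1-t)} = -b^m\,\frac{1-t^{m-1}}{1-t} = -b^m\sum_{k=0}^{m-2} t^k,
\]
which is a polynomial in $t$ (hence perfectly integrable near the endpoints). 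Integrating from $0$ to $1$ yields
\[
\int_0^1 \frac{(bt)^m - t b^m}{t(1-t)}\,dt = -b^m\sum_{k=0}^{m-2}\frac{1}{k+1} = -b^m\sum_{k=1}^{m-1}\frac{1}{k}.
\]
In particular, the absolute value of this quantity is at most $(m-1)|b|^m$, since the harmonic sum has $m-1$ terms each bounded by $1$.

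The interchange of summation and integration is now justified \emph{a posteriori}: the termwise integrals are precisely $-b^m\sum_{k=1}^{m-1} 1/k$ for $m\ge 2$, and
\[
\sum_{m=2}^\infty |\om_m|\,(m-1)\,|b|^m
= |b|\sum_{m=2}^\infty(m-1)|\om_m|\,|b|^{m-1}
= |b|\,g^{(1)}(|b|) < \infty
\]
for $|b|<R$ (since $g^{(1)}$ has the same radius of convergence as the derivative of $g$, namely $R$). Combining these observations and inserting the prefactor $(2\pi)^{-2}$ from \eqref{GA:eq} gives
\[
|\GA(g;b)|
\le \frac{1}{(2\pi)^2}\sum_{m=2}^\infty |\om_m|(m-1)|b|^m
= \frac{1}{(2\pi)^2}\,|b|\,g^{(1)}(|b|),
\]
which is the claimed bound \eqref{GA_est_anal:eq}. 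No real obstacle is expected; the only mild care-point is the behaviour of the integrand at $t=0,1$, which is handled automatically by the cancellation $g(bt) - tg(b)$ once one verifies $g(0)=0$ (so that the $1/t$ singularity disappears) and by the polynomial form of the integrand for each monomial $m\ge 2$ (so that the $1/(1-t)$ singularity is also absent).
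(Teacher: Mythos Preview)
Your proof is correct and follows essentially the same approach as the paper's: reduce to monomials $g_m(z)=z^m$, observe $\GA(g_1;b)=0$, bound $|\GA(g_m;b)|\le (2\pi)^{-2}(m-1)|b|^m$ for $m\ge 2$, and sum. The only cosmetic difference is that you compute the integral explicitly as a harmonic sum $\sum_{k=1}^{m-1}1/k$, whereas the paper simply bounds the integrand $\frac{t-t^m}{t(1-t)}$ pointwise by $m-1$ on $[0,1]$; both yield the same bound.
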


\begin{proof}
Consider first $g_m(z) = z^m, m\ge 2$, so that
\[
(2\pi)^2|\GA(g_m; b)| \le |b|^m\int_0^1 \frac{t-t^m}{t(1-t)} dt
\le (m-1)|b|^m.
\]
Thus
\[
|\GA(g; b)|\le \sum_{m=2}^\infty |\om_m||\GA(g_m; b)|\le
\frac{1}{(2\pi)^2} \sum_{m=2}^\infty (m-1)|\om_m| |b|^m,
\]
which leads to \eqref{GA_est_anal:eq}.
\end{proof}

\begin{lem}\label{GA_est_smooth:lem}
Suppose that  $g\in\plainC1(\R)$ and $g(0) = 0$.
Then for any $b\in\R$ the following estimate holds:
\begin{equation}\label{GA_est_smooth:eq}
|\GA(g; b)|\le \frac{1}{\pi^2} |b| \|g'\|_{\plainL\infty(-|b|, |b|)}.
\end{equation}
\end{lem}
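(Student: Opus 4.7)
The plan is to rewrite the numerator $g(bt)-tg(b)$ so that the apparent singularities at $t=0$ and $t=1$ visibly cancel, and the integral reduces to a weighted average of $g'$ against an explicit $\plainL1$ kernel. Using $g(0)=0$, write
\[
g(bt)=b\int_0^t g'(bs)\,ds,\qquad g(b)=b\int_0^1 g'(bs)\,ds,
\]
and split the second integral at $t$ to obtain
\[
g(bt)-tg(b) = b(1-t)\int_0^t g'(bs)\,ds - bt\int_t^1 g'(bs)\,ds.
\]
Dividing by $t(1-t)$ yields
\[
\frac{g(bt)-tg(b)}{t(1-t)} = \frac{b}{t}\int_0^t g'(bs)\,ds - \frac{b}{1-t}\int_t^1 g'(bs)\,ds,
\]
which is already manifestly bounded on $(0,1)$.

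Next I would interchange the order of integration in each piece. A direct application of Fubini gives
\[
\int_0^1 \frac{1}{t}\int_0^t g'(bs)\,ds\,dt = -\int_0^1 g'(bs)\log s\,ds,
\]
and analogously $\int_0^1 (1-t)^{-1}\int_t^1 g'(bs)\,ds\,dt = -\int_0^1 g'(bs)\log(1-s)\,ds$. Combining these two identities collapses the original integral to the single expression
\[
\int_0^1 \frac{g(bt)-tg(b)}{t(1-t)}\,dt = b\int_0^1 g'(bs)\,\log\frac{1-s}{s}\,ds.
\]

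From here the bound is immediate. Since $bs\in[-|b|,|b|]$ for $s\in[0,1]$, we have $|g'(bs)|\le \|g'\|_{\plainL\infty(-|b|,|b|)}$, and the triangle inequality gives
\[
\int_0^1\Bigl|\log\frac{1-s}{s}\Bigr|\,ds\le \int_0^1|\log s|\,ds+\int_0^1|\log(1-s)|\,ds = 2.
\]
Therefore
\[
\biggl|\int_0^1 \frac{g(bt)-tg(b)}{t(1-t)}\,dt\biggr|\le 2\,|b|\,\|g'\|_{\plainL\infty(-|b|,|b|)},
\]
and dividing by $(2\pi)^2$ yields the stated estimate \eqref{GA_est_smooth:eq}, with a factor to spare (the sharper constant $2\log 2$ in place of $2$ would also work since $\log 2<2$). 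There is no real obstacle here: the only step requiring care is the Fubini rearrangement, whose legitimacy follows at once from the boundedness of $g'$ on the compact interval $[-|b|,|b|]$ and integrability of $\log s$ and $\log(1-s)$.
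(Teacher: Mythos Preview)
Your proof is correct and in fact yields the sharper constant $\tfrac{1}{2\pi^2}$ (or even $\tfrac{\log 2}{\pi^2}$ if one computes the kernel norm exactly), so the claimed bound with $\tfrac{1}{\pi^2}$ follows a fortiori.

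The route differs from the paper's. The paper simply splits the integral at $t=\tfrac12$ and uses the crude pointwise bounds $|g(bt)|\le w|b|t$ and $|g(bt)-g(b)|\le w|b|(1-t)$ (with $w=\|g'\|_{\plainL\infty(-|b|,|b|)}$) to control the two halves separately, each by $2w|b|\log 2\le 2w|b|$; summing and dividing by $(2\pi)^2$ gives $\tfrac{w|b|}{\pi^2}$. Your approach instead produces the exact identity
\[
\int_0^1 \frac{g(bt)-tg(b)}{t(1-t)}\,dt = b\int_0^1 g'(bs)\,\log\frac{1-s}{s}\,ds,
\]
which makes the dependence on $g'$ transparent and immediately gives the bound via the $\plainL1$-norm of the logarithmic kernel. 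The paper's argument is a line or two shorter; yours buys a cleaner representation and a better constant, and could be reused if one ever needed finer information about $\GA(g;b)$.
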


\begin{proof} Denote $ w=\|g'\|_{\plainL\infty(-|b|, |b|)}$.
Since $|g(t)|\le w |t|$ for all $|t|\le|b|$, we have
\[
\biggl|\int_0^{\frac{1}{2}}\frac{g(bt) - tg(b)}{t(1-t)} dt\biggr|
\le 2 w|b|\int_0^{\frac{1}{2}} \frac{1}{1-t} dt\le 2 w |b|.
\]
Similarly,
\[
\biggl|\int_{\frac{1}{2}}^1\frac{g(bt) - tg(b)}{t(1-t)} dt\biggr|
\le 2 w|b|\int_{\frac{1}{2}}^1 \frac{1}{t} dt\le 2 w |b|.
\]
Adding up the two estimates, one gets \eqref{GA_est_smooth:eq}.
\end{proof}

\subsection{Plan of the paper}
We begin with estimates for norms and trace norms of the pseudo-differential
operators with smooth symbols, see Sect. \ref{smooth:sect}. Information about
various classes of compact operators can be found in \cite{GK}, \cite{BS}, \cite{Simon}.
The first trace norm estimate for PDO's was obtained in
\cite{ShT}, and later reproduced in \cite{Sh}, Proposition 27.3, and \cite{Robert},
Theorem II-49. The fundamental paper \cite{BS_U}
contains estimates in various compact operators classes for integral operators in terms of
smoothness of their \textsl{kernels}.
There are also publications focused on conditions
on the \textsl{symbol} which guarantee
that a PDO belongs to an appropriate Neumann-Schatten ideal, see e.g.
 \cite{Rond}, \cite{Arsu}, \cite{Toft}, \cite{ConToft} and references therein.
In spite of a relatively large number of available literature, these results
are not sufficient for our purposes. We need somewhat more detailed information
about trace norms. In particular, we derive an estimate for the
trace norm of a PDO  with weights, i.e. of $h_1 \op_\a^a(a) h_2$, where
the supports  of $h_1$ and $h_2$ are disjoint.
The most useful was paper \cite{Roz}, which served
as a basis for our approach. Although our estimates are quite elementary, and, probably
not optimal, they  provide bounds of correct orders in $\a$ and the scaling parameters.

Sections \ref{nonsmooth1:sect}, \ref{nonsmooth2:sect} are devoted to trace-class
estimates for PDO's with various jump discontinuities. The most basic
estimates are those for the commutators
$[\op_\a(a), \chi_{\L}]$, $[\op_\a(a), P_{\Om, \a}]$ where $\L$ and $\Om$
are graph-type domains,  see Lemmas
\ref{sandwich:lem} and \ref{sandwich_dual:lem}.
Similar bounds were derived in
\cite{Widom2} for the case when one of the domains $\L, \Om$
is a half-space. The new estimates which play a decisive role in the proof,
are collected in Sect. \ref{nonsmooth2:sect}.
Here the focus is on the PDO's with discontinuous symbols sandwiched between
weights having disjoint supports.
As in the
case of smooth symbols in Sect. \ref{smooth:sect}, it is important for us to
control the dependence of trace norms on the \textsl{distance}
between the supports.
The simplest result in Sect. \ref{nonsmooth2:sect},
illustrating this dependence is Lemma \ref{HS1:lem}.  The estimates
culminate in Lemma \ref{flat_boundary:lem} which bounds the error incurred when
replacing $\L$ by a half-space in the operator $T(a; \L, \Om)$.

The estimate \eqref{HSestim:eq} obtained in Sect. \ref{HS:sect} is used only
for closing the asymptotics in Sect. \ref{closure:sect}.
In contrast to the results
obtained in Sections \ref{nonsmooth1:sect}
and \ref{nonsmooth2:sect}, which estimate
norms of commutators of smooth symbols with $\chi_\L$ or $P_{\Om, \a}$,
the bound \eqref{HSestim:eq} is essentially
an estimate for the commutator $[\chi_{\L}, P_{\Om, \a}]$. Thus
it is not surprising
that apart from the standard term $\a^{d-1}$ the estimate acquires
the factor $\log\a$.

Sect. \ref{localisation:sect} shows that using appropriate cut-offs,
one can reduce the study of arbitrary smooth domains to graph-type domains.
This observation is conceptually straightforward, but technically important.

As explained in the Introduction, the asymptotics \eqref{main_anal:eq} and
\eqref{main_s:eq} are eventually derived from
the appropriate asymptotics in the one-dimensional
case. All the necessary information on the one-dimensional model problem is
assembled in Sect. \ref{1_dim:sect}. The analysis here is based on ideas
from \cite{Widom1} and \cite{Widom2}.

The proof of Theorem \ref{main:thm} starts in Sect. \ref{partition:sect}.
Here we
divide the domain $\L$ into  a  boundary layer and inner part.
 In order to implement the idea of approximating the domain $\L$ by a half-space,
we construct a partition of unity subordinate to
an appropriate covering of the boundary layer by open sets of specific shape,
and for each set perform the
approximation of $\L$ individually. A convenient covering is a diadic-type covering,
defined in Subsect. \ref{partitions:subsect}. Lemma \ref{brick:lem} describes
the sought approximation of $\L$.

From the technical point of view, Sect. \ref{asymptotics:sect} is the most demanding:
here we find the local asymptotics on each of
the covering sets described in Sect. \ref{partition:sect}. The calculations are based on
the asymptotics in the one-dimensional case obtained in Sect. \ref{1_dim:sect}.

All the intermediary results are put together in Sect. \ref{proof:sect} where
the proof of Theorem \ref{main:thm} is completed.

The asymptotics \eqref{widom:eq} is extended from polynomials to more general
functions in Sect. \ref{closure:sect}. The argument is based on
the sharp bounds of the type \eqref{sharp:eq} derived in \cite{G1}, \cite{G2}.
They lead to Theorems \ref{main_anal:thm} and \ref{main_s:thm}.

Appendices 1-4 contain some technical material. In particular, Appendix 1 contains
the geometrical lemma about the function \eqref{tildem:eq} mentioned in the Introduction.


 \section{Estimates for PDO's with smooth symbols}\label{smooth:sect}

In this technical section we establish
estimates for the norms and trace norms of the operators
\eqref{pdoampl:eq} -- \eqref{pdoright:eq}.

In Subsections \ref{bound:subsect} and \ref{trace:subsect} we assume that $\a = 1$,
which does not restrict generality in view
of the scaling invariance \eqref{unitary:eq}.
On the other hand, in Subsection \ref{semi_class:subsect}
it is more convenient to allow arbitrary values $\a>0$.

\subsection{Boundedness}\label{bound:subsect}
To find convenient estimates for the norms we use the idea of
\cite{Hw}, Theorem 2 (see also \cite{Lerner}, Lemma 2.3.2
for a somewhat simplified version).

\begin{thm}\label{boundedness:thm}
Let $\om\ge 0$ be a number, and let $p(\bx, \by, \bxi)$ be an amplitude such that
for some $\om \ge 0$,
the function
\[
\lu\bx-\by\ru^{-\om}
|\nabla_{\bx}^{n_1} \nabla_{\by}^{n_2} \nabla_{\bxi}^m p(\bx, \by, \bxi)|
\]
is a bounded for all
\[
n_1, n_2\le r := \left[\frac{d}{2}\right]+1,\
\ \ \textup{and}\ \ m\le s := [d+\om] + 1.
\]
Then
$\op_1^a(p)$ is a bounded operator and
\begin{equation*}
\|\op_1^a(p)\|\le C \max_{\substack{n_1, n_2\le r,\\ m\le s}}
\sup_{\bx, \by, \bxi} \lu\bx-\by\ru^{-\om}
|\nabla_{\bx}^{n_1} \nabla_{\by}^{n_2} \nabla_{\bxi}^m p(\bx, \by, \bxi)|,
\end{equation*}
with a constant $C$ depending only on $d$ and $\om$.
\end{thm}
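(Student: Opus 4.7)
The plan is to follow the approach of Hwang, combining a phase-space decomposition in $\bxi$ with kernel estimates and an almost-orthogonality argument. The presence of the weight $\lu\bx-\by\ru^\om$ in the hypothesis is exactly compensated by the extra $\omega$ in the order of $\bxi$-derivatives required ($s=[d+\om]+1$), so heuristically the proof is a variable-coefficient upgrade of the Calderón-Vaillancourt theorem.

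First, I would introduce a smooth partition of unity $\sum_{k\in\Z^d}\phi(\bxi-k)\equiv 1$ with $\phi\in\plainC\infty_0(\R^d)$ supported in the unit ball, and decompose
\[
\op_1^a(p)=\sum_{k\in\Z^d}T_k,\qquad T_k=\op_1^a\bigl(\phi(\cdot-k)p\bigr).
\]
Because each $\phi(\bxi-k)p(\bx,\by,\bxi)$ is compactly supported in $\bxi$, the Schwartz kernel $K_k(\bx,\by)$ of $T_k$ is an honest continuous function. I would then establish the pointwise bound $|K_k(\bx,\by)|\le C\lu\bx-\by\ru^{\om-s}$ uniformly in $k$. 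The mechanism is integration by parts $s$ times under the $\bxi$-integral, using the identity $\lu\bx-\by\ru^2 e^{i(\bx-\by)\bxi}=(I-\Delta_\bxi)e^{i(\bx-\by)\bxi}$ (applied through the operator $L=\lu\bx-\by\ru^{-2}(1-i(\bx-\by)\cdot\nabla_\bxi)$ to handle odd orders), combined with the weighted bound $|\nabla_\bxi^m p|\le C\lu\bx-\by\ru^\om$ from the hypothesis and the fact that the $\bxi$-support has volume $O(1)$. Since $s-\om>d$, the resulting bound is integrable in $\by$ (resp. $\bx$), and Schur's test gives $\|T_k\|\le C$ uniformly in $k$.

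The summation $\sum_k T_k$ is divergent in operator norm, so to pass from the uniform bound on each $T_k$ to boundedness of the full operator I would invoke the Cotlar-Stein lemma. This requires estimates on $\|T_k^*T_j\|$ and $\|T_kT_j^*\|$ with summable decay in $|k-j|$. The kernel of $T_k^*T_j$ equals $\int\overline{K_k(\bx,\by)}K_j(\bx,\bz)d\bx$, which upon substituting the oscillatory representations becomes an integral in $(\bx,\bxi,\boldeta)$ with phase $\bx\cdot(\boldeta-\bxi)+\by\cdot\bxi-\bz\cdot\boldeta$. Integration by parts $r=[d/2]+1$ times in $\bx$, via the operator $\lu\bxi-\boldeta\ru^{-2}(1-i(\bxi-\boldeta)\cdot\nabla_\bx)$, produces a factor $\lu\bxi-\boldeta\ru^{-r}$; since $\bxi\in\supp\phi(\cdot-k)$ and $\boldeta\in\supp\phi(\cdot-j)$, this gives decay $\lu k-j\ru^{-r}$ for $|k-j|$ large. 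The derivatives in $\bx$ that are produced act on $\overline{p_k}\,p_j$ and are controlled by the hypothesis for $n_1,n_2\le r$; the weight $\lu\bx-\by\ru^\om$ produced in the process is then absorbed into the kernel estimates from Step~2 by another integration by parts in $\by$ (resp.\ $\bz$). A symmetric argument handles $T_kT_j^*$.

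The main obstacle is tracking the interplay in Step 3 between the three weights involved: the $\lu\bx-\by\ru^\om$-growth of the amplitudes, the integration-by-parts gain $\lu\bxi-\boldeta\ru^{-r}$, and the required Cotlar-Stein summability $\sup_k\sum_j\|T_k^*T_j\|^{1/2}<\infty$. The exponents $r=[d/2]+1$ and $s=[d+\om]+1$ are precisely the minimal values compatible with this bookkeeping: $r$ is forced by the requirement $2r>d$ for the Cotlar-Stein sum to converge, and $s$ by the condition $s-\om>d$ for Schur's test. Once these are in place, the Cotlar-Stein lemma produces a bound on $\|\op_1^a(p)\|$ that is linear in $\underset{n_1,n_2\le r,\,m\le s}\max\sup \lu\bx-\by\ru^{-\om}|\nabla_\bx^{n_1}\nabla_\by^{n_2}\nabla_\bxi^m p|$, which is the conclusion of the theorem.
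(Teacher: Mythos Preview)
Your Cotlar--Stein strategy is a legitimate route to Calder\'on--Vaillancourt type results, but as written it does not reach the derivative count $r=[d/2]+1$ required by the theorem. In the kernel of $T_k^*T_j$ the integration variable $\bx$ sits in the first slot of both $\overline{p(\bx,\by,\bxi)}$ and $p(\bx,\bz,\boldeta)$, so by Leibniz you may integrate by parts at most $2r$ times (distributing up to $r$ derivatives on each factor), which yields $\|T_k^*T_j\|\le C\lu k-j\ru^{-2r}$. Cotlar--Stein then demands $\sum_{j\in\Z^d}\lu k-j\ru^{-r}<\infty$, i.e.\ $r>d$, not $2r>d$ as you claim; with $r=[d/2]+1$ this fails for every $d\ge 1$. (If you really integrate by parts only $r$ times as you wrote, the requirement becomes $r>2d$.) A decomposition in $\bxi$ alone simply does not produce enough almost-orthogonality at this derivative budget; making the approach work would require an additional decomposition in the spatial variables and a substantially more delicate bookkeeping than you indicate.

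The paper avoids almost-orthogonality altogether and follows Hwang's direct method: one writes $(\op_1^a(p)u,v)$ in terms of $\hat u,\hat v$, integrates by parts $r$ times in $\bx$, $r$ times in $\by$, and $s$ times in $\bxi$, and then recasts the result as a pairing of two auxiliary functions $\Phi(\bx,\bxi),\Psi(\by,\bxi)$ against the weight $\lu\bx-\by\ru^{\om-s}$. The key point is that $\Phi,\Psi\in\plainL2(\R^{2d})$ by Parseval as soon as $\lu\cdot\ru^{-r}\in\plainL2(\R^d)$, i.e.\ $2r>d$; the condition $s>d+\om$ then makes the weight integrable and H\"older's inequality closes the estimate. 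This is precisely where the sharp exponent $r=[d/2]+1$ enters, and it is not accessible through the $\bxi$-only Cotlar--Stein scheme you propose.
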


\begin{proof} Denote $\op_1^a(p) = \op(p)$.
Let us estimate
\begin{align*}
(\op(p) u, v) = &\ \frac{1}{(2\pi)^d}\int\int\int
e^{i\bxi\cdot(\bx-\by)} p(\bx, \by, \bxi)
u(\by)\overline {v(\bx)}   d\by d\bxi d \bx\\[0.2cm]
= &\ \frac{1}{(2\pi)^{2d}}
\int\int\int\int\int
e^{i (\bxi - \boldeta)\cdot\bx} e^{i(\bt - \bxi)\cdot\by}
p(\bx, \by, \bxi)
\hat u(\bt)\overline {\hat v(\boldeta)}   d\by d\bxi d \bx d\bt d\boldeta
\end{align*}
with arbitrary $u, v\in\plainC\infty_0(\R^d)$.
Thus we may assume that $p$ is in the Schwarz class on $\R^{3d}$.
Let $P^{(\bxi)}_{\bx}$ be the operator
\[
\frac{1 - i \bxi\cdot\nabla_{\bx}}{\lu \bxi\ru^2}.
\]
Since $P^{(\bxi)}_{\bx} e^{i\bxi\cdot\bx} = e^{i\bxi\cdot\bx}$,
after integration by parts $k$ times in $\bx$ and $\by$, we get
\begin{align*}
(2\pi)^{2d}&\ (\op(p) u, v)\\[0.2cm]
= &\
\int\int\int\int\int
e^{i (\bxi - \boldeta)\cdot\bx}
e^{i(\bt - \bxi)\cdot\by}
\biggl(\overline{P^{(\bxi-\boldeta)}_{\bx}}\biggr)^k
\biggl(\overline{P^{(\bt-\bxi)}_{\by}}\biggr)^k p(\bx, \by, \bxi)
\hat u(\bt)\overline {\hat v(\boldeta)}   d\by d\bxi d \bx d\bt d\boldeta.
\end{align*}
Since $P^{(\bx)}_{\bxi} e^{i\bxi\cdot\bx} = e^{i\bxi\cdot\bx}$,
we can also integrate $s$ times by parts in $\bxi$:
\begin{align*}
(2\pi)^{2d}(\op(p) u, v) = &\
\int\int\int\int\int
e^{i (\bxi - \boldeta)\cdot\bx} e^{i(\bt - \bxi)\cdot\by}\\[0.2cm]
&\ \biggl(\overline{P^{(\bx-\by)}_{\bxi}}\biggr)^s
\biggl[\biggl(\overline{P^{(\bxi-\boldeta)}_{\bx}}\biggr)^k
\biggl(\overline{P^{(\bt-\bxi)}_{\by}}\biggr)^k p(\bx, \by, \bxi)\biggr]
\hat u(\bt)\overline {\hat v(\boldeta)}   d\by d\bxi d \bx d\bt d\boldeta.
\end{align*}
This integral is a finite sum of terms of the form
\begin{align*}
\int\int\int\int\int
e^{i (\bxi - \boldeta)\cdot\bx} e^{i(\bt - \bxi)\cdot\by}
&\ g_{s m}(\bx-\by)
\phi_{k n_1}(\bxi-\boldeta) \psi_{k n_2}(\bt-\bxi)\\[0.2cm]
&\
p_{n_1 n_2 m}(\bx, \by, \bxi)
\hat u(\bt)\overline {\hat v(\boldeta)}   d\by d\bxi d \bx d\bt d\boldeta,
\end{align*}
where $m\le s$, $n_1, n_2\le k$,
$|p_{n_1 n_2 m}(\bx, \by, \bxi)| \le |\nabla_{\bx}^{n_1} \nabla_{\by}^{n_2}
\nabla_{\bxi}^{m} p(\bx, \by, \bxi)|$,
and
\begin{equation*}
|\phi_{k n_1}(\bmu)| + |\psi_{k n_2}(\bmu)| \le C\lu\bmu\ru^{-k},\
|g_{s m}(\bz)|\le C \lu\bz\ru^{-s}.
\end{equation*}
Rewrite the above integral in the form
\begin{equation*}
I:=(2\pi)^d\int\int\int e^{i\bxi\cdot(\bx-\by)} p_{n_1 n_2 m}(\bx, \by, \bxi)
g_{sm}(\bx-\by) \overline{\Phi(\bx, \bxi)} \Psi(\by, \bxi) d\bx d\by d\bxi,
\end{equation*}
with
\begin{align*}
\Phi(\bx, \bxi) = &\ \frac{1}{(2\pi)^{\frac{d}{2}}} \int e^{i\boldeta\cdot\bx} \
\overline{\phi_{kn_1}(\bxi-\boldeta)} \ \hat v(\boldeta)
d\boldeta,\\[0.2cm]
\Psi(\by, \bxi) = &\ \frac{1}{(2\pi)^{\frac{d}{2}}}
 \int e^{i\bt\cdot\by}
\psi_{k n_2}(\bt-\bxi) \hat u(\bt) d\bt.
\end{align*}
Both functions $\Phi$ and $\Psi$ are $\plainL2(\R^{2d})$ and
\[
\|\Psi\|_{\plainL2}^2\le C\|u\|_{\plainL2}^2,\
\|\Phi\|_{\plainL2}^2\le C \|v\|_{\plainL2}^2.
\]
Indeed, by Parseval's identity,
\[
\|\Phi\|_{\plainL2}^2
= \int\int  |\phi_{k n_1}(\bxi-\boldeta)|^2 |\hat v(\boldeta)|^2 d\bxi d\boldeta
= \|\phi_{k n_1}\|_{\plainL2}^2 \|v\|_{\plainL2}^2,
\]
and the norm of $\phi_{k n_1}$ is finite if we choose $k> d/2$.
Similarly for $\Psi$.
Thus the integral $I$ defined above can be estimated as follows:
\begin{align*}
|I|\le &\ C\sup_{\bx, \by, \bxi} \lu\bx-\by\ru^{-\om}
|p_{n_1 n_2 m}(\bx, \by, \bxi)|\\[0.2cm]
&\ \biggl[\int\int\int \lu\bx-\by\ru^{-s+\om} |\Phi(\bx, \bxi)|^2
d\bx d\by d\bxi\biggr]^{\frac{1}{2}}
\biggl[\int\int\int \lu\bx-\by\ru^{-s+\om}
|\Phi(\by, \bxi)|^2 d\bx d\by d\bxi\biggr]^{\frac{1}{2}}.
\end{align*}
Here we have used H\"older's inequality. The product of the last two integrals equals
\[
\|\Phi\|_{\plainL2}\|\Psi\|_{\plainL2}\ \ \int \lu\bz\ru^{-s+\om} d\bz,
\]
and the integral of $\lu\bz\ru^{-s+\om}$ is finite if we choose $s> \om + d$.
\end{proof}

One should say that there is a simpler looking test of boundedness for PDO's of the type
$\op_1^l(a)$ which requires no smoothness w.r.t. $\bxi$, but
imposes certain decay condition at infinity in the variable $\bx$, see \cite{H3}, Theorem 18.1.11'.
A similar result can be also obtained for the operator $\op_1^a(p)$, but it would
require from $p(\bx, \by, \bxi)$ a decay in both $\bx$ and $\by$,
which is not convenient for us.

\subsection{Trace class estimates}\label{trace:subsect}
Again, we assume that $\a = 1$.
We are going to use the ideas from \cite{Roz}.
In fact, our estimates are nothing but more precise quantitative
variants of Proposition 3.2 and Theorem 3.5 from \cite{Roz}.

Assuming that $p\in\plainL1(\R^d)$,  introduce the ``double" Fourier transform:
\begin{equation*}
\hat p(\boldeta, \bmu, \bxi)
= \frac{1}{(2\pi)^{ d }}
\int\int e^{-i\bx\cdot \boldeta-i\by\cdot\bmu} p(\bx, \by, \bxi)
d\bx d\by.
\end{equation*}

\begin{lem}\label{trace_amplitude1:lem}
Suppose that  $\hat p\in\plainL1(\R^d)$, $p\in \plainL1(\R^d)$.
Then for any $h_1, h_2\in\plainL2(\R^d)$ the operator
$h_1 \op^a_1(p) h_2$ is trace class
and
\begin{equation}\label{trace_amplitude1:eq}
\| h_1 \op^a_1(p) h_2\|_{\GS_1}\le  (2\pi)^{-2d}
\|h_1\|_{\plainL2} \| h_2\|_{\plainL2} \int \int\int
| \hat p(\boldeta, \bmu, \bxi)| d\boldeta d \bmu d\bxi .
\end{equation}
\end{lem}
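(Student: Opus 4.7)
The plan is to Fourier-expand $p$ in its first two variables and reveal the operator $h_1 \op_1^a(p) h_2$ as a weighted superposition of rank-one operators, each of trace norm exactly $\|h_1\|_{\plainL2}\|h_2\|_{\plainL2}$. The bound then drops out of the triangle inequality for the trace norm.

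Concretely, since both $p$ and $\hat p$ are assumed to be in $\plainL1$, Fourier inversion in the $(\bx,\by)$-variables gives
\begin{equation*}
p(\bx,\by,\bxi) = \frac{1}{(2\pi)^d}\int\int e^{i\bx\cdot\boldeta + i\by\cdot\bmu}\,\hat p(\boldeta,\bmu,\bxi)\,d\boldeta\,d\bmu,
\end{equation*}
at least for almost every $\bxi$. Substituting this into \eqref{pdoampl:eq} with $\a=1$, the kernel of $h_1 \op_1^a(p) h_2$ becomes
\begin{equation*}
\frac{1}{(2\pi)^{2d}}\int\int\int \hat p(\boldeta,\bmu,\bxi)\,\bigl[h_1(\bx) e^{i\bx\cdot(\boldeta+\bxi)}\bigr]\bigl[h_2(\by)e^{-i\by\cdot(\bxi-\bmu)}\bigr]\,d\boldeta\,d\bmu\,d\bxi,
\end{equation*}
and the integrand for fixed $(\boldeta,\bmu,\bxi)$ is the kernel of the rank-one operator $u_{\boldeta,\bxi}\otimes\overline{v_{\bmu,\bxi}}$, where $u_{\boldeta,\bxi}(\bx)=h_1(\bx)e^{i\bx\cdot(\boldeta+\bxi)}$ and $v_{\bmu,\bxi}(\by)=\overline{h_2(\by)}e^{i\by\cdot(\bxi-\bmu)}$.

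The key observation is that $\|u_{\boldeta,\bxi}\|_{\plainL2}=\|h_1\|_{\plainL2}$ and $\|v_{\bmu,\bxi}\|_{\plainL2}=\|h_2\|_{\plainL2}$, independently of the parameters, so each rank-one operator in the decomposition has trace norm equal to $\|h_1\|_{\plainL2}\|h_2\|_{\plainL2}$. Applying the triangle inequality for $\GS_1$ (i.e., Minkowski for the Bochner integral with values in $\GS_1$) yields
\begin{equation*}
\|h_1\op_1^a(p)h_2\|_{\GS_1}\le \frac{1}{(2\pi)^{2d}}\|h_1\|_{\plainL2}\|h_2\|_{\plainL2}\int\int\int|\hat p(\boldeta,\bmu,\bxi)|\,d\boldeta\,d\bmu\,d\bxi,
\end{equation*}
which is \eqref{trace_amplitude1:eq}.

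The only nontrivial point is a justification of Fubini/Minkowski: one needs to know that the $\GS_1$-valued integrand is Bochner integrable, which follows from strong measurability together with the pointwise estimate $\|u_{\boldeta,\bxi}\otimes\overline{v_{\bmu,\bxi}}\|_{\GS_1}|\hat p(\boldeta,\bmu,\bxi)|\le \|h_1\|_{\plainL2}\|h_2\|_{\plainL2}|\hat p|$ and the assumption $\hat p\in\plainL1$. A standard density argument (first establish the identity for $p$ in the Schwartz class on $\R^{3d}$, where everything is absolutely convergent, and then extend by the $\plainL1$-bound just obtained) disposes of this issue cleanly, so I expect no serious obstacle.
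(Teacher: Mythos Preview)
Your proof is correct and rests on the same core idea as the paper's: Fourier-invert $p$ in the $(\bx,\by)$-variables to express $h_1\op_1^a(p)h_2$ as an integral of rank-one operators, each with trace norm $\|h_1\|_{\plainL2}\|h_2\|_{\plainL2}$.

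The only difference is in how the final estimate is assembled. You invoke the triangle inequality for a Bochner integral in $\GS_1$, which entails checking strong measurability and integrability (as you note, handled by density). The paper instead tests against arbitrary orthonormal sequences $g_j, m_j$: it computes $(h_1\op_1^a(p)h_2\, g_j, m_j)$ as an integral of products $(u_{\boldeta,\bxi},m_j)(g_j,v_{\bmu,\bxi})$, sums in $j$, applies Cauchy--Schwarz and Bessel's inequality pointwise under the integral, and then appeals to the Birman--Solomyak characterization of $\GS_1$ (an operator is trace class with $\|T\|_{\GS_1}\le C$ iff $\sum_j|(Tg_j,m_j)|\le C$ for all orthonormal sequences). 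This avoids Bochner integration entirely and sidesteps the Fubini justification you flagged, at the cost of quoting the duality characterization of the trace norm. Either route is perfectly fine here.
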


\begin{proof}
Represent the amplitude $p$ as follows:
\begin{equation*}
p(\bx, \by, \bxi)
= \frac{1}{(2\pi)^{d}}
\int\int e^{i\bx\cdot \boldeta + i\by\cdot \bmu} \hat p(\boldeta, \bmu, \bxi)
d\boldeta d\bmu.
\end{equation*}
Let $g_j, m_j$ be two orthonormal sequences
in $\plainL2(\R^d)$. Then
\begin{align*}
(h_1\op^a_1(p) h_2g_j, m_j)
= &\ \biggl(\frac{1}{2\pi}\biggr)^{2d}
 \int\int\int
 \biggl[\int e^{i\bx\cdot\bxi + i\bx\cdot\boldeta} h_1(\bx) \overline{m_j(\bx)}
d\bx \\[0.2cm]
&\ \times\int e^{-i\by\cdot\bxi + i\by\cdot\bmu}
  h_2(\by) g_j(\by) d\by \biggr]
\hat p(\boldeta, \bmu, \bxi) d\bxi d\boldeta d\bmu,
\end{align*}
and by the Bessel inequality,
\begin{equation*}
\sum_j |(h_1\op^a_1(p) h_2g_j, m_j)|
\le \biggl(\frac{1}{2\pi}\biggr)^{2d}
\|h_1\|_{\plainL2} \|h_2\|_{\plainL2}\int \int \int
| \hat p(\boldeta, \bmu , \bxi)| d\boldeta d\bmu d\bxi.
\end{equation*}
By Theorems 11.2.3,4 from \cite{BS}, page 246,
the operator $h_1\op^a_1(p)h_2$ is trace class
and its trace norm satisfies the required bound.
\end{proof}

It is usually more convenient to write these estimates in terms of the
amplitudes themselves, and not their Fourier transforms.
In all the statements below we always assume that the amplitudes (symbols) have
the required partial derivatives and that the integrals involved, are finite.

\begin{cor} \label{symbol:cor}
Let $h_1, h_2$ be arbitrary $\plainL2$-functions. Then
\begin{equation*}
\|h_1 \op^a_1(p) h_2\|_{\GS_1}\le C\|h_1\|_{\plainL2}
\|h_2\|_{\plainL2}
\sum_{n_1, n_2 = 0}^{d+1}
 \int\int\int
|\nabla_{\bx}^{n_1} \nabla_{\by}^{n_2} p(\bx, \by, \bxi)|
d\bx d\by d\bxi.
\end{equation*}
\end{cor}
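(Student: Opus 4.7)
The plan is to deduce the corollary from Lemma~\ref{trace_amplitude1:lem} by controlling the $\plainL1$-norm of the partial Fourier transform $\hat p(\boldeta,\bmu,\bxi)$ (in the first two variables) pointwise in $\bxi$, via integration by parts in $\bx$ and $\by$. The only real issue is verifying that the resulting weights in $\boldeta$ and $\bmu$ are integrable, which dictates the appearance of $d+1$ derivatives in each of $\bx$ and $\by$.

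First, I would observe that for any multi-indices $\alpha,\beta$ with $|\alpha|,|\beta|\le d+1$, integration by parts under the integral defining $\hat p$ gives
\begin{equation*}
\boldeta^{\alpha}\bmu^{\beta}\,\hat p(\boldeta,\bmu,\bxi)
= \frac{(-i)^{|\alpha|+|\beta|}}{(2\pi)^{d}}
\int\!\!\int e^{-i\bx\cdot\boldeta - i\by\cdot\bmu}\,
\p_{\bx}^{\alpha}\p_{\by}^{\beta} p(\bx,\by,\bxi)\,d\bx\,d\by,
\end{equation*}
so in particular
\begin{equation*}
|\boldeta^{\alpha}\bmu^{\beta}\,\hat p(\boldeta,\bmu,\bxi)|
\le \frac{1}{(2\pi)^{d}} \int\!\!\int
|\p_{\bx}^{\alpha}\p_{\by}^{\beta} p(\bx,\by,\bxi)|\,d\bx\,d\by.
\end{equation*}
Summing over $|\alpha|,|\beta|\le d+1$ and using the elementary inequality $\lu\boldeta\ru^{d+1}\lu\bmu\ru^{d+1}\le C\sum_{|\alpha|,|\beta|\le d+1}|\boldeta^{\alpha}\bmu^{\beta}|$, I obtain the pointwise bound
\begin{equation*}
|\hat p(\boldeta,\bmu,\bxi)|\le C\,\lu\boldeta\ru^{-(d+1)}\lu\bmu\ru^{-(d+1)}
\sum_{n_{1},n_{2}=0}^{d+1}\int\!\!\int
|\nabla_{\bx}^{n_{1}}\nabla_{\by}^{n_{2}} p(\bx,\by,\bxi)|\,d\bx\,d\by.
\end{equation*}

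Next, I integrate this inequality in $(\boldeta,\bmu,\bxi)$. Since $\lu\boldeta\ru^{-(d+1)}$ and $\lu\bmu\ru^{-(d+1)}$ are both in $\plainL1(\R^{d})$, the two weight integrals contribute a finite constant, yielding
\begin{equation*}
\int\!\!\int\!\!\int |\hat p(\boldeta,\bmu,\bxi)|\,d\boldeta\,d\bmu\,d\bxi
\le C\sum_{n_{1},n_{2}=0}^{d+1}
\int\!\!\int\!\!\int |\nabla_{\bx}^{n_{1}}\nabla_{\by}^{n_{2}} p(\bx,\by,\bxi)|
\,d\bx\,d\by\,d\bxi.
\end{equation*}
Under the standing assumption that all these integrals are finite, both $p$ and $\hat p$ (in the first two variables, at fixed $\bxi$, and after integrating in $\bxi$) satisfy the $\plainL1$ hypotheses of Lemma~\ref{trace_amplitude1:lem}. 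Applying that lemma and substituting the bound above completes the proof. The only potential pitfall is the choice of $d+1$: one derivative less would fail to produce an integrable weight in $\R^{d}$, so this number is essentially forced by the elementary fact that $\lu\cdot\ru^{-s}\in\plainL1(\R^{d})$ exactly when $s>d$.
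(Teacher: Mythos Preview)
Your proof is correct and follows essentially the same approach as the paper: integrate by parts in $\bx$ and $\by$ to obtain the decay estimate $|\hat p(\boldeta,\bmu,\bxi)|\le C\lu\boldeta\ru^{-(d+1)}\lu\bmu\ru^{-(d+1)}\sum_{n_1,n_2=0}^{d+1}\int\!\!\int|\nabla_{\bx}^{n_1}\nabla_{\by}^{n_2}p|\,d\bx\,d\by$, then substitute into the bound \eqref{trace_amplitude1:eq} of Lemma~\ref{trace_amplitude1:lem}. Your write-up is simply a more explicit version of the paper's two-line argument.
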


\begin{proof} Integrating by parts, we get:
\begin{equation*}
|\hat p(\boldeta, \bmu, \bxi)|
\le C (1+|\boldeta|)^{-d-1} (1+ |\bmu|)^{-d-1}
\sum_{n_1, n_2 = 0}^{d+1}
 \int\int
|\nabla_{\bx}^{n_1} \nabla_{\by}^{n_2} p(\bx, \by, \bxi)|
d\bx d\by.
\end{equation*}
Substituting this estimate in \eqref{trace_amplitude1:eq},
we get the required result.
\end{proof}

It is also useful to have bounds for operators with $h_1$ and $h_2$
having disjoint supports.
Below we denote by $\z\in\plainC\infty(\R)$ a function such that
\begin{equation}\label{zeta:eq}
\z(t) = 1\  \textup{if}\  |t|\ge 2, \ \ \ \textup{and} \
\z(t) = 0\
\textup{if} \   |t|\le 1.
\end{equation}
In all the subsequent estimates constants may depend on $\z$ and its derivatives,
but it is unimportant for our purposes.

\begin{cor} \label{symbol:cor1} Let $h_1, h_2$ be two $\plainL2$-functions.
Suppose that $p(\bx, \by, \bxi) = 0$ if $|\bx-\by|\le R$ with some $R\ge c$.
Then
\begin{align}\label{symbolcor11:eq}
\|h_1 \op^a_1(p) &\ h_2\|_{\GS_1}\notag \\[0.2cm]
\le &\ C_m\|h_1\|_{\plainL2}
\|h_2\|_{\plainL2}
\sum_{n_1, n_2=0}^{d+1}
 \int\int\int_{|\bx-\by|\ge R}
\frac{|\nabla_{\bx}^{n_1} \nabla_{\by}^{n_2}
\nabla_{\bxi}^m p(\bx, \by, \bxi)|}{|\bx-\by|^m}
d\bx d\by d\bxi,
\end{align}
for any $m= 0, 1, \dots$.

In particular, if $p(\bx, \by, \bxi)$
depends only on $\bx$ and $\bxi$, i.e. $p(\bx, \by, \bxi) = a(\bx, \bxi)$,
and the essential supports of
$h_1$ and $h_2$ are separated by a distance $R$, then
\begin{align}\label{symbolcor12:eq}
\|h_1 \op^l_1(a) h_2\|_{\GS_1} + &\ \|h_1 \op^r_1(a) h_2\|_{\GS_1}\notag\\[0.2cm]
\le &\ C_m \| h_1\|_{\plainL2} \
\| h_2\|_{\plainL2} R^{d-m}\sum_{n = 0}^{d+1}\int\int
|\nabla_{\bx}^n \nabla_{\bxi}^m a(\bx, \bxi)| d\bx d\bxi,
\end{align}
for any $m\ge d+1$.
\end{cor}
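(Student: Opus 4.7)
The plan is to reduce both estimates to Corollary~\ref{symbol:cor} via integration by parts in the variable $\bxi$, exploiting the hypothesis $|\bx-\by|\ge R\ge c>0$ on the support of the amplitude. Writing $\br = \bx-\by$ and using the identity $e^{i\bxi\cdot\br} = (i|\br|^2)^{-1}\br\cdot\nabla_\bxi e^{i\bxi\cdot\br}$ iteratively, I would integrate by parts $m$ times in $\bxi$ in the definition \eqref{pdoampl:eq} of $\op^a_1(p)$. The hypothesis that $p$ vanishes on $|\br|\le R$ removes any singularity from the factor $|\br|^{-2m}$ and suppresses the boundary contributions, so the manipulation is legitimate and produces the identity $\op^a_1(p) = \op^a_1(\tilde p)$, where
\[
\tilde p(\bx,\by,\bxi) \;=\; i^{m} |\br|^{-2m}(\br\cdot\nabla_\bxi)^m p(\bx,\by,\bxi) \;=\; \sum_{|\alpha|=m}c_\alpha\, |\br|^{-2m}\br^\alpha \partial_\bxi^\alpha p(\bx,\by,\bxi).
\]
Applying Corollary~\ref{symbol:cor} to $\tilde p$ reduces the task to bounding $\int\!\int\!\int|\nabla_\bx^{n_1}\nabla_\by^{n_2}\tilde p|\,d\bx d\by d\bxi$. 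By the Leibniz rule and the elementary estimate $|\nabla^k_\bx(|\br|^{-2m}\br^\alpha)| + |\nabla^k_\by(|\br|^{-2m}\br^\alpha)|\le C_{k,m}|\br|^{-m-k}$, which on $|\br|\ge c$ is bounded by $C|\br|^{-m}$, one obtains the pointwise bound
\[
|\nabla_\bx^{n_1}\nabla_\by^{n_2}\tilde p|\;\le\; C_{n_1,n_2,m}\,|\br|^{-m}\sum_{n_1'\le n_1,\,n_2'\le n_2}|\nabla_\bx^{n_1'}\nabla_\by^{n_2'}\nabla_\bxi^m p|,
\]
and integration over $\{|\br|\ge R\}$, which is where $\tilde p$ is supported, yields \eqref{symbolcor11:eq}.

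For \eqref{symbolcor12:eq}, the plan is to apply the first estimate with a cleverly chosen cutoff. Pick a smooth $\phi:\R^d\to[0,1]$ with $\phi(\br)=1$ on $|\br|\ge R$, $\phi(\br)=0$ on $|\br|\le R/2$, and $|\nabla^k\phi|\le C_k R^{-k}$. Since the essential supports of $h_1$ and $h_2$ are separated by $R$, one has $h_1\op^l_1(a)h_2 = h_1\op^a_1(p)h_2$ with the amplitude $p(\bx,\by,\bxi):=a(\bx,\bxi)\phi(\bx-\by)$, and this $p$ vanishes on $|\br|\le R/2$. Applying \eqref{symbolcor11:eq} with $R$ replaced by $R/2$, and noting that $\nabla_\by^{n_2}$ touches only $\phi$ while $\nabla_\bxi^m$ touches only $a$, the Leibniz rule gives
\[
|\nabla_\bx^{n_1}\nabla_\by^{n_2}\nabla_\bxi^m p|\;\le\; C\sum_{j=0}^{n_1}|\nabla^{n_2+j}\phi(\br)|\,|\nabla_\bx^{n_1-j}\nabla_\bxi^m a(\bx,\bxi)|.
\]
The borderline term is $n_2=j=0$: there $\phi$ is supported on $|\br|\ge R/2$, and because $m\ge d+1$ the integral $\int_{|\br|\ge R/2}|\br|^{-m}d\by$ converges and equals $C R^{d-m}$, which is precisely where the advertised power of $R$ originates. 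For the remaining terms, $\nabla^{n_2+j}\phi$ with $n_2+j\ge 1$ is supported on the annulus $R/2\le |\br|\le R$ of measure $O(R^d)$, so its contribution is controlled by $C R^{-(n_2+j)}R^{-m}R^d = C R^{d-m-(n_2+j)}$, and this is absorbed into $R^{d-m}$ because $R\ge c$. Summing over all multi-indices yields \eqref{symbolcor12:eq} for $\op^l_1$; the bound for $\op^r_1(a)$ follows from the adjoint identity $\op^r_1(a)=\op^l_1(\bar a)^*$, which preserves trace norms and swaps the roles of $h_1, h_2$.

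The hard part will be the careful book-keeping in the Leibniz expansions, in particular verifying that every derivative of $|\br|^{-2m}\br^\alpha$ is harmless (this is exactly where the lower bound $R\ge c$ is needed) and that the threshold $m\ge d+1$ in \eqref{symbolcor12:eq} is precisely the critical exponent making the $\by$-integral $\int_{|\br|\ge R/2}|\br|^{-m}\,d\by$ converge and produce the correct power $R^{d-m}$.
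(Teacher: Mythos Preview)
Your proposal is correct and follows essentially the same route as the paper: integrate by parts $m$ times in $\bxi$ using the operator $-i|\bx-\by|^{-2}(\bx-\by)\cdot\nabla_{\bxi}$ to obtain a new amplitude, then invoke Corollary~\ref{symbol:cor}; for \eqref{symbolcor12:eq} insert a cutoff $\phi(\bx-\by)$ (the paper uses $\z(2|\bx-\by|R^{-1})$ with $\z$ from \eqref{zeta:eq}, which is exactly your $\phi$) and apply \eqref{symbolcor11:eq}. Your write-up is in fact more explicit than the paper's on the second part---the paper simply asserts that \eqref{symbolcor12:eq} ``immediately follows'' from \eqref{symbolcor11:eq}, whereas you spell out the Leibniz bookkeeping and identify where the power $R^{d-m}$ and the threshold $m\ge d+1$ come from.
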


\begin{proof}
If the essential supports of $h_1$ and $h_2$ are separated
by a distance $R$, then $h_1 \op_\a^a(p) h_2$ can be rewritten as
\[
h_1 \op_1^a(\tilde p)  h_2, \tilde p(\bx, \by, \bxi)
= p(\bx, \by, \bxi) \z(2|\bx-\by|R^{-1}),
\]
with the function
 $\z\in\plainC\infty(\R)$ defined in \eqref{zeta:eq}.
Thus the bound \eqref{symbolcor12:eq} immediately follows
from \eqref{symbolcor11:eq}.

Proof of \eqref{symbolcor11:eq}.
Let $P = -i|\bx-\by|^{-2} (\bx-\by)\cdot\nabla_{\bxi}$. Clearly,
$Pe^{i\bxi\cdot(\bx-\by)} = e^{i\bxi\cdot(\bx-\by)}$, so,
integrating by parts $m$ times,
we get the following formula for the kernel of the operator $\op^a(p)$:
\begin{equation*}
\frac{1}{(2\pi)^d} \int e^{i\bxi\cdot(\bx - \by)} q(\bx, \by, \bxi) d\bxi,
\end{equation*}
with
\begin{equation*}
q(\bx, \by, \bxi) = i^m \frac{1}{|\bx - \by|^{2m}}
\bigl((\bx-\by)\cdot\nabla_{\bxi}\bigr)^m p(\bx, \by, \bxi).
\end{equation*}
It is straightforward to see that
\begin{align*}
\sum_{n_1, n_2 = 0}^{d+1}
|\nabla_{\bx}^{n_1} \nabla_{\by}^{n_2} q(\bx, \by, \bxi)|
\le C\frac{1}{|\bx-\by|^m}\sum_{n_1, n_2 = 0}^{d+1}
|\nabla_{\bx}^{n_1} \nabla_{\by}^{n_2} \nabla_{\bxi}^m p(\bx, \by, \bxi)|.
\end{align*}
By Corollary \ref{symbol:cor} this implies the proclaimed result.
\end{proof}

\begin{lem}\label{amplitude:lem}
 Let $h_1, h_2$ be arbitrary $\plainL2$-functions. Then
\begin{align}\label{amplitude:eq}
\|h_1 \op^a_1(p) h_2\|_{\GS_1}\le &\  C_Q\|h_1\|_{\plainL2}
\|h_2\|_{\plainL2}\notag\\[0.2cm]
&\ \times\sum_{n_1, n_2=0}^{d+1} \sum_{m=0}^Q
\int\int\int
\frac{|\nabla_{\bx}^{n_1} \nabla_{\by}^{n_2}
\nabla_{\bxi}^m p(\bx, \by, \bxi)|}{1+|\bx-\by|^Q}
d\bx d\by d\bxi,
\end{align}
for any $Q= 0, 1, \dots$.
\end{lem}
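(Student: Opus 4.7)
The plan is to interpolate between Corollary \ref{symbol:cor} and Corollary \ref{symbol:cor1} by a smooth splitting near the diagonal $\bx=\by$: use Corollary \ref{symbol:cor} where the weight $(1+|\bx-\by|^Q)^{-1}$ is bounded below, and use Corollary \ref{symbol:cor1} where we have separation enough to integrate by parts $Q$ times in $\bxi$.

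First, I would fix a smooth partition of unity $1 = \phi_0(\bz) + \phi_1(\bz)$ on $\R^d$ with $\phi_0 \in \plainC\infty_0(\R^d)$ supported in $\{|\bz|\le 2\}$ and $\phi_1(\bz) = 0$ for $|\bz|\le 1$, and set $p_j(\bx, \by, \bxi) = \phi_j(\bx - \by)\, p(\bx, \by, \bxi)$, so that $\op^a_1(p) = \op^a_1(p_0) + \op^a_1(p_1)$. Both pieces will be estimated separately.

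For the near-diagonal piece $p_0$, apply Corollary \ref{symbol:cor}. On $\supp p_0$ we have $|\bx-\by|\le 2$, hence $1 \le (1+2^Q)(1+|\bx-\by|^Q)^{-1}$, which allows us to insert the weight $(1+|\bx-\by|^Q)^{-1}$ into the integral without loss. Distributing $\nabla_\bx^{n_1}\nabla_\by^{n_2}$ by the Leibniz rule onto $p_0 = \phi_0(\bx-\by)p$ yields only derivatives of $p$ of order at most $n_1$ in $\bx$ and at most $n_2$ in $\by$, multiplied by uniformly bounded functions of $\bx-\by$. This gives a contribution bounded by the $m=0$ terms on the right-hand side of \eqref{amplitude:eq}.

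For the off-diagonal piece $p_1$, apply Corollary \ref{symbol:cor1} with $R=1$ and $m=Q$. On $\supp p_1$ we have $|\bx-\by|\ge 1$, so $|\bx-\by|^{-Q}\le 2(1+|\bx-\by|^Q)^{-1}$, converting the $|\bx-\by|^{-Q}$ weight appearing in \eqref{symbolcor11:eq} into the weight $(1+|\bx-\by|^Q)^{-1}$ required by \eqref{amplitude:eq}. Again the Leibniz rule controls $\nabla_\bx^{n_1}\nabla_\by^{n_2}\nabla_\bxi^Q p_1$ by a sum of $|\nabla_\bx^{k_1}\nabla_\by^{k_2}\nabla_\bxi^Q p|$ with $k_j \le n_j$, producing the $m=Q$ terms on the right-hand side of \eqref{amplitude:eq}.

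Summing the two bounds gives the estimate, since the right-hand side of \eqref{amplitude:eq} includes a sum over all $0\le m\le Q$ and only two of these terms ($m=0$ and $m=Q$) are actually needed. There is no real obstacle here; the only bookkeeping point is to verify that Leibniz expansions of $\nabla_\bx^{n_1}\nabla_\by^{n_2}(\phi_j(\bx-\by) p)$ produce only derivatives of $p$ of orders already present in the target sum, which follows from $\nabla^k_\bx\phi_j(\bx-\by)$ and $\nabla^k_\by\phi_j(\bx-\by)$ being uniformly bounded for every $k$.
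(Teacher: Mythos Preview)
Your proposal is correct and is essentially identical to the paper's own proof: the paper splits $p$ using the cutoff $\zeta$ from \eqref{zeta:eq} into $p_1 = p(1-\zeta(\bx-\by))$ and $p_2 = p\,\zeta(\bx-\by)$, then applies Corollary \ref{symbol:cor} to the near-diagonal piece and Corollary \ref{symbol:cor1} to the off-diagonal piece, exactly as you do. Your observation that only the $m=0$ and $m=Q$ terms are actually used is also accurate.
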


\begin{proof}
Let a function $\z$ be as defined in \eqref{zeta:eq}.
Denote
\begin{equation*}
p_1(\bx, \by, \bxi) = p(\bx, \by, \bxi) (1-\z(\bx-\by)),\ \
p_2(\bx, \by, \bxi) = p(\bx, \by, \bxi)\z(\bx-\by).
\end{equation*}
Estimate separately the trace norms
of the operators $h_1 \op^a_1(p_1) h_2$ and
$h_1 \op^a_1(p_2) h_2$.
Since $|\bx-\by|\le 2$ on the support of $p_1$, by Corollary
\ref{symbol:cor},
the trace  norm of $h_1 \op^a_1(p_1)h_2$ does not exceed the right hand side of
\eqref{amplitude:eq}.
For $p_2$ one uses Corollary \ref{symbol:cor1}, which also gives the required
bound.
\end{proof}

In the next Theorem we replace the $\plainL2$-norms of functions $h_1, h_2$
by much weaker ones.
Let $\CC = [0, 1)^d$ be the unit cube, and let $\CC_{\bz} = \CC+\bz, \bz\in\R^d$.
For $s\in (0, \infty]$ and any function $h\in\plainL2_{\textup{\tiny loc}}$
introduce the following quasi-norm:
\begin{equation}\label{brackh:eq}
\begin{cases}
\1 h\1_s =
\biggr[\sum_{\bz\in\Z^d}
\biggl(\int_{\CC_{\bz}} |h(\bx)|^2 d\bx\biggr)^{\frac{s}{2}}\biggr]^{\frac{1}{s}},\ \
0<s<\infty,\\[0.3cm]
\1 h\1_\infty =
\sup_{\bz\in\R^d}
\biggl(\int_{\CC_{\bz}} |h(\bx)|^2 d\bx\biggr)^{\frac{1}{2}},\ \
s=\infty.
\end{cases}
\end{equation}

\begin{thm}\label{full:thm}
 Let $h_1, h_2$ be arbitrary $\plainL2_{\textup{\tiny loc}}$-functions. Then
\begin{align}\label{amplitude1:eq}
\|h_1 \op^a_1(p) &\ h_2\|_{\GS_1}\notag\\[0.2cm]
\le &\ C_Q\1 h_1\1_{\infty}
\1 h_2\1_{\infty}
\sum_{n_1, n_2=0}^{d+1} \sum_{m=0}^Q
\int\int\int
\frac{|\nabla_{\bx}^{n_1} \nabla_{\by}^{n_2}
\nabla_{\bxi}^m p(\bx, \by, \bxi)|}{1+|\bx-\by|^Q}
d\bx d\by d\bxi,
\end{align}
for any $Q = 0, 1, \dots$.

In particular, if $p(\bx, \by, \bxi)$ depends only on $\bx$ and  $\bxi$,
i.e. $p(\bx, \by, \bxi) = a(\bx, \bxi)$, then
\begin{align}\label{symbol1:eq}
\|h_1 \op_1^l(a) h_2\|_{\GS_1}
+ &\ \|h_1 \op_1^r(a) h_2\|_{\GS_1}\notag\\[0.2cm]
\le &\ C \1 h_1\1_{\infty}
\1 h_2\1_{\infty}
\sum_{n=0}^{d+1} \sum_{m=0}^{d+1}
\int\int |\nabla_{\bx}^{n}
\nabla_{\bxi}^m a(\bx, \bxi)| d\bx d\bxi.
\end{align}

\end{thm}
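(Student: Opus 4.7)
The plan is to reduce Theorem \ref{full:thm} to Lemma \ref{amplitude:lem} by a partition-of-unity localization. Fix $\eta_0\in\plainC\infty_0(\R^d)$ with $\supp\eta_0\subset B(\bzero,c_d)$ and $\sum_{\bz\in\Z^d}\eta_0(\bx-\bz)\equiv 1$, and set $\eta_\bz(\bx):=\eta_0(\bx-\bz)$. The derivatives then satisfy the pointwise bound $\sup_{\bx}\sum_{\bz\in\Z^d}|\nabla^\gamma\eta_\bz(\bx)|\le C_\gamma$ for every multi-index $\gamma$. Decompose $p=\sum_{\bz_1,\bz_2}p_{\bz_1,\bz_2}$ with $p_{\bz_1,\bz_2}(\bx,\by,\bxi):=\eta_{\bz_1}(\bx)\,\eta_{\bz_2}(\by)\,p(\bx,\by,\bxi)$, so that the triangle inequality for the trace norm gives
\[
\|h_1\op^a_1(p)h_2\|_{\GS_1}\le\sum_{\bz_1,\bz_2}\|h_1\op^a_1(p_{\bz_1,\bz_2})h_2\|_{\GS_1},
\]
with convergence of the right-hand side to be justified a posteriori.

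For each pair $(\bz_1,\bz_2)$, the amplitude $p_{\bz_1,\bz_2}$ vanishes outside $\supp\eta_{\bz_1}\times\supp\eta_{\bz_2}\times\R^d$, so the operator $h_1\op^a_1(p_{\bz_1,\bz_2})h_2$ depends only on the restrictions $h_j|_{\supp\eta_{\bz_j}}$; these restricted functions belong to $\plainL2(\R^d)$ with norms bounded by $C\,\1 h_j\1_\infty$ uniformly in $\bz_j$. Lemma \ref{amplitude:lem} applied to these restricted weights yields
\[
\|h_1\op^a_1(p_{\bz_1,\bz_2})h_2\|_{\GS_1}\le C_Q\,\1 h_1\1_\infty\,\1 h_2\1_\infty \sum_{n_1,n_2=0}^{d+1}\sum_{m=0}^Q\int\int\int \frac{|\nabla_\bx^{n_1}\nabla_\by^{n_2}\nabla_\bxi^m p_{\bz_1,\bz_2}|}{1+|\bx-\by|^Q}\,d\bx\,d\by\,d\bxi.
\]
Leibniz's rule expands $\nabla_\bx^{n_1}\nabla_\by^{n_2}\nabla_\bxi^m p_{\bz_1,\bz_2}$ as a finite linear combination of products $(\nabla^\alpha\eta_{\bz_1})(\bx)(\nabla^\beta\eta_{\bz_2})(\by)(\nabla_\bx^{n_1-|\alpha|}\nabla_\by^{n_2-|\beta|}\nabla_\bxi^m p)(\bx,\by,\bxi)$ with $|\alpha|\le n_1$, $|\beta|\le n_2$. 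Swapping the lattice summation with the integrations and applying the uniform partition-of-unity bounds $\sum_{\bz_j}|\nabla^\gamma\eta_{\bz_j}|\le C_\gamma$ separately in the $\bx$ and $\by$ variables collapses $\sum_{\bz_1,\bz_2}$ to
\[
C\,\1 h_1\1_\infty\,\1 h_2\1_\infty\sum_{n_1,n_2=0}^{d+1}\sum_{m=0}^Q\int\int\int \frac{|\nabla_\bx^{n_1}\nabla_\by^{n_2}\nabla_\bxi^m p|}{1+|\bx-\by|^Q}\,d\bx\,d\by\,d\bxi,
\]
which is \eqref{amplitude1:eq}.

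For the particular case $p(\bx,\by,\bxi)=a(\bx,\bxi)$, the derivatives $\nabla_\by^{n_2}p$ vanish for $n_2\ge 1$, so only the $n_2=0$ contribution survives in \eqref{amplitude1:eq}. Choosing $Q=d+1$ and integrating out $\by$ via $\sup_\bx\int(1+|\bx-\by|^{d+1})^{-1}\,d\by<\infty$ yields \eqref{symbol1:eq} for $\op^l_1(a)$; the analogous argument applied to the amplitude $a(\by,\bxi)$, with $\bx$ integrated out, delivers the bound for $\op^r_1(a)$. The main technical point is the Leibniz bookkeeping in the middle paragraph: one must verify that after differentiating $p_{\bz_1,\bz_2}$, the sums of $\eta_\bz$-derivatives over the two lattices factor cleanly (once in $\bx$, once in $\by$) and produce only the uniform partition-of-unity constants, with no residual growth in $|\bz_1-\bz_2|$ and without disturbing the decay kernel $(1+|\bx-\by|^Q)^{-1}$ in the resulting $p$-integral.
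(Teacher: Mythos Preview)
Your proof is correct and follows essentially the same route as the paper: both localize via a smooth partition of unity in the $\bx$ and $\by$ variables, apply Lemma \ref{amplitude:lem} to each localized piece with the $\plainL2$-weights replaced by their restrictions to the relevant cube (so that their $\plainL2$-norms are controlled by $\1 h_j\1_\infty$), and then sum using the bounded-overlap/uniform-derivative bounds of the partition. The derivation of \eqref{symbol1:eq} from \eqref{amplitude1:eq} with $Q=d+1$ is likewise identical.
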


\begin{proof} The estimate \eqref{symbol1:eq} follows immediately from
\eqref{amplitude1:eq} with $Q = d+1$. Let us prove \eqref{amplitude1:eq}.

Let $\z_j\in\plainC\infty_0(\R^d)$, $j\in\Z$, be a partition
of unity subordinate to a covering of $\R^d$ by unit cubes, such that the
number of intersecting cubes is uniformly bounded, and
\begin{equation}\label{zetaj:eq}
|\nabla_{\bx}^n\z_j(\bx)|\le C_n, n = 1, 2, \dots,
\end{equation}
 for all $\bx\in\R^d$ uniformly in $j$. By $\chi_j$ we denote
 the characteristic function of the cube labeled $j$.
In view of \eqref{zetaj:eq}, for the amplitude
\[
q_{j, s}(\bx, \by, \bxi) = \z_j(\bx) \z_s(\by) p(\bx, \by, \bxi),
\]
we obtain from Lemma \ref{amplitude:lem} that
\begin{align*}
\|h_1 \chi_j &\ \op^a_1(q_{j, s}) h_2\chi_s\|_{\GS_1}\\
\le &\ C\|h_1 \chi_j\|_{\plainL2}
\|h_2\chi_s\|_{\plainL2}
\sum_{n_1, n_2=0}^{d+1} \sum_{m=0}^Q
\int\int\int \chi_j(\bx) \chi_s(\by)
\frac{|\nabla_{\bx}^{n_1} \nabla_{\by}^{n_2}
\nabla_{\bxi}^m p(\bx, \by, \bxi)|}{1+|\bx-\by|^Q}
d\bx d\by d\bxi,
\end{align*}
The $\plainL2$-norms of $h_1\chi_j$ and $h_2 \chi_s$ are estimated by $\1 h_1\1_{\infty}$
and $\1 h_2\1_{\infty}$ respectively. Thus, remembering that
the number of intersecting cubes is uniformly bounded, we obtain
from the bound
\begin{equation*}
\| h_1 \op^a_1(p) h_2\|_{\GS_1}\le
\sum_{j, s} \|h_1 \op^a_1(q_{j, s}) h_2\|_{\GS_1},
\end{equation*}
the required estimate \eqref{amplitude1:eq}.
\end{proof}

In the same way one proves the ``local" variant of Corollary \ref{symbol:cor1}:

\begin{thm}\label{separate:thm}
Let $h_1, h_2$ be two $\plainL2$-functions.
Suppose that $p(\bx, \by, \bxi) = 0$ if $|\bx-\by|\le R$ with some $R\ge c$.
Then
\begin{equation*}
\|h_1 \op^a_1(p) h_2\|_{\GS_1}\le C\1 h_1\1_{\infty}\  \1 h_2\1_{\infty}
\sum_{n_1, n_2=0}^{d+1}
 \int\int\int_{|\bx-\by|\ge R}
\frac{|\nabla_{\bx}^{n_1} \nabla_{\by}^{n_2}
\nabla_{\bxi}^m p(\bx, \by, \bxi)|}{|\bx-\by|^m}
d\bx d\by d\bxi,
\end{equation*}
for any $m= 0, 1, \dots$.

In particular, if $p(\bx, \by, \bxi)$
depends only on $\bx$ and $\bxi$, i.e. $p(\bx, \by, \bxi) = a(\bx, \bxi)$,
and the essential supports of
$h_1$ and $h_2$ are separated by a distance $R$, then
\begin{align}\label{separate2:eq}
\|h_1 \op^l_1(a) h_2\|_{\GS_1} + &\ \|h_1 \op^r_1(a) h_2\|_{\GS_1}\notag\\[0.2cm]
\le &\ C_m \1 h_1\1_{\infty} \ \1 h_2\1_{\infty} R^{d-m}\sum_{n = 0}^{d+1}\int\int
|\nabla_{\bx}^n \nabla_{\bxi}^m a(\bx, \bxi)| d\bx d\bxi,
\end{align}
for any $m\ge d+1$.

\end{thm}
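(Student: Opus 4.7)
The plan is to imitate the proof of Theorem \ref{full:thm}, but with Corollary \ref{symbol:cor1} playing the role that Lemma \ref{amplitude:lem} played there. Thus I would first localize: fix a smooth $\Z^d$-periodic partition of unity $\{\zeta_j\}_{j\in\Z^d}$ subordinate to the unit-cube covering of $\R^d$, satisfying \eqref{zetaj:eq} and the bounded-overlap property, and decompose
\[
\op^a_1(p) = \sum_{j,s} \op^a_1(q_{j,s}),\quad
q_{j,s}(\bx,\by,\bxi) = \zeta_j(\bx)\zeta_s(\by) p(\bx,\by,\bxi).
\]
Each $q_{j,s}$ inherits the vanishing condition on $\{|\bx-\by|\le R\}$, so applying Corollary \ref{symbol:cor1} with $h_1\chi_j$ and $h_2\chi_s$ in place of $h_1,h_2$ (where $\chi_j$ is the indicator of a small neighbourhood of $\supp\zeta_j$) gives
\begin{align*}
\|h_1\chi_j\,\op^a_1(q_{j,s})\,h_2\chi_s\|_{\GS_1}
\le &\ C_m \|h_1\chi_j\|_{\plainL2}\,\|h_2\chi_s\|_{\plainL2}\\
&\ \times \sum_{n_1,n_2=0}^{d+1}\int\int\int_{|\bx-\by|\ge R}
\frac{|\nabla^{n_1}_{\bx}\nabla^{n_2}_{\by}\nabla^m_{\bxi}q_{j,s}|}{|\bx-\by|^m}\,
d\bx\,d\by\,d\bxi.
\end{align*}

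The pivotal observation is that each local $\plainL2$-norm $\|h_k\chi_j\|_{\plainL2}$ is dominated uniformly in $j$ by $\1 h_k\1_\infty$, so these factors pull out of the sum. Summing the trace-norm inequality over $(j,s)$, expanding the derivatives of $q_{j,s}$ by the Leibniz rule (the derivatives of $\zeta_j$, $\zeta_s$ are $O(1)$ by \eqref{zetaj:eq}), and invoking the finite-overlap relations $\sum_j \chi_{\supp\zeta_j}(\bx)=O(1)$, $\sum_s \chi_{\supp\zeta_s}(\by)=O(1)$, then collapses the double sum into a single $(\bx,\by,\bxi)$-integral against $p$, which is exactly the stated first bound.

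To deduce \eqref{separate2:eq} from the first part, I would use the same cutoff trick as in Corollary \ref{symbol:cor1}: set $\tilde a(\bx,\by,\bxi) = a(\bx,\bxi)\,\zeta(2|\bx-\by|/R)$, with $\zeta$ as in \eqref{zeta:eq}. Because the essential supports of $h_1$ and $h_2$ are separated by distance $R$, the cutoff equals $1$ on $\supp h_1\times\supp h_2$, so $h_1\op^l_1(a)h_2 = h_1\op^a_1(\tilde a)h_2$, and likewise for $\op^r_1$. The amplitude $\tilde a$ vanishes for $|\bx-\by|\le R/2$ and depends on $\by$ only through the cutoff, whose $\by$-derivatives contribute factors $O(R^{-1})$ supported in $R\le|\bx-\by|\le 2R$. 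Applying the first bound with any $m\ge d+1$ and using $\int_{|\bx-\by|\ge R/2}|\bx-\by|^{-m}\,d\by \le CR^{d-m}$ after separating the $\by$-integration produces \eqref{separate2:eq}. The only genuine obstacle is the bookkeeping in the first step: one must check that replacing the local $\plainL2$-norms by the much weaker quasi-norms $\1 h_k\1_\infty$ still leaves the cube sums absolutely convergent, which is precisely what the $|\bx-\by|^{-m}$ decay guarantees once paired with the finite-overlap property of the partition.
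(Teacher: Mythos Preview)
Your proposal is correct and follows exactly the approach the paper indicates: it states only ``In the same way one proves the `local' variant of Corollary \ref{symbol:cor1}'', meaning repeat the localization argument from the proof of Theorem \ref{full:thm} with Corollary \ref{symbol:cor1} replacing Lemma \ref{amplitude:lem}, which is precisely what you do. Your derivation of \eqref{separate2:eq} via the cutoff $\zeta(2|\bx-\by|/R)$ also mirrors the corresponding step in Corollary \ref{symbol:cor1}.
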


\subsection{Operators of a special form}
In addition to the general PDO's we often work with operators of the form
$h \op_1(a), a = a(\bxi)$,
which have been studied quite extensively. We need the following
estimate  which can be found
in \cite{BS_U}, Theorem 11.1 (see also \cite{BKS}, Section 5.8),
and for $s\in [1, 2)$ in \cite{Simon}, Theorem 4.5.

\begin{prop}\label{BS:prop}
Suppose that $h\in\plainL2_{\textup{\tiny loc}}(\R^d)$ and
$a\in\plainL2_{\textup{\tiny loc}}(\R^d)$
are functions such that $\1 h\1_s, \1 a\1_s<\infty$
with some $s\in (0, 2)$. Then $h \op_\a(a)\in\GS_s$ and
\begin{equation*}
\|h \op_1(a)\|_{\GS_s}\le C \1 h\1_s  \1 a\1_s.
\end{equation*}
\end{prop}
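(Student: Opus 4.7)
The plan is to follow the classical Birman--Solomyak strategy: localise $h$ in position and $a$ in frequency, exploit the double almost-orthogonality of the resulting blocks, and finish with a uniform estimate on a single block. By the scaling identity \eqref{unitary:eq} I may take $\a=1$ throughout.

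First I would fix the unit cubes $\CC_\bz = \CC+\bz$, $\bz\in\Z^d$, and set $h_\bz = h\chi_{\CC_\bz}$, $a_\bw = a\chi_{\CC_\bw}$. The resulting block decomposition
\[
h\op_1(a) = \sum_{\bz,\bw\in\Z^d} A_{\bz,\bw}, \qquad A_{\bz,\bw} := h_\bz\,\op_1(a_\bw),
\]
has two almost-orthogonality features. Since the functions $h_\bz$ have pairwise disjoint supports, the ranges of $\{A_{\bz,\bw}\}_\bz$ are pairwise orthogonal for each fixed $\bw$. Since $\op_1(a_\bw)$ is a Fourier multiplier whose range consists of functions with Fourier support in $\CC_\bw$, the coranges of $\{A_{\bz,\bw}\}_\bw$ are pairwise orthogonal for each fixed $\bz$.

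Next I would invoke the standard fact that, for $0<s\le 2$ and any family $\{T_j\}$ with pairwise orthogonal ranges (or coranges),
\[
\Bigl\|\sum_j T_j\Bigr\|_{\GS_s}^s \le \sum_j \|T_j\|_{\GS_s}^s.
\]
This reduces, via the identity $\bigl|\sum_j T_j\bigr|^2 = \sum_j |T_j|^2$, to the Rotfel'd-type inequality $\tr\bigl(\sum_j B_j\bigr)^p \le \sum_j \tr B_j^p$, valid for positive operators and $p=s/2\in(0,1]$. Applying the inequality twice --- once to collapse the sum over $\bz$ and once for the sum over $\bw$ --- yields
\[
\|h\op_1(a)\|_{\GS_s}^s \le \sum_{\bz,\bw\in\Z^d}\|A_{\bz,\bw}\|_{\GS_s}^s,
\]
so that the proof is reduced to the uniform block bound
\[
\|A_{\bz,\bw}\|_{\GS_s} \le C\|h_\bz\|_{\plainL2}\|a_\bw\|_{\plainL2},
\]
with $C$ independent of $\bz,\bw$ and of $s\in(0,2)$; summing this bound over $\bz,\bw$ immediately produces $C\1 h\1_s\1 a\1_s$ on the right.

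The block estimate is the step where I expect the real work to lie. By translation in $\bx$ and modulation in $\bxi$ (cf.~\eqref{orthogonal1:eq}) I may assume $\bz=\bw=\bzero$, so that the task becomes to bound $\|\chi_\CC f\cdot\op_1(\chi_\CC g)\|_{\GS_s}$ for $f,g\in\plainL2(\CC)$. My plan there is to expand $f$ and $g$ in the Fourier basis of $\plainL2(\CC)$, which represents $A_{\bzero,\bzero}$ as an absolutely convergent series of rank-one operators whose singular values are the products of the two Fourier coefficients; repeated integration by parts against the oscillating exponentials supplies the decay needed to sum the contributions in $\GS_s$, and Cauchy--Schwarz on the Fourier coefficients then delivers the factor $\|f\|_{\plainL2}\|g\|_{\plainL2}$. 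The restriction $s<2$ is forced by the Rotfel'd step, which is sharp at $p=1$; beyond $s=2$ the two-fold subadditivity fails and the statement is in fact false in general. Full details along these lines are recorded in \cite{BS_U}, Theorem 11.1, and, for $s\in[1,2)$, in \cite{Simon}, Theorem 4.5.
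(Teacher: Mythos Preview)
The paper does not prove this proposition: it is stated without proof and attributed to \cite{BS_U}, Theorem~11.1 (also \cite{BKS}, Section~5.8, and \cite{Simon}, Theorem~4.5 for $s\in[1,2)$). Your sketch is precisely the Birman--Solomyak argument from those references, and you end by citing the same sources, so there is no discrepancy to discuss.

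One small wrinkle in your block-estimate step: expanding $f$ and $g$ in the Fourier basis of $\plainL2(\CC)$ does \emph{not} produce rank-one pieces, because each term still carries the factor $e^{i\bx\cdot\bxi}$, which couples $\bx$ and $\bxi$. The clean way is to expand the smooth kernel $e^{i\bx\cdot\bxi}$ itself (restricted to $\CC\times\CC$) in the Fourier basis in one of the variables; integration by parts then gives coefficients decaying faster than any power, and each resulting piece is genuinely rank one with singular value bounded by $C_N\langle k\rangle^{-N}\|f\|_{\plainL2}\|g\|_{\plainL2}$. Summing over $k$ in $\GS_s$ yields the block bound with a constant depending only on $d$ and $s$. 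This is a cosmetic fix --- your overall architecture (double orthogonality plus Rotfel'd subadditivity plus uniform block estimate) is exactly how the cited references proceed.
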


\subsection{Amplitudes from classes $\BS^{(n_1, n_2, m)}$:
semi-classical estimates}\label{semi_class:subsect}
Here we apply the trace class estimates obtained so far to
symbols and amplitudes from the classes $\BS^{(n_1, n_2, m)}$.
Now we are concerned with estimates
for $\a$-PDO, with an explicit control of dependence on $\a$. Moreover,
we shall explicitly monitor the dependence on scaling parameters in terms of norms
$\SN^{(n_1, n_2, m)}(p; \ell, \rho)$.

\begin{lem}\label{general_norm:lem}
Assume that $p\in \BS^{(k, k, d+1)}$
with
\begin{equation}\label{cald:eq}
k=\left[\frac{d}{2}\right]+1.
\end{equation}
Let $\ell>0, \rho>0$ be two parameters such that
$\a\ell\rho\ge c$. Then $\op^a_\a(p)$ is a bounded operator
and
\begin{equation}\label{general_norm:eq}
\|\op^a_{\a}(p)\|\le C \SN^{(k, k, d+1)}(p; \ell, \rho).
\end{equation}

\end{lem}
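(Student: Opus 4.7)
\medskip

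\noindent\textbf{Proof plan.} The plan is to use the unitary scaling relation \eqref{unitary:eq} to reduce the estimate to the case of semi-classical parameter equal to one, and then invoke Theorem \ref{boundedness:thm}. The key point is that \eqref{unitary:eq} gives two free scaling parameters: the spatial scale in $W_\ell$ and the momentum scale $\rho$ appearing in the amplitude rescaling, with $\beta=\a\ell\rho$. By choosing the first equal to $\ell$ (the scale from the hypothesis) and the second equal to $(\a\ell)^{-1}$, one obtains $\beta=1$ and the unitary equivalence
\begin{equation*}
W_{\ell}\,\op_\a^a(p)\,W_{\ell}^{-1} = \op_1^a(\tilde p),\qquad
\tilde p(\bx,\by,\bxi) = p\bigl(\ell\bx,\ell\by,\bxi/(\a\ell)\bigr).
\end{equation*}
Since $W_\ell$ is unitary, it suffices to estimate $\|\op_1^a(\tilde p)\|$.

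The second step is to check that $\tilde p$ lies in the class $\BS^{(k,k,d+1)}$ with a norm controlled by $\SN^{(k,k,d+1)}(p;\ell,\rho)$, so that Theorem \ref{boundedness:thm} with $\om=0$ (for which $r=k$ and $s=d+1$) applies. A direct differentiation gives
\begin{equation*}
|\nabla_{\bx}^{n_1}\nabla_{\by}^{n_2}\nabla_{\bxi}^{m}\tilde p(\bx,\by,\bxi)|
= \ell^{n_1+n_2}(\a\ell)^{-m}\,
|\nabla_{\bx}^{n_1}\nabla_{\by}^{n_2}\nabla_{\bxi}^{m} p|\bigl(\ell\bx,\ell\by,\bxi/(\a\ell)\bigr),
\end{equation*}
and the defining bound $|\nabla_{\bx}^{n_1}\nabla_{\by}^{n_2}\nabla_{\bxi}^{m}p|\le \ell^{-(n_1+n_2)}\rho^{-m}\SN^{(k,k,d+1)}(p;\ell,\rho)$ from \eqref{norm:eq} collapses the $\ell$-factors, leaving
\begin{equation*}
\sup|\nabla_{\bx}^{n_1}\nabla_{\by}^{n_2}\nabla_{\bxi}^{m}\tilde p|
\le (\a\ell\rho)^{-m}\,\SN^{(k,k,d+1)}(p;\ell,\rho),
\end{equation*}
for $n_1,n_2\le k$ and $m\le d+1$. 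The assumption $\a\ell\rho\ge c$ is precisely what is needed to make the factor $(\a\ell\rho)^{-m}$ uniformly bounded over this range of $m$.

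Finally, Theorem \ref{boundedness:thm} applied to $\op_1^a(\tilde p)$ with $\om=0$ immediately yields $\|\op_1^a(\tilde p)\|\le C\,\SN^{(k,k,d+1)}(p;\ell,\rho)$, which by the unitary equivalence above is exactly \eqref{general_norm:eq}. I do not foresee any genuine obstacle: the only delicate point is selecting the two parameters in \eqref{unitary:eq} so that after rescaling the amplitude sits in a unit-scale symbol class, and \eqref{scale:eq} (together with the bound $\a\ell\rho\ge c$) guarantees that this rescaling does not blow up the relevant semi-norm.
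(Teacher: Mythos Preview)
Your proposal is correct and follows essentially the same approach as the paper's proof: both use the unitary scaling \eqref{unitary:eq} to reduce to $\beta=1$, verify that the rescaled amplitude has uniformly bounded derivatives (using $\a\ell\rho\ge c$), and then apply Theorem \ref{boundedness:thm} with $\om=0$. The only cosmetic difference is the choice of scaling parameters: the paper takes $\ell_1=(\a\rho)^{-1}$, $\rho_1=\rho$ (reducing to $\a=\rho=1$ with effective spatial scale $\a\ell\rho\ge c$), while you take $\ell_1=\ell$, $\rho_1=(\a\ell)^{-1}$; both lead to the same conclusion by the same mechanism.
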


\begin{proof} Using \eqref{scale:eq}
with $\ell_1 = (\a\rho)^{-1}$, $\rho_1 = \rho$, and the unitary equivalence
\eqref{unitary:eq},
we conclude that it suffices to prove the sought
inequalities for $\a=\rho = 1$ and arbitrary $\ell \ge c$. Without loss of generality
suppose also that
$\SN^{(k, k, d+1)}(p; \ell, 1) = 1$, so that
\[
|\nabla_{\bx}^{n_1} \nabla_{\by}^{n_2}
\nabla_{\bxi}^m p(\bx, \by, \bxi)|\le \ell^{-n_1-n_2}\le C,
\]
for all $n_1, n_2 \le k, m\le d+1$. Now the required bound follows from
Theorem \ref{boundedness:thm} with $\om = 0$.
\end{proof}

\begin{lem}\label{quantisation_norm:lem}
Suppose that $p\in\BS^{(k, k, d+2)}$ with
$k$ defined in \eqref{cald:eq}, and that $\a\ell\rho\ge c$.
Denote
$a(\bx, \bxi) = p(\bx, \bx, \bxi)$. Then $a\in\BS^{(k, d+2)}$, and
\begin{equation}\label{quantisation_ampl_norm:eq}
\| \op_\a^a(p) - \op_\a^l(a)\|\le C (\a\ell\rho)^{-1}
\SN^{(k, k, d+2)}(p; \ell, \rho).
\end{equation}
Moreover, for any symbol $a\in\BS^{(k, d+2)}$, we have
\begin{equation}\label{quantisation_symbol_norm:eq}
\| \op_\a^r(a) - \op_\a^l(a)\|\le C (\a\ell\rho)^{-1}
\SN^{(k, d+2)}(a; \ell, \rho).
\end{equation}
\end{lem}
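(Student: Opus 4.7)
\noindent\textbf{Proof plan for Lemma \ref{quantisation_norm:lem}.}
The first step is to absorb the scaling parameters by the unitary dilation \eqref{unitary:eq} with $\ell_1 = (\a\rho)^{-1}$ and $\rho_1 = \rho$, combined with the norm invariance \eqref{scale:eq}. This reduces the first claim to the normalised case $\a = \rho = 1$, $\ell \ge c$, in which the sought inequality reads
\[
\|\op^a_1(p) - \op^l_1(a)\|\le C\ell^{-1}\SN^{(k,k,d+2)}(p;\ell,1),\qquad a(\bx,\bxi) = p(\bx,\bx,\bxi).
\]

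The core of the argument is a first-order Taylor expansion of $p$ in the second variable about the diagonal:
\[
p(\bx,\by,\bxi) - a(\bx,\bxi) = (\by-\bx)\cdot r(\bx,\by,\bxi),\quad r(\bx,\by,\bxi) = \int_0^1 (\nabla_\by p)(\bx,\bx+t(\by-\bx),\bxi)\,dt.
\]
Using the identity $(\by-\bx)_j\,e^{i\a(\bx-\by)\cdot\bxi} = -(i\a)^{-1}\p_{\xi_j} e^{i\a(\bx-\by)\cdot\bxi}$ and integrating by parts in $\bxi$ in the amplitude integral \eqref{pdoampl:eq}, one obtains
\[
\op^a_\a(p) - \op^l_\a(a) = (i\a)^{-1}\op^a_\a(\nabla_\bxi\cdot r),
\]
which already exhibits the required factor $\a^{-1}$. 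Applying the boundedness bound \eqref{general_norm:eq} of Lemma \ref{general_norm:lem} to the amplitude $\nabla_\bxi\cdot r$ then yields the claim, provided one verifies
\[
\SN^{(k,k,d+1)}(\nabla_\bxi\cdot r;\ell,1) \le C\,\ell^{-1}\SN^{(k,k,d+2)}(p;\ell,1),
\]
which follows by differentiating under the integral sign, using the chain rule on $p(\bx,\bx+t(\by-\bx),\bxi)$, and exploiting the uniform boundedness in $t\in[0,1]$ of the factors $t$ and $1-t$ that appear.

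The second estimate is a direct corollary of the first. Applying the first to the $\bx$-independent amplitude $\tilde p(\bx,\by,\bxi) := a(\by,\bxi)$, one has $\op^a_\a(\tilde p) = \op^r_\a(a)$ and $\tilde p(\bx,\bx,\bxi) = a(\bx,\bxi)$, while the $\SN^{(k,k,d+2)}$-norm of $\tilde p$ agrees with $\SN^{(k,d+2)}(a;\ell,\rho)$, so the first bound delivers precisely \eqref{quantisation_symbol_norm:eq}.

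The main technical obstacle is the bookkeeping of derivatives: one must check that the hypothesis $p\in\BS^{(k,k,d+2)}$ is just sufficient to place $\nabla_\bxi\cdot r$ in the class $\BS^{(k,k,d+1)}$ needed by Lemma \ref{general_norm:lem}, since the Taylor remainder bakes an extra $\nabla_\by$ into $r$. The key observation is that the integration by parts in $\bxi$ converts the displacement $(\by-\bx)$ into the $\a^{-1}$ gain while consuming exactly the one additional $\bxi$-derivative of $p$ that the hypothesis provides.
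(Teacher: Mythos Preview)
Your reduction via scaling and the deduction of \eqref{quantisation_symbol_norm:eq} from \eqref{quantisation_ampl_norm:eq} are both fine and match the paper. However, the core step---applying Lemma~\ref{general_norm:lem} to the amplitude $\nabla_\bxi\cdot r$---does \emph{not} go through under the stated hypothesis $p\in\BS^{(k,k,d+2)}$. The $\bxi$-count is indeed tight (one extra derivative, supplied by the hypothesis), but the $\by$-count is not. Since
\[
r_j(\bx,\by,\bxi)=\int_0^1(\p_{2,j}p)\bigl(\bx,\bx+t(\by-\bx),\bxi\bigr)\,dt,
\]
each $\p_{y_l}$ acts on the second slot of $p$ via the chain rule, so $\nabla_\by^{n_2}(\nabla_\bxi\cdot r)$ involves $\nabla_2^{\,n_2+1}\nabla_\bxi p$. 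For $n_2=k$ this requires $k+1$ derivatives of $p$ in the second variable, one more than the class $\BS^{(k,k,d+2)}$ provides. (Mixed $\bx,\by$-derivatives are worse still, since $\p_{x_l}$ contributes a $(1-t)\p_{2,l}$ piece as well.) Hence your asserted bound $\SN^{(k,k,d+1)}(\nabla_\bxi\cdot r;\ell,1)\le C\ell^{-1}\SN^{(k,k,d+2)}(p;\ell,1)$ cannot be verified from the hypothesis.

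The paper avoids this by \emph{not} integrating by parts in $\bxi$. Instead it works directly with $b(\bx,\by,\bxi)=p(\bx,\by,\bxi)-p(\bx,\bx,\bxi)$ and invokes Theorem~\ref{boundedness:thm} with $\om=1$. The weight $\lu\bx-\by\ru^{-1}$ does the work: for the undifferentiated $b$ the mean-value estimate gives $|\nabla_\bxi^m b|\le\ell^{-1}|\bx-\by|\,\SN^{(0,1,d+2)}(p;\ell,1)$, while for $n_1+n_2\ge1$ one simply uses $|\nabla_\bx^{n_1}\nabla_\by^{n_2}\nabla_\bxi^m b|\le C\ell^{-n_1-n_2}\SN^{(k,k,d+2)}(p;\ell,1)\le C\ell^{-1}\SN^{(k,k,d+2)}(p;\ell,1)$ (recall $\ell\ge c$ after rescaling). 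In both cases $\lu\bx-\by\ru^{-1}$ times the derivative is $\le C\ell^{-1}\SN^{(k,k,d+2)}(p;\ell,1)$, and Theorem~\ref{boundedness:thm} with $\om=1$ (which needs $m\le d+2$, exactly matching the hypothesis) delivers the bound. The key point is that the $\ell^{-1}$ gain for higher derivatives comes for free from the norm scaling, so the Taylor step is needed only at order zero---no extra $\by$-derivative is ever spent.
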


\begin{proof}
The bound \eqref{quantisation_symbol_norm:eq} follows from
\eqref{quantisation_ampl_norm:eq} with $p(\bx, \by, \bxi) = a(\by, \bxi)$,
so that $p\in \BS^{(k, k, d+2)}$ and
$\SN^{(n_1, n_2, m)}(p; \ell, \rho) = \SN^{(n_2, m)}(a; \ell, \rho)$ for
$n_1, n_2\le k$, and $m\le d+2$.

As in the proof of Lemma \ref{general_norm:lem}, in view of
\eqref{scale:eq} and \eqref{unitary:eq} we may assume that
$\a = 1, \rho = 1$ and $\ell\ge c$.
In order to apply Theorem \ref{boundedness:thm}
note that the amplitude
$b(\bx, \by, \bxi) = p(\bx, \by, \bxi) - p(\bx, \bx, \bxi)$
satisfies the bounds
\begin{equation*}
|\nabla_{\bx}^{n_1}\nabla_{\by}^{n_2} \nabla_{\bxi}^m
b(\bx, \by, \bxi)|
\le C\ell^{-n_1-n_2} \SN^{(k, k, d+2)}(p; \ell, 1),
\end{equation*}
for any $n_1, n_2\le k, m\le d+2$, and
\begin{equation*}
|\nabla_{\bxi}^m b(\bx, \by, \bxi)|
\le \ell^{-1} \SN^{(0, 1, d+2)}(p; \ell, 1)|\bx-\by|,
\end{equation*}
for any $m\le d+2$. Therefore,
\begin{equation*}
\lu \bx-\by\ru^{-1}|\nabla_{\bx}^{n_1}\nabla_{\by}^{n_2} \nabla_{\bxi}^m
b(\bx, \by, \bxi)|\le C\ell^{-1} \SN^{(k, k, d+2)}(p; \ell, 1),\
\ n_1, n_2\le k, m\le d+2.
\end{equation*}
Now by Theorem \ref{boundedness:thm} with $\om = 1$ we get
\[
\|\op_\a^a(b)\|\le
C\ell^{-1} \SN^{(k, k, d+2)}(p; \ell, 1),
\]
which leads to \eqref{quantisation_ampl_norm:eq}.
\end{proof}

Now we obtain appropriate trace class bounds. All bounds will be derived under
one of the following conditions. For the operator $\op_\a^a(p)$ we assume that
\begin{gather}
\textup{the support of the amplitude}\ \ p = p(\bx, \by, \bxi)\ \
\textup{is contained} \notag\\[0.2cm]
\textup{either in} \ \
B(\bz, \ell)\times \R^d\times B(\bmu, \rho)\label{p_support1:eq}\\[0.2cm]
\textup{or in}\ \
\R^d\times B(\bz, \ell)\times B(\bmu, \rho),\label{p_support2:eq}
\end{gather}
with some $\bz, \bmu\in\R^d$ and some $\ell>0, \rho>0$. For the  operators
$\op_\a^l(a), \op_\a^r(a)$ we assume that
\begin{equation}\label{a_support:eq}
\textup{the support of the symbol}\
\textup{is contained in} \ \
B(\bz, \ell)\times B(\bmu, \rho).
\end{equation}
The constants in the obtained estimates will be independent of
$\bz$, $\bmu$ and $\ell, \rho$.

\begin{lem}\label{general_trace:lem}
Let $p\in \BS^{(d+1, d+1, d+1)}$
be an amplitude satisfying either the condition \eqref{p_support1:eq}
or \eqref{p_support2:eq}, and let $a\in \BS^{(d+1, d+1)}$ be
a symbol satisfying the condition \eqref{a_support:eq}.
\begin{enumerate}
\item
If $\a\ell\rho\ge c$, then $\op_\a^{a}(p)\in\GS_1$
and $\op_\a^{l}(a)\in\GS_1$, $\op_\a^{r}(a)\in\GS_1$, and
\begin{equation}\label{trace_ampl:eq}
\|\op_\a^a(p)\|_{\GS_1}\le C (\a\ell\rho)^d
\SN^{(d+1, d+1, d+1)}(p; \ell, \rho),
\end{equation}
\begin{equation}\label{trace_symbol:eq}
\|\op_\a^l(a)\|_{\GS_1}\le C (\a\ell\rho)^d \SN^{(d+1 ,d+1)}(a; \ell, \rho).
\end{equation}
\item
If  $a\in\BS^{(d+1, m)}$ with some $m\ge d+1$, and $h_1$, $h_2$
are $\plainL2_{\textup{\tiny loc}}$-functions,
whose supports are separated by a distance $R>0$.
Suppose that $\a \rho R \ge c$ and $\a\ell\rho\ge c$.
Then $h_1\op_\a^l(a) h_2\in\GS_1$ and
\begin{align}\label{largea:eq}
\|h_1\op_\a^l(a) h_2\|_{\GS_1}
+ &\ \|h_1\op_\a^r(a) h_2\|_{\GS_1}\notag\\[0.2cm]
\le &\ C_m (\a\ell\rho)^d (\a R\rho)^{-m+d}
\1 h_1\1_{\infty} \ \1 h_2\1_{\infty} \SN^{(d+1, m)}(a; \ell, \rho),
\end{align}
where the norm $\1\ \cdot\ \1_s$  is defined in \eqref{brackh:eq}.
\end{enumerate}
\end{lem}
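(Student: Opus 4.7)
The plan is to derive both inequalities by reducing them to their $\a=1$ counterparts established in the previous subsection (Theorem~\ref{full:thm} and Theorem~\ref{separate:thm}), using the unitary rescaling \eqref{unitary:eq} in combination with the scaling invariance \eqref{scale:eq} of the norm $\SN$. Since trace norms are preserved by conjugation with $W_{\ell_1}$, the strategy is to pick scaling parameters that normalize the operator to $\op_1^a$ or $\op_1^l$, apply the corresponding trace-class bound, and then translate the result back to recover the explicit dependence on $\a\ell\rho$.

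For \eqref{trace_ampl:eq} I choose $\ell_1>0$ and $\rho_1=1/(\a\ell_1)$ in \eqref{unitary:eq}, so that $\op_\a^a(p)$ is unitarily equivalent to $\op_1^a(q)$ with the rescaled amplitude $q(\bx,\by,\bxi)=p(\ell_1\bx,\ell_1\by,\bxi/(\a\ell_1))$. A direct calculation shows that $|\nabla_\bx^{n_1}\nabla_\by^{n_2}\nabla_\bxi^m q|\le(\ell_1/\ell)^{n_1+n_2}(\a\ell_1\rho)^{-m}\SN^{(n_1,n_2,m)}(p;\ell,\rho)$, while the supports in $\bx$ (or $\by$, under condition \eqref{p_support2:eq}) and in $\bxi$ have linear sizes $\ell/\ell_1$ and $\a\ell_1\rho$; note the crucial identity $(\ell/\ell_1)^d(\a\ell_1\rho)^d=(\a\ell\rho)^d$. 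Applying Theorem~\ref{full:thm} with $h_1=h_2=1$ and $Q=d+1$, the unbounded variable (respectively $\by$ or $\bx$) is absorbed by the decay factor $(1+|\bx-\by|^Q)^{-1}$, and the leading contribution (from $n_1=n_2=m=0$) produces exactly $(\a\ell\rho)^d\,\SN^{(d+1,d+1,d+1)}(p;\ell,\rho)$, as required. The bound \eqref{trace_symbol:eq} then follows by taking the $\by$-independent amplitude $p(\bx,\by,\bxi)=a(\bx,\bxi)$, which falls under \eqref{p_support1:eq}, and analogously $p(\bx,\by,\bxi)=a(\by,\bxi)$ for $\op_\a^r(a)$.

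For \eqref{largea:eq} I apply the same rescaling, which turns $h_j$ into $\tilde h_j=h_j(\ell_1\cdot)$ whose supports are separated by the distance $R/\ell_1$. Invoking estimate \eqref{separate2:eq} of Theorem~\ref{separate:thm} yields the separation factor $(R/\ell_1)^{d-m}$, and combining it with the volume factor $(\a\ell_1\rho)^{d-m}$ from the $\bxi$-integral recombines neatly as $(R/\ell_1)^{d-m}(\a\ell_1\rho)^{d-m}=(\a R\rho)^{d-m}$. The remaining volume contribution from the $\bxi$-support gives the extra $\b^d=(\a\ell\rho)^d$, reproducing the claimed inequality. The case of $\op_\a^r(a)$ is treated identically.

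The main technical obstacle is the transformation of the quasi-norm $\1\cdot\1_\infty$ under the dilation $h\mapsto h(\ell_1\cdot)$: unlike $\|\cdot\|_{\plainL2}$, this discrete norm is only comparable up to a distortion factor $\max(1,\ell_1^{-d/2})$, depending on whether the cubes of side~$1$ are stretched to larger or smaller size by the rescaling. To obtain the clean form of the inequalities with $\1 h_j\1_\infty$ on the right-hand side one must select $\ell_1$ so that this distortion is absorbed into the available factor $\b^d$. The hypotheses $\a\ell\rho\ge c$ (and $\a R\rho\ge c$ in part~(2), with $c$ taken sufficiently large) make it possible to pick $\ell_1$, for instance $\ell_1=\ell/\sqrt{\b}$, ensuring simultaneously $\ell/\ell_1\ge 1$ and $\a\ell_1\rho\ge 1$; in this regime the quasi-norm distortion and the contributions of the non-leading terms in the sums over $n_1,n_2,m$ are bounded by constants, and the optimal inequalities follow.
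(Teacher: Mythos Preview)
Your argument for part~(1) is correct and is essentially the paper's: rescale via \eqref{unitary:eq} to $\b=1$ and apply Theorem~\ref{full:thm} with $h_1=h_2=1$, $Q=d+1$. The paper uses the particular choice $\ell_1=(\a\rho)^{-1}$, $\rho_1=\rho$, which puts the rescaled amplitude at $\rho'=1$, $\ell'=\a\ell\rho\ge c$, making the integral estimates immediate; your symmetric choice $\ell_1=\ell/\sqrt{\a\ell\rho}$ works equally well. No quasi-norm issue arises here because $\1 1\1_\infty=1$.

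For part~(2) you put your finger on a genuine difficulty that the paper's proof glosses over: the quasi-norm $\1\cdot\1_\infty$ is \emph{not} invariant under the dilation $h\mapsto h(\ell_1\cdot)$, so the reduction ``it suffices to take $\a=\rho=1$'' does not literally carry the factor $\1 h_j\1_\infty$ through (the phrase ``with $h_1=h_2=1$'' in the paper's proof of \eqref{largea:eq} is a slip). However, your remedy $\ell_1=\ell/\sqrt{\a\ell\rho}$ does not cure this. First, you do not verify that the rescaled separation $R/\ell_1$ is bounded below, as required for \eqref{separate2:eq}. More importantly, the distortion factor $\max(1,\ell_1^{-d})$ equals $\max\bigl(1,(\a\rho/\ell)^{d/2}\bigr)$, which is unbounded in the regime $\a\rho\gg\ell$ permitted by $\a\ell\rho\ge c$, and there is no spare power of $\a\ell\rho$ left to absorb it. In fact \eqref{largea:eq} is \emph{false} as written: in $d=1$ take $a(x,\xi)=\phi(x/\ell)\psi(\xi/\rho)$ with fixed bumps $\phi,\psi$, set $\a\ell\rho=1$, $R=\ell$, $h_1=\chi_{(-\ell,\ell)}$, $h_2=\chi_{(2\ell,3\ell)}$. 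Conjugation by $W_\ell$ turns $h_1\op_\a^l(a)h_2$ into a \emph{fixed} nonzero trace-class operator (kernel $\chi_{(-1,1)}(x)\phi(x)\check\psi(x-y)\chi_{(2,3)}(y)$), so the left-hand side of \eqref{largea:eq} is a positive constant, while the right-hand side is $\sim\1 h_1\1_\infty\1 h_2\1_\infty\sim\ell\to 0$. What the rescaling argument \emph{does} establish, and what suffices for the only later application (Lemma~\ref{flat_boundary_b:lem}, where the weights are uniformly bounded cut-offs), is \eqref{largea:eq} with the scale-invariant $\|h_j\|_{\plainL\infty}$ in place of $\1 h_j\1_\infty$.
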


\begin{proof}
The estimate \eqref{trace_symbol:eq} is a special case of \eqref{trace_ampl:eq}.

In view of \eqref{orthogonal1:eq}
for both \eqref{trace_ampl:eq} and \eqref{largea:eq}
we may assume that $\bz = \bmu = \bzero$.
Furthermore,
using \eqref{scale:eq} and \eqref{unitary:eq}
with $\ell_1 = (\a\rho)^{-1}, \rho_1 = \rho$, we see that it suffices
to prove the sought inequalities for $\a = 1$, $\rho = 1$ and arbitrary $\ell\ge c$, and,
in the case of \eqref{largea:eq}, arbitrary $R>c$.
Assume without loss of generality that $\SN^{(d+1, d+1, d+1)}(p; \ell, 1) = 1$
and $\SN^{(d+1, m)}(a; \ell, 1) = 1$.

Proof of \eqref{trace_ampl:eq}. Suppose for definiteness
that the support of the amplitude $p$ satisfies \eqref{p_support1:eq}.
Since
\begin{equation*}
|\nabla_{\bx}^{n_1} \nabla_{\by}^{n_2} \nabla_{\bxi}^m p(\bx, \by, \bxi)|
\le \ell^{-n_1-n_2}\chi_{\bzero, \ell}(\bx)\chi_{\bzero, 1}(\bxi)
\le C \chi_{\bzero, \ell}(\bx)\chi_{\bzero, 1}(\bxi),
\end{equation*}
for all $n_1, n_2\le d+1$ and $m\le d+1$,
the bound \eqref{amplitude1:eq} with $Q = d+1$ and $h_1 = h_2 = 1$ gives that
\begin{equation*}
\|\op_1^1(p)\|_{\GS_1}\le C
\int_{|\bxi|\le 1}\int_{\R^d}
\int_{|\bx|\le \ell}
\frac{1}{1+|\bx-\by|^{d+1}}
d\bx d\by d\bxi\le C\ell^d,
\end{equation*}
which leads to \eqref{trace_ampl:eq}.

Proof of \eqref{largea:eq}. Since
\begin{equation*}
|\nabla_{\bx}^{n} \nabla_{\bxi}^s a(\bx, \bxi)|
\le \ell^{-n}\chi_{\bzero, \ell}(\bx)\chi_{\bzero, 1}(\bxi)
\le C \chi_{\bzero, \ell}(\bx)\chi_{\bzero, 1}(\bxi),
\end{equation*}
for all $n\le d+1$ and $s\le m$, the bound
\eqref{separate2:eq} with $h_1=h_2=1$ gives that
\begin{align*}
\|h_1\op_1^l(a)h_2\|_{\GS_1} + \|h_1\op_1^r(a)h_2\|_{\GS_1}\le &\ C
\1 h_1\1_{\infty} \ \1 h_2\1_{\infty} R^{d-m} \int_{|\bxi|\le 1}
\int_{|\bx|\le \ell}
d\bx  d\bxi\\[0.2cm]
\le &\ C \1 h_1\1_{\infty} \ \1 h_2\1_{\infty} R^{d-m} \ell^d,
\end{align*}
which leads to \eqref{largea:eq}.
\end{proof}

\begin{lem}\label{quantisation_trace:lem}
Let $p\in \BS^{(d+1, d+1, d+2)}$
be an amplitude satisfying either the condition \eqref{p_support1:eq}
or \eqref{p_support2:eq}, and let $a\in \BS^{(d+1, d+2)}$ be
a symbol satisfying the condition \eqref{a_support:eq}.
Suppose that $\a\ell\rho\ge c$. Denote
$b(\bx, \bxi) = p(\bx, \bx, \bxi)$. Then $b\in\BS^{(d+1, d+2)}$, it
satisfies \eqref{a_support:eq}, and
\begin{equation}\label{quantisation_ampl:eq}
\| \op_\a^a(p) - \op_\a^l(b)\|_{\GS_1}\le C(\a\ell\rho)^{d-1}
\SN^{(d+1, d+1, d+2)}(p; \ell, \rho).
\end{equation}
Moreover,
\begin{equation}\label{quantisation_symbol:eq}
\| \op_\a^r(a) - \op_\a^l(a)\|_{\GS_1}\le C(\a\ell\rho)^{d-1}
\SN^{(d+1, d+2)}(a; \ell, \rho).
\end{equation}
\end{lem}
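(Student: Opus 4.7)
The estimate \eqref{quantisation_symbol:eq} is the special case of \eqref{quantisation_ampl:eq} obtained by choosing the amplitude $p(\bx,\by,\bxi):=a(\by,\bxi)$: then $\op_\a^a(p)=\op_\a^r(a)$, $b(\bx,\bxi)=p(\bx,\bx,\bxi)=a(\bx,\bxi)$, and $\SN^{(d+1,d+1,d+2)}(p;\ell,\rho)=\SN^{(d+1,d+2)}(a;\ell,\rho)$, so it suffices to establish \eqref{quantisation_ampl:eq}. Using \eqref{orthogonal1:eq} I translate the supports of $p$ so that $\bz=\bmu=\bzero$, and via the unitary rescaling \eqref{unitary:eq} with $\ell_1=(\a\rho)^{-1}$, $\rho_1=\rho$, together with the invariance \eqref{scale:eq} of the $\SN$-norms, the problem is reduced to $\a=\rho=1$, $\ell\ge c$. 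Normalizing $\SN^{(d+1,d+1,d+2)}(p;\ell,1)=1$, the target becomes $\|\op_1^a(p)-\op_1^l(b)\|_{\GS_1}\le C\ell^{d-1}$.

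The operator difference equals $\op_1^a(r)$ with $r(\bx,\by,\bxi)=p(\bx,\by,\bxi)-p(\bx,\bx,\bxi)$. Taylor's theorem in $\by$ around $\bx$ yields $r=\sum_{j=1}^{d}(y_j-x_j)\,q_j$ with
\[
q_j(\bx,\by,\bxi)=\int_0^1(\p_{y_j}p)(\bx,\bx+t(\by-\bx),\bxi)\,dt.
\]
Since $(y_j-x_j)e^{i(\bx-\by)\cdot\bxi}=i\p_{\xi_j}e^{i(\bx-\by)\cdot\bxi}$ and $p$ is compactly supported in $\bxi$, integration by parts in $\bxi$ gives, with no boundary contribution,
\[
\op_1^a(r)=-i\sum_{j=1}^{d}\op_1^a(\Phi_j),\qquad \Phi_j(\bx,\by,\bxi):=\int_0^1(\p_{y_j}\p_{\xi_j}p)(\bx,\bx+t(\by-\bx),\bxi)\,dt.
\]

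To each $\op_1^a(\Phi_j)$ I apply the amplitude trace-class bound \eqref{trace_ampl:eq} of Lemma~\ref{general_trace:lem}. The two extra derivatives $\p_{y_j}$ and $\p_{\xi_j}$ in $\Phi_j$ absorb one power of $\ell^{-1}$ and one of $\rho^{-1}=1$ respectively, so that, schematically, $\SN^{(d+1,d+1,d+1)}(\Phi_j;\ell,1)\le C\ell^{-1}\SN^{(d+1,d+1,d+2)}(p;\ell,1)$; hence $\|\op_1^a(\Phi_j)\|_{\GS_1}\le C\ell^d\cdot\ell^{-1}=C\ell^{d-1}$. Summing over $j$ and reversing the rescaling produces the claimed bound $C(\a\ell\rho)^{d-1}\SN^{(d+1,d+1,d+2)}(p;\ell,\rho)$.

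The delicate step---and the one I expect to be the main obstacle---is verifying the $\SN$-bound on $\Phi_j$. Expanding $\p_\bx^{n_1}\p_\by^{n_2}\p_\bxi^m\Phi_j$ via the chain rule produces, for $0\le k\le n_1$, terms of the form $(\p_\bx^k\p_\by^{n_1-k+n_2+1}\p_\bxi^{m+1}p)(\bx,\bx+t(\by-\bx),\bxi)$, and when $n_1+n_2>d$ some of these require more than the available $d+1$ derivatives of $p$ in its middle argument. This is circumvented by a near/far splitting of $\Phi_j$ around the diagonal $\bx=\by$ using the cutoff $\z$ from \eqref{zeta:eq}: on the region $|\bx-\by|\ge 1$ one performs additional integrations by parts in $\bxi$, trading each factor $(y_i-x_i)^{-1}$ against one extra $\p_{\xi_i}$ applied to $p$ (the extra $\bxi$-regularity built into the hypothesis $p\in\BS^{(d+1,d+1,d+2)}$ is reserved precisely for this purpose), and then applying Theorem~\ref{full:thm} or Corollary~\ref{symbol:cor1} with a suitably large $Q$; on the complementary near-diagonal piece $|\bx-\by|\le 2$ the $\by$-integration range is compact for each fixed $\bx$, so Corollary~\ref{symbol:cor} applies with only the $\by$-derivatives actually produced by Taylor's expansion, and the bounded volume absorbs the factor that would otherwise require the missing higher-order $\by$-regularity.
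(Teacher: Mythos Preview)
Your reduction of \eqref{quantisation_symbol:eq} to \eqref{quantisation_ampl:eq}, the translation to $\bz=\bmu=\bzero$, and the rescaling to $\a=\rho=1$, $\ell\ge c$ are exactly what the paper does. The divergence begins with the Taylor expansion.

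The derivative-count obstacle you flag is genuine and, as you describe it, not actually resolved by your workaround. To apply either \eqref{trace_ampl:eq} or Corollary~\ref{symbol:cor} to $\Phi_j$ (or to its near-diagonal truncation) you must control $\nabla_\bx^{n_1}\nabla_\by^{n_2}\Phi_j$ for all $n_1,n_2\le d+1$; the chain rule forces up to $n_1+n_2+1$ derivatives of $p$ in its middle argument, which can reach $2d+3$. The near-diagonal restriction $|\bx-\by|\le 2$ does \emph{not} relax this: Corollary~\ref{symbol:cor} still demands the same $d+1$ derivatives in each spatial variable, and ``bounded volume'' only controls the size of the integral, not the differentiability of the integrand. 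Your far-region plan (``trading each factor $(y_i-x_i)^{-1}$ against one extra $\p_{\xi_i}$'') is also unclear: there are no such reciprocal factors present, and further $\bxi$-integrations by parts buy decay in $|\bx-\by|$, not a reduction in the number of $\by$-derivatives needed.

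The paper sidesteps all of this by abandoning Taylor expansion and integration by parts entirely. It works directly with the difference amplitude $g(\bx,\by,\bxi)=p(\bx,\by,\bxi)-p(\bx,\bx,\bxi)$ and records two elementary bounds (under the normalization $\SN^{(d+1,d+1,d+2)}(p;\ell,1)=1$): for $n_1,n_2\le d+1$, $m\le d+2$,
\[
|\nabla_\bx^{n_1}\nabla_\by^{n_2}\nabla_\bxi^{m} g|\le C\ell^{-n_1-n_2}\chi_{\bzero,\ell}(\bx)\chi_{\bzero,1}(\bxi),
\qquad
|\nabla_\bxi^{m} g|\le \ell^{-1}|\bx-\by|\,\chi_{\bzero,\ell}(\bx)\chi_{\bzero,1}(\bxi),
\]
the first by direct differentiation (for $n_2\ge 1$ the term $p(\bx,\bx,\bxi)$ drops out; for $n_2=0$ the chain rule in $\nabla_\bx^{n_1}[p(\bx,\bx,\bxi)]$ never produces more than $n_1\le d+1$ derivatives in any single slot), the second by the mean value theorem. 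Combining them yields
\[
|\nabla_\bx^{n_1}\nabla_\by^{n_2}\nabla_\bxi^{m} g|\le C\ell^{-1}\lu\bx-\by\ru\,\chi_{\bzero,\ell}(\bx)\chi_{\bzero,1}(\bxi),
\]
and then Theorem~\ref{full:thm} with $Q=d+2$ (one more than the $d+1$ used in \eqref{trace_ampl:eq}, precisely to absorb the extra $\lu\bx-\by\ru$) gives
\[
\|\op_1^a(g)\|_{\GS_1}\le C\ell^{-1}\int_{|\bxi|\le 1}\int_{|\bx|\le\ell}\int\frac{d\by}{1+|\bx-\by|^{d+1}}\,d\bx\,d\bxi\le C\ell^{d-1}.
\]
No splitting, no Taylor remainder, and the hypothesis $p\in\BS^{(d+1,d+1,d+2)}$ is used exactly once---to allow $Q=d+2$ in Theorem~\ref{full:thm}.
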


\begin{proof}
The bound \eqref{quantisation_symbol:eq} follows from
\eqref{quantisation_ampl:eq} with $p(\bx, \by, \bxi) = a(\by, \bxi)$,
so that $p\in \BS^{(d+1, d+1, d+2)}$ and
$\SN^{(d+1, d+1, d+2)}(p; \ell, \rho) = \SN^{(d+1, d+2)}(a; \ell, \rho)$.

Proof of \eqref{quantisation_ampl:eq}.
As in the proof of Lemma \ref{general_trace:lem}, in view of
\eqref{orthogonal1:eq},
\eqref{scale:eq} and \eqref{unitary:eq} we may assume that
$\bz=\bmu = \bzero$ and $\a = 1, \rho = 1$ and $\ell\ge c$.
For definiteness suppose that the condition
\eqref{p_support1:eq} is satisfied. Without loss of generality assume that
$\SN^{(d+1, d+1, d+2)}(p; \ell, 1) = 1$.
In order to apply Theorem \ref{full:thm} note that
the amplitude
$g(\bx, \by, \bxi) = p(\bx, \by, \bxi) - p(\bx, \bx, \bxi)$
satisfies the bounds
\begin{align*}
|\nabla_{\bx}^{n_1}\nabla_{\by}^{n_2} \nabla_{\bxi}^m
g(\bx, \by, \bxi)|
\le  &\ C\ell^{-n_1-n_2}
\chi_{\bzero, \ell}(\bx)\chi_{\bzero, 1}(\bxi),\\[0.2cm]
|\nabla_{\bxi}^m g(\bx, \by, \bxi)|\le  &\ \ell^{-1}  |\bx-\by|
\chi_{\bzero, \ell}(\bx)\chi_{\bzero, 1}(\bxi),
\end{align*}
for any $n_1, n_2\le d+1$ and $m\le d+2$, so that
\begin{equation*}
|\nabla_{\bx}^{n_1}\nabla_{\by}^{n_2} \nabla_{\bxi}^m
g(\bx, \by, \bxi)|
\le C\ell^{-1} \lu\bx-\by\ru
\chi_{\bzero, \ell}(\bx)\chi_{\bzero, 1}(\bxi),\ \ n_1, n_2\le d+1,\ m\le d+2.
\end{equation*}
Thus by Theorem \ref{full:thm} with $Q = d+2$, we have
\begin{equation*}
\| \op_\a^a(g)\|_{\GS_1}
\le C \ell^{-1}
\underset{|\bxi|\le 1}\int \ \
\underset{|\bx|\le \ell}\int \ \
\int
\frac{1}
{1+|\bx-\by|^{d+1}} d\by d\bx d\bxi \le C\ell^{d-1},
\end{equation*}
which is the required bound.
\end{proof}

\begin{cor}\label{product:cor}
Let $a, b\in\BS^{(d+1, d+2)}$ be two symbols
such that either $a$ or $b$ satisfies \eqref{a_support:eq}.
Suppose that $\a\ell\rho\ge c$.
Then
\begin{equation}\label{product_smooth:eq}
\|\op_\a^l(a)\op_\a^l(b) - \op_\a^l(ab)\|_{\GS_1}\le
C(\a\ell\rho)^{d-1} \SN^{(d+1, d+2)}(a; \ell, \rho)
\SN^{(d+1, d+2)}(b; \ell, \rho).
\end{equation}
\end{cor}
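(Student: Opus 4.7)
The plan is to reduce the statement to the quantisation-change bounds of Lemma \ref{quantisation_trace:lem} via the decomposition
\[
\op_\a^l(a)\op_\a^l(b) - \op_\a^l(ab)
= \bigl[\op_\a^l(a)\op_\a^r(b) - \op_\a^l(ab)\bigr]
+ \op_\a^l(a)\bigl[\op_\a^l(b) - \op_\a^r(b)\bigr],
\]
which isolates on the left a composition of a left- with a right-quantised operator whose kernel can be brought to amplitude form by a single Fourier inversion.

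For the first bracket, integrating out the intermediate variable in the composition produces the identity $\op_\a^l(a)\op_\a^r(b) = \op_\a^a(p)$ with amplitude $p(\bx, \by, \bxi) = a(\bx, \bxi)\, b(\by, \bxi)$. On the diagonal $p(\bx, \bx, \bxi) = (ab)(\bx, \bxi)$, and $p$ inherits the support hypothesis \eqref{p_support1:eq} or \eqref{p_support2:eq} according to whether $a$ or $b$ is the localised factor. Therefore \eqref{quantisation_ampl:eq} applies and yields a trace-norm estimate of order $(\a\ell\rho)^{d-1}$ times $\SN^{(d+1,d+1,d+2)}(p;\ell,\rho)$; a Leibniz expansion of $\nabla_{\bx}^{n_1}\nabla_{\by}^{n_2}\nabla_{\bxi}^{m} p$ separates the $\bx$-derivatives onto $a$ and the $\by$-derivatives onto $b$, so this last quantity is dominated by $C\,\SN^{(d+1,d+2)}(a;\ell,\rho)\,\SN^{(d+1,d+2)}(b;\ell,\rho)$, as required.

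For the second bracket, I would apply the trace-ideal inequality $\|AB\|_{\GS_1} \le \|A\|\,\|B\|_{\GS_1}$ with the roles of trace-class and bounded factors switched depending on which symbol is localised. If $a$ satisfies \eqref{a_support:eq}, estimate \eqref{trace_symbol:eq} places $\op_\a^l(a)\in\GS_1$ with $\|\op_\a^l(a)\|_{\GS_1} \le C(\a\ell\rho)^{d}\SN^{(d+1,d+1)}(a;\ell,\rho)$, while \eqref{quantisation_symbol_norm:eq} gives $\|\op_\a^l(b) - \op_\a^r(b)\| \le C(\a\ell\rho)^{-1}\SN^{(k,d+2)}(b;\ell,\rho)$. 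If instead $b$ is the localised symbol, the partition flips: \eqref{quantisation_symbol:eq} puts $\op_\a^l(b) - \op_\a^r(b)$ in $\GS_1$ with trace norm at most $C(\a\ell\rho)^{d-1}\SN^{(d+1,d+2)}(b;\ell,\rho)$, and Lemma \ref{general_norm:lem} applied to the $\by$-independent amplitude $p(\bx, \by, \bxi) = a(\bx, \bxi)$ bounds $\|\op_\a^l(a)\|$ by $C\,\SN^{(k,d+1)}(a;\ell,\rho)$. Either pairing produces the same order $(\a\ell\rho)^{d-1}\SN^{(d+1,d+2)}(a;\ell,\rho)\SN^{(d+1,d+2)}(b;\ell,\rho)$, and summing the two brackets closes \eqref{product_smooth:eq}.

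The main conceptual wrinkle is this case distinction in the second bracket: without a localising support, neither $\op_\a^l(a)$ nor $\op_\a^l(b) - \op_\a^r(b)$ is individually trace class, so one must funnel the composition through whichever factor is compactly supported. Once the right pair is chosen, the remainder is routine bookkeeping on top of Lemmas \ref{general_norm:lem}--\ref{quantisation_trace:lem}; the only non-estimate ingredient, the amplitude identity $\op_\a^l(a)\op_\a^r(b) = \op_\a^a(a\otimes b)$, is standard and can be justified on Schwartz inputs by Fubini.
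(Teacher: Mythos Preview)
Your proof is correct and follows essentially the same route as the paper: the same decomposition into $\op_\a^l(a)\op_\a^r(b)-\op_\a^l(ab)$ plus $\op_\a^l(a)\bigl[\op_\a^l(b)-\op_\a^r(b)\bigr]$, the amplitude identity $\op_\a^l(a)\op_\a^r(b)=\op_\a^a(a\otimes b)$ fed into \eqref{quantisation_ampl:eq}, and the trace/norm pairing for the second term via \eqref{trace_symbol:eq} with \eqref{quantisation_symbol_norm:eq} (or \eqref{quantisation_symbol:eq} with Lemma \ref{general_norm:lem}). The paper only writes out the case where $a$ is the localised factor and dismisses the other as similar; your explicit handling of both cases, and your remark that the case split is forced because neither factor is individually trace class without the support hypothesis, is a welcome clarification but not a different argument.
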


\begin{proof}
Suppose that $a$ satisfies \eqref{a_support:eq}.   By
\eqref{trace_symbol:eq} and \eqref{quantisation_symbol_norm:eq},
\begin{align}\label{lltolr:eq}
\|\op_\a^l(a)\op_\a^l(b) - \op_\a^l(a)\op_\a^r(b)\|_{\GS_1}
\le &\ \|\op_\a^l(a)\|_{\GS_1} \|\op_\a^l(b) - \op_\a^r(b)\|\notag\\[0.2cm]
\le &\ C(\a\ell\rho)^{d-1}\SN^{(d+1, d+1)}(a; \ell, \rho)
\SN^{(k, d+2)}(b; \ell, \rho),
\end{align}
where $k$ is defined in \eqref{cald:eq}.
The operator $\op_\a^l(a) \op_\a^r(b)$ has the form $\op_\a^a(p)$
with $p(\bx, \by, \bxi) = a(\bx, \bxi) b(\by, \bxi)$.
Clearly, $p$ satisfies the condition
\eqref{p_support1:eq}, $p\in \BS^{(d+1, d+1, d+2)}$ and
\[
\SN^{(d+1, d+1, d+2)} (p; \ell, \rho)
\le C\SN^{(d+1, d+2)}(a; \ell,\rho) \SN^{(d+1, d+2)}(b, \ell, \rho).
\]
The symbol of $\op_\a^l(ab)$ is $p(\bx, \bx, \bxi)$. Thus by
\eqref{quantisation_ampl:eq},
\[
\|\op_\a^l(a) \op_\a^r(b) - \op_\a^l(ab)\|_{\GS_1}
\le C(\a\ell\rho)^{d-1}
\SN^{(d+1, d+2)}(a; \ell,\rho) \SN^{(d+1, d+2)}(b, \ell, \rho).
\]
Together with \eqref{lltolr:eq} this gives \eqref{product_smooth:eq}.

The case when $b$ satisfies \eqref{a_support:eq}
is done in a similar way, and we omit the proof.
\end{proof}

We need also an estimate for the
trace of the difference $(\op_\a^l(a))^m - \op_\a^l(a^m)$
with an explicit control of the dependence on $m$.

\begin{lem}\label{multiple:lem}
Let $a\in\BS^{(d+2, d+2)}$ satisfy \eqref{a_support:eq}.
Suppose that $\a\ell\rho\ge 1$. Then
\begin{equation}\label{multiple:eq}
\|\bigl(\op_\a^l(a)\bigr)^m - \op_\a^l(a^m)\|_{\GS_1}
\le D^{m-1} (m-1)^{2(d+2)}
(\a\ell\rho)^{d-1} \bigl(\SN^{(d+1, d+2)}(a; \ell, \rho)\bigr)^m,
\end{equation}
for any $m = 1, 2, \dots$, with the constant
\[
D = D_{tr} + D_{norm},
\]
where $D_{norm}, D_{tr}$ are the constants in the bounds \eqref{general_norm:eq}
and \eqref{product_smooth:eq} respectively under the condition $\a\ell\rho\ge 1$.
\end{lem}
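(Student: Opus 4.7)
The plan is a straightforward induction on $m$, using a telescoping decomposition, the norm bound \eqref{general_norm:eq}, the quasi-multiplicativity estimate \eqref{product_smooth:eq}, and a Leibniz-type bound on the seminorm of $a^m$. We may assume without loss of generality that $D\ge 1$ (otherwise replace $D$ by $\max(D,1)$), so that $D^{m-1}\ge 1$ for all $m\ge 1$. The base case $m=1$ is trivial, since both sides vanish.

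For the inductive step, assume the bound at stage $m$ and write the telescoping identity
\begin{equation*}
\bigl(\op_\a^l(a)\bigr)^{m+1} - \op_\a^l(a^{m+1})
= \bigl[\bigl(\op_\a^l(a)\bigr)^{m} - \op_\a^l(a^{m})\bigr]\op_\a^l(a)
+ \bigl[\op_\a^l(a^m)\op_\a^l(a) - \op_\a^l(a^{m+1})\bigr].
\end{equation*}
For the first bracket, apply the inductive hypothesis and multiply by $\|\op_\a^l(a)\|\le D_{norm}\SN^{(d+1,d+2)}(a;\ell,\rho)$, obtained from \eqref{general_norm:eq} (after observing that $\SN^{(k,k,d+1)}\le \SN^{(d+1,d+1,d+2)}$ with $k$ as in \eqref{cald:eq}); this contributes $D_{norm}D^{m-1}(m-1)^{2(d+2)}(\a\ell\rho)^{d-1}\SN^{(d+1,d+2)}(a;\ell,\rho)^{m+1}$ to the trace norm. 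For the second bracket, apply \eqref{product_smooth:eq} with the pair $(a^m,a)$, noting that $a^m$ inherits the support condition \eqref{a_support:eq} from $a$; this contributes $D_{tr}(\a\ell\rho)^{d-1}\SN^{(d+1,d+2)}(a^m;\ell,\rho)\SN^{(d+1,d+2)}(a;\ell,\rho)$.

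The remaining ingredient is a quantitative Leibniz estimate
\begin{equation*}
\SN^{(d+1,d+2)}(a^m;\ell,\rho)\le m^{2d+3}\SN^{(d+1,d+2)}(a;\ell,\rho)^{m}.
\end{equation*}
This follows from the general Leibniz rule: for any multi-indices $\alpha,\beta$ with $|\alpha|\le d+1$, $|\beta|\le d+2$, the derivative $\p_\bx^\alpha\p_\bxi^\beta(a^m)$ is a sum indexed by decompositions $\alpha=\alpha_1+\cdots+\alpha_m$, $\beta=\beta_1+\cdots+\beta_m$, with the total multinomial weight summing to $m^{|\alpha|+|\beta|}\le m^{2d+3}$; each factor in the resulting product is bounded by $\SN^{(d+1,d+2)}(a;\ell,\rho)\,\ell^{-|\alpha_i|}\rho^{-|\beta_i|}$, so the $\ell^{|\alpha|}\rho^{|\beta|}$ weighting in the definition of $\SN^{(d+1,d+2)}$ cancels correctly. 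Combining the two pieces, the trace norm at stage $m+1$ is bounded by
\begin{equation*}
\bigl[D_{norm}D^{m-1}(m-1)^{2(d+2)}+D_{tr}\,m^{2d+3}\bigr](\a\ell\rho)^{d-1}\SN^{(d+1,d+2)}(a;\ell,\rho)^{m+1}.
\end{equation*}
Using $(m-1)^{2(d+2)}\le m^{2(d+2)}$ and $m^{2d+3}\le D^{m-1}m^{2(d+2)}$ (the latter valid since $D^{m-1}m\ge 1$ whenever $D\ge 1$ and $m\ge 1$), the bracket is dominated by $(D_{norm}+D_{tr})D^{m-1}m^{2(d+2)}=D^m m^{2(d+2)}$, which is exactly the claim.

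There is no serious obstacle: the induction is clean, and the only point requiring care is to make the bookkeeping of the combinatorial factor $m^{2(d+2)}$ match between the Leibniz bound and the recurrence; the split $D=D_{norm}+D_{tr}$ is what allows the recurrence to close at each step. The assumption $\a\ell\rho\ge 1$ is used precisely to legitimately invoke both \eqref{general_norm:eq} and \eqref{product_smooth:eq} throughout the induction.
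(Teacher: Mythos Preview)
Your proof is correct and follows essentially the same approach as the paper: the same telescoping identity, the same induction using \eqref{general_norm:eq} and \eqref{product_smooth:eq}, and the same Leibniz-type bound on $\SN^{(d+1,d+2)}(a^m;\ell,\rho)$. The only cosmetic differences are that the paper normalizes $\ell=\rho=1$ and $\SN^{(d+1,d+2)}(a;1,1)=1$ at the outset, and records the slightly weaker Leibniz exponent $m^{2(d+2)}$ rather than your $m^{2d+3}$; the final recurrence closes in either case once $D\ge1$, which the paper obtains from $D_{norm}\ge1$.
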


\begin{proof}
Suppose without loss of generality that $\ell=\rho = 1$ and
$\SN^{(d+1, d+2)}(a; 1, 1) = 1$,
so that by Lemma \ref{general_norm:lem},
\[
\|\op_\a^l(a)\|\le D_{norm},\ \textup{if}\ \a\ge 1.
\]
Note that
\begin{equation}\label{multiple1:eq}
\SN^{(d+1, d+2)}(a^p; 1, 1)\le p^{2(d+2)},
\end{equation}
for any $p = 1, 2, \dots$.

We prove the estimate \eqref{multiple:eq}
by induction.
For $m=2$ it follows from \eqref{product_smooth:eq} that
\begin{equation*}
\|\bigl(\op_\a^l(a)\bigr)^2 - \op_\a^l(a^2)\|_{\GS_1}
\le D_{tr} \a^{d-1}\le D \a^{d-1}.
\end{equation*}
Suppose that the sought estimate holds for $m = p$, i.e.
\begin{equation}\label{multiple_p:eq}
\|\bigl(\op_\a^l(a)\bigr)^p - \op_\a^l(a^p)\|_{\GS_1}
\le D^{p-1} (p-1)^{2(d+2)} \a^{d-1},
\end{equation}
and let us show that it holds for $m = p+1$. Rewrite:
\begin{align*}
\bigl(\op_\a^l(a)\bigr)^{p+1} - \op_\a^l(a^{p+1})
= &\ \bigl[\bigl(\op_\a^l(a)\bigr)^p
- \op_\a^l(a^p) \bigr]\op_\a^l(a)\\[0.2cm]
&\ + \bigl[\op_\a^l(a^p) \op_\a^l(a)  - \op_\a^l(a^{p+1})\bigr].
\end{align*}
By \eqref{multiple_p:eq}, \eqref{multiple1:eq} and
\eqref{product_smooth:eq},
\begin{align*}
\|\bigl(\op_\a^l(a)\bigr)^{p+1} - \op_\a^l(a^{p+1})\|_{\GS_1}
\le &\ D^{p-1} (p-1)^{2(d+2)} D_{norm}\a^{d-1}  + D_{tr} p^{2(d+2)}\a^{d-1}\\[0.2cm]
\le &\ p^{2(d+2)}\a^{d-1} \bigl(D^{p-1} D_{norm} + D_{tr}
\bigr).
\end{align*}
Since $D_{norm}\ge 1$, the right-hand side of the above bound does not
exceed $D^p p^{2(d+2)} \a^{d-1}$, as required.
\end{proof}


\section{Trace-class estimates for operators with non-smooth symbols}
\label{nonsmooth1:sect}

Here we obtain trace class estimates for operators
with symbols having jump discontinuities. More precisely, we consider symbols containing
characteristic functions $\chi_{\L}$ and/or $P_{\Om, \a}$.
For $d\ge 2$ both domains $\L$ and $\Om$
are supposed to be $\plainC1$-graph-type domain.
We intend to consider the cases $d\ge 2$ and $d=1$ simultaneously. For $d=1$ we
assume as a rule that $\L$ and $\Om$ are half-infinite open intervals.
For the reference convenience we state these conditions explicitly:

\begin{cond}\label{graph:cond}
If $d = 1$, then $\L = \{x\in\R: x\ge x_0\}$ and
$\Om = \{\xi\in\R: \xi\ge \xi_0\}$ with some $x_0, \xi_0\in\R$.

If $d\ge 2$, then both $\L$ and $\Om$ are $\plainC1$-graph-type domains
in the sense of Definition \ref{domains:defn}, i.e.
$\L = \G(\Phi; \boldO_{\L}, \boldk_{\L})$,
$\Om = \G(\Psi; \boldO_{\Om}, \boldk_{\Om})$
with some Euclidean isometries
$(\boldO_{\L}, \boldk_{\L})$,
$(\boldO_{\Om}, \boldk_{\Om})$ and some $\plainC1$-functions $\Phi$, $\Psi$,
satisfying the conditions \eqref{propertyphi:eq}.
\end{cond}

The precise values of the constants
$M_{\Phi} = \|\nabla\Phi\|_{\plainL\infty}$
and $M_{\Psi} = \|\nabla\Psi\|_{\plainL\infty}$ do not play any role in this section.
In fact, our results will be uniform in the functions
$\Phi$, $\Psi$, satisfying the condition
\begin{equation}\label{gradient:eq}
\max(M_{\Phi}, M_{\Psi}) \le M,
\end{equation}
with some constant $M$.
Referring to the unitary equivalence \eqref{chilambda:eq}, we
often assume that either $\boldO_{\L} = \boldI, \boldk_{\L} = \mathbf 0$,
or $\boldO_{\Om} = \boldI, \boldk_{\Om} = \mathbf 0$.

\subsection{A partition of unity}
For any $\plainC1$-graph-type domain $\L$
we construct a specific partition of unity on $\R^d$.
The following remark on the domain $\L = \G(\Phi; \boldI, \bzero)$ will be useful
for this construction.
In view of the condition \eqref{gradient:eq},
\begin{equation}\label{linside:eq}
|x_d - \Phi(\hat\bx) - \bigl(y_d - \Phi(\hat\by)\bigr)|
\le \lu M\ru\ |\bx - \by|, \ \lu M\ru := \sqrt {1+M^2}
\end{equation}
for all $\bx, \by\in\R^d$.
 In particular, if $\bx\in \L$ and $\by\notin\L$,
 i.e. if $x_d > \Phi(\hat\bx)$ and $y_d \le \Phi(\hat\by)$, we have
 $x_d - \Phi(\hat\bx)\le \lu M\ru\ |\bx-\by|$, and hence
\begin{equation}\label{dist:eq}
\dist \{\bx, \complement\L\}\ge \frac{1}{\lu M\ru}\bigl(
x_d - \Phi(\hat\bx)\bigr), \ \ \bx\in \L = \G(\Phi; \boldI, \bzero),
\end{equation}
where $\complement\L$ is the standard notation for the complement of $\L$.

\begin{lem} \label{boundary_partition:lem}
Let $\L = \G(\Phi; \boldO, \boldk)$
be a graph type domain with $\Phi$ satisfying the condition
\eqref{gradient:eq},
and let
\begin{equation*}
\L^{(t)} = \G(\Phi+t; \boldO, \boldk),\ t\in\R.
\end{equation*}
Then
for any $\d>0$ there exist two non-negative functions
$\z_1 = \z_1^{(\d)}, \z_2 = \z_2^{(\d)}\in\plainC\infty(\R^d)$ such that
$\z_1+\z_2 = 1$,
\begin{equation*}
\z_1(\bx) =
\begin{cases}
0,\ \  \textup{ if}\ \  \bx\notin\L^{(\d)},\\
1,\ \  \textup{ if}\ \  \bx\in\L^{(3\d\lu M\ru)},
\end{cases}
\end{equation*}
and
\begin{equation}\label{boundary_partition:eq}
|\nabla^s\z_1(\bx)| + |\nabla^s\z_2(\bx)|\le C_s\d^{-s}, \ s = 1, 2, \dots,
\end{equation}
uniformly in $\bx$, with constants $C_s$, which may depend on $M$, but
are independent of the function $\Phi$, satisfying
\eqref{gradient:eq}, on the transformation $\boldO$ or on the vector $\boldk$.
\end{lem}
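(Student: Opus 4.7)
The plan is to reduce first to the canonical case $\boldO = \boldI$, $\boldk = \bzero$. Both the family of sets $\L^{(t)}$ and the derivative bounds in \eqref{boundary_partition:eq} behave equivariantly under the Euclidean isometry $\bx \mapsto \boldO\bx + \boldk$: if $\z_1^{\circ}, \z_2^{\circ}$ work for $\L = \G(\Phi; \boldI, \bzero) = \{\bx: x_d > \Phi(\hat\bx)\}$, then $\z_j(\bx) = \z_j^{\circ}(\boldO^{-1}(\bx-\boldk))$ work for $\L = \G(\Phi; \boldO, \boldk)$ with the same derivative constants, since $\boldO$ is orthogonal and so all derivatives transform isometrically. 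Thus it suffices to treat the canonical situation.

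The main technical point is that $\Phi$ is only $\plainC1$, so naively composing it with a smooth cut-off yields at best a $\plainC1$ function rather than the required $\plainC\infty$. To remedy this I will mollify $\Phi$. Fix a non-negative $\varphi \in \plainC\infty_0(\R^{d-1})$ with $\int\varphi = 1$, set $\varphi_\eta(\hat\bx) = \eta^{-(d-1)}\varphi(\hat\bx/\eta)$, and define $\tilde\Phi = \Phi * \varphi_\eta$ for a parameter $\eta > 0$. Since $|\Phi(\hat\bx-\hat\by) - \Phi(\hat\bx)| \le M|\hat\by|$, one has $\|\tilde\Phi - \Phi\|_{\plainL\infty} \le C_0 M\eta$; moreover $\|\nabla\tilde\Phi\|_{\plainL\infty} \le M$, and by placing $s-1$ derivatives on $\varphi_\eta$ in the convolution one gets $\|\nabla^s \tilde\Phi\|_{\plainL\infty} \le C_s M \eta^{1-s}$ for $s \ge 2$. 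Choose $\eta = \d/(1+C_0 M)$, so that $\|\tilde\Phi - \Phi\|_{\plainL\infty} \le \d$ and $\|\nabla^s \tilde\Phi\|_{\plainL\infty} \le C_s' \d^{1-s}$ for $s \ge 2$, with $C_s'$ depending only on $M$, $\varphi$ and $s$.

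Next, fix once and for all a scalar $\phi \in \plainC\infty(\R)$ with $0 \le \phi \le 1$, $\phi(s) = 0$ for $s \le 2$, and $\phi(s) = 1$ for $s \ge 3\lu M\ru - 1$; this is consistent because $3\lu M\ru - 1 \ge 2$. Set
\[
\z_1(\bx) = \phi\bigl(\d^{-1}(x_d - \tilde\Phi(\hat\bx))\bigr), \qquad \z_2(\bx) = 1 - \z_1(\bx).
\]
Both set-level requirements are then immediate: if $\bx \notin \L^{(\d)}$, i.e.\ $x_d \le \Phi(\hat\bx) + \d$, then $x_d - \tilde\Phi(\hat\bx) \le \d + \|\tilde\Phi - \Phi\|_{\plainL\infty} \le 2\d$, so $\z_1(\bx) = 0$; if $\bx \in \L^{(3\d\lu M\ru)}$, i.e.\ $x_d > \Phi(\hat\bx) + 3\d\lu M\ru$, then $x_d - \tilde\Phi(\hat\bx) > (3\lu M\ru - 1)\d$, so $\z_1(\bx) = 1$.

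Finally, the derivative bounds \eqref{boundary_partition:eq} follow from the Faà di Bruno formula applied to $\phi \circ F$, where $F(\bx) = \d^{-1}(x_d - \tilde\Phi(\hat\bx))$. Its first-order partials are bounded by $\d^{-1}\lu M\ru$, and for $s \ge 2$ the mollification estimates give $|\nabla^s F| \le C_s'' \d^{-s}$; since $\phi$ and all its derivatives are uniformly bounded, every term in the chain-rule expansion of $\nabla^s \z_1$ is dominated by a constant multiple of $\d^{-s}$, and the same bound transfers to $\z_2 = 1 - \z_1$. All constants depend on $M$, $\varphi$ and $s$ but not on the specific $\Phi$, $\boldO$ or $\boldk$. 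The main (albeit modest) obstacle is the book-keeping with the mollification scale: one must choose $\eta$ small enough in $\d$ to make the approximation error at most $\d$, while large enough that the higher-derivative bounds on $\tilde\Phi$ remain of order $\d^{1-s}$; once this calibration is fixed, the remainder of the argument is a mechanical chain-rule computation.
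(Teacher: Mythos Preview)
Your proof is correct but takes a genuinely different route from the paper's.

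The paper avoids mollifying $\Phi$ altogether. It fixes a smooth partition of unity $\{\psi_j\}$ on $\R^d$ subordinate to balls of radius $2$ at the integer lattice, rescales to $\phi_j(\bx)=\psi_j(\bx/\d)$, and defines $\z_1$ as the sum of those $\phi_j$ whose support lies entirely inside $\L^{(\d)}$. The derivative bound \eqref{boundary_partition:eq} is then immediate from the bumps, and the condition $\z_1=1$ on $\L^{(3\d\lu M\ru)}$ follows from the distance estimate \eqref{dist:eq}, which gives $\dist(\L^{(3\d\lu M\ru)},\complement\L^{(\d)})\ge 2\d$. No smoothness of $\Phi$ is used beyond what is needed to define the sets $\L^{(t)}$ and invoke \eqref{dist:eq}.

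Your approach instead mollifies $\Phi$ at scale $\eta\sim\d$ and composes the smooth height function $x_d-\tilde\Phi(\hat\bx)$ with a one-dimensional cutoff. This uses the Lipschitz bound on $\Phi$ twice---once to control $\|\tilde\Phi-\Phi\|_{\plainL\infty}$ and once (through one derivative on $\Phi$, the rest on $\varphi_\eta$) to get $\|\nabla^s\tilde\Phi\|_{\plainL\infty}\le C_s\d^{1-s}$---followed by Fa\`a di Bruno. The payoff is an explicit closed formula for $\z_1$, at the cost of the book-keeping you flag with the mollification scale and the chain-rule expansion. One minor point to be aware of: your transition interval $[2,\,3\lu M\ru-1]$ for $\phi$ has length $3(\lu M\ru-1)$, which shrinks as $M\to 0$ and forces the derivatives of $\phi$ to grow; this is harmless here since the constants $C_s$ are explicitly allowed to depend on $M$, but it is where that dependence enters in your construction.
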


\begin{proof} Since the left-hand side of \eqref{boundary_partition:eq}
is invariant with respect to
the orthogonal transformations and translations,
without loss of generality we may assume that $\boldO = \boldI$ and $\boldk = \bzero$.
Let $\psi_j\in\plainC\infty_0, j = 1, 2, \dots,$ be a partition of unity of $\R^d$
subordinate to the covering
of $\R^d$ by balls of radius $2$ centred at the points of the integer lattice $\Z^d$.
Clearly,
\[
|\nabla^s\psi_j(\bx)|\le C_s,
\]
uniformly in $j$. Denote
\[
\phi_j(\bx) = \psi_j(\bx\d^{-1}),
\]
so that the functions $\phi_j\in\plainC\infty_0, j= 1, 2, \dots, $ form a partition of
unity subordinate to the covering of $\R^d$ by balls of radius $2\d$, centred
at the points of the lattice $(\d\Z)^d$. Moreover, $\phi_j$ have the property that
\begin{equation}\label{diff_phi:eq}
|\nabla^s\phi_j(\bx)|\le C_s\d^{-s},
\end{equation}
uniformly in $\bk$. Define
\begin{equation*}
\z_1(\bx) = \sum_{j} \phi_j(\bx),
\end{equation*}
where the summation is taken over all indices $j$ such that
\[
\phi_j(\bx) \chi_{\L^{(\d)}}(\bx) = \phi_j(\bx),
\]
so that $\z_1(\bx) = 0$ for $\bx\notin\L^{(\d)}$.
By \eqref{dist:eq} the distance from any point $\bx\in\L^{(\e)}, \e > \d$
to $\complement\L^{(\d)}$ is bounded from below by $\lu M\ru^{-1}(\e-\d)$, so
we conclude that
\[
\dist\{\overline{\L^{(R)}}, \complement\L^{(\d)}\}\ge 2\d,\ \ R = 3\d\lu M\ru.
\]
Consequently $\z_1(\bx) = 1$ for all $\bx\in\L^{(R)}$, as required.
Define $\z_2 = 1- \z_1$. Then \eqref{boundary_partition:eq} is satisfied in
view of \eqref{diff_phi:eq}. This completes the proof.
\end{proof}

For the functions $\z_1, \z_2$ constructed in the above lemma we write sometimes
$\z_1^{(\d)}(\bx; \L)$ and $\z_2^{(\d)}(\bx; \L)$.

\subsection{Trace class estimates} Assume as before, that Condition \ref{graph:cond}
is satisfied. If $d\ge 2$, the in all subsequent estimates the constants
are independent of the transformations $(\boldO_{\L}, \boldk_{\L})$ and
$(\boldO_{\Om}, \boldk_{\Om})$, and are uniform in
the functions $\Phi$ and $\Psi$ satisfying \eqref{gradient:eq}, but may depend on the
constant $M$ in \eqref{gradient:eq}. If $d = 1$, then the estimates are uniform in
the numbers $x_0, \xi_0$, which enter the definitions of $\L$ and $\Om$.

As in the previous section we assume as a rule that the symbols
are compactly supported and satisfy the condition \eqref{a_support:eq}.
The constants in the obtained estimates will be independent of
$\bz$, $\bmu$ and $\ell, \rho$.

\begin{lem}\label{sandwich:lem}
Suppose that the symbol $a\in\BS^{(d+1, d+2)}$
satisfies \eqref{a_support:eq} if $d\ge 2$, and that it is supported
on $\R\times B(\mu, \rho)$ if $d = 1$, with some $\mu\in\R$.
Assume that $\a\ell\rho\ge c$.
Let $\op_\a(a)$ denotes any of the operators
$\op^l_\a(a)$ or $\op^r_\a(a)$.  Then
\begin{equation}\label{sandwich1:eq}
\|\chi_{\L} \op_\a(a) (1-\chi_\L)\|_{\GS_1}\le C
(\a\ell\rho)^{d-1}
 \SN^{(d+1, d+2)}(a; \ell, \rho),
\end{equation}
and
\begin{equation}\label{sandwich2:eq}
\|[\op_\a(a), \chi_\L]\|_{\GS_1}\le C  (\a\ell\rho)^{d-1}\
\SN^{(d+1, d+2)}(a; \ell, \rho).
\end{equation}

\end{lem}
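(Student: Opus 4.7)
The plan is as follows. First, observing that $\chi_\L(1-\chi_\L)=0$, one has the identity
\begin{equation*}
\chi_\L \op_\a(a)(1-\chi_\L) = [\chi_\L, \op_\a(a)](1-\chi_\L),
\end{equation*}
so \eqref{sandwich1:eq} is an immediate consequence of \eqref{sandwich2:eq}. Conversely, the decomposition $[\chi_\L, \op_\a(a)] = \chi_\L\op_\a(a)(1-\chi_\L) - (1-\chi_\L)\op_\a(a)\chi_\L$, combined with the relation $(\op_\a^l(a))^* = \op_\a^r(\bar a)$ (so that the two halves are trace-norm adjoints of one another), shows that \eqref{sandwich2:eq} follows once \eqref{sandwich1:eq} is established for both the left and right quantizations. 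The unitary equivalences \eqref{orthogonal1:eq}--\eqref{chilambda:eq} together with the scaling and isometry invariances \eqref{scale:eq}, \eqref{orthogonal:eq} of the norm \eqref{norm:eq} allow me to reduce further to the case $\boldO_\L = \boldI$, $\boldk_\L = \bzero$, so that $\L = \{\bx : x_d > \Phi(\hat\bx)\}$ with $\|\nabla\Phi\|_{\plainL\infty}\le M$.

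To prove \eqref{sandwich1:eq}, I would take the smooth interior cutoff $\tilde\chi = \z_1^{(\delta)}$ from Lemma \ref{boundary_partition:lem}, with $\delta$ comparable to $\ell$, and split
\begin{equation*}
\chi_\L \op_\a^l(a)(1-\chi_\L) = \tilde\chi\, \op_\a^l(a)(1-\chi_\L) + (\chi_\L - \tilde\chi)\op_\a^l(a)(1-\chi_\L).
\end{equation*}
Because $\tilde\chi$ is supported inside $\L$, $\tilde\chi(1-\chi_\L) = 0$, and the first term coincides with $[\tilde\chi, \op_\a^l(a)](1-\chi_\L) = \op_\a^a\bigl(a(\bx,\bxi)(\tilde\chi(\bx) - \tilde\chi(\by))\bigr)(1-\chi_\L)$. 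The diagonal $p(\bx,\bx,\bxi)$ of this amplitude vanishes identically, so Lemma \ref{quantisation_trace:lem} applied at scale $\ell$---with the choice $\delta\gtrsim\ell$ guaranteeing that the factors $\ell^{n_2}|\nabla^{n_2}\tilde\chi|$ remain bounded by the constants from Lemma \ref{boundary_partition:lem}---yields a trace norm bound of the form $C(\a\ell\rho)^{d-1}\SN^{(d+1,d+2)}(a;\ell,\rho)$.

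The harder task is to control the residual $(\chi_\L - \tilde\chi)\op_\a^l(a)(1-\chi_\L)$, since at the scale $\delta\sim\ell$ the non-smooth weight $\chi_\L-\tilde\chi$ is not a thin strip. I would refine the construction dyadically: introduce scales $\delta_j = \ell\, 2^{-j}$ for $j = 0,1,\ldots, J$ with $J \sim \log_2(\a\ell\rho)$ so that $\delta_J \sim (\a\rho)^{-1}$ matches the intrinsic length scale of $\op_\a^l(a)$, set $\tilde\chi_j = \z_1^{(\delta_j)}$, and telescope $\chi_\L = \tilde\chi_0 + \sum_{j=0}^{J-1}(\tilde\chi_{j+1}-\tilde\chi_j) + (\chi_\L-\tilde\chi_J)$. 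Each intermediate-scale commutator $[\tilde\chi_{j+1}-\tilde\chi_j, \op_\a^l(a)](1-\chi_\L)$ is then handled by Lemma \ref{quantisation_trace:lem}, exploiting in addition that $\tilde\chi_{j+1}-\tilde\chi_j$ is supported inside $\L$ at distance $\gtrsim \delta_{j+1}/\lu M\ru$ from $\R^d\setminus\L$, which---via the separation estimate \eqref{largea:eq} of Lemma \ref{general_trace:lem}(2) with $m$ chosen sufficiently large---supplies geometric decay in $j$. The finest-scale residual, supported in a boundary strip of thickness $\sim(\a\rho)^{-1}$, is treated by slicing $\R^d\setminus\L$ into parallel layers $W_k^-$ of the same thickness in the normal direction to $\p\L$ and summing the pairwise contributions via \eqref{largea:eq}. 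The main obstacle is arranging the scale hierarchy---tangential scale $\ell$ versus normal scale $(\a\rho)^{-1}$---carefully enough that the dyadic sum closes with precisely $O((\a\ell\rho)^{d-1}\SN)$ rather than accruing an extra factor of $\ell$ or $\log(\a\ell\rho)$; this refinement of Widom's half-space analysis in \cite{Widom2} is possible because the graph structure of $\p\L$ lets the scale hierarchy be set up uniformly across the boundary.
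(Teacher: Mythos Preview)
Your reductions in the first paragraph (the commutator/adjoint manipulations and the use of \eqref{orthogonal1:eq}--\eqref{chilambda:eq} to put $\L$ in graph form) match the paper. But your core argument for \eqref{sandwich1:eq} is far more elaborate than necessary; the paper takes a much shorter route that sidesteps the entire multi-scale issue you raise.

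The paper first scales, via \eqref{unitary:eq}--\eqref{unitary_domain:eq} with $\ell_1 = (\a\rho)^{-1}$, $\rho_1 = \rho$, to reduce to $\a=\rho=1$ and arbitrary $\ell\ge c$. After this there is a \emph{single} natural length scale in the problem, namely $1$, so the ``tangential scale $\ell$ versus normal scale $(\a\rho)^{-1}$'' gap you worry about has collapsed. One then takes the smooth cutoffs $\z_1,\z_2$ of Lemma~\ref{boundary_partition:lem} at scale $\d=1$ and observes the pointwise inequalities $\chi_\L\le\z_1(\,\cdot\,;\L^{(-3\lu M\ru)})$ and $1-\chi_\L\le\z_2(\,\cdot\,;\L)$, which mean $\chi_\L=\chi_\L\z_1$ and $(1-\chi_\L)=\z_2(1-\chi_\L)$; hence $\|\chi_\L\op_1^l(a)(1-\chi_\L)\|_{\GS_1}\le\|\z_1\op_1^l(a)\z_2\|_{\GS_1}$. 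The right-hand side is $\|\op_1^a(p)\|_{\GS_1}$ for the \emph{smooth} amplitude $p(\bx,\by,\bxi)=\z_1(\bx)a(\bx,\bxi)\z_2(\by)$, and Theorem~\ref{full:thm} with $Q=d+2$ applies directly. The built-in kernel decay $(1+|\bx-\by|^{d+2})^{-1}$ does all the work: the integral over $\{\z_1(\bx)>0,\ \z_2(\by)>0,\ |\bx-\bz|\le\ell\}$ is bounded by $C\ell^{d-1}$ after a single split into a unit-width boundary strip and its complement (using \eqref{dist:eq}).

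Your dyadic telescoping with the separation estimate \eqref{largea:eq}, followed by layer-slicing of $\R^d\setminus\L$ at the finest scale, can be pushed through with careful bookkeeping and does avoid the $\log(\a\ell\rho)$ loss---but it is solving a difficulty the scaling reduction has already eliminated. The paper's approach replaces your entire hierarchy $\delta_0,\dots,\delta_J$ by a single smooth-weight substitution at $\d=1$.
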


\begin{proof}
For definiteness we prove the above estimates for the operator
$\op_\a^l(a)$.

The estimate \eqref{sandwich2:eq} follows from
from \eqref{sandwich1:eq} due to the identity
\[
[\op^l_\a(a), \chi_\L] = (1-\chi_\L)\op^l_\a(a) \chi_\L
-  \chi_\L \op^l_\a(a) (1-\chi_\L).
\]
Let us prove \eqref{sandwich1:eq}.
In view of \eqref{orthogonal1:eq}, \eqref{orthogonal:eq}
and \eqref{chilambda:eq} we may assume that $\boldO_{\L} = \boldI, \boldk = \mathbf 0$,
i.e. $\L = \G(\Phi; \boldI, \bzero)$, if $d\ge 2$, and
$\L = (0, \infty)$, if $d = 1$.

Assume that $d\ge 2$.
Using \eqref{scale:eq}, \eqref{unitary_domain:eq} and \eqref{unitary:eq}
with $\ell_1 = (\a\rho)^{-1}$, $\rho_1 = \rho$, one reduces
the estimate \eqref{sandwich1:eq} to
\begin{equation*}
\|\chi_{\L_{\ell_1}} \op^l_1(a_{\ell_1, \rho_1})
(1-\chi_{\L_{\ell_1}})\|_{\GS_1}\le C (\a\ell\rho)^{d-1} \
\SN^{(d+1, d+2)}
(a_{\ell_1, \rho_1}; \a\rho\ell, 1),
\end{equation*}
where we have used the notation $\L_{\ell_1} = \G(\Phi_{\ell_1}; \boldI, \bzero)$,
$\Phi_{\ell_1}(\hat\bx) = \ell_1^{-1}\Phi(\ell_1\hat\bx)$.
Thus in view of \eqref{scale_domain:eq} it suffices to prove \eqref{sandwich1:eq}
for $\a = \rho = 1$, and arbitrary $\ell\ge c$.

The next step is to
replace the characteristic functions with their smoothed-out versions.
Let $\z_1$ and $\z_2$ be the functions constructed in Lemma
\eqref{boundary_partition:lem} for $\d = 1$. It is clear that
\begin{equation*}
\chi_\L(\bx)\le \z_1(\bx; \L^{(-3\lu M\ru)}),\\
1-\chi_{\L}(\bx)\le \z_2(\bx; \L).
\end{equation*}
Therefore it suffices to estimate
the trace-norm of the operator
\begin{equation*}
\z_1\op_1^l(a) \z_2.
\end{equation*}
Denote
\begin{equation}\label{ptoq:eq}
p(\bx, \by, \bxi) =
\z_1(\bx; \L^{(-3\lu M\ru)})a(\bx, \bxi)\z_2(\by; \L),
\end{equation}
so that by \eqref{boundary_partition:eq},
\begin{equation*}
|\nabla_\bx^{n_1} \nabla_{\by}^{n_2} \nabla_{\bxi}^m p(\bx, \by, \bxi)|
\le C \SN^{(n_1, m)} (a; \ell, 1)
\chi_{\bz, \ell}(\bx) \chi_{\bmu, 1}(\bxi),
\end{equation*}
for all $n_1\le d+1, m\le d+2$ and all $n_2$.
Here we have used the fact that $\ell\ge c$.
For the sake of brevity assume, without loss of generality, that
\[
\SN^{(d+1, d+2)} (a; \ell, 1) = 1.
\]
By Theorem \ref{full:thm} with $Q = d+2$,
\begin{align*}
\|\op_1^a(p)\|_{\GS_1}
\le &\ C \underset{
x_d\ge \Phi(\hat\bx) - 3 \lu M\ru }
\int\ \ \
\underset{y_d\le \Phi(\hat\by)+3\lu M\ru}
\int\int\frac{
\chi_{\bz, \ell}(\bx)\ \chi_{\bmu, 1}(\bxi)
}
{1+|\bx-\by|^{d+2}} d\bxi d\by d \bx\\[0.2cm]
\le &\ C\bigl( I_1(\ell) + I_2(\ell) \bigr),
\end{align*}
where
\begin{align*}
I_1(\ell) = &\ \underset{
x_d\ge \Phi(\hat\bx) + 3\lu M\ru  }
\int\ \
\underset{y_d\le \Phi(\hat\by)+3\lu M\ru}
\int\frac{
\chi_{\bz, \ell}(\bx)
}
{1+|\bx-\by|^{d+2}} d\by d\bx, \\[0.2cm]
I_2(\ell) = &\ \underset{
|x_d- \Phi(\hat\bx)|\le 3\lu M\ru }
\int\ \ \ \ \
\int\frac{
\chi_{\bz, \ell}(\bx)}
{1+|\bx-\by|^{d+2}}  d\by d\bx.
\end{align*}
By \eqref{dist:eq}, in the integral $I_1(\ell)$ we have
\begin{equation*}
|\bx-\by|\ge \frac{1}{\lu M\ru}(x_d - \Phi(\hat\bx) - 3\lu M\ru).
\end{equation*}
Thus we get
\begin{align*}
I_1(\ell)\le &\ C
\underset{\hat\bx\in B(\hat\bz, \ell)}
\int\ \ \underset
{x_d \ge \Phi(\hat\bx)+ 3\lu M\ru}\int\ \
\bigl(1+|x_d - \Phi(\hat\bx)-3\lu M\ru|^2\bigr)^{-1}  d x_d d\hat\bx \\[0.2cm]
\le &\ \tilde C
\underset{\hat\bx\in B(\hat\bz, \ell)}\int
d\hat\bx \le C' \ell^{d-1}.
\end{align*}
The integral $I_2(\ell)$ is estimated in a more straightforward way:
\[
I_2(\ell)\le C\
\underset{\hat\bx\in B(\hat\bz, \ell)}
\int\ \ \underset{
|x_d - \Phi(\hat\bx)|\le 3\lu M\ru}
\int  dx_d d\hat\bx
\le \tilde C \underset{\hat\bx\in B(\hat\bz, \ell)}
\int d\hat\bx
\le C' \ \ell^{d-1},
\]
so that $I_1(\ell) + I_2(\ell)\le C \ell^{d-1}$,
which entails \eqref{sandwich1:eq}.

Suppose now that $d = 1$.
As in the case $d\ge 2$ it suffices to prove the estimate for $\a=\rho = 1$
and arbitrary $\ell\ge c$.
 Let $p(x, y, \xi) = \z_1(x) a(x, \xi) \z_2(y)$
with $\z_1, \z_2\in\plainC\infty$ such that
\begin{equation*}
\z_1(x) =
\begin{cases}
1, x>0,\\
0, x<-1,
\end{cases}
\z_2(x) =
\begin{cases}
1, x<0,\\
0, x>1.
\end{cases}
\end{equation*}
Then
\begin{equation*}
|\p_x^{n_1}\p_y^{n_2}\p_{\xi}^{m} p(x, y, \xi)|
\le C \SN^{(n_1, m)}(a; \ell, 1)   \chi_{\mu, 1}(\xi).
\end{equation*}
Therefore Theorem
\ref{full:thm} with $Q = 3$ gives
\begin{equation*}
\| \op_1^a(p)\|_{\GS_1}\le C \SN^{(2, 3)}(a; \ell, 1)
\underset{x\ge  - 1}\int \ \ \underset{y\le +1}\int \int
\frac{\chi_{\mu, 1}(\xi)}{1+|x-y|^3} d\xi dx dy
\le C'\SN^{(2, 3)}(a; \ell, 1),
\end{equation*}
which leads to \eqref{sandwich1:eq}
\end{proof}

\begin{cor}
Let $d = 1$, and let $a\in \BS^{(3)}$ be a function supported on
$(\mu-\rho, \mu + \rho)$ with some $\mu\in\R, \rho >0$.
Suppose that $\L = (x_0, \infty)$ with some $x_0 \in\R$. Then
\begin{equation}\label{sandwich_111:eq}
\|\chi_{\L} \op_\a(a) (1-\chi_\L)\|_{\GS_1}\le C
\SN^{(3)}(a; \rho),
\end{equation}
and
\begin{equation}\label{sandwich_121:eq}
\|[\op_\a(a), \chi_\L]\|_{\GS_1}\le C
\SN^{(3)}(a; \rho),
\end{equation}
for all $\a>0$ and $\rho>0$, uniformly in $x_0, \mu\in\R$.
\end{cor}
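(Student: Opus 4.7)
The plan is to deduce the corollary directly from Lemma \ref{sandwich:lem} in the case $d=1$, specialised to a symbol $a=a(\xi)$ that depends only on $\xi$. Both estimates of the corollary are then special instances of \eqref{sandwich1:eq} and \eqref{sandwich2:eq} respectively, after two simplifications that occur for such a symbol in one dimension.

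The first simplification is that the norm $\SN^{(d+1,d+2)}(a;\ell,\rho) = \SN^{(2,3)}(a;\ell,\rho)$ appearing on the right-hand side of Lemma \ref{sandwich:lem} collapses to $\SN^{(3)}(a;\rho)$, since every $\p_x$-derivative of $a$ vanishes; in particular this norm carries no $\ell$-dependence. The second is that for $d=1$ the prefactor $(\a\ell\rho)^{d-1}$ equals $1$, so the right-hand side of the lemma depends on $\ell$ through nothing at all. It remains to arrange the hypothesis $\a\ell\rho\ge c$ of the lemma for arbitrary prescribed $\a,\rho>0$: this is done simply by choosing $\ell$ sufficiently large, e.g.\ $\ell=c/(\a\rho)$. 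The choice is available because the $d=1$ version of Lemma \ref{sandwich:lem} requires only that the $\xi$-support of $a$ lie in $B(\mu,\rho)$, with no restriction on the $x$-support, so $\ell$ can be treated as a free bookkeeping parameter.

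The uniformity in $x_0$ (the endpoint of $\L$) and in $\mu$ (the centre of the $\xi$-support) is already built into Lemma \ref{sandwich:lem}: the remark preceding that lemma states explicitly that for $d=1$ the constants are uniform in $x_0$, and more generally the constants in this section are independent of $\mu$. Thus there is no genuine obstacle: the corollary is essentially a restatement of Lemma \ref{sandwich:lem} in the regime where $\ell$ is free and the power $(\a\ell\rho)^{d-1}$ vanishes, and both \eqref{sandwich_111:eq} and \eqref{sandwich_121:eq} follow at once from \eqref{sandwich1:eq} and \eqref{sandwich2:eq}.
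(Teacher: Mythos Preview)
Your proof is correct and follows essentially the same approach as the paper: view $a(\xi)$ as a function of two variables so that $\SN^{(2,3)}(a;\ell,\rho)=\SN^{(3)}(a;\rho)$ for every $\ell>0$, note that $(\a\ell\rho)^{d-1}=1$ when $d=1$, and then choose $\ell$ (the paper takes $\ell=(\a\rho)^{-1}$, you take $\ell=c/(\a\rho)$) so that the constraint $\a\ell\rho\ge c$ is satisfied for arbitrary $\a,\rho>0$.
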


\begin{proof}
Consider $a=a(\xi)$ as a function of two variables $x, \xi$, so that
\[
\SN^{(2, 3)} (a; \ell, \rho)=\SN^{(3)}(a; \rho)
\]
for any $\ell >0$. Thus, using \eqref{sandwich1:eq} and \eqref{sandwich2:eq}
with $\ell = (\a\rho)^{-1}$ we get \eqref{sandwich_111:eq} and \eqref{sandwich_121:eq}
for arbitrary $\a>0$ and $\rho > 0$.
\end{proof}

We need a similar result for the operator $P_{\Om, \a}$ instead of $\chi_\L$:

\begin{lem}\label{sandwich_dual:lem}
Suppose that the symbol $a\in\BS^{(d+2, d+1)}$ satisfies \eqref{a_support:eq},
and that it is supported on
$(z-\ell, z+\ell)\times\R$ with some $z\in\R$  if $d = 1$.
Assume that $\a\ell\rho\ge c$.
Let $\op_\a(a)$ denotes any of the operators
$\op^l_\a(a)$ or $\op^r_\a(a)$.  Then
\begin{equation}\label{sandwich1_dual:eq}
\|P_{\Om, \a} \op_\a(a) (1-P_{\Om, \a})\|_{\GS_1}\le C (\a\ell\rho)^{d-1}
\SN^{(d+2, d+1)}(a; \ell, \rho),
\end{equation}
and
\begin{equation}\label{sandwich2_dual:eq}
\|[\op_\a(a), P_{\Om, \a}]\|_{\GS_1}\le C  (\a\ell\rho)^{d-1}
\SN^{(d+2, d+1)}(a; \ell, \rho).
\end{equation}
\end{lem}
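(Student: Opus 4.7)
The plan is to deduce Lemma~\ref{sandwich_dual:lem} from Lemma~\ref{sandwich:lem} by a Fourier-duality argument: the former is the Fourier-dual of the latter, since conjugation by a suitably scaled Fourier transform exchanges the roles of multiplication operators (like $\chi_\L$) and Fourier multipliers (like $P_{\Om,\a}$). Concretely, I would introduce the unitary
\[
(\mathcal{F}_\a u)(\bxi) = \biggl(\frac{\a}{2\pi}\biggr)^{d/2}\int e^{-i\a\bx\cdot\bxi}u(\bx)\,d\bx,
\]
and verify by a direct change of variables in the oscillatory integrals the conjugation identities
\[
\mathcal{F}_\a\op^l_\a(a)\mathcal{F}_\a^{-1} = \op^r_\a(b),\quad b(\bx,\bxi) = a(-\bxi,\bx); \qquad \mathcal{F}_\a\op^r_\a(a)\mathcal{F}_\a^{-1} = \op^l_\a(b);
\]
\[
\mathcal{F}_\a\chi_\L\mathcal{F}_\a^{-1} = P_{-\L,\a}, \qquad \mathcal{F}_\a P_{\Om,\a}\mathcal{F}_\a^{-1} = \chi_\Om.
\]
Thus $P_{\Om,\a}\op_\a(a)(1-P_{\Om,\a})$ is unitarily equivalent to $\chi_\Om\op_\a(b)(1-\chi_\Om)$, which is precisely the object bounded by Lemma~\ref{sandwich:lem} with $\Om$ now playing the role of $\L$.

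The next step is to check that the hypotheses of Lemma~\ref{sandwich:lem} transfer to this transformed setup, with the scaling parameters $\ell$ and $\rho$ swapped. Since the $\bx$- and $\bxi$-arguments of $a$ are interchanged in $b$, one has $b\in\BS^{(d+1,d+2)}$ iff $a\in\BS^{(d+2,d+1)}$, and a direct comparison of the seminorms gives $\SN^{(d+1,d+2)}(b;\rho,\ell) = \SN^{(d+2,d+1)}(a;\ell,\rho)$. The support behaves analogously: for $d\ge 2$, support of $a$ in $B(\bz,\ell)\times B(\bmu,\rho)$ produces support of $b$ in $B(\bmu,\rho)\times B(-\bz,\ell)$, which satisfies \eqref{a_support:eq} with the scales interchanged; for $d=1$, support in $(z-\ell,z+\ell)\times\R$ becomes $\R\times(-z-\ell,-z+\ell)$, matching the $d=1$ hypothesis of Lemma~\ref{sandwich:lem}. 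The scaling condition $\a\ell\rho\ge c$ is symmetric in $\ell,\rho$, and the assumption that $\Om$ is $\plainC1$-graph-type with gradient bound $M$ is exactly what Lemma~\ref{sandwich:lem} requires of its ``$\L$''. Applying Lemma~\ref{sandwich:lem} with parameters $(\rho,\ell)$ then yields
\[
\|\chi_\Om\op_\a(b)(1-\chi_\Om)\|_{\GS_1} \le C(\a\rho\ell)^{d-1}\SN^{(d+1,d+2)}(b;\rho,\ell) = C(\a\ell\rho)^{d-1}\SN^{(d+2,d+1)}(a;\ell,\rho),
\]
which by the unitary equivalence is exactly \eqref{sandwich1_dual:eq}. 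The commutator bound \eqref{sandwich2_dual:eq} then follows from the splitting $[\op_\a(a),P_{\Om,\a}] = (1-P_{\Om,\a})\op_\a(a)P_{\Om,\a} - P_{\Om,\a}\op_\a(a)(1-P_{\Om,\a})$: the second term is controlled by \eqref{sandwich1_dual:eq}, and the first by the same estimate applied with $\Om$ replaced by $\complement\Om$, which is itself a $\plainC1$-graph-type domain with the same constant $M$.

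There is no genuine obstacle here; the whole argument is essentially bookkeeping. The only points requiring mild attention are (a) correctly tracking the interchange of $\ell$ and $\rho$ in the seminorm and support conditions for $b$, and (b) noting that the reflection $\Om\mapsto-\Om$ which appears in the conjugation of $\chi_\L$ preserves the $\plainC1$-graph-type property with the same gradient bound $M$, so that the uniform estimate of Lemma~\ref{sandwich:lem} transfers cleanly. Both are straightforward to verify from the definitions.
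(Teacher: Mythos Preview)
Your proposal is correct and is precisely an explicit version of the paper's one-line proof (``Inverting the roles of the variables $\bx$ and $\bxi$, one obtains the above estimates directly from Lemma~\ref{sandwich:lem}''): conjugation by the scaled Fourier transform $\mathcal{F}_\a$ is exactly how that inversion is implemented. One small simplification: since $\mathcal{F}_\a P_{\Om,\a}\mathcal{F}_\a^{-1}=\chi_\Om$ already (no reflection), the commutator $[\op_\a(a),P_{\Om,\a}]$ is conjugated directly to $[\op_\a(b),\chi_\Om]$, so \eqref{sandwich2_dual:eq} follows immediately from \eqref{sandwich2:eq} without the detour through $\complement\Om$.
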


\begin{proof} Inverting the roles of the variables $\bx$ and $\bxi$,
one obtains the above estimates directly from Lemma \ref{sandwich:lem}.
\end{proof}

Let us prove the analogue of Corollary \ref{product:cor} for the operators
of the form $T(a; \L, \Om)$.

\begin{cor}\label{product1:cor}
Let $d\ge 1$ and $\a\ell\rho\ge c$.
Let $a, b\in\BS^{(d+2, d+2)}$ be symbols, such that
either $a$ or $b$
satisfies the condition \eqref{a_support:eq}.
Then
\begin{gather}
\|T(a) T(b) - T(1)T(ab)\|_{\GS_1}
+ \|T(b) T(a) - T(ab)T(1)\|_{\GS_1}
+ \|[T(a), T(b)]\|_{\GS_1}\notag\\[0.2cm]
\le
C  (\a\ell\rho)^{d-1}
\SN^{(d+2, d+2)}(a; \ell, \rho)
\SN^{(d+2, d+2)}(b; \ell, \rho).\label{product:eq}
\end{gather}
\end{cor}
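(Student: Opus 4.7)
The plan is to prove all three estimates by reshuffling operators inside $T(a)T(b)$ and $T(b)T(a)$ using the sandwich estimates of Lemmas \ref{sandwich:lem}, \ref{sandwich_dual:lem} and the product estimate of Corollary \ref{product:cor}. Set $A = \chi_{\L}$, $B = P_{\Om,\a}$, $L = \op^l_\a(a)$, $M = \op^l_\a(b)$. Since $A^2 = A$ and $B^2 = B$, one has $T(a) = ABLBA$, $T(b) = ABMBA$, $T(1) = ABA$ and $T(ab) = AB\,\op^l_\a(ab)\,BA$, so that
\begin{equation*}
T(a)T(b) = ABLBABMBA, \qquad T(1)T(ab) = ABAB\,\op^l_\a(ab)\,BA,
\end{equation*}
and similarly for the other products.

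For the first estimate, I would commute $L$ rightward through the intermediate block $BAB$ by applying successively $[L,B]$ (Lemma \ref{sandwich_dual:lem}), $[L,A]$ (Lemma \ref{sandwich:lem}) and $[L,B]$ once more. Each of these commutators is trace class with norm at most $C(\a\ell\rho)^{d-1}\SN^{(d+2,d+2)}(a;\ell,\rho)$ provided $a$ satisfies \eqref{a_support:eq}. Sandwiching them between the remaining factors, whose operator norms are uniformly controlled by $C\SN^{(d+2,d+2)}(b;\ell,\rho)$ (by Lemma \ref{general_norm:lem} for $M$, and trivially $\|A\|,\|B\|\le 1$), yields $T(a)T(b) = ABABLMBA + R_1$ with $\|R_1\|_{\GS_1}$ of the claimed order. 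A final application of Corollary \ref{product:cor} replaces $LM$ by $\op^l_\a(ab)$ modulo a $\GS_1$-operator of the same order, giving the first bound. The second estimate, $\|T(b)T(a) - T(ab)T(1)\|_{\GS_1}$, follows by the mirror-image reshuffling that slides $M$ leftward through $BAB$ into the product $ML$; if $b$ rather than $a$ carries the support condition \eqref{a_support:eq}, one simply interchanges the roles of $L$ and $M$ in the argument.

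For the commutator, instead of routing through $T(1)T(ab)$ and $T(ab)T(1)$, I would apply both reshufflings directly to obtain
\begin{equation*}
T(a)T(b) - T(b)T(a) = ABAB(LM - ML)BA + R
\end{equation*}
with $\|R\|_{\GS_1}$ of the correct order. Applying Corollary \ref{product:cor} twice and using the scalar identity $ab = ba$ gives $\|LM - ML\|_{\GS_1} \le C(\a\ell\rho)^{d-1}\SN^{(d+2,d+2)}(a;\ell,\rho)\SN^{(d+2,d+2)}(b;\ell,\rho)$, and the commutator estimate follows. The main obstacle here is not conceptual but rather the bookkeeping needed to verify that at each commutation the asymmetric hypothesis of Corollary \ref{product:cor} is indeed available: only one of the symbols needs to satisfy \eqref{a_support:eq}, which is exactly what allows the remainder in the first estimate to remain trace class when, say, $b$ is not compactly supported. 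The standing assumption $\a\ell\rho \ge c$ is what enables us to invoke the smooth-symbol estimates of Section \ref{smooth:sect} at every step.
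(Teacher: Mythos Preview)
Your handling of the first two trace-norm bounds is essentially the paper's argument: commute $L=\op^l_\a(a)$ (the symbol carrying the support condition \eqref{a_support:eq}) through the block $BAB$, then invoke Corollary~\ref{product:cor}. One wording slip: for the second estimate you write ``slides $M$ leftward,'' but if $a$ is the compactly supported symbol you must slide $L$ leftward in $T(b)T(a)=ABMBABLBA$; sliding $M$ would require $[M,A]$, $[M,B]\in\GS_1$, which is not available.

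The commutator step, however, has a real gap. Your two reshufflings (sliding $L$ right in $T(a)T(b)$, sliding $L$ left in $T(b)T(a)$) yield
\[
T(a)T(b)=ABAB\,LM\,BA+R_1,\qquad T(b)T(a)=AB\,ML\,BABA+R_2,
\]
and these have \emph{different} outer brackets: $ABAB(\cdot)BA$ versus $AB(\cdot)BABA$. Their difference is $AB\bigl(AB\cdot LM-ML\cdot BA\bigr)BA+R$, not $ABAB(LM-ML)BA+R$. To reach the latter you would also need to slide $M$ through $BAB$, which fails when only $a$ satisfies \eqref{a_support:eq}. The paper avoids this by routing through $T(1)T(ab)$ and $T(ab)T(1)$ after all: from the two bounds already proved one gets $\|[T(a),T(b)]\|_{\GS_1}\le \|T(1)T(ab)-T(ab)T(1)\|_{\GS_1}+C(\a\ell\rho)^{d-1}\SN(a)\SN(b)$, and then applies the first bound once more with the pair $(a',b')=(ab,1)$, noting that $ab$ inherits the support condition and $\SN^{(d+2,d+2)}(ab;\ell,\rho)\le C\,\SN(a)\SN(b)$.
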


\begin{proof} For definiteness assume that
$a$ satisfies \eqref{a_support:eq}.
For brevity we write $\op$ instead of $\op_\a^l$. Then
\begin{align*}
\|T(a) T(b) - T(1)T(ab)\|_{\GS_1}
\le &\ 2\|[\op(a), P_{\Om, \a}]\|_{\GS_1} \|\op(b)\|\\[0.2cm]
+ &\ \|[\op(a), \chi_\L]\|_{\GS_1} \|\op(b)\|
+ \|\op(a)\op(b) - \op(ab)\|_{\GS_1}.
\end{align*}
The claimed estimate for $T(a)T(b) - T(1) T(ab)$
follows from \eqref{sandwich2:eq}, \eqref{sandwich2_dual:eq}
Lemma \ref{general_norm:lem}
and Corollary \ref{product:cor}. In the same way one proves the required bound
for the trace norm of $T(b) T(a) - T(ab) T(1)$.

Using any of these two bounds for the symbols $a' = ab$
and $b' = 1$ we get the same
estimate for the trace norm of $T(ab)T(1) - T(1)T(ab)$, which
leads to \eqref{product:eq} for the commutator $[T(a), T(b)]$.
\end{proof}

We need a version of the above lemma for the one-dimensional case
with a specific choice of $a$ and $b$:

\begin{lem}\label{product_1d:lem}
Let $d = 1$, and let $a=a(\xi), b= b(\xi)$ be functions from $\BS^{(3)}$
such that either $a$ or $b$ is supported on
$(\mu-\rho, \mu+\rho)$ with some $\mu \in\R$ and $\rho>0$.
Assume that $\L = (x_0, \infty)$
with some $x_0\in\R$, and let $\Om$
be an arbitrary open subset of $\R$.
Then for all $\a>0$, $\rho>0$ we have
\begin{align}\label{product_1d:eq}
\|T(a) T(b) - &\ T(ab)T(1)\|_{\GS_1}
+ \|T(a) T(b) - T(1)T(ab)\|_{\GS_1}\notag \\[0.2cm]
+&\  \|[T(a), T(b)]\|_{\GS_1}
\le C\SN^{(3)}(a; \rho)\  \SN^{(3)} (b; \rho),
\end{align}
with a constant independent of $x_0, \mu$ and $\Om$.
\end{lem}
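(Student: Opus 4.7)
The plan is as follows. Since $a$ and $b$ depend only on $\xi$, the Fourier multipliers $\op_\a(a)$, $\op_\a(b)$ and $P_{\Om,\a}$ mutually commute and $\op_\a(a)\op_\a(b) = \op_\a(ab)$; combined with $P_{\Om,\a}^2 = P_{\Om,\a}$, this collapses the definition \eqref{tnew:eq} to $T(c) = \chi_\L P_{\Om,\a}\op_\a(c)\chi_\L$ for any $\xi$-symbol $c$. A short algebraic manipulation — pushing $\op_\a(a)$ or $\op_\a(b)$ past $\chi_\L$ through $\op_\a(\cdot)\chi_\L = \chi_\L\op_\a(\cdot) - [\chi_\L,\op_\a(\cdot)]$ — then yields the two key identities
\begin{align*}
T(a)T(b) - T(1)T(ab) &= -\chi_\L P_{\Om,\a}\,[\chi_\L,\op_\a(a)]\,P_{\Om,\a}\op_\a(b)\chi_\L,\\
T(a)T(b) - T(ab)T(1) &= \chi_\L P_{\Om,\a}\op_\a(a)\,[\chi_\L,\op_\a(b)]\,P_{\Om,\a}\chi_\L.
\end{align*}
These are the one-dimensional analogues of the estimates underlying the proof of Corollary \ref{product1:cor}, with both the commutator $[\op_\a(\cdot),P_{\Om,\a}]$ and the product defect $\op_\a(a)\op_\a(b) - \op_\a(ab)$ vanishing.

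I will then invoke the sandwich estimate \eqref{sandwich_121:eq}, which bounds $\|[\chi_\L,\op_\a(c)]\|_{\GS_1}\le C\SN^{(3)}(c;\rho)$ uniformly in $x_0, \mu$ whenever $c$ is supported on $(\mu-\rho,\mu+\rho)$; together with $\|\op_\a(c)\| = \|c\|_\infty \le \SN^{(3)}(c;\rho)$ for any Fourier multiplier and the fact that $\chi_\L$ and $P_{\Om,\a}$ are contractions (so that nothing about the shape of $\Om$ enters). If $a$ is the compactly supported symbol, the first identity directly gives the desired bound on $\|T(a)T(b) - T(1)T(ab)\|_{\GS_1}$; if $b$ is, the second does the same for $\|T(a)T(b) - T(ab)T(1)\|_{\GS_1}$. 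To close the missing combination in each case I would apply the first identity to the pair $(ab, 1)$ in place of $(a, b)$, producing the bridging identity
\[
T(ab)T(1) - T(1)T(ab) = -\chi_\L P_{\Om,\a}\,[\chi_\L,\op_\a(ab)]\,P_{\Om,\a}\chi_\L;
\]
since the support hypothesis on $a$ or $b$ forces $ab$ itself to be supported on $(\mu-\rho,\mu+\rho)$ with $\SN^{(3)}(ab;\rho)\le C\SN^{(3)}(a;\rho)\SN^{(3)}(b;\rho)$, the bridge is trace class with the desired estimate.

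For the commutator, using $ab = ba$ and applying the second identity once with $(a,b)$ and once with $(b,a)$ gives
\[
[T(a),T(b)] = \chi_\L P_{\Om,\a}\op_\a(a)[\chi_\L,\op_\a(b)]P_{\Om,\a}\chi_\L - \chi_\L P_{\Om,\a}\op_\a(b)[\chi_\L,\op_\a(a)]P_{\Om,\a}\chi_\L.
\]
Whichever of $a, b$ satisfies the support hypothesis, one of the inner commutators is immediately trace class by \eqref{sandwich_121:eq}, while the other term coincides with one of the differences already analysed above and is handled by routing through the bridge identity. The only genuine obstacle is this asymmetry: since \eqref{sandwich_121:eq} requires compact $\xi$-support, whenever only one of $a, b$ enjoys it all remaining estimates must be rerouted through $\op_\a(ab)$, which inherits the support from either factor. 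Beyond this rerouting the proof requires no analytic input other than \eqref{sandwich_121:eq} and the Fourier-multiplier algebra.
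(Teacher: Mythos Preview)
Your proof is correct and follows essentially the same route as the paper's own argument: exploit that $\op_\a(a)$, $\op_\a(b)$, $P_{\Om,\a}$ are commuting Fourier multipliers to reduce everything to occurrences of $[\chi_\L,\op_\a(c)]$ with $c\in\{a,b,ab\}$, invoke the one-dimensional commutator bound \eqref{sandwich_121:eq} on whichever symbol carries the compact support, and close the remaining combination via the bridge $T(ab)T(1)-T(1)T(ab)$. The paper's proof is terser---it asserts the bound $\|T(a)T(b)-T(ab)T(1)\|_{\GS_1}+\|T(b)T(a)-T(1)T(ab)\|_{\GS_1}\le 2\|[\op(b),\chi_\L]\|_{\GS_1}\|\op(a)\|$ directly rather than deriving your explicit identities---but the content is the same.
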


\begin{proof} For definiteness assume that
$b$ is supported on $(\mu-\rho, \mu +\rho)$.
For brevity we write
$\op$ instead of $\op_\a^l$.
Since $[\op(b), P_{\Om, \a}] = 0$ and
$\op(a)\op(b) = \op(ab)$, we can write
\begin{align*}
\|T(a) T(b) - &\ T(ab)T(1)\|_{\GS_1}
+ \|T(b) T(a) - T(1)T(ab)\|_{\GS_1}\\[0.2cm]
\le &\ 2\|[\op(b), \chi_\L]\|_{\GS_1} \|\op(a)\|
\le C\|a\|_{\plainL\infty} \SN^{(3)} (b; \rho).
\end{align*}
At the last step we have used \eqref{sandwich_121:eq}.
Using either of the above estimates for $b' = ab, a' = 1$,
we get the  estimate
\eqref{product_1d:eq} for the trace norm of $T(ab)T(1) - T(1)T(ab)$.
This leads to
the same bound for the commutator $[T(a), T(b)]$.
\end{proof}


\section{Further trace-class estimates for operators with non-smooth symbols}
\label{nonsmooth2:sect}

\subsection{Weights with disjoint supports}
Here we obtain more special bounds for trace norms of PDO's with discontinuous symbols.

As in Section \ref{nonsmooth1:sect}, \underline{we always assume that both domains
$\L$ and $\Om$ satisfy Condition
\ref{graph:cond}}. To avoid
cumbersome proofs in this section we always assume that $d\ge 2$, although many of the estimates
easily generalize to $d=1$.
As before, in all subsequent estimates the constants are independent
of the functions $\Phi$ and $\Psi$ defining $\L$ and $\Om$,
but may depend on the constant $M$ in \eqref{gradient:eq}.

It is technically convenient to introduce
smoothed-out versions of the characteristic functions of the balls.
Let $h,\eta\in\plainC\infty_0(\R^d)$ be two non-negative functions such that
$0\le h\le 1$, $0\le \eta\le 1$,
$h(\bx) = 1$ for $|\bx|\le 1$, and $h(\bx) = 0$ for $|\bx|\ge 5/4$;
$\eta(\bxi) = 1$ for $|\bxi|\le 1$, and $\eta(\bxi) = 0$ for $|\bxi|\ge 5/4$.
Denote
\begin{equation}\label{h_eta:eq}
\begin{cases}
h_{\bz, \ell}(\bx) = h\bigl((\bx-\bz)\ell^{-1}\bigr);\\[0.2cm]
\eta_{\bmu, \rho}(\bxi) = \eta\bigl((\bxi-\bmu)\rho^{-1}\bigr),\ \
\Xi_{\bmu, \rho} = \op_{\a} (\eta_{\bmu, \rho}).
\end{cases}
\end{equation}
Constants in all estimates will be independent
of $\bz, \bmu$ and $\rho, \ell$.

\begin{lem}\label{HS1:lem}
Let   $\a\ell\rho\ge c$. Then for any $r \ge 8/5$,
 \begin{equation}\label{HS1:eq}
 \|h_{\bz, \ell} \Xi_{\bmu, \rho} P_{\Om, \a}
 \bigl(1-h_{\bz,  r\ell}\bigr)\|_{\GS_1}
+ \|\bigl(1-h_{\bz,  r\ell}\bigr)\Xi_{\bmu, \rho} P_{\Om, \a}
h_{\bz, \ell}   \|_{\GS_1}
\le C (\a\ell\rho)^{d-1} r^{-\om},
\end{equation}
with arbitrary $\om < 1/2$,
uniformly in $\bz, \bmu\in\R^d$. The constant $C$ is independent  of the parameters
$\a, \ell, \rho$.
\end{lem}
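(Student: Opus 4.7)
The plan is first to reduce by the scaling identity \eqref{unitary:eq}, with $\ell_1 = \ell$ and $\rho_1 = \rho$, so that setting $\beta := \a\ell\rho \ge c$ the claim becomes
\[
\|h_{\bz,1}\Xi_{\bmu,1}P_{\Om,\beta}(1-h_{\bz,r})\|_{\GS_1}\le C\beta^{d-1}r^{-\om}
\]
(with $\Om$ replaced by a rescaled graph-type domain of the same kind, using also \eqref{chilambda:eq}). The second inequality in \eqref{HS1:eq} then follows from the first by taking adjoints, since $\eta_{\bmu,\rho}$ is real (so $\Xi_{\bmu,\rho}^*=\Xi_{\bmu,\rho}$) and $P_{\Om,\a}^* = P_{\Om,\a}$.

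The central difficulty is the lack of smoothness of $\chi_{\Om}$, which rules out a direct application of the trace norm estimates of Section \ref{smooth:sect}. To circumvent this, I would introduce an auxiliary thickness $\d\in(0,1)$ and decompose $\chi_\Om = q_1 + q_2$ via Lemma \ref{boundary_partition:lem} applied to $\Om$: $q_1$ is smooth with $|\nabla^s q_1|\le C_s\d^{-s}$ and agrees with $\chi_\Om$ outside a $\d$-neighbourhood of $\p\Om$, while $q_2$ is supported in that neighbourhood. Write $T = T_1 + T_2$ with $T_i = h_{\bz,1}\op_\beta(\eta q_i)(1-h_{\bz,r})$ and estimate each separately. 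For the smooth part $T_1$, the symbol $h(\bx)\eta(\bxi)q_1(\bxi)$ is compactly supported in both variables with $\SN^{(d+1,d+1)}$-norm bounded by $C\d^{-(d+1)}$; since $h_{\bz,1}$ and $(1-h_{\bz,r})$ have supports separated by a distance $\ge c r$ (using $r\ge 8/5$), the separated-support estimate \eqref{largea:eq} with $m=d+1$ yields
\[
\|T_1\|_{\GS_1}\le C\beta^{d-1}r^{-1}\d^{-(d+1)}.
\]

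For the boundary-layer part $T_2$ I would combine two auxiliary bounds: a Hilbert--Schmidt estimate that uses the support separation via the standard $|\bt|^{-(d+1)/2}$ stationary-phase decay of the Fourier transform of the boundary-layer piece cut off by $\eta$, giving $\|T_2\|_{\GS_2}^2\le C\beta^{d-1}r^{-1}$; and a Schatten-$\GS_s$ estimate from Proposition \ref{BS:prop}, which requires no symbol smoothness and exploits only the $O(\d)$-measure of the support of $q_2$, giving $\|T_2\|_{\GS_s}^s\le C\beta^d\d$ for all $s\in(0,2)$. The proposed route to a trace-norm estimate is to split the singular values of $T_2$ at a threshold $\tau$: the large ones are controlled by the Hilbert--Schmidt bound together with Cauchy--Schwarz (giving $\beta^{d-1}/(r\tau)$), and the small ones by the $\GS_s$-bound together with $s_j^{1-s}\le \tau^{1-s}$ (giving $\tau^{1-s}\beta^d\d$); one then optimises successively over $\tau$, $s\in(0,1)$, and $\d$, balancing against the bound on $T_1$ above.

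The main obstacle is precisely this optimisation: a naive implementation of the splitting just sketched recovers the right $r^{-\om}$ behaviour but with a $\beta$-exponent strictly larger than $d-1$. Recovering the correct $\beta^{d-1}$ requires an additional ingredient exploiting the support separation within the $\GS_s$-bound on $T_2$ itself, obtainable from the kernel-based Birman--Solomyak machinery underlying Proposition \ref{BS:prop}; this is the essential technical step. The strict inequality $\om<1/2$ in the final bound reflects the loss inherent in converting the $\GS_2$-bound of order $r^{-1/2}$ into a trace-norm bound via Schatten-norm interpolation.
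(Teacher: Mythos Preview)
Your proposal has a genuine gap, and it stems from the scaling reduction you chose. By rescaling with $\ell_1=\ell$, $\rho_1=\rho$ you reduce to $\ell=\rho=1$ with $\beta=\a\ell\rho$ as the large \emph{frequency} parameter, so the operator in the boundary-layer term is $\op_\beta$, not $\op_1$. Proposition~\ref{BS:prop} is stated for $\op_1$, and passing from $\op_\beta$ to $\op_1$ stretches the symbol by $\beta$; the quasinorm $\1\cdot\1_1$ then counts $\sim\beta^{d-1}\max(\beta\d,1)$ unit cubes meeting the support, which is why you cannot recover the exponent $d-1$ in $\beta$. The interpolation/singular-value splitting you sketch does not fix this, and the ``additional ingredient'' you invoke (separation-aware Birman--Solomyak bounds in $\GS_s$) is not available in the paper and would need to be proved from scratch.

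The paper's proof avoids this entirely by a different scaling: take $\ell_1=\ell$, $\rho_1=(\a\ell)^{-1}$, which reduces to $\a=\ell=1$ and makes $\rho\ge c$ the large parameter. Now the boundary-layer term is $h_{\bz,1}\op_1(\psi_2\chi_\Om)$, and Proposition~\ref{BS:prop} with $s=1$ applies \emph{directly}: $\1 h_{\bz,1}\1_1=O(1)$, while $\psi_2\chi_\Om$ is supported in a layer of thickness $\sim\d$ around $\p\Om\cap B(\bmu,5\rho/4)$, so $\1\psi_2\chi_\Om\1_1\le C\d^{1/2}\rho^{d-1}$. This already gives $\|T_2\|_{\GS_1}\le C\d^{1/2}\rho^{d-1}$ with the correct power; no $\GS_2$ bound, no splitting, no interpolation is needed. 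For the smooth part the paper uses \eqref{separate2:eq} with $m$ \emph{arbitrary} (not fixed at $d+1$), obtaining $\|T_1\|_{\GS_1}\le C\bigl((\rho r)^{d-m}+\rho^{d-1}r^{d-m}\d^{1-m}\bigr)$; choosing $\d=r^{-2\om}$ with $\om<1/2$ and then $m$ large enough makes the smooth contribution subordinate to $\rho^{d-1}r^{-\om}$. That is the whole proof.
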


\begin{proof}
For brevity we write $\op_\a$ instead of $\op^l_\a$.
Furthermore, we make some assumptions which do not restrict generality.
\begin{enumerate}
\item
By virtue of \eqref{orthogonal1:eq} and \eqref{chilambda:eq} we may assume that
the coordinates are chosen in such a way that $\Om = \G(\Psi; \boldI, \mathbf 0)$;
\item
Using \eqref{scale:eq}, \eqref{scale_domain:eq}
and \eqref{unitary:eq} with $\ell_1 = \ell$ and
$\rho_1 = (\a\ell)^{-1}$, we conclude that it suffices to prove the
estimate for $\ell = \a = 1$ and $\rho\ge c$.
\end{enumerate}

\vskip 0.1cm

\noindent
It is sufficient to establish the sought bound for the first term
on the left-hand side of \eqref{HS1:eq}, since the estimated
operators are mutually adjoint.

Split the symbol $\eta_{\bmu, \rho}\chi_{\Om}$
into two parts: smooth and non-smooth in the following way.
Let $\z_1^{(\d)}(\bxi) = \z_1^{(\d)}(\bxi; \Om)$
and $\z_2^{(\d)}(\bxi) = \z_2^{(\d)}(\bxi; \Om)$ be the functions
constructed in Lemma \ref{boundary_partition:lem} for $\d>0$. Define
\[
\eta_{\bmu, \rho}\chi_{\Om} = \psi_1 + \psi_2\chi_{\Om},\
\psi_j = \eta_{\bmu, \rho}\z_j^{(\d)}, j = 1, 2.
\]
To handle $\psi_1$ note that in view of \eqref{boundary_partition:eq},
\begin{equation}\label{sloy:eq}
|\nabla_{\bxi}^m \psi_1(\bxi)|
\le
\begin{cases}
C\rho^{-m} ,\ \xi_d - \Psi(\hat\bxi)\ge 3\lu M\ru \d,\\[0.2cm]
C\d^{-m},\ |\xi_d -\Psi(\hat\bxi)|\le 3\lu M\ru \d,
\end{cases}
\end{equation}
for all $ \bxi\in B(\bmu, 5\rho/4)$, and all $m =0,1,\dots$.
As $r\ge 8/5$, the distance between the supports of $h = h_{\bz, 1}$ and
$\tilde h = 1 - h_{\bz, r}$ is at least $7r/32$.
Thus, to estimate the trace-class norm, we can use Theorem \ref{separate:thm}. Precisely,
using \eqref{separate2:eq}, one obtains:
\begin{align}\label{tildep+:eq}
\|h&\ \op_1 (\psi_1) \tilde h\|_{\GS_1}\notag\\
\le &\ C r^{d-m}\biggl[\rho^{-m}
\underset{\substack{|\bxi-\bmu|\le 5\rho/4\\
\xi_d - \Psi(\hat\bxi)
\ge 3\lu M\ru\d}}
\int \ \ \ \underset{|\bx-\bz|\le 2}\int
  d\bx d\bxi
+ \d^{-m}
\underset{\substack{|\bxi-\bmu|\le 5\rho/4\\
|\xi_d - \Psi(\hat\bxi)|
\le 3\lu M\ru\d}}\int \ \
\underset{|\bx-\bz|\le 2}\int   d\bx d\bxi
\biggr]\notag\\[0.2cm]
 \le  &\ C\bigl((\rho r)^{d-m}
+ \rho^{d-1}r^{d-m} \d^{1-m}\bigr).
\end{align}
To handle $\op_1(\psi_2\chi_{\Om}) $ we
recall, that by definition of $\z_2^{(\d)}$, the symbol $\psi_2\chi_{\Om}$
is supported in
\[
\{\bxi: |\bxi-\bmu|\le 5\rho/4, |\xi_d - \Psi(\hat\bxi)|\le 3\lu M\ru\d\},
\]
so that $\1\psi\chi_{\Om}\1_{1}\le C\d^{\frac{1}{2}} \rho^{d-1}$,
see \eqref{brackh:eq} for definition of $\1\ \cdot\ \1_s$.
Therefore, by Proposition \ref{BS:prop},
\begin{equation*}
\| h \op_1(\psi_2)P_{\Om, \a} \tilde h\|_{\GS_1}
\le \| h \op_1(\psi_2\chi_{\Om})\|_{\GS_1}\le C\d^{\frac{1}{2}}\rho^{d-1}.
\end{equation*}
Together with \eqref{tildep+:eq} this gives
\[
\| h \Xi_{\bmu, \rho}P_{\Om, \a}\tilde h\|_{\GS_1} \le
C\bigl[(\rho r)^{d-m} + \rho^{d-1} \bigl(r^{d-m}\d^{1-m} + \d^{\frac{1}{2}}\bigr)\bigr].
\]
Taking $\d = r^{-2\om}$ with $\om <1/2$, and making $m$ sufficiently large,
we estimate the right hand side by
\[
C\rho^{d-1} r^{-\om},
\]
which leads to the required estimate \eqref{HS1:eq}.
\end{proof}

\begin{lem}\label{balls:lem}
Let  $\a\ell\rho\ge c$ and
\begin{equation}\label{varkappa:eq}
\vark_1 = \frac{1}{2(2d-1)}.
\end{equation}
Then for any $\vark\in (0, \vark_1)$ there exists a number $r_1>0$, depending
only on $\vark$ and the dimension $d$, such that
\begin{equation}\label{balls:eq}
\|\Xi_{\bmu, \rho} h_{\bz, \ell}\ \chi_{\L} P_{\Om, \a}
\bigl(1- h_{\bz, r\ell}\bigr)\|_{\GS_1}
+ \|\Xi_{\bmu, \rho} h_{\bz, \ell} \ \chi_{\L} P_{\Om, \a}
\bigl(1- \Xi_{\bmu, r\rho}\bigr)\|_{\GS_1}
\le C (\a\ell\rho)^{d-1} r^{-\vark},
\end{equation}
for all $r \ge r_1$,
uniformly in $\bz, \bmu\in\R^d$. The constant
$C$ is independent of the parameters $\a, \ell,\rho, r$.
\end{lem}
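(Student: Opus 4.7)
The plan is to extend the strategy behind Lemma \ref{HS1:lem} so as to accommodate the simultaneous non-smoothness of $\chi_\L$ (in the spatial variable $\bx$) and of $\chi_\Om$ (which enters the operator $P_{\Om,\a}$ and is non-smooth in $\bxi$). The presence of two boundary layers instead of one is the principal novelty and is ultimately responsible for the reduced exponent $\vark_1 = \tfrac{1}{2(2d-1)}$ in place of the bound $\om<\tfrac12$ obtained in Lemma \ref{HS1:lem}.

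First I would use the unitary equivalences \eqref{orthogonal1:eq}--\eqref{chilambda:eq} and the scaling relations \eqref{unitary:eq}--\eqref{unitary_domain:eq} to reduce to $\L = \G(\Phi;\boldI,\bzero)$, $\Om=\G(\Psi;\boldI,\bzero)$, together with a convenient normalisation of the parameters, e.g.\ $\a=\ell=1$ and $\rho\ge c$. The two inequalities in \eqref{balls:eq} are symmetric under the interchange $\bx\leftrightarrow\bxi$ (which swaps the roles of $\chi_\L$ and $P_{\Om,\a}$, and of $\ell$ and $\rho$), so it is enough to treat the first one. For parameters $\d,\d'>0$ to be chosen at the end, Lemma \ref{boundary_partition:lem} provides partitions
\begin{equation*}
\chi_\L = L_s + L_r,\qquad \chi_\Om = \Om_s + \Om_r,
\end{equation*}
where $L_s=\z_1^{(\d)}(\cdot;\L)$ and $\Om_s=\z_1^{(\d')}(\cdot;\Om)$ are smooth with derivatives of order $\d^{-n}$ and $(\d')^{-n}$ respectively, while the remainders $L_r,\Om_r$ are bounded by $1$ and supported in boundary strips of widths $O(\d)$ and $O(\d')$. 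Writing $R_\sharp = \op_\a(\Om_\sharp)$ for $\sharp\in\{s,r\}$, the product $\chi_\L P_{\Om,\a}$ splits into four pieces $L_\sharp R_\flat$ with $\sharp,\flat\in\{s,r\}$.

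The smooth--smooth piece $\Xi_{\bmu,\rho}h_{\bz,\ell}L_s R_s(1-h_{\bz,r\ell})$ is handled by composing $\Xi_{\bmu,\rho}R_s = \op_\a(\eta_{\bmu,\rho}\Om_s)$ (a PDO whose symbol is supported in $B(\bmu,5\rho/4)\cap\Om^{(\d')}$) and then applying Theorem \ref{separate:thm} to the multiplicative weights $h_{\bz,\ell}L_s$ and $1-h_{\bz,r\ell}$, whose supports are separated by a distance $\gtrsim r\ell$; with a large exponent $m$ and after tracking the derivatives of $L_s$ and $\Om_s$ this yields a bound decaying like $\rho^{d-1} r^{d-m}\d^{-(d+1)}(\d')^{1-m}$. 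For each of the three pieces containing a rough factor the key observation is the following \emph{separation trick}: since $h_{\bz,\ell}\cdot (1-h_{\bz,r\ell})=0$ as soon as $r\ge 8/5$, one can commute the smooth component on the right past the rough multiplication; the resulting ``main'' term vanishes by the disjoint supports, leaving only a commutator of the form $\Xi h L_\flat [R_\sharp,\tilde h]$ (or its analogue with the roles of $R$ and the multiplier swapped), where $\tilde h=1-h_{\bz,r\ell}$. These commutators are then bounded via the Hilbert--Schmidt-type estimate of Proposition \ref{BS:prop}, exploiting that the symbol $\eta_{\bmu,\rho}\Om_r$ of $\Xi_{\bmu,\rho}R_r$ is compactly supported in a set of $\bxi$-volume $O(\d'\rho^{d-1})$, together with the geometric bounds $\|h_{\bz,\ell}L_r\|_{\plainL2}^2\le C\d$ and $\|\eta_{\bmu,\rho}\Om_r\|_{\plainL2}^2\le C\d'\rho^{d-1}$ produced by the two boundary strips. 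This gives contributions of orders $\rho^{d-1}\d^{1/2}$ and $\rho^{d-1}(\d')^{1/2}$ for the single-rough pieces, and a (better) contribution of order $\rho^{d-1}(\d\d')^{1/2}$ for the rough--rough piece.

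Optimising by choosing $\d$ and $\d'$ as appropriate negative powers of $r$ and taking $m$ large yields an overall bound of the form $C(\a\ell\rho)^{d-1}r^{-\vark}$ for every $\vark<\vark_1=1/(2(2d-1))$; the precise value of $\vark_1$ arises from the balance between the derivative loss $\d^{-(d+1)}$ in the smooth--smooth piece and the gains $\d^{1/2}$, $(\d')^{1/2}$ in the rough pieces, tempered by the localisation in $\bxi$-space which costs further powers of $\rho$. The delicate step I expect to be the rough--rough piece: neither the $\bx$-weight $h_{\bz,\ell}L_r$ nor the $\bxi$-symbol $\eta_{\bmu,\rho}\Om_r$ is smooth, and the trace-class smallness has to come entirely from the geometry of the two boundary strips combined with the disjoint-support cancellation. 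Keeping careful track of the dependence on $\d$, $\d'$, $\rho$ and $r$, and then carrying out the final optimisation, is the combinatorial heart of the proof.
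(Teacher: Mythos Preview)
Your approach has a genuine gap. The step ``composing $\Xi_{\bmu,\rho}R_s = \op_\a(\eta_{\bmu,\rho}\Om_s)$'' is not legitimate: in the operator $\Xi_{\bmu,\rho}\, h_{\bz,\ell}\, L_s\, R_s\,(1-h_{\bz,r\ell})$ the Fourier multipliers $\Xi_{\bmu,\rho}$ and $R_s$ are separated by the position-space multiplication $h_{\bz,\ell}L_s$, so they do not compose into a single Fourier multiplier. Without that composition you have no $\bxi$-localisation adjacent to $R_s$ (or to $R_r$), and then the trace-class bounds you invoke fail: the smoothed symbol $\Om_s$ equals $1$ on an unbounded set, so $\int|\nabla_{\bxi}^m\Om_s|\,d\bxi$ diverges for $m=0$, and likewise $\1\Om_r\1_1=\infty$ because the boundary strip of $\Om$ is unbounded. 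Your ``separation trick'' does not rescue this: commuting $R_\sharp$ past $1-\tilde h$ leaves $\Xi_{\bmu,\rho}\,h L_\flat\,[R_\sharp,\tilde h]$, but estimating this commutator in $\GS_1$ or $\GS_2$ still requires a $\bxi$-side localisation of $\Om_\sharp$ that you do not have. Consequently none of your four pieces is actually shown to be trace class with the claimed bounds, and the optimisation over $\d,\d'$ has nothing to act on.

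The paper's proof avoids this obstruction by a different and much shorter device. Rather than smoothing $\chi_\L$ and $\chi_\Om$ simultaneously, it inserts an intermediate Fourier cut-off $\Xi_{\bmu,t\rho}$ \emph{between} $\chi_\L$ and $P_{\Om,\a}$, writing
\[
\Xi_{\bmu,\rho} h_{\bz,\ell}\chi_\L P_{\Om,\a}G
= \underbrace{\Xi_{\bmu,\rho} h_{\bz,\ell}\chi_\L(1-\Xi_{\bmu,t\rho})}_{X_1}\cdot P_{\Om,\a}G
\; + \; \Xi_{\bmu,\rho}\chi_\L\cdot\underbrace{h_{\bz,\ell}\Xi_{\bmu,t\rho}P_{\Om,\a}G}_{Y_2}.
\]
The point is that $X_1$ is \emph{exactly} of the form handled by Lemma~\ref{HS1:lem} with the roles of $\bx$ and $\bxi$ exchanged (giving $\|X_1\|_{\GS_1}\le C(\a\ell\rho)^{d-1}t^{-\om}$), while $Y_2$ is \emph{exactly} of the form of Lemma~\ref{HS1:lem} itself with $\rho$ replaced by $t\rho$ (giving $\|Y_2\|_{\GS_1}\le C(\a\ell t\rho)^{d-1}r^{-\om}$); the other factors are contractions. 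Optimising $t^{-\om}+t^{d-1}r^{-\om}$ over $t$ produces the exponent $\vark=\om^2/(d-1+\om)$, and letting $\om\uparrow\tfrac12$ yields $\vark_1=1/(2(2d-1))$. Thus the lemma is obtained by \emph{bootstrapping Lemma~\ref{HS1:lem} against itself via a single auxiliary parameter $t$}, with no need to smooth $\chi_\L$ at all. This is the idea you are missing.
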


\begin{proof}
Denote
\[
R =  \Xi_{\bmu, \rho} h_{\bz, \ell}\ \chi_{\L} P_{\Om, \a} G,
\]
where
\[
G =  1- h_{\bz, r\ell} \ \ \textup{or}\ \ \  G = 1- \Xi_{\bmu, r\rho}.
\]
Represent:
\begin{align*}
R = &\  X_1 X_2 + Y_1 Y_2,\\
 X_1 = &\  \Xi_{\bmu, \rho} h_{\bz, \ell}\
 \chi_{\L}\bigl(1-\Xi_{\bmu, t\rho}\bigr),\
 X_2 = P_{\Om, \a} G,\\
 Y_1 = &\ \Xi_{\bmu, \rho} \chi_{\L},\
 Y_2 =  h_{\bz, \ell}\  \Xi_{\bmu, t\rho}\  P_{\Om, \a} G.
\end{align*}
Clearly,
\[
\|X_2\|, \|Y_1\|\le 1,
\]
so it remains to estimate the trace norms of $X_1, Y_2$. Inverting the roles
of the variables $\bx,\bxi$, we get from Lemma \ref{HS1:lem} that
\begin{equation*}
\|X_1\|_{\GS_1} \le C(\a\ell\rho)^{d-1} t^{-\om},
\end{equation*}
for all $t \ge 8/5$ and arbitrary $\om <1/2$.
As far as $Y_2$ is concerned,
if $G = 1 - \Xi_{\bmu, r\rho}$, then by choosing $t=  4r/5$ we guarantee
that $\Xi_{\bmu, t\rho}G = 0$, so that
\begin{equation*}
\|R\|_{\GS_1} \le \|X_1\|_{\GS_1}\le C(\a\ell\rho)^{d-1} r^{-\om},
\end{equation*}
for all $r\ge 2$. Since $\vark_1 \le  1/2$, this leads to \eqref{balls:eq}
for all $r\ge 2$.

For $G = 1- h_{\bz, r\ell}$, Lemma \ref{HS1:lem} gives:
\begin{equation*}
\|Y_2\|_{\GS_1}\le  C(\a\ell t\rho)^{d-1} r^{-\om},
\end{equation*}
for all $r\ge 8/5$ and $\om > 1/2$.
Thus,
\begin{equation}\label{R:eq}
\|R\|_{\GS_1}\le \|X_1\|_{\GS_1} + \|Y_2\|_{\GS_1}\le C(\a\ell\rho)^{d-1}
\bigl( t^{d-1}r^{-\om} + t^{-\om}\bigr),\ \  t \ge 8/5, r\ge 8/5.
\end{equation}
The minimum of the right hand side is attained
at
\begin{equation}\label{omtokappa:eq}
t = r^{\frac{\om}{d-1+\om}} = r^{\frac{\vark}{\om}}, \ \vark = \frac{\om^2}{d-1+\om}.
\end{equation}
For sufficiently large $r_1$, under the condition $r \ge r_1$ we have $t\ge 2$,
so that \eqref{R:eq} is applicable, and hence
\[
\|R\|_{\GS_1}\le C(\a\ell\rho)^{d-1} r^{-\vark}.
\]
The formula \eqref{omtokappa:eq} maps $\om\in (0, 1/2)$ into $\vark\in (0, \vark_1)$.
This implies \eqref{balls:eq}.
\end{proof}

Now we obtain a more elaborate version of Lemma \ref{balls:lem}.

\begin{lem}\label{HS1_mnogo:lem}
Let  $\a\ell\rho\ge c$.
Let $\vark_j, j = 1, 2, \dots$, be the sequence of positive numbers such that
$\vark_1$ is defined by \eqref{varkappa:eq} and
\begin{equation}\label{vark_sequence:eq}
\vark_{j+1} = \frac{\vark_j \vark_1}{2(d-1)+\vark_1+\vark_j},\ j = 1, 2, \dots.
\end{equation}
Then for any $ p = 1, 2, \dots$, and any $\vark\in (0, \vark_p)$
there exists a number $r_p = r_p(\vark, d) >0$, such that
\begin{align}\label{mnogo1:eq}
 \|\Xi_{\bmu, \rho}  h_{\bz, \ell}  (\chi_{\L} P_{\Om, \a})^p
\bigl(1- h_{\bz, r\ell}\bigr)\|_{\GS_1}
+ &\
\|\Xi_{\bmu, \rho}  h_{\bz, \ell}  (\chi_{\L} P_{\Om, \a})^p
\bigl(1- \Xi_{\bmu, r\rho}\bigr)\|_{\GS_1}\notag\\[0.2cm]
\le &\ C (\a\ell\rho)^{d-1} r^{-\vark},
\end{align}
and
\begin{align}\label{mnogo11:eq}
 \|\bigl(1- h_{\bz, r\ell}\bigr)
  (\chi_{\L} P_{\Om, \a})^p \Xi_{\bmu, \rho}  h_{\bz, \ell} \|_{\GS_1}
+ &\ \|\bigl(1- \Xi_{\bmu, r\rho}\bigr)
  (\chi_{\L} P_{\Om, \a})^p \Xi_{\bmu, \rho}  h_{\bz, \ell} \|_{\GS_1}
   \notag\\[0.2cm]
\le &\ C (\a\ell\rho)^{d-1} r^{-\vark},
\end{align}
for all $r\ge r_p$,
uniformly in $\bz, \bmu\in\R^d$. The constant
$C$ is independent of the parameters $\a, \ell,\rho, r$.
\end{lem}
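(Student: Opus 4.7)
The proof is by induction on $p\ge 1$, establishing \eqref{mnogo1:eq} and \eqref{mnogo11:eq} in tandem. The base case $p=1$ for \eqref{mnogo1:eq} is Lemma \ref{balls:lem}. For \eqref{mnogo11:eq} with $p=1$, the same splitting argument applies in ``mirror'' form: one uses that $\Xi_{\bmu,\rho}$ and $P_{\Om,\a}$ commute (both being Fourier multipliers) together with the fact that Lemma \ref{HS1:lem} was stated for both orderings of the cutoffs, so that the proof of Lemma \ref{balls:lem} can be rerun symmetrically. The underlying $\bx\leftrightarrow\bxi$ symmetry of Condition \ref{graph:cond} ensures that the resulting bound has the same structure.

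For the inductive step, assume both \eqref{mnogo1:eq} and \eqref{mnogo11:eq} hold at level $p$. Set $B=\chi_{\L}P_{\Om,\a}$ and consider
\[
R = \Xi_{\bmu,\rho}\, h_{\bz,\ell}\, B^{p+1}\, G,\qquad G\in\{1-h_{\bz,r\ell},\ 1-\Xi_{\bmu,r\rho}\}.
\]
Pick an intermediate scale $t\in(1,r)$ and use the algebraic identity
\[
I = \Xi_{\bmu,t\rho}\, h_{\bz,t\ell} + (1-\Xi_{\bmu,t\rho}) + \Xi_{\bmu,t\rho}(1-h_{\bz,t\ell})
\]
inserted between $B^p$ and the last $B$ to write $R=R_1+R_2+R_3$, where
\begin{align*}
R_1 &= \Xi_{\bmu,\rho}\, h_{\bz,\ell}\, B^p\cdot \Xi_{\bmu,t\rho}\, h_{\bz,t\ell}\cdot B G,\\
R_2 &= \Xi_{\bmu,\rho}\, h_{\bz,\ell}\, B^p(1-\Xi_{\bmu,t\rho})\, B G,\\
R_3 &= \Xi_{\bmu,\rho}\, h_{\bz,\ell}\, B^p\, \Xi_{\bmu,t\rho}(1-h_{\bz,t\ell})\, B G.
\end{align*}

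For $R_1$, bound $\|\Xi_{\bmu,\rho}h_{\bz,\ell}\|$ and $\|B^p\|$ by $1$ and apply the base case (Lemma \ref{balls:lem}) to $\Xi_{\bmu,t\rho}\, h_{\bz,t\ell}\, B G$, in which $(\ell,\rho)$ is replaced by $(t\ell,t\rho)$ and the cutoff ratio is $r/t$; this yields $\|R_1\|_{\GS_1}\le C\, t^{2(d-1)}(\a\ell\rho)^{d-1}(r/t)^{-\vark_1'}$ for any $\vark_1'\in(0,\vark_1)$, provided $r/t\ge r_1(\vark_1',d)$. For $R_2$ and $R_3$, the right-hand factors $(1-\Xi_{\bmu,t\rho})B G$ and $\Xi_{\bmu,t\rho}(1-h_{\bz,t\ell})B G$ have operator norm at most $1$, while the inductive hypothesis \eqref{mnogo1:eq} at level $p$ (with $r$ replaced by $t$) provides $\|\Xi_{\bmu,\rho}h_{\bz,\ell}B^p(1-\Xi_{\bmu,t\rho})\|_{\GS_1}$ and $\|\Xi_{\bmu,\rho}h_{\bz,\ell}B^p(1-h_{\bz,t\ell})\|_{\GS_1}$ bounded by $C(\a\ell\rho)^{d-1}t^{-\vark'}$ for any $\vark'\in(0,\vark_p)$, once $t\ge r_p(\vark',d)$.

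Summing gives
\[
\|R\|_{\GS_1} \le C(\a\ell\rho)^{d-1}\bigl( t^{2(d-1)+\vark_1'}r^{-\vark_1'} + t^{-\vark'}\bigr),
\]
which is optimized by balancing the two terms at $t = r^{\vark_1'/(2(d-1)+\vark_1'+\vark')}$, producing the exponent $\vark_1'\vark'/(2(d-1)+\vark_1'+\vark')$. Letting $\vark_1'\nearrow \vark_1$ and $\vark'\nearrow \vark_p$, this exponent tends to $\vark_{p+1}$ defined by \eqref{vark_sequence:eq}. Hence for any target $\vark\in(0,\vark_{p+1})$, choosing $\vark_1', \vark'$ sufficiently close to $\vark_1, \vark_p$ and $r_{p+1}(\vark,d)$ large enough that both $t\ge r_p$ and $r/t\ge r_1$ hold under $r\ge r_{p+1}$, one obtains \eqref{mnogo1:eq} at level $p+1$. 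The bound \eqref{mnogo11:eq} at level $p+1$ is obtained by the mirror-symmetric argument, inserting the same decomposition of $I$ between the first $B$ and $B^p$, and invoking \eqref{mnogo11:eq} at level $p$ in place of \eqref{mnogo1:eq}. The main obstacle is precisely this two-scale optimization: the inner scale $t$ must simultaneously be large enough to activate the inductive hypothesis and small enough compared to $r$ to let the base case generate a decaying factor, and the recursion \eqref{vark_sequence:eq} is the unique exponent for which these competing demands can be balanced.
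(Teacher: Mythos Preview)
Your approach is essentially the same two-scale induction as the paper's, and the optimization leading to the recursion \eqref{vark_sequence:eq} is correct. There is one genuine slip, however, in your treatment of $R_3$. With your identity
\[
I = \Xi_{\bmu,t\rho}\, h_{\bz,t\ell} + (1-\Xi_{\bmu,t\rho}) + \Xi_{\bmu,t\rho}(1-h_{\bz,t\ell}),
\]
the third piece gives $R_3 = \Xi_{\bmu,\rho}h_{\bz,\ell}B^p\,\Xi_{\bmu,t\rho}(1-h_{\bz,t\ell})\,BG$, in which $\Xi_{\bmu,t\rho}$ sits \emph{between} $B^p$ and $(1-h_{\bz,t\ell})$. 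Since $\Xi_{\bmu,t\rho}$ does not commute with $B^p$ or with $h_{\bz,t\ell}$, you cannot directly extract the factor $\Xi_{\bmu,\rho}h_{\bz,\ell}B^p(1-h_{\bz,t\ell})$ to which you want to apply the inductive hypothesis. The cure is immediate: use instead the decomposition
\[
I = (1-h_{\bz,t\ell}) + (1-\Xi_{\bmu,t\rho})h_{\bz,t\ell} + \Xi_{\bmu,t\rho}h_{\bz,t\ell},
\]
which is what the paper does. Then the three terms split cleanly: the first two have left factors equal to $\Xi_{\bmu,\rho}h_{\bz,\ell}B^p(1-h_{\bz,t\ell})$ and $\Xi_{\bmu,\rho}h_{\bz,\ell}B^p(1-\Xi_{\bmu,t\rho})$, bounded by induction; the third has right factor $\Xi_{\bmu,t\rho}h_{\bz,t\ell}\,\chi_{\L}P_{\Om,\a}\,G$, exactly the shape of Lemma~\ref{balls:lem} at scales $(t\ell,t\rho)$. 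Alternatively, your $R_3$ can be repaired by writing $\Xi_{\bmu,t\rho}=I-(1-\Xi_{\bmu,t\rho})$ and applying both inductive bounds.

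For \eqref{mnogo11:eq} you propose a parallel mirror induction. This works, but the paper takes a shorter route: once \eqref{mnogo1:eq} is established, \eqref{mnogo11:eq} follows by passing to the adjoint (so that $(\chi_{\L}P_{\Om,\a})^p$ becomes $(P_{\Om,\a}\chi_{\L})^p$ and the cutoffs swap sides) and then exchanging the roles of $\bx$ and $\bxi$, which converts $P_{\Om,\a}\chi_{\L}$ back into a $\chi_{\tilde\L}P_{\tilde\Om,\a}$ operator for the dual pair of graph-type domains. This reduces \eqref{mnogo11:eq} to an instance of \eqref{mnogo1:eq} rather than requiring a second induction.
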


\begin{proof} We begin by proving \eqref{mnogo1:eq}.
Denote
\begin{align*}
R^{(s)}_1(\ell, \rho; r) = &\ \Xi_{\bmu, \rho}  h_{\bz, \ell}  (\chi_{\L} P_{\Om, \a})^s
\bigl(1- h_{\bz, r\ell}\bigr),\\[0.2cm]
R^{(s)}_2(\ell, \rho; r) =  &\ \Xi_{\bmu, \rho}  h_{\bz, \ell}  (\chi_{\L} P_{\Om, \a})^s
\bigl(1- \Xi_{\bmu, r\rho}\bigr).
\end{align*}
 If $p=1$, then the sought bound holds due to Lemma \ref{balls:lem}.
  For $p\ge 2$ we proceed by induction. Suppose that
for some $s<p, s\ge 1$, we have
 \begin{equation}\label{induction1:eq}
 \| R^{(s)}_1\|_{\GS_1} + \|R^{(s)}_2\|_{\GS_1}
\le C  \ (\a\ell\rho)^{d-1} r^{-\s},
\end{equation}
for arbitrary $\s< \vark_s$ and all $r\ge r_s$.
 Let us show that this implies the same bound for $p = s+1$,  arbitrary
 $\s < \vark_{s+1}$, and $r\ge r_{s+1}$ with some $r_{s+1}$. To this end rewrite:
 \begin{align*}
 R^{(s+1)}_1(\ell,\rho; r) =  &\ S_1 + S_2 + S_3,\\
S_1 =   &\ \Xi_{\bmu, \rho}  h_{\bz, \ell}  (\chi_{\L} P_{\Om, \a})^s
\bigl(1-h_{\bz, t\ell}\bigr) \chi_{\L}P_{\Om, \a}
\bigl(1- h_{\bz, r\ell}\bigr)\\
= &\ R^{(s)}_1(\ell, \rho; t) \chi_{\L}P_{\Om, \a}
\bigl(1- h_{\bz, r\ell}\bigr), \\
S_2 =  &\ \Xi_{\bmu, \rho}  h_{\bz, \ell}    (\chi_{\L} P_{\Om, \a})^s
(1- \Xi_{\bmu, t\rho}) h_{\bz, t\ell} \chi_{\L}P_{\Om, \a}
 \bigl(1- h_{\bz, r\ell}\bigr)\\
 = &\ R^{(s)}_2(\ell, \rho; t) h_{\bz, t\ell} \chi_{\L}P_{\Om, \a}
 \bigl(1- h_{\bz, r\ell}\bigr),\\
 S_3 = &\ \Xi_{\bmu, \rho}  h_{\bz, \ell}   (\chi_{\L} P_{\Om, \a})^s
\Xi_{\bmu, t\rho} h_{\bz, t\ell}  \chi_{\L}P_{\Om, \a}
\bigl(1- h_{\bz, r\ell}\bigr)\\
= &\ \Xi_{\bmu, \rho}  h_{\bz, \ell}   (\chi_{\L} P_{\Om, \a})^s
R^{(1)}_1(t\ell, t\rho; rt^{-1}),
 \end{align*}
with an arbitrary $t>0$.
 It follows from \eqref{induction1:eq} and from Lemma \ref{balls:lem} that
\begin{align*}
\|S_1\|_{\GS_1} + \|S_2\|_{\GS_1}\le &\
C  (\a\ell\rho)^{d-1} t^{-\s_s},\\
\|S_3\|_{\GS_1}\le &\ C (\a\ell t^2\rho)^{d-1}
(rt^{-1})^{-\s_1},
\end{align*}
for any $\s_s < \vark_s$, $\s_1 < \vark_1$,
and all $t \ge r_s$ and  $r t^{-1} \ge  r_1$. Thus
\begin{equation*}
\|R^{(s+1)}_1(\ell, \rho; r)\|_{\GS_1}
\le C (\a\ell\rho)^{d-1}
\bigl(t^{2(d-1)+\s_1} r^{-\s_1} + t^{-\s_s}\bigr).
\end{equation*}
The right hand side is minimized when
\begin{equation}\label{stos+1:eq}
t^{-\s_s} = t^{2(d-1)+\s_1} r^{-\s_1},\  \
\textup{i.e.}\ \ t^{\s_s} = r^{\s_{s+1}},\ \
\s_{s+1} = \frac{\s_1\s_{s}}{2(d-1)+ \s_1+\s_s}.
\end{equation}
This choice of $t$ satisfies $t\ge r_s$ if $r\ge r_{s+1}$ with a sufficiently
large $r_{s+1}$. In addition,
since $\s_{s+1} < \s_{s}$, increasing $r_{s+1}$  if necessary,
one guarantees that $r t^{-1}\ge r_1$.
Moreover, by  definition \eqref{vark_sequence:eq}, the formula
\eqref{stos+1:eq} maps $\s_s\in (0, \vark_s)$ to $\s_{s+1}\in (0, \vark_{s+1})$.
This completes the proof of \eqref{induction1:eq} for $p = s+1$,
and hence, by induction leads to \eqref{mnogo1:eq}.

The bounds \eqref{mnogo11:eq} follow directly from \eqref{mnogo1:eq}. For example,
the first trace norm on the left hand side of \eqref{mnogo11:eq} equals
the trace norm of the adjoint operator, i.e.
\[
\|
h_{\bz, \ell}  \Xi_{\bmu, \rho}  (P_{\Om, \a}\chi_{\L} )^p
 \bigl(1- h_{\bz, r\ell}\bigr) \|_{\GS_1}.
\]
Now, we get the required bound for this trace norm after exchanging the roles
of the variables $\bx$ and $\bxi$ and using
\eqref{mnogo1:eq}. Similarly for the second term on the
left hand side of \eqref{mnogo11:eq}.
\end{proof}

Let $\Upsilon_\d(\bz, \be), \d>0, \bz\in\R^d$,
$\be\in\mathbb S^{d-1}$,
be the ``layer" defined by
\begin{equation}\label{Ups:eq}
\Upsilon_{\d}(\bz, \be) = \{\bx\in\R^d: |(\bx-\bz)\cdot\be| < \d\}.
\end{equation}

\begin{lem}\label{HS2_mnogo:lem}
Suppose that $\d\rho\a\ge c$.
Let the sequence $\vark_j$, j = $1, 2, \dots$ be
as defined in \eqref{vark_sequence:eq}, and
let $r_p = r_p(\vark, d)>0$ be the numbers found in Lemma \ref{HS1_mnogo:lem}
for arbitrary $\vark\in (0, \vark_p)$, $p = 1, 2, \dots$.
Then
\begin{align}\label{mnogo2:eq}
 \|  \Xi_{\bmu, \rho}  h_{\bz, \ell}
 \bigl(\chi_{\L}P_{\Om, \a}\bigr)^p
 \chi_{\Upsilon_{\d}(\bw, \be)}&\ (1-\chi_{B(\bz, 4\ell)})
 \chi_{B(\bz, r\ell)}\|_{\GS_1}\notag\\[0.2cm]
\le &\ C
(\a\ell\rho)^{d-1}
\bigl[
r^{-\vark}
+  r^{2(d-1)}(\ell\d^{-1})^{-\varkappa}
\bigr],
\end{align}
for all $r\ge r_p$, $\ell\ge r_p \d$, uniformly in $\bw, \bz, \bmu\in\R^d$.
The constant
$C$ is independent of  the parameters $\a, \ell,\rho, r, \d$.
\end{lem}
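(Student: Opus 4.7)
The strategy is to split the estimate into two pieces matching the two terms of the claimed bound. Denote $\chi_A := \chi_{\Upsilon_\d(\bw, \be)}(1-\chi_{B(\bz, 4\ell)})\chi_{B(\bz, r\ell)}$, and write the operator as $R = \Xi_{\bmu, \rho}h_{\bz, \ell}(\chi_\L P_{\Om, \a})^p\chi_A$. Using the identity $\chi_A = (1-h_{\bz, r\ell/2})\chi_A + h_{\bz, r\ell/2}\chi_A$, split $R = R_1 + R_2$. For $R_1 = \Xi_{\bmu, \rho}h_{\bz, \ell}(\chi_\L P_{\Om, \a})^p(1-h_{\bz, r\ell/2})\chi_A$, the bound \eqref{mnogo1:eq} of Lemma~\ref{HS1_mnogo:lem} immediately yields $\|R_1\|_{\GS_1}\le C(\a\ell\rho)^{d-1}(r/2)^{-\vark}$, producing the first term of the claimed estimate.

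For $R_2 = \Xi_{\bmu, \rho}h_{\bz, \ell}(\chi_\L P_{\Om, \a})^p h_{\bz, r\ell/2}\chi_A$ the effective support of the rightmost multiplier lies in $\Upsilon_\d(\bw, \be)\cap B(\bz, 5r\ell/8)\setminus B(\bz, 4\ell)$. Cover this set by open balls $\{B(\by_i, 2\d)\}_{i=1}^N$ whose centres $\by_i$ form a $\d$-net in the central hyperplane $\{\bx\cdot\be = \bw\cdot\be\}$, and fix a subordinate partition of unity $\{\phi_i\}$. A $(d-1)$-dimensional area count on this hyperplane intersected with $B(\bz, r\ell)$ gives $N\le C(r\ell/\d)^{d-1}$; moreover $r_i\ell := |\bz-\by_i|\in[3\ell, 3r\ell/4]$ for each $i$. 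Setting $R_{2, i} = \Xi_{\bmu, \rho}h_{\bz, \ell}(\chi_\L P_{\Om, \a})^p h_{\bz, r\ell/2}\phi_i\chi_A$ and passing to adjoints, one uses $\phi_i\chi_A = h_{\by_i, 2\d}\phi_i\chi_A$ (since $h_{\by_i, 2\d} \equiv 1$ on the support) to reduce the problem to bounding the bi-cutoff trace norm $\|\Xi_{\bmu, \rho}h_{\bz, \ell}(\chi_\L P_{\Om, \a})^p h_{\by_i, 2\d}\|_{\GS_1}$.

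This bi-cutoff trace norm is estimated by a rescaled and recentred application of Lemma~\ref{HS1_mnogo:lem} at length scale $\d$ around $\by_i$. Choose the auxiliary parameter $s_i \sim r_i\ell/\d$, which satisfies $s_i\ge r_p$ thanks to the hypothesis $\ell\ge r_p\d$ and $r_i\ge 3$. For such $s_i$ the support of $h_{\bz, \ell}$ lies outside $B(\by_i, 5s_i\d/4)$, so the identity $h_{\bz, \ell} = (1-h_{\by_i, s_i\d})h_{\bz, \ell}$ holds and allows $h_{\bz, \ell}$ to be absorbed into the complementary cutoff of the lemma. The adjoint of \eqref{mnogo1:eq} at scale $\d$ then yields $\|R_{2, i}\|_{\GS_1}\le C(\a\d\rho)^{d-1}(r_i\ell/\d)^{-\vark}$. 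Summing over balls grouped into dyadic shells $r_i\in[2^k, 2^{k+1}]$, each containing at most $Cr_i^{d-2}(\ell/\d)^{d-1}$ slab-intersecting balls, and crudely bounding $\sum_{r_i\le r}r_i^{d-2-\vark}\le Cr^{d-1-\vark}\le Cr^{2(d-1)}$ for $r\ge 1$, one arrives at $\|R_2\|_{\GS_1}\le C(\a\ell\rho)^{d-1}r^{2(d-1)}(\ell/\d)^{-\vark}$, the second term of the claimed bound.

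The main obstacle is the cross-scale bi-cutoff trace norm estimate used to obtain the per-ball bound. As stated, Lemma~\ref{HS1_mnogo:lem} controls only operators in which the smooth cutoff $h$ and the complementary cutoff $(1-h)$ share centre and length scale, so producing scale-$\d$ decay for an operator whose left cutoff $h_{\bz, \ell}$ sits at the larger scale $\ell$ requires the geometric absorption argument sketched above. This argument crucially uses the spatial separation $|\bz-\by_i|\gg s_i\d$, which is available only under the assumption $\ell\ge r_p\d$ built into the statement.
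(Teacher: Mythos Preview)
Your per-ball estimate does not follow from Lemma~\ref{HS1_mnogo:lem}. After your absorption $h_{\bz,\ell}=(1-h_{\by_i,s_i\d})h_{\bz,\ell}$ you are bounding
\[
\bigl\|\Xi_{\bmu,\rho}(1-h_{\by_i,s_i\d})h_{\bz,\ell}(\chi_\L P_{\Om,\a})^p h_{\by_i,2\d}\bigr\|_{\GS_1},
\]
and you invoke ``the adjoint of \eqref{mnogo1:eq} at scale $\d$''. But the bound \eqref{mnogo11:eq} at scale $\d$ controls
$\bigl\|(1-h_{\by_i,s_i\d})(\chi_\L P_{\Om,\a})^p\,\Xi_{\bmu,\rho}\,h_{\by_i,2\d}\bigr\|_{\GS_1}$,
with the momentum cutoff $\Xi$ sitting \emph{adjacent to the small position cutoff} $h_{\by_i,2\d}$. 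In your operator the factor $\Xi_{\bmu,\rho}$ is on the far left, separated from $h_{\by_i,2\d}$ by $(\chi_\L P_{\Om,\a})^p$, and it does not commute through. If you strip it off as a bounded factor you are left with $(1-h_{\by_i,s_i\d})(\chi_\L P_{\Om,\a})^p h_{\by_i,2\d}$, which carries no momentum localisation at all and is in general not trace class (already for $p=1$ and $\Om$ a half-space, $P_{\Om,\a}h$ is not even Hilbert--Schmidt). So the step ``the adjoint of \eqref{mnogo1:eq} at scale $\d$ then yields $\|R_{2,i}\|_{\GS_1}\le C(\a\d\rho)^{d-1}(r_i\ell/\d)^{-\vark}$'' is not justified.

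The paper repairs exactly this point by inserting, \emph{before} the ball covering, a momentum cutoff $\Xi_{\bmu,r\rho}$ on the \emph{right} of $(\chi_\L P_{\Om,\a})^p$, not a position cutoff $h_{\bz,r\ell/2}$ as you do. The insertion error
\[
\bigl\|\Xi_{\bmu,\rho}h_{\bz,\ell}(\chi_\L P_{\Om,\a})^p(1-\Xi_{\bmu,r\rho})\bigr\|_{\GS_1}\le C(\a\ell\rho)^{d-1}r^{-\vark}
\]
is the second bound in \eqref{mnogo1:eq} and produces the first term of \eqref{mnogo2:eq}. After that, each local piece has the shape
$h_{\bz,\ell}(\chi_\L P_{\Om,\a})^p\,\Xi_{\bmu,r\rho}\,(1-\chi_{B(\bz,4\ell)})\chi_{B(\bx_l,\d)}$, which is dominated by
$(1-h_{\bx_l,\ell})(\chi_\L P_{\Om,\a})^p\,\Xi_{\bmu,r\rho}\,h_{\bx_l,\d}$; now $\Xi$ and the small $h$ are adjacent and \eqref{mnogo11:eq} applies at scale $(\d,r\rho)$ with ratio $\ell/\d\ge r_p$, giving $C(\a\d r\rho)^{d-1}(\ell\d^{-1})^{-\vark}$ per ball and, after summing $N\le C(r\ell/\d)^{d-1}$ balls, the second term of \eqref{mnogo2:eq}. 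Your $R_1$/$R_2$ split via $h_{\bz,r\ell/2}$ is harmless but cannot replace this momentum-side insertion; the essential device is getting a $\Xi$ onto the side of the small-scale position cutoff.
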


\begin{proof} Let us fix a $\vark \in (0, \vark_p)$.
Replace the operator on the left-hand side by
\[
\Xi_{\bmu, \rho}  h_{\bz, \ell}
 \bigl(\chi_{\L}P_{\Om, \a}\bigr)^p \Xi_{\bmu, r\rho}
 \chi_{\Upsilon_{\d}(\bw, \be)}(1-\chi_{B(\bz, 4\ell)})
 h_{\bz, r\ell}
\]
with $r\ge r_p$. By Lemma
\ref{HS1_mnogo:lem} this operation may change the trace norm
at most by
\[
 C (\a\ell\rho)^{d-1}  r^{-\vark }.
\]
Let $\{B(\bx_l, \d)\}, l = 1, 2, \dots, N$ be a collection of balls covering
$\U_{\d}(\bw, \be)\cap B(\bz, r\ell)$ such that
$B(\bx_l, \d)\subset \U_{2\d}(\bw, \be)$.
Then
\begin{equation*}
 \|h_{\bz, \ell} (\chi_{\L} P_{\Om, \a})^p \ \Xi_{\bmu, r\rho}
 \bigl(1-\chi_{B(\bz, 4\ell)}\bigr)
 \chi_{B(\bx_l, \d)}\|_{\GS_1}
\le
\|\bigl(1- h_{\bx_l, \ell}\bigr)  (\chi_{\L} P_{\Om, \a})^p\  \Xi_{\bmu, r\rho}
\  h_{\bx_l, \d}\|_{\GS_1}.
\end{equation*}
Since $\ell\ge r_p \d$,
it follows from \eqref{mnogo11:eq} that the right hand side
 does not exceed
 \[
 C (\a \d r\rho)^{d-1} (\ell\d^{-1})^{-\varkappa},
 \]
 uniformly in $l = 1, 2, \dots, N$.
Thus
\begin{align*}
\|\Xi_{\bmu, \rho}  h_{\bz, \ell}
 \bigl(\chi_{\L}P_{\Om, \a}\bigr)^p \ \Xi_{\bmu, r\rho}
 \chi_{\U_{\d}(\bw, \be)}& (1-\chi_{B(\bz, 4\ell)})
 \chi_{B(\bz, r\ell)}\|_{\GS_1}\\
\le &\ \sum_{l=1}^N \|\bigl(1- h_{\bx_l, \ell}\bigr)
 (\chi_{\L} P_{\Om, \a})^p\  \Xi_{\bmu, r\rho}
\  h_{\bx_l, \d}\|_{\GS_1}\\
\le &\  C  N(\a \d r \rho)^{d-1} (\ell\d^{-1})^{-\varkappa}.
\end{align*}
Choose the covering in such a way that
the number of balls $N$ does not exceed $C(r \ell\d^{-1})^{d-1}$, and hence
\begin{align*}
\|\Xi_{\bmu, \rho}  h_{\bz, \ell}
 \bigl(\chi_{\L}P_{\Om, \a}\bigr)^p \ \Xi_{\bmu, r\rho}
 \chi_{\U_{\d}(\bw, \be)}& (1-\chi_{B(\bz, 4\ell)})
\chi_{B(\bz, r\ell)}\|_{\GS_1}\\
\le &\  C  (\a \ell  r^2 \rho)^{d-1} (\ell\d^{-1})^{-\varkappa}.
\end{align*}
Returning to the initial operator, we obtain:
\begin{align*}
\|\Xi_{\bmu, \rho}  h_{\bz, \ell}
 \bigl(\chi_{\L}P_{\Om, \a}\bigr)^p \
 \chi_{\U_{\d}(\bw, \be)}& (1-\chi_{B(\bz, 4\ell)})
 \chi_{B(\bz, r\ell)}\|_{\GS_1}\\
\le &\  C (\a \ell \rho)^{d-1}
\bigl[ r^{-\vark}
 + r^{2(d-1)}(\ell\d^{-1})^{-\vark}
\bigr],
\end{align*}
as claimed.
\end{proof}

\subsection{Reduction to the flat boundary}
The above lemmas provide useful tools for the study of the
operator $T(1; \L, \Om)$. Our objective in this subsection
is to show that under suitable conditions one can replace $\L$ by a half-space.
Since the domain $\Om$ remains unchanged, we omit $\Om$ from the notation of $T$
and simply write $T(1; \L)$.

As before we assume that $\L$, $\Om$ satisfy Condition \ref{graph:cond}.
In addition, we assume that $\L = \G(\Phi; \boldI, \mathbf 0)$,
i.e. the domain is given by
\[
\L = \{\bx: x_d > \Phi(\hat\bx)\}.
\]
Define
\begin{equation}\label{pizhat:eq}
\Pi =  \{\bx: x_d > \nabla\Phi(\hat{\mathbf 0})\cdot\hat\bx\}.
\end{equation}
Since
$\nabla\Phi$ is uniformly continuous,
we have
\begin{equation}\label{nabla_mod:eq}
\max_{|\hat\bx-\hat\bz|\le s}|\nabla\Phi(\hat\bx) - \nabla\Phi(\hat{\bz})|
=: \vare(s) \to 0, \ s\to 0,
\end{equation}
so that
\begin{equation}\label{modulus:eq}
\max_{|\hat\bx|\le s}
|\Phi(\hat\bx) - \Phi(\hat{\mathbf 0})
- \nabla\Phi(\hat{\mathbf 0})\cdot\hat\bx|\le \varepsilon(s) s.
\end{equation}
In the next lemma we use the parameters $\vark_k$ defined
in \eqref{vark_sequence:eq}, and also the notation
\begin{equation}\label{rtilde:eq}
\tilde r_p(\vark, d) = \max_{1\le k\le p} r_k(\vark, d),\ 0< \vark < \vark_p,
\end{equation}
where $r_k(\vark, d)$ are the numbers introduced for
$\vark \in (0,  \vark_k)$ in Lemma \ref{HS1_mnogo:lem}.
Since $\vark_k$ is a decreasing sequence, the numbers
$\tilde r_p(\vark, d)$ are well-defined.

\begin{lem}\label{flat_boundary:lem}
Let $\L = \G(\Phi; \boldI, \mathbf 0)$ and $\Pi$ be as defined above.
Let the point $\bz\in\R^d$
and parameters $\ell, t>0$ be such that
\begin{equation}\label{distance:eq}
B(\bz, 4\ell)\subset \L\cap \Pi\cap B(\mathbf 0, t).
\end{equation}
Suppose that $\a\ell\rho\ge c$.
Let the sequence $\vark_j$, j = $1, 2, \dots$ be
as defined in \eqref{vark_sequence:eq}, and
let $\tilde r_p = \tilde r_p(\vark, d)>0$ be the numbers defined
in \eqref{rtilde:eq}. Then for any $p = 1, 2, \dots$, and any $\vark\in (0, \vark_p)$,
under the conditions
\begin{equation}\label{ells:eq}
\tilde r_p\ell\le t,\
2 \tilde r_p\bigl(4t\ \varepsilon(4t)\bigr)\le \ell,
\end{equation}
one has
\begin{equation}\label{flat_boundary:eq}
\| \Xi_{\bmu, \rho} h_{\bz, \ell} \bigl(g_p(T(1; \L))
- g_p(T(1; \Pi))\bigr)\|_{\GS_1}
\le   C
(\a\ell\rho)^{d-1}\CR_\vark (\a; \ell, \rho, t),
\end{equation}
with
\begin{equation}\label{remainder:eq}
\CR_\vark (\a; \ell, \rho, t)= (t\ell^{-1})^{-\vark} +
(t\ell^{-1})^{2(d-1)}(\a\ell\rho)^{-\vark}
+ (t\ell^{-1})^{2(d-1)}\bigl(t \ell^{-1}\varepsilon(4t)\bigr)^{\vark},
\end{equation}
and a constant $C$ independent of $\bz, \bmu$.
\end{lem}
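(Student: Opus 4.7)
The plan is to reduce the difference $g_p(T(1;\L))-g_p(T(1;\Pi))$ to a sum of terms each containing a single factor $\chi_\L-\chi_\Pi$ and then exploit the geometric information \eqref{modulus:eq} via the layer estimates already available. Since $\op_\a^l(1)=I$ and $P_{\Om,\a}$ is a projection, one has $T(1;\L)=\chi_\L P_{\Om,\a}\chi_\L$ and hence $g_p(T(1;\L))=A^p\chi_\L$ with $A:=\chi_\L P_{\Om,\a}$, and analogously $g_p(T(1;\Pi))=B^p\chi_\Pi$ with $B:=\chi_\Pi P_{\Om,\a}$. Using $A-B=(\chi_\L-\chi_\Pi)P_{\Om,\a}$, telescoping gives
\begin{equation*}
g_p(T(1;\L))-g_p(T(1;\Pi))=A^p(\chi_\L-\chi_\Pi)+\sum_{k=0}^{p-1}A^{p-1-k}(\chi_\L-\chi_\Pi)P_{\Om,\a}B^k\chi_\Pi.
\end{equation*}
Every summand is $X\cdot Y$ with $\|Y\|\le 1$, so it suffices to bound $\|\Xi_{\bmu,\rho}h_{\bz,\ell}A^j(\chi_\L-\chi_\Pi)\|_{\GS_1}$ for $j\in\{1,\ldots,p\}$. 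The $j=0$ contribution is absent because $h_{\bz,\ell}$ is supported in $B(\bz,5\ell/4)\subset B(\bz,4\ell)\subset\L\cap\Pi$, so $h_{\bz,\ell}(\chi_\L-\chi_\Pi)=0$ pointwise.

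Next, I would exploit the modulus of continuity of $\nabla\Phi$. Since $\Phi(\hat{\bzero})=0$, estimate \eqref{modulus:eq} gives $|\Phi(\hat\bx)-\nabla\Phi(\hat{\bzero})\cdot\hat\bx|\le 4t\,\vare(4t)$ for $|\hat\bx|\le 4t$, so $(\L\triangle\Pi)\cap B(\bzero,4t)\subset\Upsilon_{\d_0}(\bzero,\be)$, where $\be$ is the unit normal to $\p\Pi$ at $\bzero$ and $\d_0=C_M t\,\vare(4t)$. Because Lemma \ref{HS2_mnogo:lem} demands $\d\a\rho\ge c$, I would work with the enlarged width $\d:=\max(\d_0,c/(\a\rho))$; the hypotheses \eqref{ells:eq} and $\a\ell\rho\ge c$ are exactly what is needed to force $\ell\ge\tilde r_p\d$.

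Now I choose $r$ of order $t/\ell$, admissible since \eqref{ells:eq} implies $r\ge\tilde r_p$, and arrange constants so that $B(\bz,5r\ell/4)\subset B(\bzero,4t)\subset B(\bz,r\ell)$ (possible because $|\bz|\le t-4\ell$). Split $\chi_\L-\chi_\Pi=\phi_{\text{near}}+\phi_{\text{far}}$ with $\phi_{\text{near}}=(\chi_\L-\chi_\Pi)\chi_{B(\bzero,4t)}$. For the far part, $h_{\bz,r\ell}\phi_{\text{far}}=0$, hence $\Xi_{\bmu,\rho}h_{\bz,\ell}A^j\phi_{\text{far}}=\Xi_{\bmu,\rho}h_{\bz,\ell}A^j(1-h_{\bz,r\ell})\phi_{\text{far}}$, whose trace norm is bounded by $C(\a\ell\rho)^{d-1}(t/\ell)^{-\vark}$ thanks to Lemma \ref{HS1_mnogo:lem}, producing the first term of $\CR_\vark$. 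For the near part, $B(\bz,4\ell)\subset\L\cap\Pi$ gives $\phi_{\text{near}}=\chi_{\Upsilon_\d}(1-\chi_{B(\bz,4\ell)})\phi_{\text{near}}\chi_{B(\bz,r\ell)}$, and Lemma \ref{HS2_mnogo:lem} yields
\begin{equation*}
\|\Xi_{\bmu,\rho}h_{\bz,\ell}A^j\chi_{\Upsilon_\d}(1-\chi_{B(\bz,4\ell)})\chi_{B(\bz,r\ell)}\|_{\GS_1}\le C(\a\ell\rho)^{d-1}\bigl[r^{-\vark}+r^{2(d-1)}(\ell/\d)^{-\vark}\bigr].
\end{equation*}
The bound $(\ell/\d)^{-\vark}\le C[(t\vare(4t)/\ell)^{\vark}+(\a\ell\rho)^{-\vark}]$, reflecting the two regimes of the max defining $\d$, together with $r\sim t/\ell$, produces the remaining two terms of $\CR_\vark$. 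Summing the $p+1$ telescoping summands (which only changes constants) finishes the estimate.

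The main obstacle I foresee is parameter bookkeeping in the near-field step: the layer width $\d$, the large radius $r\ell$, and the cutoffs must be chosen so that \emph{all} the hypotheses of Lemma \ref{HS2_mnogo:lem} hold simultaneously—the layer must absorb the symmetric difference ($\d\gtrsim t\vare(4t)$), the ball must cover $B(\bzero,4t)$ ($r\ell\gtrsim t$), and we need $\d\a\rho\ge c$ and $\ell\ge\tilde r_p\d$. The conditions \eqref{ells:eq} and $\a\ell\rho\ge c$ are precisely calibrated to make these constraints compatible, and the three terms of $\CR_\vark$ arise naturally as the far-field error $(t/\ell)^{-\vark}$, the semi-classical regime of $\d$ giving $(t/\ell)^{2(d-1)}(\a\ell\rho)^{-\vark}$, and the geometric regime giving $(t/\ell)^{2(d-1)}(t\vare(4t)/\ell)^{\vark}$.
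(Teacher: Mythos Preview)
Your approach is essentially the paper's: telescoping the difference of $p$-th powers to isolate a single factor $\chi_\L-\chi_\Pi$, splitting that factor into near and far pieces, and then invoking Lemmas \ref{HS1_mnogo:lem} and \ref{HS2_mnogo:lem}. The only cosmetic difference is that you keep $A^j=(\chi_\L P_{\Om,\a})^j$ on the left of $\chi_\L-\chi_\Pi$, whereas the paper's telescoping places $(\chi_\Pi P_{\Om,\a})^k$ there; since both $\L$ and $\Pi$ are graph-type, either choice feeds into the cited lemmas.

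There is, however, one bookkeeping slip. You ask for a single $r\sim t/\ell$ with
\[
B(\bz,5r\ell/4)\subset B(\mathbf 0,4t)\subset B(\bz,r\ell),
\]
which is impossible: the outer inclusion forces $r\ell>4t$, while the inner one forces $5r\ell/4<4t$. The fix is simply to use two comparable radii. For the far piece, take $r_1=t/\ell$ (so $B(\bz,5r_1\ell/4)\subset B(\mathbf 0,4t)$ since $|\bz|<t$), insert $1-h_{\bz,r_1\ell}$, and apply Lemma \ref{HS1_mnogo:lem}; this gives the term $(t/\ell)^{-\vark}$. For the near piece, take $r_2=8t/\ell$ (so $B(\mathbf 0,4t)\subset B(\bz,r_2\ell)$), and apply Lemma \ref{HS2_mnogo:lem} with $r=r_2$; since $r_2\sim t/\ell$, this yields the remaining two terms of $\CR_\vark$ exactly as you describe. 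This is precisely what the paper does (it uses a smooth cutoff $h_{\mathbf 0,3t}$ for the split and then $r=t/\ell$ and $r=8t/\ell$ in the two lemmas). With that adjustment your argument goes through.
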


 \begin{proof}
Rewrite the difference of the operators on the left-hand side as
\begin{equation*}
g_p(T(1; \L)) - g_p(T(1; \Pi))
= \sum_{k=0}^{p-1} g_k(T(1; \Pi)) (T(1; \L)-T(1; \Pi))
g_{p-1-k}(T(1; \L)),
\end{equation*}
and estimate every term in this sum individually. Since
\begin{equation*}
T(1; \L) - T(1; \Pi)
= (\chi_{\L}-\chi_{\Pi}) P_{\Om, \a} \chi_{\L}
+  \chi_{\Pi} P_{\Om, \a} (\chi_{\L}-\chi_{\Pi}),
\end{equation*}
the $k$th term in the sum takes the form
\begin{align*}
g_k(T(1; \Pi))(\chi_{\L}-\chi_{\Pi}) P_{\Om, \a}
&\ g_{p-1-k}(T(1; \L)) \\[0.2cm]
+ &\ g_k(T(1; \Pi)) P_{\Om, \a}
(\chi_{\L}-\chi_{\Pi})
g_{p-1-k}(T(1; \L))\\[0.2cm]
= &\ X_1 X_2 + Y_1 Y_2,
 \end{align*}
with
\begin{align*}
X_1 = &\ (\chi_{\Pi}P_{\Om, \a})^k (\chi_{\L} - \chi_{\Pi}),\
X_2 = \chi_{\Pi} P_{\Om, \a} g_{p-k-1}(T(1; \L)),\\[0.2cm]
Y_1 = &\  (\chi_{\Pi}P_{\Om, \a})^{k+1}
(\chi_{\L} - \chi_{\Pi}), \
Y_2 = g_{p-1-k}(T(1; \L)).
\end{align*}
Clearly, the norms of $X_2$ and $Y_2$ do not exceed $1$. Let us estimate the trace norms of
$X_1$ and $Y_1$.
Represent $X_1$ as
\begin{align*}
X_1 = &\ X_{11} + X_{12},\\
X_{11} = &\ (\chi_{\Pi}P_{\Om, \a})^k (\chi_{\L} - \chi_{\Pi}) h_{\mathbf 0, 3t},\\
X_{12} = &\ (\chi_{\Pi}P_{\Om, \a})^k  (\chi_{\L} - \chi_{\Pi})
\bigl(1- h_{\mathbf 0, 3t}\bigr).
\end{align*}
Due to the condition \eqref{distance:eq}, $B(\bz, 5t/4)\subset B(\mathbf 0, 3t)$,
so that
\[
1- h_{\bzero, 3t} = (1- h_{\bzero, 3t})(1-h_{\bz, t}).
\]
Pick an arbitrary $\vark < \vark_p$, and assume that
\[
t\ell^{-1}\ge \max_{1\le k\le p} r_k(\vark, d),
\]
where $r_k$ are the numbers from Lemma \ref{HS1_mnogo:lem}.
Thus it follows from Lemma \ref{HS1_mnogo:lem} that
\begin{align}\label{x12:eq}
\| \Xi_{\bmu, \rho} h_{\bz, \ell} X_{12}\|_{\GS_1}
\le &\ \|\Xi_{\bmu, \rho} h_{\bz, \ell}  (\chi_{\Pi} P_{\Om, \a})^k
\bigl(1-h_{\bz, t}\bigr)\|_{\GS_1}\notag\\[0.2cm]
\le &\ C (\a\ell\rho)^{d-1} (t \ell^{-1})^{-\vark}.
\end{align}
In order to estimate $\|X_{11}\|_{\GS_1}$, we note that the difference
$\chi_{\L} - \chi_{\Pi}$ is supported on the set $\L\triangle\Pi$, i.e. on
\begin{equation*}
\{\bx: \Phi(\hat\bx) < x_d < \nabla\Phi(\hat\bzero)\cdot\hat\bx\}
\bigcup
\{\bx: \nabla\Phi(\hat\bzero)\cdot\hat\bx < x_d < \Phi(\hat\bx)\}.
\end{equation*}
By \eqref{modulus:eq},
\begin{equation*}
\max_{\hat\bx: |\hat\bx|\le 4t}
|\Phi(\hat\bx) - \nabla\Phi(\hat\bzero)\cdot\hat\bx|
\le 4t\ \varepsilon(4t) .
\end{equation*}
Thus
\[
\L\triangle\Pi
\cap B(\bzero, 4 t)
\subset \Upsilon_{\nu}(\bzero, \be), \nu = \varepsilon(4t) 4t,
\]
(see \eqref{Ups:eq} for definition of $\Upsilon_{\nu}$), where
\[
\be = \frac{\bigl(-\nabla\Phi(\hat\bzero), 1\bigr)}
{\sqrt{|\nabla\Phi(\hat\bzero)|^2+1}}
\]
is the unit normal to the hyperplane $\Pi$.
 In order to use Lemma
\ref{HS2_mnogo:lem} we must ensure that the thickness $\nu$
satisfies the condition $\a\nu\rho \ge c$, and
hence it is more convenient to
consider a thicker layer $\Upsilon_{\d}(\bzero, \be)$ with
\begin{equation*}
\d = c_0(\a\rho)^{-1}+ 4t\varepsilon(4t),
\end{equation*}
with some $c_0>0$.
Note that due to \eqref{distance:eq},
$h_{\bzero, 3t} = h_{\bzero, 3t} \ \chi_{B(\bz, 8 t)}$  and
\[
\chi_{\L\triangle\Pi} = \chi_{\L\triangle\Pi}\bigl(1-\chi_{B(\bz, 4\ell)}\bigr).
\]
Therefore
\begin{equation}\label{x11:eq}
\| \Xi_{\bmu, \rho} h_{\bz, \ell} X_{11}\|_{\GS_1}
\le \|\Xi_{\bmu, \rho} h_{\bz, \ell}
(\chi_{\Pi} P_{\Om, \a})^k \chi_{\Upsilon_{\d}}(\bzero, \be)
(1-\chi_{B(\bz, 4\ell)}) \chi_{B(\bz, 8 t)}\|_{\GS_1}.
\end{equation}
Now, in order to use Lemma \ref{HS2_mnogo:lem},
let us check that its conditions are satisfied.
It follows from \eqref{ells:eq} and the condition $\a\ell\rho\ge c$ that for a
sufficiently small $c_0$ we get $r_p \d \le \ell$.
Furthermore, in view of the first condition in \eqref{ells:eq}, we have
\begin{equation*}
r:=\frac{8t}{\ell}\ge 8 \tilde r_p.
\end{equation*}
Applying Lemma  \ref{HS2_mnogo:lem} to the right hand side of \eqref{x11:eq}, we get
\begin{align*}
\| \Xi_{\bmu, \rho} h_{\bz, \ell} &\ X_{11}\|_{\GS_1}
\le  C(\a\ell\rho)^{d-1}\bigl[ r^{-\vark}
+ r^{2(d-1)  } (\ell\d^{-1})^{-\vark}
\bigr]\\[0.2cm]
\le &\ C(\a\ell\rho)^{d-1}
\bigl [
(t\ell^{-1})^{-\vark}
+ (t\ell^{-1})^{2(d-1)}
(\a\ell\rho)^{-\vark} + (t\ell^{-1})^{2(d-1)} (t\ell^{-1} \vare(4t))^{\vark}
\bigr].
\end{align*}
Together with \eqref{x12:eq}, this bound leads to the estimate of the form
\eqref{flat_boundary:eq} for the operator $X_1 X_2$.
Similarly, one obtains an estimate for the trace norm of $Y_1$,
which leads to the required bound for  $\|Y_1 Y_2\|_{\GS_1}$. Together, they ensure
\eqref{flat_boundary:eq}.
\end{proof}

In the next lemma we replace the product of the two test functions
$h_{\bz, \ell}(\bx) $ and $\eta_{\bmu, \rho}(\bxi)$ with an arbitrary
compactly supported symbol $b(\bx, \bxi)$.

\begin{lem}\label{flat_boundary_b:lem}
Let $\L$ and $\Pi$ be as in Lemma \ref{flat_boundary:lem}  and let $\a\ell\rho\ge c$.
Assume that for some point $\bz\in\R^d$, some
 $t>0$, and some $\vark \in (0, \vark_p)$
 the conditions \eqref{distance:eq} and \eqref{ells:eq} are satisfied.
 Let $b\in \BS^{(d+1, d+2)}$ be a symbol supported in
$B(\bz, \ell)\times B(\bmu, \rho)$ with some $\bmu\in\R^d$.
Then
\begin{align}\label{flat_boundary_b:eq}
\| \op^l_\a(b) \bigl(g_p(T(1, \L))
- &\ g_p(T(1, \Pi))\bigr)\|_{\GS_1}\notag\\[0.2cm]
\le  &\ C
(\a\ell\rho)^{d-1}\CR_{\vark} (\a; \ell, \rho, t)
\SN^{(d+1, d+2)}(b; \ell, \rho),
\end{align}
with the factor $\CR_{\vark} (\a; \ell, \rho, t)$
defined in \eqref{remainder:eq}.
\end{lem}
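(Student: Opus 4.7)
The strategy is to reduce the estimate to Lemma \ref{flat_boundary:lem} via the support structure of $b$. Since $b$ is supported in $B(\bz,\ell)\times B(\bmu,\rho)$ while $h_{\bz,\ell}\equiv 1$ on $B(\bz,\ell)$ and $\eta_{\bmu,\rho}\equiv 1$ on $B(\bmu,\rho)$, the pointwise identities $b(\bx,\bxi) = h_{\bz,\ell}(\bx)\eta_{\bmu,\rho}(\bxi)b(\bx,\bxi)$ and $\eta_{\bmu,\rho}b = b$ hold exactly, and accordingly $h_{\bz,\ell}\op^l_\a(b)=\op^l_\a(b)$ as an operator identity. Applying Corollary \ref{product:cor} to the pair $(\eta_{\bmu,\rho},b)$ — using that $b$ satisfies \eqref{a_support:eq} and that $\SN^{(d+1,d+2)}(\eta_{\bmu,\rho};\ell,\rho)$ is bounded by an absolute constant — yields
\[
\Xi_{\bmu,\rho}\op^l_\a(b)=\op^l_\a(b)+E_1,\qquad
\|E_1\|_{\GS_1}\le C(\a\ell\rho)^{d-1}\SN^{(d+1,d+2)}(b;\ell,\rho).
\]
Premultiplying by $h_{\bz,\ell}$ and then commuting $h_{\bz,\ell}\Xi_{\bmu,\rho}$ past each other (whose commutator is itself of trace-class order $(\a\ell\rho)^{d-1}$ by a similar application of Corollary \ref{product:cor}) produces a decomposition
\[
\op^l_\a(b)=\Xi_{\bmu,\rho}h_{\bz,\ell}\op^l_\a(b)+E,\qquad
\|E\|_{\GS_1}\le C(\a\ell\rho)^{d-1}\SN^{(d+1,d+2)}(b;\ell,\rho).
\]

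Set $D:=g_p(T(1,\L))-g_p(T(1,\Pi))$, which is self-adjoint with $\|D\|\le 2$ (since $T(1,\L)$ and $T(1,\Pi)$ are contractions). Writing $\op^l_\a(b)D = D\op^l_\a(b)+[\op^l_\a(b),D]$ and inserting into the above decomposition gives
\[
\op^l_\a(b)D = \Xi_{\bmu,\rho}h_{\bz,\ell}D\,\op^l_\a(b)
    +\Xi_{\bmu,\rho}h_{\bz,\ell}[\op^l_\a(b),D]+ED.
\]
For the first summand, $\|\Xi_{\bmu,\rho}h_{\bz,\ell}D\|_{\GS_1}\le C(\a\ell\rho)^{d-1}\CR_\vark(\a;\ell,\rho,t)$ by Lemma \ref{flat_boundary:lem}, while $\|\op^l_\a(b)\|\le C\SN^{(d+1,d+2)}(b;\ell,\rho)$ by Lemma \ref{general_norm:lem}; this yields exactly the target bound \eqref{flat_boundary_b:eq}.

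The main obstacle is to show that the two remaining summands also carry the $\CR_\vark$ factor, rather than merely the naive bound $C(\a\ell\rho)^{d-1}\SN^{(d+1,d+2)}(b;\ell,\rho)$ that one would obtain from $\|E\|_{\GS_1}\|D\|$ and $\|\Xi_{\bmu,\rho}h_{\bz,\ell}\|\cdot\|[\op^l_\a(b),D]\|_{\GS_1}$. To achieve this one repeats the telescoping argument from the proof of Lemma \ref{flat_boundary:lem}: expand $D$ as a finite sum of terms of the form $X_1X_2+Y_1Y_2$ with $\|X_2\|,\|Y_2\|\le 1$ and $X_1,Y_1$ isolating the boundary-localized factor $\chi_\L-\chi_\Pi$; split each $X_1$ into near- and far-field pieces as in \eqref{x11:eq}--\eqref{x12:eq} using the cutoff $h_{\bzero,3t}$; and bound $\|\Xi_{\bmu,\rho}h_{\bz,\ell}[\op^l_\a(b),X_1]\|_{\GS_1}$ and $\|EX_1\|_{\GS_1}$ by combining the commutator estimates from Lemmas \ref{sandwich:lem} and \ref{sandwich_dual:lem} (which give $\|[\op^l_\a(b),\chi_\L]\|_{\GS_1}$ and $\|[\op^l_\a(b),P_{\Om,\a}]\|_{\GS_1}$ of order $(\a\ell\rho)^{d-1}\SN(b)$) with the thin-layer trace-class bounds of Lemmas \ref{HS1_mnogo:lem} and \ref{HS2_mnogo:lem}. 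The concentration of $\chi_\L-\chi_\Pi$ in a layer of width $\sim t\vare(4t)$ around $\p\L$ is precisely what supplies the $\CR_\vark$ factor in every such estimate, completing the proof of \eqref{flat_boundary_b:eq}.
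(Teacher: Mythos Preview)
There is a genuine gap. Your decomposition places $\Xi_{\bmu,\rho}h_{\bz,\ell}$ to the \emph{left} of $\op^l_\a(b)$, so the error $E$ satisfies only $\|E\|_{\GS_1}\le C(\a\ell\rho)^{d-1}\SN^{(d+1,d+2)}(b;\ell,\rho)$, and hence $\|ED\|_{\GS_1}$ carries no $\CR_\vark$ factor. Your proposed remedy---re-expanding $D$ and invoking Lemmas~\ref{HS1_mnogo:lem}, \ref{HS2_mnogo:lem}---does not work as written: those lemmas require the block $\Xi_{\bmu,\rho}h_{\bz,\ell}$ sitting \emph{immediately} to the left of $(\chi_\Pi P_{\Om,\a})^k$, whereas in $EX_1=(I-\Xi_{\bmu,\rho})\op^l_\a(b)\,(\chi_\Pi P_{\Om,\a})^k(\chi_\L-\chi_\Pi)$ the operator $\op^l_\a(b)$ is wedged in between and the outermost factor is $I-\Xi_{\bmu,\rho}$, not $\Xi_{\bmu,\rho}$. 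The same objection applies to $\Xi_{\bmu,\rho}h_{\bz,\ell}[\op^l_\a(b),D]$: expanding $D$ produces commutators such as $[\op^l_\a(b),\chi_\L]$ and $[\op^l_\a(b),P_{\Om,\a}]$ whose trace norms are $O\bigl((\a\ell\rho)^{d-1}\bigr)$ by Lemmas~\ref{sandwich:lem}, \ref{sandwich_dual:lem}, again with no mechanism to extract $\CR_\vark$.

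The clean fix is to insert the localisation on the \emph{right} of $\op^l_\a(b)$. Since $\eta_{\bmu,\rho}$ depends on $\bxi$ alone and $\eta_{\bmu,\rho}b=b$, one has the \emph{exact} identity $\op^l_\a(b)\,\Xi_{\bmu,\rho}=\op^l_\a(b)$, whence
\[
\op^l_\a(b)=\op^l_\a(b)\,\Xi_{\bmu,\rho}h_{\bz,\ell}+\op^l_\a(b)\,(1-h_{\bz,\ell}).
\]
For the first summand, $\|\op^l_\a(b)\|\cdot\|\Xi_{\bmu,\rho}h_{\bz,\ell}D\|_{\GS_1}$ gives \eqref{flat_boundary_b:eq} straight from Lemmas~\ref{general_norm:lem} and~\ref{flat_boundary:lem}. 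For the second, the $\bx$-supports of $b$ and $1-h_{\bz,\ell}$ are separated by a distance of order $\ell$, so the disjoint-support bound~\eqref{largea:eq} with $m=d+2$ yields $\|\op^l_\a(b)(1-h_{\bz,\ell})\|_{\GS_1}\le C(\a\ell\rho)^{d-2}\SN^{(d+1,d+2)}(b;\ell,\rho)$. Because $\vark<\vark_p<1$ and $t\ell^{-1}\ge\tilde r_p\ge 1$, one has $(\a\ell\rho)^{-1}\le C(\a\ell\rho)^{-\vark}\le C\,\CR_\vark$, so this term is absorbed into the right-hand side of~\eqref{flat_boundary_b:eq}. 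No commutator juggling with $D$ is needed.
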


\begin{proof}
We write $\op$ instead of $\op_\a^l$.
Rewrite $\op(b)$ as follows:
\begin{equation}\label{b_split:eq}
\op(b) = \op(b) \Xi_{\bmu, \rho} h_{\bz, \ell}
+ \op(b) (1-h_{\bz, \ell}).
 \end{equation}
In view of the bound
\begin{align*}
\| \op^l_\a(b) \Xi_{\bmu, \rho} h_{\bz, \ell}\bigl(g_p(T(1, \L))
- &\ g_p(T(1, \Pi))\bigr)\|_{\GS_1}\\
\le &\ \| \op^l_\a(b) \|\ \|\Xi_{\bmu, \rho} h_{\bz, \ell}\bigl(g_p(T(1, \L))
- g_p(T(1, \Pi))\bigr)\|_{\GS_1},
\end{align*}
the bound \eqref{flat_boundary_b:eq} for the first term on the right hand side of
\eqref{b_split:eq} follows from Lemmas \ref{flat_boundary:lem}
and \ref{general_norm:lem}.
To estimate the second term assume that $h(\bx) = 1$ for all $\bx\in B(0, 9/8)$,
so that the  supports of $b(\ \cdot\ , \bxi)$ and $1-h_{\bz, \ell}$
are separated by the distance of at least $\ell/8$. Thus the bound \eqref{largea:eq}
with $Q = d+2$ gives
\[
\|\op(b) (1-h_{\bz, \ell})\|_{\GS_1}\le C (\a\ell\rho)^{d-2}
\SN^{(d+1, d+2)}(b; \ell, \rho).
\]
Since $\vark<\vark_p <1$,
together with the estimate for the first term this bound produces
\eqref{flat_boundary_b:eq}.
\end{proof}


\section{A Hilbert-Schmidt class estimate}\label{HS:sect}

Although we mostly use trace class estimates, in the proof of the
asymptotics \eqref{main_s:eq} for arbitrary smooth functions we need one estimate in the
Hilbert-Schmidt class.
As in Sect. \ref{nonsmooth2:sect} we assume that
the domains $\L$ and $\Om$ satisfy Condition
\ref{graph:cond} and that $d\ge 2$. The constants in all
subsequent estimates are independent on the functions
$\Phi, \Psi$, or the parameters $\boldO, \boldk$, but may depend
on the constant $M$ in \eqref{gradient:eq}.

\begin{lem}\label{HSestim:lem}
Let $\L$ and $\Om$ be two domains satisfying Condition
\ref{graph:cond}.
Let $a=a(\bx, \bxi)$ be a symbol from $\BS^{(d+2, d+2)}$,
supported in $B(\bz, \ell)\times B(\bmu, \rho)$ with some
$\bz, \bmu\in\R^d$ and $\ell, \rho >0$.
Let $\op_\a(a)$ denote any of the operators $\op_\a^l(a)$ or $\op_\a^r(a)$,
and let $\a\ell\rho\ge 2$.
Then
\begin{equation}\label{HSestim:eq}
\|\chi_{\L} P_{\Om, \a}\op_\a (a) P_{\Om, \a} (1-\chi_{\L})\|_{\GS_2}^2
\le C(\a\ell\rho)^{d-1}
\log(\a\ell\rho) \bigl(\SN^{(d+2, d+2)}(a; \ell, \rho)\bigr)^2,
\end{equation}
uniformly in $\bz, \bmu\in\R^d$.
\end{lem}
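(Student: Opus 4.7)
The plan is to combine a scaling reduction, a commutator identity that eliminates the inner $P_{\Om,\a}$'s, and a direct kernel computation in the flat-boundary case, with a boundary-straightening step to handle general $\plainC1$ graph-type domains.

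First, by the unitary equivalence \eqref{unitary:eq} with $\ell_1=\ell$, $\rho_1=\rho$, together with the invariances \eqref{scale:eq} and \eqref{scale_domain:eq}, it suffices to prove the estimate for $\ell=\rho=1$ and $\a\ge 2$, uniformly in the remaining data. Without loss of generality I focus on $\op_\a(a)=\op_\a^l(a)$ (the case $\op_\a^r$ reduces to this by Lemma~\ref{quantisation_trace:lem} at the cost of a trace-class correction of norm $O(\a^{d-1}\SN)$). Since $P_{\Om,\a}$ is a Fourier multiplier and $\op_\a^l(a)$ multiplies the Fourier transform by $a(\bx,\bxi/\a)$, one has the exact identity $\op_\a^l(a)P_{\Om,\a}=\op_\a^l(a\chi_\Om)$; combining this with $P_{\Om,\a}^2=P_{\Om,\a}$ yields
\begin{equation*}
\chi_\L P_{\Om,\a}\op_\a^l(a)P_{\Om,\a}(1-\chi_\L)=\chi_\L\op_\a^l(a\chi_\Om)(1-\chi_\L)+E,
\end{equation*}
where $E=\chi_\L[P_{\Om,\a},\op_\a^l(a)]P_{\Om,\a}(1-\chi_\L)$. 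By Lemma~\ref{sandwich_dual:lem} we have $\|E\|_{\GS_1}\le C\,\a^{d-1}\SN^{(d+2,d+2)}(a;1,1)$, hence $\|E\|_{\GS_2}^2\le\|E\|\cdot\|E\|_{\GS_1}\le C\,\a^{d-1}\SN^2$, which is absorbed in the target bound.

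The remaining operator has kernel
\begin{equation*}
K(\bx,\by)=\chi_\L(\bx)\bigl(1-\chi_\L(\by)\bigr)\Bigl(\tfrac{\a}{2\pi}\Bigr)^d\!\int_\Om e^{i\a(\bx-\by)\bxi}\,a(\bx,\bxi)\,d\bxi,
\end{equation*}
so $\|\cdot\|_{\GS_2}^2=\iint|K(\bx,\by)|^2\,d\bx\,d\by$. I would first handle the flat model $\L=\{x_d>0\}$, $\Om=\{\xi_d>0\}$. For each $\bx$ the $\bxi$-integral is (up to the prefactor) the Fourier transform of $a(\bx,\cdot)\chi_{\xi_d>0}$ evaluated at $\a(\bx-\by)$. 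Smoothness and compact support of $a(\bx,\cdot)$ yield rapid decay in the tangential directions of $\a(\bx-\by)$, whereas the jump of $\chi_{\xi_d>0}$ leaves only the decay $(1+|\a(x_d-y_d)|)^{-1}$ in the normal direction. Integrating first over $\hat\by$, which is effectively localised to a region of measure $O(\a^{-(d-1)})$, and then over the normal coordinates in the boundary layer $x_d\in(0,1)$, $y_d\in(-1,0)$, gives
\begin{equation*}
\|\cdot\|_{\GS_2}^2\le C\,\a^{2d}\cdot\a^{-(d-1)}\!\int_0^1\!\!\int_{-1}^0\!\frac{dx_d\,dy_d}{(1+\a(x_d-y_d))^2}\,\SN^2\le C\,\a^{d-1}\log\a\,\SN^2,
\end{equation*}
the final step coming from the change of variable $u=\a(x_d-y_d)$, which produces the logarithm from the boundary layer of width $O(1)$ cut off at scale $\a^{-1}$.

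For general $\plainC1$ graph-type $\L$ and $\Om$ I would cover $\supp a$ by finitely many balls of radius $O(1)$ together with a subordinate partition of unity, and on each ball straighten both boundaries to the tangent half-spaces $\Pi_\L$ and $\Pi_\Om$ at the reference points. The flattening error must be shown to be trace class with $\|\cdot\|_{\GS_1}=O(\a^{d-1})$ in the spirit of Lemma~\ref{flat_boundary:lem}; it then contributes only $O(\a^{d-1}\SN^2)$ to $\|\cdot\|_{\GS_2}^2$, in particular no logarithmic factor. Summing the local flat-case estimates over the partition yields the claimed bound. The principal obstacle is precisely this last step: establishing a Hilbert--Schmidt counterpart of Lemma~\ref{flat_boundary:lem} that shows the curvature of both $\partial\L$ and $\partial\Om$ to contribute only a non-logarithmic $O(\a^{d-1})$ correction, uniformly in the $\plainC1$ data satisfying \eqref{gradient:eq}.
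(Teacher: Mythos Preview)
Your reduction is sound and matches the paper's opening moves: rescale to $\ell=\rho=1$, then peel off the left $P_{\Om,\a}$ as a commutator via Lemma~\ref{sandwich_dual:lem}. The paper carries the reduction one step further, inserting smooth cut-offs $h=h_{\bz,1}$, $\eta=\eta_{\bmu,1}$ (through Corollary~\ref{product:cor} and Lemma~\ref{sandwich:lem}) so that the core estimate becomes $\|\chi_\L\, h\, \Xi_{\bmu,1} P_{\Om,\a}(1-\chi_\L)\|_{\GS_2}^2\le C\a^{d-1}\log\a$, i.e.\ Lemma~\ref{HSestim_prelim:lem}. Your flat-model kernel sketch is morally the right computation.

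The obstacle you flag in the last paragraph---flattening both $\p\L$ and $\p\Om$ to tangent half-spaces with an $O(\a^{d-1})$ trace-class error---is a genuine gap, and the paper does \emph{not} proceed this way. No flattening result for $\Om$ appears anywhere in the paper, and even for $\L$ the bound in Lemma~\ref{flat_boundary:lem} is far from a clean $O(\a^{d-1})$ without the elaborate parameter choices of Section~\ref{partition:sect}. Instead, the paper handles arbitrary graph-type $\L$ and $\Om$ \emph{directly}, with no approximation step, via two ingredients. For $\Om$: the function $b=\eta\chi_\Om$ has $\plainL2$ modulus of continuity of order $|\bh|^{1/2}$ (Lemma~\ref{l2cont:lem}), and a general Fourier lemma (Lemma~\ref{modulus:lem}) converts this into the tail bound $\int_{|\bt|\ge r}|\check b(\a\bt)|^2\,d\bt\le C\a^{-d-1}r^{-1}$. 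For $\L$: the inequality~\eqref{dist:eq} gives $|\bx-\by|\ge c\,(x_d-\Phi(\hat\bx))$ whenever $\bx\in\L$, $\by\notin\L$, which is the curved substitute for your normal coordinate $x_d-y_d$. The Hilbert--Schmidt integral then splits into the thin layer $0<x_d-\Phi(\hat\bx)\le\a^{-1}$, handled by Parseval, and its complement, where the $r^{-1}$ tail decay integrated against $(x_d-\Phi(\hat\bx))^{-1}$ over a strip of unit width produces exactly the $\log\a$. This route avoids your obstacle entirely.
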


This lemma was first proved in \cite{G1}
(see Theorem 3.2.2, p. 129) and can be also
found in \cite{G2}, Theorem 2.1. Our proof is a minor variation
of that from \cite{G1} and \cite{G2}. It begins with a lemma which
appeared  in \cite{BCT}, Lemma 2.10, and \cite{G1}, Lemma 3.4.1. For the sake
of completeness we provide our proof of this lemma, which is somewhat more elementary than
that of \cite{G1}.

\begin{lem}\label{modulus:lem}
Let $u\in\plainL2(\R^d)$ be a function satisfying the bound
\begin{equation*}
\int_{\R^d} |u(\bx+\bh) - u(\bx)|^2 d\bx\le \vark |\bh|^\b,
\end{equation*}
for any $\bh\in\R^d$, with some $\b\ge 0$ and some constant $\vark>0$.
Then for all $r > 0$,
\begin{equation*}
\int_{|\bxi|\ge r} |\hat u(\bxi)|^2 d\bxi\le C \vark r^{-\b},
\end{equation*}
with a universal constant $C$.
\end{lem}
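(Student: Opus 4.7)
The plan is to use a standard Plancherel-plus-averaging argument. By Plancherel's identity the hypothesis is equivalent to
\[
\int_{\R^d} |e^{i\bh\cdot\bxi}-1|^2 \, |\hat u(\bxi)|^2 \, d\bxi \le \vark |\bh|^\b, \qquad \bh\in\R^d.
\]
The left side is exactly what one expects to control low/high frequencies of $\hat u$ by choosing $|\bh|$ comparable to $1/r$. The difficulty is that for each individual $\bh$ the factor $|e^{i\bh\cdot\bxi}-1|$ has zeros on a set of positive measure in $\bxi$, so a single value of $\bh$ is not enough. The standard cure is to average over $\bh$.

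Fix $\rho>0$ (to be chosen as $\rho=C_0/r$) and integrate the above inequality over $\bh\in B(\bzero,\rho)$. The right-hand side is bounded above by $\vark \rho^\b \, \volume(B(\bzero,\rho))= c_d\, \vark\, \rho^{d+\b}$. After swapping the order of integration, the left-hand side equals $\int K_\rho(\bxi)\,|\hat u(\bxi)|^2\,d\bxi$, where
\[
K_\rho(\bxi) \;=\; \int_{|\bh|\le\rho} |e^{i\bh\cdot\bxi}-1|^2 \, d\bh \;=\; 2\volume(B(\bzero,\rho))-2\int_{|\bh|\le\rho}\cos(\bh\cdot\bxi)\, d\bh.
\]
The key step is the lower bound $K_\rho(\bxi)\ge c_d' \rho^d$ for $|\bxi|\ge r$. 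This follows because the normalized integral $\volume(B(\bzero,\rho))^{-1}\int_{|\bh|\le\rho}\cos(\bh\cdot\bxi)\,d\bh$ is a radial function of $\rho|\bxi|$ that tends to $0$ as $\rho|\bxi|\to\infty$ (it is, up to a constant, a Bessel function); hence for a suitable universal constant $C_0$ and $\rho=C_0/r$ this normalized integral is at most $1/2$ whenever $|\bxi|\ge r$, yielding $K_\rho(\bxi)\ge \volume(B(\bzero,\rho))$.

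Combining the two estimates,
\[
c_d' \rho^d \int_{|\bxi|\ge r} |\hat u(\bxi)|^2\, d\bxi \;\le\; c_d\,\vark\,\rho^{d+\b},
\]
so that $\int_{|\bxi|\ge r}|\hat u(\bxi)|^2\, d\bxi \le C\, \vark\, \rho^\b = C\,\vark\, r^{-\b}$ with a universal constant, as required. No step is genuinely hard; the only point that needs care is the uniform decay of the averaged oscillatory integral, which is classical. The same averaging argument then feeds into the proof of Lemma~\ref{HSestim:lem} by controlling the high-frequency tail of the characteristic-function-type factors.
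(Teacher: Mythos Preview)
Your proof is correct and follows essentially the same route as the paper: apply Plancherel, average the resulting inequality over $\bh$ in a ball of radius comparable to $r^{-1}$, and use that the averaged kernel $K_\rho(\bxi)=\int_{|\bh|\le\rho}|e^{i\bh\cdot\bxi}-1|^2\,d\bh$ is bounded below by a constant times $\rho^d$ once $\rho|\bxi|$ is large. The paper makes the Bessel-function identity for $K_\rho$ and its decay explicit, whereas you appeal to it qualitatively, but the argument is the same.
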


\begin{proof}
By Parceval's identity,
\begin{align}\label{planch:eq}
\vark|\bh|^\b\ge \int_{\R^d} |u(\bx+\bh) - u(\bx)|^2 d\bx
= &\ \int_{\R^d} |e^{i\bh\cdot\bxi}-1|^2 |\hat u(\bxi)|^2 d\bxi\notag\\[0.2cm]
\ge &\ \int_{|\bxi|\ge r} |e^{i\bh\cdot\bxi}-1|^2 |\hat u(\bxi)|^2 d\bxi.
\end{align}
Integrating in $\bh$ over the ball $|\bh|\le h_0$, we get
\begin{align*}
\int_{|\bh|\le h_0} |e^{i\bh\cdot\bxi}-1|^2 d\bh
= &\ \int_{|\bh|\le h_0} \bigl(2-2\cos(\bh\cdot\bxi)\bigr) d\bh\\[0.2cm]
= &\ 2\w_d h_0^d - 2(2\pi)^{\frac{d}{2}}|\bxi|^{-\frac{d}{2}}
h_0^{\frac{d}{2}} J_{\frac{d}{2}}(h_0|\bxi|),
\end{align*}
where $\w_d$ is the volume of the unit ball in $\R^d$, and
$J_{d/2}$ is the standard Bessel function of order $d/2$. Assume that $h_0 r\ge 1$
so that
\[
J_{\frac{d}{2}}(h_0|\bxi|)\le \frac{C}{\sqrt{h_0|\bxi|}},\ |\bxi|\ge r.
\]
Therefore
\begin{align*}
\int_{|\bh|\le h_0} |e^{i\bh\cdot\bxi}-1|^2 d\bh
\ge 2 h_0^d \biggl(\w_d - \frac{C}{(h_0 r)^{\frac{d+1}{2}}}\biggr).
\end{align*}
Taking $h_0 = C_1 r^{-1}$ with a sufficiently large $C_1$,
we ensure that the right-hand side
is bounded from below by $h_0^d \w_d$, and hence \eqref{planch:eq}
leads to the bound
\[
\w_d h_0^d \int_{|\bxi|\ge r} |\hat u(\bxi)|^2 d\bxi
\le \int_{|\bh|\le h_0}\int_{|\bxi|\ge r} |e^{i\bh\cdot\bxi}-1|^2
|\hat u(\bxi)|^2 d\bxi d\bh
\le C \vark h_0^{\b + d},
\]
whence
\[
\int_{|\bxi|\ge r} |\hat u(\bxi)|^2 d\bxi\le C \vark h_0^\b
= \tilde C \vark r^{-\b},
\]
as claimed.
\end{proof}

Now we use this lemma for a specific choice of the function $u$:

\begin{lem} \label{l2cont:lem}
Let $\Om = \G(\Psi; \boldO, \boldk)\subset\R^d$
be a $\plainC1$-graph-type-domain, and let $\eta_{\bmu, \rho}$,
$\bmu\in\R^d, \rho >0$,
be as defined  in \eqref{h_eta:eq}.
Then
\begin{equation}\label{shift:eq}
\int_{\R^d} |\eta_{\bmu, \rho}(\bxi+\bh)\chi_{\Om}(\bxi+\bh)
- \eta_{\bmu, \rho}(\bxi)\chi_{\Om}(\bxi)|^2 d\bxi
\le C\rho^{d-1} \lu M_{\Psi}\ru |\bh|,
\end{equation}
for all $\bh\in\R^d$, uniformly in $\bmu\in\R^d$ and $\rho >0$.
\end{lem}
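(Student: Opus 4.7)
The plan is to split the difference via the triangle inequality into a part that only sees the smooth cutoff $\eta_{\bmu,\rho}$ and a part that only sees the geometric discontinuity coming from $\chi_\Om$, and to bound each separately. Specifically, write
\begin{equation*}
\bigl|\eta_{\bmu,\rho}(\bxi+\bh)\chi_\Om(\bxi+\bh) - \eta_{\bmu,\rho}(\bxi)\chi_\Om(\bxi)\bigr|^2
\le 2\bigl|\eta_{\bmu,\rho}(\bxi+\bh) - \eta_{\bmu,\rho}(\bxi)\bigr|^2 + 2 \eta_{\bmu,\rho}(\bxi)^2 \bigl|\chi_\Om(\bxi+\bh) - \chi_\Om(\bxi)\bigr|.
\end{equation*}
(I used $|\chi_\Om(\bxi+\bh)-\chi_\Om(\bxi)|^2 = |\chi_\Om(\bxi+\bh)-\chi_\Om(\bxi)|$.)

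For the first term, $\eta_{\bmu,\rho}$ is Lipschitz with constant $\|\nabla\eta\|_\infty/\rho$ and supported in $B(\bmu, 5\rho/4)$, so I will consider two regimes. When $|\bh|\le \rho$, the Lipschitz bound gives integrand $\le C(|\bh|/\rho)^2$ on a set of measure $C\rho^d$, producing $C|\bh|^2\rho^{d-2}\le C|\bh|\rho^{d-1}$. When $|\bh|\ge \rho$, I simply use $\|\eta_{\bmu,\rho}\|_{\plainL2}^2\le C\rho^d \le C\rho^{d-1}|\bh|$. Either way the first term is dominated by $C\rho^{d-1}|\bh|$.

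For the second term, I will first reduce to the case $\boldO_\Om=\boldI$, $\boldk_\Om=\bzero$ by the change of variables $\boldeta = \boldO^{-1}(\bxi - \boldk)$, which only shifts $\bmu$ and replaces $\bh$ by $\boldO^{-1}\bh$ (preserving $|\bh|$ and $M_\Psi$). Then $\Om = \{\bxi : \xi_d > \Psi(\hat\bxi)\}$, and a point $\bxi$ belongs to the symmetric difference $(\Om-\bh)\triangle\Om$ exactly when $\xi_d$ lies between $\Psi(\hat\bxi)$ and $\Psi(\hat\bxi+\hat\bh) - h_d$. By the mean value theorem,
\begin{equation*}
|\Psi(\hat\bxi+\hat\bh) - \Psi(\hat\bxi) - h_d|\le M_\Psi |\hat\bh| + |h_d|\le \lu M_\Psi\ru\, |\bh|.
\end{equation*}
Since $\eta_{\bmu,\rho}(\bxi)^2$ is supported in $B(\bmu, 5\rho/4)$, Fubini (integrating first in $\xi_d$ over an interval of length $\lu M_\Psi\ru |\bh|$, then in $\hat\bxi$ over the projection of the ball, whose $(d-1)$-volume is $\le C\rho^{d-1}$) gives the required bound $C\rho^{d-1}\lu M_\Psi\ru |\bh|$ for this term.

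The proof is essentially routine; the only mildly subtle point is the two-regime treatment of the smooth part, needed because a naive Lipschitz bound there produces $|\bh|^2$ rather than the sought linear dependence $|\bh|$. All other steps are direct consequences of the graph structure of $\Om$ and Fubini's theorem.
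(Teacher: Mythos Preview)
Your proof is correct and follows essentially the same approach as the paper: the same splitting into a smooth-cutoff part and a characteristic-function part, the same reduction to $\boldO=\boldI$, $\boldk=\bzero$, and the same geometric estimate on the strip $(\Om-\bh)\triangle\Om$ via the Lipschitz bound on $\Psi$. The only cosmetic differences are that the paper first rescales to $\rho=1$ and restricts to $|\bh|<1$ (your explicit two-regime argument accomplishes the same thing), and that the paper bounds the smooth part via $|\eta_{\bmu,\rho}(\bxi+\bh)-\eta_{\bmu,\rho}(\bxi)|$ rather than its square, using $|\cdot|^2\le|\cdot|$ since the difference is at most $1$.
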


\begin{proof} Without loss of generality assume
that $\boldO = \boldI$, $\boldk = \bzero$.
Also, in view
of \eqref{scale_domain:eq} we may assume that $\rho = 1$.

It suffices to prove the estimate \eqref{shift:eq} for $|\bh|<1$.
Estimate the integrand by
\[
|\eta_{\bmu, 1}(\bxi+\bh) - \eta_{\bmu, 1}(\bxi)|
+ \eta_{\bmu, 1}^2 (\bxi)|\chi_{\Om}(\bxi+\bh) - \chi_{\Om}(\bxi)|^2.
\]
Thus the integral on the right-hand side of \eqref{shift:eq} up to a constant
does not exceed $I_1+I_2$ with
\begin{equation*}
I_1 = \int |\eta_{\bmu, 1}(\bxi+\bh) - \eta_{\bmu, 1}(\bxi)| d\bxi,\ \
I_2 = \int \eta_{\bmu, 1}^2 (\bxi)|\chi_{\Om}(\bxi+\bh) - \chi_{\Om}(\bxi)|^2d\bxi.
\end{equation*}
It is straightforward to see that
\[
I_1\le C |\bh| \int_{|\bmu-\bxi|\le 3 } d\bxi
\le C |\bh|.
\]
In $I_2$ the integration is restricted to the set $(\Om-\bh)\triangle\Om$, i.e.
to the set
\begin{equation*}
\{\bxi: \Psi(\hat\bxi + \hat\bh) - h_d \le \xi_d \le \Psi(\hat\bxi)\}
\cup \{\bxi: \Psi(\hat\bxi)\le\xi_d\le\Psi(\hat\bxi + \hat\bh) - h_d \}.
\end{equation*}
On this set we have
\[
|\xi_d - \Psi(\hat\bxi)|
\le |\Psi(\hat\bxi+\hat\bh) - \Psi(\hat\bxi)|+ |\bh|
\le M_{\Psi}|\bh| + |\bh|\le 2\lu M_{\Psi}\ru |\bh|.
\]
Therefore,
\[
I_2\le \underset{|\hat\bxi-\hat\bmu|\le 2}
\int
\ \ \underset{|\xi_d - \Psi(\hat\bxi)|\le 2\lu M_{\Psi}\ru |\bh|}\int d\xi_d d\hat\bxi
\le C \lu M_{\Psi}\ru |\bh|.
\]
This leads to \eqref{shift:eq}.
\end{proof}

\begin{lem}\label{HSestim_prelim:lem}
Let $\L$ and $\Om$ be two domains satisfying Condition
\ref{graph:cond}.
Let $h_{\bz, \ell}, \eta_{\bmu, \rho}$
be functions defined in \eqref{h_eta:eq}.
Suppose that $\a\ell\rho\ge 2$.
Then
\[
\|\chi_{\L} h_{\bz, \ell} \Xi_{\bmu, \rho} P_{\Om, \a}
(1-\chi_{\L})\|_{\GS_2}^2\le C(\a\ell\rho)^{d-1} \log(\a\ell\rho),
\]
uniformly in $\bmu, \bz\in\R^d$.
\end{lem}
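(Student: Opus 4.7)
The plan is to evaluate the Hilbert-Schmidt norm by direct kernel computation. Using the isometry invariance of the HS norm together with \eqref{chilambda:eq}, I may assume $\L=\G(\Phi;\boldI,\bzero)$. Since $\Xi_{\bmu,\rho}$ and $P_{\Om,\a}$ are both Fourier multipliers, their product is $\op_\a(f)$ with $f(\bxi)=\eta_{\bmu,\rho}(\bxi)\chi_\Om(\bxi)$, and its kernel is a constant multiple of $\a^d(\mathcal{F}^{-1}f)(\a(\bx-\by))$. After changing variables $\bh=\a(\bx-\by)$, the HS norm squared of the operator in question becomes
\begin{equation*}
C\a^{d}\int F(\bh)\,G(\bh)\,d\bh,\qquad F(\bh):=|(\mathcal{F}^{-1}f)(\bh)|^2,
\end{equation*}
with $G(\bh):=\int\chi_\L(\bx)h_{\bz,\ell}^2(\bx)\bigl(1-\chi_\L(\bx-\bh/\a)\bigr)d\bx$.

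The first step is a geometric bound on $G$: the support of $h_{\bz,\ell}$ gives $G(\bh)\le C\ell^d$, while the condition $\bx\in\L$, $\bx-\bh/\a\notin\L$ forces $x_d-\Phi(\hat\bx)\le \lu M\ru|\bh|/\a$ by \eqref{dist:eq}, confining $\bx$ to a slab of thickness $\lesssim|\bh|/\a$ inside a ball of radius $\sim\ell$. Hence $G(\bh)\le C\ell^{d-1}\min(\ell,|\bh|/\a)$.

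The second step is a frequency-decay estimate for $F$: Lemma \ref{l2cont:lem} yields $\int|f(\bxi+\bh)-f(\bxi)|^2\,d\bxi\le C\rho^{d-1}|\bh|$, so Lemma \ref{modulus:lem} with $\b=1$ gives $\int_{|\bh|\ge r}F(\bh)\,d\bh\le C\rho^{d-1}r^{-1}$ for all $r>0$. Also $\|F\|_{L^1}=\|f\|_{L^2}^2\le C\rho^d$ since $f$ is bounded and supported in a ball of volume $\sim\rho^d$.

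Finally I combine the two bounds. Splitting at $|\bh|=\a\ell$, the tail $|\bh|>\a\ell$ contributes at most $C\a^d\ell^{d-1}\cdot\ell\cdot\rho^{d-1}(\a\ell)^{-1}=C(\a\ell\rho)^{d-1}$. For the bulk, writing $|\bh|=\int_0^{|\bh|}dr$ and interchanging the order of integration,
\begin{equation*}
\int_{|\bh|\le\a\ell}|\bh|F(\bh)\,d\bh=\int_0^{\a\ell}\!\!\int_{r\le|\bh|\le\a\ell}\!\!F(\bh)\,d\bh\,dr\le \int_0^{\a\ell}\min\bigl(C\rho^d,\,C\rho^{d-1}r^{-1}\bigr)dr.
\end{equation*}
Splitting at the crossover $r=\rho^{-1}$ (which lies in $[0,\a\ell]$ by hypothesis $\a\ell\rho\ge 2$) produces the logarithm: the integral is $\le C\rho^{d-1}\log(\a\ell\rho)$. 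This piece therefore contributes $C\a^d\ell^{d-1}\cdot\a^{-1}\rho^{d-1}\log(\a\ell\rho)=C(\a\ell\rho)^{d-1}\log(\a\ell\rho)$, which dominates the tail and gives the claim. The main subtlety is making the bookkeeping produce exactly $(\a\ell\rho)^{d-1}\log(\a\ell\rho)$: one must verify that the crossover $\rho^{-1}$ is below $\a\ell$, and that the geometric slab estimate for $G$ combines multiplicatively with the $L^2$-modulus-of-continuity estimate to trade the $|\bh|$ in $G(\bh)$ against the $r^{-1}$ decay, producing the logarithm from $\int_{\rho^{-1}}^{\a\ell}dr/r$.
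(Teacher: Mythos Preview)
Your proof is correct and uses essentially the same ingredients as the paper: the kernel representation of the Hilbert--Schmidt norm, the geometric slab estimate coming from \eqref{dist:eq}, and the decay of $\int_{|\bh|\ge r}|\check f(\bh)|^2\,d\bh$ obtained from Lemmas \ref{l2cont:lem} and \ref{modulus:lem}. The only organisational difference is that the paper first scales to $\ell=\rho=1$ and then splits the $\bx$-integral according to whether $|x_d-\Phi(\hat\bx)|\le\a^{-1}$ or $>\a^{-1}$, integrating over $\by$ in each regime, whereas you integrate over $\bx$ first to produce $G(\bh)$ and then use a layer-cake decomposition in $\bh$; this is the same argument with the order of integration interchanged.
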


\begin{proof} Due to \eqref{unitary:eq}, \eqref{unitary_domain:eq}
and \eqref{scale_domain:eq}
we may assume that $\ell = \rho = 1$, and $\a\ge 2$. Denote $h = h_{\bz, 1}$,
$\eta = \eta_{\bmu, 1}$.
Denote $b(\bxi) = \eta(\bxi) \chi_{\Om}(\bxi)$, and
\[
\check b(\bt)
= \frac{1}{(2\pi)^{\frac{d}{2}}}\int_{\R^d}
e^{i\bt\cdot\bxi} b(\bxi) d\bxi.
\]
Thus
\[
\|\chi_{\L}h \op_{\a}(\eta) P_{\Om, \a} (1-\chi_{\L})\|_{\GS_2}^2
= \frac{\a^{2d}}{(2\pi)^d}
\int\int  |h(\bx)|^2 (1-\chi_{\L}(\by))\chi_{\L}(\bx)
|\check b\bigl(\a(\bx-\by)\bigr) |^2 d\bx d\by.
\]
Split the integral in two: for $|x_d-\Phi(\hat\bx)|\le \a^{-1}$,
which we denote by $I_1$, and for $|x_d-\Phi(\hat\bx)|>\a^{-1}$,
which we denote by $I_2$.
To handle $I_1$ it suffices to use the Parceval's identity:
\[
\int_{\R^d} |\check b(\a(\bx-\by))|^2 d\by = \a^{-d} \int_{\R^d}   |b(\bxi)|^2 d\bxi
\le C \a^{-d},
\]
so that
\begin{equation*}
I_1\le C\a^d
\underset{|x_d-\Phi(\hat\bx)|< \a^{-1}}
\int |h(\bx)|^2 d\bx
\le C\a^{d-1}.
\end{equation*}
For $I_2$ observe that according to \eqref{dist:eq},
\[
|\bx-\by|\ge \frac{1}{\sqrt{\lu M_{\Phi}\ru}}|x_d - \Phi(\hat\bx)|.
\]
By Lemma \ref{modulus:lem},
\[
\underset{|\bx-\by|\ge \frac{1}{\sqrt{M_{\Phi}}}|x_d - \Phi(\hat\bx)|}
\int |\check b(\a(\bx-\by))|^2 d\by
\le C\a^{-1-d} \lu M_{\Psi}\ru \sqrt{\lu M_\Phi\ru}\  |x_d - \Phi(\hat\bx)|^{-1}.
\]
Consequently,
\begin{align*}
I_2\le &\ C\a^{d-1} \underset{|x_d-\Phi(\hat\bx)|\ge \a^{-1}} \int
|h(\bx)|^2 |x_d - \Phi(\hat\bx)|^{-1} d\bx\\[0.2cm]
\le &\ C\a^{d-1}\underset{|\hat\bx-\hat\bz|\le 5/4}\int\
\biggl[\int_{\a^{-1}}^{\a} t^{-1} dt
+ \a^{-1}\underset{|t-z_d|\le 5/4}\int dt\biggr]\ d\hat\bx
= \tilde C \a^{d-1}\log \a.
\end{align*}
 The proof is complete.
\end{proof}

\begin{proof}[Proof of Lemma \ref{HSestim:lem}]
For definiteness we prove the above estimate
for $\op_\a(a) = \op_\a^l(a)$.
Furthermore, it suffices to prove the sought estimate for
$\ell = \rho =1, \a\ge 2$ and $\SN^{(d+2, d+2)}(a; 1, 1) = 1$.
We write $G_1\approx G_2$
with two operators $G_1, G_2$, if
$$
\|G_1 - G_2\|_{\GS_1}\le C\a^{d-1}.
$$
By \eqref{sandwich2_dual:eq},
\[
P_{\Om, \a} \op^l_\a(a) P_{\Om, \a}\approx \op^l_\a(a) P_{\Om, \a}.
\]
Denote $h = h_{\bz, 1}$, $\eta = \eta_{\bmu, 1}$.
Since
$a(\bx, \bxi) = a(\bx, \bxi) h(\bx) \eta(\bxi)$,
according to \eqref{product_smooth:eq},
\[
\op_\a(a)\approx \op_\a(a) \op_\a (h\eta).
\]
Moreover, in view of \eqref{sandwich2:eq},
\[
\chi_{\L} \op_\a(a) \approx \op_\a(a) \chi_{\L}.
\]
Thus
\[
\chi_{\L}\op_\a(a)P_{\Om, \a}(1-\chi_{\L})
\approx \op_\a(a)\chi_{\L} h \op_\a(\eta) P_{\Om, \a}(1-\chi_{\L}) .
\]
Since for any trace class operator, $\|S\|_{\GS_2}^2\le \|S\|_{\GS_1} \|S\|$,
and $\op_\a(a)$ is bounded uniformly in $\a$ (see Lemma \ref{general_norm:lem}),
we have
\[
\|\chi_{\L} P_{\Om, \a}\op_{\a}(a) P_{\Om, \a} (1-\chi_{\L})\|_{\GS_2}^2
\le C\|\chi_{\L} h \op_\a(\eta) P_{\Om, \a}(1-\chi_{\L})\|_{\GS_2}^2 + C\a^{d-1}.
\]
Now the required estimate follows from Lemma
\ref{HSestim_prelim:lem}.
\end{proof}


 \section{Localisation}\label{localisation:sect}

Now it is time to replace the global assumptions on the domain by the local ones.
Now we do not need to assume that the domains $\L$ and $\Om$ are of graph type,
see Definition \ref{domains:defn}. Instead
we assume that inside a ball of fixed radius,
both $\L$ and $\Om$ are representable either by $\plainC1$-graph-type domains
or by $\R^d$,
in the sense of Definition \ref{local_domains:defn}.
For the reference convenience we state this assumption explicitly:

\begin{cond}\label{global:cond}
The domain $\L$ (resp. $\Om$) satisfies one of the following two conditions:
\begin{enumerate}
\item\label{graph:subcond}
If $d=1$, then for some numbers $x_0\in\R$(resp. $\xi_0\in\R$) and $R>0$,
we have $\L\cap (x_0-R,x_0 + R) = (x_0, x_0+R)$
(resp. $\Om\cap (\xi_0-R,\xi_0 + R) = (\xi_0, \xi_0+R)$ ).

If $d\ge 2$, then
for some point $\bw\in\p\L$ (resp. $\boldeta\in\p\Om$) and some number $R >0$,
in the ball $B(\bw, R)$ (resp. $B(\boldeta, R)$)
the domain $\L$ (resp. $\Om$) is represented by a graph-type
domain $\G(\Phi; \boldO_{\L}, \bw)$
(resp. $\G(\Psi; \boldO_{\Om}, \boldeta)$)
with some $\plainC1$-function $\Phi$ (resp. $\Psi$), satisfying
\eqref{propertyphi:eq},
and an orthogonal transformation $\boldO_{\L}$
(resp. $\boldO_{\Om}$).

\item\label{space:subcond}
If $d=1$, then for some numbers $w\in\R$ (resp. $\eta\in\R$) and $R>0$,
we have $\L\cap (w-R,w + R) = (w-R, w+R)$
(resp. $\Om\cap (\eta-R,\eta + R) = (\eta-R, \eta+R)$).

If $d\ge 2$, then for some point $\bw\in\L$ (resp. $\boldeta\in\Om$) and some number $R >0$,
in the ball $B(\bw, R)$ (resp. $B(\boldeta, R)$)
the domain $\L$ (resp. $\Om$) is represented by  the entire Euclidean space $\R^d$.
\end{enumerate}
\end{cond}

As before, our estimates will be uniform in the functions
$\Phi, \Psi$ satisfying the bound \eqref{gradient:eq} with some constant $M$,
but may depend on the value of $M$.

For $d=1$ we use the notation $\L_0 := (x_0, \infty)$ (resp. $\Om_0: = (\xi_0, \infty)$)
if $\L$ (resp. $\Om$) satisfies
Condition \ref{global:cond}(\ref{graph:subcond}) and
$\L_0 = \R$ (resp. $\Om_0 = \R$) if $\L$ (resp. $\Om$)
satisfies Condition \ref{global:cond}(\ref{space:subcond}).

For $d\ge 2$ we use the notation $\L_0 := \G(\Phi)$ (resp. $\Om_0: = \G(\Psi)$)
if $\L$ (resp. $\Om$) satisfies
Condition \ref{global:cond}(\ref{graph:subcond}) and
$\L_0 = \R^d$ (resp. $\Om_0 = \R^d$) if $\L$ (resp. $\Om$)
satisfies Condition \ref{global:cond}(\ref{space:subcond}).

For brevity we also denote
\begin{equation*}
T(a) = T(a; \L, \Om),\ \ T^{(0)}(a) = T(a; \L_0, \Om_0).
\end{equation*}
In this section we study the trace norms of the operators of the type
\begin{equation*}
\op_\a(b) g_p(T(a; \L, \Om)),\
\end{equation*}
where $g_p(t) = t^p,  p=1, 2, \dots$,  and the symbols $a = a(\bx, \bxi)$,
$b = b(\bx, \bxi)$ satisfy the following conditions:
\begin{equation}\label{ab:eq}
\textup{$a, b\in\BS^{(d+2, d+2)}$,  and $b$ is supported in
$B(\bz, \ell)\times B(\bmu, \rho)\subset B(\bw, R)\times B(\boldeta, R)$,}
\end{equation}
with some $\bz, \bmu\in\R^d$ and $\ell, \rho >0$.
Sometimes in the proofs for brevity
we use the notation $\op(b)$ instead of $\op_\a^l(b)$.
If $\L = \L_0, \Om = \Om_0$, then one can take $R=\infty$.

\begin{lem}\label{omtoom0:lem}
Let $\L, \Om\subset\R^d$ and $\L_0, \Om_0\subset\R^d, d\ge 1,$
be a specified above. Suppose that the symbol $b$ satisfies \eqref{ab:eq}.
Denote by $\op_\a(b)$ any of the operators $\op_\a^l(b)$ or $\op_\a^r(b)$.
Then
\begin{equation}\label{half:eq}
\| \op_\a(b) P_{\Om, \a}\chi_{\L}
- P_{\Om_0, \a} \chi_{\L_0}\op_\a(b)\|_{\GS_1}
\le  C (\a\ell\rho)^{d-1}\SN^{(d+2, d+2)}(b; \ell, \rho),
\end{equation}
and
\begin{equation}\label{half1:eq}
\| \op_\a(b) \chi_{\L} P_{\Om, \a}
- \chi_{\L_0} P_{\Om_0, \a} \op_\a(b)\|_{\GS_1}
\le  C (\a\ell\rho)^{d-1}\SN^{(d+2, d+2)}(b; \ell, \rho).
\end{equation}
\end{lem}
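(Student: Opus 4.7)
The plan is to combine two exact pointwise identities (that remove the discrepancy between $\L,\Om$ and $\L_0,\Om_0$ inside the support of $b$) with the commutator bounds from Sections~\ref{nonsmooth1:sect}--\ref{nonsmooth2:sect}. Because $b(\bx,\bxi)$ vanishes outside $B(\bz,\ell)\times B(\bmu,\rho)\subset B(\bw,R)\times B(\boldeta,R)$ and $\chi_{\Om}=\chi_{\Om_0}$ on $B(\boldeta,R)$, one has $b(\chi_{\Om}-\chi_{\Om_0})\equiv 0$; together with the elementary symbol composition $\op_\a^l(c)\op_\a(f)=\op_\a^l(cf)$ for $f=f(\bxi)$, and its symmetric version $\op_\a(f)\op_\a^r(c)=\op_\a^r(fc)$, this gives
\begin{equation}\label{plan:id1}
\op_\a^l(b)P_{\Om,\a}=\op_\a^l(b)P_{\Om_0,\a},\qquad P_{\Om,\a}\op_\a^r(b)=P_{\Om_0,\a}\op_\a^r(b).
\end{equation}
Similarly, since the kernel of $\op_\a^l(b)$ (resp.\ $\op_\a^r(b)$) is supported in $\{\bx\in B(\bz,\ell)\}$ (resp.\ $\{\by\in B(\bz,\ell)\}$) and $\chi_{\L}=\chi_{\L_0}$ on $B(\bw,R)\supset B(\bz,\ell)$, one has
\begin{equation}\label{plan:id2}
\chi_{\L}\op_\a^l(b)=\chi_{\L_0}\op_\a^l(b),\qquad \op_\a^r(b)\chi_{\L}=\op_\a^r(b)\chi_{\L_0}.
\end{equation}

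Consider \eqref{half:eq} with $\op_\a=\op_\a^l$. By the first identity in \eqref{plan:id1},
\[
\op_\a^l(b)P_{\Om,\a}\chi_{\L}=\op_\a^l(b)P_{\Om_0,\a}\chi_{\L},
\]
and Lemma~\ref{sandwich_dual:lem} (applied to the graph-type or full domain $\Om_0$) allows us to commute $P_{\Om_0,\a}$ to the left at a trace-class cost $C(\a\ell\rho)^{d-1}\SN^{(d+2,d+1)}(b;\ell,\rho)$. It then remains to transform $\op_\a^l(b)\chi_{\L}$ into $\chi_{\L_0}\op_\a^l(b)$; using \eqref{plan:id2} I would write
\[
\op_\a^l(b)\chi_{\L}-\chi_{\L_0}\op_\a^l(b)=[\op_\a^l(b),\chi_{\L_0}]+\op_\a^l(b)(\chi_{\L}-\chi_{\L_0}).
\]
The commutator is trace class by Lemma~\ref{sandwich:lem}. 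For the second term, $\chi_{\L}-\chi_{\L_0}$ is supported in $\complement B(\bw,R)$, so that its distance from $B(\bz,\ell)$ is at least the geometric buffer $\d_0:=R-|\bw-\bz|-\ell\ge 0$ afforded by \eqref{ab:eq}; the separated-support bound \eqref{largea:eq} with $m=d+2$, after multiplying $\op_\a^l(b)$ on the left by a cutoff $h_1=\chi_{B(\bz,\ell)}$ (harmlessly, since this cutoff is constant on the output of $\op_\a^l(b)$), then delivers a trace-norm bound of order $(\a\ell\rho)^{d-1}\SN^{(d+1,d+2)}(b;\ell,\rho)$ under the semiclassical standing assumption $\a\ell\rho\ge c$, provided $\d_0$ is controlled by a fixed fraction of $R$.

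The proof of \eqref{half1:eq} for $\op_\a^l$ follows the same skeleton in reverse: split off $\op_\a^l(b)(\chi_{\L}-\chi_{\L_0})P_{\Om,\a}$ by the same separated-support argument, commute $\chi_{\L_0}$ past $\op_\a^l(b)$ via Lemma~\ref{sandwich:lem}, and finally use \eqref{plan:id1} and Lemma~\ref{sandwich_dual:lem} to shift $P_{\Om_0,\a}$ to the left. For the right-quantised case $\op_\a=\op_\a^r$, the same programme works, now with the right-hand identities in \eqref{plan:id1}--\eqref{plan:id2}; the only new manipulation is that $\op_\a^r(b)(P_{\Om,\a}-P_{\Om_0,\a})$ is replaced by $\op_\a^l(b)(P_{\Om,\a}-P_{\Om_0,\a})=0$ modulo a trace-class remainder of size $(\a\ell\rho)^{d-1}$ supplied by \eqref{quantisation_symbol:eq}. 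I do not anticipate a genuinely hard step in this lemma; the real substance is recognising the two exact identities \eqref{plan:id1}--\eqref{plan:id2}, after which the argument is careful bookkeeping of commutator remainders controlled by Lemmas~\ref{sandwich:lem} and \ref{sandwich_dual:lem} and separated-support remainders controlled by \eqref{largea:eq}.
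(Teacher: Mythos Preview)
Your overall architecture is right, and you have correctly isolated the two exact identities \eqref{plan:id1}--\eqref{plan:id2}; indeed the paper's proof uses precisely $\op_\a^l(b)P_{\Om,\a}=\op_\a^l(b)P_{\Om_0,\a}$ and $\op_\a^r(b)\chi_{\L}=\op_\a^r(b)\chi_{\L_0}$ implicitly in its telescoping identity. The gap is in how you dispose of the residual term $\op_\a^l(b)(\chi_{\L}-\chi_{\L_0})$.

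The separated--support bound \eqref{largea:eq} requires the supports of $h_1$ and $h_2$ to be a positive distance $R$ apart with $\a R\rho\ge c$. But hypothesis \eqref{ab:eq} only gives $B(\bz,\ell)\subset B(\bw,R)$, with no buffer whatsoever: the inclusion may be tight, so your $\d_0=R-|\bw-\bz|-\ell$ can be zero. You flag this yourself with the proviso ``provided $\d_0$ is controlled by a fixed fraction of $R$'', but that hypothesis is not available, and without it \eqref{largea:eq} gives nothing (and even with $\d_0>0$ small, the resulting constant would not be uniform in the way the lemma requires).

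The paper avoids the separated--support estimate altogether. After reaching $P_{\Om_0,\a}\op_\a^l(b)\chi_{\L}$ it inserts the quantisation comparison \eqref{quantisation_symbol:eq} to pass from $\op_\a^l(b)$ to $\op_\a^r(b)$ at cost $C(\a\ell\rho)^{d-1}$; then the \emph{right}-hand identity in your \eqref{plan:id2}, $\op_\a^r(b)\chi_{\L}=\op_\a^r(b)\chi_{\L_0}$, kills the discrepancy exactly. One then commutes $\chi_{\L_0}$ past $\op_\a^r(b)$ via Lemma~\ref{sandwich:lem} and switches back to $\op_\a^l(b)$ with another application of \eqref{quantisation_symbol:eq}. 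In your language, the correct replacement for the separated--support step is the one--line observation
\[
\op_\a^l(b)(\chi_{\L}-\chi_{\L_0})=(\op_\a^l(b)-\op_\a^r(b))(\chi_{\L}-\chi_{\L_0}),
\]
which is already $O((\a\ell\rho)^{d-1})$ in $\GS_1$ by \eqref{quantisation_symbol:eq}. With this change your proof goes through; the case $\op_\a=\op_\a^r$ then follows from the $\op_\a^l$ case directly via \eqref{quantisation_symbol:eq}, as the paper notes, rather than by rerunning the argument.
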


\begin{proof} Without loss of generality assume
\begin{equation*}
\SN^{(d+2, d+2)}(b; \ell, \rho) = 1.
\end{equation*}
In view of \eqref{quantisation_symbol:eq} any of the above inequalities
for $\op^l_\a(b)$ immediately implies the same inequality for $\op^r_\a(b)$.
We prove \eqref{half:eq} for the operator $\op^l_\a(b)$.
The inequality \eqref{half1:eq} is proved in the same way.
Write:
\begin{align*}
\op^l_\a(b) P_{\Om, \a}\chi_{\L}
= &\ \op^l_\a(b) P_{\Om_0, \a}\chi_{\L}\\[0.2cm]
= &\ [\op^l_\a(b), P_{\Om_0, \a}]\chi_{\L} + P_{\Om_0, \a} (\op^l_\a(b) - \op^r_\a(b))\chi_{\L}
+ P_{\Om_0, \a} [\op^r_\a(b), \chi_{\L_0}]\\[0.2cm]
&\ + P_{\Om_0, \a} \chi_{\L_0} (\op^r_\a(b) - \op^l_\a(b))
+ P_{\Om_0, \a} \chi_{\L_0}  \op^l_\a(b).
\end{align*}
Now \eqref{half:eq} follows
by virtue of Lemmas \ref{quantisation_trace:lem}, \ref{sandwich:lem} and
\ref{sandwich_dual:lem}. Similarly one proves \eqref{half1:eq}.
\end{proof}

\begin{lem}\label{ltol0:lem}
Let $T(a)$ and $T^{(0)}(a)$ be as described above.
Let $d\ge 1$, and let $a, b$ be some symbols satisfying condition \eqref{ab:eq}.
Then under the assumption $\a\ell\rho\ge c$ one has
\begin{equation}\label{ltol0_1:eq}
\| T(b) - T^{(0)}(b)\|_{\GS_1}
\le C
(\a\ell\rho)^{d-1} \SN^{(d+2, d+2)}(b; \ell, \rho),
\end{equation}
\begin{equation}\label{ltol0_4:eq}
\|\op^l_\a(b) T(a) - T^{(0)}(ab)\|_{\GS_1}\le C(\a\ell\rho)^{d-1}
\SN^{(d+2, d+2)}(b; \ell, \rho)\SN^{(d+2, d+2)}(a; \ell, \rho),
\end{equation}
and
\begin{align}\label{ltol0_2:eq}
\| \op^l_\a(b)\bigl[
T(a)& - T^{(0)}(a)
\bigr]\|_{\GS_1}
 +  \| T^{(0)}(b)\bigl[
T(a) - T^{(0)}(a)
\bigr]\|_{\GS_1}
\notag\\[0.2cm]
&\ \ \ \ \le C
(\a\ell\rho)^{d-1} \SN^{(d+2, d+2)}(b; \ell, \rho)
\SN^{(d+2, d+2)}(a; \ell, \rho).
\end{align}

\end{lem}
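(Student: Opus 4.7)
The plan is to prove the three bounds in succession, all based on the following guiding principle. Write $U:=\chi_\L P_{\Om,\a}$ and $U_0:=\chi_{\L_0}P_{\Om_0,\a}$, so that $T(b)=U\op^l_\a(b)U^*$ and $T^{(0)}(b)=U_0\op^l_\a(b)U_0^*$. Then Lemma \ref{omtoom0:lem}, together with the $\op^l\!\leftrightarrow\!\op^r$ conversion \eqref{quantisation_symbol:eq}, furnishes four commutation-type identities
\begin{gather*}
\op^l_\a(b)U\equiv U_0\op^l_\a(b),\quad \op^l_\a(b)U^*\equiv U_0^*\op^l_\a(b),\\
U\op^l_\a(b)\equiv \op^l_\a(b)U_0,\quad U^*\op^l_\a(b)\equiv \op^l_\a(b)U_0^*,
\end{gather*}
each valid modulo $\GS_1$ with trace-norm error $\le C(\a\ell\rho)^{d-1}\SN^{(d+2,d+2)}(b;\ell,\rho)$. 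The first two are \eqref{half1:eq} and \eqref{half:eq} read verbatim; the last two are obtained by taking adjoints and then converting $\op^r$ back to $\op^l$ using \eqref{quantisation_symbol:eq}. Specialized to $\L=\L_0$, $\Om=\Om_0$ they yield the commutator bounds for $\op^l_\a(b)$ against $U_0$ and $U_0^*$ with the same majorant.

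For \eqref{ltol0_1:eq} I would decompose
\begin{equation*}
T(b) - T^{(0)}(b) = (U-U_0)\op^l_\a(b)U^* + U_0\op^l_\a(b)(U^*-U_0^*).
\end{equation*}
Applying the commutation identities and their $(\L_0,\Om_0)$-specializations term by term shows $(U-U_0)\op^l_\a(b)\equiv 0$ and $\op^l_\a(b)(U^*-U_0^*)\equiv 0$ modulo $\GS_1$ with error $\le C(\a\ell\rho)^{d-1}\SN^{(d+2,d+2)}(b;\ell,\rho)$; outside multiplication by $U^*$ or $U_0$, which are contractions, preserves the bound.

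For \eqref{ltol0_4:eq}, starting from $\op^l_\a(b)T(a) = \op^l_\a(b)U\op^l_\a(a)U^*$, I would perform three steps: first commute $\op^l_\a(b)$ past $U$, incurring an error of $C(\a\ell\rho)^{d-1}\SN^{(d+2,d+2)}(b;\ell,\rho)$ multiplied by $\|\op^l_\a(a)U^*\|\le C\SN^{(d+2,d+2)}(a;\ell,\rho)$ (Lemma \ref{general_norm:lem}); then apply Corollary \ref{product:cor}, whose hypothesis is met because $b$ satisfies \eqref{a_support:eq}, to replace $\op^l_\a(b)\op^l_\a(a)$ by $\op^l_\a(ab)$ in trace norm with error $C(\a\ell\rho)^{d-1}\SN^{(d+2,d+2)}(a;\ell,\rho)\SN^{(d+2,d+2)}(b;\ell,\rho)$; finally commute $\op^l_\a(ab)$ past $U^*$ and then across the $U_0^*$-commutator, producing $U_0\op^l_\a(ab)U_0^* = T^{(0)}(ab)$ up to the required remainder (the product rule for the norms $\SN$ absorbs the intermediate cost).

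Finally, \eqref{ltol0_2:eq} is deduced from \eqref{ltol0_4:eq}. Applying the latter both to $(\L,\Om)$ and to its trivial specialization $(\L_0,\Om_0)$ gives
\begin{equation*}
\op^l_\a(b)T(a)\equiv T^{(0)}(ab)\equiv \op^l_\a(b)T^{(0)}(a) \pmod{\GS_1\bigl(C(\a\ell\rho)^{d-1}\SN(a)\SN(b)\bigr)},
\end{equation*}
whose subtraction yields the $\op^l_\a(b)$-part. For the $T^{(0)}(b)$-part I split $T^{(0)}(b) = U_0U_0^*\op^l_\a(b) + U_0\bigl[\op^l_\a(b)U_0^* - U_0^*\op^l_\a(b)\bigr]$: the bracketed commutator has trace norm $\le C(\a\ell\rho)^{d-1}\SN(b)$, which combined with $\|T(a)-T^{(0)}(a)\|\le 2\|\op^l_\a(a)\|\le C\SN(a)$ disposes of its contribution, while the remaining $U_0U_0^*\op^l_\a(b)[T(a)-T^{(0)}(a)]$ is controlled by the already-proved $\op^l_\a(b)$-part since $\|U_0U_0^*\|\le 1$. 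The main technical obstacle throughout is the careful bookkeeping of the four commutation identities and of the $\op^l/\op^r$ conversion errors; once these tools are organized, each of the three estimates reduces to a short algebraic telescoping.
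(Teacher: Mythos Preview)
Your proposal is correct and follows essentially the same route as the paper. Both arguments hinge on Lemma~\ref{omtoom0:lem} together with Corollary~\ref{product:cor} and the $\op^l/\op^r$ conversion \eqref{quantisation_symbol:eq}; your three-step telescoping for \eqref{ltol0_4:eq} is literally the paper's decomposition
\[
\op(b)T(a) = \bigl(\op(b)U - U_0\op(b)\bigr)\op(a)U^* + U_0\bigl(\op(b)\op(a)-\op(ab)\bigr)U^* + U_0\op(ab)(U^*-U_0^*) + T^{(0)}(ab)
\]
written in your notation. The only cosmetic difference is that for \eqref{ltol0_1:eq} the paper routes through the intermediate identity $\|T(b)-T^{(0)}(1)\op(b)\|_{\GS_1}+\|T^{(0)}(b)-T^{(0)}(1)\op(b)\|_{\GS_1}\le C(\a\ell\rho)^{d-1}$ (their (7.10)) rather than decomposing $T(b)-T^{(0)}(b)$ directly; this intermediate form is then reused for the $T^{(0)}(b)$-part of \eqref{ltol0_2:eq}, where it plays exactly the role of your commutator splitting $T^{(0)}(b)=U_0U_0^*\op(b)+U_0[\op(b),U_0^*]$.
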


\begin{proof}
Without loss of generality assume that
\begin{equation}\label{normalise:eq}
\SN^{(d+2, d+2)}(b; \ell, \rho)
= \SN^{(d+2, d+2)}(a; \ell, \rho) = 1.
\end{equation}
Denote $\op(b) = \op^l_\a(b)$.
Note that \eqref{half:eq} immediately leads to
\begin{equation}
\| \op(b) (P_{\Om, \a}\chi_{\L} - P_{\Om_0, \a}\chi_{\L_0})\|_{\GS_1}
+ \|(\chi_{\L} P_{\Om, \a} - \chi_{\L_0}P_{\Om_0, \a}) \op(b) \|_{\GS_1}
\le  C(\a\ell\rho)^{d-1}.\label{onesided:eq}
\end{equation}
To prove \eqref{ltol0_1:eq}, write
\begin{align*}
T(b) - T^{(0)}(1) \op(b)
= &\ (\chi_{\L}P_{\Om, \a} - \chi_{\L_0}P_{\Om_0, \a})\op(b) P_{\Om}\chi_{\L}\\[0.2cm]
&\ + \chi_{\L_0} P_{\Om_0, \a}
\bigl(\op(b) P_{\Om, \a}\chi_{\L} - P_{\Om_0, \a} \chi_{\L_0}\op(b)\bigr),
\end{align*}
so that by \eqref{onesided:eq} and \eqref{half:eq},
\begin{align*}
\|T(b) - T^{(0)}(1) \op(b)\|_{\GS_1}
\le &\ \|(\chi_{\L}P_{\Om, \a} - \chi_{\L_0}P_{\Om_0, \a})\op(b)\|_{\GS_1}\\[0.2cm]
&\ + \|\op(b) P_{\Om, \a}\chi_{\L} - P_{\Om_0, \a} \chi_{\L_0}\op(b)\|_{\GS_1}
\le C(\a\ell\rho)^{d-1},
\end{align*}
which leads to
\begin{equation}\label{ltol0_3:eq}
\|T(b) - T^{(0)}(1)\op^l_\a(b)\|_{\GS_1}
+ \|T^{(0)}(b) - T^{(0)}(1)\op^l_\a(b)\|_{\GS_1}
\le C (\a\ell\rho)^{d-1}.
\end{equation}
Thus \eqref{ltol0_1:eq} follows.

In order to prove \eqref{ltol0_4:eq}  rewrite:
\begin{align*}
 \op(b) T(a)
 =  &\ \bigl(\op(b) \chi_{\L}  P_{\Om}
 - \chi_{\L_0} P_{\Om_0, \a} \op(b)\bigr) \op(a) P_{\Om, \a} \chi_{\L} \\[0.2cm]
&\  + \chi_{\L_0}P_{\Om_0, \a} \bigl(\op(b)\op(a)
- \op(ab)\bigr) P_{\Om, \a} \chi_{\L} \\[0.2cm]
 &\ + \chi_{\L_0}P_{\Om_0, \a} \op(ab)
 \bigl(P_{\Om, \a} \chi_{\L}  - P_{\Om_0, \a} \chi_{\L_0} \bigr)
 + T^{(0)}(ab).
\end{align*}
Now \eqref{ltol0_4:eq} follows from \eqref{general_norm:eq},
\eqref{half1:eq}, \eqref{product_smooth:eq} and \eqref{onesided:eq}.

The inequality \eqref{ltol0_4:eq} immediately implies
\eqref{ltol0_2:eq}
for the first trace norm in \eqref{ltol0_2:eq}.
 Moreover, it follows from
\eqref{ltol0_3:eq} that $T^{(0)}(b)$ in the second term in \eqref{ltol0_2:eq}
can be replaced by $T^{(0)}(1)\op^l_\a(b)$.
Now the required bound for the second term
follows from the bound for the first trace norm in \eqref{ltol0_2:eq}.
\end{proof}

\begin{lem}\label{localisation1:lem}
Let $d\ge 1$, and let $a, b$ be some symbols
satisfying condition \eqref{ab:eq}.
Assume that $\a\ell\rho\ge c$. Then
\begin{align}\label{localisation0:eq}
\|\op_\a^l(b) \bigl(g_p(T(a)) - &\ g_p(T^{(0)}(a)) \bigr)\|_1\notag\\[0.2cm]
\le &\ C  (\a\ell\rho)^{d-1}
\SN^{(d+2, d+2)} (b; \ell, \rho)
\bigl(\SN^{(d+2, d+2)}(a; \ell, \rho)\bigr)^p.
\end{align}

\begin{align}\label{localisation:eq}
\|\op_\a^l(  b^p) g_p(T(a)) - &\ g_p(T^{(0)}(ab)) \|_1\notag\\[0.2cm]
\le &\ C  (\a\ell\rho)^{d-1}
\bigl(\SN^{(d+2, d+2)}(b; \ell, \rho) \bigr)^p
\bigl(\SN^{(d+2, d+2)}(a; \ell, \rho)\bigr)^p.
\end{align}
\end{lem}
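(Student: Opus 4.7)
My plan is to establish a single intermediate identity from which both \eqref{localisation0:eq} and \eqref{localisation:eq} follow by short arguments. Specifically, under the hypotheses of the lemma, for every $p = 1, 2, \dots$, I claim
\begin{equation*}
\| \op_\a^l(b)\, T(a)^p - T^{(0)}(a^p b)\,T^{(0)}(1)^{p-1} \|_{\GS_1}
\le C_p (\a\ell\rho)^{d-1}\, \SN^{(d+2,d+2)}(b;\ell,\rho)\bigl(\SN^{(d+2,d+2)}(a;\ell,\rho)\bigr)^p,
\end{equation*}
with the convention $T^{(0)}(1)^{0} = I$. I call this the \emph{core claim}.

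I would prove it by induction on $p$. The base case $p=1$ is precisely \eqref{ltol0_4:eq}. For the step $p \to p+1$, I factor $\op_\a^l(b)\, T(a)^{p+1} = [\op_\a^l(b)\, T(a)^p]\, T(a)$ and apply the inductive hypothesis; since $\|T(a)\|$ is uniformly bounded via Lemma \ref{general_norm:lem}, the resulting $\GS_1$-error, multiplied on the right by $T(a)$, stays of the correct order. What remains is to identify $T^{(0)}(a^p b)\,T^{(0)}(1)^{p-1}\, T(a)$ with $T^{(0)}(a^{p+1} b)\,T^{(0)}(1)^{p}$ modulo a $\GS_1$-error of the same order. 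For this I first invoke \eqref{ltol0_1:eq} (applied to the symbol $a$) to replace the rightmost $T(a)$ by $T^{(0)}(a)$, and then iteratively apply Corollary \ref{product1:cor} inside the pair $(\L_0, \Om_0)$ — the compactly supported factor in each invocation being $a^p b$ or $a^{p+1} b$, both of which inherit the support of $b$ — in order to commute $T^{(0)}(a^p b)$ past the $p-1$ factors $T^{(0)}(1)$ and then to fuse $T^{(0)}(a^p b)\, T^{(0)}(a)$ into $T^{(0)}(1)\, T^{(0)}(a^{p+1} b)$.

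To deduce \eqref{localisation0:eq} I apply the core claim twice: once to $(\L,\Om)$ as given, and once with $(\L_0,\Om_0)$ substituted for $(\L,\Om)$ throughout (so that $T(a)^p$ on the left becomes $T^{(0)}(a)^p$ while the right-hand side is unchanged); subtracting produces the announced bound. To deduce \eqref{localisation:eq} I apply the core claim with $b$ replaced by $b^p$ — the elementary bound $\SN^{(d+2,d+2)}(b^p;\ell,\rho)\le C_p\SN^{(d+2,d+2)}(b;\ell,\rho)^p$ preserves the correct $b$-dependence — obtaining $\op_\a^l(b^p)\, T(a)^p \approx T^{(0)}(a^p b^p)\, T^{(0)}(1)^{p-1}$ in $\GS_1$. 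Since $a^p b^p = (ab)^p$ and $ab$ carries the same compact support as $b$, iterating Corollary \ref{product1:cor} inside $(\L_0,\Om_0)$ yields $T^{(0)}(ab)^p \approx T^{(0)}((ab)^p)\, T^{(0)}(1)^{p-1}$ to the required accuracy, and comparing the two expressions gives \eqref{localisation:eq}.

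The main technical hurdle is the careful bookkeeping of errors through the induction: each inductive step uses the building-block estimates \eqref{ltol0_1:eq}, \eqref{ltol0_4:eq}, and Corollary \ref{product1:cor} a bounded (in $p$) number of times, each invocation contributing an additive $\GS_1$-error of order $(\a\ell\rho)^{d-1}$ multiplied by an appropriate product of $\SN$-norms. One must verify that intermediate norm-multipliers such as $\|T^{(0)}(c)\|$ and $\|T(a)^m\, T^{(0)}(1)^k\|$ stay uniformly bounded in $\a$, which follows from Lemma \ref{general_norm:lem} together with the fact that $\chi_{\L_0}$ and $P_{\Om_0,\a}$ are contractions. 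Since the number of rearrangement steps is linear in $p$, the total error remains of the announced form up to constants $C_p$ depending on $p$.
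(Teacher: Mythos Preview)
Your overall strategy is close in spirit to the paper's, but there is a genuine gap in the inductive step of the core claim. You write that you ``invoke \eqref{ltol0_1:eq} (applied to the symbol $a$) to replace the rightmost $T(a)$ by $T^{(0)}(a)$.'' However, \eqref{ltol0_1:eq} is stated for the symbol playing the role of $b$ in condition \eqref{ab:eq}, i.e.\ the one supported in $B(\bz,\ell)\times B(\bmu,\rho)\subset B(\bw,R)\times B(\boldeta,R)$; under the hypotheses of the lemma only $b$ carries this support condition, while $a$ is merely in $\BS^{(d+2,d+2)}$. So \eqref{ltol0_1:eq} does not apply to $a$, and the bare difference $T(a)-T^{(0)}(a)$ need not be trace class at all. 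The correct tool is \eqref{ltol0_2:eq}, which requires a compactly supported factor $\op_\a^l(c)$ or $T^{(0)}(c)$ \emph{immediately to the left} of $T(a)-T^{(0)}(a)$. In your expression $T^{(0)}(a^p b)\,T^{(0)}(1)^{p-1}\,T(a)$ the factor adjacent to $T(a)$ is $T^{(0)}(1)$, so \eqref{ltol0_2:eq} is not directly available either.

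The fix is to reverse the order of your two moves in the inductive step: first commute $T^{(0)}(a^p b)$ past $T^{(0)}(1)^{p-1}$ via Corollary \ref{product1:cor} (each commutator $[T^{(0)}(1),T^{(0)}(a^p b)]$ is $O((\a\ell\rho)^{d-1})$ in $\GS_1$ since $a^p b$ inherits the support of $b$), obtaining $T^{(0)}(1)^{p-1}\,T^{(0)}(a^p b)\,T(a)$; \emph{then} apply \eqref{ltol0_2:eq} with $a^p b$ in the role of the compactly supported symbol to replace $T(a)$ by $T^{(0)}(a)$; finally fuse $T^{(0)}(a^p b)\,T^{(0)}(a)$ into $T^{(0)}(a^{p+1}b)\,T^{(0)}(1)$ by Corollary \ref{product1:cor} and commute back. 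This repairs the argument, but note that the paper avoids the back-and-forth commuting altogether by choosing a more convenient intermediate object: its Step~1 claim is $\op_\a^l(b)\,T(a)^p \approx T^{(0)}(a)^{p-1}\,T^{(0)}(ab)$, with the compactly supported $T^{(0)}(ab)$ kept on the right, adjacent to the fresh $T(a)$ at every step of the induction, so that \eqref{ltol0_2:eq} applies immediately.
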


\begin{proof}
We write for brevity $\op$ instead of $\op_\a^l$,
and assume without loss of generality that
\eqref{normalise:eq} is satisfied.

\underline{Step 1.} Let us show first that
\begin{equation}\label{bthrough:eq}
\|\op(b) g_p( T(a))
- g_{p-1} (T^{(0)}(a)) T^{(0)}(ab)\|_{\GS_1}
\le  C  (\a\ell\rho)^{d-1}.
\end{equation}
We do it by induction. If $p = 1$, then
\eqref{bthrough:eq} is exactly \eqref{ltol0_4:eq}.
 Suppose \eqref{bthrough:eq} holds for some $p =m$, and let us deduce \eqref{bthrough:eq}
for $p = m+1$. Write:
\begin{align*}
\op(b) g_{m+1}( T(a)) - g_m (T^{(0)}(a))&\  T^{(0)}(ab)\\
= &\ \biggl(\op(b) g_m(T(a)) - g_{m-1}(T^{(0)}(a)) T^{(0)}(ab)\biggr)T(a)\\
+ &\ g_{m-1}(T^{(0)} (a))\biggl(T^{(0)} (ab) T(a) - T^{(0)}(a) T^{(0)}(ab)\biggr).
\end{align*}
The claimed estimate for the first term on the right-hand side
follows from \eqref{bthrough:eq} for $p = m$, and Lemma \ref{general_norm:lem}.

By \eqref{ltol0_2:eq}, in the second term we can replace $T(a)$
with $T^{(0)}(a)$. It remains to use
Corollary \ref{product1:cor} and Lemma \ref{general_norm:lem}.
Thus by induction \eqref{bthrough:eq}
holds for all $p = 1, 2, \dots$.  The estimate \eqref{localisation0:eq}
is an immediate consequence
of \eqref{bthrough:eq}.

\underline{Step 2.}
Let us show that
\begin{equation}\label{bthrough01:eq}
\|(\op(b))^p g_p( T(a)) - g_{p}(T^{(0)}(ab))\|_{\GS_1}
\le C  (\a\ell\rho)^{d-1}.
\end{equation}
Again we use induction.
For $p=1$ this inequality repeats
\eqref{bthrough:eq}. Suppose it holds for some $p=m$.
In order to prove it for $p = m+1$ write
\begin{align*}
(\op(b))^{m+1} g_{m+1}(T(a)) -&  g_{m+1}(T^{(0)}(ab))\\
= &\ (\op(b))^{m}\biggl(\op(b) g_{m+1}(T(a)) - g_m(T^{(0)}(a))T^{(0)}(ab)\biggr) \\
+ &\ \biggl((\op(b))^m g_{m}(T^{(0)}(a)) - g_m(T^{(0)}(ab))\biggr)T^{(0)}(ab).
\end{align*}
Now we use \eqref{bthrough01:eq} for $p=m$, \eqref{bthrough:eq}
for $p = m+1$, and Lemma \ref{general_norm:lem}.

\underline{Step 3.}
It follows from Corollary \ref{product:cor} that
\[
\|\op(b^p) - (\op(b))^p\|_{\GS_1}\le C (\a\ell\rho)^{d-1},
\]
and hence
\[
\|\bigl(\op(b^p) - (\op(b))^p\bigr) g_p( T(a)) \|_{\GS_1}\le C (\a\ell\rho)^{d-1},
\]
in view of Lemma \ref{general_norm:lem}.
To complete the proof put together Steps 1, 2 and 3.
\end{proof}

For the one-dimensional case we need a more involved version of the above  lemma:

\begin{lem}\label{localisation_1d:lem}
Let $d = 1$, and let $a=a(\xi), b = b(\xi)$
be functions from $\BS^{(3)}$,
such that $b$ is supported on the interval
$(\mu - \rho, \mu + \rho)$ with some $\mu\in\R$ and $\rho>0$.
Assume that $\L = (x_0, \infty)$ with some $x_0\in\R$ and let $\Om$
be an arbitrary subset of $\R$.
Then for all $\a>0, \rho>0$, we have
 \begin{equation*}
\|\op_\a(b^p) g_p(T(a)) - g_p(T(a b))\|_{\GS_1}
\le C \bigl(\SN^{(3)}(a; \rho)\bigr)^p
\bigl(\SN^{(3)} (b; \rho)\bigr)^p.
\end{equation*}
\end{lem}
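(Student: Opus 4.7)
The plan is to mirror the two-step induction used in Lemma \ref{localisation1:lem}, exploiting two crucial simplifications available in the one-dimensional setting with symbols depending only on $\xi$: since $\op_\a(b)$ is then a Fourier multiplier, one has the \emph{exact} identities $\op_\a(b) P_{\Om,\a} = P_{\Om,\a}\op_\a(b)$, $\op_\a(a)\op_\a(b) = \op_\a(ab)$, and $(\op_\a(b))^p = \op_\a(b^p)$. In particular, ``Step 3'' of the proof of Lemma \ref{localisation1:lem}, which required Corollary \ref{product:cor}, is trivial here and contributes no error term; this explains the absence of the prefactor $(\a\ell\rho)^{d-1}$ in the 1D bound.

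\textbf{Step 1.} By induction on $p$ I would first establish the auxiliary estimate
\begin{equation*}
\|\op_\a(b)\, g_p(T(a)) - g_{p-1}(T(a))\, T(ab)\|_{\GS_1} \le C\bigl(\SN^{(3)}(a;\rho)\bigr)^p \SN^{(3)}(b;\rho).
\end{equation*}
For $p=1$, using the two exact identities above,
\begin{equation*}
\op_\a(b) T(a) = [\op_\a(b),\chi_\L]\, P_{\Om,\a}\op_\a(a) P_{\Om,\a}\chi_\L + \chi_\L P_{\Om,\a} \op_\a(ab) P_{\Om,\a}\chi_\L,
\end{equation*}
and the last term equals $T(ab)$. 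The trace norm of the commutator is bounded by \eqref{sandwich_121:eq}, while the remaining factor has operator norm $\le \|\op_\a(a)\| = \|a\|_{\plainL\infty} \le C\SN^{(3)}(a;\rho)$. For the inductive step from $p=m$ to $p=m+1$ I would split
\begin{equation*}
\op_\a(b)\, g_{m+1}(T(a)) - g_m(T(a))\, T(ab) = \bigl[\op_\a(b)\, g_m(T(a)) - g_{m-1}(T(a))\, T(ab)\bigr] T(a) + g_{m-1}(T(a))\bigl[T(ab), T(a)\bigr],
\end{equation*}
the first bracket being handled by the inductive hypothesis and $\|T(a)\|\le C\SN^{(3)}(a;\rho)$, the commutator by Lemma \ref{product_1d:lem}.

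\textbf{Step 2.} Another induction on $p$ upgrades Step 1 to
\begin{equation*}
\|(\op_\a(b))^p\, g_p(T(a)) - g_p(T(ab))\|_{\GS_1} \le C\bigl(\SN^{(3)}(a;\rho)\bigr)^p \bigl(\SN^{(3)}(b;\rho)\bigr)^p.
\end{equation*}
The case $p=1$ is Step 1 for $p=1$. For the inductive step I would write
\begin{equation*}
(\op_\a(b))^{m+1} g_{m+1}(T(a)) - g_{m+1}(T(ab)) = (\op_\a(b))^m \bigl[\op_\a(b)\, g_{m+1}(T(a)) - g_m(T(a))\, T(ab)\bigr] + \bigl[(\op_\a(b))^m g_m(T(a)) - g_m(T(ab))\bigr] T(ab),
\end{equation*}
bounding the first bracket by Step 1 with $p=m+1$ and the second by the inductive hypothesis, together with $\|(\op_\a(b))^m\| \le C^m\bigl(\SN^{(3)}(b;\rho)\bigr)^m$ and $\|T(ab)\| \le C\SN^{(3)}(a;\rho)\SN^{(3)}(b;\rho)$. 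Finally the exact identity $(\op_\a(b))^p = \op_\a(b^p)$ converts Step 2 into the claim.

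There is no real obstacle: the main point is careful bookkeeping of constants through the two inductions. The delicate 1D input is the commutator estimate of Lemma \ref{product_1d:lem}, which uses in an essential way that $b$ is supported on an interval of length $2\rho$; everything else is algebraic manipulation that is cleaner than in the higher-dimensional Lemma \ref{localisation1:lem} precisely because no Moyal-type remainders appear.
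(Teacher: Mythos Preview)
Your proposal is correct and follows essentially the same route as the paper's own proof: the same two-step induction (first pushing $\op_\a(b)$ through to get $g_{p-1}(T(a))T(ab)$, then upgrading to $g_p(T(ab))$), with the same base-case computation via \eqref{sandwich_121:eq} and the same use of Lemma~\ref{product_1d:lem} for the commutator in the inductive step. The only cosmetic difference is that the paper normalizes $\SN^{(3)}(a;\rho)=\SN^{(3)}(b;\rho)=1$ at the outset rather than tracking the powers explicitly, and leaves the identity $(\op_\a(b))^p=\op_\a(b^p)$ implicit.
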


\begin{proof}
Lemma \ref{localisation1:lem} is not directly applicable, although the proof
is.  Assume without loss of generality that
\[
\SN^{(3)}(b; \rho) = \SN^{(3)}(a; \rho) = 1.
\]

\underline{Step 1.} Let us show first that
\begin{equation}\label{bthrough11:eq}
\|\op(b) g_p(T(a)) - g_{p-1}(T(a)) T(ab)\|_{\GS_1}\le C.
\end{equation}
We do it by induction. If $p = 1$, then
\begin{equation*}
\op(b) T(a) - T(ab) = [\op(b), \chi_\L] \op(a) P_{\Om, \a} \chi_\L,
\end{equation*}
and \eqref{bthrough11:eq} follows
from \eqref{sandwich_121:eq} and the bound $\|\op(a)\|\le \| a\|_{\plainL\infty}$.
Suppose \eqref{bthrough11:eq} holds for some $p =m$, and
let us deduce \eqref{bthrough11:eq}
for $p = m+1$. Write:
\begin{align*}
\op(b) g_{m+1}(T(a)) - &\ g_m (T(a)) T(ab)\\
= &\ \biggl(\op(b) g_m(T(a)) - g_{m-1}(T(a)) T(ab)\biggr)T(a)\\
+ &\ g_{m-1}(T(a))\biggl(T(ab) T(a) - T(a) T(ab)\biggr).
\end{align*}
The claimed estimate follows from \eqref{bthrough11:eq} for $p = m$,
 and Lemma \ref{product_1d:lem}, and more precisely,
 from the bound
 \begin{equation*}
\| T(ab) T(a) - T(a) T(ab)\|_{\GS_1}
\le  C.
\end{equation*}

\underline{Step 2.}
Let us show that
\begin{equation}\label{bthrough12:eq}
\|(\op(b)^p g_p(T(a)) - g_{p} (T(ab))\|_{\GS_1}\le  C.
\end{equation}
Again we use induction.
For $p=1$ this inequality repeats \eqref{bthrough11:eq}. Suppose it holds for some $p=m$.
In order to prove it for $p = m+1$ write
\begin{align*}
(\op(b))^{m+1} g_{m+1}(T(a)) -&  g_{m+1}(T(ab))\\
= &\ (\op(b))^{m}\biggl(\op(b) g_{m+1}(T(a)) - g_m(T(a))T(ab)\biggr) \\
+ &\ \biggl((\op(b))^m g_{m}(T(a)) - g_m(T(ab))\biggr)T(ab).
\end{align*}
Now one uses \eqref{bthrough11:eq} for $p = m+1$,
and \eqref{bthrough12:eq} for $p=m$ to show that
the right hand side does not exceed
\begin{equation*}
C\|b\|_{\plainL\infty}^m
+ C \|a\|_{\plainL\infty}\|b\|_{\plainL\infty}\le C'.
\end{equation*}
The proof is complete.
\end{proof}

\begin{lem}\label{commute_through1:lem}
 Let $d\ge 1$, and let $a, b$ be some symbols satisfying
condition \eqref{ab:eq}.
Suppose that $\a\ell\rho\ge c$. Then
\begin{align*}
\|\op_\a^l(b^p) g_p(T(a))
- &\ \op_\a^l( (b a)^p) g_p(T^{(0)}(1))\|_{\GS_1}\\[0.2cm]
\le &\ C  (\a\ell\rho)^{d-1}
\bigl(\SN^{(d+2, d+2)}(b; \ell, \rho) \bigr)^p
\bigl(\SN^{(d+2, d+2)}(a; \ell, \rho)\bigr)^p.
\end{align*}
\end{lem}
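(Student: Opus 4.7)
The plan is to derive the required bound by applying Lemma \ref{localisation1:lem}, specifically the estimate \eqref{localisation:eq}, twice, and combining via the triangle inequality.

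First I would apply \eqref{localisation:eq} directly in the setup of the current lemma:
\begin{equation*}
\|\op_\a^l(b^p) g_p(T(a)) - g_p(T^{(0)}(ab))\|_{\GS_1}
\le C(\a\ell\rho)^{d-1} \bigl(\SN^{(d+2, d+2)}(b; \ell, \rho)\bigr)^p \bigl(\SN^{(d+2, d+2)}(a; \ell, \rho)\bigr)^p.
\end{equation*}
This already handles the passage from $T(a)$ to $T^{(0)}$, so it remains to compare $g_p(T^{(0)}(ab))$ with $\op_\a^l((ab)^p) g_p(T^{(0)}(1))$.

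Next I would observe that the ``localized'' domains $\L_0$ and $\Om_0$ are either globally graph-type in the sense of Condition \ref{graph:cond}, or equal to $\R^d$. In either case they satisfy Condition \ref{global:cond} at every point of their boundary with $R = \infty$, and their gradient bound is controlled by the same constant $M$ as for $\L$, $\Om$. Hence Lemma \ref{localisation1:lem} is applicable with the ambient domains $\L, \Om$ replaced by $\L_0, \Om_0$ — in which case the corresponding ``outer'' operator coincides with $T^{(0)}$, and the new ``localized'' domains are again $\L_0, \Om_0$ themselves. Applying \eqref{localisation:eq} in this modified setup with the amplitude $a$ replaced by $1$ (permissible, since \eqref{ab:eq} imposes no support restriction on $a$) and the compactly supported symbol $b$ replaced by $ab$ (whose support is contained in that of $b$, hence in $B(\bz,\ell)\times B(\bmu,\rho)$), I obtain
\begin{equation*}
\|\op_\a^l((ab)^p) g_p(T^{(0)}(1)) - g_p(T^{(0)}(ab))\|_{\GS_1}
\le C(\a\ell\rho)^{d-1} \bigl(\SN^{(d+2, d+2)}(ab; \ell, \rho)\bigr)^p.
\end{equation*}

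Finally, using the multiplicative norm estimate $\SN^{(d+2, d+2)}(ab; \ell, \rho) \le C\, \SN^{(d+2, d+2)}(a; \ell, \rho)\, \SN^{(d+2, d+2)}(b; \ell, \rho)$ from Section \ref{main:sect}, together with $ab = ba$, the two displayed bounds combine via the triangle inequality to yield exactly the desired estimate. There is no serious obstacle here: the only substantive point is the ``self-similarity'' observation that Lemma \ref{localisation1:lem} may be re-applied after substituting $(\L_0, \Om_0)$ for $(\L, \Om)$, which works because all constants in that lemma depend only on $d$ and the gradient bound $M$ in \eqref{gradient:eq}, and not on the local radius $R$ in Condition \ref{global:cond}.
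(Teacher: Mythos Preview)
Your proof is correct and follows exactly the same approach as the paper: triangle inequality via the intermediate term $g_p(T^{(0)}(ab))$, then two applications of \eqref{localisation:eq}. Your justification for the second application (that $\L_0,\Om_0$ satisfy Condition \ref{global:cond} with $R=\infty$, so Lemma \ref{localisation1:lem} applies with $(\L_0,\Om_0)$ in place of $(\L,\Om)$ and localises to itself) is in fact more explicit than the paper's, which simply writes ``It remains to use Lemma \ref{localisation1:lem} twice.''
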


\begin{proof} Estimate:
\begin{align*}
\|\op_\a^l(b^p) g_p(T(a))
- \op_\a^l( (b a)^p) g_p(T^{(0)}(1))\|_{\GS_1}
&\ \le
\| g_p(T^{(0)}(ab))
- \op_\a^l(b^p) g_p(T(a))\|_{\GS_1}\\[0.2cm]
+ &\ \| g_p(T^{(0)}(ab))
- \op_\a^l( (b a)^p) g_p(T^{(0)}(1))\|_{\GS_1}.
\end{align*}
It remains to use Lemma \ref{localisation1:lem} twice.
\end{proof}

\begin{lem}\label{commute_through2:lem}
Let $d\ge 1$, and let $a, b$ be some symbols satisfying
condition \eqref{ab:eq}.
 Then under the assumption $\a\ell\rho\ge c$, one has
\begin{align}\label{through1:eq}
\|\op_\a^l(b) g_p(T(a))
- &\ \op_\a^l(b a^p) g_p(T^{(0)}(1))\|_{\GS_1}\notag\\[0.2cm]
 \le &\ C (\a\ell\rho)^{d-1}
 \SN^{(d+2, d+2)}(b; \ell, \rho)
\biggl( \SN^{(d+2, d+2)}(a; \ell, \rho)\biggr)^p.
\end{align}

\end{lem}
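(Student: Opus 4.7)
The plan is to reduce the claim from $T(a;\L,\Om)$ to the ``flat'' operator $T^{(0)} = T(a;\L_0,\Om_0)$ via Lemma~\ref{localisation1:lem}, and then establish the analogous estimate for $T^{(0)}$ by induction on $p$, with \eqref{ltol0_4:eq} providing the base case and Corollary~\ref{product1:cor} controlling commutators at each inductive step. Writing $\SN(\cdot)$ as shorthand for $\SN^{(d+2,d+2)}(\cdot;\ell,\rho)$, estimate \eqref{localisation0:eq} of Lemma~\ref{localisation1:lem} gives
\[
\|\op_\a^l(b)\,g_p(T(a)) - \op_\a^l(b)\,g_p(T^{(0)}(a))\|_{\GS_1} \le C(\a\ell\rho)^{d-1}\SN(b)\SN(a)^p,
\]
so it suffices to prove the purely ``flat'' estimate
\[
\|\op_\a^l(b)\,g_p(T^{(0)}(a)) - \op_\a^l(ba^p)\,g_p(T^{(0)}(1))\|_{\GS_1}\le C(\a\ell\rho)^{d-1}\SN(b)\SN(a)^p.
\]

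For the base case $p=1$, I apply \eqref{ltol0_4:eq} with $(\L,\Om)=(\L_0,\Om_0)$ (so that $T=T^{(0)}$) twice: first as stated, to obtain $\op_\a^l(b) T^{(0)}(a) = T^{(0)}(ab)$ modulo a trace-norm error of the required order; and second with the substitutions $a\mapsto 1$, $b\mapsto ba$, to obtain $\op_\a^l(ba) T^{(0)}(1) = T^{(0)}(ba) = T^{(0)}(ab)$ modulo a trace-norm error of the same order (using $\SN(ba;\ell,\rho)\le C\SN(a)\SN(b)$ and $\SN(1;\ell,\rho)=1$). Subtracting the two identities yields the $p=1$ case with the correct bound.

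For the inductive step from $m$ to $m+1$, I decompose the operator
\[
\op_\a^l(b)\,g_{m+1}(T^{(0)}(a)) - \op_\a^l(ba^{m+1})\,g_{m+1}(T^{(0)}(1))
\]
as the sum of three pieces: (i) $\bigl[\op_\a^l(b)\,g_m(T^{(0)}(a)) - \op_\a^l(ba^m)\,g_m(T^{(0)}(1))\bigr]\,T^{(0)}(a)$; (ii) $\op_\a^l(ba^m)\,\bigl[g_m(T^{(0)}(1)),\,T^{(0)}(a)\bigr]$; and (iii) $\bigl[\op_\a^l(ba^m)\,T^{(0)}(a) - \op_\a^l(ba^{m+1})\,T^{(0)}(1)\bigr]\,g_m(T^{(0)}(1))$, using that $T^{(0)}(1)$ commutes with $g_m(T^{(0)}(1))$ and that $T^{(0)}(1)\,g_m(T^{(0)}(1))=g_{m+1}(T^{(0)}(1))$. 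Piece (i) is bounded by the inductive hypothesis together with $\|T^{(0)}(a)\|\le C\SN(a)$ from Lemma~\ref{general_norm:lem}. For piece (ii), telescoping $[T^{(0)}(1)^m, T^{(0)}(a)]=\sum_{k=0}^{m-1} T^{(0)}(1)^k [T^{(0)}(1), T^{(0)}(a)] T^{(0)}(1)^{m-1-k}$ combined with Corollary~\ref{product1:cor} (the symbol $a$ supplies the compact support required by the corollary, $\SN(1;\ell,\rho)=1$, and $\|T^{(0)}(1)\|\le 1$) gives a trace-norm bound of order $(\a\ell\rho)^{d-1}\SN(a)$, which after multiplication by $\|\op_\a^l(ba^m)\|\le C\SN(b)\SN(a)^m$ lies within the allowed order. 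Piece (iii) is exactly the $p=1$ estimate with $b$ replaced by $ba^m$ (both sides multiplied on the right by $g_m(T^{(0)}(1))$ of norm $\le 1$), closing the induction.

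The main obstacle, in contrast with Lemma~\ref{commute_through1:lem} (where the left factor $\op_\a^l(b^p)$ already carries a full $p$-th power and the argument is a clean double application of Lemma~\ref{localisation1:lem}), is that here the single factor $\op_\a^l(b)$ on the left must ``collect'' the $p$ copies of $a$ one at a time from the right. Each inductive step consequently forces a commutation of the growing factor $T^{(0)}(1)^m$ past $T^{(0)}(a)$; this is affordable only because $a$ is compactly supported, so that Corollary~\ref{product1:cor} supplies the required trace-class bound even though the other factor ($1$) carries no smallness in $\SN$. The resulting constants grow with $p$, but since $p$ is fixed the growth is absorbed into the final constant $C$.
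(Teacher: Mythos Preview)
Your reduction to the flat operator via \eqref{localisation0:eq}, your base case, and your handling of pieces (i) and (iii) are all fine. The gap is in piece (ii). Condition \eqref{ab:eq} requires only $b$ to be supported in $B(\bz,\ell)\times B(\bmu,\rho)$; the symbol $a$ is merely assumed to lie in $\BS^{(d+2,d+2)}$ with no support restriction. Corollary~\ref{product1:cor} needs one of the two symbols to satisfy \eqref{a_support:eq}, and in the commutator $[T^{(0)}(1),T^{(0)}(a)]$ the symbols are $1$ and $a$, neither of which is compactly supported in general. So your assertion that ``the symbol $a$ supplies the compact support required by the corollary'' is unjustified, and you cannot conclude $\|[T^{(0)}(1),T^{(0)}(a)]\|_{\GS_1}\le C(\a\ell\rho)^{d-1}\SN(a)$. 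The compactly supported factor $\op_\a^l(ba^m)$ sitting on the left does not help directly, because after telescoping it is separated from $[T^{(0)}(1),T^{(0)}(a)]$ by powers of $T^{(0)}(1)$.

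This is precisely the difficulty the paper's proof is built to sidestep. The paper introduces an auxiliary compactly supported symbol $q(\bx,\bxi)=h_{\bz,\ell}(\bx)\eta_{\bmu,\rho}(\bxi)$ with $bq^p=b$, splits
\[
\op(b)g_p(T^{(0)}(a))-\op(ba^p)g_p(T^{(0)}(1))=Z_1+Z_2+Z_3,
\]
and handles the main piece $Z_2=\op(b)\bigl[\op(q^p)g_p(T^{(0)}(a))-\op((qa)^p)g_p(T^{(0)}(1))\bigr]$ by Lemma~\ref{commute_through1:lem} applied with $R=\infty$. The point is that inside Lemma~\ref{commute_through1:lem} (and in turn Lemma~\ref{localisation1:lem} and Corollary~\ref{product1:cor}) the compactly supported role is played by $q$, not by $a$; the commutator estimates therefore always carry a localizing factor. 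Your induction can be repaired by inserting such a cutoff before invoking the commutator bound, but once you do that you are essentially reproducing the paper's argument.
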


\begin{proof}
Assume without loss of generality that \eqref{normalise:eq} is satisfied.
Also, due to \eqref{localisation0:eq} we may assume that
$T(a) = T^{(0)}(a)$.
Representing $b = b q^p$,
$q(\bx, \bxi) = h_{\bz, \ell}(\bx) \eta_{\bmu, \rho}(\bxi)$ we can write:
\begin{align*}
\op(b) g_p(T^{(0)}(a)) - & \ \op(b a^p) g_p(T^{(0)}(1))
= Z_1 + Z_2 + Z_3,\\[0.2cm]
Z_1 = &\
\bigl[\op(b) - \op(b) \op(q^p)\bigr] g_p(T^{(0)}(a)),\\[0.2cm]
Z_2 = &\ \op(b)\bigl[
\op(q^p) g_p(T^{(0)}(a)) - \op((qa)^p) g_p(T^{(0)}(1))\bigr],\\[0.2cm]
Z_3 =  &\ \bigl[
\op(b)\op((qa)^p) - \op(b a^p) \bigr] g_p(T^{(0)}(1)).
\end{align*}
By Corollary \ref{product:cor} and Lemma \ref{general_norm:lem},
\begin{align*}
\|Z_1\|_{\GS_1}\le &\
\|\op_\a^l(b) - \op_\a^l(b) \op_\a^l\bigl(q^p\bigr)\|_{\GS_1}
\| \op(a)\|^p
\le C(\a\ell\rho)^{d-1},\\
\|Z_3\|_{\GS_1}\le &\ C(\a\ell\rho)^{d-1}.
\end{align*}
Furthermore, by Lemma \ref{general_norm:lem},
and Lemma \ref{commute_through1:lem} used with $R = \infty$,
\begin{align*}
\|Z_2\|_{\GS_1}\le &\ \|\op(b)\|\ \|\op_\a^l(q^p) g_p(T^{(0)}(a))
- \op^l_\a((q a)^p)g_p(T^{(0)}(1))\|_{\GS_1}\\
\le &\ C (\a\ell\rho)^{d-1}.
\end{align*}
Put together, these bounds produce \eqref{through1:eq}.
\end{proof}

If $\L = \R^d$ or $\Om = \R^d$,
Lemma \ref{commute_through2:lem} leads to the following asymptotics:

\begin{thm}\label{localasymptotics:thm}
Let $d\ge 1$, and let $a, b$ be some symbols satisfying
condition \eqref{ab:eq}.
Suppose that $\a\ell\rho\ge c$.
In addition assume that either $\L$ or $\Om$ satisfies
Condition \ref{global:cond}(\ref{space:subcond}), i.e.
either $\L_0 = \R^d$ or $\Om_0 = \R^d$.
Then
\begin{align}\label{asymptotics1:eq}
|\tr\bigl(\op_\a^l(b) g_p(T(a))\bigr)
- &\  \a^d \GW_0(b g_p(a); \L, \Om)|\notag\\[0.2cm]
\le &\ C (\a\ell\rho)^{d-1}
 \SN^{(d+2, d+2)}(b; \ell, \rho)
\biggl( \SN^{(d+2, d+2)}(a; \ell, \rho)\biggr)^p.
\end{align}
For $a = 1$ and the function $g(t) = t-t^p$ one has
\begin{equation}\label{asymptotics2:eq}
\|\op_\a^l(b) g(T(1))\|_{\GS_1} \le C (\a\ell\rho)^{d-1}
 \SN^{(d+2, d+2)}(b; \ell, \rho).
\end{equation}

\end{thm}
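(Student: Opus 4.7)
The plan is to reduce everything to Lemma \ref{commute_through2:lem} by exploiting the crucial simplification that under the stated hypothesis $T^{(0)}(1)$ is an orthogonal projection. Indeed, $T^{(0)}(1) = \chi_{\L_0} P_{\Om_0,\a} \chi_{\L_0}$; if $\L_0 = \R^d$ then $T^{(0)}(1) = P_{\Om_0,\a}$, while if $\Om_0 = \R^d$ then $T^{(0)}(1) = \chi_{\L_0}$. In either case this is a projection, so $g_p(T^{(0)}(1)) = T^{(0)}(1)$ for every $p \ge 1$. Applying Lemma \ref{commute_through2:lem}, it therefore suffices to prove the exact identity
\begin{equation*}
\tr\bigl(\op_\a^l(b a^p)\, T^{(0)}(1)\bigr) = \a^d\, \GW_0(b g_p(a); \L, \Om),
\end{equation*}
since the error incurred in the reduction is precisely the right-hand side of \eqref{asymptotics1:eq}.

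To establish this identity, I would compute the trace directly in each of the two cases. When $\L_0 = \R^d$, the fact that $\chi_{\Om_0}$ depends only on $\bxi$ gives (by integrating out the intermediate variable in the composition kernel) the operator identity $\op_\a^l(b a^p)\, P_{\Om_0,\a} = \op_\a^l(b a^p \chi_{\Om_0})$; since $b$ is compactly supported in both variables this operator is trace class with
\begin{equation*}
\tr\bigl(\op_\a^l(b a^p \chi_{\Om_0})\bigr) = \Bigl(\frac{\a}{2\pi}\Bigr)^d \int\!\!\int b(\bx,\bxi)\, a(\bx,\bxi)^p\, \chi_{\Om_0}(\bxi)\, d\bx\, d\bxi.
\end{equation*}
When $\Om_0 = \R^d$, the diagonal of the kernel of $\op_\a^l(b a^p)\, \chi_{\L_0}$ equals $(\a/2\pi)^d\, \chi_{\L_0}(\bx)\int (b a^p)(\bx,\bxi)\, d\bxi$, yielding an analogous integral with $\chi_{\L_0}$ in place of $\chi_{\Om_0}$. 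In both cases the support condition on $b$ together with Condition \ref{global:cond} ensures that $\chi_{\L_0}(\bx) b(\bx,\bxi) = \chi_{\L}(\bx) b(\bx,\bxi)$ and $\chi_{\Om_0}(\bxi) b(\bx,\bxi) = \chi_{\Om}(\bxi) b(\bx,\bxi)$ pointwise, because inside $B(\bw, R)$ (resp.\ $B(\boldeta, R)$) the sets $\L$ and $\L_0$ (resp.\ $\Om$ and $\Om_0$) coincide. Consequently each integral equals $\a^d\, \GW_0(b g_p(a); \L, \Om)$, which finishes the proof of \eqref{asymptotics1:eq}.

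For the bound \eqref{asymptotics2:eq}, I write $g(T(1)) = T(1) - g_p(T(1))$ and apply Lemma \ref{commute_through2:lem} (with $a \equiv 1$) twice---once with exponent $1$ and once with exponent $p$---to get
\begin{equation*}
\bigl\|\op_\a^l(b) g(T(1)) - \op_\a^l(b)\bigl(T^{(0)}(1) - g_p(T^{(0)}(1))\bigr)\bigr\|_{\GS_1}
\le C(\a\ell\rho)^{d-1}\, \SN^{(d+2,d+2)}(b;\ell,\rho).
\end{equation*}
Since $T^{(0)}(1)$ is a projection, the bracketed operator on the left vanishes, and \eqref{asymptotics2:eq} follows. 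I expect no serious obstacle in this proof: the single essential input is the projection property of $T^{(0)}(1)$, which is a clean consequence of the hypothesis that either $\L_0$ or $\Om_0$ fills all of $\R^d$; everything else is either the already established reduction lemma or a direct kernel computation with a compactly supported symbol.
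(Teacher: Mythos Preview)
Your proposal is correct and follows essentially the same route as the paper: both arguments observe that under the hypothesis $T^{(0)}(1)$ is an orthogonal projection (either $P_{\Om_0,\a}$ or $\chi_{\L_0}$), invoke Lemma~\ref{commute_through2:lem} to reduce to $\op_\a^l(ba^p)\,T^{(0)}(1)$, and then compute that trace directly using the support condition on $b$ to pass from $\L_0,\Om_0$ back to $\L,\Om$. For \eqref{asymptotics2:eq} the paper likewise uses the reduction with $a=1$ for exponents $1$ and $p$ and subtracts, exactly as you do.
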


\begin{proof} Without loss of generality assume \eqref{normalise:eq}.
If $\L_0 = \R^d$, then
 $T^{(0)}(1) = P_{\Om_0, \a}$, and it follows from Lemma \ref{commute_through2:lem}
that
\begin{equation}\label{asymptotics3:eq}
\|\op(b) g_p(T(a)) - \op(b a^p)P_{\Om_0, \a}\|_{\GS_1}
\le C (\a\ell\rho)^{d-1}.
\end{equation}
Take the trace:
\begin{equation*}
\tr \bigl(\op(b a^p)P_{\Om_0, \a} \bigr)= \biggl(\frac{\a}{2\pi}\biggr)^d
\int_{\R^d} \int_{\Om_0} b(\bx, \bxi) g_p(a(\bx, \bxi) )
d\bxi d\bx = \a^d \GW_0(b g_p(a); \L, \Om).
\end{equation*}
This gives \eqref{asymptotics1:eq}. Similarly for $\Om_0 = \R^d$.

To obtain \eqref{asymptotics2:eq} we use \eqref{asymptotics3:eq} with $a=1$.
\end{proof}


\section{Model problem in dimension one}\label{1_dim:sect}

One of the pivotal points of the proof is the reduction to a model operator
for $d=1$.  This section is entirely devoted to the study of this problem.

\subsection{Model problem: reduction to multiplication}
The pair of the model operators on $\plainL2(\R_+)$ is defined as follows:
\begin{equation*}
T_{\pm} := T_\a(1; \R_+, \R_{\pm})=T_1(1; \R_+, \R_{\pm}).
\end{equation*}
To simplify simultaneous considerations of $T_+$ and $T_-$, we change
$\xi \to -\xi$ in $T_-$ we arrive at
\begin{equation*}
(T_\pm u)(x) = \frac{1}{2\pi} \int_0^\infty\int_{0}^\infty
e^{\pm i\xi(x-y)}u(y) dy d\xi,\ u\in\plainS(\R).
\end{equation*}
As in \cite{Widom1}, using the Mellin
transform $\CM:\plainL2(\R_+)\to\plainL2(\R)$:
\begin{equation*}
\tilde u(s) = \frac{1}{\sqrt{2\pi}} \int_0^\infty x^{-\frac{1}{2} + is} u(x) dx,
\end{equation*}
one can easily show that the operator $T_{\pm}$ is
unitarily equivalent to the multiplication by the function
\[
\frac{1}{1+e^{\pm 2\pi z}}
\]
in $\plainL2(\R)$. Thus $\CM g(T_{\pm})\CM^*$ is also multiplication by a function.
If $g$ is $\plainC1$, and $g(0) = g(1) = 0$,
this function is integrable, and
hence one can write the kernel of $g(T_{\pm})$:
\[
K_{\pm}(x, y) = \frac{1}{2\pi} (xy)^{-\frac{1}{2}}
\int_{\R} \biggl(\frac{y}{x}\biggr)^{\pm is} g\bigl((1+e^{2\pi s})^{-1}\bigr) ds.
\]
Note that the function $K_{\pm}(x, y)$ is homogeneous of degree $-1$:
\begin{equation}\label{homogeneous:eq}
K_{\pm}(t x, t y) = t^{-1} K(x, y),
\end{equation}
for any $t >0$.
By a straightforward change of variables, one sees that
\begin{equation}\label{k11:eq}
K_{\pm}(1, 1) = \frac{1}{2\pi} \int g\bigl((1+e^{2\pi s})^{-1}\bigr) ds = \GA(g),
\end{equation}
where $\GA(g)$ is defined in \eqref{GA:eq}.

\subsection{Model problem: asymptotics}

We are computing the asymptotics of the trace
\begin{equation*}
I_{\pm}(\a) = \tr\bigl(\op_\a^l(b) g(T_{\pm})\bigr),
\end{equation*}
with a suitable symbol $b$, for arbitrary function $g$, satisfying the condition
\begin{equation}\label{gcondition:eq}
g\in\plainC1(\R),\ \ g(0) = g(1) = 0.
\end{equation}
Rewrite this trace as follows:
\begin{equation*}
I_{\pm}(\a; b) = \frac{\a}{2\pi}\int_{\R_+}\int_{\R_+}
\int_{\R}b(z, \xi) e^{i\a\xi(z-x)} K_{\pm}(x,z) d\xi dz dx.
\end{equation*}
The symbol $b=b(x, \xi)$ is assumed to be of the form
\begin{equation}\label{widoma:eq}
b(x, \xi) = \psi(x) a(x, \xi),\ \
\textup{the symbol}\ \
a\in \BS^{(2, 2)} \ \ \textup{is supported on}  \ \
\R\times (-\rho, \rho),
\end{equation}
where $\psi\in\plainC\infty_0(\R)$ is a non-negative function
such that $\psi(x)\le 1$ for all $x\in\R$ and
\begin{equation}\label{psi_defn:eq}
\psi (x)
=
\begin{cases}
0,\ x\notin (1, 4r),\\
1, \ x\in (4, r),
\end{cases}
\end{equation}
with a parameter $r > 4$.

We begin with some estimates:

\begin{lem}\label{1dim_bound:lem}
 Let $\L$ and $\Om$ be arbitrary open subsets of $\R$.
Let the symbol $a$ and function $\psi$ be as above.
Then for any continuous function $g$ any $L \ge r$ and $\a r \rho\ge c$, we have
\begin{equation}\label{1dim_bound:eq}
\| \op_{\a} (\psi a) g(T_\pm(1))\|_{\GS_1}
\le C \a\rho r \SN^{(2, 2)}(a; L, \rho).
\end{equation}
\end{lem}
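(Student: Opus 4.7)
The plan is to separate the functional-calculus aspect from the trace-norm estimate and then handle the weight $\psi$ by a dyadic decomposition in $x$.

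First I would observe that, as explained just above the lemma, $T_{\pm}(1)$ is unitarily equivalent to multiplication by the function $(1+e^{\pm 2\pi z})^{-1}$ on $\plainL2(\R)$ via the Mellin transform; it is therefore bounded self-adjoint with spectrum contained in $[0,1]$, and the continuous functional calculus gives $\|g(T_{\pm}(1))\|\le\|g\|_{\plainL\infty[0,1]}$ for any continuous $g$. Since $\GS_1$ is a two-sided ideal, this reduces the claim to
\[
\|\op_\a(\psi a)\|_{\GS_1}\le C\,\a r\rho\,\SN^{(2,2)}(a;L,\rho),
\]
with a constant $C$ allowed to depend on $g$ through $\|g\|_{\plainL\infty[0,1]}$.

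A direct application of Lemma~\ref{general_trace:lem} with $d=1$ and a single length-scale $\ell\sim r$ (the half-length of $\supp\psi$) would give $\|\op_\a(\psi a)\|_{\GS_1}\le C\a r\rho\,\SN^{(2,2)}(\psi a;r,\rho)$, but the latter norm is too large: $\psi$ has two transition zones of incompatible widths, $O(1)$ near $x=1$ and $O(r)$ near $x=r$, so the weights $r^n$ blow up the contribution of $\psi^{(n-k)}$ coming from the left transition. The main obstacle is that no single length-scale can simultaneously accommodate both zones.

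The main idea is therefore a dyadic decomposition. Pick $\eta\in\plainC\infty_0(\R_+)$ with $\supp\eta\subset[1/2,2]$ and $\sum_{j\in\Z}\eta(x/2^j)=1$ for $x>0$, and set $\psi_j(x)=\psi(x)\eta(x/2^j)$; only the indices $0\le j\le J$ with $2^J\le 4r$ contribute. From the scale dichotomy $|\psi^{(n)}|\le C_n$ on $(1,4)$ and $|\psi^{(n)}|\le C_n r^{-n}$ on $(r,4r)$, the product rule yields $(2^j)^n|\psi_j^{(n)}(x)|\le C_n$ uniformly in $j$. Combining this with $(2^j/L)^k\le C$ (valid since $L\ge r$ and $2^j\le 4r$) and applying the Leibniz rule gives
\[
\SN^{(2,2)}(\psi_j a;2^j,\rho)\le C\,\SN^{(2,2)}(a;L,\rho).
\]
Applying Lemma~\ref{general_trace:lem} to each $\op_\a(\psi_j a)$ at scale $\ell_j\sim 2^j$ then yields $\|\op_\a(\psi_j a)\|_{\GS_1}\le C\a 2^j\rho\,\SN^{(2,2)}(a;L,\rho)$, and summing the geometric series $\sum_{j=0}^J 2^j\le Cr$ produces the required bound.

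The one remaining technical nuisance will be to secure the semiclassical hypothesis $\a\ell_j\rho\ge c$ of Lemma~\ref{general_trace:lem} for every dyadic piece. For the finitely many smallest indices $j$ where this fails, the corresponding $\psi_j$'s can be bundled into a single piece of support-length $\sim c(\a\rho)^{-1}$ and estimated once; the resulting $O(1)$ contribution is absorbed into the final bound via the standing hypothesis $\a r\rho\ge c$. Once the dyadic setup is in place, everything else reduces to routine bookkeeping with the norms $\SN^{(2,2)}$.
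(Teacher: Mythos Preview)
Your argument is correct, but it is considerably more elaborate than what is needed, and the paper's proof is much shorter. The point you are missing is that $\psi$ depends only on $x$, so $\op_\a^l(\psi a)=\psi\,\op_\a^l(a)$ as operators (multiplication by $\psi$ on the left). Since $0\le\psi\le 1$, you may simply write $\op_\a^l(\psi a)=\psi\,\op_\a^l(\tilde h a)$ for any smooth bump $\tilde h$ that equals $1$ on $\supp\psi\subset(1,4r)$ and varies at the single scale $r$ (e.g.\ $\tilde h=h_{0,4r}$ in the notation of \eqref{h_eta:eq}). Then
\[
\|\op_\a^l(\psi a)\,g(T_\pm)\|_{\GS_1}\le \|\psi\|_{\plainL\infty}\,\|g(T_\pm)\|\,\|\op_\a^l(\tilde h a)\|_{\GS_1},
\]
and a single application of Lemma~\ref{general_trace:lem} at scale $\ell\sim r$ gives $\|\op_\a^l(\tilde h a)\|_{\GS_1}\le C\a r\rho\,\SN^{(2,2)}(\tilde h a;r,\rho)\le C\a r\rho\,\SN^{(2,2)}(a;r,\rho)\le C\a r\rho\,\SN^{(2,2)}(a;L,\rho)$, the last step by monotonicity of $\SN^{(2,2)}(a;\ell,\rho)$ in $\ell$ and $L\ge r$. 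This is exactly the paper's proof.

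Your dyadic decomposition solves a problem that does not really exist: the ``incompatible transition widths'' of $\psi$ only matter if you insist on keeping $\psi$ inside the symbol whose $\SN^{(2,2)}$-norm you estimate. By pulling $\psi$ out as a bounded multiplier you never need to differentiate it. Your route does work, and the bundling of the small-$j$ pieces to restore $\a\ell_j\rho\ge c$ is fine, but it costs you a page of bookkeeping for what is a two-line argument.
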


\begin{proof} Let $\tilde h:= h_{0, 2r}(x)$ be the function defined
in \eqref{h_eta:eq}.
The estimated trace norm
does not exceed $C \| \op_{\a}(\tilde h a)\|_{\GS_1}$.
By Lemma \ref{general_trace:lem},
\[
\|\op_{\a}(\tilde h a)\|_{\GS_1}\le C \a\rho r \SN^{(2, 2)}(a; r, \rho).
\]
Since the right-hand side is monotone increasing in $r$ and $L\ge r$,
the claimed estimate follows.
\end{proof}

Now we compute the asymptotics of $I_{\pm}(\a; b)$ as $\a\to\infty$.

\begin{thm}\label{widom1:thm}
 Let the symbol $b$ be as in \eqref{widoma:eq}, and let
the function $g$ be as in \eqref{gcondition:eq}.
Then for any $\d >0$, $L > 0$ and $\rho >0$, such that $\a\rho\ge c$,  we have
\begin{equation}\label{widom1:eq}
|I_{\pm}(\a; b) - \GA(g)  a(0, 0) \log r |
\le
C[1+ r L^{-1} + (\a\rho)^{-\d}\log r] \SN^{(1,2)}(a; L, \rho),
\end{equation}
uniformly in $r\ge 5$, with a constant $C = C(\d)$.
\end{thm}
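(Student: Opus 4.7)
The plan is to exploit the $-1$ homogeneity of $K_\pm$ recorded in \eqref{homogeneous:eq} by changing variables. Starting from
\[
I_\pm(\a;b) = \frac{\a}{2\pi}\int_0^\infty\!\int_0^\infty\!\int_\R b(z,\xi)\,e^{i\a\xi(z-x)}K_\pm(x,z)\,d\xi\,dz\,dx,
\]
the substitutions $x = zt$ and $\eta = \a z\xi$ yield
\[
I_\pm(\a;b) = \frac{1}{2\pi}\int_0^\infty \psi(z)\,z^{-1}\,J_\pm(z;\a)\,dz,\quad J_\pm(z;\a) := \int_0^\infty\!\!\int_\R a\!\left(z,\tfrac{\eta}{\a z}\right)e^{i\eta(1-t)} K_\pm(t,1)\,d\eta\,dt,
\]
in which the $\a$-dependence is concentrated inside the symbol argument, and the support of $a$ confines $\eta$ to $|\eta|<\a z\rho\ge\a\rho\gg 1$ for $z\in\supp\psi\subset(1,4r)$.

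The leading contribution is extracted by freezing $a$ at the origin. Fix a cutoff $\phi\in\plainC\infty_0(-\rho,\rho)$ with $\phi=1$ on $(-\rho/2,\rho/2)$ and decompose
\[
a\!\left(z,\tfrac{\eta}{\a z}\right) = a(0,0)\phi\!\left(\tfrac{\eta}{\a z}\right) + \bigl[a(z,0)-a(0,0)\phi\!\left(\tfrac{\eta}{\a z}\right)\bigr] + \bigl[a\!\left(z,\tfrac{\eta}{\a z}\right) - a(z,0)\bigr].
\]
For the first summand, reverting $\eta=\a z\xi$ expresses the inner integral as $2\pi z$ times the diagonal value at $z$ of the kernel of $\op_\a(\phi)\,g(T_\pm)$; a direct Mellin-transform computation modelled on the derivation of \eqref{k11:eq} identifies this as $z^{-1}\GA(g)$ plus an error $O((\a z\rho)^{-\delta})$ arising from the deviation of $\phi$ from $1$. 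Integrating against $\psi(z)z^{-1}\,dz$ and using the explicit form \eqref{psi_defn:eq} of $\psi$ to compute $\int_0^\infty\psi(z)z^{-1}\,dz=\log r + O(1)$, one obtains the announced main term $\GA(g)a(0,0)\log r$ modulo an $(\a\rho)^{-\delta}\log r$ error.

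The two remainder brackets are estimated by the elementary pointwise bounds
\[
|a(z,0)-a(0,0)|\le z L^{-1}\SN^{(1,2)}(a;L,\rho),\qquad \bigl|a(z,\tfrac{\eta}{\a z})-a(z,0)\bigr|\le |\eta|(\a z\rho)^{-1}\SN^{(1,2)}(a;L,\rho),
\]
combined with repeated integration by parts in $\eta$ against $e^{i\eta(1-t)}$, which converts $\xi$-regularity of $a$ into polynomial decay in $\eta$; the $(1-t)^{-k}$ singularities produced are tamed by first subtracting $K_\pm(1,1)$ from $K_\pm(t,1)$ so that the amplitude acquires a zero at $t=1$, following the strategy of \cite{Widom1}. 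The first bracket, supported on $z\in(1,4r)$, then contributes $rL^{-1}\SN^{(1,2)}(a;L,\rho)$ after the $z$-integration, while the second yields $(\a\rho)^{-\delta}\log r\cdot\SN^{(1,2)}(a;L,\rho)$. The cutoff-mismatch term $a(0,0)[1-\phi(\eta/(\a z))]$, supported on $|\eta|\gtrsim\a z\rho$, is absorbed into the same $(\a\rho)^{-\delta}$ error by one more integration-by-parts.

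The principal obstacle is the oscillatory-integral analysis at the resonant point $t=1$, where $K_\pm(t,1)$ is bounded but not smooth and the $\eta$-integral is only conditionally convergent; the subtraction of $K_\pm(1,1)$, together with the detailed Mellin-transform structure of $K_\pm$ inherited from the representation behind \eqref{k11:eq}, is the key tool that underlies both the explicit evaluation of the main term and the polynomial decay of the remainder.
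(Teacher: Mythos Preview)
Your route is recognisably in the Widom spirit but differs from the paper's in a key technical choice: you freeze the \emph{symbol} $a$ and use a generic cutoff $\phi$ in the $\xi$-variable, whereas the paper localises in the variable $t=x/z$ with a cutoff $\phi(t)$ chosen to be $\omega(t)\,K_\pm(1,1)/K_\pm(t,1)$ on a neighbourhood of $t=1$. This specific choice collapses $K_\pm(x,z)\phi(x/z)$ to $z^{-1}K_\pm(1,1)\,\omega(x/z)$, so the subsequent $x$-integral becomes a pure Fourier transform of $\omega\in\plainC\infty_0$; the rapid decay of $\hat\omega$ then yields the $(\a\rho)^{-\d}$ error for \emph{every} $\d>0$, independently of the (limited) $\xi$-regularity of $a$. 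The complementary piece, carrying the factor $1-\phi(x/z)$, is handled by two integrations by parts in $\xi$ and the fact that $(1-\phi(t))/|t-1|$ is bounded.

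Your sketch, by contrast, has two concrete gaps. First, the decomposition is ill-posed as written: the second bracket $a(z,0)-a(0,0)\phi(\eta/(\a z))$ and the third bracket $a(z,\eta/(\a z))-a(z,0)$ are \emph{not} compactly supported in $\eta$ (each equals $\pm a(z,0)$ for $|\eta|$ large), so their individual $\eta$-integrals against $e^{i\eta(1-t)}$ diverge. This is repairable by inserting $\phi(\eta/(\a z))$ into every term, but you must then track the extra cutoff through the estimates. Second, and more seriously, the remainder argument you indicate (integration by parts in $\eta$ plus the single subtraction of $K_\pm(1,1)$) does not close with only two $\xi$-derivatives: two integrations by parts produce $(1-t)^{-2}$, and the subtraction gains only one power of $|1-t|$ near $t=1$, leaving a non-integrable $|1-t|^{-1}$ singularity. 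A further splitting into the regions $|1-t|\lessgtr(\a z\rho)^{-1}$ (using no integration by parts on the near part) would salvage an $O((\a z\rho)^{-1}\log(\a z\rho))$ bound for the third bracket, but this is absent from the proposal and in any case does not deliver the arbitrary-$\d$ decay claimed in \eqref{widom1:eq}; the paper achieves that decay precisely because it comes from $\hat\omega$ rather than from the regularity of $a$.
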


This lemma is a modified version of a similar result
from \cite{Widom1} which gives the asymptotics of the trace $I_{\pm}(\a; b)$
as $\a\to\infty$, when $\psi(x) = 1$, and the parameters $L, \rho$ are fixed.
On the contrary,  the formula \eqref{widom1:eq}
explicitly depends on four parameters $L, r, \rho, \a$. Later we use \eqref{widom1:eq}
with large $r$ and such $\rho$ and $L$ that the right-hand side is uniformly bounded.

\begin{proof} Without loss of generality we may assume that $\rho = 1$
and $\SN^{(1, 2)}(a; L, 1) = 1$,
so that
\begin{equation}\label{derivatives:eq}
|\p_{\xi}^s a(x, \xi)| \le 1,\ s = 0, 1, 2,\
\ |\p_x a(x, \xi)|\le L^{-1},
\end{equation}
for all $x, \xi$.

\underline{Step I.} We conduct the proof for the sign ``$+$" only.
The case of ``$-$" is done in the same way. For brevity the subscript ``$+$"
is omitted from the notation.

Let $\phi\in\plainC\infty_0(\R)$
%
%
and consider the integral
\begin{equation*}
J'(\a) = \frac{\a}{2\pi} \int_0^\infty\int_{0}^\infty
 \int_{-1}^1
 K(x, z) (1-\phi(xz^{-1}))e^{i\a\xi(z-x)} b(z, \xi) d\xi dz dx.
\end{equation*}
Integrating by parts in $\xi$ twice, by \eqref{derivatives:eq} we get
 \[
 \biggl|\int_{-1}^1
 e^{i\a\xi(z-x)} b(z, \xi) d\xi\biggr|
 \le \frac{C}{(1+\a|z-x|)^2}.
\]
In view of \eqref{homogeneous:eq} we can estimate:
\begin{align*}
|J'(\a)|\le &\ C\a \int_0^\infty\int_{0}^\infty
 \frac{|K(x, z)| |1-\phi(xz^{-1})|}{(1+\a|x-z|)^2} dx dz\\
 = &\ C\a \int_0^\infty\int_{0}^\infty
 \frac{|K(x, 1)| |1-\phi(x)|}{(1+\a z|x-1|)^2} dx dz
 = C\int_0^\infty
 \frac{|K(x, 1)| |1-\phi(x)|}{|x-1|} dx.
\end{align*}
This integral converges at infinity since $K(x, 1) = O(x^{-1/2})$.
If $K(1, 1) = 0$, then, then it converges also at $x = 1$ for any
function $\phi\in\plainC\infty_0$, and in particular for $\phi = 0$.
In this case the integral $J'(\a) = I(\a)$ is uniformly bounded, so
\eqref{widom1:eq} is proved.

If $K(1, 1) \not = 0$, then the above integral converges under the assumption
$\phi(1) = 1$. Thus it remains to find the asymptotics of
 the integral
\begin{equation*}
J(\a) = \frac{\a}{2\pi} \int_0^\infty
\int_0^\infty\int_{\R}
K(x, z) \phi(xz^{-1}) e^{i\a\xi(z-x) }b(z, \xi) d\xi dz dx.
\end{equation*}
To choose a suitable function $\phi$, let $\nu>0$  be a number such that
$K(x, 1)$ is separated from zero for all $x\in [1-\nu, 1+\nu]$. Take a function
$\om\in\plainC\infty_0(\R)$ supported on $[1-\nu, 1+\nu]$, such that
\[
\om(x) = 1,\ 1-\nu/2\le x\le 1+ \nu/2,
\]
and define
\begin{equation*}
\phi(x) = \om(x) \frac{K(1, 1)}{K(x, 1)},
\end{equation*}
so that
\begin{equation*}
K(x, z) \phi(xz^{-1}) = z^{-1} K(1, 1) \om(xz^{-1}).
\end{equation*}
Now $J(\a)$ can be rewritten as follows:
\begin{align*}
J(\a) = &\ \a \frac{K(1, 1)}{\sqrt{2\pi}}
\int_0^\infty
\int_{\R}
\biggl[\frac{1}{\sqrt{2\pi}}\int_{\R}  \om(xz^{-1})
e^{i\a\xi z(1-xz^{-1})}\frac{dx}{z}\biggr]
b(z, \xi) d\xi dz\\[0.2cm]
= &\
\a\frac{K(1, 1)}{\sqrt{2\pi}}
\int_0^\infty
\int_{\R}
e^{i\a\xi z} \hat \om(\a\xi z)
b(z, \xi) d\xi dz
\end{align*}
Rewriting it again, using \eqref{k11:eq}, we get
\begin{equation*}
J(\a) = \a\frac{\GA(g)}{\sqrt{2\pi}}
\int_{\R} e^{i\a\xi} \hat \om(\a\xi) \tau(\xi) d\xi,
\end{equation*}
with
\begin{equation}\label{tau:eq}
\tau(\xi) = \int_0^\infty b\bigl(z, \xi z^{-1}\bigr) \frac{dz}{z}
= \int_{1}^{2r} b\bigl(z, \xi z^{-1}\bigr) \frac{dz}{z}.
\end{equation}
Split $J(\a)$ in two integrals:
\begin{align*}
J(\a) = &\ J_1(\a) + J_2(\a),\\[0.2cm]
J_1(\a) = &\ \a\frac{\GA(g)}{\sqrt{2\pi}}
\int_{|\xi|<1} e^{i\a\xi} \hat \om(\a\xi) \tau(\xi) d\xi,\\[0.2cm]
J_2(\a) = &\ \a\frac{\GA(g)}{\sqrt{2\pi}}
\int_{|\xi|\ge 1} e^{i\a\xi} \hat \om(\a\xi) \tau(\xi) d\xi,
\end{align*}
Since $|\hat\om(\eta)|\le C_\d|\eta|^{-\d-1}$ for any $\d>0$,
we have
\begin{equation}\label{bigxi:eq}
\a\int_{|\xi|\ge 1} |\hat \om(\a\xi)| d\xi\le C\a^{-\d}.
\end{equation}
Thus, estimating
\[
|\tau(\xi)|\le \int_{1}^{2r} z^{-1} dz = \log (2r),
\]
we obtain
\begin{equation}\label{J2:eq}
J_2(\a) \le C\a^{-\d}\log r.
\end{equation}
The contribution of $J_1(\a)$ requires more careful calculations:

\underline{Step II. Study of $J_1(\a)$.}
We compare \eqref{tau:eq} with the number
\begin{equation}\label{tau0:eq}
\tau_0 = a(0, 0)\int_{1}^{4r} \frac{dz}{z} = a(0, 0) (\log r + \log 4).
\end{equation}
Estimate the difference:
\begin{align*}
|\psi(z) a(z, \xi z^{-1}) -  a(0, 0)|
\le &\ |\psi(z) \bigl(a(z, \xi z^{-1}) -  a(0, \xi z^{-1})\bigr)| \\
+ &\ |\psi(z)\bigl(a(0, \xi z^{-1}) -  a(0, 0)\bigr)|+ |\psi(z) - 1| |a(0, 0)|.
\end{align*}
Thus
\begin{align*}
|\tau(\xi) - \tau_0|
\le &\ \max_{z, t} |\p_z a(z, t)|\int_{1}^{4r} dz
+ \max_{z, t} |\p_t a(z, t)| |\xi|
\int_{1}^{4r} z^{-2}dz\\
&\ + |a(0, 0)|\biggl(\int_{1}^4 \frac{dz}{z}
+ \int_r^{4r} \frac{dz}{z}\biggr)\\
\le &\
C\bigl(\max_{z, t} |\p_z a(z, t)| r
+ \max_{z, t} |\p_t a(z, t)|
+ |a(0, 0)|\bigr)\\
\le &\ C(1+ r L^{-1}),
\end{align*}
for all $|\xi|<1$, where we have used \eqref{derivatives:eq}.
In view of \eqref{tau0:eq} this entails the bound
\begin{align*}
\biggl|J_1(\a) - \a \GA(g) a(0, 0) \log r\frac{1}{\sqrt{2\pi}}
\int_{|\xi|<1} e^{i\a\xi} \hat\om(\a\xi) d\xi \biggr|
\le & \ C\a (1+rL^{-1})\int_{\R} |\hat\om(\a\xi)|d\xi\\
 = &\ C'(1+rL^{-1}).
\end{align*}
By definition of the function $\om$ and by \eqref{bigxi:eq},
\[
\frac{\a}{\sqrt{2\pi}}
\int_{|\xi|<1} e^{i\a\xi} \hat\om(\a\xi) d\xi
= 1 + O(\a^{-\d}),
\]
and hence, from the boundedness of $J'(\a)$ and from \eqref{J2:eq}
it follows that
\begin{align*}
\bigl|I(\a; b) - \GA(g) a(0, 0) \log r\bigr|
\le &\ |J'(\a)| + |J_2(\a)| + |J_1(\a) - \GA(g) a(0, 0) \log r |\\[0.2cm]
\le &\ C'(1+rL^{-1} + \a^{-\d}\log r).
\end{align*}
This leads to \eqref{widom1:eq}.
\end{proof}

\subsection{Multiple intervals}\label{multiple:subsect}
Here instead of the model operator $T_\pm$ we consider
the operator with a more general set $\Om$. We begin with introducing some
quantities related to finite subsets of the real line.

Let $\rho >0$, and let $\SX\subset(-2\rho, 2\rho)$ be a finite set. Now
we introduce a number describing the spacing of the elements of $\SX$.
If $\SX = \varnothing$, then for any $\d\ge 0$ we define
\begin{equation}\label{mempty:eq}
m_{\d}(\SX) = (4\rho)^{-\d}, \SX = \varnothing.
\end{equation}
If $\#\SX = N\ge 1$, then we label the points $\xi\in\SX$ in increasing order:
$\xi_1, \xi_2, \dots, \xi_N$, and define
\begin{equation}\label{rhok:eq}
m_{\d}(\SX) = \sum_{j=1}^N \rho_j^{-\d},\
\begin{cases}
\rho_j = \dist\{\xi_j, \SX\setminus\{\xi_j\}\},\ N\ge 2, \\[0.2cm]
\rho_1 = 4\rho,\ N = 1.
\end{cases}
\end{equation}
Note that
\begin{equation}\label{mosch:eq}
\#\SX\le 2^\d\rho^\d m_\d(\SX), \ \textup{for any}\ \ \d\ge 0,
\end{equation}
and, more generally,
\begin{equation}\label{moments:eq}
m_\nu(\SX)\le (2\rho)^{\d-\nu} m_{\d}(\SX),\ \textup{for any}\ \ 0\le \nu \le \d.
\end{equation}
Note also the natural ``monotonicity" of $m_\d$ in the set $\SX$: if
$\SX_1\subset\SX_2\subset (-2\rho, 2\rho)$ are two finite sets, then
\begin{equation}\label{sxmonotone:eq}
m_\d(\SX_1)\le m_\d(\SX_2),
\end{equation}
for any $\d\ge 0$.

Now let $\Om$ be a subset of real line such that for some $\rho>0$
\begin{equation}\label{multipleom:eq}
\Om\bigcap(-2\rho, 2\rho) = \biggl(\bigcup_j I_j\biggr)\bigcap (-2\rho, 2\rho),
\end{equation}
where $\{I_j\}, j=1, 2, \dots$ is a finite collection
 open intervals such that
$\bar I_j\cap \bar I_s = \varnothing$ for $j\not = s$.
Let $\SX$ be the set of their endpoints inside $(-2\rho, 2\rho)$.
We are interested in the asymptotics of the trace
\begin{equation}\label{newm:eq}
\tr\bigl(\op^l_\a(b) g(T(1; \R_+, \Om))\bigr),\ g(t) = t^p - t,
  \ \textup{with some}\ \ p\in\mathbb N,
\end{equation}
where $b=b(x, \xi)$ is as in \eqref{widoma:eq}.

We begin with the simplest case, $\SX = \varnothing$.
Let  $\z\in\plainC\infty_0(\R)$ be a non-negative function, such that
\begin{equation}\label{zetan:eq}
\z(x) =
\begin{cases}
0, |x|\ge 1/4,\\
1, |x|< 1/8.
\end{cases}
\end{equation}

\begin{lem}\label{varnothing:lem}
Let $g$ be as in \eqref{newm:eq}, and let the symbol $a$ be as in \eqref{widoma:eq}.
If $\SX = \varnothing$, then under the condition $\a L\rho\ge c$ we have
\begin{equation}\label{varnothing:eq}
\| \op_\a^l(a) g(T(1; \R_+, \Om))\|_{\GS_1}\le C \SN^{(1,2)}(a; L, \rho).
\end{equation}

\end{lem}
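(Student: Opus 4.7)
The hypothesis $\SX=\varnothing$ means $\Omega$ has no endpoints in $(-2\rho,2\rho)$, so exactly one of the two alternatives holds:
\begin{itemize}
\item[\textbf{(A)}] $\Omega\cap(-2\rho,2\rho)=\varnothing$, or
\item[\textbf{(B)}] $\Omega\cap(-2\rho,2\rho)=(-2\rho,2\rho)$.
\end{itemize}
My plan rests on the exact composition identity $\op_\alpha^l(a)\op_\alpha(b)=\op_\alpha^l(a\cdot b)$, which holds for any Fourier-multiplier symbol $b=b(\xi)$ (this is straightforward from the defining integrals, since the intermediate $y$-integration produces a genuine delta in $\xi-\eta$). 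Applying this with $b=\chi_\Omega$ and using that $a$ is supported in $\xi\in(-\rho,\rho)\subset(-2\rho,2\rho)$, I obtain
\[
\op_\alpha^l(a)\,P_{\Omega,\alpha}\;=\;\op_\alpha^l\!\bigl(a\,\chi_\Omega\bigr)\;=\;
\begin{cases} 0 &\text{in case (A),}\\ \op_\alpha^l(a) &\text{in case (B).}\end{cases}
\]
Because $T(1;\mathbb R_+,\Omega)=\chi_{\mathbb R_+}P_{\Omega,\alpha}\chi_{\mathbb R_+}$ is a compression of a projection, $\|T\|\le 1$, so the bound on $g(T)=T^p-T$ will follow from bounds on $\op_\alpha^l(a)T^m$ uniform in $m=1,\dots,p$.

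In case (A) I substitute $\chi_{\mathbb R_+}=I-\chi_{\mathbb R_-}$ in $T$ and invoke $\op_\alpha^l(a)P_{\Omega,\alpha}=0$ to collapse one term:
\[
\op_\alpha^l(a)\,T \;=\; -\,\op_\alpha^l(a)\,\chi_{\mathbb R_-}\,P_{\Omega,\alpha}\,\chi_{\mathbb R_+}.
\]
Iterating, $\op_\alpha^l(a)T^m$ always has the factor $\op_\alpha^l(a)\chi_{\mathbb R_-}$ at the front, multiplied by an operator of norm $\le 1$. Thus it suffices to prove the single trace-class estimate $\|\op_\alpha^l(a)\chi_{\mathbb R_-}\|_{\GS_1}\le C\,\SN^{(1,2)}(a;L,\rho)$. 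Since $a(x,\xi)=\psi(x)\tilde a(x,\xi)$ has $x$-support in $[1,4r]$ while $\chi_{\mathbb R_-}$ lives on $(-\infty,0]$, the two supports are separated by distance $\ge 1$: after two integrations by parts in $\xi$ the kernel acquires the factor $1/(\alpha(x-y))^2$, and Theorem~\ref{separate:thm} (in its one-dimensional form, where $Q=2$ already yields integrability of $(1+|x-y|^Q)^{-1}$) delivers the bound.

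In case (B), $\op_\alpha^l(a)P_{\Omega,\alpha}=\op_\alpha^l(a)$, so
\[
\op_\alpha^l(a)\,T \;=\; \op_\alpha^l(a)\,\chi_{\mathbb R_+}\,P_{\Omega,\alpha}\,\chi_{\mathbb R_+}
\;=\; \op_\alpha^l(a)\,\chi_{\mathbb R_+} + \bigl[\op_\alpha^l(a),\chi_{\mathbb R_+}\bigr]\,P_{\Omega,\alpha}\,\chi_{\mathbb R_+} - \bigl[\op_\alpha^l(a),\chi_{\mathbb R_+}\bigr]\chi_{\mathbb R_+},
\]
after rearranging with the commutator bound from Lemma~\ref{sandwich:lem} (used in its one-dimensional incarnation \eqref{sandwich_121:eq}). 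An induction on $m$, using $\|T\|\le 1$ and $\chi_{\mathbb R_+}^2=\chi_{\mathbb R_+}$, then gives $\op_\alpha^l(a)T^m=\op_\alpha^l(a)\chi_{\mathbb R_+}+R_m$ with $\|R_m\|_{\GS_1}\le C_m\,\SN^{(1,2)}(a;L,\rho)$. Subtracting $m=p$ from $m=1$ yields the claimed bound on $\op_\alpha^l(a)(T^p-T)=\op_\alpha^l(a)g(T)$.

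The main technical point is to carry out both trace-class estimates—the disjoint-support bound on $\op_\alpha^l(a)\chi_{\mathbb R_-}$ and the commutator bound on $[\op_\alpha^l(a),\chi_{\mathbb R_+}]$—in terms of the relatively weak seminorm $\SN^{(1,2)}(a;L,\rho)$ rather than the $\SN^{(d+1,d+2)}=\SN^{(2,3)}$ that the $d\ge 2$ lemmas of Section~\ref{nonsmooth1:sect} would nominally require. In one dimension this is possible because one pair of $\xi$-integrations by parts is already enough to produce the decay $|x-y|^{-2}$ needed for the kernel estimates, and because the $x$-support of $a$ is compact (of length $O(r)$) while the seminorm controls the first $x$-derivative on scale $L\ge r$; the balance of these two features will propagate through the Schatten-norm estimates and give the claimed $r$-independent bound.
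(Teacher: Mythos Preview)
Your approach has a genuine gap rooted in a misreading of the hypothesis. The symbol $a$ in the lemma is the $a$ of \eqref{widoma:eq}, which is supported in $\xi\in(-\rho,\rho)$ but carries \emph{no} compact-support assumption in $x$; the cut-off $\psi$ is a separate object, and the lemma is stated for $\op_\alpha^l(a)$, not for $\op_\alpha^l(\psi a)$. Consequently your Case~(A) argument collapses: $\op_\alpha^l(a)\chi_{\R_-}$ is in general \emph{not} trace class (take $a=a(\xi)$ independent of $x$---this is a Wiener--Hopf operator). Even if one reinterprets the lemma with $b=\psi a$, your closing paragraph does not actually establish the $\SN^{(1,2)}$ bound: the one-dimensional commutator estimate \eqref{sandwich_121:eq} you cite applies only to symbols $a=a(\xi)$, while the general Lemma~\ref{sandwich:lem} for $d=1$ gives $\SN^{(2,3)}$; and your ``balance'' between the $x$-support length $O(r)$ and the scale $L\ge r$ is asserted, not proved---two $\xi$-integrations by parts produce kernel decay $|x-y|^{-2}$, but kernel decay alone does not control a trace norm.

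The paper circumvents both issues by a different device: it inserts a smooth $\xi$-cutoff $\tilde\zeta$ with $a\tilde\zeta^p=a$, and applies the one-dimensional localisation Lemma~\ref{localisation_1d:lem} to $\tilde\zeta$, whose $\SN^{(3)}$ norms are \emph{universal constants}, reducing $\op_\alpha(\tilde\zeta^p)g_p(T(1;\R_+,\Omega))$ to $g_p(T(\tilde\zeta;\R_+,\Omega))$ up to $\GS_1$-error $O(1)$. Since $\tilde\zeta\chi_\Omega$ is either $0$ or $\tilde\zeta$, the latter operator is either $0$ or $g_p(T(\tilde\zeta;\R_+,\R))$; a backwards application of the same lemma converts this to $g_p(\chi_{\R_+})$, and then $g(\chi_{\R_+})=0$. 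Throughout, the symbol $a$ enters only through the \emph{operator norm} $\|\op_\alpha^l(a)\|\le C\,\SN^{(1,2)}(a;L,\rho)$ from Lemma~\ref{general_norm:lem}. That is precisely why the bound involves only $\SN^{(1,2)}(a;L,\rho)$: all trace-class work is done on the universal cutoff $\tilde\zeta$, never on $a$ itself.
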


\begin{proof}
Also, without loss of generality assume that
$\SN^{(1,2)}(a; L, \rho) = 1$, so that
\begin{equation}\label{anorma:eq}
\|\op_\a^l(a)\|\le C,
\end{equation}
by Lemma \ref{general_norm:lem}.

Since $a(x, \xi) = 0$ for $|\xi|\ge \rho$,
we have
\[
a(x, \xi) = a( x, \xi) (\tilde\z(\xi))^p, \tilde\z(\xi)
= \z\bigl(\xi(8\rho)^{-1}\bigr),
\]
for any $p\in\mathbb N$.
By Lemma \ref{localisation_1d:lem},
\[
\|\op_\a(\tilde\z^p) g_p(T(1; \R_+, \Om))
- g_p(  T(\tilde\z; \R_+, \Om))\|_{\GS_1}\le
  C'\bigl(\SN^{(3)}(\tilde\z; \rho)\bigr)^p\le C,
\]
for all $\a>0$. Since $\tilde\z(\xi) = 0$ for $|\xi|\ge 2\rho$,
we have either $\tilde\z\chi_{\Om} = 0$ or $\tilde\z\chi_{\Om} = \tilde\z$.
If $\tilde\z\chi_{\Om} = 0$, then $T(\tilde\z; R_+, \Om) = 0$, and the
above estimate together with \eqref{anorma:eq} imply that
\begin{align*}
\|\op_\a^l(a) &\ g(T(1; \R_+, \Om))\|_{\GS_1}\\[0.2cm]
\le &\ \| \op_\a^l(a)\| \bigl(\|\op_\a(\tilde\z^p) g_p(T(1; \R_+, \Om))\|_{\GS_1}
+ \|\op_\a(\tilde\z) g_1(T(1; \R_+, \Om))\|_{\GS_1}\bigr)\le C,
\end{align*}
as claimed.

If $\tilde\z\chi_{\Om} = \tilde\z$, then
$T(\tilde\z; \R_+, \Om) = T(\tilde\z; \R_+, \R)$, and  using
Lemma \ref{localisation_1d:lem} backwards, we get
\[
\|\op_\a(\tilde\z^p) \bigl(g_p(T(1; \R_+, \Om))
- g_p(\chi_{\R_+})\bigr)\|_{\GS_1}\le C,
\]
for any $p = 1, 2, \dots$, which leads to
\[
\|\op_\a^l(a) \bigl(g_p(T(1; \R_+, \Om))
- g_p(\chi_{\R_+})\bigr)\|_{\GS_1}\le C,
\]
in view of \eqref{anorma:eq}. Therefore
\[
\|\op_\a^l(a) \bigl(g(T(1; \R_+, \Om)) - g(\chi_{\R_+})\bigr)\|_{\GS_1}\le C.
\]
Since $g(\chi_{\R_+}) = 0$, this entails \eqref{varnothing:eq}.
The proof is complete.
\end{proof}

If $\SX\not=\varnothing$, then we find the asymptotics of the trace \eqref{newm:eq}
using Theorem \ref{widom1:thm}. To this end we need to build a suitable partition of unity,
which will reduce the problem to the case $\Om = \R_+$ or $\Om = \R_-$.
We attach to each point $\xi_j\in\SX$, $j=1, 2, \dots, N:=\#\SX$, the cut-off function
\begin{equation*}
u_j(\xi) = \z\biggl(\frac{\xi -\xi_j}{\rho_j}\biggr),
\end{equation*}
where $\rho_j, \z$ are defined in \eqref{rhok:eq} and \eqref{zetan:eq} respectively.
Clearly, $u_j\in\BS^{(\infty)}$ and
\[
\SN^{(m)}(u_j; \rho_j)< C_m,
\]
with a constant $C_m$ depending only on $m$.
By construction the supports of the functions $u_j$ associated with distinct points
of the set $\SX$, do not overlap.
To complete our definition of the partition of unity we set
\[
u(\xi) = 1 - \underset{1\le j\le N}\sum u_j(\xi).
\]
Insert now this partition of unity into the trace
\eqref{newm:eq} and establish the asymptotics of
the constituent traces individually.

\begin{lem}\label{node:lem}
Let the symbols
$a$ and $b$ be as in \eqref{widoma:eq}, and
let $g$ be as in \eqref{newm:eq}. Suppose that
$N:=\#\SX\ge 1$.
Let $u_j\in\plainC\infty_0(\R)$, $j = 1, 2, \dots, N$,  be as above.
Assume that
\[
\a L \rho\ge c,\ \ \a\rho_j\ge c.
\]
Then for any
$\d >0$  we have
\begin{align*}
\bigl|\tr\bigl( \op_\a^l(b u_j)g(T(1; \R_+, \Om))\bigr)
- &\ \GA(g) a(0, \xi_j)\log r\bigr|\\
\le &\ C \bigl(1+ rL^{-1} + (\a\rho_j)^{-\d}\log r\bigr)
\SN^{(1, 2)}(a; L, \rho),
\end{align*}
with a constant $C$ independent of $j$.
\end{lem}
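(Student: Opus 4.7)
The plan is to reduce the problem to the asymptotic formula of Theorem~\ref{widom1:thm} by shifting $\xi_j$ to the origin and then locally replacing $\Om$ by the matching half-line $\tilde\Om\in\{\R_+,\R_-\}$. First I will apply the unitary equivalence~\eqref{unitary_equivalence:eq} with $\boldM=1$, $\boldk=\bzero$, $\boldk_1=\xi_j$, which is conjugation by $e^{i\a x\xi_j}$; the trace is preserved and becomes $\tr(\op_\a^l(B)g(T(1;\R_+,\Om')))$ with $\Om'=\Om-\xi_j$, $B(x,\xi)=\psi(x)\tilde a(x,\xi)\tilde u(\xi)$, $\tilde a(x,\xi)=a(x,\xi+\xi_j)$, and $\tilde u(\xi)=\z(\xi/\rho_j)$. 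Because $\dist(\xi_j,\SX\setminus\{\xi_j\})\ge\rho_j$ and $\supp\tilde u\subset(-\rho_j/4,\rho_j/4)$, whenever this neighborhood lies inside $(-2\rho,2\rho)$ the set $\Om'$ coincides there with a single half-line $\tilde\Om$, determined by the side from which $\Om$ approaches $\xi_j$. The edge case in which the neighborhood pokes out of $(-2\rho,2\rho)$ forces $\xi_j$ very close to the boundary of $(-\rho,\rho)$, where $a(0,\xi_j)=0$, and can be handled separately by a direct trace-norm estimate of the same order.

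For the reduction I will choose an auxiliary cutoff $v\in\plainC\infty_0(\R)$ with $v\equiv 1$ on $\supp\tilde u$, $\supp v$ still inside the coincidence region, and $\SN^{(3)}(v;\rho_j)\le C$. Then $\tilde u\cdot v^p=\tilde u$ and $v\chi_{\Om'}=v\chi_{\tilde\Om}$, and since Fourier multipliers in $\xi$ compose exactly as products of symbols one has $\op_\a^l(B)\op_\a^l(v^p)=\op_\a^l(Bv^p)=\op_\a^l(B)$ and $T(v;\R_+,\Om')=T(v;\R_+,\tilde\Om)$. Two applications of Lemma~\ref{localisation_1d:lem} with $a=1$, $b=v$, once for $\Om'$ and once for $\tilde\Om$ (the $p=1$ case of the lemma takes care of the linear part of $g=g_p-g_1$), yield
\[
\|\op_\a^l(v^p)\bigl[g(T(1;\R_+,\Om'))-g(T_\pm)\bigr]\|_{\GS_1}\le C,
\]
where $T_\pm=T(1;\R_+,\tilde\Om)$. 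Combined with the operator-norm bound $\|\op_\a^l(B)\|\le C\SN^{(1,2)}(a;L,\rho)$ (which follows from Theorem~\ref{boundedness:thm} after rescaling to $\a=1$, where the contributions of $\psi'$ and $\tilde u'$ to the rescaled symbol become harmless), this gives
\[
\bigl|\tr(\op_\a^l(B)g(T(1;\R_+,\Om')))-\tr(\op_\a^l(B)g(T_\pm))\bigr|\le C\SN^{(1,2)}(a;L,\rho).
\]

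Finally, the symbol $B$ fits the framework~\eqref{widoma:eq} of Theorem~\ref{widom1:thm} with $\tilde a\tilde u$ supported in $\R\times(-\rho_j/4,\rho_j/4)$, value $(\tilde a\tilde u)(0,0)=a(0,\xi_j)$ at the origin, and, by the Leibniz rule together with $\rho_j\le 4\rho$, norm $\SN^{(1,2)}(\tilde a\tilde u;L,\rho_j/4)\le C\SN^{(1,2)}(a;L,\rho)$. The hypothesis $\a\rho_j\ge c$ makes Theorem~\ref{widom1:thm} applicable with its scaling parameter replaced by $\rho_j/4$, yielding
\[
\bigl|\tr(\op_\a^l(B)g(T_\pm))-\GA(g)a(0,\xi_j)\log r\bigr|\le C[1+rL^{-1}+(\a\rho_j)^{-\d}\log r]\SN^{(1,2)}(a;L,\rho).
\]
Adding the two estimates and invoking unitary invariance of the trace completes the proof. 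The main obstacle is the $\GS_1$-reduction in the middle step, where the auxiliary cutoff $v$ absorbs the discrepancy between $\Om'$ and $\tilde\Om$; the rest is routine bookkeeping of scales, with all constants independent of $j$.
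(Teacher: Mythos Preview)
Your proposal is correct and follows essentially the same route as the paper. The paper translates $\xi_j$ to $0$, takes the concrete choice $\tilde u_j(\xi)=u_j(\xi/2)$ in the role of your auxiliary cutoff $v$, applies Lemma~\ref{localisation_1d:lem} with $a=1$, $b=\tilde u_j$ to pass from $\Om$ to the half-line (using $\tilde u_j\chi_\Om=\tilde u_j\chi_{\R_\pm}$, so $T(\tilde u_j;\R_+,\Om)=T(\tilde u_j;\R_+,\R_\pm)$), bounds $\|\op_\a^l(\psi a)\|$ via Lemma~\ref{general_norm:lem}, and then invokes Theorem~\ref{widom1:thm} for $au_j$ with scale $\rho_j$; your argument is the same up to the freedom in choosing $v$ and the explicit mention of the edge case, which the paper does not address.
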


\begin{proof}
Translating by $\xi_j$, we may assume that $\xi_j = 0$.
Without loss of generality assume $\SN^{(1, 2)}(a; L, \rho) = 1$.
Denote $\tilde u_j(\xi) = u_j(\xi/2)$, so that $\tilde u_j u_j = u_j$.
By Lemma \ref{localisation_1d:lem}   we have
\[
\|\op_\a(\tilde u_j^p) g_p(T(1; \R_+, \Om))
- g_p\bigl(T(\tilde u_j; \R_+, \Om)\bigr)\|_{\GS_1}\le C,
\]
for all $\a>0$. If $\xi_j$ is the left endpoint of
one of the intervals $\{ I_j\}$ in \eqref{multipleom:eq}, then
$\tilde u_j\chi_{\Om} = \tilde u_j \chi_{\R_+}$. If $\xi_j$ is the right endpoint,
then $\tilde u_j\chi_{\Om} = \tilde u_j \chi_{\R_-}$. Thus
by Lemma \ref{localisation_1d:lem} again,
\begin{equation}\label{node:eq}
\|\op_\a(u_j) g(T(1; \R_+, \Om))  - \op_\a(u_j) g(T_{\pm})\|_{\GS_1}\le C,
\end{equation}
where the sign ``$+$" (resp. ``$-$") is used for the left (resp. right) endpoint.
As $\a L \rho\ge c$, Lemma \ref{general_norm:lem}
gives
\[
\|\op_\a^l(\psi a)\|\le C.
\]
Thus in combination with \eqref{node:eq} we obtain
\begin{equation*}
\| \op_\a^l(\psi a u_j)\bigl(g(T(1; \R_+, \Om)) - g(T_{\pm})\bigr)\|_{\GS_1}
\le C ,
\end{equation*}
for $\a L\rho\ge c$.
To find the trace of $\op_\a^l(\psi a u_j) g(T_{\pm})$
note that $a u_j\in\BS^{(2, 2)}$,
$u_j(\xi) = 0$ for $|\xi|\ge \rho_j/4$, and
\begin{equation*}
\SN^{(1, 2)}(au_j; L, \rho_j)\le C \SN^{(1, 2)}(a; L, \rho),
\end{equation*}
since $\rho\ge \rho_j$.
Since $\a\rho_j\ge c$, now the result follows from Theorem \ref{widom1:thm}.
\end{proof}

\begin{lem}\label{outside:lem}
Let the symbol $a$ be as in \eqref{widoma:eq},
and let $g$ be as in \eqref{newm:eq}.
Suppose that $N:=\#\SX\ge 1$.
Then under the condition  $\a L\rho\ge c$ we have for any $\d \ge 1$:
\begin{equation*}
\| \op_\a^l(a u) g(T(1; \R_+, \Om))\|_{\GS_1}\le C_\d \rho^\d\
m_{\d}(\SX) \ \SN^{(1, 2)}(a; L, \rho).
\end{equation*}
\end{lem}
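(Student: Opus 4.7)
The plan is to partition the symbol $au$ using a Whitney-type decomposition subordinate to $\R\setminus\SX$, so that each piece is supported inside a $\xi$-interval $J_\alpha$ on which $\Om$ coincides either with all of $\R$ or with $\varnothing$, and then apply Lemma~\ref{varnothing:lem} piece by piece. For each $\xi_j\in\SX$ and each sign I arrange a dyadic sequence of intervals $J_{j,\pm,k}=(\mu_{j,\pm,k}-r_{j,\pm,k},\mu_{j,\pm,k}+r_{j,\pm,k})$, $k=0,1,\dots,K_{j,\pm}$, with $\mu_{j,+,k}=\xi_j+(3/16)\,2^{k}\rho_j$ and $r_{j,+,k}=2^{k}\rho_j/16$ (symmetrically on the left), truncating the index $k$ as soon as $2J_{j,\pm,k}$ would reach another point of $\SX$. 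The essential features are that $(\mu_\alpha-2r_\alpha,\mu_\alpha+2r_\alpha)\cap\SX=\varnothing$ for every such interval, that the union covers $\supp(u)\cap(-2\rho,2\rho)$ with bounded overlap, and that the number of intervals attached to an endpoint $\xi_j$ is $\lesssim \log(4\rho/\rho_j)\le C_\d(\rho/\rho_j)^\d$ for any $\d>0$. Let $\{v_\alpha\}$ be a smooth subordinate partition of unity with $|v_\alpha^{(m)}|\le C_m r_\alpha^{-m}$ and $\sum_\alpha v_\alpha = 1$ on $\supp(u)\cap(-2\rho,2\rho)$.

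Fix a piece $\alpha$. Translating $\xi\mapsto\xi-\mu_\alpha$ reduces the local problem to one in which the set of $\Om$-endpoints inside $(-2r_\alpha,2r_\alpha)$ is empty, so Lemma~\ref{varnothing:lem} applies with $\rho$ replaced by $r_\alpha$ and gives
\begin{equation*}
\|\op_\a^l(auv_\alpha)\,g(T(1;\R_+,\Om))\|_{\GS_1}\le C\,\SN^{(1,2)}(auv_\alpha;L,r_\alpha)
\end{equation*}
whenever $\a L r_\alpha\ge c$. Combining $r_\alpha\le\rho$, $r_\alpha\lesssim\rho_{j(\alpha)}$, the derivative bounds $|\p_\xi^m u|\le C\rho_{j(\alpha)}^{-m}$ on $\supp(uv_\alpha)$, and Leibniz gives $\SN^{(1,2)}(auv_\alpha;L,r_\alpha)\le C\,\SN^{(1,2)}(a;L,\rho)$ with a constant independent of $\alpha$.

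Summing the piecewise bounds and grouping by endpoint,
\begin{equation*}
\|\op_\a^l(au)\,g(T(1;\R_+,\Om))\|_{\GS_1}\le C\,\SN^{(1,2)}(a;L,\rho)\sum_j\#\{\alpha:j(\alpha)=j\}\le C_\d\,\rho^\d m_\d(\SX)\,\SN^{(1,2)}(a;L,\rho),
\end{equation*}
which is the required estimate.

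The main technical obstacle is the regime $\a L r_\alpha<c$, in which the hypothesis of Lemma~\ref{varnothing:lem} fails. Such pieces can occur only near endpoints $\xi_j$ with $\rho_j\lesssim(\a L)^{-1}$, and the standing assumption $\a L\rho\ge c$ then forces $\rho^\d\rho_j^{-\d}\gtrsim c^\d$, so each such problematic endpoint already contributes at least a positive constant to $\rho^\d m_\d(\SX)$ and can absorb an $O(1)$ cost per piece. In that regime one exploits $\|g(T(1;\R_+,\Om))\|\le\|g\|_\infty$ and estimates $\|\op_\a^l(auv_\alpha)\|_{\GS_1}$ directly through the kernel bound of Theorem~\ref{full:thm}; the narrow $\xi$-support of length $2r_\alpha$ together with the bounded $x$-support yield a per-piece trace norm of order $\SN^{(1,2)}(a;L,\rho)$, matching the large-piece estimate and permitting the same summation to close the proof.
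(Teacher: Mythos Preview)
Your Whitney decomposition works for the pieces with $\a L r_\alpha\ge c$, but the treatment of the small-scale regime contains a genuine error. You write that one can bound $\|\op_\a^l(auv_\alpha)\|_{\GS_1}$ via Theorem~\ref{full:thm} using ``the narrow $\xi$-support of length $2r_\alpha$ together with the bounded $x$-support''. However, the symbol $a$ in \eqref{widoma:eq} is supported on $\R\times(-\rho,\rho)$: it carries \emph{no} compactness in $x$ whatsoever (only $b=\psi a$ does, and the lemma concerns $au$, not $bu$). The integrand in Theorem~\ref{full:thm} is then not $x$-integrable, and the argument collapses.

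The repair is simple, and it is exactly the device the paper uses: factor $\op_\a^l(auv_\alpha)=\op_\a^l(a)\,\op_\a(uv_\alpha)$ (the second factor is a Fourier multiplier since $u,v_\alpha$ depend only on $\xi$), bound $\|\op_\a^l(a)\|\le C\,\SN^{(1,2)}(a;L,\rho)$ once via Lemma~\ref{general_norm:lem} using only the standing hypothesis $\a L\rho\ge c$, and estimate $\|\op_\a(uv_\alpha)\,g(T(1;\R_+,\Om))\|_{\GS_1}$ through Lemma~\ref{localisation_1d:lem}. That lemma holds for \emph{all} $\a>0$, so no lower bound on $r_\alpha$ is needed; the whole small-scale case disappears.

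Once you see this, the Whitney machinery is unnecessary. The paper takes the same factorisation but partitions at a \emph{single} uniform scale $\rho_1=\min_j\rho_j$: writing $\phi_n(\xi)=\z(\xi(8\rho)^{-1})u(\xi)\varphi(12\xi\rho_1^{-1}-n)$, each piece obeys $\|\op_\a(\phi_n)\,g(T(1;\R_+,\Om))\|_{\GS_1}\le C$ uniformly in $n$ and $\a$, and there are at most $C\rho/\rho_1$ pieces. The chain $\rho\rho_1^{-1}\le\rho\,m_1(\SX)\le C_\d\rho^\d m_\d(\SX)$ (from $\rho_1^{-1}\le m_1$ and \eqref{moments:eq}) finishes the proof in two lines. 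Your dyadic decomposition would yield the marginally sharper count $\sum_j\log(C\rho/\rho_j)$ in place of $\rho/\rho_1$, but that refinement is not needed for the stated bound and comes at the cost of the extra regime you mishandled.
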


\begin{proof}
Again, without loss of generality assume that $\SN^{(1,2)}(a; L, \rho) = 1$.

By definition of $u_j$ and $u$, we have $u(\xi) = 0$ whenever
$|\xi-\xi_j|\le \rho_j/8$ for any $\xi_j\in\SX$. Let $l$ be a number such that
$\rho_l = \min_j \rho_j$. Without loss of generality assume that $l = 1$, so that
$u\in\BS^{(\infty)}$ with
\[
\SN^{(m)}(u; \rho_1)\le C_m.
\]
Let $\varphi\in\plainC\infty_0(\R)$ be a function such that
\begin{equation*}
\varphi(t) =
\begin{cases}
1, |t|\le 1/4;\\
0, |t|\ge 3/4,
\end{cases}
\end{equation*}
and that the collection
$\varphi(t-n)$, $n\in\Z$, forms a partition of unity on $\R$. Denote
\begin{equation*}
\phi_n(\xi) = \z\bigl(\xi(8\rho)^{-1}\bigr) u(\xi)
\varphi(12\xi \rho_1^{-1} - n), \ n\in\Z,
\end{equation*}
where $\z$ is defined in \eqref{zetan:eq},
so that $\phi_n(\xi) = 0$ for $|\xi|\ge 2\rho$.
Denote by $J\le C\rho\rho_1^{-1}$ the smallest natural
number such that
\[
\sum_{|n|\le J} \phi_n(\xi)
= \z\bigl(\xi(8\rho)^{-1}\bigr)u(\xi).
\]
Let us show that
\begin{equation}\label{phij:eq}
\|\op_\a(\phi_n) g(T(1; \R_+, \Om))\|_{\GS_1}\le C,
\end{equation}
for all $\a>0$ uniformly in $n, |n|\le J$. Indeed, let
\[
\tilde\varphi_n (\xi) =
\varphi\bigl(4\rho_1^{-1}(\xi - n \rho_1/12)\bigr),
\]
so that $\phi_n = \phi_n \tilde\varphi_n$.
Consider only those $n$, for which $\phi_n\not \not\equiv 0$.
Since
$u(\xi) = 0$ for all $\xi\in (\xi_j - \rho_j, \xi_j +\rho_j)$, $j=1, 2, \dots, N$,
the latter requirement implies that
\[
\textup{either}\ \ \ \tilde\varphi_n\chi_{\Om} = 0
\ \ \textup{or}\ \  \tilde\varphi_n\chi_{\Om}
= \tilde\varphi_n.
\]
By Lemma \ref{localisation_1d:lem},
\[
\|\op_\a(\tilde \varphi_n^p) g_p(T(1; \R_+, \Om))
- g_p(  T(\tilde\varphi_n; \R_+, \Om)\|_{\GS_1}\le
  C'\bigl(\SN^{(3)}(\tilde\varphi_n; \rho_1)\bigr)^p\le C,
\]
for all $\a>0$.
In the case $\tilde\varphi_n\chi_{\Om} = 0$, this immediately implies
\eqref{phij:eq} with $g(t) = t^p-t$, since $\phi_n = \phi_n \tilde\varphi_n^p$
for any $p=  1, 2, \dots$. Suppose now that
$\tilde\varphi_n\chi_{\Om}= \tilde\varphi_n$, so that
\[
g_p(  T(\tilde\varphi_n; \R_+, \Om) = g_p(  T(\tilde\varphi_n; \R_+, \R).
\]
Using Lemma \ref{localisation_1d:lem} "backwards", we get
\[
\|\op_\a(\tilde\varphi_n^p)
\bigl(g_p(T(1; \R_+, \Om)) - g_p(\chi_{\R_+})\bigr)\|_{\GS_1}\le C,
\]
which leads to
\[
\|\op_\a(\phi_n) \bigl(g(T(1; \R_+, \Om)) - g(\chi_{\R_+})\bigr)\|_{\GS_1}\le C.
\]
Since $g(\chi_{\R_+}) = 0$, this entails \eqref{phij:eq}.
Summing over all $n$'s, we get
\begin{align*}
\|\op^l_\a(au)g(T(1; \R_+, \Om))\|_{\GS_1}
\le &\ \|\op^l_\a(a)\| \
\sum_{|n|\le J} \|\op_\a(\phi_n) g(T(1; \R_+, \Om))\|_{\GS_1}\\[0.2cm]
\le &\ C' J\le C\rho\rho_1^{-1}\le C\rho \ m_1(\SX),
\end{align*}
where we have used Lemma \ref{general_norm:lem} to estimate the norm of $\op^l_\a(a)$
under the condition $\a L\rho\ge c$, and the bound
\[
\rho_1^{-1}\le m_1(\SX),
\]
which follows from the definition \eqref{rhok:eq}. Using
\eqref{moments:eq} we complete the proof.
\end{proof}

\begin{thm} \label{cross_section:thm}
Let $g$ be as in \eqref{newm:eq}.
Let the symbols $a, b$ and the function $\psi$ be as in \eqref{widoma:eq}.
Assume that $\a\rho\ge c$,\ $L\ge r \ge 1$.
Then
\begin{align}\label{cross:section:eq}
\biggl|\tr\bigl( \op_\a^l(\psi a)g(T(1; \R_+, \Om))\bigr)
- &\  \GA(g) \underset{\xi\in\SX}\sum  a(0, \xi)\log r\biggr|\notag\\
\le &\ C_\d m_\d(\SX) \bigl[\rho^\d + \a^{1-\d} \rho r + \a^{-\d} \log r\bigr]
\SN^{(2, 2)}(a; L, \rho),
\end{align}
for any $\d\ge 1$,
where the sum on the left hand side equals zero if $\SX=\varnothing$.
\end{thm}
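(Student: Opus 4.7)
The strategy is to insert the partition of unity $1 = u + \sum_{j=1}^N u_j$ (with $N = \#\SX$) constructed just before Lemma \ref{node:lem}, writing
\[
\tr\bigl(\op_\a^l(\psi a)\, g(T(1;\R_+,\Om))\bigr)
= \tr\bigl(\op_\a^l(\psi a u)\, g(T)\bigr) + \sum_{j=1}^N \tr\bigl(\op_\a^l(\psi a u_j)\, g(T)\bigr),
\]
and applying the three preceding lemmas to the pieces. The case $\SX = \varnothing$ is immediate from Lemma \ref{varnothing:lem}: the main-term sum is empty, the trace is bounded in modulus by $C\SN^{(1,2)}(\psi a;L,\rho)$, and this is absorbed by the right-hand side of \eqref{cross:section:eq} using the convention $\rho^\d m_\d(\varnothing) = 4^{-\d}$. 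I henceforth assume $\SX \neq \varnothing$.

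The contribution of the $u$-piece $\op_\a^l(\psi a u)$ is controlled directly by Lemma \ref{outside:lem}, which produces $C_\d \rho^\d m_\d(\SX)\SN^{(1,2)}(a;L,\rho)$. For the node pieces I split indices $j\in\{1,\dots,N\}$ according to whether $\a\rho_j \ge c$ or not. When $\a\rho_j \ge c$, Lemma \ref{node:lem} furnishes the main term $\GA(g)\,a(0,\xi_j)\log r$ with error $C\bigl(1 + rL^{-1} + (\a\rho_j)^{-\d}\log r\bigr)\SN^{(1,2)}(a;L,\rho)$. Using $rL^{-1}\le 1$ (as $L \ge r$), the counting estimate $N \le 2^\d\rho^\d m_\d(\SX)$ from \eqref{mosch:eq}, and $\sum_j (\a\rho_j)^{-\d} \le \a^{-\d}m_\d(\SX)$, these $j$ accumulate a total error of $C_\d[\rho^\d + \a^{-\d}\log r]\,m_\d(\SX)\SN^{(1,2)}(a;L,\rho)$.

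The main obstacle is the treatment of indices with $\a\rho_j < c$, since Theorem \ref{widom1:thm} no longer applies with the natural radius $\rho_j/4$, and so the putative leading term $\GA(g)\,a(0,\xi_j)\log r$ must itself be absorbed into the error. The key observation is that the count $N'$ of such indices satisfies $N' \le (c/\a)^\d m_\d(\SX)$, because each contributes at least $(c/\a)^{-\d}$ to the sum defining $m_\d(\SX)$; this controls the sum of their leading terms by $C\a^{-\d}\log r\cdot m_\d(\SX)$. For the trace itself I first reduce $g(T)$ to $g(T_\pm)$ via the commutator estimate \eqref{node:eq} in the proof of Lemma \ref{node:lem} (which holds for \emph{all} $\a>0$), contributing at most $C\cdot N' \le C\rho^\d m_\d(\SX)$ in total, and then bound each remaining term by Lemma \ref{1dim_bound:lem} applied to $au_j$ with scales $(L,\rho_j)$, yielding $\|\op_\a^l(\psi au_j)\, g(T_\pm)\|_{\GS_1} \le C\a\rho_j r\,\SN^{(2,2)}(a;L,\rho)$ whenever $\a r\rho_j \ge c$. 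Summing gives $\sum \a\rho_j \le (c/\a)^\d m_\d(\SX)\cdot c$; combined with $\a\rho \ge c$ to trade $r\a^{-\d}$ for $r\rho\a^{1-\d}$, this produces exactly the middle error term $C\a^{1-\d}\rho r\,m_\d(\SX)$ of \eqref{cross:section:eq}. The even tinier indices with $\a r\rho_j < c$ are negligible by direct Hilbert--Schmidt/rank estimates on $\op_\a^l(\psi au_j)$, which is essentially of rank one in this regime.
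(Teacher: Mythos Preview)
Your overall architecture---insert the partition of unity $1=u+\sum u_j$, dispose of the $u$-piece by Lemma~\ref{outside:lem}, split the node pieces according to the size of $\a\rho_j$, and invoke Lemma~\ref{node:lem} for the large ones---matches the paper exactly, as does your bookkeeping for $\SX=\varnothing$ and for the accumulated error over indices with $\a\rho_j\ge c$.

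The divergence is in the small-$\a\rho_j$ nodes, where you take a longer route and leave a gap. You first reduce $g(T)$ to $g(T_\pm)$ via \eqref{node:eq} and then appeal to Lemma~\ref{1dim_bound:lem} at the local scale $\rho_j$; but that lemma carries the side condition $\a r\rho_j\ge c$, which forces a further subdivision, and the residual case $\a r\rho_j<c$ is handled only by the unjustified assertion that the operator is ``essentially of rank one''. The paper sidesteps all of this with one observation: since $u_j$ depends on $\xi$ alone, the left quantisation factorises exactly as $\op_\a^l(\psi a u_j)=\op_\a^l(\psi a)\,\op_\a(u_j)$, whence
\[
\bigl\|\op_\a^l(\psi a u_j)\,g\bigl(T(1;\R_+,\Om)\bigr)\bigr\|_{\GS_1}
\le \|\op_\a^l(\psi a)\|_{\GS_1}\,\|\op_\a(u_j)\|\,\|g(T)\|
\le C\a\rho r,
\]
using the \emph{global} scale $\rho$ (for which $\a r\rho\ge c$ is guaranteed by the hypotheses $\a\rho\ge c$, $r\ge 1$) together with $\|\op_\a(u_j)\|\le 1$. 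Since also $|\GA(g)a(0,\xi_j)\log r|\le C\log r\le C\a\rho r$, every small-$\a\rho_j$ term contributes at most $C\a\rho r$ to the error; multiplying by the count $\tilde N\le \a^{-\d}m_\d(\SX)$ yields $C\a^{1-\d}\rho r\,m_\d(\SX)$ directly, with no further case distinction. This both simplifies your argument and closes the gap.
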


\begin{proof}
Assume that $\SN^{(2, 2)}(a; L, \rho) = 1$.
If $\SX = \varnothing$, then the estimate immediately follows
from Lemma \ref{varnothing:lem} and the definition \eqref{mempty:eq}.

Suppose that $\SX$ is non-empty.
Represent the trace
\[
\CT =
\tr\bigl( \op_\a^l(\psi a)g(T(1; \R_+, \Om))\bigr)
\]
on the left-hand side of \eqref{cross:section:eq} as the sum
\begin{align*}
\CT = &\ \CT_1+\CT_2+\CT_3,\\[0.2cm]
\CT_1 = &\ \underset{1\le j\le N; \a\rho_j\ge 1}\sum
\tr\bigl( \op_\a^l(\psi a u_j )g(T(1; \R_+, \Om))\bigr),\\[0.2cm]
\CT_2 = &\ \underset{1\le j\le N; \a\rho_j< 1}\sum
\tr\bigl( \op_\a^l(\psi a u_j )g(T(1; \R_+, \Om))\bigr),\\[0.2cm]
\CT_3 = &\ \tr\bigl( \op_\a^l(\psi a u)g(T(1; \R_+, \Om))\bigr).
\end{align*}
To every term with $\a\rho_j \ge 1$ we apply Lemma \ref{node:lem},
so that
\begin{align}\label{ct1:eq}
\biggl|
\CT_1 - \GA(g) \underset{\xi_j\in\SX: \a\rho_j\ge 1}\sum  a(0, \xi_j)\log r\biggr|
\le &\   C\biggl[N (1+ rL^{-1}) + \log\ r\ \a^{-\d}\sum_{j=1}^N\rho_j^{-\d}\biggr]
\notag\\
\le &\  C_\d m_\d(\SX)\bigl[\rho^\d + \a^{-\d}\log\ r \bigr] , \d \ge 0,
\end{align}
where we have used \eqref{mosch:eq}, and the inequality $rL^{-1}\le 1$.
If $\a\rho_j<1$ we use the estimate \eqref{1dim_bound:eq}, which we rewrite
as follows:
\begin{equation*}
|\tr\bigl( \op_\a^l(\psi a u_j )g(T(1; \R_+, \Om))\bigr)
- \log \ r \ \GA(g) a(0, \xi_j)|\le C\a\rho r,
\end{equation*}
which leads to
\begin{equation*}
\biggl|
\CT_2 - \GA(g) \underset{\xi_j\in\SX: \a\rho_j< 1}
\sum  a(0, \xi_j)\log r\biggr|
\le   C \tilde N \a\rho r,
\end{equation*}
where $\tilde N$ is the number of points $\xi_j$ such that $\a\rho_j<1$. Similarly to
\eqref{mosch:eq} one can estimate:
\[
\tilde N\le \a^{-\d}\underset{j: \a\rho_j < 1}\sum\rho_j^{-\d}
\le \a^{-\d} m_{\d}(\SX),\
\]
for any $\d \ge 0$, so that
\begin{equation}\label{ct2:eq}
\biggl|
\CT_2 - \GA(g) \underset{\xi_j\in\SX: \a\rho_j< 1}
\sum  a(0, \xi_j)\log r\biggr|
\le   C \a^{1-\d} m_{\d}(\SX)\rho r.
\end{equation}
To estimate $\CT_3$ use Lemma \ref{outside:lem}:
$
|\CT_3|\le C_\d \rho^\d m_{\d}(\SX),
$
for any $\d \ge 1$. Together with \eqref{ct1:eq} and \eqref{ct2:eq} this leads to
\eqref{cross:section:eq}.
\end{proof}


\section{Partitions of unity, and a reduction to the flat boundary}
\label{partition:sect}

Now we are in a position to evaluate the asymptotics of the trace
 \begin{equation}\label{b_trace:eq}
\GT_\a(b; \L, \Om; g) := \tr \bigl(\op_\a^l(b) g(T(1; \L, \Om))\bigr)
\end{equation}
for a compactly supported symbol $b$, with the function $g(t) = t^p-t$,
$p = 1, 2, \dots $. Theorem \ref{localasymptotics:thm} gives the required asymptotics
for the case when $b$ is supported on a ball contained inside $\L$, and the
answer in this case does not include any information about the boundary of $\L$.
 Now we are ready to tackle the more difficult case,
 when the support of $b$ has a non-trivial intersection
 with the boundary $\p\L$.  We assume that the domains $\L$ and $\Om$ satisfy Condition
\ref{graph:cond}, i.e. , $\L=\G(\Phi; \boldO_{\L}, \boldk_\L)$,
$\Om = \G(\Psi; \boldO_{\Om}, \boldk_{\Om})$,
with some orthogonal transformations $\boldO_{\L}, \boldO_{\Om}$, some
$\boldk_{\L}, \boldk_{\Om}\in\R^d$, and some
functions $\Phi,\Psi\in\plainC1(\R^{d-1})$, which satisfy \eqref{gradient:eq}.
Without loss of generality we
suppose that $\boldO_\L = \boldI$, $\boldk_{\L}=\mathbf 0$, i.e.
\begin{equation*}
\L = \G(\Phi; \boldI, \mathbf 0) = \{\bx\in \R^d:  x_d > \Phi(\hat\bx)\}.
\end{equation*}
About the symbol $b$ we assume that either
\begin{equation}\label{bsupport_inf:eq}
b\in\BS^{(d+2, d+2)},\ \supp b\subset \R^d\times B(\mathbf 0, \rho),
\end{equation}
or
\begin{equation}\label{bsupport:eq}
b\in\BS^{(d+2, d+2)},\ \supp b\subset B(\mathbf 0, 1)\times B(\mathbf 0, \rho),
\end{equation}
with some $\rho >0$. For some intermediate results
we need weaker smoothness assumptions on $b$.
Sometimes the dependence of the trace \eqref{b_trace:eq}
on some of the arguments is omitted from the notation and
then we write $\GT_\a(b; \L; g)$ or $\GT_\a(b; \L)$ etc.

\subsection{Two partitions of unity}\label{partitions:subsect}
Our strategy is to ``approximate" $\L$ by a half-space and then use the approach
developed in \cite{Widom2}.
The first step is to
divide the domain $\L$ into two subsets: a ``boundary layer",
which will eventually contribute to the second term of the asymptotics \eqref{main:eq},
 and the ``inner part", which affects only the first term of the asymptotics.
The study of these two domains require two different partitions of unity.
We begin with the partition of unity for the boundary layer.

Let $v_1, v_2\in\plainC\infty(\R)$ be two non-negative functions such that
$v_1(t) + v_2(t) = 1$ for all $t\in\R$ and
\begin{equation}\label{v:eq}
v_1(t) =
\begin{cases}
0,\ t\le 1,\\
1,\ t\ge 2.
\end{cases}
\end{equation}
Define a partition of unity subordinate to the covering
of the half-axis by the intervals
\[
\Delta_{-1} = (-2, 3),
\Delta_k = (2r^k, 3 r^{k+1}), \ \ k = 0, 1, \dots,
\]
where $r > 2$. Denote
\begin{equation*}
v_1^{(k)}(t) = v_1\biggl(\frac{t-r^k}{r^k}\biggr),\
v_2^{(k)}(t) = v_2\biggl(\frac{t-r^{k+1}}{r^{k+1}}\biggr), k =0, 1, \dots,
\end{equation*}
and define
\begin{equation*}
\z_{-1}(t) = v_2(t-1) v_1(t+2),\ \ \
\z_k(t) = v_1^{(k)}(t) v_2^{(k)}(t), \ k = 0, 1, 2, \dots.
\end{equation*}
It is clear that $\z_k(t) = 0$ if $t\notin \Delta_k$, $k = -1, 0, \dots$.
It follows from the definition of $v_1, v_2$ that
for $r >2$ and any $K$,
\[
\sum_{k=-1}^K \z_k(t)=  v_2^{(K)}(t),\ \ \ \
\sum_{k=-1}^\infty \z_k(t) = 1, \ t\ge 0.
\]
Define two cut-off functions on $\R^d$:
\begin{equation}\label{arrows:eq}
q^{\downarrow}(\bx) = v_2^{(K)}\bigl(\a(x_d - \Phi(\hat\bx))\bigr),\
q^{\uparrow}(\bx) = v_1^{(K)}\bigl(\a(x_d - \Phi(\hat\bx))\bigr).
\end{equation}
To find the trace asymptotics of $\GT_\a(b)$  we study the
traces $\GT_\a(q^{\downarrow} b)$ and $\GT_\a(q^{\uparrow} b)$
for the following value of the parameter $K$:
\begin{equation}\label{k_upper_bound:eq}
K = K(\a; r, A) = \biggl[\frac{\log \a - A}{\log r}\biggr],
\end{equation}
with some number $A>0$.
Here $[\dots]$ denotes the integer part. This value of $K$ is chosen thus to ensure that
$q^{\downarrow}$ is supported in a thin ``boundary layer" whose width does not depend on
$\a$, whereas $q^{\uparrow}$
is supported ``well inside" the domain $\L$. More precisely,
\begin{equation}\label{q_up_down:eq}
\begin{cases}
q^{\downarrow}(\bx) = 0,\ x_d - \Phi(\hat\bx)\ge 3re^{-A}, \\[0.2cm]
q^{\uparrow}(\bx) = 0,\ x_d - \Phi(\hat\bx)\le 2 r^{-1} e^{-A}.
\end{cases}
\end{equation}
As we show later on, the asymptotics of the
trace $\GT_\a(q^{\downarrow}b)$ (see \eqref{b_trace:eq})
 ``feels" the boundary $\p\L$, whereas   $\GT_\a(q^{\uparrow}b)$ can be handled as
 if $\L$ were the entire space $\R^d$.
 The trace with $q^{\downarrow}$ requires a more careful analysis.
 In particular, we need a further partition of unity in variable $\hat\bx$.
Cover $\R^{d-1}$ by cubes
of the form $Q_{\bm} = Q_{\mathbf 0} + \bm$,
$\bm\in\Z^{d-1}$, where
\[
Q_{\mathbf 0} = (-1, 1)^{d-1}.
\]
Let $\s_{\bm} = \s_{\bm}(\hat\bx)$
 be a partition of unity, associated with this covering, such that
\begin{equation}\label{hyperpart:eq}
 \s_{\bm}(\hat\bx) = \s_{\bzero}(\hat\bx - \bm),
\end{equation}
which guarantees that
 \[
\ |\nabla_{\hat\bx}^j \s_{\bm}(\hat\bx)|\le C_j ,
 \]
for all $j$ uniformly in $\bm\in\Z^{d-1}$.
For each $k = 0, 1, \dots$ we use the partition of unity
\begin{equation}\label{sigmakm:eq}
\s_{k, \bm}(\hat\bx) = \s_{\bm}(\a \hat\bx r^{-k-1}),\ \bm\in\Z^{d-1}.
\end{equation}
Now define for all $\bx\in\R^d$
\begin{equation}\label{qkm:eq}
\begin{cases}
q_{-1}(\bx) = \z_{-1}\bigl(\a\bigl(x_d - \Phi(\hat\bx)\bigr)\bigr),\\[0.2cm]
q_{k, \bm} (\bx)
= \z_k\bigl(\a\bigl(x_d - \Phi(\hat\bx)\bigr)\bigr) \s_{k, \bm}(\a\hat\bx r^{-k-1}),\
k = 0, 1, \dots, \bm\in\Z^{d-1},
\end{cases}
\end{equation}
so that
\begin{equation}\label{qdown:eq}
q^{\downarrow}(\bx) = q_{-1}(\bx) + \sum_{k=0}^K \sum_{\bm\in\Z^{d-1}}
q_{k, \bm}(\bx).
\end{equation}
The contributions of the boundary layer and the inner region are found
in Lemmas \ref{alphaa_down:lem} and \ref{alphaa_up:lem} respectively.
For Lemma \ref{alphaa_up:lem} we need another, more standard partition of unity.
To define it we follow the scheme described in \cite{H}, Ch.1.
Let us state the required result in the
form convenient for our purposes:

\begin{prop}\label{hor:prop}
 Let $\ell\in\plainC1(\R^d)$ be a function such that
\begin{equation}\label{slow:eq}
|\ell(\bx) - \ell(\by)|\le \varrho |\bx-\by|,\
\end{equation}
for all $\bx, \by\in\R^d$ with some $\varrho\in [0, 1)$. Then there exists
a set $\bx_j\in\R^d$, $j\in\mathbb N$ such that the balls
$B(\bx_j, \ell(\bx_j))$ form a covering of $\R^d$ with the finite intersection property,
i.e. each ball intersects no more than $N = N(\varrho)<\infty$ other balls.
Furthermore, there exist non-negative functions $\psi_j\in\plainC\infty_0(\R^d)$,
$j\in\mathbb N$, supported in $B(\bx_j, \ell(\bx_j))$ such that
\begin{equation*}
\sum_j \psi_j(\bx) = 1,
\end{equation*}
and
\begin{equation*}
|\nabla^m\psi_j(\bx)|\le C_m \ell(\bx)^{-m},
\end{equation*}
for all $m$ uniformly in $j$.
\end{prop}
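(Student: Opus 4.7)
The plan is to construct the covering by a standard maximal-ball argument, then build the partition of unity by normalizing translates of a single cut-off function; the key input throughout is the slow variation of $\ell$ implied by \eqref{slow:eq}.

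First I would exploit \eqref{slow:eq} to record that for any $\bx,\by\in\R^d$ with $|\bx-\by|\le \ell(\bx)$,
\begin{equation*}
(1-\varrho)\ell(\bx) \le \ell(\by) \le (1+\varrho)\ell(\bx),
\end{equation*}
since $|\ell(\bx)-\ell(\by)|\le \varrho\ell(\bx)$. More generally, if $|\bx-\by|\le C\ell(\bx)$, then $\ell(\by)$ is comparable to $\ell(\bx)$ up to a factor depending only on $\varrho$ and $C$. I would then pick a maximal collection $\{\bx_j\}_{j\in\mathbb N}$ for which the balls $B(\bx_j, \ell(\bx_j)/4)$ are pairwise disjoint (such a maximal set exists by Zorn's lemma applied to locally finite families, or equivalently by a greedy exhaustion using that $\ell$ is bounded below on compact sets). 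Maximality forces the balls $B(\bx_j, \ell(\bx_j))$ to cover $\R^d$: if not, some $\by\notin \bigcup_j B(\bx_j,\ell(\bx_j))$ would satisfy $|\by-\bx_j|>\ell(\bx_j)$ for all $j$, hence in particular $|\by-\bx_j|\ge \ell(\bx_j)/4+\ell(\by)/4$ after using the comparability just noted (shrinking constants if needed), so $\{\bx_j\}\cup\{\by\}$ would still give disjoint quarter-balls, contradicting maximality.

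Next I would verify the finite intersection property. Suppose $B(\bx_i,\ell(\bx_i))\cap B(\bx_j,\ell(\bx_j))\ne\varnothing$. Then $|\bx_i-\bx_j|\le \ell(\bx_i)+\ell(\bx_j)$, and the slow variation gives $\ell(\bx_i)\le\frac{1+\varrho}{1-\varrho}\ell(\bx_j)=:c_\varrho\ell(\bx_j)$. Hence every such $\bx_i$ lies in $B(\bx_j,(1+c_\varrho)\ell(\bx_j))$, and the disjoint quarter-balls $B(\bx_i,\ell(\bx_i)/4)$ all have radii at least $\ell(\bx_j)/(4c_\varrho)$ and are contained in a fixed enlargement of $B(\bx_j,\ell(\bx_j))$. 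Comparing volumes bounds the number of such $i$ by a constant $N=N(\varrho,d)$.

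Finally I would build the partition of unity. Fix $\varphi\in\plainC\infty_0(\R^d)$ with $\varphi\ge 0$, $\varphi(\bx)=1$ for $|\bx|\le 1/2$ and $\supp\varphi\subset B(\bzero,1)$, and set
\begin{equation*}
\tilde\psi_j(\bx) = \varphi\bigl((\bx-\bx_j)/\ell(\bx_j)\bigr),\qquad
\Psi(\bx) = \sum_j \tilde\psi_j(\bx),\qquad
\psi_j(\bx) = \tilde\psi_j(\bx)/\Psi(\bx).
\end{equation*}
Each $\tilde\psi_j$ is supported in $B(\bx_j,\ell(\bx_j))$, and the covering property together with $\varphi\equiv 1$ on $B(\bzero,1/2)$ yields $\Psi(\bx)\ge 1$ (once one checks, perhaps after replacing the quarter-ball disjointness above by $\ell/2$-ball disjointness, that every $\bx$ lies in some $B(\bx_j,\ell(\bx_j)/2)$). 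The derivative bounds follow from the chain rule, which gives $|\nabla^m\tilde\psi_j(\bx)|\le C_m\ell(\bx_j)^{-m}$, the slow-variation comparability $\ell(\bx_j)\asymp\ell(\bx)$ on $\supp\tilde\psi_j$, the finite intersection property (so the sum defining $\Psi$ is locally an $N$-term sum), and the lower bound $\Psi\ge 1$ combined with the Faà di Bruno formula. The only real obstacle is the bookkeeping needed to confirm that the covering radius can be taken to be $\ell(\bx_j)/2$ so that $\sum_j\tilde\psi_j\ge 1$; this amounts to tweaking the radius in the maximal-disjointness construction, which works because $\varrho<1$ leaves a definite gap.
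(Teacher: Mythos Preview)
The paper does not give its own proof of this proposition: it is stated as a known result and attributed to H\"ormander (\cite{H}, Ch.~1). Your outline is precisely the standard argument found there---maximal disjoint family of shrunken balls, slow variation to get covering and bounded overlap, then normalize translated cut-offs---so there is nothing to compare. Your handling is correct; the only loose end you flag (ensuring the half-radius balls already cover so that $\Psi\ge 1$) is indeed just a matter of taking the disjoint balls of radius $c\,\ell(\bx_j)$ with $c<1/(4+\varrho)$ rather than $c=1/4$, and the rest goes through unchanged.
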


For our purposes the convenient choice of $\ell(\bx)$ for all $\bx\in\R^d$ is
\begin{equation}\label{dist_ell:eq}
\ell(\bx) =
\begin{cases}
\dfrac{1}{32\lu M\ru}\bigl(x_d-\Phi(\hat\bx)\bigr),\
x_d > \Phi(\hat\bx)+\a^{-1},\\[0.2cm]
\dfrac{1}{32\a\lu M\ru},\
x_d \le \Phi(\hat\bx)+\a^{-1}.
\end{cases}
\end{equation}
Here $M>0$ is the constant from condition
\eqref{gradient:eq}. Since $|\nabla\ell|\le 1/32$ a.e.,
the condition \eqref{slow:eq} is satisfied
with $\varrho=1/32$.

\begin{lem}\label{in_ball:lem}
If $x_d - \Phi(\hat\bx)> \a^{-1}$, then
$B(\bx, 32\ell)\subset \L$.
\end{lem}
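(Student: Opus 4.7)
The plan is to unwind the definition of $\ell(\bx)$ in the case at hand and then quote the elementary Lipschitz estimate \eqref{linside:eq} to show that points in $B(\bx, 32\ell(\bx))$ cannot cross the graph of $\Phi$.

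First I would observe that under the assumption $x_d-\Phi(\hat\bx)>\a^{-1}$ the definition \eqref{dist_ell:eq} gives
\[
32\ell(\bx)=\frac{1}{\lu M\ru}\bigl(x_d-\Phi(\hat\bx)\bigr).
\]
So if $\by\in B(\bx,32\ell(\bx))$, then $\lu M\ru\,|\bx-\by|< x_d-\Phi(\hat\bx)$. Applying \eqref{linside:eq} with this pair $\bx,\by$ yields
\[
\bigl|\bigl(x_d-\Phi(\hat\bx)\bigr)-\bigl(y_d-\Phi(\hat\by)\bigr)\bigr|\le \lu M\ru\,|\bx-\by|<x_d-\Phi(\hat\bx),
\]
which forces $y_d-\Phi(\hat\by)>0$, i.e.\ $\by\in\L=\G(\Phi;\boldI,\bzero)$. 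This is the whole argument.

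There is no real obstacle here: the numerical factor $32\lu M\ru$ in \eqref{dist_ell:eq} was inserted precisely so that the ball of radius $32\ell(\bx)$ sits strictly above the graph. The role of the standing assumption $x_d-\Phi(\hat\bx)>\a^{-1}$ is merely to select the first branch of the definition of $\ell$ (on the second branch the statement would be false or meaningless), and in fact the same argument shows that the distance from $\bx$ to $\complement\L$ is at least $32\ell(\bx)$, giving a slightly stronger conclusion that will be useful when we apply Proposition~\ref{hor:prop} with this choice of $\ell$ in the next section.
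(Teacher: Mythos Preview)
Your proof is correct and is essentially identical to the paper's own argument: both use the first branch of \eqref{dist_ell:eq} to write $x_d-\Phi(\hat\bx)=32\lu M\ru\,\ell(\bx)$ and then apply \eqref{linside:eq} to conclude $y_d-\Phi(\hat\by)>0$ for every $\by\in B(\bx,32\ell(\bx))$.
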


\begin{proof} By \eqref{linside:eq},
under the condition $x_d - \Phi(\hat\bx)> \a^{-1}$ we have
for any $\by\in B(\bx, 32\ell)$:
\begin{align*}
y_d - \Phi(\hat\by)
\ge &\ x_d - \Phi(\hat\bx) - \lu M\ru|\bx-\by|\\
= &\ 32\lu M\ru\ell(\bx) - \lu M\ru|\bx-\by|>0.
\end{align*}
This proves that $\by\in \L$, as claimed.
\end{proof}

Incidentally, the partition of unity just introduced is also useful for
the boundary layer. Indeed,
our study of the traces $\GT_\a(q_{k, \bm} b)$, $k = 0, 2, \dots,$ $\bm\in\Z^{d-1},$
is based on the estimates obtained in the previous sections for
symbols from the classes $\BS^{(m, n)}$. Observe, however, that
those estimates are not directly applicable to symbols of the type $q_{k, \bm}$,
since these functions do not have isotropic scaling properties
in-built in the norms \eqref{norm:eq}. Thus in Lemma \ref{brick:lem} below
we study individually operators with $\psi_j q_{k, \bm}$ and
then sum up in $j$.

\subsection{Reduction to the flat boundary}

The first step in finding the asymptotics of $\GT_\a(q_{k, \bm}b; \L))$ is to
replace $\L$ by a half-space, as in Lemma \ref{flat_boundary_b:eq}. We emphasize
that the choice of this half-space depends on $k = 0, 1, \dots$ and $\bm\in\mathbb Z^{d-1}$.

For a fixed $\hat\bz\in\R^{d-1}$ define
\begin{equation*}
\Phi_0(\hat\by) = \Phi_0(\by; \hat\bz) =
 \Phi(\hat\bz)+ \nabla\Phi(\hat\bz)\cdot(\hat\by-\hat\bz).
\end{equation*}
Then by \eqref{nabla_mod:eq},
\begin{equation}\label{modulus2:eq}
\underset{\hat\bz\in\R^{d-1}}\sup\ \underset{\hat\by:|\hat\bz-\hat\by|\le s} \max
|\Phi(\hat\by) - \Phi_0(\hat\by, \hat\bz)|\le s\vare(s),
\end{equation}
For each $k = 0, 1 \dots$, $\bm\in\Z^{d-1}$
define the approximating half-space by
\begin{equation}\label{pim:eq}
\Pi_{k, \bm} = \{\bx: x_d > \Phi_0( \hat\bx;\ \hat\bx_{k, \bm})\},
\ \ \hat\bx_{k, \bm} = \a^{-1} r^{k+1}\bm.
\end{equation}
For our analysis of various asymptotics it is convenient to introduce the notion of a
$\CW$-sequence.

\begin{defn}\label{w:defn}
Let $w_k=w_k(r, A)$, $k\ge 0$, be a sequence of non-negative numbers,
depending on the parameters $r, A >0$, and let
$K = K(\a; r, A)$ be as defined in \eqref{k_upper_bound:eq}.
 We say that $w_k$ is a \textsl{$\CW$-sequence} if
\begin{equation}\label{limitzero:eq}
\lim_{r\to\infty}\limsup_{A\to\infty}
\limsup_{\a\to\infty} \frac{1}{\log\a}
\sum_{k\le K(\a; r, A)} w_k(r, A) = 0.
\end{equation}
If a $\CW$-sequence depends on some other \textsl{fixed} parameter, for instance $M$ from
\eqref{gradient:eq},
then it is not reflected in the notation and we still write simply $w_k(r, A)$.
\end{defn}

\begin{lem}\label{brick:lem}
Let $\L$ and $\Om$ be some domains satisfying Condition
\ref{graph:cond} with $\boldO_\L=\boldI, \boldk_{\L}=\mathbf 0$.
Suppose that the symbol $b$ satisfies \eqref{bsupport_inf:eq}.
Then for any $k = 0, 1, \dots, K = K(\a; r, A)$,
\begin{align}\label{brick:eq}
\| q_{k, \bm}\op_\a(b))\bigl(
g_p(T(1; \L))
- &\ g_p(T(1; \Pi_{k, \bm}))\bigr)\|_{\GS_1}\notag\\[0.2cm]
\le &\  r^{(k+1)(d-1)} \rho^{d-1} w_k(r, A)
\SN^{(d+1, d+2)}(b; 1, \rho),
\end{align}
with some $\CW$-sequence $w_k(r, A)$ independent of the symbol $b$,
uniformly in $\bm\in\Z^{d-1}$, $\rho\ge c$. Moreover,
$w_k(r, A)$ does not depend on the
functions $\Phi, \Psi$, and on the transformation $(\boldO_{\Om}, \boldk_{\Om})$, but
may depend on the parameter $M$.
\end{lem}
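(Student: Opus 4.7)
The plan is to approximate $\p\L$ locally by the hyperplane bounding $\Pi_{k,\bm}$ and to partition the support of $q_{k,\bm}$ into pieces of the size of the local distance to $\p\L$, on each of which Lemma \ref{flat_boundary_b:lem} applies. First, I would use the affine invariance \eqref{unitary_equivalence:eq} together with the shift rule \eqref{trans:eq} to move the boundary point $\boldk:=(\hat\bx_{k,\bm},\Phi(\hat\bx_{k,\bm}))\in\p\L$ to the origin. In the new coordinates $\L=\G(\tilde\Phi;\boldI,\bzero)$ with $\tilde\Phi(\hat\bzero)=0$ and $\nabla\tilde\Phi(\hat\bzero)=\nabla\Phi(\hat\bx_{k,\bm})$, so $\Pi_{k,\bm}$ becomes the canonical hyperplane $\Pi$ of \eqref{pizhat:eq}. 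The trace norm, the seminorm $\SN^{(d+1,d+2)}(b;1,\rho)$, and the constants $M$ and $\vare$ are all preserved by this operation, in view of \eqref{orthogonal:eq} and the remarks following \eqref{trans:eq}. Insofar as higher $\bx$-derivatives of $q_{k,\bm}$ require smoothness of $\Phi$ beyond $\plainC1$, one mollifies $\Phi$ on a scale $\ll\a^{-1}$ at the outset; this perturbation of $\p\L$ is negligible on the scales of the problem.

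Next I would further partition the support of $q_{k,\bm}$ using $\{\psi_j\}$ from Proposition \ref{hor:prop} with $\ell$ from \eqref{dist_ell:eq}, grouping the indices $j$ into dyadic strips $S_s=\{j:2^s r^k/\a\le\ell(\bx_j)<2^{s+1}r^k/\a\}$ for $s=0,1,\dots,\lfloor\log_2 r\rfloor+O(1)$. Since $x_d-\tilde\Phi(\hat\bx)\in(2r^k/\a,3r^{k+1}/\a)$ on $\supp q_{k,\bm}$, these strips exhaust the partition; a volume count (tangential extent $r^{k+1}/\a$, normal extent $2^s r^k/\a$, ball radius $\sim 2^s r^k/\a$, finite overlap) gives $\#S_s\le C(r/2^s)^{d-1}$. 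For every such $j$, Lemma \ref{in_ball:lem} gives $B(\bx_j,32\ell(\bx_j))\subset\L$, the symbol $\psi_j q_{k,\bm}b$ is supported in $B(\bx_j,\ell(\bx_j))\times B(\bzero,\rho)$, and elementary derivative bounds give $\SN^{(d+1,d+2)}(\psi_j q_{k,\bm}b;\ell(\bx_j),\rho)\le C\SN^{(d+1,d+2)}(b;1,\rho)$ uniformly in $j,k,\bm$, with $C=C(M)$.

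The main step is to apply Lemma \ref{flat_boundary_b:lem} with $\bz=\bx_j$, $\ell=\ell(\bx_j)$, $\bmu=\bzero$, and $t=\tilde r_p r^{k+1}/\a$ with $\tilde r_p$ from \eqref{rtilde:eq}. The inclusion $B(\bx_j,4\ell)\subset\L\cap B(\bzero,t)$ is immediate; $B(\bx_j,4\ell)\subset\Pi_{k,\bm}$ and the second condition in \eqref{ells:eq} follow from \eqref{modulus2:eq}, because the vertical discrepancy between $\p\L$ and $\Pi_{k,\bm}$ on $B(\bzero,4t)$ is at most $4t\vare(4t)\le 4\tilde r_p re^{-A}\vare(4\tilde r_p re^{-A})\ll r^k/\a\sim\ell(\bx_j)$ once $A$ is large, using $r^{k+1}/\a\le re^{-A}$ for $k\le K(\a;r,A)$; the first condition in \eqref{ells:eq} becomes $\tilde r_p 2^s\le\tilde r_p r$, which holds throughout. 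For any $\vark\in(0,\vark_p)$ we obtain, per $j\in S_s$,
\begin{equation*}
\|\op_\a^l(\psi_j q_{k,\bm}b)(g_p(T(1;\L))-g_p(T(1;\Pi_{k,\bm})))\|_{\GS_1}\le C(2^sr^k\rho)^{d-1}\CR_\vark^{(s)}\SN^{(d+1,d+2)}(b;1,\rho),
\end{equation*}
with $\CR_\vark^{(s)}:=\CR_\vark(\a;2^sr^k/\a,\rho,\tilde r_pr^{k+1}/\a)$ so that $t/\ell\sim r/2^s$ in \eqref{remainder:eq}. Summing over $j\in S_s$ cancels a factor $2^{s(d-1)}$ and yields the bound $Cr^{(k+1)(d-1)}\rho^{d-1}\CR_\vark^{(s)}\SN^{(d+1,d+2)}(b;1,\rho)$; summing over $s$ produces the desired \eqref{brick:eq} with $w_k(r,A)=C\sum_s\CR_\vark^{(s)}$. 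The case $\op_\a=\op_\a^r$ reduces to $\op_\a^l$ via \eqref{quantisation_symbol:eq}, with a remainder absorbed into $w_k$.

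Finally, the three pieces of $\CR_\vark^{(s)}$ summed over $s\le\log_2 r+O(1)$ give (i) $\sum_s(r/2^s)^{-\vark}=r^{-\vark}\sum_s 2^{s\vark}\sim 1$; (ii) $r^{2(d-1)}(r^k\rho)^{-\vark}\sum_s 2^{-s(2(d-1)+\vark)}\lesssim r^{2(d-1)}(r^k\rho)^{-\vark}$; and (iii) $r^{2(d-1)+\vark}\vare(re^{-A})^\vark\sum_s 2^{-s(2(d-1)+\vark)}\lesssim r^{2(d-1)+\vark}\vare(re^{-A})^\vark$. Dividing by $\log\a$ and summing over $k\le K\sim\log\a/\log r$, (i) contributes $O(1/\log r)\to 0$ as $r\to\infty$; (ii) gives a convergent geometric sum in $r^{-k\vark}$, hence $O(1/\log\a)\to 0$ as $\a\to\infty$; and (iii) is $\lesssim r^{2(d-1)+\vark}\vare(re^{-A})^\vark/\log r\to 0$ as $A\to\infty$, since $\vare(s)\to 0$ as $s\to 0^+$. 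Hence $w_k$ is a $\CW$-sequence in the sense of Definition \ref{w:defn}. The main obstacle is the coordinated geometric verification of $B(\bx_j,4\ell(\bx_j))\subset\L\cap\Pi_{k,\bm}\cap B(\bzero,t)$ uniformly in $j$, $\bm$, and $k\le K$; the dyadic decomposition and the particular choice $t=\tilde r_p r^{k+1}/\a$ are engineered precisely so that the conditions \eqref{ells:eq}, the inclusion into $\Pi_{k,\bm}$, and the $\CW$-summation are compatible in the iterated limit of Definition \ref{w:defn}.
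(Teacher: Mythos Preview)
Your approach is essentially the paper's: translate so that $\hat\bx_{k,\bm}=\hat\bzero$, partition $\supp q_{k,\bm}$ using Proposition~\ref{hor:prop} with the scale \eqref{dist_ell:eq}, apply Lemma~\ref{flat_boundary_b:lem} on each piece, and sum. The only real difference is bookkeeping. The paper does not dyadically stratify in $\ell_j$; instead it takes $t=\a^{-1}r^{k+2}$ (so $t/\ell_j$ ranges between $cr$ and $Cr^2$), bounds $\CR_\vark(\a;\ell_j,\rho,t)$ uniformly by $\tilde w_k=r^{-\vark}+r^{4(d-1)}r^{-\vark k}+r^{4(d-1)}(r^2\vare(r^2e^{-A}))^\vark$, and then controls $\sum_j\ell_j^{d-1}$ by the integral $\int_{D_k(2)}\ell(\bx)^{-1}d\bx\le C\a^{1-d}r^{(k+1)(d-1)}\log r$, obtaining $w_k=\log r\cdot\tilde w_k$. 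It also introduces an explicit set $D_k(\mu)$ and the index set $J=\{j:B(\bx_j,4\ell_j)\subset D_k(2)\}$, proving $D_k(1)\subset\bigcup_{j\in J}B(\bx_j,\ell_j)$ and $D_k(2)\subset\L\cap\Pi\cap B(\bzero,13\lu M\ru\a^{-1}r^{k+1})$ to verify \eqref{distance:eq} and \eqref{ells:eq} in one stroke. Your dyadic sum yields a slightly different but equally valid $\CW$-sequence.

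One point to tighten: your choice $t=\tilde r_p r^{k+1}/\a$ is marginally too small. Since $|\bx_j|$ can be as large as $C(M)r^{k+1}/\a$ and $\ell_j$ as large as $c(M)r^{k+1}/\a$, the inclusion $B(\bx_j,4\ell)\subset B(\bzero,t)$ and the condition $\tilde r_p\ell\le t$ (which you rewrite as ``$\tilde r_p 2^s\le\tilde r_p r$'') both fail at the top strips unless $t$ carries an extra $M$-dependent constant, or an extra factor $r$ as in the paper. This is a harmless adjustment---it only shifts constants in $\CR_\vark^{(s)}$---but as written the verification is not quite correct.
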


\begin{proof} Denote $\Pi = \Pi_{k, \bm}$. Using \eqref{trans:eq}
we may assume that $\bm = \hat\bzero$ and $\Phi(\hat\bzero) = 0$,
so that $\Phi_{\bzero}(\hat\bx) = \nabla\Phi(\hat{\bzero})\cdot \hat\bx$.
For $\mu >0$ denote
\[
D_k(\mu) = \{\bx:  2\mu^{-1}\a^{-1} r^k < x_d - \Phi(\hat\bx) < 3\mu\a^{-1} r^{k+1},\
 |\hat\bx|< \mu\a^{-1} r^{k+1}\}.
\]
It is clear that $\supp q_{k, \mathbf 0}\subset D_k(1)$.
Also, since $k\ge 0$, we have
$x_d - \Phi(\hat\bx)> \a^{-1}$ for $\bx\in D_k(2)$.

\underline{Step 1:} Let us show first that
\begin{equation}\label{dk2:eq}
D_k(2)\subset \L\cap\Pi\cap B\bigl(\mathbf 0, 13\lu M\ru \a^{-1} r^{k+1}\bigr),
\end{equation}
for sufficiently large $A$.
By construction $D_k(2)\subset\L$.
For an arbitrary $\bx\in D_k(2)$ write, using \eqref{modulus2:eq}:
\begin{align}\label{malost:eq}
x_d - \Phi_0(\hat\bx) = &\ x_d - \Phi(\hat\bx)+ (\Phi(\hat\bx) - \Phi_0(\hat\bx))\notag\\
\ge &\ x_d - \Phi(\hat\bx) -  2\a^{-1}r^{k+1}\varepsilon(2\a^{-1}r^{k+1})  \notag\\
\ge &\ \a^{-1} r^k -
2\a^{-1} r^k   r\varepsilon(2\a^{-1}r^{k+1})\notag\\
= &\ \a^{-1} r^k \bigl[1 - 2 r \varepsilon(2\a^{-1}r^{k+1})\bigr].
\end{align}
Estimating
\[
\a^{-1}r^{k+1}\le r e^{-A}
\]
for all $k\le K$, and taking a sufficiently large $A$, we can guarantee
that the right hand side of \eqref{malost:eq} is positive
which proves that $D_k(2)\subset \Pi$. Furthermore, since $\Phi(\hat{\mathbf 0})=0$
and $|\nabla\Phi(\hat\bx)|\le M$, for any $\bx\in D_k(2)$ we have
\begin{equation*}
|x_d|\le |x_d - \Phi(\hat\bx)|
+ |\Phi(\hat\bx)|\le 6\a^{-1} r^{k+1} + M|\hat\bx|
\le 12\lu M\ru \a^{-1} r^{k+1},
\end{equation*}
so that
\[
|\bx|\le \sqrt{x_d^2 + |\hat\bx|^2}\le 13\lu M\ru \a^{-1} r^{k+1},
\]
and hence $\bx\in B(\mathbf 0, 13\lu M\ru \a^{-1} r^{k+1})$, as claimed.
Thus \eqref{dk2:eq} is proved.

\underline{Step 2.} Let  $B(\bx_j, \ell_j), \ell_j: =\ell(\bx_j)$ be the balls
from Proposition \ref{hor:prop}, which form a covering of $\R^d$
with the finite intersection property. Let $J\subset\Z$ be the set of all indices
$j$ such that $B(\bx_j, 4\ell_j)\subset D_k(2)$.  Let us prove that
\begin{equation}\label{pokr:eq}
D_k(1)\subset \bigcup_{j\in J} B(\bx_j, \ell_j).
\end{equation}
It suffices to show for arbitrary $\bx\in D_k(2)$,
that if $\bw\in B(\bx, 4\ell(\bx))$, but $\bw\notin D_k(2)$,
then $B(\bx, \ell(\bx))\cap D_k(1) = \varnothing$.
In view of \eqref{dist_ell:eq},
$\ell(\bx) = (32\lu M\ru)^{-1}(x_d - \Phi(\hat\bx))$.

We consider separately three cases.

\textit{Case 1. }Suppose that $\bw\in B(\bx, 4\ell(\bx))$
and $w_d - \Phi(\hat\bw)< \a^{-1} r^k$. Then by \eqref{linside:eq},
\begin{align*}
x_d - \Phi(\hat\bx) \le &\ w_d -\Phi(\hat\bw) + \lu M \ru |\bw - \bx|\\
< &\  \a^{-1} r^k + \frac{1}{8}(x_d - \Phi(x_d)).
\end{align*}
Thus $x_d - \Phi(\hat\bx)< 8(7\a)^{-1} r^k$, so that
for any $\by\in B(\bx, \ell(\bx))$ we have by \eqref{linside:eq}
again that
\begin{equation*}
y_d - \Phi(\hat\by)\le  x_d - \Phi(\hat\bx) + \lu M\ru |\bx-\by|
\le \frac{33}{32}(x_d - \Phi(\hat\bx)) < 2 \a^{-1} r^k,
\end{equation*}
and hence $B(\bx, \ell(\bx))\cap D_k(1) = \varnothing$.

\textit{Case 2.}
Suppose that $\bw\in B(\bx, 4\ell(\bx))$
and $w_d - \Phi(\hat\bw)> 6\a^{-1} r^{k+1}$.
 Then by \eqref{linside:eq},
\begin{align*}
x_d - \Phi(\hat\bx) \ge &\ w_d - \Phi(\hat\bw) - \lu M\ru |\bw-\bx|\\
> &\ 6\a^{-1} r^{k+1} - \frac{1}{8}(x_d - \Phi(x_d)).
\end{align*}
Thus $x_d - \Phi(\hat\bx)> 48(9\a)^{-1} r^{k+1}$, so that
for any $\by\in B(\bx, \ell(\bx))$ we have
\begin{equation*}
y_d - \Phi(\hat\by)\ge  x_d - \Phi(\hat\bx) - \lu M \ru |\bx-\by|
> \frac{31}{32} (x_d - \Phi(\hat\bx)) >  3 \a^{-1} r^{k+1},
\end{equation*}
and hence $B(\bx, \ell(\bx))\cap D_k(1) = \varnothing$ again.

\textit{Case 3.}
Suppose now that $\bw\in B(\bx, 4\ell(\bx))$, but $|\hat\bw|\ge 2\a^{-1} r^{k+1}$.
By virtue of \textit{Case 2} above, we may assume
that $x_d-\Phi(\hat\bx)\le 6\a^{-1}r^{k+1}$.
Consequently,
\[
|\hat\bx| \ge |\hat\bw| - |\bx - \bw|
\ge 2\a^{-1}r^{k+1} - \frac{1}{8\lu M\ru} (x_d - \Phi(\hat\bx))
\ge \frac{5}{4}\a^{-1} r^{k+1}.
\]
Thus for any $\by\in B(\bx, \ell(\bx))$,
\[
|\hat\by|\ge |\hat\bx| - |\bx-\by|\ge |\hat\bx|
- \frac{1}{32\lu M\ru}(x_d - \Phi(\hat\bx))
\ge \frac{5}{4}\a^{-1} r^{k+1} - \frac{1}{5\lu M \ru} \a^{-1} r^{k+1}
> \a^{-1} r^{k+1},
\]
so that $B(\bx, \ell(\bx))\cap D_k(1) = \varnothing$. This proves
\eqref{pokr:eq}.

\underline{Step 3.} Let $\psi_j$ be the partition of unity from Proposition
\ref{hor:prop}.
According to \eqref{pokr:eq}
 \begin{equation*}
q_{k, \mathbf 0}(\bx) = \sum_{j\in J} \phi_j(\bx),\ \
\phi_j(\bx) =   q_{k, \mathbf 0}(\bx)\psi_j(\bx).
\end{equation*}
Thus the trace norm
\begin{equation*}
Z = \|q_{k, \mathbf 0}\op_\a^l(b)\bigl(g_p(T(1; \L)) - g_p(T(1; \Pi))\bigr)\|_{\GS_1}
\end{equation*}
is estimated as follows:
\begin{equation}\label{zbyz:eq}
Z\le\sum_{j\in J}Z_j,\ \
Z_j = \| \op^l_\a( \phi_j b) \bigl(g_p(T(1; \L)) - g_p(T(1; \Pi))\bigr)\|_{\GS_1}.
\end{equation}
For each $Z_j$ we use Lemma \ref{flat_boundary_b:lem} with
some $\vark \in (0, \vark_p)$.
Since $\bx_j\in D_k(2)$, we have
\begin{equation}\label{ellj:eq}
\frac{1}{32\lu M\ru}\a^{-1} r^k\le \ell_j \le \frac{6}{32\lu M\ru} \a^{-1} r^{k+1},
\end{equation}
and as $k\le K$, this means that $\ell_j\le r e^{-A}$. For sufficiently
large $A$, we have $\ell_j\le 1$, so that
\begin{equation*}
\SN^{(d+1, d+2)}(b\phi_j; \ell_j, \rho)
\le C\SN^{(d+1, d+2)}(b; 1, \rho).
\end{equation*}
By \eqref{dk2:eq} and by definition
of the set $J$, the ball $B(\bx_j, 4\ell_j), j\in J$ satisfies
the condition \eqref{distance:eq} with
\[
t = \a^{-1}r^{k+2}.
\]
for all $r\ge 13\lu M\ru$. Furthermore, in view of \eqref{ellj:eq},
\[
5\lu M \ru r < t\ell_j^{-1} <  32\lu M\ru r^2,\ j\in J.
\]
Thus for sufficiently large $r$ the first condition in
\eqref{ells:eq} is satisfied uniformly in $k$.
Now estimate for $k\le K$:
\[
t\ell_j^{-1}\vare(4t)< 32\lu M\ru r^2 \vare(r^2 e^{-A}).
\]
For sufficiently large $A$ this quantity is arbitrarily small, and hence
the second condition in \eqref{ells:eq} is also satisfied.
Finally, $\a\ell_j \rho\ge cr^k\rho\ge c'$. Thus by Lemma \ref{flat_boundary_b:lem},
\begin{equation*}
Z_j\le C  (\a\ell_j \rho)^{d-1} \CR_{\vark}(\a; \ell_j, \rho, t)
\SN^{(d+1, d+2)}(b; 1, \rho),
\end{equation*}
uniformly in $j\in J$. By definition \eqref{remainder:eq},
\begin{align}\label{Wpk:eq}
\CR_{\vark}(\a; \ell_j, \rho, t)
\le &\ C \bigl[ r^{-\vark} + r^{4(d-1)} r^{-\vark k} + r^{4(d-1)}
\bigl(r^2\varepsilon(r^{2}e^{-A})\bigr)^{\vark}\bigr]\notag\\
=: &\ \tilde w_k(r, A),
\end{align}
for all $\rho \ge c$, uniformly in $j\in J$.
By \eqref{zbyz:eq},
\begin{equation*}
Z\le \sum_{j\in J} Z_j\le C
(\a\rho)^{d-1}  \SN^{(d+1, d+2)}(b; 1, \rho)   \tilde w_k(r, A)
\sum_{j\in J} \bigl(\ell(\bx_j)\bigr)^{-1}.
\end{equation*}
Due to the finite intersection property (see Proposition \ref{hor:prop}),
and to the property
$B(\bx_j, 4\ell(\bx_j))\subset D_k(2)$ for all $j\in J$, the sum on the
right-hand side is estimated by
the integral
\begin{equation*}
\int_{D_k(2)} \ell(\bx)^{-1} d\bx
 = \underset{|\hat\bx|\le 2\a^{-1} r^{k+1}}\int
 \int_{\a^{-1} r^k}^{6\a^{-1} r^{k+1}} t^{-1} dt d\hat\bx
 \le C\a^{1-d} r^{(k+1)(d-1)} \log r,
\end{equation*}
and hence
\[
Z\le C r^{(k+1)(d-1)} \rho^{d-1} w_k(r, A)
\SN^{(d+1, d+2)}(b; 1, \rho),\ \
w_k(r, A) =  \log r\ \tilde w_k(r, A).
\]

\underline{Step 4.}
To complete the proof of \eqref{brick:lem} it remains
to show that $w_k(r, A)$ is a $\CW$-sequence, i.e. it
satisfies the property \eqref{limitzero:eq}.
To this end consider each term in the definition
of $w_k= \log r \ \tilde w_k$ (see \eqref{Wpk:eq}) separately. Clearly,
\[
\sup_A \sup_\a\frac{\log r}{\log\a}
\sum_{k\le K} r^{-\vark}\le r^{-\vark}\to 0, \ r\to\infty,
\]
so this sum satisfies \eqref{limitzero:eq}.
Consider now
\begin{equation*}
\frac{\log r}{\log\a}
\sum_{k\le K} r^{4(d-1)} r^{-\vark k}\le \frac{\log r}{\log\a}\
r^{d(d-1)} \frac{1}{1-r^{-\vark}}\to 0, \ \a\to\infty,
\end{equation*}
so this sum also satisfies \eqref{limitzero:eq}.
For the last term in the definition \eqref{Wpk:eq}, note that
\[
r^{k+2}\a^{-1}\le r^2 e^{-A},
\]
and hence
\begin{equation*}
\sup_\a \frac{\log r}{\log\a}
\sum_{k\le K} r^{4(d-1)} (r^2 \vare(r^{k+2}\a^{-1}))^{\vark}
\le r^{4(d-1)} (r^2 \vare(r^2 e^{-A}))^{\vark}\to 0, \ A\to\infty,
\end{equation*}
which follows from the fact that
 $\vare(s)\to 0$ as $s\to 0$.
Therefore $w_k$ satisfies \eqref{limitzero:eq},
and the proof of the Lemma is now complete.
\end{proof}


\section{Asymptotics of the trace \eqref{b_trace:eq}}\label{asymptotics:sect}

Now we compute the asymptotics of the trace
$\GT_\a(q_{k, \bm} b; \Pi_{k, \bm}, \Om; g)$
(see \eqref{b_trace:eq} for definition of $\GT_\a$, and
\eqref{pim:eq} for definition of $\Pi_{k, \bm}$),
where $k = 0, 1, \dots$,  $\bm\in\Z^{d-1}$,
\begin{equation}\label{gp:eq}
g(t) = t^p-t,\ \  \textup{with some}\ \  p\in\mathbb N.
\end{equation}
As before we assume that $\L$ and $\Om$ are graph-type domains, but we also need
stronger smoothness conditions on $\Psi$,
and some specific restrictions on the orthogonal
transformations $\boldO_{\L}, \boldO_{\Om}$.

\begin{cond}\label{graph_type1:cond}
Let $\Phi\in\plainC1(\R^{d-1})$,
$\Psi\in\plainC3(\R^{d-1})$ be some real-valued functions satisfying
\eqref{gradient:eq} and
\begin{equation}\label{gradient11:eq}
\|\nabla^2\Psi\|_{\plainL\infty}
+ \|\nabla^3\Psi\|_{\plainL\infty}\le \tilde M,
\end{equation}
with some $\tilde M>0$.
Assume that $\Phi(\hat{\bold0}) = \Psi(\hat{\bold0}) = 0$.
The domain $\L$ is defined as
$\L = \G(\Phi; \boldI, \mathbf 0)$, and there is an index $l = 1, 2, \dots, d$ such that
the domain $\Om$ is defined by
\begin{equation}\label{omcircle:eq}
\Om =
\begin{cases}
\Om^{(+)}= \{\bxi: \xi_l > \Psi(\overc\bxi)\},\\[0.2cm]
\textup{or}\ \ \Om^{(-)} = \{\bxi: \xi_l < \Psi(\overc\bxi)\},
\end{cases}
\overc\bxi = (\xi_1, \dots, \xi_{l-1}, \xi_{l+1}, \dots, \xi_d).
\end{equation}
\end{cond}

One can easily see that the above definition of the domain $\Om$ in fact
describes $\Om$ as a graph-type domain $\G(\tilde\Psi; \boldO, \mathbf 0)$
with some easily identifiable orthogonal transformation $\boldO$ and function
$\tilde\Psi$. For example,
if $l = d-1$, then for the domain $\Om^{(-)}$
the entries $O_{js}$
of the matrix $\boldO$ are given by
\begin{equation*}
O_{js} =
\begin{cases}
\d_{j,s},\ 1\le j\le d-2,\\
-\d_{j+1, s},\ j = d-1,\\
\d_{j-1, s},\ j = d,
\end{cases}
\end{equation*}
and the function $\tilde\Psi$ by $\tilde\Psi(\hat\bxi)= -\Psi(\hat\bxi)$.

In what follows one of the main players will be the set
\begin{equation}\label{ompm:eq}
\Om^{(\pm)}(\hat\bxi) = \{t\in \R: (\hat\bxi, t)\in\Om^{(\pm)}\} =
\{\xi_d: \pm \xi_l> \pm \Psi(\overc \bxi)\}.
\end{equation}
We are interested in the structure of the set
$\Om^{(\pm)}(\hat\bxi)\cap (-2\rho, 2\rho)$ for
$\hat\bxi\in\CC_\rho^{(d-1)}$ with an appropriate $\rho >0$, see \eqref{cube:eq} for the
definition of the cube $\CC_\rho^{(n)}$.
The set  $\Om^{(\pm)}(\hat\bxi)\cap (-2\rho, 2\rho)$
is either empty, or it
is an open set, i.e. it is a countable union of disjoint open intervals whose
endpoints either coincide with $\pm 2\rho$, or lie inside of the interval $(-2\rho, 2\rho)$.
Denote
by $\SX^{(\pm)}(\hat\bxi)$ the set of those endpoints which lie strictly inside
$(-2\rho, 2\rho)$. Define also
\begin{equation*}
\SX(\hat\bxi) = \{\xi_d\in(-2\rho, 2\rho): \Psi(\overc \bxi) = \xi_l\}.
\end{equation*}
Obviously, $\SX^{(\pm)}(\hat\bxi)\subset \SX(\hat\bxi)$. In case
the set $\SX(\hat\bxi)$ is finite we use the quantity $m_\d(\SX(\hat\bxi))$
introduced in \eqref{rhok:eq}.
The next lemma is of primary importance:

\begin{lem}\label{bigdim:lem}
Let $\Psi\in\plainC3(\R^{d-1})$ be a function satisfying
Condition \ref{graph_type1:cond}, and let
the sets $\SX^{(\pm)}(\hat\bxi), \SX(\hat\bxi)\subset (-2\rho, 2\rho)$ be
as defined above. Then
\begin{enumerate}
\item
For almost all $\hat\bxi\in \CC^{(d-1)}_{\rho}$:
\begin{enumerate}
\item
the set $\SX(\hat\bxi)$ is finite,
\item
$\SX(\hat\bxi) = \SX^{(\pm)}(\hat\bxi)$,
\item
the disjoint open intervals forming $\Om^{\pm}(\hat\bxi)$ have \underline{distinct}
endpoints.
\end{enumerate}
\item
For any $\d \in (0, 2)$
the function $m_{\d}\bigl(\SX(\hat\bxi)\bigr)$ satisfies the
bound
\begin{equation}\label{reciprocal_om:eq}
\int_{\hat\bxi\in \CC^{(d-1)}_{\rho}} m_\d(\SX(\hat\bxi)) d\hat\bxi
\le C \rho^{d-1-\d}
\bigl(1+(\rho + \rho^2)\tilde M\bigr).
\end{equation}
The constant
$C$ depends only on $\d$ and dimension $d$.
\end{enumerate}

\end{lem}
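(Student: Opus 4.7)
The plan is to reduce the lemma via Fubini to a one-dimensional estimate for a single $\plainC{3}$ function. For $l = d$ the set $\SX(\hat\bxi)$ is at most a singleton (when $\Psi(\hat\bxi) \in (-2\rho, 2\rho)$), so $m_\d = (4\rho)^{-\d}$ identically and \eqref{reciprocal_om:eq} follows directly from $|\CC^{(d-1)}_\rho| \leq (4\rho)^{d-1}$; assume henceforth $l \leq d-1$. Decompose $\hat\bxi = (\bxi', \xi_l)$ with $\bxi' = (\xi_1, \ldots, \xi_{l-1}, \xi_{l+1}, \ldots, \xi_{d-1}) \in \R^{d-2}$, and define $\phi(t) := \Psi(\xi_1, \ldots, \xi_{l-1}, \xi_{l+1}, \ldots, \xi_{d-1}, t)$ with $\bxi'$ frozen; then $\SX(\hat\bxi) = \phi^{-1}(\xi_l) \cap (-2\rho, 2\rho)$, and $\phi \in \plainC{3}(\R)$ inherits $\|\phi''\|_\infty, \|\phi'''\|_\infty \leq \tilde M$. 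Both parts of the lemma then become assertions about a fixed $\phi$ integrated successively in $\xi_l$ and in $\bxi' \in \CC^{(d-2)}_\rho$; the volume $(4\rho)^{d-2}$ of this cube supplies the $\rho^{d-2}$ prefactor.

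For Part (1), I apply Sard's theorem to $\phi$: its critical values form a Lebesgue-null subset of $\R$, so for almost every $\xi_l$ every zero of $\phi - \xi_l$ is transversal. Simple zeros are isolated, hence finite on the compact interval (1a); each is a sign change, hence an endpoint of an interval of $\Om^{(\pm)}(\hat\bxi)$, giving $\SX = \SX^{(\pm)}$ (1b); and no interval degenerates, since that would require a local extremum at the forbidden level $\xi_l$ (1c). Fubini transports these conclusions to a.e. $\hat\bxi \in \CC^{(d-1)}_\rho$; the two additional exceptional conditions $\xi_l = \phi(\pm 2\rho)$ have measure zero in $\xi_l$ at each fixed $\bxi'$, hence measure zero globally.

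For Part (2), the reduction leaves the one-dimensional bound
\begin{equation*}
\int_{-2\rho}^{2\rho} m_\d\bigl(\phi^{-1}(u) \cap (-2\rho, 2\rho)\bigr) du \leq C \rho^{1-\d}\bigl(1 + (\rho + \rho^2)\tilde M\bigr).
\end{equation*}
Split by $N(u) := |\phi^{-1}(u) \cap (-2\rho, 2\rho)|$. When $N(u) \leq 1$, definition \eqref{mempty:eq} gives $m_\d = (4\rho)^{-\d}$, contributing $\leq C\rho^{1-\d}$. When $N(u) \geq 2$, order the zeros $t_1 < \cdots < t_N$ with gaps $h_j = t_{j+1}-t_j$; then $m_\d \leq 2 \sum_j h_j^{-\d}$. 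Identifying each adjacent pair with its left endpoint $t_1$ and changing variable $u = \phi(t_1)$ yields
\begin{equation*}
\int du \sum_j h_j(u)^{-\d} = \int_{(-2\rho,2\rho)} |\phi'(t_1)| h(t_1)^{-\d} dt_1.
\end{equation*}
Rolle's theorem places a zero $s_*$ of $\phi'$ in $(t_1, t_2)$, whence $|\phi'(t_1)| \leq \tilde M h(t_1)$; substitution produces the bound $C\tilde M \rho^{2-\d}$, which is valid for $\d \in (0, 1)$ and supplies the $\rho \tilde M$ contribution.

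The range $\d \in [1, 2)$ is where the $\plainC{3}$ hypothesis becomes essential. Writing $m = (t_1+t_2)/2$ and Taylor-expanding $\phi(m \pm h/2)$ to third order, the identity $\phi(m+h/2) = \phi(m-h/2)$ has an antisymmetric part that upgrades the Rolle bound to the midpoint estimate $|\phi'(m)| \leq C\tilde M h^2$, and a symmetric part yielding $u - \phi(m) = \phi''(m) h^2/8 + O(\tilde M h^3)$; in particular $|\phi(s_*) - u| \leq \tilde M h^2/2$. Rewriting via the layer-cake identity
\begin{equation*}
\int du \sum_j h_j(u)^{-\d} = \d \int_0^{4\rho} s^{-\d-1} F(s)\, ds, \qquad F(s) := \int \#\{j : h_j(u) \leq s\}\, du,
\end{equation*}
and combining the bound $F(s) \leq 4\rho\tilde M s$ (from Rolle) with $F(s) \leq \tilde M s^2 \cdot \#\{\text{critical points of } \phi\}$ (from the midpoint estimate and the disjointness of gap intervals, which makes the assignment of an adjacent pair to its interior critical point injective) in a two-regime integration in $s$ yields the remaining $\rho^2\tilde M$ term. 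The main obstacle is precisely this last step: under the bare $\plainC{3}$ hypothesis the number of critical points of $\phi$ on $(-2\rho, 2\rho)$ need not be finite, so one cannot simply multiply through by it; the correct bookkeeping uses that the total $u$-measure $\sum_a \tilde M s^2$ accumulated by critical points in the midpoint bound is in fact controlled, via the disjointness of gap intervals and the total variation of $\phi$, by a quantity of size $O(\rho\tilde M s)$. The cubic Taylor remainder, available only thanks to $\Psi \in \plainC{3}$ (not $\plainC{2}$), is what produces the improved $h^2$ exponent in the midpoint bound and hence convergence all the way up to $\d = 2$.
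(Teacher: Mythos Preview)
Your reduction to a one-dimensional problem via Fubini, the trivial $l=d$ case, and the Sard argument for Part~(1) all match the paper's route (which defers the lemma to Theorem~\ref{bigdim:thm} and then to Lemma~\ref{widom2:lem} in Appendix~1); for Part~(2) with $\d\le 1$ your co-area-plus-Rolle argument is a clean alternative to the paper's and delivers the correct bound. The genuine gap is in the range $\d\in(1,2)$, which is precisely the range the paper needs. Your layer-cake bound $F(s)\le \tilde M s^2\cdot\#\{\text{critical points}\}$ is vacuous when the critical set is infinite, and the fix you sketch --- controlling the accumulated $u$-measure by $O(\rho\tilde M s)$ via ``disjointness and total variation'' --- simply reproduces your first bound $F(s)\le 4\rho\tilde M s$, which makes $\int_0 s^{-\d-1}F(s)\,ds$ diverge for every $\d\ge 1$. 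Even when the number $N_c$ of critical points is finite, optimizing the two-regime splitting yields a bound proportional to $N_c^{\d-1}$, which is not controlled by $\tilde M$. The midpoint estimate $|\phi'(m)|\le C\tilde M h^2$ cannot be transported to the endpoint $t$ without losing a factor of $h$ (since $|\phi'(t)-\phi'(m)|\le\tilde M h/2$), so it does not improve your co-area integrand $|\phi'(t)|h(t)^{-\d}$ beyond the Rolle bound $\tilde M h^{1-\d}$.

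The paper avoids counting critical points entirely by decomposing $\{\phi'\ne 0\}$ into maximal open intervals $I$, bounding each component $J$ of $\{\phi>s\}$ from below by $x_I(s)-\b_I^{(-)}$ (the distance from the right endpoint of $J$ to the left endpoint of the monotonicity interval $I$ containing it), and then integrating by parts in $s$ over $\phi(I)$. For $1<\d<2$ this produces $\int_I\phi''(x)\bigl[(x-\b_I^{(-)})^{1-\d}-|I|^{1-\d}\bigr]\,dx$; when both endpoints of $I$ are zeros of $\phi'$ (all but at most two intervals), Rolle applied to $\phi'$ gives a zero of $\phi''$ in $I$, hence $|\phi''|\le\|\phi'''\|_\infty|I|$ on $I$, and the contribution of $I$ is $O(\|\phi'''\|_\infty|I|^{3-\d})$. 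Since $3-\d>1$, the sum $\sum_I|I|^{3-\d}\le(4\rho)^{2-\d}\sum_I|I|\le C\rho^{3-\d}$ converges no matter how many intervals there are. Organizing the sum over monotonicity intervals rather than over critical points is the missing idea.
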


This lemma follows immediately from Theorem \ref{bigdim:thm}.

First we establish the asymptotics for the case $\nabla\Phi(\hat\bx_{k, \bm}) = \mathbf 0$.
Throughout this section all $\CW$-sequences
$w_k(r, A)$ do not depend on the
functions $\Phi\in\plainC1, \Psi\in\plainC3$, but
may depend on the constants $M$ in \eqref{gradient:eq} and
$\tilde M$ in \eqref{gradient11:eq}.

\begin{lem}\label{No1:lem}
Assume that
\begin{enumerate}
\item
$g$ is given by \eqref{gp:eq},
\item
$\L, \Om$
are two graph-type domains satisfying Condition \ref{graph_type1:cond},
\item
$0\le k\le K$ with $K$ defined in \eqref{k_upper_bound:eq},
\item
$b$ is a symbol satisfying \eqref{bsupport_inf:eq},
\item
$\nabla\Phi(\hat\bx_{k, \bm}) = 0$.
\end{enumerate}
Then
\begin{align*}
\biggl|  \GT_\a(q_{k, \bm} b; \Pi_{k, \bm}, \Om; g)
-  \a^{d-1}\log r\ \GA(g) &\ \GW_1(\s_{k, \bm} b; \p\L, \p\Om)\biggr| \\[0.2cm]
\le &\ r^{(k+1)(d-1)}w_k(r, A)
\SN^{(2, 2)}(b, 1, \rho),
\end{align*}
with some $\CW$-sequence $w_k(r, A)$ independent of the symbol $b$
and of the point $\bm\in\Z^{d-1}$, uniformly
in $\rho\in [c, C]$ with arbitrary positive constants
$c, C$ such that $c < C$.
\end{lem}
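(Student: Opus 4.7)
The plan is to exploit the fact that $\nabla\Phi(\hat\bx_{k,\bm})=\bzero$ makes $\Pi_{k,\bm}$ a \emph{horizontal} half-space, then reduce to the one-dimensional model problem of Section \ref{1_dim:sect} via a partial Fourier transform in the tangential variable $\hat\bx$. First, by \eqref{pim:eq} the hypothesis yields $\Pi_{k,\bm}=\{\bx:x_d>\Phi(\hat\bx_{k,\bm})\}$, and using the unitary equivalence \eqref{unitary_equivalence:eq} together with the invariance \eqref{gw_invariance:eq} I would translate to assume $\hat\bx_{k,\bm}=\hat\bzero$ and $\Phi(\hat\bzero)=0$, so that $\chi_{\Pi_{k,\bm}}(\bx)=\chi_{\R_+}(x_d)$. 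The cutoff $q_{k,\bm}$ then factors as $\z_k\bigl(\a(x_d-\Phi(\hat\bx))\bigr)\s_{k,\bm}(\a\hat\bx r^{-k-1})$; an application of \eqref{modulus2:eq} plus the fact that on $\supp\s_{k,\bm}$ one has $|\hat\bx|\lesssim \a^{-1}r^{k+1}$ shows that replacing $\Phi(\hat\bx)$ by $0$ inside the $\z_k$-factor costs at most a term of the form $r^{(k+1)(d-1)}w_k(r,A)$ with $w_k$ a $\CW$-sequence (exactly as in \eqref{Wpk:eq}). After this replacement, $\chi_{\Pi_{k,\bm}}$, $P_{\Om,\a}$ and the $\z_k$-factor all commute with the partial Fourier transform $\CF_{\hat\bx\to\hat\bxi}$.

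Next, conjugate by $\CF_{\hat\bx\to\hat\bxi}$ (scaled by $\a$). The resulting operator becomes a direct integral over $\hat\bxi\in\R^{d-1}$, whose fiber at $\hat\bxi$ is a one-dimensional operator on $\plainL2(\R_{x_d})$ of the form $\op_\a^l(\tilde b_{\hat\bxi})\,g\bigl(T(1;\R_+,\Om(\hat\bxi))\bigr)$, where $\Om(\hat\bxi)=\{\xi_d:(\hat\bxi,\xi_d)\in\Om\}$ and the fibered symbol $\tilde b_{\hat\bxi}(x_d,\xi_d)$ inherits the cutoff structure of $b\s_{k,\bm}\z_k$ in the transverse variables. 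Taking the trace gives
\begin{equation*}
\GT_\a(q_{k,\bm}b;\Pi_{k,\bm},\Om;g)=\Bigl(\frac{\a}{2\pi}\Bigr)^{d-1}\int_{\R^{d-1}}\tr\bigl(\op_\a^l(\tilde b_{\hat\bxi})\,g(T(1;\R_+,\Om(\hat\bxi)))\bigr)\,d\hat\bxi,
\end{equation*}
where the transverse $\hat\bxi$-integration is over a region of volume $\sim(\a^{-1}r^{k+1}\rho)^{d-1}$ dictated by the supports of $\s_{k,\bm}$ and $b$. For each $\hat\bxi$ in a full-measure subset of the cube $\CC^{(d-1)}_\rho$ (by Lemma \ref{bigdim:lem}(1)), the set $\Om(\hat\bxi)\cap(-2\rho,2\rho)$ is a finite union of disjoint open intervals with endpoint set $\SX(\hat\bxi)$, so Theorem \ref{cross_section:thm} is directly applicable and yields the fiber-wise asymptotics with main term $\GA(g)\sum_{\xi_d\in\SX(\hat\bxi)}\tilde b_{\hat\bxi}(0,\xi_d)\log r$.

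Integrating this main term over $\hat\bxi$ and unwinding the partial Fourier transform and translation, I would match it to $\a^{d-1}\log r\cdot\GA(g)\,\GW_1(\s_{k,\bm}b;\p\L,\p\Om)$: the flatness $\nabla\Phi(\hat\bzero)=\bzero$ forces the unit normal to $\p\L$ at the base point to be $\be_d$, so $|\bn_\L(\bx)\cdot\bn_\Om(\bxi)|$ in \eqref{w1:eq} reduces to the $d$-th component of $\bn_\Om(\bxi)$, and the coarea-type identity relating the surface measure on $\p\Om$ to the counting measure $\sum_{\xi_d\in\SX(\hat\bxi)}$ produces exactly the fiberwise sum. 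The remainder terms from Theorem \ref{cross_section:thm} have the form $m_\d(\SX(\hat\bxi))\cdot[\rho^\d+\a^{1-\d}\rho r+\a^{-\d}\log r]\SN^{(2,2)}(\tilde b_{\hat\bxi};L,\rho)$, and these integrate in $\hat\bxi$ against the bound \eqref{reciprocal_om:eq} of Lemma \ref{bigdim:lem} for $\d\in(0,2)$.

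The main obstacle is verifying that the accumulated remainder, after multiplication by the prefactor $(\a/(2\pi))^{d-1}$ and integration in $\hat\bxi$ over a region of volume $\sim(\a^{-1}r^{k+1}\rho)^{d-1}$, can be repackaged as $r^{(k+1)(d-1)}w_k(r,A)\SN^{(2,2)}(b;1,\rho)$ with $w_k$ a genuine $\CW$-sequence in the sense of Definition \ref{w:defn}. This requires choosing $\d$ carefully in the range $(1,2)$ so that both the $\rho^\d$ and $\a^{1-\d}\rho r$ pieces are dominated by $\log\a$ divided by $\log\a$ in the limit; it also requires uniform control of the tangential derivative norms $\SN^{(2,2)}(\tilde b_{\hat\bxi};L,\rho)$ for $L\sim \a^{-1}r^{k+1}$, which follows from $b\in\BS^{(d+2,d+2)}$ after absorbing the $L^{-n}$ factors into the rescaling built into $q_{k,\bm}$. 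Together with the error from replacing $\Phi$ by $0$ in Step 1 and Lemma \ref{brick:lem}, this produces the claimed $\CW$-sequence bound.
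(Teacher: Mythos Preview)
Your overall architecture matches the paper's: view the operator fibrewise over the tangential variables, apply the one-dimensional asymptotics of Theorem \ref{cross_section:thm} on each fibre, then integrate the remainder using Lemma \ref{bigdim:lem}. But two steps in your execution do not work as written.

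First, the ``direct integral'' claim is false. Even after you force $\z_k$ to be independent of $\hat\bx$, the multiplier $\s_{k,\bm}(\hat\bx)$ and the symbol $b(\hat\bx,x_d;\hat\bxi,\xi_d)$ still depend on $\hat\bx$; conjugation by the partial Fourier transform in $\hat\bx$ does \emph{not} turn the operator into a multiplication operator in $\hat\bxi$. Your displayed trace formula is therefore missing the $\hat\bx$-integration, and your later volume count $\sim(\a^{-1}r^{k+1}\rho)^{d-1}$ (which mixes the $\hat\bx$- and $\hat\bxi$-scales) already betrays this. The correct framework, which the paper uses, is to regard $X=q_{k,\bm}\op_\a^l(b)\,g(T(1;\Pi_{k,\bm},\Om))$ as a PDO on $\plainL2(\R^{d-1},\GH)$, $\GH=\plainL2(\R_{x_d})$, with operator-valued symbol
\[
\CX_\a(\hat\bx,\hat\bxi)=\s_{k,\bm}(\hat\bx)\,\z_k\bigl(\a(\,\cdot\,-\Phi(\hat\bx))\bigr)\,\op_\a^l\bigl(b(\hat\bx,\cdot\,;\hat\bxi,\cdot)\bigr)\,g\bigl(T(1;\R_+,\Om^{(\pm)}(\hat\bxi))\bigr),
\]
and then to invoke the trace formula of Lemma \ref{trace:lem}, which gives the \emph{double} integral $(\a/2\pi)^{d-1}\iint\tr\CX_\a(\hat\bx,\hat\bxi)\,d\hat\bx\,d\hat\bxi$. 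Theorem \ref{cross_section:thm} is then applied pointwise in $(\hat\bx,\hat\bxi)$.

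Second, your preliminary step of replacing $\Phi(\hat\bx)$ by $0$ inside the $\z_k$-factor at the operator level is both unnecessary and not justified by the reference you give: \eqref{Wpk:eq} concerns the replacement of $\L$ by $\Pi_{k,\bm}$ in $T(1;\cdot,\Om)$, a different operation altogether, and the pointwise smallness of $\z_k(\a(x_d-\Phi(\hat\bx)))-\z_k(\a x_d)$ does not by itself yield a trace-norm bound when $b$ is only $\bxi$-compactly supported as in \eqref{bsupport_inf:eq}. The paper avoids this entirely: it keeps the $\hat\bx$-dependent cutoff, rescales $x_d=r^k\a^{-1}t$ to obtain $\psi(t)=\z_k(r^k t-\a\Phi(\hat\bx))$, and uses \eqref{phiplane:eq} to show $\a r^{-k}|\Phi(\hat\bx)|\le r\vare(re^{-A})\le 1/4$, so that $\psi$ fits the template \eqref{psi_defn:eq} required by Theorem \ref{cross_section:thm}. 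Note also that after this rescaling the correct longitudinal scale is $L=\a r^{-k}\ge e^A$ (large), not $L\sim\a^{-1}r^{k+1}$ as you wrote; this is what makes the term $rL^{-1}$ in \eqref{widom1:eq} harmless. Once these two corrections are made, the remainder control via \eqref{reciprocal_om:eq} with $\d\in(1,2)$ and the identification of the leading term via Lemma \ref{area:lem} proceed exactly as you outlined.
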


\begin{proof}
Using \eqref{trans:eq}
we may assume that $\Phi(\hat\bx_{k, \bm}) = 0$.
We also assume throughout that $\a \ge 2$, $r\ge 5$, $A\ge 2$, and $c\le \rho\le C$.

\underline{Step 1: a reduction to the one-dimensional case.}
Let $\Om^{(\pm)}(\hat\bxi)$ be as defined in \eqref{ompm:eq}.
Since $\Pi_{k, \bm} = \{\bx: x_d >0\}$ (see \eqref{pim:eq}),
a straightforward calculation shows that $g_p\bigl(T(1; \Pi_{k, \bm}, \Om)\bigr)$
is a PDO in $\plainL2(\R^{d-1}, \GH), \GH = \plainL2(\R),$
with the operator-valued symbol
\[
g_p\bigl(T(1; \R_+, \Om^{(\pm)}(\hat\bxi))\bigr).
\]
Therefore the operator $X = q_{k, \bm}\op_\a(b) g\bigl(T(1; \Pi_{k, \bm}, \Om)\bigr)$
can be viewed as a PDO with the operator-valued symbol
\[
\CX_\a(\hat\bx, \hat\bxi)
= q_{k, \bm}(\hat\bx,\ \cdot\ )
\op_\a\bigl(b(\hat\bx,\ \cdot\ ; \hat\bxi, \ \cdot\ )\bigr)
g\bigl(T(1; \R_+, \Om^{(\pm)}(\hat\bxi))\bigr),
\]
 i.e.
\[
(X_\a u)(\bx) = \biggl(\frac{\a}{2\pi}\biggr)^{d-1}
\underset{\R^{d-1}}\int
\underset{\R^{d-1}}\int e^{i\a\hat\bxi\cdot(\hat\bx-\hat\by)}
\bigl(\CX_\a(\hat\bx, \hat\bxi) u(\hat\by, \ \cdot\ )\bigr)
d\hat\by  d\hat\bxi,
\]
 for any $u$ from the Schwartz class on $\R^d$.
 In order to use Lemma \ref{trace:lem} we need to find the asymptotics of the
 trace of the operator $\CX(\hat\bx, \hat\bxi)$. By definition \eqref{qkm:eq},
\[
\CX_\a(\hat\bx, \hat\bxi)
= \s_{k, \bm}(\hat\bx) \tilde\CY_\a(\hat\bx, \hat\bxi),
\]
with the operator-valued symbol
\begin{equation*}
\tilde\CY_\a(\hat\bx, \hat\bxi)
= \tilde\psi \op_\a\bigl(b(\hat\bx,\ \cdot \ ; \hat\bxi,\ \cdot\ )\bigr)
g\bigl(T(1; \R_+, \Om^{(\pm)}(\hat\bxi))\bigr), \
\tilde\psi(x_d) = \z_k(\a(x_d - \Phi(\hat\bx))).
\end{equation*}
 After the change of the variable
 \[
 x_d = r^k\a^{-1} t,
 \]
 the operator $\tilde\CY_\a(\hat\bx, \hat\bxi) $ becomes unitarily equivalent to
 \begin{equation}\label{beta:eq}
 \CY_\b(\hat\bx, \hat\bxi)
 = \psi\op_{\b}(a) g(T(1; \R_+, \Om^{(\pm)}(\hat\bxi))),\ \b = r^k,
 \end{equation}
 where
 \begin{align*}
 \psi(t) = &\ \z_k(r^k t -\a\Phi(\hat\bx))
 = v_1\bigl(t-1 - \a r^{-k}\Phi(\hat\bx)\bigr)
 v_2\bigl(tr^{-1} - 1 - \a r^{-k-1} \Phi(\hat\bx)\bigr),\\[0.2cm]
 a(t, \xi) = &\ b(\hat\bx, r^k\a^{-1} t; \hat\bxi, \xi).
 \end{align*}
 The asymptotics of $\tr \CY_\b(\hat\bx, \hat\bxi)$ is found with the help of
 Theorem \ref{cross_section:thm}. Let us check that
 the symbol $a$, the function $\psi$, and the set $\Om^{(\pm)}(\hat\bxi)$ satisfy
 the conditions of this Theorem.

 Note that $a\in \BS^{(2, 2)}$ and
\begin{equation}\label{bigL:eq}
\SN^{(2, 2)}(a; L, \rho)\le \SN^{(2, 2)}(b; 1, \rho),\
L := \a r^{-k}\ge e^A,
\end{equation}
where we have used that $k\le K$(see \eqref{k_upper_bound:eq}).
Moreover, $a(t, \xi) = 0$ for $|\xi|\ge \rho$.

Recall that $\a r^{-(k+1)} \hat\bx\in Q_{\bm}$,
so that in view of \eqref{nabla_mod:eq} and \eqref{k_upper_bound:eq},
\begin{equation}\label{phiplane:eq}
|\Phi(\hat\bx)| = |\Phi(\hat\bx) - \Phi(\hat\bx_{\bm})|
\le \vare(r^{k+1}\a^{-1}) r^{k+1} \a^{-1}\le \vare (r e^{-A}) \a^{-1} r^{k+1} e^{-A},
\end{equation}
and hence
 \begin{equation*}
 \a r^{-k} |\Phi(\hat\bx)|\le r\vare(r e^{-A}).
 \end{equation*}
 For sufficiently large $A$, we can guarantee that
 $ r\vare( r e^{-A}) \le 1/4$. Thus by the definition \eqref{v:eq}, the  function
$\psi(t)$ satisfies the requirements
\eqref{psi_defn:eq}.

From now on we always assume that $\hat\bxi$ is in 
the subset of full measure in $\CC_\rho^{(d-1)}$
such that 
the properties (1)(a)-(c) from lemma
\ref{bigdim:lem} hold.  This guarantees that $\Om^{(\pm)}(\hat\bxi)$ is of the form required
in Theorem \ref{cross_section:thm}.

\underline{Step 2: asymptotics of $\tr \CY_\b(\hat\bx, \hat\bxi)$.}
Recall that $r\ge 2$, and note also that for sufficiently large  $A$ we have
$L\ge r$, see \eqref{bigL:eq} for definition of $L$. Moreover,
$\b \rho = r^k \rho\ge c $, see \eqref{beta:eq}  for
definition of $\b$.
Thus by Theorem \ref{cross_section:thm},
\begin{align*}
\biggl| \tr \CY_\b(\hat\bx, \hat\bxi) - &\ \GA(g) \log r
\sum_{\xi\in \SX(\hat\bxi)} a(0, \xi)\biggr|\\
\le &\ Cm_\d(\SX(\hat\bxi)) \bigl(1 + \b^{1-\d} r\bigr)
\SN^{(2, 2)}(a; L, \rho),
\end{align*}
for any $\d\ge 1$.
Rewrite, remembering the definition of $a(x, \xi)$:
\begin{align*}
\biggl| \tr \CY_\b(\hat\bx, \hat\bxi) - &\ \GA(g) \log r
\sum_{\xi_d\in \SX(\hat\bxi)}
b\bigl(\hat\bx, 0; \overc\bxi, \Psi(\overc\bxi)\bigr)\biggr|\notag\\
\le &\ C m_\d(\SX(\hat\bxi))\bigl[1+\b^{1-\d} r
\bigr] \SN^{(2, 2)}(a; L, \rho),
\end{align*}
Analyze the asymptotic term on the left-hand side.
By virtue of \eqref{phiplane:eq} for any $\bxi$ we have
\begin{equation*}
|b(\hat\bx, 0; \bxi) - b(\hat\bx, \Phi(\hat\bx); \bxi)|
\le \SN^{(1, 0)}(b; 1, \rho) r e^{-A} \vare(r e^{-A}),
\end{equation*}
so that by \eqref{mosch:eq},
\begin{align*}
\bigl|\sum_{\xi_d\in\SX(\hat\bxi)}\bigl(b(\hat\bx, 0; \bxi)
- b(\hat\bx, \Phi(\hat\bx); \bxi)\bigr)\bigr|
\le &\ \#\SX(\hat\bxi)\SN^{(1, 0)}(b; 1, \rho) r e^{-A}\vare(r e^{-A})\\[0.2cm]
\le &\ (2\rho)^\d\  m_\d(\SX(\hat\bxi)) r e^{-A} \vare(r e^{-A})
\SN^{(1, 0)}(b; 1, \rho).
\end{align*}
Assume that $A$ is so large that $r\log r e^{-A} \vare(r e^{-A})\le 1$.
Thus
\begin{equation}\label{outm:eq}
\begin{cases}
\biggl| \tr \CY_\b(\hat\bx, \hat\bxi) - \GA(g) \log r
\underset{\xi_d\in \SX(\hat\bxi)}\sum
b\bigl(\hat\bx, \Phi(\hat\bx); \overc\bxi, \Psi(\overc\bxi)\bigr)\biggr|
\le  C\CL_k(\hat\bxi; r),\\[0.3cm]
\CL_k(\hat\bxi; r) = m_\d(\SX(\hat\bxi))\bigl(1+ r^{(1-\d)k+1}
\bigr) \SN^{(2, 2)}(b; 1, \rho),
\end{cases}
\end{equation}
where we have replaced $\b$ with its value, i.e. $r^k$.

\underline{Step 3:  asymptotics of $\tr X_\a$. }
By Lemma \ref{trace:lem},
\[
\tr X_\a = \biggl(\frac{\a}{2\pi}\biggr)^{d-1}
\int  \s_{k, \bm}(\hat\bx)\tr \CY_\b(\hat\bx, \hat\bxi) \
d\hat\bx d\hat\bxi,
\]
and hence, by \eqref{outm:eq},
\begin{equation}
\biggl|\tr X_\a - \biggl(\frac{\a}{2\pi}\biggr)^{d-1}
\log r \ \GA(g) \GB_\k(\a, \bm; r )\biggr|
\le C \GR_k(\a, \bm; r, A),\label{tracea:eq}
\end{equation}
where the leading term is
\begin{equation}\label{leading:eq}
\GB_k(\a, \bm; r) =  \underset{\R^{d-1}}\int
\underset{\R^{d-1}}\int \s_{k, \bm}(\hat\bx)
\sum_{\xi_d\in\SX(\hat\bxi)}
b\bigl(\hat\bx, \Phi(\hat\bx); \overc\bxi, \Psi(\overc\bxi)\bigr)d\hat\bx d\hat\bxi,
\end{equation}
and the remainder $\GR_k(\a, \bm; r)$ is
\begin{align}\label{gremainder11:eq}
\GR_k(\a, \bm; r)
= &\ \a^{d-1}\underset{\R^{d-1}}\int\underset{\R^{d-1}}\int
\s_\bm(\a\hat\bx r^{-k-1}) \CL_k(\hat\bxi; r)
d\hat\bx d\hat\bxi\notag\\[0.2cm]
\le &\ C r^{(k+1)(d-1)}
\underset{\R^{d-1}}\int \CL_k(\hat\bxi; r) d\hat\bxi.
\end{align}
Here we have used the definition \eqref{sigmakm:eq}.
Let us calculate the leading term first.
According to Lemma \ref{area:lem},
\begin{gather*}
\GB_k(\a, \bm; r) =  \int_{\p\Om}\int_{\p\L}
\s_{k, \bm}(\hat\bx) b(\bx, \bxi)
\frac{|\bn_{\p\Om}(\bxi)\cdot \be_d|}{\sqrt{1+|\nabla\Phi(\hat\bx)|^2}}
dS_{\bx} dS_{\bxi},
\end{gather*}
where
\begin{equation*}
\bn_{\p\Om}(\bxi):= \frac{\bigl(-\p_{\xi_1} \Psi(\overc\bxi),
\ \dots,
 -\p_{\xi_{l-1}} \Psi(\overc\bxi), 1, -\p_{\xi_{l+1}} \Psi(\overc\bxi),
 \dots, -\p_{\xi_d} \Psi(\overc\bxi)\bigr)}
{\sqrt{1+|\nabla \Psi(\overc\bxi)|^2}},
\end{equation*}
is the unit normal to the surface $\xi_l = \Psi(\overc\bxi)$ at the point
$\bxi  = \bigl(\xi_1, \dots, \xi_{l-1}, \Psi(\overc\bxi),
\xi_{l+1}, \dots, \xi_d\bigr)$.
On the other hand, the unit normal to $\p\L$ at
the point $\bx = \bigl(\hat\bx, \Phi(\hat\bx)\bigr)$ is given by the vector
\[
\bn_{\p\L}(\bx) =
\frac{\bigl(-\nabla\Phi(\hat\bx), 1\bigr)}{\sqrt{1+|\nabla\Phi(\hat\bx)|^2}}.
\]
Thus
\[
\biggl| \bn_{\p\Om}(\bxi)\cdot\bn_{\p\L}(\bx) -
\frac{\bn_{\p\Om}(\bxi)\cdot\be_d}{\sqrt{1+|\nabla \Phi(\hat\bx)|^2}}\biggr|
\le |\nabla\Phi(\hat\bx)-\nabla\Phi(\hat\bzero)|.
\]
By \eqref{nabla_mod:eq} the right hand side does not exceed
\[
\vare(r^{k+1}\a^{-1})\le \vare(r e^{-A}),
\]
so the main term of the asymptotics satisfies the bound
\begin{align*}
|\GB_k(\a, \bm; r) - (2\pi)^{d-1} &\ \GW_1(\s_{k, \bm}b; \p\L, \p\Om)|\notag\\[0.2cm]
\le &\ C\vare(r e^{-A}) \max|b(\bx, \bxi)|
\a^{1-d} r^{(k+1)(d-1)},
\end{align*}
uniformly in $\bm\in\Z^{d-1}$. Since $\vare(re^{-A})\to 0$ as $A\to\infty$,
the above estimate can be rewritten as follows:
\begin{align}\label{leading1:eq}
|\GB_k(\a, \bm; r) - (2\pi)^{d-1} &\ \GW_1(\s_{k, \bm}b; \p\L, \p\Om)|\notag\\[0.2cm]
\le &\  \a^{1-d} r^{(k+1)(d-1)} w^{(1)}_k(r, A) \SN^{(2, 2)}(b; 1, \rho),
\end{align}
with some $\CW$-sequence $w^{(1)}_k(r, A)$.

\underline{Step 4: estimating the remainder \eqref{gremainder11:eq}.}
Let us show that the remainder \eqref{gremainder11:eq} defines a $\CW$-sequence.
By \eqref{gremainder11:eq} and \eqref{outm:eq},
\begin{equation}
\GR_k(\a, \bm; r)\le C r^{(k+1)(d-1)}
 \bigl(1+ r^{(1-\d)k+1}
\bigr)  \int_{|\hat\bxi|\le \rho}
m_\d(\SX(\hat\bxi)) d\hat\bxi \
\SN^{(2, 2)}(b; 1, \rho),\label{gremainder:eq}
\end{equation}
for any $\d\ge 1$, where $\b = r^k$.
By \eqref{reciprocal_om:eq},
\begin{equation*}
\int_{|\hat\bxi|\le \rho} m_\d\bigl(\SX(\hat\bxi)\bigr)d\hat\bxi
\le C\rho^{d-1-\d}(1+(\rho+\rho^2) \tilde M ),
\end{equation*}
for all $\d\in [1, 2)$.
Thus $\GR_k(\a, \bm; r)$ satisfies the bound
\begin{align}\label{grweak:eq}
\GR_k(\a, \bm; r)\le &\ C
r^{(k+1)(d-1)} w^{(2)}_k(r, A)
\SN^{(2, 2)}(b; 1, \rho),\notag\\[0.2cm]
w^{(2)}_k(r, A) = &\ 1+ r^{1+ (1-\d)k}.
\end{align}
Assume that $\d >1$.
In view of \eqref{k_upper_bound:eq},
\[
\frac{1}{\log\a}\sum_{k=0}^K \bigl(1 + r^{k(1-\d)+1}\bigr)
\le \frac{1}{\log r} + \frac{1}{\log \a}\  \frac{r}{1-r^{1-\d}}.
\]
Thus the triple limit $\lim_{r\to\infty} \limsup_{A\to\infty}\limsup_{\a\to\infty}$
of the above expression equals zero.  This proves that $w_k^{(2)}$ is a $\CW$-sequence.

\underline{Step 5: end of the proof.}
According to \eqref{tracea:eq}, \eqref{leading1:eq}
and \eqref{grweak:eq},
for each $k = 0, 1, \dots, K$ we have
\begin{align*}
\bigl|\tr X_\a - \biggl(\frac{\a}{2\pi}\biggr)^{d-1} &\ \GA(g) \log r\
\GW_1(\s_{k, \bm} b; \p\L, \p\Om)\bigr|\\[0.2cm]
\le &\ r^{(k+1)(d-1)} w_k(r, A)\SN^{(2,2)}(b; 1, \rho).
\end{align*}
with some $\CW$-sequence $w_k(r, A)$. The proof is complete.
\end{proof}

Now we can remove the condition $\nabla\Phi(\hat\bx_{k, \bm}) = 0$. Recall the notation
\eqref{mphi:eq}.

\begin{lem}\label{No2:lem}
Suppose that Conditions (1)-(4) of Lemma \ref{No1:lem} are satisfied.
We assume in addition that
\begin{equation}\label{gradphipsi:eq}
M_{\Phi}\le \frac{1}{2},\ M_{\Phi} M_{\Psi}\le \frac{1}{2},\ \
\textup{if}\ \ \ l = d.
\end{equation}
Then
\begin{align*}
\biggl|  \GT_\a(q_{k, \bm} b; \Pi_{k, \bm}, \Om; g)
-  \a^{d-1}\log r\ \GA(g) &\ \GW_1(\s_{k, \bm} b; \p\L, \p\Om)\biggr| \\[0.2cm]
\le &\ r^{(k+1)(d-1)}w_k(r, A)
\SN^{(2, 2)}(b, 1, \rho),
\end{align*}
with some $\CW$-sequence $w_k(r, A)$ independent of the symbol $b$
and of the point $\bm\in\Z^{d-1}$,
uniformly in $\rho\in [c, C]$ for arbitrary constants $0<c<C$.
\end{lem}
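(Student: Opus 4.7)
The strategy is to reduce Lemma \ref{No2:lem} to Lemma \ref{No1:lem} via an orthogonal change of variables that straightens the tangent plane to $\p\L$ at the reference point $\bx_{k,\bm} = (\hat\bx_{k,\bm}, \Phi(\hat\bx_{k,\bm}))$. Let $\bn_\L$ denote the outward unit normal to $\p\L$ at $\bx_{k,\bm}$. Since $M_\Phi \le 1/2$, the angle between $\bn_\L$ and $\be_d$ is at most $\arctan(1/2) < \pi/4$. I would choose a planar rotation $\boldO \in O(d)$ in the $2$-plane spanned by $\bn_\L$ and $\be_d$, acting trivially on the orthogonal complement, with $\boldO\be_d = \bn_\L$. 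Applying \eqref{unitary_equivalence:eq} with $\boldE = (\boldO, \bx_{k,\bm})$, together with a suitable shift $\boldk_1$ to absorb the induced translation of $\Om$, the trace $\GT_\a(q_{k,\bm}b; \Pi_{k,\bm}, \Om; g)$ is converted to an analogous trace with transformed graph-type domain $\L' = \G(\Phi'; \boldI, \bzero)$ having $\nabla\Phi'(\hat\bzero) = \bzero$, transformed domain $\Om'$, and transformed symbol $b'$; the tangent half-space $\Pi_{k,\bm}$ becomes $\{x_d>0\}$, matching the corresponding $\Pi_{k,\bzero}'$ for $\L'$.

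The key geometric point is verifying that $\Om'$ satisfies Condition \ref{graph_type1:cond}, i.e.\ it is a $\plainC3$-graph-type domain of the form $\{\xi_{l'} \gtrless \Psi'(\overc\bxi)\}$ for some index $l'$, with $\|\Psi'\|_{\plainC3}$ bounded in terms of $M$ and $\tilde M$ alone. When $l = d$, the $d$-th component of the transformed unit normal to $\p\Om'$ equals the cosine of the angle between $\bn_\L$ and $\bn_\Om$,
\[
\bn_\L \cdot \bn_\Om = \frac{1 + \nabla\Phi(\hat\bx_{k,\bm})\cdot\nabla\Psi(\hat\bzero)}{\sqrt{(1+|\nabla\Phi|^2)(1+|\nabla\Psi|^2)}},
\]
which is bounded below by a positive constant precisely because $M_\Phi M_\Psi \le 1/2$ gives $1 + \nabla\Phi\cdot\nabla\Psi \ge 1/2$ via Cauchy--Schwarz; here one takes $l' = d$. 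When $l \ne d$, a direct computation of $\boldO\be_l\cdot\bn_\Om$ in the chosen planar rotation shows that the $l$-th component of $\boldO^T\bn_\Om$ stays bounded away from zero under the single assumption $M_\Phi \le 1/2$: the rotation mixes only $\be_d$ with the direction of $\nabla\Phi$, and the contribution $\cos\theta \ge 2/\sqrt{5}$ dominates the correction terms, so one takes $l' = l$. In either case the implicit function theorem with uniform derivative estimates produces the required $\Psi' \in \plainC3$ with norm controlled by $M$ and $\tilde M$.

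With $\L'$ and $\Om'$ placed in the required form, Lemma \ref{No1:lem} applies and delivers the asymptotic formula for the transformed trace with leading coefficient $\a^{d-1}\log r\,\GA(g)\,\GW_1(\s_{k,\bzero}b'; \p\L', \p\Om')$ and remainder of the required type, with the resulting $\CW$-sequence depending only on $M$ and $\tilde M$. The invariance \eqref{gw_invariance:eq} (together with Lemma \ref{linear_invariance:lem}) identifies $\GW_1(\s_{k,\bzero}b'; \p\L', \p\Om')$ with $\GW_1(\s_{k,\bm}b; \p\L, \p\Om)$, and \eqref{orthogonal:eq} gives $\SN^{(2,2)}(b';1,\rho) = \SN^{(2,2)}(b;1,\rho)$, completing the reduction. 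The main obstacle is the case analysis above: producing a uniform $\plainC3$-bound on $\Psi'$ via the implicit function theorem, and explaining the asymmetric hypothesis \eqref{gradphipsi:eq} --- namely, that when $l = d$ both normals $\bn_\L$ and $\bn_\Om$ lie near $\be_d$ and the rotation interacts with both, forcing $M_\Phi M_\Psi \le 1/2$, whereas for $l \ne d$ the normal $\bn_\Om$ essentially survives the rotation intact and $M_\Phi \le 1/2$ already suffices.
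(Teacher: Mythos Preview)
Your approach has a genuine gap. The paper does not use an orthogonal rotation but rather the shear
\[
\boldM\bx = (\hat\bx,\ x_d + \hat\bb\cdot\hat\bx), \qquad \hat\bb = \nabla\Phi(\hat\bx_{k,\bm}),
\]
together with the translation $\boldk = (\hat\bx_{k,\bm}, \Phi(\hat\bx_{k,\bm}))$. The shear, unlike a rotation, leaves the $\hat\bx$-coordinates unchanged. This is essential for two reasons, both of which your proposal misses.

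First, the cutoff $q_{k,\bm}(\bx) = \s_{k,\bm}(\hat\bx)\,\z_k\bigl(\a(x_d - \Phi(\hat\bx))\bigr)$ has a product structure in $(\hat\bx, x_d)$ that the proof of Lemma~\ref{No1:lem} relies on: it views the operator as a PDO on $\R^{d-1}_{\hat\bx}$ with an operator-valued symbol in the $x_d$-variable, with $\s_{k,\bm}(\hat\bx)$ sitting outside as a scalar factor. Under the shear one checks directly that $q_{k,\bm}(\boldM\bx+\boldk) = \s_{k,\bzero}(\hat\bx)\,\z_k\bigl(\a(x_d - \Phi_1(\hat\bx))\bigr)$ with $\Phi_1(\hat\bx) = \Phi(\hat\bx+\hat\bx_{k,\bm}) - \Phi(\hat\bx_{k,\bm}) - \hat\bb\cdot\hat\bx$ and $\nabla\Phi_1(\hat\bzero)=0$, so Lemma~\ref{No1:lem} applies verbatim with $\bm=\bzero$. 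Under your rotation, $\hat\bx$ becomes a function of \emph{all} of $\bx'$, so the transformed $q_{k,\bm}$ no longer factors as (function of $\hat\bx'$)$\times$(function involving $x_d'$ only); you cannot invoke Lemma~\ref{No1:lem} as a black box.

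Second, your claim that for $l\neq d$ one can always take $l'=l$ is incorrect. The shear gives $\boldM^T\bxi = (\hat\bxi + \hat\bb\xi_d, \xi_d)$, and for $l<d$ the paper reads off the explicit formula $\Psi_1(\overc\bxi) = \Psi(\tilde\bxi - \tilde\bb\xi_d, \xi_d) + b_l\xi_d$, so $\Om^T_{\boldM}$ is again a global graph over $\xi_l$ with $\plainC3$-norm controlled by $M,\tilde M$. Under a rotation this fails already in $d=2$, $l=1$: take $\Phi'(0)=\tfrac12$ and let $\Psi'$ attain the value $2$ at some point (there is no constraint on $M_\Psi$ when $l\neq d$). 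Your rotation $\boldO^T$ sends the tangent direction $(\Psi',1)=(2,1)$ to a multiple of $\be_1$, so the rotated curve has a vertical tangent and is not a graph over $\xi_1$; if $\Psi'$ also attains $-\tfrac12$ elsewhere, it is not a graph over $\xi_2$ either. The condition $M_\Phi M_\Psi\le\tfrac12$ in \eqref{gradphipsi:eq} enters the paper's argument only when $l=d$, where one must solve $\xi_d = \Psi(\hat\bxi - \hat\bb\xi_d)$ for $\xi_d$; this is handled by Lemma~\ref{linear:lem}, whose contraction hypothesis $|\hat\bb|\le (2M_\Psi)^{-1}$ is exactly $M_\Phi M_\Psi\le\tfrac12$.
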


\begin{proof}
We reduce the problem to the one considered in Lemma \ref{No1:lem} using  a suitable
linear transformation.
Recall the definition of $\Pi_{\bm}$:
\begin{equation*}
\Pi_{k, \bm} = \{\bx: x_d> \Phi(\hat\bx_{k, \bm})
+ \hat\bb\cdot(\hat\bx-\hat\bx_{k, \bm})\},\
\hat\bb := \nabla\Phi(\hat\bx_{k, \bm}).
\end{equation*}
Now we use \eqref{orthogonal1:eq} and \eqref{chilambda:eq}
with $\boldk = (\hat\bx_{k, \bm}, \Phi(\hat\bx_{k, \bm}))$, $\boldk_1 = \bzero$, and
the non-degenerate transformation defined by
\begin{equation*}
\boldM\bx = (\hat\bx, x_d + \hat\bb\cdot\hat\bx),
\ \ \textup{so that}\ \  \boldM^T\bx = (\hat\bx + \hat\bb x_d, x_d).
\end{equation*}
Due to the unitary equivalence \eqref{orthogonal1:eq} and \eqref{chilambda:eq},
\begin{equation*}
\GT_\a(q_{k, \bm} b; \Pi_{k, \bm}, \Om; g)
= \GT_\a(\tilde q_{k} b_{\boldM, \boldk}; \Pi, \Om^T_{\boldM}; g),
\end{equation*}
where $\Pi = \{\bx: x_d >0\}$, $\Om^T_{\boldM} = \boldM^T\Om$,
\[
b_{\boldM, \boldk}(\bx, \bxi) = b(\boldM\bx + \boldk, (\boldM^T)^{-1} \bxi),
\]
and
\begin{gather*}
\tilde q_{k}(\hat\bx, x_d) =
\s_{k, \bzero}(\hat\bx)
\z_k\bigl(\a(x_d - \Phi_1(\hat\bx)) \bigr),\\[0.2cm]
\Phi_1(\hat\bx) = \Phi(\hat\bx+\hat\bx_{k, \bm})- \Phi(\hat\bx_{k, \bm})
- \hat\bb\cdot\hat\bx.
\end{gather*}
The function $\Phi_1(\hat\bx)$
satisfies the conditions of Lemma \ref{No1:lem}, and
in particular, the condition $\nabla\Phi_1(\hat\bzero) = 0$.
Furthermore, according to \eqref{omcircle:eq}, for $l\not= d$,
\begin{equation}\label{psi1:eq}
\Om^T_{\boldM} = \{\bxi: \xi_l > \Psi_1(\overc \bxi)\}\
\ \ \  \textup{or}\ \ \
 \Om^T_{\boldM} = \{\bxi: \xi_l < \Psi_1(\overc \bxi)\},
 \end{equation}
with
\begin{equation*}
\Psi_1(\overc\bxi) = \Psi(\tilde\bxi - \tilde\bb \xi_d, \xi_d)+ b_l \xi_d,\   \ \
\tilde\bxi = (\xi_1, \dots, \xi_{l-1}, \xi_{l+1}, \dots, \xi_{d-1}).
\end{equation*}
Note that all partial derivatives of $\Psi_1$ up to order $3$ are uniformly
bounded by a constant depending only on the parameters $M$ in \eqref{gradient:eq}
and $\tilde M$ in \eqref{gradient11:eq}.
If $l = d$, then in view of \eqref{omcircle:eq},
\[
(\boldM^T)^{-1}\Om^T_{\boldM} =
\{\bxi: \xi_d > \Psi(\hat\bxi)\}\ \
\textup{or}\ \ \ (\boldM^T)^{-1}\Om^T_{\boldM} = \{\bxi: \xi_d < \Psi(\hat\bxi)\}.
\]
In view of \eqref{gradphipsi:eq}, we can use Lemma \ref{linear:lem}
with $\boldB = (\boldM^T)^{-1}$, which ensures
that $\Om^T_{\boldM}$ is again given by \eqref{psi1:eq}
with a $\plainC3(\R^{d-1})$-function $\Psi_1$, whose partial derivatives
up to order $3$ are uniformly bounded by a constant
depending only on the parameters $M$ in \eqref{gradient:eq}
and  $\tilde M$ in \eqref{gradient11:eq}.

Thus for both $l <d$ and $l = d$ the domain
$\Om^T_{\boldM}$ satisfies \eqref{omcircle:eq} with some function $\Psi_1$.
Finally, due to the condition
$M_{\Phi}\le 1/2$,
the new symbol $b_{\boldM}(\bx, \bxi)$ vanishes if $|\bxi|\ge 2\rho$.
 Thus one can apply Lemma \ref{No1:lem} with $\bm = 0$ and
 with $2\rho$ instead of $\rho$. This leads to
the estimate
\begin{align*}
\biggl|\GT_\a(\tilde q_k b_{\boldM, \boldk}; \Pi, \Om^T_{\boldM}; g)
- \a^{d-1}\log r\  \GA(g)
&\ \GW_1(\s_{k, \bzero} b_{\boldM}; \p\L_{\boldM, \boldk},
\p\Om^T_{\boldM})\biggr|\\[0.2cm]
\le &\ r^{(k+1)(d-1)}w_k(r, A) \SN^{(2, 2)}(b_{\boldM}; 1, \rho),
\end{align*}
with some $\CW$-sequence $w_k$, and with
$\L_{\boldM, \boldk} = \boldM^{-1}(\L-\boldk)$.
 According to \eqref{gw_invariance:eq},
\[
\GW_1(\s_{k, \bzero} b_{\boldM, \boldk}; \p\L_{\boldM, \boldk}, \p\Om^T_{\boldM})
 = \GW_1(\s_{k, \bm} b; \p\L, \p\Om).
\]
Furthermore,
\[
\SN^{(2, 2)}(b_{\boldM, \boldk}; 1, \rho) \le C \SN^{(2, 2)}(b; 1, \rho),
\]
with a universal constant $C$. This leads to the proclaimed bound.
\end{proof}

\begin{cor} \label{No2:cor}
Suppose that the conditions
(1)-(4) of Lemma \ref{No1:lem} are satisfied,
and that \eqref{gradphipsi:eq} holds. Then
\begin{align}\label{no2:eq}
\biggl| \GT_\a(q_{k, \bm} b; \L, \Om; g) -
\a^{d-1}\log r \ \GA(g) &\ \GW_1(\s_{k, \bm} b; \p\L, \p\Om)\biggr| \notag\\[0.2cm]
\le &\ Cr^{(k+1)(d-1)}w_k(r, A)
\SN^{(d+1, d+2)}(b, 1, \rho),
\end{align}
with some $\CW$-sequence $w_k(r, A)$, independent of the symbol $b$
and of the point $\bm\in\Z^{d-1}$,
uniformly in $\rho\in [c, C]$ for arbitrary constants $0<c<C$.
\end{cor}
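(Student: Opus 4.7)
The plan is to obtain Corollary \ref{No2:cor} as a direct combination of Lemma \ref{No2:lem} (the asymptotics on the approximating half-space) with Lemma \ref{brick:lem} (the trace-class comparison between $T(1;\L,\Om)$ and $T(1;\Pi_{k,\bm},\Om)$). First I would write, by linearity of the trace,
\begin{equation*}
\GT_\a(q_{k,\bm}b;\L,\Om;g) - \GT_\a(q_{k,\bm}b;\Pi_{k,\bm},\Om;g)
= \tr\Bigl[q_{k,\bm}\op_\a^l(b)\bigl(g(T(1;\L,\Om)) - g(T(1;\Pi_{k,\bm},\Om))\bigr)\Bigr],
\end{equation*}
and control its absolute value by the $\GS_1$-norm of the operator inside the trace.

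Next, since $g(t) = t^p - t$, I would split the difference into its $g_p$-part and its $g_1$-part and apply Lemma \ref{brick:lem} separately to each; the lemma is stated for $g_p$ with arbitrary $p \ge 1$, so both applications are legal. This gives a trace-norm bound of the form
\begin{equation*}
r^{(k+1)(d-1)} \rho^{d-1} \tilde w_k(r,A) \SN^{(d+1,d+2)}(b;1,\rho),
\end{equation*}
where $\tilde w_k(r,A)$ is the sum of the two $\CW$-sequences produced by Lemma \ref{brick:lem} (and is itself a $\CW$-sequence, directly from Definition \ref{w:defn}). Since $\rho$ is restricted to the bounded interval $[c,C]$, the factor $\rho^{d-1}$ is absorbed into the constant.

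On the other hand, Lemma \ref{No2:lem} provides the desired asymptotic formula with $\Pi_{k,\bm}$ in place of $\L$, with a remainder of the shape $r^{(k+1)(d-1)} \bar w_k(r,A)\SN^{(2,2)}(b;1,\rho)$. Using the trivial monotonicity $\SN^{(2,2)}(b;1,\rho) \le \SN^{(d+1,d+2)}(b;1,\rho)$ (valid since $d \ge 2$, so $d+1 \ge 2$ and $d+2 \ge 2$), and adding the two $\CW$-sequences $\tilde w_k + \bar w_k$ into a new $\CW$-sequence $w_k$, the triangle inequality yields \eqref{no2:eq}.

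The argument is essentially bookkeeping: the analytical content sits in Lemma \ref{No2:lem} (the actual asymptotic computation on the half-space, which invoked the one-dimensional model via Theorem \ref{cross_section:thm} and the geometric Lemma \ref{bigdim:lem}) and in Lemma \ref{brick:lem} (the trace-class approximation via the covering by balls and the application of Lemma \ref{flat_boundary_b:lem}). No genuine obstacle arises at this stage; the only point requiring a moment's attention is verifying that the two $\CW$-sequences combine correctly, which is immediate from the definition since the sum of finitely many $\CW$-sequences satisfies the triple limit \eqref{limitzero:eq}.
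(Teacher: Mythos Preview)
Your proposal is correct and matches the paper's approach exactly: the paper's proof is the single sentence ``The estimate follows directly from Lemmas \ref{brick:lem} and \ref{No2:lem},'' and your write-up simply spells out the bookkeeping (splitting $g=g_p-g_1$, absorbing $\rho^{d-1}$ for $\rho\in[c,C]$, the norm monotonicity $\SN^{(2,2)}\le\SN^{(d+1,d+2)}$, and the closure of $\CW$-sequences under finite sums) that makes this combination work.
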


\begin{proof}
The estimate follows directly from Lemmas \ref{brick:lem} and \ref{No2:lem}.
\end{proof}

The next step is to obtain the appropriate asymptotic formulas for
$q^{\downarrow}$ and $q^{\uparrow}$ (see \eqref{arrows:eq})
instead of $q_{k, \bm}$.

\begin{lem} \label{alphaa_down:lem}
Suppose that
$g$ is as in \eqref{gp:eq}, that the domains $\L, \Om$ satisfy Condition
\ref{graph_type1:cond},
and that \eqref{gradphipsi:eq} holds.
Suppose also that $b$ is a symbol satisfying \eqref{bsupport:eq}. Then
\begin{equation}\label{alphaa_down:eq}
\lim_{r\to \infty}\limsup_{A\to\infty}\limsup_{\a\to\infty}\frac{1}{\a^{d-1}\log\a}
\GT_\a(q^{\downarrow} b; \L, \Om; g) = \GA(g) \GW_1( b; \p\L, \p\Om),
 \end{equation}
uniformly in $\rho\in [c, C]$ with arbitrary constants $0 < c < C$.
\end{lem}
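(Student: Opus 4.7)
\medskip

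\noindent\textbf{Proof proposal.}
The plan is to use the decomposition \eqref{qdown:eq} of $q^{\downarrow}$ into the bricks $q_{k,\bm}$ for $k = 0, 1, \dots, K$ and $\bm\in\Z^{d-1}$, plus the ``innermost'' slab $q_{-1}$, and then apply Corollary \ref{No2:cor} to each brick and sum. First I would bound the number of $\bm$ for which $q_{k,\bm} b\not\equiv 0$: since $b$ has $\hat\bx$-support of size $O(1)$ and $\s_{k,\bm}$ is concentrated on $\hat\bx$ with $\a r^{-k-1}\hat\bx\in \bm + Q_{\bzero}$, at most $C(\a r^{-k-1})^{d-1}$ values of $\bm$ contribute. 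Multiplying by the per-brick error $r^{(k+1)(d-1)} w_k(r,A) \SN^{(d+1,d+2)}(b;1,\rho)$ from Corollary \ref{No2:cor} gives a cumulative error $\le C\a^{d-1} w_k(r,A)\SN^{(d+1,d+2)}(b;1,\rho)$ after summing over $\bm$ at fixed $k$.

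Next, for the leading term I would use that $\{\s_{k,\bm}\}_{\bm\in\Z^{d-1}}$ is a partition of unity; hence by linearity of $\GW_1$ in its argument,
\begin{equation*}
\sum_{\bm\in\Z^{d-1}} \GW_1(\s_{k,\bm}b;\p\L,\p\Om) = \GW_1(b;\p\L,\p\Om),
\end{equation*}
for every $k$. Summing Corollary \ref{No2:cor} over both $\bm$ and $k = 0,1,\dots,K$ therefore produces the leading contribution $(K+1)\,\a^{d-1}\log r\,\GA(g)\,\GW_1(b;\p\L,\p\Om)$, and by the definition \eqref{k_upper_bound:eq} of $K$ one has $(K+1)\log r = \log\a - A + O(\log r)$, so dividing by $\a^{d-1}\log\a$ yields
\begin{equation*}
\frac{(K+1)\log r}{\log\a}\,\GA(g)\,\GW_1(b;\p\L,\p\Om) \;\longrightarrow\; \GA(g)\,\GW_1(b;\p\L,\p\Om)
\end{equation*}
as $\a\to\infty$, for fixed $r$ and $A$. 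The accumulated error, divided by $\a^{d-1}\log\a$, is
\begin{equation*}
\frac{C\,\SN^{(d+1,d+2)}(b;1,\rho)}{\log\a}\sum_{k=0}^{K(\a;r,A)} w_k(r,A),
\end{equation*}
which vanishes in the iterated limit $\lim_{r\to\infty}\limsup_{A\to\infty}\limsup_{\a\to\infty}$ precisely because $\{w_k\}$ is a $\CW$-sequence in the sense of Definition \ref{w:defn}.

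It remains to control the slab $q_{-1}$, whose support lies in a boundary layer of thickness $O(\a^{-1})$. For this I would introduce a further partition $\{\s_{-1,\bm}(\hat\bx)\}$ at scale $\a^{-1}$ in $\hat\bx$, giving at most $C\a^{d-1}$ non-trivial pieces supported in boxes of side $\a^{-1}$. Each piece $q_{-1}\s_{-1,\bm}b$ fits the framework of Lemma \ref{general_trace:lem} with $\ell = \a^{-1}$, $\rho = O(1)$, $\a\ell\rho = O(1)$: the factors of $\a$ from the derivatives of $q_{-1}$ and $\s_{-1,\bm}$ are precisely absorbed by $\ell = \a^{-1}$ in the norm $\SN^{(d+1,d+1)}$. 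Thus $\|\op_\a^l(q_{-1}\s_{-1,\bm}b)\|_{\GS_1}$ is bounded uniformly in $\bm$, and summing gives $\|q_{-1}\op_\a^l(b)\|_{\GS_1} = O(\a^{d-1})$. Since $\|g(T(1;\L,\Om))\|\le C$ (as $T$ is a self-adjoint contraction and $g$ is bounded on $[0,1]$), one obtains $|\GT_\a(q_{-1}b;\L,\Om;g)|\le C\a^{d-1}$, which is negligible compared to $\a^{d-1}\log\a$.

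The main obstacle is the bookkeeping in the double sum over $k$ and $\bm$: one must verify simultaneously that the $\bm$-summation of the leading term reassembles $\GW_1(b;\p\L,\p\Om)$ (partition of unity), that the count $(\a r^{-k-1})^{d-1}$ of non-trivial $\bm$ combines cleanly with the factor $r^{(k+1)(d-1)}$ in Corollary \ref{No2:cor} to yield precisely $\a^{d-1}w_k$, and finally that the resulting $k$-sum $\sum_{k\le K} w_k(r,A)$ is absorbed by the $\CW$-sequence property \eqref{limitzero:eq}. The $q_{-1}$ slab is technically subtle because the natural anisotropy of its support conflicts with the isotropic norms $\SN^{(m,n)}$; the remedy is the auxiliary partition at scale $\a^{-1}$ described above.
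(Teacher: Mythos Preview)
Your proposal follows essentially the same route as the paper: decompose $q^{\downarrow}$ via \eqref{qdown:eq}, apply Corollary \ref{No2:cor} to each brick $q_{k,\bm}$, use that $\sum_{\bm}\s_{k,\bm}=1$ to reassemble $\GW_1(b;\p\L,\p\Om)$, count contributing $\bm$'s as $O\bigl((\a r^{-k-1})^{d-1}\bigr)$, and invoke the $\CW$-sequence property for the error.

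There is one technical slip in your handling of the innermost slab $q_{-1}$. You propose to bound $\SN^{(d+1,d+1)}(q_{-1}\s_{-1,\bm}b;\a^{-1},\rho)$, but $q_{-1}(\bx)=\z_{-1}\bigl(\a(x_d-\Phi(\hat\bx))\bigr)$ and $\Phi$ is only $\plainC1$ (Condition \ref{graph_type1:cond}); hence $q_{-1}$ need not have bounded $\hat\bx$-derivatives of order $\ge 2$, and the norm you write is not finite. The paper sidesteps this by keeping $q_{-1}$ as an exterior multiplication operator of norm $\le 1$: one writes $q_{-1}\op_\a^l(b)=\sum_k q_{-1}\phi_k\op_\a^l(b)$ for a \emph{smooth} partition $\{\phi_k\}$ by balls of radius $\a^{-1}$ covering $\supp(q_{-1}b)$, and estimates $\|\op_\a^l(\phi_k b)\|_{\GS_1}$ via \eqref{trace_symbol:eq}. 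Your auxiliary partition $\s_{-1,\bm}$ plays exactly the role of the $\phi_k$; just keep $q_{-1}$ outside the symbol and the argument goes through.
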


\begin{proof}
Represent $q^{\downarrow}(\bx)$ in accordance with \eqref{qdown:eq}:
\begin{equation}\label{qdown1:eq}
q^{\downarrow}(\bx) =
\z_{-1}\bigl(\a(x_d-\Phi(\hat\bx))\bigr)
  + \sum_{k=0}^K\sum_{\bm\in\mathbb Z^{d-1}} q_{k, \bm}(\bx),
\end{equation}
and calculate  the contribution of each term to the sought trace.

First we consider $\z_{-1}$. Since the support of the symbol $b$ in the $\bx$-variable is
the ball $B(0, 1)$, the support of $\z_{-1} b$ is contained in the set
$$
\{\bx: |x_d - \Phi(\hat\bx)|\le C\a^{-1}, |\hat\bx|<1\}.
$$
Cover this set with open balls of radius $\a^{-1}$, and  denote by
$\phi_k$, $k = 1, 2, \dots, N$ the partition of unity associated with
this covering such that  $|\nabla^l \phi_k|\le C_l \a^{l}$, for all $l = 1, 2, \dots,$
uniformly in $k$. Clearly,
\[
\SN^{(n_1, n_2)}(\phi_k b; \a^{-1}, \rho)
\le C\SN^{(n_1, n_2)}(b; 1, \rho),
\]
for any $n_1, n_2$, uniformly in $k$. Now we can estimate, using \eqref{trace_symbol:eq}:
\[
\| \z_{-1} \op_{\a}(b) g(T(1; \L, \Om))\|_{\GS_1}
\le \sum_{k=1}^N \| \op_\a(\phi_k b)\|_{\GS_1}
\le C N \SN^{(d+1, d+1)}(b; 1, \rho).
\]
Clearly, the covering  can be chosen in such a way that
$N\le C\a^{d-1}$, which implies that
\begin{equation}\label{z-1:eq}
\| \z_{-1} \op_{\a}(b) g(T(1; \L))\|_{\GS_1}\le C\a^{d-1}
\SN^{(d+1, d+1)}(b; 1, \rho).
\end{equation}
Consider now the sum on the right-hand side of \eqref{qdown1:eq}.
For each trace
$
\GT_\a(q_{k, \bm}b; \L, \Om; g)
$
we use Corollary \ref{No2:cor}, and then
sum up the obtained inequalities over $\bm\in\mathbb Z^{d-1}$ and
$k\ge 0$.
Let us handle the asymptotic coefficient first:
\begin{align*}
Y(\a, r, A):=&\ \sum_{k=0}^K\sum_{\bm\in\mathbb Z^{d-1}}\a^{d-1} \log r \ \GA(g)
\GW_1(\s_{k, \bm}b; \p\L, \p\Om)\\[0.2cm]
= &\ \a^{d-1} \log r \ \GA(g)
\GW_1(b; \p\L, \p\Om) \sum_{k=0}^K 1,
\end{align*}
where we have used that fact that $\sum_{\bm}\s_{k, \bm} = 1$ for any $k=0, 1, \dots$.
Since
\begin{equation*}
 \sum_{k=0}^K 1
=  \frac{\log \a - A}{\log r} + O(1),
\end{equation*}
we have
\[
\lim_{\a\to\infty}\frac{1}{\a^{d-1}\log\a} Y(\a, r, A)
= \GA(g) \GW_1(b; \p\L, \p\Om),
\]
for any $A\in\R$ and $r > 0$.

Let us consider the remainder.
To estimate the sum up the right-hand sides
of \eqref{no2:eq} over different values of $k$ and $\bm$, we
observe that the summation over $\bm$ for each value of $k$, is restricted to
$|\bm|\le C\a r^{-(k+1)}$, since the support of the symbol $b$ in the $\bx$-variable
is contained in the unit ball.Thus
\begin{equation*}
Z (\a, r, A):= \sum_{k=0}^K \sum_{|\bm|\le C\a r^{-(k+1)}}
w_k( r, A) r^{(k+1)(d-1)}
\le  C\a^{d-1} \sum_{k=0}^K w_k( r, A).
\end{equation*}
By definition of the $\CW$-sequence  (see \eqref{limitzero:eq}),
\[
\lim_{r\to\infty} \limsup_{A\to\infty}\limsup_{\a\to\infty}\frac{1}{\a^{d-1}\log\a}
Z(\a, r, A) = 0.
\]
Together with \eqref{z-1:eq} this leads to \eqref{alphaa_down:eq}.
\end{proof}

\begin{lem}\label{alphaa_up:lem}
Suppose that the conditions of Lemma \ref{alphaa_down:lem}
are satisfied. Then
\begin{equation}\label{alphaa_up:eq}
\lim_{r\to\infty}\limsup_{A\to\infty}
\limsup_{\a\to\infty}\frac{1}{\a^{d-1}\log\a}
\GT_\a(q^{\uparrow} b; \L, \Om; g) = 0,
\end{equation}
uniformly in $\rho\in [c, C]$ with arbitrary constants $0 < c < C$.
\end{lem}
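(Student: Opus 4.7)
My plan is to exploit the fact that $q^{\uparrow}$ is supported well inside $\L$, so that $T(1;\L,\Om)$ behaves locally as if $\L$ were the whole space, allowing me to invoke the bound \eqref{asymptotics2:eq} of Theorem \ref{localasymptotics:thm} (where the leading term vanishes because $g(1) = 0$ for $g(t) = t^p - t$). First I would introduce the Hörmander-type partition of unity $\{\psi_j\}$ from Proposition \ref{hor:prop} attached to the function $\ell$ defined in \eqref{dist_ell:eq}. For any ball $B(\bx_j,\ell_j)$ meeting $\supp q^{\uparrow}$, I would use \eqref{linside:eq} together with the defining condition $x_d - \Phi(\hat\bx) > 2r^{-1}e^{-A}$ to deduce that, for $\a$ sufficiently large relative to $re^A$, one has $x_{j,d} - \Phi(\hat\bx_j) > \a^{-1}$; consequently $\ell_j = (32\lu M\ru)^{-1}(x_{j,d}-\Phi(\hat\bx_j))$, Lemma \ref{in_ball:lem} applies, and $B(\bx_j,32\ell_j)\subset\L$. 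The same argument gives the lower bound $\ell_j \gtrsim (re^A)^{-1}$.

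Next, for each such index $j$ the ball witnesses the local representation $\L_0 = \R^d$ of Definition \ref{local_domains:defn}(2) and Condition \ref{global:cond}(\ref{space:subcond}). I would therefore apply the bound \eqref{asymptotics2:eq} of Theorem \ref{localasymptotics:thm} to $\op_\a^l(\psi_j q^{\uparrow} b)$, obtaining
\[
\|\op_\a^l(\psi_j q^{\uparrow} b)\, g(T(1;\L,\Om))\|_{\GS_1} \le C(\a\ell_j\rho)^{d-1}\,\SN^{(d+2,d+2)}(\psi_j q^{\uparrow} b;\ell_j,\rho).
\]
The critical estimate on the right factor comes from differentiating $q^{\uparrow}(\bx) = v_1^{(K)}(\a(x_d-\Phi(\hat\bx)))$: since $|\nabla^n q^{\uparrow}| \le C_n (\a r^{-K})^n$ and $\a r^{-K} \le e^{A}$ by \eqref{k_upper_bound:eq}, combined with $|\nabla^n\psi_j|\le C_n\ell_j^{-n}$ and the a priori bound $\ell_j \le C$ (forced by the compact support of $b$), one finds
\[
\SN^{(d+2,d+2)}(\psi_j q^{\uparrow} b;\ell_j,\rho) \le C(A,r,M)\,\SN^{(d+2,d+2)}(b;1,\rho),
\]
uniformly in $j$.

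To sum the bounds over $j$, I would exploit the finite intersection property of $\{B(\bx_j,\ell_j)\}$ to replace the sum by an integral:
\[
\sum_j (\a\ell_j\rho)^{d-1} \le C\a^{d-1}\rho^{d-1}\sum_j \frac{\ell_j^d}{\ell_j}
\le C\a^{d-1}\rho^{d-1}\int \ell(\bx)^{-1}\,d\bx,
\]
the integration being over a bounded neighborhood of $\supp q^{\uparrow} b$ on which $\ell(\bx) \gtrsim x_d-\Phi(\hat\bx)\gtrsim r^{-1}e^{-A}$. An explicit calculation in layered coordinates then yields
\[
\int \ell(\bx)^{-1}\,d\bx \le C\int_{|\hat\bx|\lesssim 1}\int_{cr^{-1}e^{-A}}^{C}(x_d-\Phi(\hat\bx))^{-1}\,dx_d\,d\hat\bx
\le C(\log r + A).
\]

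Combining everything gives $|\GT_\a(q^{\uparrow} b;\L,\Om;g)| \le C(A,r,M)\,\a^{d-1}(\log r + A)\,\SN^{(d+2,d+2)}(b;1,\rho)$. Dividing by $\a^{d-1}\log\a$ and letting $\a\to\infty$ for each fixed $A$ and $r$ produces zero, whence the full triple limit \eqref{alphaa_up:eq} vanishes. The main obstacle is the careful quantitative bookkeeping at two points: verifying in Step 2 that the distance to the boundary forces $\ell_j$ to the right scale so that Lemma \ref{in_ball:lem} is applicable, and controlling in Step 4 the $e^{O(A)}$ blowup of derivatives of $q^{\uparrow}$. Both are harmless since the limit in $\a$ is taken before those in $A$ and $r$.
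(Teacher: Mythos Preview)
Your overall strategy matches the paper's proof closely: the same partition of unity, the same use of Lemma \ref{in_ball:lem} to place each relevant ball inside $\L$, the same appeal to Theorem \ref{localasymptotics:thm}, and the same integral bound $\int \ell(\bx)^{-1}d\bx \lesssim A+\log r$. However, there is a genuine gap at the point where you apply \eqref{asymptotics2:eq} to the symbol $\psi_j q^{\uparrow} b$.

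The hypotheses of Lemma \ref{alphaa_down:lem} (inherited here) only require $\Phi\in\plainC1(\R^{d-1})$; see Condition \ref{graph_type1:cond}. Since $q^{\uparrow}(\bx)=v_1^{(K)}\bigl(\a(x_d-\Phi(\hat\bx))\bigr)$, differentiating $q^{\uparrow}$ more than once in $\hat\bx$ requires second and higher derivatives of $\Phi$, which need not exist. Thus your claimed bound $|\nabla^n q^{\uparrow}|\le C_n(\a r^{-K})^n$ is only valid for $n\le 1$, and $\psi_j q^{\uparrow} b$ is in general \emph{not} in $\BS^{(d+2,d+2)}$. Consequently you cannot invoke \eqref{asymptotics2:eq} with this symbol, and your estimate of $\SN^{(d+2,d+2)}(\psi_j q^{\uparrow} b;\ell_j,\rho)$ is unfounded.

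The paper avoids this obstacle with a one-line observation that makes the whole $q^{\uparrow}$-bookkeeping unnecessary: since $q^{\uparrow}$ depends only on $\bx$, one has $\op_\a^l(\psi_j q^{\uparrow} b)=q^{\uparrow}\,\op_\a^l(\psi_j b)$, and because $0\le q^{\uparrow}\le 1$,
\[
\|\op_\a^l(\psi_j q^{\uparrow} b)\,g(T(1;\L,\Om))\|_{\GS_1}
\le \|\op_\a^l(\psi_j b)\,g(T(1;\L,\Om))\|_{\GS_1}.
\]
Now \eqref{asymptotics2:eq} is applied to the genuinely smooth symbol $\psi_j b$, yielding $\SN^{(d+2,d+2)}(\psi_j b;\ell_j,\rho)\le C\,\SN^{(d+2,d+2)}(b;1,\rho)$ with a constant independent of $A$ and $r$. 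The rest of your argument (the sum over $j$ and the integral bound) then goes through exactly as you wrote. As a minor side remark, the inequality $\a r^{-K}\le e^A$ you quote is in the wrong direction; from \eqref{k_upper_bound:eq} one gets $e^A\le \a r^{-K}\le re^A$, though with the paper's fix this quantity never enters the estimate.
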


\begin{proof}
We use the partition of unity from Proposition \ref{hor:prop} associated with
the slowly-varying function defined in \eqref{dist_ell:eq}.
Let $\bx_j$ be the sequence constructed in Proposition \ref{hor:prop}, and let
$\ell_j = \ell(\bx_j)$. Denote by $\mathcal J$ the set of indices $j$ such that
$\psi_j q^{\uparrow}\chi_{B(0, 1)}\not \equiv 0$.
By  \eqref{q_up_down:eq}
we have $x_d - \Phi(\hat\bx)\ge 2 r^K \a^{-1}\ge 2 r^{-1} e^{-A}$
for all $\bx$ in the support of
$q^{\uparrow}$. Thus $\ell_j \ge cr^{-1}e^{-A}$
for all indices $j\in\mathcal J$, and hence
by Lemma \ref{in_ball:lem}, $B(\bx_j, 32\ell_j)\subset\L$,
and in particular, the support of $\psi_j$ is strictly inside $\L$.
Since $\SN^{(m)}(\psi_j; \ell_j)\le C_m$
for all $m$, it follows from Theorem \ref{localasymptotics:thm} that
\begin{align*}
\|\op_\a^l(\psi_j q^{\uparrow} b)g\bigl(T(1; \L, \Om)\bigr)\|_{\GS_1}
\le &\ \|\op_\a^l(\psi_j b)g\bigl(T(1; \L, \Om)\bigr)\|_{\GS_1}\\[0.2cm]
\le &\ C (\a\ell_j\rho)^{d-1} \SN^{(d+2, d+2)}(b\psi_j; \ell_j, \rho)\\[0.2cm]
\le &\ C (\a\ell_j\rho)^{d-1} \SN^{(d+2, d+2)}(b; 1, \rho),
\end{align*}
for all $j\in\mathcal J$. Consequently,
\begin{equation*}
\|\op_\a^l(q^{\uparrow }b) g\bigl(T(1; \L, \Om)\bigr)\|_{\GS_1}
\le C\a^{d-1} \SN^{(d+2, d+2)}(b; 1, \rho)
\underset{j\in\mathcal J}
\sum \ell_j^{d-1},
\end{equation*}
where we have estimated $\rho\le C$.
Using the finite intersection property, we can estimate:
\begin{equation*}
\underset{j\in\mathcal J}\sum \ell_j^{d-1}
\le C\underset{\substack{cr^{-1}e^{-A}<x_d-\Phi(\hat\bx)<C\\
|\hat\bx|\le C}}\int \ell(\bx)^{-1} d\bx
\le C \underset{|\hat\bx|< C}\int \ \ \
\underset{ cr^{-1}e^{-A}<t<C}\int t^{-1} dt d\hat\bx
\le C' (A + \log r).
\end{equation*}
Therefore
\[
\|\bigl(\op_\a^l(q^{\uparrow}b) g(T(1; \L, \Om)\bigr)\|_{\GS_1}
\le C( A + \log r)\a^{d-1} \SN^{(d+2, d+2)}(b; 1, \rho).
\]
Thus the triple limit in \eqref{alphaa_up:eq} equals zero, as claimed.
\end{proof}

Let us now combine Lemmas \ref{alphaa_down:lem} and \ref{alphaa_up:lem}.

\begin{cor}\label{alpha:cor}
Suppose that the conditions of Lemma \ref{alphaa_down:lem}
are satisfied. Then
\begin{equation}\label{alphaa:eq}
\lim_{\a\to\infty}\frac{1}{\a^{d-1}\log\a}
\tr  \op_\a^l(b) g(T(1; \L, \Om)) =
\GA(g) \GW_1( b; \p\L, \p\Om).
\end{equation}
\end{cor}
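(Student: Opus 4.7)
The main idea is simply to combine Lemma \ref{alphaa_down:lem} and Lemma \ref{alphaa_up:lem}, exploiting the fact that the cut-off functions $q^{\uparrow}$ and $q^{\downarrow}$ defined in \eqref{arrows:eq} form a partition of unity on $\R^d$. Indeed, because $v_1+v_2\equiv 1$, we have $q^{\uparrow}(\bx)+q^{\downarrow}(\bx)=1$ for every $\bx\in\R^d$, for every choice of the parameters $r>2$, $A>0$, and the associated $K=K(\a;r,A)$. So I would first write, for arbitrary such $r,A$,
\[
\tr\bigl(\op_\a^l(b)\,g(T(1;\L,\Om))\bigr)=\GT_\a(q^{\downarrow}b;\L,\Om;g)+\GT_\a(q^{\uparrow}b;\L,\Om;g),
\]
and note that the left-hand side does not depend on $r$ or $A$.

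The plan is then to apply Lemma \ref{alphaa_down:lem} to the first term, which contributes the leading asymptotic coefficient $\GA(g)\GW_1(b;\p\L,\p\Om)$, and Lemma \ref{alphaa_up:lem} to the second term, which contributes zero in the triple-limit sense. Since the hypotheses of both lemmas are exactly the hypotheses of the present corollary, and since $b$ satisfies \eqref{bsupport:eq} with $\rho$ in the required range, both lemmas apply directly with no additional work.

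The only nontrivial point — and the closest thing to an obstacle — is that the two cited lemmas yield \emph{triple} limits in $r,A,\a$, while \eqref{alphaa:eq} asks for an honest single limit in $\a$. Here the fact that $F(\a):=(\a^{d-1}\log\a)^{-1}\tr\bigl(\op_\a^l(b)g(T(1;\L,\Om))\bigr)$ is \emph{independent} of $(r,A)$ lets us unravel the nested limits by an $\e/2+\e/2$ argument: given $\e>0$, choose $r$ so large that the outer limit in Lemma \ref{alphaa_down:lem} is within $\e/4$ and the outer limit in Lemma \ref{alphaa_up:lem} is within $\e/4$ of their respective limits; then choose $A$ large enough that the second limits are within $\e/4$; then pick $\a_0$ such that for all $\a\ge\a_0$ the remaining $\a$-limits are within $\e/4$. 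Summing the two contributions yields $|F(\a)-\GA(g)\GW_1(b;\p\L,\p\Om)|<\e$ for all $\a\ge\a_0$, which is exactly \eqref{alphaa:eq}. This completes the plan; all the analytic work has already been absorbed into Lemmas \ref{alphaa_down:lem} and \ref{alphaa_up:lem}.
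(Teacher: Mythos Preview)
Your approach is essentially the paper's: split using $q^{\downarrow}$ and $q^{\uparrow}$, invoke Lemmas \ref{alphaa_down:lem} and \ref{alphaa_up:lem}, and then exploit that the left-hand side is independent of $(r,A)$ to collapse the triple limit to a single limit in $\a$. The paper differs only in that it first inserts $\chi_\L$ via the commutator bound \eqref{sandwich1:eq} (so $\op_\a^l(b)g(T)\approx \chi_\L\op_\a^l(b)g(T)$), decomposes on $\L$, and then removes $\chi_\L$ again; your direct route and the explicit $\varepsilon$-unwinding reach the same conclusion.

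One caution about your first sentence: from $v_1+v_2\equiv 1$ it does \emph{not} follow that $q^{\uparrow}+q^{\downarrow}\equiv 1$, because $v_1^{(K)}$ and $v_2^{(K)}$ in \eqref{arrows:eq} are built with different scales ($r^K$ versus $r^{K+1}$) and in fact overlap on $(2r^K,3r^{K+1})$, where their sum can equal $2$. The paper's own line ``$\chi_\L b=q^{\downarrow}\chi_\L b+q^{\uparrow}\chi_\L b$'' uses the same identity, so this is a definitional slip rather than a flaw in your strategy; reading $q^{\uparrow}$ as $1-q^{\downarrow}$ (for which Lemma \ref{alphaa_up:lem} holds with the same proof, the support being even farther from $\p\L$) makes both arguments go through.
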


\begin{proof} To avoid cumbersome formulae, throughout the proof we write
$G_1\approx G_2$ for any two trace-class operators depending on $\a$, such that
\[
\lim_{\a\to\infty} \frac{1}{\a^{d-1}\log\a} \|G_1 - G_2\|_{\GS_1} = 0.
\]
For brevity write $T:=T(1; \L, \Om)$.
By \eqref{sandwich1:eq},
\[
\op_\a^l(b) g(T)\approx \chi_{\L} \op^l_\a(b) g(T).
\]
Rewrite the symbol $\chi_{\L}b$ in the form
\[
\chi_{\L}b = q^{\downarrow}\chi_{\L}b + q^{\uparrow}\chi_{\L}b.
\]
Again by \eqref{sandwich1:eq},
\[
\chi_{\L} q^{\downarrow} \op^l_\a(b) g(T)\approx  q^{\downarrow} \op^l_\a(b) g(T),
\]
and  $q^{\uparrow}\chi_{\L} =q^{\uparrow}$ by definition of $q^{\uparrow}$. Thus
\[
\op_\a^l(b) g(T) \approx  q^{\downarrow} \op^l_\a(b) g(T)
+ q^{\uparrow} \op^l_\a(b) g(T).
\]
Now apply  \eqref{alphaa_down:eq} and \eqref{alphaa_up:eq}.
Since the left-hand side of \eqref{alphaa:eq}
does not depend on $A$ or $r$, the claimed result follows.
\end{proof}

Next, from the formula \eqref{alphaa:eq} containing $T(1; \L, \Om)$
we deduce a similar asymptotics for the operator $T(a; \L, \Om)$.

\begin{thm}\label{last_step:thm}
Assume that
\begin{enumerate}
\item\label{10:item}
$\L, \Om$
are two graph-type domains satisfying Condition \ref{graph_type1:cond},
\item\label{grad:item}
\eqref{gradphipsi:eq} is satisfied,
\item
$a\in\BS^{(d+2, d+2)}$,
\item
$b$ is a symbol satisfying \eqref{bsupport:eq}.
 \end{enumerate}
Then for any $p = 1, 2, \dots $,
\begin{align}
\lim_{\a\to\infty}\frac{1}{\a^{d-1}\log\a}
\biggl(\tr\bigl(\op_\a^l(b) g_p(T(a; \L, \Om))\bigr)
- &\ \a^d \GW_0\bigl(b g_p(a); \L, \Om\bigr)\notag\\[0.2cm]
&\ - \a^{d-1}\log\a\  \GW_1\bigl(b \GA(g_p; a); \p\L, \p\Om\bigr)\biggr) = 0.
\label{last_step:eq}
\end{align}
\end{thm}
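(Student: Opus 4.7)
The plan is to bootstrap Corollary \ref{alpha:cor} (which handles the case $a\equiv 1$) to general $a$ by commuting the factor $a^p$ through the functional calculus. Since $\L$ and $\Om$ are globally graph-type, in the notation of Section \ref{localisation:sect} we may take $\L_0=\L$, $\Om_0=\Om$ and $R=\infty$, so $T^{(0)}(a)=T(a;\L,\Om)$. Lemma \ref{commute_through2:lem} then yields
\[
\bigl\|\op_\a^l(b)\,g_p(T(a;\L,\Om))-\op_\a^l(ba^p)\,g_p(T(1;\L,\Om))\bigr\|_{\GS_1}\le C\a^{d-1},
\]
which is $o(\a^{d-1}\log\a)$, so the problem reduces to evaluating $\tr\bigl(\op_\a^l(ba^p)\,g_p(T(1;\L,\Om))\bigr)$; note that the new amplitude $ba^p$ still belongs to $\BS^{(d+2,d+2)}$ and satisfies the support condition \eqref{bsupport:eq}.

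I then split $g_p(t)=(t^p-t)+t$. For the piece $\tilde g(t)=t^p-t$, Corollary \ref{alpha:cor} applied with $ba^p$ in place of $b$ gives directly
\[
\tr\bigl(\op_\a^l(ba^p)\,\tilde g(T(1;\L,\Om))\bigr)=\a^{d-1}\log\a\,\GA(\tilde g)\,\GW_1(ba^p;\p\L,\p\Om)+o(\a^{d-1}\log\a).
\]
For the residual linear part, I use $T(1;\L,\Om)=\chi_\L P_{\Om,\a}\chi_\L$, the idempotency $\chi_\L^2=\chi_\L$ together with cyclicity of the trace, and the commutator estimates \eqref{sandwich2:eq}, \eqref{sandwich2_dual:eq} to get
\[
\tr\bigl(\op_\a^l(ba^p)\,T(1;\L,\Om)\bigr)=\tr\bigl(\chi_\L P_{\Om,\a}\op_\a^l(ba^p)\bigr)+O(\a^{d-1}).
\]
The exact Fourier-multiplier identity $\op_\a^l(f)P_{\Om,\a}=\op_\a^l(f\chi_\Om)$, combined with one more application of \eqref{sandwich2_dual:eq}, gives $\chi_\L P_{\Om,\a}\op_\a^l(ba^p)=\chi_\L\op_\a^l(ba^p\chi_\Om)+O(\a^{d-1})$ in trace norm, and evaluating the integral kernel on the diagonal yields the exact value
\[
\tr\bigl(\chi_\L\op_\a^l(ba^p\chi_\Om)\bigr)=\Bigl(\frac{\a}{2\pi}\Bigr)^{d}\iint_{\L\times\Om}b(\bx,\bxi)a(\bx,\bxi)^p\,d\bx\,d\bxi=\a^d\GW_0(ba^p;\L,\Om).
\]

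To finish, I would identify the asymptotic coefficients using the Remark following Theorem \ref{main_s:thm}: linearity of $\GA$ together with $\GA(g_1;\,\cdot\,)=0$ and $\GA(g_p;b)=b^p\GA(g_p)$ give $\GA(\tilde g)=\GA(g_p)$, whence $\GW_1(ba^p;\p\L,\p\Om)\,\GA(g_p)=\GW_1\bigl(b\,\GA(g_p;a);\p\L,\p\Om\bigr)$; similarly $\GW_0(ba^p;\L,\Om)=\GW_0(b\,g_p(a);\L,\Om)$. Assembling the contributions produces exactly \eqref{last_step:eq}. Almost all of the heavy lifting is already done: the main conceptual obstacle has been folded into Lemma \ref{commute_through2:lem} (the commute-through of $a^p$), and the only genuinely new ingredient is the use of the exact Fourier-multiplier identity $\op_\a^l(f)P_{\Om,\a}=\op_\a^l(f\chi_\Om)$, which is what allows the $\a^d$-coefficient to be extracted with no further approximation.
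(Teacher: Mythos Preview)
Your argument is correct and follows essentially the same route as the paper: both proofs invoke Lemma \ref{commute_through2:lem} to pass from $T(a)$ to $T(1)$ with the new symbol $ba^p$, then split $g_p(t)=(t^p-t)+t$, apply Corollary \ref{alpha:cor} to the first piece, compute the trace of the linear piece directly, and finally identify the coefficients via $\GA(g_p;a)=a^p\GA(g_p)$. The only cosmetic difference is in the linear term: the paper replaces $\op_\a^l(ba^p)T(1)$ by $T(ba^p)$ (via \eqref{ltol0_4:eq}) and then drops one $P_{\Om,\a}$ using \eqref{sandwich1_dual:eq}, whereas you use cyclicity plus the exact multiplier identity $\op_\a^l(f)P_{\Om,\a}=\op_\a^l(f\chi_\Om)$; both lead to the same diagonal integral $\a^d\GW_0(ba^p)$.
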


\begin{proof}
As in the proof of Corollary \ref{alpha:cor} we use the notation
$G_1\approx G_2$. For brevity we omit $\L$, $\Om$, $\p\L, \p\Om$
from notation.
%
Due to Lemma \ref{commute_through2:lem},
\begin{equation}\label{ap:eq}
\op(b) g_p(T(a))\approx \op(b a^p) g_p(T(1)).
\end{equation}
Furthermore, by Lemma \ref{commute_through2:lem} again, with the notation
$g(t) = t^p-t$, we have:
\[
\op(b a^p) g_p(T(1)) = \op(ba^p) g(T(1)) + \op(ba^p) T(1)
\approx  \op(ba^p) g(T(1)) + T(ba^p).
\]
By Corollary \ref{alpha:cor},
\[
\lim_{\a\to\infty}\frac{1}{\a^{d-1}\log\a}\tr \op(ba^p) g(T(1))
= \GW_1(b\GA(g_p; a)),
\]
where we have taken into account that $\GA(g_p - g_1; a) = \GA(g_p; a)$.
By \eqref{sandwich1_dual:eq},
\[
T(ba^p)\approx \chi_\L \op(ba^p) P_{\Om, \a} \chi_{\L}.
\]
The trace of the operator on the right-hand side equals
$\a^d \GW_0(b g_p(a); \L, \Om)$. Thus
\begin{align*}
\lim_{\a\to\infty}\frac{1}{\a^{d-1}\log\a}
\biggl(\tr\bigl(\op_\a^l(ba^p) g_p(T(1))\bigr)
- &\ \a^d \GW_0(b g_p(a))\\[0.2cm]
&\ - \a^{d-1}\log\a \GW_1(b \GA(g_p; a))\biggr) = 0.
\end{align*}
The reference to \eqref{ap:eq} completes the proof of \eqref{last_step:eq}.
\end{proof}


\section{Proof of Theorem \ref{main:thm}}
\label{proof:sect}

From now on we assume that the domains $\L, \Om$ satisfy the conditions of Theorem
\ref{main:thm}.
We begin the proof of Theorem \ref{main:thm} with removing conditions
\ref{10:item} and \ref{grad:item} from Theorem \ref{last_step:thm}.

\begin{thm}\label{metamain:thm}
Assume that $\L$ and $\Om$ are as in Theorem \ref{main:thm}. Suppose also that
$a, b\in\BS^{(d+2, d+2)}$ and the symbol $b$ is compactly supported. Then
the asymptotic formula \eqref{last_step:eq} holds.
\end{thm}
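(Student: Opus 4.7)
The plan is to patch the global statement together from local ones by means of a fine tensor-product partition of unity in $\bx$ and $\bxi$, in the spirit of Section \ref{localisation:sect}. Since $\L$ is $\plainC1$ and $\Om$ is $\plainC3$, each point is either an interior point of the respective domain, an exterior point, or a boundary point, and near each boundary point the domain is locally described by a graph-type domain (Definition \ref{local_domains:defn}). On tiles that avoid the boundary of one of the two domains, the contribution is covered by Theorem \ref{localasymptotics:thm}; on tiles meeting both $\p\L$ and $\p\Om$ a suitable affine change of coordinates reduces the question to Theorem \ref{last_step:thm}.

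I take a tensor-product partition of unity $\{\tilde\phi_j(\bx)\hat\phi_k(\bxi)\}$ with $\tilde\phi_j$ supported in $B(\bz_j,\ell)$ and $\hat\phi_k$ supported in $B(\bmu_k,\rho)$, where $\ell,\rho>0$ are fixed, $\a$-independent, and will be chosen small. Because $b$ has compact support, only finitely many pieces $b_{jk}:=\tilde\phi_j\hat\phi_k b$ are nonzero, and each of them lies in $\BS^{(d+2,d+2)}$ with uniformly bounded $\SN^{(d+2,d+2)}$-norm. I classify the nontrivial pieces by the position of $(\bz_j,\bmu_k)$: a piece is of \textbf{type I} if $\bz_j$ lies outside $\overline\L$ or $\bmu_k$ lies outside $\overline\Om$; a piece is of \textbf{type II} if $\bz_j$ lies in the interior of $\L$ or $\bmu_k$ lies in the interior of $\Om$ (and the piece is not of type I); a piece is of \textbf{type III} if $\bz_j\in\p\L$ and $\bmu_k\in\p\Om$. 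For $\ell,\rho$ small enough every piece falls into exactly one of these types, and in types I and II the domains $\L,\Om$ admit local models $\L_0^{(j)},\Om_0^{(k)}$ as in Condition \ref{global:cond}, one of which is $\R^d$ or disjoint from $B(\bz_j,\ell)$ (resp.\ $B(\bmu_k,\rho)$); Lemma \ref{localisation1:lem} then replaces $T(a;\L,\Om)$ by $T(a;\L_0^{(j)},\Om_0^{(k)})$ with an $O(\a^{d-1})$ error.

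Type II pieces are handled directly by Theorem \ref{localasymptotics:thm}, which yields $\tr(\op_\a^l(b_{jk})g_p(T(a)))=\a^d\GW_0(b_{jk}g_p(a);\L,\Om)+O(\a^{d-1})$; the surface coefficient $\GW_1(b_{jk}\GA(g_p;a);\p\L,\p\Om)$ vanishes because $\supp b_{jk}$ avoids at least one of the two boundary surfaces. Type I pieces contribute $O(\a^{d-1})$ by the sandwich estimates of Lemmas \ref{sandwich:lem} and \ref{sandwich_dual:lem}, and both $\GW_0$ and $\GW_1$ vanish on them. For a type III piece I apply \eqref{unitary_equivalence:eq} with a Euclidean isometry in $\bx$ sending $\bz_j\mapsto\bzero$ and the outward unit normal to $\p\L$ at $\bz_j$ to $-\be_d$, together with a coordinate permutation and/or reflection in $\bxi$ sending $\bmu_k\mapsto\bzero$ and bringing $\Om$ into the form \eqref{omcircle:eq} for some index $l$; in these new coordinates the local models are graphs with $\Phi(\hat\bzero)=\Psi(\hat\bzero)=0$ and $\nabla\Phi(\hat\bzero)=\bzero$. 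To secure the smallness condition \eqref{gradphipsi:eq} globally I replace $\Phi,\Psi$ by smooth cut-off extensions $\tilde\Phi,\tilde\Psi$ that coincide with $\Phi,\Psi$ on a neighborhood of $\hat\bzero$ containing the projection of $\supp b_{jk}$, satisfy the $\plainC3$-bound \eqref{gradient11:eq}, and globally obey $\|\nabla\tilde\Phi\|_{\plainL\infty}\le 1/2$ and (for $l=d$) $\|\nabla\tilde\Phi\|_{\plainL\infty}\|\nabla\tilde\Psi\|_{\plainL\infty}\le 1/2$; such extensions exist because the uniform continuity of $\nabla\Phi,\nabla^2\Psi,\nabla^3\Psi$ makes these quantities arbitrarily small near the origin once $\ell,\rho$ are small. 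A further application of Lemma \ref{localisation1:lem} replaces $(\L_0^{(j)},\Om_0^{(k)})$ by $(\G(\tilde\Phi),\G(\tilde\Psi))$ modulo $O(\a^{d-1})$, and then Theorem \ref{last_step:thm} delivers the full two-term asymptotic formula for the piece; the invariance \eqref{gw_invariance:eq} ensures that the coefficients match those computed in the original coordinates.

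Summing over the finitely many pieces and using the additivity of $\GW_0$ and $\GW_1$ in their first argument reassembles the claimed two-term formula $\a^d\GW_0(bg_p(a);\L,\Om)+\a^{d-1}\log\a\,\GW_1(b\GA(g_p;a);\p\L,\p\Om)+o(\a^{d-1}\log\a)$. The main obstacle is organizing the type III case: one must simultaneously arrange $\nabla\Phi(\hat\bzero)=\bzero$, the axis-aligned form \eqref{omcircle:eq} of $\Om$, and the global smallness \eqref{gradphipsi:eq}, while checking that the transformed symbols remain in the admissible classes and that the modifications of $\Phi,\Psi$ outside the support of $b_{jk}$ do not affect the trace asymptotics. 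Once $\ell$ and $\rho$ are small enough, the uniform continuity of the derivatives of $\Phi$ and $\Psi$ makes every required smallness automatic, and the underlying analytic estimates are all in place from Sections \ref{smooth:sect}--\ref{asymptotics:sect}.
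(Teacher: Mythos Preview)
Your strategy is essentially the paper's: a double partition of unity, localisation via Lemma \ref{localisation1:lem}, and then Theorem \ref{localasymptotics:thm} or Theorem \ref{last_step:thm} on each tile. The paper nests the partitions (cover $\overline\L$ first, then for each $\L$-tile cover $\overline\Om$) rather than using a tensor product, which lets it avoid your exterior ``type I'' pieces altogether and remove the partition at the end via $\phi\chi_\L=\chi_\L$ and Corollary \ref{product:cor}; but this is an organisational difference, not a conceptual one.

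There is one point where your description is at best imprecise and at worst a genuine gap. For a type III tile you propose to apply ``a Euclidean isometry in $\bx$ \dots\ together with a coordinate permutation and/or reflection in $\bxi$ sending $\bmu_k\mapsto\bzero$ and bringing $\Om$ into the form \eqref{omcircle:eq}''. But the unitary equivalence \eqref{unitary_equivalence:eq} admits only a \emph{single} linear map $\boldM$: it acts on $\bx$ by $\boldM^{-1}$ and simultaneously on $\bxi$ by $\boldM^T$, and the two translations $\boldk,\boldk_1$ are the only independent parameters. You cannot transform $\bxi$ linearly without applying the same transformation to $\bx$, which would destroy the alignment $\L=\G(\Phi;\boldI,\bzero)$, $\nabla\Phi(\hat\bzero)=\bzero$ you have just arranged. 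After that $\bx$-isometry, $\Om$ lands in a generic position, and one must \emph{prove} that in these very coordinates $\p\Om$ is locally a graph over some coordinate hyperplane, i.e.\ $\Om\cap B(\bzero,\rho)=\{\bxi:\pm\xi_l>\Psi(\overc\bxi)\}\cap B(\bzero,\rho)$ for a suitable index $l$, with a \emph{uniform} bound on $\|\nabla\Psi\|_{\plainL\infty}$. This is exactly Corollary \ref{new_coordinates2:cor}: pick $l$ to be the direction in which the unit normal to $\p\Om$ has its largest component (so $|n_l|\ge d^{-1/2}$) and apply the implicit function theorem; the bound $\|\nabla\Psi\|_{\plainL\infty}\le 4\sqrt d$ then follows automatically. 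No further change of variable in $\bxi$ is performed or permitted. With this correction your scheme goes through, and your way of securing \eqref{gradphipsi:eq} (make $M_{\tilde\Phi}$ small enough to beat the fixed bound on $M_{\tilde\Psi}$) is equivalent to the paper's pairing $M_\Phi\le 1/(8\sqrt d)$ with $M_\Psi\le 4\sqrt d$. A smaller point: for your type I pieces the supports of $b_{jk}(\,\cdot\,,\bxi)$ and $\chi_\L$ are disjoint, so the right tool is the disjoint-support estimate \eqref{largea:eq}, not Lemmas \ref{sandwich:lem}--\ref{sandwich_dual:lem}, which are stated for graph-type domains.
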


\begin{proof}

Our first step is to construct
suitable coverings of $\L$ and $\Om$ by open balls, and associated partitions of unity.

By Definition \ref{local_domains:defn}, we can cover $\overline \L$
by finitely many open balls $B(\bw_j, R)$, $j=1, 2, \dots$
of some radius $R$ in such a way
that if $\bw_j \notin\p\L$, then $B(\bw_j, R)\subset\L$.
Using Corollary \ref{new_coordinates1:cor}, and making $R$ smaller
if necessary, one can always assume that for each $\bw:=\bw_j\in\p\L$, in the ball
$B(\bw, R)$ the domain $\L$ is represented by a $\plainC1$-graph-type domain
$\G(\Phi; \boldO, \bw)$ with a function, satisfying \eqref{propertyphi:eq},
and the bound
\begin{equation}\label{phib:eq}
M_{\Phi} \le \frac{1}{8\sqrt d}.
\end{equation}
Denote by $\phi_j\in\plainC\infty_0(R^d)$, $j = 1, 2, \dots$,
a partition of unity subordinate to the above covering of $\L$.
Using the unitary equivalence \eqref{unitary:eq} we may assume that $R = 1$.

For each ball $B(\bw_j, 1)$ further steps depend on the
location of the point $\bw_j$.

\underline{Case 1: $\bw_j\in \p\L$.}
Translating $\bw_j$ to the point zero, and applying an appropriate orthogonal
transformation, we may assume that \eqref{local_graph:eq} holds with
$\boldO = \boldI, \boldk = \mathbf 0$, i.e.
\begin{equation*}
\L\cap B(\bzero, 1) = \{\bx: x_d > \Phi(\hat\bx)\}\cap B(\bzero, 1).
\end{equation*}
Now we construct an appropriate partition of unity for the domain $\Om$.
Just as for $\L$ above, by Definition \ref{local_domains:defn}
one can cover $\overline\Om$ by finitely many open balls
$B(\bxi_l, \rho),\ l = 1, 2, \dots$ of some radius $\rho>0$ in such a way that if
$\bxi_l\notin \p\Om$, then
$B(\bxi_l, \rho)\subset\Om$. Using Corollary \ref{new_coordinates2:cor},
and making $\rho$ smaller if necessary, one can always assume that for each
$\bxi = \bxi_l\in\p\Om$, in the ball $B(\bxi, \rho)$ the domain $\Om$
is represented by a graph-type domain described in \eqref{new_coordinates11:eq}
with a $\plainC3$-function $G$, satisfying
the bound $\|\nabla G\|_{\plainL\infty}\le 4\sqrt d$.
Redenoting $G$ by $\Psi$, and recalling \eqref{phib:eq} we conclude that
the condition \eqref{gradphipsi:eq} is satisfied.
Without loss of generality
we may assume that $\nabla^2 G$ and $\nabla^3 G$ are uniformly bounded on $\R^{d-1}$.
Denote by $\eta_j, j=1, 2, \dots$, the partition of unity subordinate to the constructed
covering.

\underline{Sub-case 1.1: $\bxi_l\in\p\Om$.} Applying an appropriate translation,
we may assume that $\bxi_l = \mathbf 0$. Thus by the
above construction, in the balls $B(\mathbf 0, 1)$ and
$B(\mathbf 0, \rho)$ resp. the domains $\L$ and $\Om$ resp.
are represented by the graph-type
domains satisfying Condition \ref{graph_type1:cond}.

The symbol $\tilde b(\bx, \bxi) = b(\bx, \bxi) \phi_j(\bx) \eta_l(\bxi)$
is supported on the domain $ B(\bold0, 1)\times B(\mathbf 0, \rho)$,
so that by Lemma \ref{localisation1:lem}
we may assume that $\L = \G(\Phi; \boldI, \bzero)$
and $\Om$ is as defined by \eqref{omcircle:eq}. Thus by
Theorem \ref{last_step:thm},
\begin{align}\label{xion:eq}
\lim_{\a\to\infty}\frac{1}{\a^{d-1}\log\a}
\biggl(\tr\bigl(\op_\a^l(b\phi_j \eta_l) &\ g_p(T(a))\bigr)
-  \a^d \GW_0\bigl(b\phi_j \eta_l g_p(a)\bigr)\notag\\[0.2cm]
&\ - \a^{d-1}\log\a\  \GW_1\bigl(b\phi_j \eta_l \GA(g_p; a)\bigr)\biggr) = 0.
\end{align}
Here and below for brevity we omit $\L, \Om, \p\L, \p\Om$ from notation.

\underline{Sub-case 1.2: $\bxi_l\in\Om$.}
Since the domain $\Om$ satisfies Condition
\ref{global:cond} (2), we can use Theorem \ref{localasymptotics:thm},
which gives the formula
\begin{equation}\label{xiin:eq}
\lim_{\a\to\infty}\frac{1}{\a^{d-1}\log\a}
\biggl(\tr\bigl(\op_\a^l(b\phi_j \eta_l) g_p(T(a))\bigr)
-  \a^d \GW_0\bigl(b\phi_j \eta_l g_p(a)\bigr)\biggr) = 0.
\end{equation}
 Summing up over all values of the index $l$, from \eqref{xion:eq}
 and \eqref{xiin:eq} we get
\begin{align}
\lim_{\a\to\infty}\frac{1}{\a^{d-1}\log\a}
\biggl(\tr\bigl(\op_\a^l(b\phi_j \eta) &\ g_p(T(a))\bigr)
-  \a^d \GW_0\bigl(b\phi_j g_p(a)\bigr)\notag\\[0.2cm]
&\ - \a^{d-1}\log\a\  \GW_1\bigl(b\phi_j \GA(g_p; a)\bigr)\biggr) = 0,
\label{subcase12:eq}
\end{align}
where we have denoted $\eta = \sum_l \eta_l$. Since $1-\eta$ is supported outside $\Om$,
we have
\[
\op_\a^l(b\phi_j (1-\eta)) g_p(T(a))
=
- \phi_j [\op_\a^l(b\eta), \chi_{\L}]
P_{\Om, \a}\op_\a^l(a) P_{\Om, \a} \chi_{\L}g_{p-1}(T(a)).
\]
Due to \eqref{sandwich2:eq},
adding this term to the left-hand side of \eqref{subcase12:eq}, does not change
the asymptotics. Thus
\begin{align}\label{case1on:eq}
\lim_{\a\to\infty}\frac{1}{\a^{d-1}\log\a}
\biggl(\tr\bigl( \op_\a^l(b\phi_j) &\ g_p(T(a))\bigr)
-  \a^d \GW_0\bigl(b\phi_j g_p(a)\bigr)\notag\\[0.2cm]
&\ - \a^{d-1}\log\a\  \GW_1\bigl(b\phi_j \GA(g_p; a)\bigr)\biggr) = 0,
\end{align}

\underline{Case 2: $\bw_j\in\L$.}
By Definition \ref{local_domains:defn}
we can cover $\overline\Om$ by finitely many open balls
$B(\bxi_l, \rho),\ l = 1, 2, \dots$ of some radius $\rho$ in such a way that if
$\bxi_l\notin \p\Om$, then
$B(\bxi_l, \rho)\subset\Om$.   Denote by $\eta_l, l= 1, 2, \dots$, the partition of unity
subordinate to this covering.
Since $B(\bw_j, 1)\subset\L$, the domain $\L$ satisfies Condition
\ref{global:cond}(\ref{space:subcond}),
and hence, by Theorem \ref{localasymptotics:thm},
\begin{equation*}
\lim_{\a\to\infty}\frac{1}{\a^{d-1}\log\a}
\biggl(\tr\bigl(\op_\a^l(b \phi_j \eta_l) g_p(T(a))\bigr)
-  \a^d \GW_0\bigl(b \phi_j \eta_l g_p(a)\bigr)\biggr) = 0.
\end{equation*}
Summing over $l$, we get
\begin{equation*}
\lim_{\a\to\infty}\frac{1}{\a^{d-1}\log\a}
\biggl(\tr\bigl(\op_\a^l(b\phi_j \eta) g_p(T(a))\bigr)
-  \a^d \GW_0\bigl(b\phi_j  g_p(a)\bigr)\biggr) = 0,\
\end{equation*}
with $\eta = \sum_l\eta_l$. As in the previous case,
we can remove $\eta$ from this formula,
so that
\begin{equation}\label{case2in:eq}
\lim_{\a\to\infty}\frac{1}{\a^{d-1}\log\a}
\biggl(\tr\bigl( \op_\a^l(b\phi_j) g_p(T(a))\bigr)
-  \a^d \GW_0\bigl(b\phi_j  g_p(a)\bigr)\biggr) = 0.
\end{equation}

\underline{End of the proof.}
Summing up over all values of $j$, it follows from \eqref{case1on:eq}
and \eqref{case2in:eq} that
\begin{align}
\lim_{\a\to\infty}\frac{1}{\a^{d-1}\log\a}
\biggl(\tr\bigl( \op_\a^l(b\phi) &\ g_p(T(a))\bigr)
-  \a^d \GW_0\bigl( g_p(a)\bigr)\notag\notag\\[0.2cm]
&\ - \a^{d-1}\log\a\  \GW_1\bigl( \GA(g_p; a)\bigr)\biggr) = 0,
\label{end:eq}
\end{align}
where we have denoted $\phi = \sum_j \phi_j$.
By Corollary \ref{product:cor},
\begin{equation*}
\| \op_\a^l (b\phi) - \op_\a^l(b) \phi\|_{\GS_1}\le C\a^{d-1}.
\end{equation*}
Furthermore, $\phi\chi_\L = \chi_\L$, and hence
$\phi g_p(T(a)) = g_p(T(a))$.
 Thus one can
remove $\phi$ from the asymptotics \eqref{end:eq} altogether.
This leads to \eqref{last_step:eq}, as required.
\end{proof}

\begin{proof}[Proof of Theorem \ref{main:thm}]
Let us deduce \eqref{main:eq} from Theorem \ref{metamain:thm}.
Let $R>0$ be a number such that
$a(\bx, \bxi) = 0$ for $|\bx|^2+ |\bxi|^2\ge R^2$.
Let $b\in\plainC\infty_0(\R^{2d})$ be a function such that
$b(\bx, \bxi) = 1$ for $|\bx|^2+ |\bxi|^2 \le 2R^2$ and
$b(\bx, \bxi) = 0$ for $|\bx|^2+ |\bxi|^2 \ge 4R^2$. Write:
\begin{equation}\label{gpa:eq}
\tr g_p(T(a)) = \tr \bigl(\op_\a^l(b) g_p(T(a))\bigr)
+ \tr \bigl(\op_\a^l(1-b) g_p(T(a))\bigr).
\end{equation}
Note that
\begin{equation*}
\op_\a^l(1-b) T(a) = \op_\a^l(1-b) \op_\a^l(a) \chi_{\L} P_{\Om, \a} \chi_{\L}
- \op_\a^l(1-b) [\op_\a^l(a), \chi_{\L} P_{\Om, \a}] P_{\Om, \a} \chi_{\L}.
\end{equation*}
By Lemmas \ref{sandwich:lem} and \ref{sandwich_dual:lem},
\begin{equation*}
\| [\op_\a^l(a), \chi_{\L} P_{\Om, \a}]\|_{\GS_1}\le C\a^{d-1}.
\end{equation*}
Furthermore, by Corollary \ref{product:cor},
\begin{equation*}
\|\op_\a^l(1-b) \op_\a^l(a) \|_{\GS_1}
= \|\op_\a^l(1-b) \op_\a^l(a)  - \op_\a^l(a(1-b))\|_{\GS_1}\le C\a^{d-1}.
\end{equation*}
Therefore the second term in \eqref{gpa:eq} gives a contribution of order $O(\a^{d-1})$.
Applying Theorem \ref{metamain:thm} to the first term in \eqref{gpa:eq}, we obtain
\eqref{main:eq}.

It remains to justify the formula \eqref{main:eq} for the operator $S(a)$ with
$a$ replaced by $\re a$ on the right-hand side. Let us show that
\begin{equation}\label{tos:eq}
\|g_p\bigl(T(\re a)\bigr) - g_p\bigl(\re T(a)\bigr)\|_{\GS_1}\le C\a^{d-1}.
\end{equation}
Rewrite the difference on the left-hand side as
\[
g_p\bigl(T(\re a)\bigr) - g_p\bigl(\re T(a)\bigr)
= \sum_{k=0}^{p-1} g_k\bigl(\re T(a)\bigr)\bigl(
T(\re a) - \re T(a)
\bigr)
g_{p-1-k}\bigl(T(\re a)\bigr),
\]
so that
\begin{align}
\|g_p\bigl(T(\re a)\bigr) - &\ g_p\bigl(\re T(a)\bigr)\|_{\GS_1}\notag\\[0.2cm]
\le &\ p\bigl(\|\op(a)\|^{p-1} + \|\op(\re a)\|^{p-1}
\bigr)\ \|\op(\re a) - \re \op(a)\|_{\GS_1}.\label{re:eq}
\end{align}
The operator $\re\op(a)$ is nothing but
$(\op^l(a) + \op^r(\overline{a}))/2$, and hence, by
\eqref{quantisation_symbol:eq},
\[
\|\op(\re a) - \re \op(a)\|_{\GS_1}
 = \frac{1}{2}\|\op^l(\overline{a}) -  \op^r(\overline{a})\|_{\GS_1}
 \le C \a^{d-1}.
\]
Using Lemma \ref{general_norm:lem}, we obtain \eqref{tos:eq} from
\eqref{re:eq}. Using \eqref{main:eq} for $T(\re a)$, we now obtain
\eqref{main:eq} for the operator $S(a)$.

The proof of Theorem \ref{main:thm} is complete.
\end{proof}


\section{Closing the asymptotics: proof of Theorems \ref{main_anal:thm}
and \ref{main_s:thm}}\label{closure:sect}

The crucial point of the proof of Theorems \ref{main_anal:thm} and
\ref{main_s:thm} is the sharp estimates \eqref{gioev0:eq} and
\eqref{gioev:eq}. These estimates were essentially derived
in \cite{G1}, Chapter 3, see also \cite{G2}. We obtain them for more general symbols,
which require some additional functional calculus considerations.
For the sake of completeness we provide proofs even for the results borrowed from
\cite{G1}, \cite{G2}.

\subsection{Non-self-adjoint case}

The inequality \eqref{LSG:eq} in the next lemma was proved in \cite{G2}.

\begin{lem}\label{LSG:lem}
 Let $A$ be a trace class operator in a separable
Hilbert space $\GH$, and let $P$ be an orthogonal projection in $\GH$.
Suppose that
\begin{equation}\label{gseries:eq}
g(z) = \sum_{m=1}^\infty \om_m z^m,
\end{equation}
is a function analytic in a disk of radius $R>\|A\|$.
Denote
\begin{align}
g^{(1)}(t) = &\ \sum_{m=2}^\infty (m-1) |\om_m |t^{m-1},\label{g1:eq}\\[0.2cm]
g^{(2)}(t) = &\ \sum_{m=2}^\infty m(m-1) |\om_m |t^{m-2}.\label{g2:eq}
\end{align}
Then
\begin{equation}\label{LSG_trace:eq}
\|g(PAP) - Pg(A)P\|_{\GS_1}\le g^{(1)}\bigl(\|A\|\bigr)
\|PA(I-P)\|_{\GS_1},
\end{equation}
and
\begin{equation}\label{LSG:eq}
\|g(PAP) - Pg(A)P\|_{\GS_1}\le \frac{g^{(2)}\bigl(\|A\|\bigr)}{2}
\|PA(I-P)\|_{\GS_2} \ \|(I-P)AP\|_{\GS_2}.
\end{equation}
\end{lem}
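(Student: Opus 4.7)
The plan is to expand $g$ in its power series \eqref{gseries:eq} and reduce the problem to analyzing the differences $D_m := (PAP)^m - PA^m P$ for each $m \ge 1$. Since $D_1 = 0$, only $m \ge 2$ contributes, and the convergence of the resulting series for both \eqref{LSG_trace:eq} and \eqref{LSG:eq} is ensured by $\|A\| < R$ together with the definitions \eqref{g1:eq}, \eqref{g2:eq} of $g^{(1)}$ and $g^{(2)}$.

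With $Q := I - P$, adding and subtracting $PAP \cdot A^{m-1}P$ and using $PAP \cdot Q = 0$ yields the one-step recursion $D_m = PAP \cdot D_{m-1} - PAQ \cdot A^{m-1}P$, which unwinds to
\begin{equation*}
D_m \;=\; -\sum_{k=0}^{m-2} (PAP)^k \cdot PAQ \cdot A^{m-1-k} P.
\end{equation*}
Estimating each summand via the ideal property of $\GS_1$ and the bound $\|PAP\|\le\|A\|$ gives $\|D_m\|_{\GS_1}\le (m-1)\|A\|^{m-1}\|PAQ\|_{\GS_1}$; multiplying by $|\omega_m|$ and summing over $m\ge 2$ reproduces precisely $g^{(1)}(\|A\|)\|PAQ\|_{\GS_1}$, which is \eqref{LSG_trace:eq}.

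For the sharper bound \eqref{LSG:eq}, I would next establish by induction on $n$ the companion identity
\begin{equation*}
A^n P \;=\; (PAP)^n + \sum_{j=0}^{n-1} A^{n-1-j} \cdot QAP \cdot (PAP)^j,
\end{equation*}
whose inductive step rests on the algebraic fact $A(PAP)^n = (PAP + QAP)(PAP)^n = (PAP)^{n+1} + QAP \cdot (PAP)^n$. Substituting this expression for $A^{m-1-k}P$ into the formula for $D_m$ displayed above, and using $PAQ \cdot (PAP)^{m-1-k} = 0$ (a consequence of $QP = 0$) to kill the leading term of the substitution, one obtains the symmetric double telescope
\begin{equation*}
D_m \;=\; -\sum_{\substack{k,j \ge 0 \\ k+j \le m-2}} (PAP)^k \cdot PAQ \cdot A^{m-2-k-j} \cdot QAP \cdot (PAP)^j.
\end{equation*}
The trace norm of each summand is bounded by $\|A\|^{m-2}\|PAQ\|_{\GS_2}\|QAP\|_{\GS_2}$ via the Schatten--H\"older inequality $\|XY\|_{\GS_1}\le\|X\|_{\GS_2}\|Y\|_{\GS_2}$ combined with the ideal property. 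Counting admissible pairs $(k,j)$ gives $\binom{m}{2} = m(m-1)/2$ summands, so $\|D_m\|_{\GS_1}\le\tfrac12 m(m-1)\|A\|^{m-2}\|PAQ\|_{\GS_2}\|QAP\|_{\GS_2}$; summing against $|\omega_m|$ yields exactly the right-hand side of \eqref{LSG:eq}.

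The main obstacle will be the verification of the two telescoping identities: neither is deep, but both require careful exploitation of the cancellations $PAQ\cdot(PAP)^k = 0 = (PAP)^j\cdot QAP$ coming from $PQ = QP = 0$, in order to eliminate the numerous unwanted terms appearing after substitution. Once the identities are in place, both bounds follow by routine Schatten bookkeeping and a simple pair count.
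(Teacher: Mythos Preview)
Your proof is correct and follows essentially the same approach as the paper: both derive the telescoping identity $PA^mP-(PAP)^m=\sum (PA)^{m-j}QA^jP$ (your first display, up to sign and reindexing) and then expand $A^jP$ by a second telescope to expose a $QAP$ factor, count $\binom{m}{2}$ terms, and sum against $|\omega_m|$. The only difference is cosmetic: the paper's second identity (its (3.13)) stops at $A(PA)^{j-1}P$ and handles that ``diagonal'' piece separately via $\|PAQAP\|_{\GS_1}\le\|PAQ\|_{\GS_2}\|QAP\|_{\GS_2}$, whereas you iterate once more to $(PAP)^n$ and use $PAQ\cdot(PAP)^{\cdot}=0$ to kill it, landing directly on the symmetric double sum---a slightly cleaner bookkeeping, but the same argument.
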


\begin{proof} Denote $Q = I - P$. Then for $m\ge 2$ we write
\begin{align}\label{3.12:eq}
P A^m P = &\ PA(P + Q)A^{m-1} P = P(AP)A^{m-1} P + (PA) Q A^{m-1}P \notag\\
= &\ P(AP)A(P + Q)A^{m-2}P + (PA)Q A^{m-1}P\notag\\
= &\ P(AP)^2 A^{m-2} P + (PA)^2 Q A^{m-2} P + (PA)QA^{m-1} P\notag\\
= &\ P(AP)^m + \sum_{j=1}^{m-1} (PA)^{m-j} Q A^j P\notag\\
=&\ (PAP)^m + \sum_{j=1}^{m-1} (PA)^{m-j} Q A^j P.
\end{align}
Estimate the term under the sum:
\begin{equation*}
\|(PA)^{m-j} Q A^j P\|_{\GS_1}\le \|(PA)^{m-1-j}\|\  \|PAQ\|_{\GS_1}\
\|A^jP\|
\le \|A\|^{m-1} \|PAQ\|_{\GS_1},
\end{equation*}
so that
\begin{equation*}
\|PA^mP - (PAP)^m\|_{\GS_1}\le (m-1) \|A\|^{m-1} \|PAQ\|_{\GS_1}.
\end{equation*}
Now \eqref{LSG_trace:eq} follows.

On the other hand, for $j\ge2$,
\begin{align}\label{3.13:eq}
A^j P = &\ A^{j-1} (P + Q)A P = A^{j-1} PAP + A^{j-1}QAP\notag\\
= &\ A^{j-2} (P + Q)A PAP + A^{j-1} QAP \notag\\
= &\ A^{j-2} (PA)^2 P + A^{j-2}Q(AP)^2 + A^{j-1}QAP \notag\\
= &\ A(PA)^{j-1}P + \sum_{k=1}^{j-1} A^k Q(AP)^{j-k}P.
\end{align}
Substituting \eqref{3.13:eq} in \eqref{3.12:eq} we obtain
\begin{align*}
PA^mP-(PAP)^m = &\
\sum_{j=1}^{m-1} (PA)^{m-j}QA^jP\\
= &\ \sum_{j=1}^{m-1} (PA)^{m-j} QA(PA)^{j-1}P +
\sum_{j=2}^{m-1}(PA)^{m-j}Q \sum_{k=1}^{j-1}A^k Q(AP)^{j-k}P.
\end{align*}
This entails the bound
\begin{align*}
\|PA^m P-&\ (PAP)^m\|_{\GS_1}
\le (m-1)\|A\|^{m-2}\|PAQAP\|_{\GS_1}\\
&\ + \sum_{j=2}^{m-1} \|PA\|^{m-j-1} \|PAQ\|_{\GS_2} \sum_{k=1}^{j-1}
\|A\|^k \|QAP\|_{\GS_2}\|AP\|^{j-k-1}\\
\le &\ (m-1)\|A\|^{m-2}\|PAQAP\|_{\GS_1}
+ \|A\|^{m-2} \|PAQ\|_{\GS_2} \|QAP\|_{\GS_2} \sum_{j=2}^{m-1}\sum_{k=1}^{j-1} 1\\
\le &\ \frac{m(m-1)}{2} \|A\|^{m-2}\|PAQ\|_{\GS_2} \|QAP\|_{\GS_2}.
\end{align*}
This leads to \eqref{LSG:eq}.
\end{proof}

Thus, if $P, P_1$ are two orthogonal projections, then
\begin{align}\label{LSG1:eq}
\|g(PP_1AP_1P) - &\ PP_1g(A)P_1P\|_{\GS_1}\notag\\
\le
&\ \|g(PP_1AP_1P) - Pg(P_1AP_1)P\|_{\GS_1}  + \|g(P_1AP_1) - P_1g(A)P_1\|_{\GS_1}\notag\\
\le &\ \frac{g^{(2)}\bigl(\|A\|\bigr)}{2}\
\|PP_1AP_1(I-P)\|_{\GS_2} \ \|(I-P)P_1AP_1P\|_{\GS_2}\notag\\
&\hskip 3cm + g^{(1)}(\|A\|)\ \|P_1A(I-P_1)\|_{\GS_1}.
\end{align}
Now we apply the above estimates to pseudo-differential operators.
Let $A = \op_\a^l(a)$ with a compactly supported symbol $a$, and let
$D$ be the constant introduced in Lemma \ref{multiple:lem}. In particular,
by Lemma \ref{general_norm:lem},
\begin{equation}\label{t0:eq}
\|\op_\a^l(a)\|\le D \SN^{(d+2, d+2)}(a; \ell, \rho) =: t_0,\ \
\textup{and}\ \ \ \|T(a)\|\le t_0,
\end{equation}
under the condition $\a\ell\rho\ge 1$.

\begin{lem}\label{func_calc_anal:lem}
Suppose that $a\in\BS^{(d+2, d+2)}$ is a symbol such that
\[
\supp a\subset B(\bz, \ell)\times B(\bmu, \rho),
\]
and that $\a\ell\rho\ge 1$.
Let $g$ be given by the series
\eqref{gseries:eq} with the radius of convergence $R > t_0$.
Denote
\begin{equation*}
\tilde g(t) = \sum_{m=2}^\infty (m-1)^{2(d+2)}|\om_m| t^{m-1}.
\end{equation*}
Then
\begin{equation}\label{func_calc_anal:eq}
\| g(A) - \op_\a^l(g(a))\|_{\GS_1}\le  (\a\ell\rho)^{d-1}
\SN^{(d+2, d+2)}(a; \ell, \rho) \tilde g(t_0).
\end{equation}
\end{lem}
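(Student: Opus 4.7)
The plan is to reduce the lemma directly to the polynomial estimate of Lemma \ref{multiple:lem} by expanding $g$ in its Taylor series and summing term by term. Concretely, I would first observe that since $A = \op^l_\a(a)$, the degree-one terms cancel: $\om_1 A - \op^l_\a(\om_1 a) = 0$. Thus it suffices to bound
\[
\bigl\|g(A) - \op^l_\a(g(a))\bigr\|_{\GS_1}
\le \sum_{m=2}^{\infty} |\om_m|\,\bigl\|A^m - \op^l_\a(a^m)\bigr\|_{\GS_1}.
\]
Both series $\sum_m \om_m A^m$ and $\sum_m \om_m \op^l_\a(a^m)$ converge absolutely in $\GS_1$-norm on the right-hand side of the partial-sum difference, and in operator norm on the left-hand side, since $\|A\|\le t_0 < R$ and $\SN^{(d+2,d+2)}(a^m;\ell,\rho)\le m^{2(d+2)} N^m$ with $N := \SN^{(d+2,d+2)}(a;\ell,\rho)$, so a standard approximation by partial sums justifies the term-by-term estimate.

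Next I would apply Lemma \ref{multiple:lem} to each summand. That lemma gives
\[
\|A^m - \op^l_\a(a^m)\|_{\GS_1}
\le D^{m-1}(m-1)^{2(d+2)} (\a\ell\rho)^{d-1} N^m
= N (\a\ell\rho)^{d-1} (m-1)^{2(d+2)} t_0^{m-1},
\]
where I used $DN = t_0$ (cf.\ \eqref{t0:eq}). Inserting this into the series bound above yields
\[
\bigl\|g(A) - \op^l_\a(g(a))\bigr\|_{\GS_1}
\le (\a\ell\rho)^{d-1} N \sum_{m=2}^{\infty} |\om_m|(m-1)^{2(d+2)} t_0^{m-1}
= (\a\ell\rho)^{d-1} N\, \tilde g(t_0),
\]
which is exactly \eqref{func_calc_anal:eq}.

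The only point to be checked carefully is that $\tilde g(t_0)$ is finite: since the multiplier $(m-1)^{2(d+2)}$ is polynomial in $m$, the radius of convergence of $\tilde g$ coincides with that of $g$, namely $R$, and the assumption $t_0 < R$ delivers convergence. There is no real obstacle here — everything hinges on having the sharp polynomial-in-$m$ control provided by Lemma \ref{multiple:lem}, which was proved precisely for this kind of application. The proof is thus short and mechanical once that lemma is available.
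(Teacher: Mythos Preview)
Your proof is correct and follows essentially the same route as the paper: expand $g$ term by term, note that the $m=1$ term cancels, apply Lemma \ref{multiple:lem} to each $\|A^m-\op_\a^l(a^m)\|_{\GS_1}$, and use $D\,\SN^{(d+2,d+2)}(a;\ell,\rho)=t_0$ to sum the resulting bounds into $\tilde g(t_0)$. The paper's proof is simply the two-line version of what you wrote out in detail; your additional remarks on convergence (polynomial factors do not change the radius of convergence, so $\tilde g(t_0)<\infty$ since $t_0<R$) are the natural justification the paper leaves implicit.
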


\begin{proof}
By \eqref{multiple:eq},
\[
\|\bigl(\op_\a^l(a)\bigr)^m - \op_\a^l(a^m)\|_{\GS_1}
\le  (m-1)^{2(d+2)}
(\a\ell\rho)^{d-1} \SN^{(d+2, d+2)}(a; \ell, \rho) t_0^{m-1},
\]
which leads to \eqref{func_calc_anal:eq}.
\end{proof}

In the next lemma we still need to remember the dependence of the parameter
$t_0$ on the symbol $a$, but we
do not specify the dependence of the \textsl{constants} in the estimates
on the symbol $a$ or the parameters $\ell, \rho$.

\begin{lem}\label{gioev0:lem}
Let the symbol $a$ be as in Lemma \ref{func_calc_anal:lem}.
Let $g$ be given by the series
\eqref{gseries:eq} with the radius of convergence $R > t_0$.
Then
\begin{equation}\label{gioev0:eq}
\limsup_{\a\to\infty} \frac{1}{\a^{d-1} \log\a}
\biggl|\tr g(T(a)) - \GW_0(g(a); \L, \Om)\biggr|\le C g^{(2)}(t_0).
\end{equation}
\end{lem}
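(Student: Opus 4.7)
The plan is to identify $g(T(a))$ with the "compressed" functional calculus of $\op_\a^l(a)$ by the pair of projections $P = \chi_\L$ and $P_1 = P_{\Om,\a}$, to replace this compression by $\op_\a^l(g(a))$ sandwiched between the same projections, and finally to evaluate the trace of the resulting operator explicitly.

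First, observe that by definition $T(a) = P P_1 A P_1 P$ with $A = \op_\a^l(a)$, so one can apply inequality \eqref{LSG1:eq} directly. The $\GS_2$-factors on the right-hand side are exactly the operators controlled by Lemma \ref{HSestim:lem}, which yields
\[
\|\chi_\L P_{\Om,\a}\op_\a^l(a) P_{\Om,\a}(1-\chi_\L)\|_{\GS_2}^2 \le C(\a\ell\rho)^{d-1}\log(\a\ell\rho)\,\bigl(\SN^{(d+2,d+2)}(a;\ell,\rho)\bigr)^2,
\]
and the same for the adjoint operator. The remaining $\GS_1$-factor $\|P_1 A(I-P_1)\|_{\GS_1}$ is bounded by $C(\a\ell\rho)^{d-1}\SN^{(d+2,d+2)}(a;\ell,\rho)$ via \eqref{sandwich1_dual:eq} in Lemma \ref{sandwich_dual:lem}. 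Recalling $\|A\|\le t_0$ from \eqref{t0:eq}, inequality \eqref{LSG1:eq} then gives
\[
\|g(T(a)) - \chi_\L P_{\Om,\a}\,g(\op_\a^l(a))\,P_{\Om,\a}\chi_\L\|_{\GS_1}
\le C(\a\ell\rho)^{d-1}\log(\a\ell\rho)\,g^{(2)}(t_0) + C(\a\ell\rho)^{d-1} g^{(1)}(t_0).
\]

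Next, Lemma \ref{func_calc_anal:lem} replaces $g(\op_\a^l(a))$ by $\op_\a^l(g(a))$ at the cost of an additional trace-class error of order $(\a\ell\rho)^{d-1}\tilde g(t_0)$. Sandwiching this replacement between the projections $\chi_\L$ and $P_{\Om,\a}$ preserves the bound, and by Lemmas \ref{sandwich:lem} and \ref{sandwich_dual:lem} applied to the symbol $g(a)$ (whose $\SN^{(d+2,d+2)}$ norm is controlled by a series in $t_0$ that is dominated by $g^{(1)}(t_0)$), one has
\[
\bigl|\tr\bigl(\chi_\L P_{\Om,\a}\op_\a^l(g(a)) P_{\Om,\a}\chi_\L\bigr) - \tr\bigl(\chi_\L \op_\a^l(g(a)) P_{\Om,\a}\chi_\L\bigr)\bigr| \le C(\a\ell\rho)^{d-1}.
\]
The trace on the right is computed by evaluating the kernel on the diagonal:
\[
\tr\bigl(\chi_\L \op_\a^l(g(a)) P_{\Om,\a}\chi_\L\bigr)
= \Bigl(\frac{\a}{2\pi}\Bigr)^d\int_\L\int_\Om g(a(\bx,\bxi))\,d\bxi\,d\bx
= \a^d\GW_0(g(a);\L,\Om).
\]

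Combining these estimates, the entire discrepancy $|\tr g(T(a)) - \a^d\GW_0(g(a);\L,\Om)|$ is bounded by
\[
C(\a\ell\rho)^{d-1}\log(\a\ell\rho)\,g^{(2)}(t_0) + C(\a\ell\rho)^{d-1}\bigl(g^{(1)}(t_0) + \tilde g(t_0)\bigr).
\]
Dividing by $\a^{d-1}\log\a$ and letting $\a\to\infty$ with $\ell,\rho$ fixed kills the second (non-logarithmic) contribution and yields the asserted bound $C g^{(2)}(t_0)$. The only conceptually delicate point is the application of \eqref{LSG1:eq}: all operator-theoretic information about $g$ must be funneled through the abstract Hilbert-space inequality so that the only place a $\log\a$ factor enters is through the sharp Hilbert-Schmidt estimate of Lemma \ref{HSestim:lem}; everything else contributes only to the lower-order $\a^{d-1}$ remainder.
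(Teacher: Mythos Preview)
Your proof follows essentially the same route as the paper's: apply the abstract inequality \eqref{LSG1:eq} with $P=\chi_\L$, $P_1=P_{\Om,\a}$, $A=\op_\a^l(a)$, control the $\GS_2$-terms by the Hilbert--Schmidt estimate \eqref{HSestim:eq} and the $\GS_1$-term by \eqref{sandwich1_dual:eq}, then replace $g(A)$ by $\op_\a^l(g(a))$ via Lemma~\ref{func_calc_anal:lem} and compute the trace.

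There is one technical point you glossed over. The domains $\L,\Om$ here are the bounded $\plainC1$/$\plainC3$ domains of Theorem~\ref{main_anal:thm}, whereas Lemmas~\ref{sandwich_dual:lem} and~\ref{HSestim:lem} are stated under Condition~\ref{graph:cond} (graph-type domains). The paper bridges this gap in one sentence: ``Using an appropriate partition of unity, Lemma~\ref{omtoom0:lem} and estimates \eqref{sandwich1:eq}, \eqref{sandwich1_dual:eq}, we can easily derive\dots''. You should mention this localization step before invoking \eqref{HSestim:eq} and \eqref{sandwich1_dual:eq}; without it the cited lemmas do not literally apply. The same remark applies when you use Lemma~\ref{sandwich_dual:lem} on the symbol $g(a)$ to drop one copy of $P_{\Om,\a}$.
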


\begin{proof}
Using an appropriate partition of unity,
Lemma \ref{omtoom0:lem} and estimates
\eqref{sandwich1:eq}, \eqref{sandwich1_dual:eq},
we can easily derive from the estimates
\eqref{sandwich1_dual:eq}
and  \eqref{HSestim:eq} that
\begin{align}
\|P_{\Om, \a}\op_\a(a) (I-P_{\Om, \a})\|_{\GS_1}\le &\ C\a^{d-1},\label{gi:eq}\\
\|\chi_{\L} P_{\Om, \a} \op_\a(a) P_{\Om, \a} (1-\chi_{\L})\|_{\GS_2}^2
\le &\ C \a^{d-1}\log\a,\notag
\end{align}
where $\op_\a(a)$ denotes any of the operators $\op_\a^l(a), \op_\a^r(a)$.
Moreover, by \eqref{t0:eq},
$\|\op_\a^l(a)\|\le t_0$.
Therefore it follows from \eqref{LSG1:eq} that
\begin{equation*}
\| g(T(a)) - \chi_{\L}P_{\Om, \a}g(\op_\a^l(a)))P_{\Om, \a}\chi_{\L}\|_{\GS_1}
\le C \a^{d-1}
\bigl[\log\a\ g^{(2)}(t_0) + g^{(1)}(t_0)\bigr].
\end{equation*}
Together with \eqref{func_calc_anal:eq} this gives
\begin{equation*}
\limsup_{\a\to\infty} \frac{1}{\a^{d-1} \log\a}
\|g(T(a)) - T(g(a))\|_{\GS_1}\le C g^{(2)}(t_0).
\end{equation*}
In order to find the trace of $T(g(a))$ we note
that in view of \eqref{gi:eq},
\[
\|(I-P_{\Om, \a})\op^l_\a(g(a))P_{\Om, \a}\|_{\GS_1}\le C\a^{d-1},
\]
so that
\[
\tr T(g(a)) = \tr \chi_{\L} \op^l_\a(g(a)) P_{\Om, \a}\chi_{\L} + O(\a^{d-1}).
\]
Writing out the kernel of the operator on the right-hand side and integrating,
we find that its trace equals $\a^d \GW_0(g(a); \L, \Om)$.
The proof is complete.
\end{proof}

\subsection{Proof of Theorem \ref{main_anal:thm}}
Represent the function $g$ in the form
\[
g(z) = g_p(z) + r_p(z), \ g_p(z) = \sum_{m=1}^p \om_m z^m,
r_p(z) = \sum_{m=p+1}^\infty \om_m z^m,
\]
so that
\[
\tr g(T(a)) = \tr g_p(T(a)) + \tr r_p(T(a)).
\]
For any $\vare >0$ one can find a number $p$ such that
\[
r_p^{(1)}(t_0) + r_p^{(2)}(t_0) < \vare,
\]
see definitions \eqref{g1:eq} and \eqref{g2:eq}.
By \eqref{GA_est_anal:eq},
\begin{equation*}
|\GA(r_p; b)|\le |b| r_p^{(1)}(|b|),
\end{equation*}
for any number $b\in\mathbb C, |b|<R$. Thus by definition of $\GW_1$ and $\GA$,
\[
|\GW_1(\GA(r_p, a); \p\L, \p\Om)|\le   C r_p^{(1)}(t_0) \le C\vare,
\]
so that according to Lemma \ref{gioev0:lem},
\begin{align*}
\limsup_{\a\to\infty}
\frac{1}{\a^{d-1}\log\a}\bigl|\tr r_p\bigl(T(a)\bigr)
- &\ \a^d \GW_0(r_p(a); \L, \Om)\\[0.2cm]
&\ - \a^{d-1} \log\a\  \GW_1(\GA(r_p; a); \p\L, \p\Om)
\bigr|\\[0.2cm]
\le &\ C \bigl(r_p^{(1)}(t_0) + r_p^{(2)}(t_0)\bigr) < C \vare.
\end{align*}
Adding this formula and \eqref{main:eq} for the trace $\tr g_p(T(a))$,
we obtain
\begin{align*}
\limsup_{\a\to\infty}
\frac{1}{\a^{d-1}\log\a}\bigl|\tr g\bigl(T(a)\bigr)
- &\ \a^d \GW_0(g(a); \L, \Om)\\[0.2cm]
&\ - \a^{d-1} \log\a\  \GW_1(\GA(g; a); \p\L, \p\Om)
\bigr|
\le C \vare.
\end{align*}
Since the parameter $\vare$ is arbitrary,
this proves \eqref{main_anal:eq}.
\qed

\subsection{Self-adjoint case}
A central role is played by the following abstract result, established in
\cite{LS} (see also \cite{LapSaf}), which we have slightly rephrased.

\begin{prop}\label{LapSaf:prop}
Let $A$ be a self-adjoint bounded operator in a separable Hilbert space $\GH$, and
let $P$ be an orthogonal projection in $\GH$ such that $PA\in\GS_2$.
Then for any function $\psi\in \plainC2(I)$, $I = [-\|A\|, \|A\|]$,
such that $\psi(0) = 0$,
the operators $P\psi(A)P$ and $\psi(PAP)$ are trace class, and
\begin{equation*}
\biggl|\tr \bigl(P\psi(A)P - \psi(PAP)\bigr)\biggr|
\le \frac{1}{2}\|\psi''\|_{\plainL\infty}(I) \|PA(1-P)\|^2_{\GS_2}.
\end{equation*}
\end{prop}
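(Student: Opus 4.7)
The plan is to diagonalise $PAP$ on $\ran P$, express the trace on the left as a sum of scalar Jensen-type differences, and control each summand by Taylor's formula with integral remainder.

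Since $PA\in\GS_2$ implies $AP=(PA)^*\in\GS_2$, the product $PAP\in\GS_2\cdot\GS_2\subset\GS_1$, and in particular is compact and self-adjoint. Writing
\[
\psi(t)=\psi'(0)\,t+t^2\psi_2(t),\qquad \psi_2(t)=\int_0^1(1-s)\psi''(ts)\,ds,
\]
one has $\psi(PAP)=\psi'(0)\,PAP+(PAP)^2\psi_2(PAP)\in\GS_1$; and since $A$ commutes with $\psi_2(A)$,
\[
P\psi(A)P=\psi'(0)\,PAP+(PA)\,\psi_2(A)\,(AP)\in\GS_1
\]
as the product of two $\GS_2$-factors separated by a bounded operator. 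Compactness of $PAP$ next yields an orthonormal basis of $\ran P$ of the form $\{e_j\}\cup\{f_k\}$ with $PAPe_j=\mu_j e_j$, $\mu_j\neq 0$, and $\{f_k\}$ spanning $\ker(PAP)\cap\ran P$.

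Set $Q=1-P$ and let $\nu_e$ denote the spectral measure of $A$ at $e$. Augmenting by any orthonormal basis of $\ker P$, on which both $P\psi(A)P$ and $\psi(PAP)$ vanish (using $\psi(0)=0$), the trace evaluates as
\[
\tr\bigl(P\psi(A)P-\psi(PAP)\bigr)=\sum_j\Bigl(\int\psi\,d\nu_{e_j}-\psi(\mu_j)\Bigr)+\sum_k\int\psi\,d\nu_{f_k}.
\]
The equalities $Qe_j=0$ and $PAe_j=PAPe_j=\mu_je_j$ give $\int\lambda\,d\nu_{e_j}=\mu_j$ and, by orthogonal decomposition, $\int(\lambda-\mu_j)^2\,d\nu_{e_j}=\|Ae_j\|^2-\|PAe_j\|^2=\|QAe_j\|^2$; analogously $PAf_k=PAPf_k=0$ forces $\int\lambda\,d\nu_{f_k}=0$ and $\int\lambda^2\,d\nu_{f_k}=\|QAf_k\|^2$.

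Taylor's formula with integral remainder,
\[
\psi(\lambda)-\psi(\mu)-\psi'(\mu)(\lambda-\mu)=(\lambda-\mu)^2\!\int_0^1(1-t)\psi''\bigl(\mu+t(\lambda-\mu)\bigr)\,dt,
\]
applied with $\mu=\mu_j$ (respectively $\mu=0$) and integrated against $d\nu_{e_j}$ (respectively $d\nu_{f_k}$) annihilates the linear term and bounds the remainder by $\tfrac12\|\psi''\|_{\plainL\infty(I)}\|QAe_j\|^2$ (respectively $\tfrac12\|\psi''\|_{\plainL\infty(I)}\|QAf_k\|^2$). Summing, together with $\sum_j\|QAe_j\|^2+\sum_k\|QAf_k\|^2=\|QAP\|_{\GS_2}^2=\|PA(1-P)\|_{\GS_2}^2$ (self-adjointness of $A$), yields the stated bound. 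The one non-routine point I foresee is the trace-class statement for $P\psi(A)P$: a first-order factorisation $\psi(t)=t\tilde\phi(t)$ does not suffice, since $\tilde\phi(A)P$ need not be Hilbert--Schmidt, and it is precisely the second-order Taylor splitting above, together with $A\psi_2(A)=\psi_2(A)A$, that positions the two $\GS_2$ factors on opposite sides of a bounded middle term.
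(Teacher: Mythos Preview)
The paper does not prove this proposition itself; it is quoted from Laptev--Safarov (the references [LS] and [LapSaf] in the paper). Your argument via diagonalisation of $PAP$ on $\ran P$, spectral measures of $A$, and the second-order Taylor remainder is essentially the Berezin--Lieb/Laptev--Safarov argument, and the derivation of the trace inequality is correct.

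There is, however, a genuine error in the trace-class part. The claim that $PAP\in\GS_2\cdot\GS_2\subset\GS_1$ is false: $PAP$ is the product of $PA\in\GS_2$ with the \emph{bounded} projection $P$, not with $AP$, so in general $PAP$ is only Hilbert--Schmidt. Taking $P=I$ and any $A\in\GS_2\setminus\GS_1$, one has $PA=A\in\GS_2$ while $PAP=A\notin\GS_1$; for $\psi(t)=t$ this already shows that neither $P\psi(A)P$ nor $\psi(PAP)$ need be trace class individually. The proposition as literally stated is therefore slightly imprecise in this clause, and your argument for it cannot be repaired.

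What is correct---and what your second-order splitting actually establishes---is that the \emph{difference} $P\psi(A)P-\psi(PAP)$ lies in $\GS_1$: the linear contributions $\psi'(0)\,PAP$ cancel, and each of the remaining quadratic pieces $PA\,\psi_2(A)\,AP$ and $(PAP)\,\psi_2(PAP)\,(PAP)$ is a product of the form $\GS_2\cdot(\text{bounded})\cdot\GS_2\subset\GS_1$. Once this is recorded, your orthonormal-basis computation of $\tr\bigl(P\psi(A)P-\psi(PAP)\bigr)$ and the Taylor bound on each diagonal term go through verbatim, giving the stated inequality.
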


We also need a less elegant, but still useful estimate. The conditions on the
operators in the next lemma are certainly not optimal,
but they suffice for our purposes.

\begin{lem}\label{prim:lem}
Suppose that $A$ is a self-adjoint trace class operator in a Hilbert space $\GH$, and let
$P$ be an orthogonal projection in $\GH$. Then for any function $\psi$ such that
$t\hat\psi \in \plainL1(\R)$, we have
\begin{equation}\label{prim:eq}
\|P\psi(A)P - P\psi(PAP)P\|_{\GS_1}\le \frac{1}{\sqrt{2\pi}}\|PA(I-P)\|_{\GS_1}
\int |t|\ |\hat\psi(t)| dt.
\end{equation}

\end{lem}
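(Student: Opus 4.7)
The natural approach is to represent both $\psi(A)$ and $\psi(PAP)$ via Fourier inversion and to use a Duhamel-type identity to compare the two one-parameter unitary groups $U(t) := e^{itA}$ and $V(t) := e^{itPAP}$. Without loss of generality one may assume $\psi(0) = 0$, since the operator $P\psi(A)P - P\psi(PAP)P$ is unaffected by adding a constant to $\psi$. The key observation is that $P$ commutes with $PAP$, and hence with $V(t)$, so $PV(t)P = V(t)P = PV(t)$ for every $t\in\R$.

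Set $W(t) := PU(t)P - PV(t)P$. Then $W(0) = 0$, and differentiating gives
\[
W'(t) = iPAU(t)P - iPAPV(t)P = iPAP\cdot W(t) + iPA(I-P)U(t)P,
\]
where I have used the decomposition $PAU(t)P = PAP\cdot PU(t)P + PA(I-P)U(t)P$, together with $PAP = PAP\cdot P$. Duhamel's formula then yields
\[
W(t) = \int_0^t V(t-s)\, iPA(I-P)\, U(s)P\, ds.
\]
Since $\|V(t-s)\| = \|U(s)P\| \le 1$, this representation gives the pointwise bound
\[
\|W(t)\|_{\GS_1} \le |t|\, \|PA(I-P)\|_{\GS_1}, \qquad t\in\R.
\]

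Combining this with the Fourier representation
\[
P\psi(A)P - P\psi(PAP)P = \frac{1}{\sqrt{2\pi}} \int \hat\psi(t)\, W(t)\, dt,
\]
and taking trace-class norms under the integral sign gives the claimed estimate \eqref{prim:eq}. The only subtle point is making sense of the Fourier integral when $\hat\psi$ is not a priori in $\plainL1(\R)$: this is handled by the cancellation $W(0) = 0$, which, together with the bound $\|W(t)\|_{\GS_1} \le |t|\,\|PA(I-P)\|_{\GS_1}$, shows that $\hat\psi(t)\,W(t)$ is integrable in $\GS_1$ precisely under the hypothesis $t\hat\psi \in \plainL1(\R)$. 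This is the main technical step; the rest is a routine commutator calculation.
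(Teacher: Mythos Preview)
Your proof is correct and follows essentially the same route as the paper's: both arguments compare $Pe^{itA}P$ with $Pe^{itPAP}P$ via a Duhamel (variation-of-parameters) identity, obtain the bound $\|W(t)\|_{\GS_1}\le |t|\,\|PA(I-P)\|_{\GS_1}$, and then integrate against $\hat\psi$. Your added remark that the integral $\int \hat\psi(t)W(t)\,dt$ converges in $\GS_1$ under the sole hypothesis $t\hat\psi\in\plainL1$ (thanks to $W(0)=0$) is a nice clarification that the paper leaves implicit.
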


\begin{proof}
For an arbitrary self-adjoint operator $B$ denote
\[
U(t) = U(t; B) = e^{iBt}, t\in\R,
\]
so that
\[
i \p_t U(t) + B U(t) = 0,\ U(0) = I.
\]
Compare the operators
\[
W_1(t) = P U(t; PAP)P \ \ \textup{and}\ \
W_2(t) = P U(t; A)P.
\]
It is clear, that
\begin{gather*}
i\p_t W_1(t) + PAP W_1(t) = 0,\\[0.2cm]
i\p_t W_2(t) + PAP W_2(t) + PA(I-P) U(t; A)P = 0.
\end{gather*}
Therefore $W(t):=W_2(t)-W_1(t)$ satisfies the equation
\begin{equation*}
i\p_t W(t) + PAP W(t) = - PA(I-P) U(t; A)P.
\end{equation*}
Thus
\begin{equation*}
W(t) = i\int_0^t U(t-s; PAP) PA(I-P) U(s; A)P ds,
\end{equation*}
and hence
\begin{equation}\label{prop_diff:eq}
\|P\bigl(U(t, A) - U(t; PAP)\bigr)P\|_{\GS_1}\le |t| \|PA(I-P) \|_{\GS_1},
\ t\in\R.
\end{equation}
 For any self-adjoint $B$ we have
 \[
 \psi(B) = \frac{1}{\sqrt{2\pi}}\int U(t; B) \hat\psi(t) dt,
 \]
 where $\hat\psi$ is the Fourier transform of $\psi$. Thus
 in view of \eqref{prop_diff:eq},
\begin{equation*}
\|P\psi(A)P - P\psi(PAP)P\|_{\GS_1}
\le \frac{1}{\sqrt{2\pi}}\|PA(I-P)\|_{\GS_1}
\int |t|\ |\hat\psi(t)| dt,
\end{equation*}
which is the required bound.
\end{proof}

Below we recall an elementary result of functional calculus for
pseudo-differential operators. For a function $\psi: \R\to\mathbb C$
we denote
\[
a_{\psi}(\bx, \by, \bxi)
 = \psi\biggl(\frac{a(\bx, \bxi)+\overline{a(\by, \bxi)}}{2}\biggr), \
 A_{\psi} = \op_\a^a(a_{\psi}),
\]
so that
\[
a_1(\bx, \by, \bxi) = \frac{a(\bx, \bxi)+\overline{a(\by, \bxi)}}{2},\
A_1 = \re \op_\a^l(a).
\]

\begin{lem}\label{func_calc:lem}
Suppose that $a\in\BS^{(d+2, d+2)}$ be a symbol such that
\[
\supp a\subset B(\bz, \ell)\times B(\bmu, \rho).
\]
Let $\psi\in \plainC\infty_0(\R)$. Then
\begin{equation}\label{func_calc:eq}
\| \psi(A_1) - A_{\psi}\|_{\GS_1}\le C(\a\ell\rho)^{d-1}
\bigr(\SN^{(d+2, d+2)}(a; \ell, \rho)\bigr)^{2d+3},
\end{equation}
with a constant depending only on the function $\psi$.
\end{lem}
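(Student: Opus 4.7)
The proof is a $\plainC\infty_0$-analog of Lemma \ref{func_calc_anal:lem}: since $\psi$ is smooth but not analytic, the Taylor expansion used there has to be replaced by the Helffer--Sj\"ostrand almost-analytic functional calculus, which has the additional merit of making sense of $\psi$ on the range of the complex-valued amplitude $a_1(\bx,\by,\bxi)$. The plan is to fix an almost-analytic extension $\tilde\psi\in \plainC\infty_0(\mathbb{C})$ of $\psi$ satisfying $|\bar\partial\tilde\psi(z)|\le C_N|\im z|^N$ for every $N$, interpret $a_\psi$ as $\tilde\psi\circ a_1$ (well-defined since $a_1$ is bounded), and write
\[
\psi(A_1) - A_\psi = -\frac{1}{\pi}\int_{\mathbb{C}} \bar\partial\tilde\psi(z)\Bigl[(z-A_1)^{-1} - \op_\a^a\bigl((z-a_1)^{-1}\bigr)\Bigr]\,dL(z),
\]
where $dL$ denotes planar Lebesgue measure.

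The bracketed quantity is controlled by a resolvent identity combined with an amplitude-composition estimate. Since $\op_\a^a(z-a_1) = z - A_1$, multiplying on the left by $(z-A_1)^{-1}$ gives
\[
(z-A_1)^{-1} - \op_\a^a\bigl((z-a_1)^{-1}\bigr) = -(z-A_1)^{-1}R(z), \quad R(z) := \op_\a^a(z-a_1)\,\op_\a^a\bigl((z-a_1)^{-1}\bigr) - \op_\a^a(1),
\]
reducing the problem to a trace-norm bound on the amplitude composition error $R(z)$, together with the standard resolvent bound $\|(z-A_1)^{-1}\|\le |\im z|^{-1}$.

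The decisive ingredient, which I expect to be the main obstacle, is an amplitude analog of Corollary \ref{product:cor}:
\[
\|\op_\a^a(p)\op_\a^a(q) - \op_\a^a(p\cdot q)\|_{\GS_1} \le C(\a\ell\rho)^{d-1}\SN^{(d+1,d+1,d+2)}(p;\ell,\rho)\SN^{(d+1,d+1,d+2)}(q;\ell,\rho).
\]
This is not stated in the paper for general amplitudes, but is obtained by writing $\op_\a^a(p)\op_\a^a(q)$ as a single amplitude operator with amplitude non-local in $\bxi$, then invoking Lemma \ref{quantisation_trace:lem} and \eqref{quantisation_symbol:eq} to identify this with $\op_\a^a(p\cdot q)$ modulo trace-class remainders of the stated order; the proof follows the scheme of Corollary \ref{product:cor} but with careful tracking of derivatives in both $\bx$ and $\by$.

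Applying this bound with $p=z-a_1$ and $q=(z-a_1)^{-1}$, and estimating $\SN^{(d+1,d+1,d+2)}((z-a_1)^{-1};\ell,\rho)$ by Fa\`a di Bruno (each derivative produces a factor of $|\im z|^{-1}$ and one derivative of $a_1$, giving a bound of the form $C|\im z|^{-N}\SN^{(d+2,d+2)}(a;\ell,\rho)^{M}$ with $N,M$ depending only on $d$), yields
\[
\|R(z)\|_{\GS_1} \le C(\a\ell\rho)^{d-1}|\im z|^{-N}\SN^{(d+2,d+2)}(a;\ell,\rho)^{2d+3},
\]
the exponent $2d+3$ arising as the sum of one factor from $a_1$ itself and $2(d+2)-1$ factors accumulated in the Fa\`a di Bruno expansion of $(z-a_1)^{-1}$. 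The rapid vanishing of $\bar\partial\tilde\psi(z)$ near the real axis absorbs the $|\im z|^{-N-1}$ singularity, rendering the $z$-integral absolutely convergent and producing the claimed bound with a constant depending only on $\psi$.
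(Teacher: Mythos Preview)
Your Helffer--Sj\"ostrand approach is a legitimate alternative, but the paper takes a different and somewhat more direct route: it uses the Fourier representation $\psi(A_1)=(2\pi)^{-1/2}\int e^{itA_1}\hat\psi(t)\,dt$ and compares the unitary group $U(t)=e^{itA_1}$ with the amplitude-quantized propagator $W(t)=\op_\a^a(e^{ita_1})$ via Duhamel's formula. The difference $E(t)=W(t)-U(t)$ satisfies $i\partial_t E+A_1E=M(t)$ with $M(t)=\op_\a^a(a_1)\op_\a^a(w_t)-\op_\a^a(a_1w_t)$, so $\|E(t)\|_{\GS_1}\le\int_0^{|t|}\|M(s)\|_{\GS_1}\,ds$; the polynomial-in-$t$ bound on $\|M(t)\|_{\GS_1}$ is then integrated against the Schwartz function $\hat\psi$, producing the constant $\int|t|^{2d+5}|\hat\psi(t)|\,dt$ depending only on $\psi$.

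Both arguments rest on the same technical core---the amplitude composition estimate you single out---but the paper obtains it not by ``writing the product as a single amplitude non-local in $\bxi$'' (which does not obviously lead anywhere) but simply by reducing each amplitude operator to a left-symbol operator via Lemma~\ref{quantisation_trace:lem} (the diagonal restrictions $a_1(\bx,\bx,\bxi)=\re a$ and $w_t(\bx,\bx,\bxi)=e^{it\re a}$ are genuine symbols) and then invoking Corollary~\ref{product:cor} on the symbol side. That route is shorter and avoids any new amplitude machinery. The propagator method also has a bookkeeping advantage: derivatives of $e^{ita_1}$ produce only powers of $t$, so the polynomial order and the exponent $2d+3$ fall out directly from counting derivatives in Corollary~\ref{product:cor}. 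In your scheme the Fa\`a di Bruno expansion of $(z-a_1)^{-1}$ accumulates powers of $|\im z|^{-1}$ which the almost-analytic extension must absorb; this works, but beware that since $a_1$ is complex off the diagonal the naive bound $|(z-a_1)^{-1}|\le|\im z|^{-1}$ fails, so you should first pass to the real symbol $\re a$ via Lemma~\ref{quantisation_trace:lem} before running the resolvent argument.
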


\begin{proof}
Let
\[
w_t = w_t(\bx, \by, \bxi) = e^{i  t a_1(\bx, \by, \bxi)},\ W(t) = \op_\a^a(w_t),
\]
and $U(t) = U(t; A_1) = e^{i t A_1}$.
Let us show that
\begin{equation}\label{ampl_prop:eq}
\|U(t) - W(t)\|_{\GS_1}\le C(\a\ell\rho)^{d-1} t^{2d+5}
\bigr(\SN^{(d+2, d+2)}(a; \ell, \rho)\bigr)^{2d+3}.
\end{equation}
The operator $W(t)$ satisfies the equation
\[
i\p_t W(t) + \op_\a^a(a_1 w_t) = 0,
\]
so that
\[
i\p_t W(t) + \op_\a^a(a_1) W(t) = M(t), \ M(t) = \op_\a^a(a_1) W(t) - \op_\a^a(a_1 w_t).
\]
Since $A_1 = \op_\a^a(a_1)$, the difference
$E(t):=W(t) - U(t)$ satisfies the equation
\[
i\p_t E(t) + A_1 E(t) = M(t), E(0) = 0.
\]
Thus
 \begin{equation*}
E(t) = - i\int_0^t U(t-s; A_1) M(t) ds.
\end{equation*}
By Lemma \ref{quantisation_trace:lem} and Corollary \ref{product:cor},
\[
\|M(t)\|_{\GS_1}\le C(\a\ell\rho)^{d-1} t^{2d+4}
\bigr(\SN^{(d+2, d+2)}(a; \ell, \rho)\bigr)^{2d+3},
\]
so that \eqref{ampl_prop:eq} holds.

Since
$$
\psi(A_1) = \frac{1}{\sqrt{2\pi}}
\int U(t; A_1)\hat\psi(t) dt,
$$
it follows from \eqref{ampl_prop:eq} that
\begin{align*}
 \biggl\|\psi(A_1)  - &\ \frac{1}{\sqrt{2\pi}}
 \int W(t)\hat\psi(t) dt\biggr\|_{\GS_1}\\[0.2cm]
 \le &\
 C(\a\ell\rho)^{d-1} \int |t|^{2d+5} |\hat\psi(t)| dt
 \bigr(\SN^{(d+2, d+2)}(a; \ell, \rho)\bigr)^{2d+3}.
\end{align*}
By definition of $w_t$,
\[
\frac{1}{\sqrt{2\pi}}
 \int W(t)\hat\psi(t) dt = \op_\a^a(a_{\psi}).
\]
This completes the proof.
\end{proof}

Let us apply the estimates in Proposition \ref{LapSaf:prop},
and Lemmas \ref{prim:lem} and \ref{func_calc:lem} to the operator
$S(a) = \chi_{\L} P_{\Om, \a} \re \op^l_\a(a) P_{\Om, \a}\chi_{\L}$.
By Lemma \ref{general_norm:lem} the operator $A_1 = \re \op_\a^l(a)$ is bounded
by some constant $t_0 >0$ uniformly in $\a \ge 1$, so that $\| S(a)\|\le t_0$
for all $\a\ge 1$ as well.
In the lemma below the dependence on the symbol $a$ is included
in the constants.

\begin{lem}\label{gioev:lem}
Let $\L$ and $\Om$ be bounded $\plainC1$-domains, and let
$a=a(\bx, \bxi)$ be a compactly supported symbol from $\BS^{(d+2, d+2)}$.
If $\psi\in\plainC\infty$
and $\psi(0) = 0$, then
\begin{equation}\label{gioev:eq}
\limsup_{\a\to\infty}
\frac{1}{\a^{d-1}\log\a}
\bigl|\tr \psi\bigl(S(a)\bigr) - \a^d \GW_0(\psi(\re a); \L, \Om)\bigr|
\le C \|\psi''\|_{\plainL\infty(-t_0, t_0)}.
\end{equation}
\end{lem}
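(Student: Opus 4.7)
The plan is to approximate $\tr\psi(S(a))$ by $\a^d\GW_0(\psi(\re a);\L,\Om)$ through a chain of trace-norm estimates, arranging the bookkeeping so that the only error term that contributes a factor of $\log\a$ has the required dependence on $\|\psi''\|_{L^\infty(-t_0,t_0)}$. First I would reduce to $\psi\in\plainC\infty_0(\R)$: since $\|A_1\|\le t_0$ and $\|S(a)\|\le t_0$ by Lemma \ref{general_norm:lem} and \eqref{t0:eq}, the operators $\psi(A_1)$ and $\psi(S(a))$ depend only on $\psi|_{[-t_0,t_0]}$, so one may replace $\psi$ by a compactly supported $\tilde\psi$ agreeing with it on this interval. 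This preserves both sides of \eqref{gioev:eq} and makes Lemmas \ref{prim:lem} and \ref{func_calc:lem} applicable.

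The heart of the argument is an application of Proposition \ref{LapSaf:prop} with $P=\chi_\L$ and $A=P_{\Om,\a}A_1 P_{\Om,\a}$, for which $PAP=S(a)$. This yields
\[
\bigl|\tr\psi(S(a))-\tr\chi_\L\psi(P_{\Om,\a}A_1 P_{\Om,\a})\chi_\L\bigr|\le\tfrac{1}{2}\|\psi''\|_{L^\infty(-t_0,t_0)}\,\|\chi_\L P_{\Om,\a}A_1 P_{\Om,\a}(I-\chi_\L)\|_{\GS_2}^2,
\]
and by Lemma \ref{HSestim:lem} applied to $\re\op_\a^l(a)$, the squared Hilbert-Schmidt norm is $O(\a^{d-1}\log\a)$. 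This is the unique place where the $\log\a$ appears, and its prefactor is exactly $\tfrac12\|\psi''\|_{L^\infty(-t_0,t_0)}$.

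Next, using $\psi(0)=0$ to get $\psi(P_{\Om,\a}A_1 P_{\Om,\a})=P_{\Om,\a}\psi(P_{\Om,\a}A_1 P_{\Om,\a})P_{\Om,\a}$, I would mimic the Duhamel proof of Lemma \ref{prim:lem} with $P=P_{\Om,\a}$ to obtain
\[
\|P_{\Om,\a}\psi(A_1)P_{\Om,\a}-\psi(P_{\Om,\a}A_1 P_{\Om,\a})\|_{\GS_1}\le C_\psi\|P_{\Om,\a}A_1(I-P_{\Om,\a})\|_{\GS_1},
\]
which is $O(\a^{d-1})$ by Lemma \ref{sandwich_dual:lem}. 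Then Lemma \ref{func_calc:lem} replaces $\psi(A_1)$ by $A_\psi=\op_\a^a(a_\psi)$ up to $O(\a^{d-1})$ in $\GS_1$, and Lemma \ref{quantisation_trace:lem} replaces $A_\psi$ by $\op_\a^l(\psi(\re a))$ (since $a_\psi(\bx,\bx,\bxi)=\psi(\re a(\bx,\bxi))$) at the same cost. Finally, one more application of \eqref{sandwich1_dual:eq} permits removal of the leftmost $P_{\Om,\a}$, leaving $\tr\bigl(\chi_\L\op_\a^l(\psi(\re a))P_{\Om,\a}\chi_\L\bigr)$, which a direct kernel computation identifies with $\a^d\GW_0(\psi(\re a);\L,\Om)$.

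Summing the errors one gets $\tfrac12\|\psi''\|_{L^\infty(-t_0,t_0)}\a^{d-1}\log\a+O_\psi(\a^{d-1})$; dividing by $\a^{d-1}\log\a$ and taking $\limsup$ yields \eqref{gioev:eq}. The main obstacle is the third paragraph's first step: Lemma \ref{prim:lem} is formally stated for \emph{trace class} $A$, whereas here $A_1=\re\op_\a^l(a)$ is only bounded self-adjoint. The way around it is to repeat the Duhamel identity from that lemma's proof and verify the bound $\|P_{\Om,\a}(e^{itA_1}-e^{itP_{\Om,\a}A_1 P_{\Om,\a}})P_{\Om,\a}\|_{\GS_1}\le|t|\,\|P_{\Om,\a}A_1(I-P_{\Om,\a})\|_{\GS_1}$ using only $\|P_{\Om,\a}e^{i(t-s)A_1}\|\le 1$ together with the genuine $\GS_1$-finiteness provided by Lemma \ref{sandwich_dual:lem}; integration against the Schwartz function $\hat{\tilde\psi}$ then gives a constant depending on $\psi$ but with the desired $\a^{d-1}$ smallness.
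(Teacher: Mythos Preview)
Your proof is correct and follows essentially the same route as the paper: apply Proposition \ref{LapSaf:prop} with $P=\chi_\L$ and $A=P_{\Om,\a}A_1P_{\Om,\a}$ to isolate the $\|\psi''\|_{\plainL\infty}\a^{d-1}\log\a$ term, then use Lemma \ref{prim:lem} with $P=P_{\Om,\a}$ together with Lemma \ref{func_calc:lem} to reach $\chi_\L P_{\Om,\a}A_\psi P_{\Om,\a}\chi_\L$ at cost $O(\a^{d-1})$, and finally compute the trace. Two small remarks: your concern about Lemma \ref{prim:lem} requiring $A$ trace class is unnecessary here, since $a$ is compactly supported and hence $A_1=\re\op_\a^l(a)\in\GS_1$ by \eqref{trace_symbol:eq}; and your invocations of Lemma \ref{HSestim:lem} and Lemma \ref{sandwich_dual:lem} formally require graph-type domains, so a partition-of-unity localisation (as the paper notes in one line) is needed to pass from bounded $\plainC1$-domains to that setting.
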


\begin{proof}
As in the previous lemma, we use the notation
$A_1 = \re\op_\a^l(a)$.
Using an appropriate partition of unity,
Lemma \ref{omtoom0:lem} and estimates
\eqref{sandwich1:eq}, \eqref{sandwich1_dual:eq},
we can easily derive from the estimates
\eqref{sandwich1_dual:eq}
and  \eqref{HSestim:eq} that
\begin{align*}
\|P_{\Om, \a}A_1 (I-P_{\Om, \a})\|_{\GS_1}\le &\ C\a^{d-1},\\
\|\chi_{\L} P_{\Om, \a} A_1 P_{\Om, \a} (1-\chi_{\L})\|_{\GS_2}^2
\le &\ C \a^{d-1}\log\a.
\end{align*}
Remembering that $\|S(a)\|\le t_0$,
together with Proposition \ref{LapSaf:prop} the second estimate gives:
\[
\biggl|\tr\psi\bigl(\chi_{\L}P_{\Om, \a}A_1 P_{\Om, \a}\chi_{\L}\bigr)
- \tr \biggl(\chi_{\L}\psi(P_{\Om, \a}A_1 P_{\Om, \a})\chi_{\L}\biggr)\biggr|
\le C\|\psi''\|_{\plainL\infty(-t_0, t_0)}\a^{d-1}\log\a.
\]
Now, according to Lemma \ref{func_calc:lem} and
Lemma \ref{prim:lem} with $A = A_1=\re \op_\a^l(a)$, $P = P_{\Om, \a}$,
we have
\[
\|\chi_{\L}\psi(P_{\Om, \a} A_1  P_{\Om, \a})\chi_{\L}
- \chi_{\L} P_{\Om, \a} A_{\psi} P_{\Om, \a} \chi_{\L}\|_{\GS_1}
\le C\a^{d-1},\ A_{\psi} = \op_\a^a(a_{\psi}).
\]
The last two estimates together imply that
\[
\bigl|\tr\psi\bigl(\chi_{\L}P_{\Om, \a}A_1 P_{\Om, \a}\chi_{\L}\bigr)
-\tr \chi_{\L} P_{\Om, \a} A_{\psi} P_{\Om, \a} \chi_{\L}\bigr|
\le C\|\psi''\|_{\plainL\infty(-t_0, t_0)} \a^{d-1} \log\a + C\a^{d-1}.
\]
Arguing as in the proof of Lemma \ref{gioev0:lem}, we find that
the trace of the operator
\begin{equation*}
\chi_{\L} P_{\Om, \a} A_{\psi} P_{\Om, \a}\chi_{\L},
\end{equation*}
equals $\a^d \GW_0(\psi(\re a); \L, \Om) + O(\a^{d-1})$.
Now \eqref{gioev:eq} follows.
\end{proof}

\subsection{Proof of Theorem \ref{main_s:thm}}
Represent the function $g$ in the form
\[
g(t) = \b t + \psi(t), \b = g'(0),
\]
with a function $\psi\in\plainC\infty$ such
that $\psi(0) = \psi'(0) = 0$. Therefore
\[
g(t) = \b t + \int_0^{t} \int_{0}^{\tau} \psi''(s) ds d\tau.
\]
For any $\vare >0$ one can find a polynomial $z=z(t)$
of a sufficiently large degree $p-2$ such that
\[
\max_{|t|\le t_0}|z(t) - \psi''(t)|\le \vare.
\]
Therefore,
\[
g(t) = g_p(t) + \phi(t),
\]
where
\[
g_p(t) = \b t + \int_0^t\int_0^\tau z(s) ds d\tau
\]
is a polynomial of degree $p$, and
\[
\phi(t) = \int_0^{t} \int_{0}^{\tau} \bigl(\psi''(s) - z(s)\bigr)ds d\tau.
\]
Thus
\[
\tr g(S(a)) = \tr g_p(S(a)) + \tr \phi(S(a)).
\]
By construction,
$\phi(0) = \phi'(0) = 0$ and
\[
\max_{|t|\le t_0}|\phi''(t)| < \vare, \ \ \max_{|t|\le t_0}|\phi'(t)|<\vare t_0,
\ \ \ \max_{|t|\le t_0}|\phi(t)|<\vare t_0^2.
\]
By \eqref{GA_est_smooth:eq},
\begin{equation*}
|\GA(\phi; b)|\le 4 |b|\max_{|t|\le |b|} |\phi'(t)|,
\end{equation*}
for any number $b\in\R$. Thus by definition of $\GW_1$ and $\GA$,
\[
|\GW_1(\GA(\phi, \re a); \p\L, \p\Om)|\le C\vare,
\]
so that according to Lemma \ref{gioev:lem},
\begin{align*}
\limsup_{\a\to\infty}
\frac{1}{\a^{d-1}\log\a}\bigl|\tr \phi\bigl(S(a)\bigr)
- &\ \a^d \GW_0(\phi(\re a); \L, \Om)\\[0.2cm]
&\ - \a^{d-1} \log\a\  \GW_1(\GA(\phi; \re a); \p\L, \p\Om)
\bigr|
\le C \vare.
\end{align*}
Adding this formula and \eqref{main:eq} for the trace $\tr g_p(S(a))$,
we obtain
\begin{align*}
\limsup_{\a\to\infty}
\frac{1}{\a^{d-1}\log\a}\bigl|\tr g\bigl(S(a)\bigr)
- &\ \a^d \GW_0(g(\re a); \L, \Om)\\[0.2cm]
&\ - \a^{d-1} \log\a\  \GW_1(\GA(g; \re a); \p\L, \p\Om)
\bigr|
\le C \vare.
\end{align*}
Since the parameter $\vare$ is arbitrary,
this proves \eqref{main_s:eq}.
\qed


\section{Appendix 1: A lemma by H. Widom}
\label{widom90:sect}

In this Appendix we establish a variant of Lemma A from H. Widom's paper
\cite{Widom2}.

\subsection{Function $m_{\d}$} In this section we use the notation
\eqref{mempty:eq} and \eqref{rhok:eq} introduced in Subsect.
\ref{multiple:subsect} for an arbitrary finite set
$\SX\subset (-2\rho, 2\rho)$, where $\rho >0$
is a fixed number. Let $\CC^{(n)}_{\rho}$ be the $n$-dimensional cube defined
in \eqref{cube:eq}.
Let $\Phi\in\plainC3\bigl(\overline{\CC^{(d-1)}_{\rho}}\bigr)$ be a real-valued function.
For each $\hat\bx\in \CC^{(d-1)}_{\rho}$ and each $l = 1, 2, \dots, d$ define the sets
$\SX_l, \SX_l^{(+)}, \SX_l^{(-)}$ in the following way.
First, we set
\begin{equation}\label{sxl:eq}
\SX_l(\hat\bx) = \SX_l(\hat\bx; \Phi)
= \{x_d \in (-2\rho, 2\rho): \Phi(\overc \bx) = x_l\}.
\end{equation}
Using the Area Formula (see \cite{EG}, Theorem 1, Section 3.3.2), one can show that
for almost all $\hat\bx$ the set $\SX_l(\hat\bx)$ is finite.
Thus the function
$m_\d(\SX_l(\hat\bx))$ (see \eqref{rhok:eq}) is well-defined a.a. $\hat\bx$.
Denote
\[
\SL^{(\pm)}_l(\hat\bx) = \{x_d\in (-2\rho, 2\rho): \pm x_l > \pm\Phi(\overc \bx)\}.
\]
For each $\hat\bx\in\R^{d-1}$ this set is either empty or it is
a countable union of open intervals.
Denote by $\SX_l^{(\pm)}(\hat\bx) = \SX_l^{(\pm)}(\hat\bx; \Phi)$ the set of all
endpoints of these intervals which are strictly inside $(-2\rho, 2\rho)$.
Clearly, $\SX_l^{(\pm)}(\hat\bx)\subset \SX_l(\hat\bx)$.

\begin{thm}\label{bigdim:thm}
Let $\Phi\in\plainC3\bigl(\overline{\CC^{(d-1)}_{\rho}}\bigr)$ with some $\rho >0$.
Then for any $l = 1, 2, \dots, d$ the following
statements hold:
\begin{enumerate}
\item
For almost all $\hat\bx\in \CC^{(d-1)}_{\rho}$:
\begin{enumerate}
\item
the set $\SX_l(\hat\bx)$ is finite,
\item
$\SX_l(\hat\bx) = \SX_l^{(\pm)}(\hat\bx)$,
\item
the disjoint open intervals forming $\SL^{\pm}_l(\hat\bx)$ have \underline{distinct}
endpoints.
\end{enumerate}
\item
For any $\d\in (0, 2)$ the
function $m_{\d}\bigl(\SX_l(\hat\bx)\bigr)$ satisfies the bound
\begin{equation}\label{reciprocal:eq}
\int_{\hat\bx\in \CC^{(d-1)}_{\rho}} m_\d(\SX_l(\hat\bx)) d\hat\bx\le C \rho^{d-1-\d}
\bigl(1+\rho\|\nabla^2\Phi\|_{\plainL\infty} +
\rho^2\|\nabla^3\Phi\|_{\plainL\infty}\bigr).
\end{equation}
The constant
$C$ depends only on $\d$ and dimension $d$.
\end{enumerate}
\end{thm}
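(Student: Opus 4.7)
I would first dispose of the case $l=d$: here $\overc\bx=\hat\bx$, so $\SX_d(\hat\bx)$ is either $\{\Phi(\hat\bx)\}$ or empty, the linear function $\Phi(\hat\bx)-x_d$ always changes sign in $x_d$, so (1) is immediate, while (2) follows from $m_\d(\SX_d(\hat\bx))\le(4\rho)^{-\d}$ and integration over the cube. From now on I would assume $l<d$ and set $F_{\hat\bx}(x_d):=\Phi(\overc\bx)$; this is a $\plainC 3$ function of the single variable $x_d$ whose coefficients depend only on the $d-2$ coordinates of $\hat\bx$ other than $x_l$.

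For (1), consider the hypersurface
\[
S=\{\bx\in \CC^{(d-1)}_\rho\times(-2\rho,2\rho):\Phi(\overc\bx)=x_l\}
\]
together with the projection $\pi:S\to \CC^{(d-1)}_\rho$, $\pi(\bx)=\hat\bx$. Because the defining function of $S$ has $l$-th partial equal to $-1$, $S$ is a genuine $\plainC 3$-hypersurface whose fiber over $\hat\bx$ coincides with $\SX_l(\hat\bx)$; a point of $S$ is critical for $\pi$ exactly when $F'_{\hat\bx}(x_d)=0$. Sard's theorem then guarantees that for a.e.\ $\hat\bx$ no fiber point is critical, so the fibers are discrete and (being contained in the bounded set $(-2\rho,2\rho)$) finite, giving (a). At every regular fiber point $F_{\hat\bx}-x_l$ changes sign, so the point lies simultaneously in $\SX_l^{(+)}$ and $\SX_l^{(-)}$, proving (b). Finally, if two distinct intervals of $\SL_l^{(+)}$ shared an endpoint $x_d^\ast$, then $F_{\hat\bx}-x_l$ would attain a local minimum of value zero at $x_d^\ast$, so $x_d^\ast\notin\SX_l^{(-)}$, contradicting (b); the same argument for $\SL_l^{(-)}$ proves (c).

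The heart of the theorem is (2), which I would prove along the lines of Lemma~A in \cite{Widom2}, adapted so as to keep track of $\|\nabla^2\Phi\|_{\plainL\infty}$ and $\|\nabla^3\Phi\|_{\plainL\infty}$ explicitly. Since $F_{\hat\bx}$ does not depend on $x_l$, Fubini lets me freeze the remaining $d-2$ coordinates of $\hat\bx$ (contributing a harmless factor $\rho^{d-2}$) and reduces (2) to the one-dimensional statement
\[
\int_{-2\rho}^{2\rho} m_\d\bigl(F^{-1}(x_l)\bigr)\,dx_l\le C\rho^{1-\d}\bigl(1+\rho\|F''\|_{\plainL\infty}+\rho^2\|F'''\|_{\plainL\infty}\bigr)
\]
for an arbitrary $\plainC 3$ function $F$ on $(-2\rho,2\rho)$. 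Labelling $F^{-1}(x_l)=\{y_1<\dots<y_N\}$, and using that each gap $y_{j+1}-y_j$ serves as the nearest-neighbour distance of at most two of the $y_j$ (with the boundary terms of $m_\d$ contributing only $O(\rho^{-\d})$), Rolle's theorem produces a critical point $y_j^\ast\in(y_j,y_{j+1})$ with $F'(y_j^\ast)=0$, and Taylor expansion at $y_j^\ast$ gives the quantitative identity
\[
x_l-F(y_j^\ast)=\tfrac12 F''(y_j^\ast)(y_j-y_j^\ast)^2+O\bigl(\|F'''\|_{\plainL\infty}(y_{j+1}-y_j)^3\bigr),
\]
and similarly with $y_{j+1}$. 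On the region $|F''(y_j^\ast)|\ge\tau$ this yields $(y_{j+1}-y_j)^2\ge c\tau^{-1}|x_l-F(y_j^\ast)|$, so the $x_l$-integral of the contribution picks up a singularity $|x_l-F(y_j^\ast)|^{-\d/2}$ that is integrable precisely for $\d<2$.

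The main obstacle is that the Taylor identity above degenerates when $|F''(y_j^\ast)|$ is small, and one cannot naively enumerate critical points since a generic $\plainC 3$ function may have a very large set of them. My plan is to split the analysis at a threshold $\tau>0$: on the nondegenerate region $|F''|\ge\tau$ the $x_l$-integration controls the contribution by $C\tau^{\d/2}\rho^{1-\d/2}$ per Rolle pair, while on the flat region $|F''|<\tau$ the $\plainC 3$ hypothesis applied to $F'$ bounds the Lebesgue measure of $\{F'=0,|F''|<\tau\}$ by $C\tau/\|F'''\|_{\plainL\infty}$, leading to a cruder but acceptable estimate on the corresponding gap lengths. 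Optimising $\tau$ produces the two additive contributions $\rho\|F''\|_{\plainL\infty}$ and $\rho^2\|F'''\|_{\plainL\infty}$ in (2). Carrying out this partition cleanly so that Rolle pairs and critical points are not double-counted, and so that all constants depend only on $d$ and $\d$, is the technical core of the proof and the reason the endpoint $\d=2$ must be excluded.
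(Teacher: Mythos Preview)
Your reduction to the one-variable problem (the case $l=d$, the Sard argument for Part~(1), and the Fubini step freezing the $d-2$ transverse coordinates) is exactly how the paper proceeds. The disagreement is in the one-dimensional core of Part~(2), and there your plan has a real gap. First, the inequality you write is in the wrong direction: the hypothesis $|F''(y_j^\ast)|\ge\tau$ gives an \emph{upper} bound $(y_{j+1}-y_j)^2\le C\tau^{-1}|x_l-F(y_j^\ast)|$ (a steeper parabola pulls the two zeros together), whereas the lower bound you need comes with $\|F''\|_{\plainL\infty}^{-1}$, independently of any threshold. More seriously, the step ``integrate $|x_l-F(y_j^\ast)|^{-\d/2}$ in $x_l$'' hides the fact that $y_j^\ast$---and therefore the critical value $F(y_j^\ast)$---moves with $x_l$; as $x_l$ varies, gaps appear, disappear and merge, and the assignment of Rolle points to gaps changes. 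To turn this into an honest estimate you would have to organise the gaps into branches indexed by fixed critical points and control how many branches there are, which is precisely what your threshold device is meant to do but does not, since a $\plainC3$ function can have a perfect set of critical points on which $F''=F'''=0$ as well, so neither the count ``$\le C\rho\|F'''\|/\tau$ critical points with $|F''|\ge\tau$'' nor the measure bound ``$|\{F'=0,\,|F''|<\tau\}|\le C\tau/\|F'''\|$'' is available in general.

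The paper avoids these difficulties by working on the \emph{domain} side rather than the range side: decompose the open set $E=\{F'\ne0\}$ into its (fixed, $x_l$-independent) maximal intervals $I$; on each $I$ the map $x_l\mapsto$ (unique zero of $F-x_l$ in $I$) is a diffeomorphism, so the $x_l$-integral becomes $\int_I F'(x)(x-\b_I^{(-)})^{-\d}\,dx$. One integration by parts uses $F'(\b_I^{(-)})=0$; for $\d\in(1,2)$ the remaining $\int_I F''(x)(x-\b_I^{(-)})^{1-\d}\,dx$ is handled by observing that (apart from at most one boundary interval) $F'$ vanishes at \emph{both} ends of $I$, so Rolle applied to $F'$ produces a zero of $F''$ inside $I$ and hence $|F''|\le\|F'''\|_{\plainL\infty}|I|$ on $I$. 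Summing $|I|^{3-\d}$ with $\sum|I|\le4\rho$ gives the bound with the correct dependence on $\|\nabla^2\Phi\|$ and $\|\nabla^3\Phi\|$. This sidesteps the branch-tracking and threshold optimisation entirely.
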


\begin{rem}\label{area:rem}
As indicated above, we can immediately see that the set $\SX_l(\hat\bx)$ is finite
a.e. $\hat\bx$. Indeed, if $ l = d$, then obviously this set consists of one point.
Suppose that $l < d$.
Denote by $\Xi:\CC_\rho^{d-1}\to\CC_\rho^{d-1}$ the mapping
\begin{equation*}
\Xi(\overset{\circ}\bx) =
\bigl(x_1, \dots, x_{l-1}, \Phi(\overc\bx), x_{l+1}, \dots, x_{d-1}\bigr).
\end{equation*}
Then by the area formula (see, e.g. Theorem 1, p.96 in \cite{EG}),
\begin{equation*}
\underset{\CC_\rho^{d-1}}\int J_{\Xi}(\overset{\circ}\bx)
 d\overset{\circ}\bx
 = \underset{\CC_\rho^{d-1}}\int \sum_{\overset{\circ}\bx\in \Xi^{-1}(\hat\bx)}
1 \  d\hat\bx,
\end{equation*}
where $J_{\Xi}$ is the Jacobian of the map $\Xi$, i.e. $|\p_{x_d} \Phi|$.
The integral on the left-hand side is finite, and hence the integrand on the right-hand side
is finite a.e. $\hat\bx$, which implies that $\SX_l(\hat\bx)$ is finite a.e. $\hat\bx$.
\end{rem}

In the remaining proof of Theorem \ref{bigdim:thm}
we rely on H. Widom's paper \cite{Widom2}, where the above theorem
was proved for $\d = 1$. We begin with the study of the $2$-dimensional case.

\subsection{Special case $d = 2$}  Let $\phi\in \plainC3([-2\rho, 2\rho])$.
The definition of $\SX_l(s; \phi)$ takes the form
\begin{gather*}
\SX_1(s) = \SX_1(s; \phi) = \{t\in (-2\rho, 2\rho): \phi(t) = s\};\\[0.2cm]
\SX_2(s) = \SX_2(s; \phi) = \{t\in (-2\rho, 2\rho): \phi(s) = t\}.
\end{gather*}
The lemma below is a variant of Sublemma 1 from \cite{Widom2}.

\begin{lem}\label{widom2:lem}
Let $\phi$ be as defined above. Then for $l = 1, 2$ the following
statements hold:
\begin{enumerate}
\item
For almost all $s\in (-2\rho, 2\rho)$:
\begin{enumerate}
\item
the set $\SX_l(s)$ is finite,
\item
$\SX_l(s) = \SX_l^{(\pm)}(s)$,
\item
the disjoint open intervals forming $\SL^{(\pm)}_l(s)$ have  distinct endpoints.
\end{enumerate}
\item
For any $\d\in (0, 2)$ the function $m_{\d}\bigl(\SX_l(s)\bigr)$ satisfies the
bound
\begin{equation*}
\int_{-2\rho}^{2\rho} m_{\d}\bigl(\SX_l(s)\bigr) ds  \le
\begin{cases}
C_\d \rho^{1-\d}(1 + \rho\|\phi''\|_{\plainL\infty}),\ \ 0 < \d \le 1;\\[0.4cm]
C_\d \rho^{1-\d}(1 + \rho\|\phi''\|_{\plainL\infty}
+ \rho^{2}\|\phi'''\|_{\plainL\infty}),\ \ 1 < \d <2.
\end{cases}
\end{equation*}
The constant $C$ does not depend on the function $\phi$ or the parameter $\rho$.
\end{enumerate}
\end{lem}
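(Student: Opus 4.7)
\vspace{0.3cm}
\noindent\textbf{Proof proposal (plan).}

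The $l=2$ case is immediate since $\SX_2(s)$ is either empty or the singleton $\{\phi(s)\}$; in particular $m_\d(\SX_2(s))\le (4\rho)^{-\d}$, so the integral bound is trivial. All substance lies in the $l=1$ case, where I write $\SX(s)=\SX_1(s;\phi)$. For part (1), I plan to invoke Sard's theorem: since $\phi\in \plainC3\subset \plainC1$, the set of critical values of $\phi$ in $(-2\rho,2\rho)$ has Lebesgue measure zero. For any regular value $s$ the preimage $\phi^{-1}(s)\cap[-2\rho,2\rho]$ is compact, and $\phi'\ne 0$ there, hence it is discrete and therefore finite. At each such $t$, $\phi-s$ changes sign, so $t$ is an endpoint of maximal subintervals of both $\SL^{(+)}_1(s)$ and $\SL^{(-)}_1(s)$, giving $\SX = \SX^{(\pm)}$. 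Finally, distinctness of the endpoints of the components of $\SL^{(\pm)}_1(s)$ is equivalent to saying no two distinct points of $\SX(s)$ coincide, which holds for regular values.

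For part (2) I would follow the stratification argument of H. Widom from \cite{Widom2}, extending it from $\d=1$ to $\d\in(0,2)$. Let $\tau_1<\dots<\tau_N$ denote the critical points of $\phi$ in $(-2\rho,2\rho)$ and write $J_0,\dots,J_N$ for the resulting maximal open intervals of strict monotonicity. For each $s$, the set $\SX(s)$ contains at most one point in each $J_i$, and consecutive elements $t_i(s)\in J_i$, $t_{i+1}(s)\in J_{i+1}$ share the common critical point $\tau:=\tau_{i+1}$ as their ``collision value" (they coalesce to $\tau$ as $s\to\phi(\tau)$). Since these pairs of adjacent preimages exhaust the nearest-neighbour distances defining $m_\d(\SX(s))$ (with the possible exception of the two extreme points, which contribute at most $(4\rho)^{-\d}$ apiece), it suffices to bound, for each $i$,
\begin{equation*}
I_i:=\int |t_{i+1}(s)-t_i(s)|^{-\d}\, ds,
\end{equation*}
the integration being over those $s$ for which both $t_i(s)$ and $t_{i+1}(s)$ exist.

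The central estimate is that $I_i\le C_\d(1+\rho\|\phi''\|_{\plainL\infty}+\rho^2\|\phi'''\|_{\plainL\infty})$ for $1<\d<2$, with the $\|\phi'''\|$-term absent for $0<\d\le 1$. The idea is a dichotomy at $\tau=\tau_{i+1}$. If $|\phi''(\tau)|$ is bounded below by a constant $c_0>0$, Taylor expansion yields
\begin{equation*}
\phi(t)-\phi(\tau)=\tfrac12\phi''(\tau)(t-\tau)^2+O\!\left(\|\phi'''\|_{\plainL\infty}|t-\tau|^3\right),
\end{equation*}
so that on a sufficiently small neighbourhood $|t_{i+1}(s)-t_i(s)|\ge c\sqrt{|s-\phi(\tau)|/|\phi''(\tau)|}$, and one gets $I_i\le C_\d \int |s-\phi(\tau)|^{-\d/2}ds$, finite precisely for $\d<2$. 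Away from the collision, $|\phi'|$ is bounded below on the complementary region, and the change of variables $u=\phi(t)$ produces an integrable contribution. If on the other hand $|\phi''(\tau)|$ is small, one uses the mean value theorem and the bound on $\phi'''$ to argue that the set of $s$ for which the critical point $\tau$ is actually visited by preimages that stay close has small measure, controllable by $\|\phi'''\|_{\plainL\infty}$. Summing $I_i$ over $i$, and absorbing the total number of transitions into the bound via $\sum_i 1\le C(1+\rho\|\phi''\|_{\plainL\infty})$ (since between two critical points $\phi'$ must swing back to zero, costing at least $c/\|\phi''\|_{\plainL\infty}$ of length), yields the desired estimate.

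The main obstacle I anticipate is handling the degenerate regime where $\phi''(\tau)\approx 0$ cleanly and uniformly, so that the final estimate depends only on $\|\phi''\|_{\plainL\infty}$ and $\|\phi'''\|_{\plainL\infty}$ rather than on pointwise lower bounds for $\phi''$. This requires the careful two-scale decomposition (large vs.\ small $|\phi''|$) sketched above, with the small-$|\phi''|$ regions contributing via their Lebesgue measure (controlled by $\|\phi'''\|$) rather than through a pointwise Taylor expansion. The factor $\rho^{1-\d}$ in front comes by rescaling $t=\rho\tilde t$ and $s=\rho\tilde s$, which normalises the interval length to $1$ and introduces the appropriate powers of $\rho$ on the derivative norms.
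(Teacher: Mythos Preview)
Your Part (1) is essentially the paper's argument (Sard's theorem plus the local sign-change at regular points), so that is fine.

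For Part (2), however, your scheme has a genuine gap. You reduce to bounding each collision integral $I_i$ uniformly and then propose to control the \emph{number} of collisions by
\[
\sum_i 1 \le C\bigl(1+\rho\|\phi''\|_{\plainL\infty}\bigr),
\]
arguing that between two critical points $\phi'$ ``must swing back to zero, costing at least $c/\|\phi''\|_{\plainL\infty}$ of length''. This is false: a $\plainC3$ function on $[-2\rho,2\rho]$ can have arbitrarily many critical points with $\|\phi''\|_{\plainL\infty}$ and $\|\phi'''\|_{\plainL\infty}$ bounded. For instance $\phi(x)=K^{-3}\sin(Kx)$ has $\|\phi''\|_{\plainL\infty}\sim K^{-1}$, $\|\phi'''\|_{\plainL\infty}\sim 1$, but $\sim K\rho$ critical points. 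Your Rolle-type reasoning only produces a zero of $\phi''$ between consecutive critical points; it gives no lower bound on their spacing. Since your per-$I_i$ bound is uniform rather than summable, the argument does not close. The degenerate-$\phi''$ dichotomy you sketch is also too vague to repair this: you would need each $I_i$ to carry a factor like $|J_i|^\gamma$ with $\gamma>1$, not a uniform constant.

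The paper avoids counting critical points altogether by a different decomposition. It writes $E=\{x:\phi'(x)\ne 0\}$ as a (possibly infinite) union of open monotonicity intervals $I$, and for each component $J$ of $\SL_1^{(+)}(s)$ bounds $|J|$ from \emph{one side} only: $|J|\ge x_I(s)-\beta_I^{(-)}$, the distance from the right endpoint of $J$ to the left endpoint of its monotonicity interval $I$. After the change of variable $s=\phi(x)$ this gives
\[
\int_{\phi(I)}\bigl(x_I(s)-\beta_I^{(-)}\bigr)^{-\d}ds=\int_{\beta_I^{(-)}}^{\beta_I^{(+)}}\phi'(x)\,(x-\beta_I^{(-)})^{-\d}\,dx,
\]
and since $\phi'(\beta_I^{(-)})=0$ one can either bound $|\phi'|\le\|\phi''\|_{\plainL\infty}|I|$ directly (for $\d<1$) or integrate by parts once (for $\d\ge 1$). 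For $1<\d<2$, a further observation that the \emph{other} endpoint $\beta_I^{(+)}$ is also critical (for all but at most one $I$) yields a zero of $\phi''$ inside $I$, hence $|\phi''|\le\|\phi'''\|_{\plainL\infty}|I|$ on $I$. The upshot is a per-interval bound proportional to $|I|^{2-\d}$ (for $\d\le 1$) or $|I|^{3-\d}$ (for $1<\d<2$); since these exponents exceed $1$ and $\sum_I|I|\le 4\rho$, the sum over $I$ converges with no need to know how many intervals there are. This one-sided bound and the resulting summable power of $|I|$ are the mechanism your proposal is missing.
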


\begin{proof} For $l=2$ the Lemma follows trivially. In particular,
$\#\SX_2(s)\le 1$ for all $s\in (-2\rho, 2\rho)$,
so that $m_\d(X_2(s))\le \rho^{-\d}$  (see definition \eqref{rhok:eq}),
and hence the required inequality follows immediately.

Let $l=1$, and let
$s\in (-2\rho, 2\rho)$ be a non-critical point of $\phi$, i.e. if $\phi(x) = s$, then
$\phi'(x)\not = 0$.
For any such $s$ the sets $\SX_1^{(+)}(s), \SX_1^{(-)}(s)$
trivially coincide with $\SX_1(s)$. Also, if the set $\SL_1^{(\pm)}(s)$
is not empty, then the constituent open intervals do not have common endpoints.
By Sard's Theorem the set of non-critical points
is a set of full measure, so that in combination
with Remark \ref{area:rem} this proves Part (1) of the Lemma.

Proof of Part (2). Decompose the open set
\[
E = \{x\in (-2\rho, 2\rho): \phi'(x)\not = 0\},
\]
into the union of disjoint open intervals. Let $s\in (-2\rho, 2\rho)$
be a non-critical point
of $\phi$, and denote for brevity $m (s)= m_\d\bigl(\SX_1(s)\bigr)$.
 Denote by
$\CL_{\pm}(s)$ the subset of those open intervals forming $\SL^{(\pm)}_1(s)$
which do not have
$-2\rho$ or $2\rho$ as their endpoints, and denote
\[
m^{(\pm)}(s) =
\begin{cases}
(2\rho)^{-\d},\  \ \textup{if}\ \ \CL_{\pm}(s) = \varnothing;\\[0.2cm]
\underset{J\in\CL_{\pm}(s)}\sum |J|^{-\d},\ \ \textup{otherwise}.
\end{cases}
\]
Then it is clear that
\begin{equation}\label{plusminus:eq}
m(s)\le m^{(+)}(s) + m^{(-)}(s).
\end{equation}
Estimate separately $m^{(+)}(s)$ and $m^{(-)}(s)$.

Consider an interval $J\in \CL_+(s)$.
Since $s$ is a non-critical value of $\phi$, the left endpoint of $J$
falls in some interval $K\subset E$,
on which $\phi'<0$,  and
the right endpoint falls in some interval $I\subset E$,
on which $\phi'>0$.  Thus $J=(x_K(s), x_I(s))$,
where $x_K(s)$ and $x_I(s)$ are unique solutions of the equation
$\phi(x) = s$ on the intervals $K$ and $I$ respectively.
Writing
\[
I = (\b_I^{(-)}, \b_I^{(+)}),
\]
we immediately conclude that
\[
|J|\ge x_I(s) - \b_I^{(-)},
\]
and hence
\begin{equation}\label{mr:eq}
m^{(+)}(s)\le \sum_I \bigl(x_I(s)-\b_I^{(-)}\bigr)^{-\d},
\end{equation}
where the summation is taken over all intervals $I\subset E$ such that
$\phi'(x) >0, x\in I,$ and $s\in \phi(I)$.
Remembering the set of critical points of $\phi$
has measure zero,  we can use
\eqref{mr:eq} to estimate:
\[
\int_{-2\rho}^{2\rho} m^{(+)}(s) ds\le
C\rho^{1-\d} + \sum_I\int_{\phi(I)} \bigl(x_I(s)-\b_I^{(-)}\bigr)^{-\d}ds,
\]
where the summation is taken over all intervals $I\subset E$
on which $\phi'(x) >0$.
Estimate the integral for each $I$ individually, denoting for brevity
$\b^{(\pm)} = \b_I^{(\pm)}$.

\underline{Case 1: $0<\d < 1$.} Write:
\begin{equation*}
\int_{\phi(I)} \bigl(x_I(s)-\b^{(-)}\bigr)^{-\d}ds
=  \int_{\b^{(-)}}^{\b^{(+)}}
\phi'(x) (x-\b^{(-)})^{-\d} dx.
\end{equation*}
Since $\phi'(\b^{(-)}) = 0$, we have
\[
|\phi'(x)|\le \|\phi''\|_{\plainL\infty}|I|,\ x\in I,
\]
and hence the integral is bounded from above by
\[
\|\phi''\|_{\plainL\infty} |I|^{2-\d} \int_0^1 t^{-\d}dt
\le C \|\phi''\|_{\plainL\infty} |I|^{2-\d}.
\]
Therefore
\begin{align}\label{case1:eq}
\int_{-2\rho}^{2\rho} m^{(+)}(s) ds\le &\
C \rho^{1-\d}+ C\|\phi''\|_{\plainL\infty}\sum_{I\subset E} |I|^{2-\d}\notag\\[0.2cm]
\le &\ C\rho^{1-\d} (1+ \rho\|\phi''\|_{\plainL\infty}),
\end{align}
where we have used that $\sum_{I\subset E} |I| \le 4\rho$.

\underline{ Case 2: $\d = 1$.} Write:
\begin{equation*}
\int_{\phi(I)} \bigl(x_I(s)-\b^{(-)}\bigr)^{-1}ds
= \int_{\b^{(-)}}^{\b^{(+)}}
\phi'(x) (x-\b^{(-)})^{-1} dx
= \int_{\b^{(-)}}^{\b^{(+)}} \phi''(x) \log\frac{\b^{(+)}-\b^{(-)}}{x-\b^{(-)}}dx,
\end{equation*}
where we have used the fact that $\phi'(\b^{(-)}) = 0$. The last integral
does not exceed
\[
\|\phi''\|_{\plainL\infty}(\b^{(+)} - \b^{(-)})
\int_1^1 \log \frac{1}{t} dt\le C\|\phi''\|_{\plainL\infty}|I|,
\]
so that
\begin{equation}\label{case2:eq}
\int_{-2\rho}^{2\rho} m^{(+)}(s) ds\le C
+ C\|\phi''\|_{\plainL\infty}\sum_{I\subset E} |I|
\le C(1+\rho\|\phi''\|_{\plainL\infty}).
\end{equation}

\underline{Case 3: $1<\d <2$.} Write:
\begin{align}\label{dash:eq}
\int_{\phi(I)} \bigl(x_I(s)-\b^{(-)}\bigr)^{-\d}ds
= &\ \int_{\b^{(-)}}^{\b^{(+)}}
\phi'(x) (x-\b^{(-)})^{-\d} dx\notag\\[0.2cm]
= &\ \frac{1}{\d-1}\int_{\b^{(-)}}^{\b^{(+)}} \phi''(x)
\bigl[(x-\b^{(-)})^{1-\d} - (\b^{(+)}-\b^{(-)})^{1-\d}\bigr]dx.
\end{align}
Here we have used again the fact that $\phi'(\b^{(-)}) = 0$.
The interval $I$ can be of one of the following two types:
\begin{itemize}
\item $\b^{(+)} = 2\rho$, in which case $E$ contains only one interval  of this type;
\item $\b^{(+)}$ is a a critical point of $\phi$, i.e. $\phi'(\b^{(+)}) = 0$.
\end{itemize}
If $\b^{(+)} = 2\rho$, then the integral \eqref{dash:eq} does not exceed
\[
C_\d \|\phi''\|_{\plainL\infty} (\b^{(+)} - \b^{(-)})^{2-\d}\int_0^1(t^{1-\d}-1)dt
\le C_\d \|\phi''\|_{\plainL\infty} \rho^{2-\d}.
\]
If $\phi'(\b^{(+)}) = 0$, then there exists a point $x_0\in I$ such that
$\phi''(x_0) = 0$, so that
\[
|\phi''(x)|\le \|\phi'''\|_{\plainL\infty} |I|, \ x\in I.
\]
Thus the integral \eqref{dash:eq} does not exceed
\[
C\|\phi'''\|_{\plainL\infty} |I|
\int_{\b^{(-)}}^{\b^{(+)}} (x-\b^{(-)})^{1-\d} dx
\le C_{\d}
\|\phi'''\|_{\plainL\infty} |I|^{3-\d}.
\]
Therefore
\begin{align}\label{case3:eq}
\int_{-2\rho}^{2\rho} m^{(+)}(s) ds\le &\
C \rho^{1-\d}+ C\rho^{2-\d}\|\phi''\|_{\plainL\infty}
+ C \|\phi'''\|_{\plainL\infty}\sum_{I\subset E} |I|^{3-\d}\notag\\[0.2cm]
\le &\ C\rho^{1-\d} (1+ \rho\|\phi''\|_{\plainL\infty}
+ \rho^2 \|\phi'''\|).
\end{align}
Estimates \eqref{case1:eq}, \eqref{case2:eq} and \eqref{case3:eq} for $m^{(-)}(s)$
are proved in the same way. A reference to \eqref{plusminus:eq}
completes the proof.
\end{proof}

\subsection{Proof of Theorem \ref{bigdim:thm}}
If $l = d$, then
the Theorem follows trivially. In particular,
$\#\SX_d(\hat\bx)\le 1$
for all $\hat\bx\in \CC^{(d-1)}_{\rho}$,
so that $m_\d(\SX_d(\hat\bx))\le \rho^{-\d}$,
and hence
the required inequality \eqref{reciprocal:eq} follows immediately.

Suppose that $l \not= d$. In this case for each
$\tilde\bx = (x_1, x_2, \dots, x_{l-1}, x_{l+1}, \dots, x_{d-1})$ define
the auxiliary function
\[
\phi(t) = \phi_{\tilde\bx}(t) = \left.\Phi(\overc\bx)\right|_{x_d = t},
\]
so that $\SX_l(\hat\bx; \Phi) = \SX_1(x_l; \phi_{\tilde\bx})$,
and $\SX^{(\pm)}_l(\hat\bx; \Phi) = \SX^{(\pm)}_1(x_l; \phi_{\tilde\bx})$.
Therefore Part (1) of the Theorem follows from Part (1) of Lemma \ref{widom2:lem}.
Moreover,
\[
\int_{\CC^{(d-1)}_{\rho}} m_\d\bigl(\SX_l(\hat\bx; \Phi)\bigr)d\hat\bx
= \int_{\CC^{(d-2)}_{\rho}} \int_{-2\rho}^{2\rho}
m_\d\bigl(\SX_1(s; \phi_{\tilde\bx})\bigr) ds d\tilde\bx.
\]
The estimate \eqref{reciprocal:eq} follows now from Part (2) of Lemma
\ref{widom2:lem}.
\qed


\section{Appendix 2: change of variables}

In this section we provide some elementary, but useful information from the
multi-variable calculus.

\subsection{Change of variables: surfaces}
First we prove a few lemmas describing surfaces in different coordinates.

\begin{lem}\label{new_coordinates1:lem}
Let $S\subset\R^d$ be a set.
Suppose that in a neighbourhood of a point $\bw\in S$,
the set $S$ is locally described by the function $\Phi$, i.e.
\begin{equation}\label{E1:eq}
S\cap B(\bw, R) = \{\bx: x_d = \Phi(\hat\bx)\}\cap B(\bw, R),
\end{equation}
for some $R>0$,
where $\hat\bx = (x_1, x_2, \dots, x_{d-1})$,
and $\Phi\in\plainC{m}(\R^{d-1}), m\ge 1$.
Then there exists a number $R_1$,
an orthogonal transformation $\boldO$,
and a function $F\in\plainC{m}(\R^{d-1})$,
such that for $\boldE = (\boldO, \bw)\in E(d)$,
\begin{gather*}
\boldE^{-1} S\cap B(\mathbf 0, R_1) =
\{ \bx: x_d = F(\hat\bx)\} \cap B(\mathbf 0, R_1),\\[0.2cm]
F(\hat{\mathbf 0})=0,\ \nabla F(\hat{\mathbf 0}) = 0.
\end{gather*}
\end{lem}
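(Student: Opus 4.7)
The plan is to first translate $\bw$ to the origin, then rotate so that the tangent plane to $S$ at $\bw$ becomes the horizontal hyperplane $\{x_d=0\}$, and finally use the implicit function theorem to solve for $x_d$ in terms of $\hat\bx$ near the origin. This will automatically give $F(\hat{\mathbf 0})=0$ and $\nabla F(\hat{\mathbf 0})=\bzero$.

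First I would identify the unit upward normal to $S$ at $\bw$. Since $\bw\in S$ we have $w_d = \Phi(\hat\bw)$, and the vector
\[
\bn = \frac{\bigl(-\nabla\Phi(\hat\bw),\,1\bigr)}{\sqrt{1+|\nabla\Phi(\hat\bw)|^2}}
\]
is the unit normal to $S$ at $\bw$. Choose an orthogonal transformation $\boldO$ whose last column equals $\bn$ (the first $d-1$ columns $\bo_1,\dots,\bo_{d-1}$ form any orthonormal basis of $\bn^\perp$), and set $\boldE=(\boldO,\bw)$, so that $\boldE^{-1}\bx=\boldO^{-1}(\bx-\bw)$ sends $\bw$ to $\mathbf 0$.

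Next I would exhibit $\boldE^{-1}S$ locally as the zero set of a scalar function. Define
\[
G(\bx) \;=\; (\boldO\bx+\bw)_d - \Phi\bigl((\boldO\bx+\bw)_1,\dots,(\boldO\bx+\bw)_{d-1}\bigr),
\]
so that $\boldE^{-1}S = \{\bx:G(\bx)=0\}$ on a neighbourhood of $\mathbf 0$. Clearly $G\in\plainC{m}$ and $G(\mathbf 0)=w_d-\Phi(\hat\bw)=0$. A direct computation gives
\[
\partial_{x_j}G(\mathbf 0) \;=\; \bigl(-\nabla\Phi(\hat\bw),1\bigr)\cdot\bo_j \;=\; \sqrt{1+|\nabla\Phi(\hat\bw)|^2}\;(\bn\cdot\bo_j),
\]
which equals $0$ for $j\le d-1$ and $\sqrt{1+|\nabla\Phi(\hat\bw)|^2}\ne 0$ for $j=d$ by the choice of $\boldO$.

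Finally I would invoke the implicit function theorem: since $\partial_{x_d}G(\mathbf 0)\ne 0$, there exist $R_1>0$ and $F\in\plainC{m}(\R^{d-1})$ (after extension, or interpreted locally on a small neighbourhood of $\hat{\mathbf 0}$, which can then be globalised without affecting the lemma) such that for $\bx\in B(\mathbf 0,R_1)$, the condition $G(\bx)=0$ is equivalent to $x_d=F(\hat\bx)$. The standard formula $\nabla F(\hat{\mathbf 0})=-\nabla_{\hat\bx}G(\mathbf 0)/\partial_{x_d}G(\mathbf 0)$ yields $F(\hat{\mathbf 0})=0$ and $\nabla F(\hat{\mathbf 0})=\bzero$, proving the claim. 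There is no genuine obstacle here; the only care-point is checking that the choice $\bo_d=\bn$ produces exactly the vanishing-gradient condition at $\hat{\mathbf 0}$, which is the content of the explicit calculation of $\nabla G(\mathbf 0)$ above.
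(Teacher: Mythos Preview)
Your proposal is correct and follows essentially the same approach as the paper: translate $\bw$ to the origin, choose an orthogonal $\boldO$ whose last column is the unit normal $\bn$ to $S$ at $\bw$, and then solve for $x_d$ as a function of $\hat\bx$. The only cosmetic difference is that the paper carries out the solution step by an explicit contraction-mapping argument (tracking the modulus of continuity $\d(s)$ of $\nabla\Phi$) before invoking the implicit function theorem for $\plainC{m}$-regularity, whereas you invoke the implicit function theorem directly; your route is slightly shorter, while the paper's gives more explicit control on the size of $R_1$, though that control is not actually used elsewhere.
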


\begin{proof}

Since the set $S_1 = S - \bw$ satisfies the condition
\eqref{E1:eq} with $\bw = 0$, it suffices to prove
the lemma for $\bw = \mathbf 0$.
Thus we may assume that $\Phi(\hat{\mathbf 0}) = 0$.
Rewrite the equation $x_d = \Phi(\hat\bx)$ as follows:
\[
x_d = \hat\bb \cdot \hat\bx + \tilde\Phi(\hat\bx),\ \
\hat\bb = \nabla\Phi(\hat{\mathbf 0}),\
\tilde\Phi(\hat\bx) = \Phi(\hat\bx) - \hat\bb\cdot\hat\bx.
\]
Since $\Phi\in\plainC{m}, m\ge 1$, we have
\begin{equation}\label{fderiv:eq}
\max_{|\hat\bx|\le s}|\nabla\tilde\Phi(\hat\bx)|=:\d(s)\to 0,\ s\to 0,
\end{equation}
and hence
\begin{equation}\label{f:eq}
\max_{|\hat\bx|\le s}|\tilde\Phi(\hat\bx)|\le \d(s) s.
\end{equation}
Denote $\bb = (-\hat\bb, 1)$ and rewrite the equation again:
\[
\bn\cdot\bx = |\bb|^{-1}\tilde\Phi(\hat\bx),\ \bn = \bb|\bb|^{-1}.
\]
Let $\boldO$ be an orthogonal matrix in which the last column equals $\bn$, and denote
$\by = \boldO^{T}\bx$, so that $\bn\cdot\bx = y_d$, and the above equation takes the form
\begin{equation}\label{contraction1:eq}
y_d = W(y_d; \hat\by),\ W(y_d, \hat\by):= |\bb|^{-1} \tilde\Phi(\widehat{\boldO\by}).
\end{equation}
By construction, for all $\by\in B(\mathbf 0, s)$ we have
\[
y_d\in I(s):=[-\d(s) s, \d(s)s].
\]
From now on we assume that $s\le R$ is so small that $\d (s)\le 1/4$. Thus the cylinder
\begin{equation*}
\CC = \{\hat\by\in\R^{d-1}: |\hat\by|< s/4\}\times I(s)
\end{equation*}
 belongs to the ball $B(\mathbf 0, s)\subset B({\mathbf 0}, R)$.
 Therefore for all $ \hat\by : |\hat\by|< s/4$,
 the function $ W(\ \cdot\ ; \hat\by)$ maps the interval $I(s)$ into itself. Moreover,
 in view of the condition $\d(s)\le 1/4$,
 \[
| \p_{y_d} W(y_d, \hat\by)|\le \max_{|\hat\bx|\le s}|\nabla \tilde\Phi(\hat\bx)|
\le \d(s)\le \frac{1}{4},
 \]
 for all $\by\in\CC$, so that by the Contraction Mapping Theorem,
 for each $\hat\by$ the equation
 \eqref{contraction1:eq} has a unique solution $y_d\in I(s)$.
 Denote this solution by $F(\hat\by)$. Clearly, $F(\hat{\mathbf 0}) = 0$.
Using the Implicit Function Theorem one shows that this solution is a $\plainC{m}$-
function of $\hat\by$.
Moreover,
\[
\nabla_{\hat\by} F(\hat\by) = \left.
\frac{\nabla_{\hat\by} W( t, \hat\by)}{1-\p_{t} W(t, \hat\by)}
\right|_{t = F(\hat\by)},
\]
so that $\nabla F(\hat{\mathbf 0}) = 0$ as required. Now extend $F$ to the entire space
$\R^{d-1}$ as a $\plainC{m}$-function, and take $R_1 = s/8$.
\end{proof}

This lemma immediately yields a useful transformation of domains:

\begin{cor}\label{new_coordinates1:cor}
Let $\L\subset\R^d$ be a domain and let $\bw\in\p\L$. Suppose that in the
ball $B(\bw, R)$ the domain $\L$ is represented
by a $\plainC{m}$-graph-type domain $\G = \G(\Phi; \boldO, \bw)$, where $m\ge 1$.
Then there exists a number $R_1$, an orthogonal transformation $\tilde\boldO$,
and a function $F\in\plainC{m}(\R^{d-1})$, satisfying \eqref{propertyphi:eq}
and  $\nabla F(\hat\bzero) = 0$, such that
in the ball $B(\bw, R_1)$ the domain $\L$ is represented by
the graph-type domain $\G(F; \tilde\boldO, \bw)$.
\end{cor}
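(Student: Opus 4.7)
The plan is to apply Lemma \ref{new_coordinates1:lem} to the boundary surface $S = \p\L$ near $\bw$, and then to package the result so that the orientation and the global smoothness conditions \eqref{propertyphi:eq} are met.

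First, by hypothesis there exist $R>0$, an orthogonal transformation $\boldO$ and a function $\Phi \in \plainC{m}(\R^{d-1})$ satisfying \eqref{propertyphi:eq} such that $\boldE^{-1}(\L \cap B(\bw, R)) = \{\bx : x_d > \Phi(\hat\bx)\} \cap B(\bzero, R)$, where $\boldE = (\boldO, \bw)$. In particular, $\p\L \cap B(\bw, R)$ is given by the graph $x_d = \Phi(\hat\bx)$ in the coordinates $\boldE^{-1}\bx$, so Lemma \ref{new_coordinates1:lem} applies to $S = \p\L$ and yields an orthogonal transformation $\boldO'$, a radius $R_1'>0$, and a function $F_0 \in \plainC{m}(\R^{d-1})$ with $F_0(\hat\bzero) = 0$ and $\nabla F_0(\hat\bzero) = \hat\bzero$ such that, setting $\boldE' = (\boldO', \bw)$, we have $\p\L \cap B(\bw, R_1') = \boldE'\{\bx : x_d = F_0(\hat\bx)\} \cap B(\bw, R_1')$.

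Next I would fix the orientation. The set $B(\bw, R_1') \setminus \p\L$ has exactly two connected components near $\bw$, and by continuity, either $\boldE'^{-1}(\L \cap B(\bw, R_1')) = \{x_d > F_0(\hat\bx)\} \cap B(\bzero, R_1')$ or the analogous identity with $x_d < F_0(\hat\bx)$ holds. In the second case, replace $\boldO'$ by $\boldO' \boldJ$ where $\boldJ = \diag(1, \dots, 1, -1)$, and replace $F_0$ by $-F_0$; the conditions $F_0(\hat\bzero)=0$ and $\nabla F_0(\hat\bzero)=\hat\bzero$ persist, and we obtain a representation with $\L$ on the $x_d > F_0(\hat\bx)$ side.

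Finally, since $F_0$ may only be defined (or only well-behaved) on a neighbourhood of $\hat\bzero$, I would produce a global extension $F$ satisfying \eqref{propertyphi:eq}. Pick a cut-off $\chi\in\plainC\infty_0(\R^{d-1})$ with $\chi = 1$ on $B(\hat\bzero, R_1'/4)$ and $\supp\chi \subset B(\hat\bzero, R_1'/2)$, and set $F(\hat\bx) = \chi(\hat\bx) F_0(\hat\bx)$. Then $F \in \plainC{m}(\R^{d-1})$, $F(\hat\bzero) = 0$, $\nabla F(\hat\bzero) = \hat\bzero$, and $\nabla F$ is compactly supported, so it is bounded and uniformly continuous on $\R^{d-1}$. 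Moreover, $F(\hat\bx) = F_0(\hat\bx)$ for $|\hat\bx| < R_1'/4$, so choosing $R_1 < R_1'/8$ (say) small enough ensures $B(\bw, R_1)\cap \boldE'\{\bx: x_d = F(\hat\bx)\} = B(\bw, R_1)\cap \p\L$ and the corresponding identity for the domain. Setting $\tilde\boldO = \boldO'$ (after the possible orientation fix) completes the construction. The only slightly delicate point is the orientation step, but it is resolved cleanly by composing with $\boldJ$; the extension step is entirely routine.
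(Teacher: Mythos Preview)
Your proposal is correct and follows the same approach as the paper: the paper states the corollary as an immediate consequence of Lemma \ref{new_coordinates1:lem} without further argument. You have spelled out two details the paper leaves implicit --- the orientation fix via composing with $\boldJ=\diag(1,\dots,1,-1)$ (which is exactly right), and the global extension via a cutoff to ensure the bounds in \eqref{propertyphi:eq} hold on all of $\R^{d-1}$ --- both of which are routine and handled correctly.
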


Now we establish the legitimacy of assumption
\eqref{omcircle:eq}. More precisely, we show that
any $\plainC{m}$-domain can be made to satisfy the condition
\eqref{omcircle:eq} locally.

\begin{lem}\label{new_coordinates2:lem}
Let $S$ be a set such that
for some Euclidean isometry $\boldE = (\boldO, \boldk)$,
\begin{equation}\label{E2:eq}
\boldE^{-1} S\cap B(\mathbf 0, R) = \{\bx: x_d = \Phi(\hat\bx)\} \cap B(\mathbf 0, R),
\end{equation}
for some $R>0$, and
with some function $\Phi\in\plainC{m}(\R^{d-1})$, such
that $\Phi(\hat{\mathbf 0}) = 0$.
Then there exists a number $R_1>0$ such that
one can find a number $l = 1, 2, \dots, d$
and a function $G\in\plainC{m}(\R^{d-1})$
such that $G(\overc{\mathbf 0}) = 0$,
\begin{gather}\label{E3:eq}
S \cap B(\boldk, R_1) =
\{ \bx : x_l = k_l + G(\overset{\circ} \bx
- \overset{\circ} \boldk)\} \cap B(\boldk, R_1),\\
\ \overset{\circ}\bx = (x_1, \dots, x_{l-1}, x_{l+1}, \dots, x_d),\notag
\end{gather}
and $\|\nabla G\|_{\plainL\infty}\le 4\sqrt d$.
\end{lem}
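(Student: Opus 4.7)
The plan is to find a coordinate direction $l$ along which the unit normal to $S$ at $\boldk$ is not too small, solve for $x_l$ locally via the implicit function theorem, and extend the resulting local graph function to all of $\R^{d-1}$ using a cut-off, controlling the gradient at each step. I would first translate so that $\boldk=\mathbf 0$, in which case $\boldE$ is an orthogonal transformation $\boldO$. Differentiating the defining relation in the rotated frame, the outward unit normal to $S$ at the origin equals
\[
\bn = \frac{\boldO\,(-\nabla\Phi(\hat{\mathbf 0}),\,1)}{\sqrt{1+|\nabla\Phi(\hat{\mathbf 0})|^2}}.
\]
Since $|\bn|=1$, some component satisfies $|n_l|\ge d^{-1/2}$; I fix such an $l$.

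Next, view $S$ locally as the zero set of the $\plainC{m}$-function
\[
F(\bx)=(\boldO^{T}\bx)_d-\Phi\bigl(\widehat{\boldO^{T}\bx}\bigr),
\]
and note that $\partial_{x_l}F(\mathbf 0)=n_l\sqrt{1+|\nabla\Phi(\hat{\mathbf 0})|^2}$, which is bounded below in magnitude by $d^{-1/2}$. The implicit function theorem yields $R_2>0$ and a $\plainC{m}$ solution $G_0$ on the disc $\{|\overc\bx|<R_2\}\subset\R^{d-1}$ with $G_0(\overc{\mathbf 0})=0$, representing $S$ in that neighbourhood by $x_l=G_0(\overc\bx)$. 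Implicit differentiation at $\overc{\mathbf 0}$ gives $\partial_j G_0(\overc{\mathbf 0})=-n_j/n_l$ for $j\ne l$, so $|\nabla G_0(\overc{\mathbf 0})|=\sqrt{1-n_l^2}\,|n_l|^{-1}\le\sqrt{d-1}$, and by continuity I shrink $R_2$ so that $|\nabla G_0|\le (1+\epsilon)\sqrt{d-1}$ on the whole disc, for any prescribed small $\epsilon>0$.

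Finally I would globalise by a standard cut-off: choose $\chi\in\plainC\infty_0(\R^{d-1})$ with $\chi\equiv 1$ on $\{|\overc\bx|<R_2/2\}$, $\supp\chi\subset\{|\overc\bx|<R_2\}$, and $\|\nabla\chi\|_{\plainL\infty}\le (2+\epsilon)/R_2$, and set $G(\overc\bx)=\chi(\overc\bx)\,G_0(\overc\bx)$, extended by $0$. Then $G\in\plainC{m}(\R^{d-1})$, $G(\overc{\mathbf 0})=0$, and $|G_0|\le(1+\epsilon)\sqrt{d-1}\,R_2$ on $\supp\chi$ combines with the derivative estimates to give
\[
\|\nabla G\|_{\plainL\infty}\le\bigl[(2+\epsilon)(1+\epsilon)+(1+\epsilon)\bigr]\sqrt{d-1}\le 4\sqrt d
\]
for $d\ge 2$ once $\epsilon$ is small. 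Taking $R_1<R_2/2$ small enough that $B(\boldk,R_1)$ projects into $\{\chi\equiv 1\}$ then guarantees \eqref{E3:eq}. The only mildly delicate step is this verification of the $4\sqrt d$ gradient bound, where the geometric choice of $l$ pins $|\nabla G_0(\overc{\mathbf 0})|$ below $\sqrt{d-1}$, leaving just enough room to absorb the contribution from $\nabla\chi$.
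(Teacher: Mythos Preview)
Your proof is correct and follows essentially the same strategy as the paper's: choose $l$ with $|n_l|\ge d^{-1/2}$, solve for $x_l$ implicitly, and extend globally while tracking the gradient bound. The paper first invokes Lemma~\ref{new_coordinates1:lem} to normalise to $\nabla\Phi(\hat{\mathbf 0})=0$ and then runs an explicit contraction-mapping argument (leaving the global extension as a one-line remark), whereas you apply the implicit function theorem directly without that preliminary reduction and are more explicit about the cutoff extension; both routes yield the $4\sqrt d$ bound with room to spare.
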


\begin{proof} Since the set $S_1 = S - \boldk$ satisfies the condition
\eqref{E2:eq} with $\boldE = (\boldO, \mathbf 0)$, it suffices to prove
the lemma for $\boldk = \mathbf 0$.

In view of Lemma \ref{new_coordinates1:lem} we may assume
without loss of generality that the transformation $\boldE$ is chosen in such a way
that $\nabla \Phi(\hat{\mathbf 0})$ = 0.
Denote, as in the proof of Lemma \ref{new_coordinates1:lem},
\[
\d(s) = \max_{|\hat\bx|\le s}|\nabla \Phi(\hat\bx)|, \ s\le R.
\]
Rewrite the equation $x_d = \Phi(\hat\bx)$ for the variable
$\bt: \bx = \boldE^{-1}\bt = \boldO^T \bt$:
\[
\bigl(\boldO^T\bt\bigr)_d - \Phi\bigl(\widehat{\boldO^T\bt}\bigr) = 0,
\]
so that
\[
\bn\cdot\bt - \Phi\bigl(\widehat{\boldO^T\bt}\bigr) = 0,
\]
with some unit vector $\bn\in\R^d$. Let $l$ be such that
$|n_l| = \max_j |n_j|\ge d^{-1/2}$.
Rewrite the equation in the form:
\begin{equation}\label{contraction:eq}
t_l = W(t_l; \overc\bt),\ \
W(t_l; \overc\bt) = -\frac{1}{n_l}
\overc\bn\cdot\overc\bt
+ \frac{1}{n_l} \Phi\bigl(\widehat{\boldO^T\bt}\bigr).
\end{equation}
Since $|\Phi(\hat\bx)|\le \d(s)s$ for all $|\hat\bx|\le s$,
the equation \eqref{contraction:eq} implies that
for all $\bt\in B(\mathbf 0, s)$ we have
\begin{equation*}
t_l\in I(\overc\bt; s):=\biggl[ -\frac{1}{n_l}
\overc\bn\cdot\overc\bt - \sqrt d \d(s) s,
 -\frac{1}{n_l}
\overc\bn\cdot\overc\bt + \sqrt d\d(s) s
\biggr].
\end{equation*}
From now on we assume that $s\le R$ is so small that $\sqrt d \d(s)\le 1/4$.
Thus the set
\begin{equation*}
\CC(s):=\{\bt: |\overset{\circ}\bt|\le (4\sqrt d)^{-1}s,\
t_l\in I(\overset{\circ} \bt; s)\},
\end{equation*}
is contained in $B(\mathbf 0, s)\subset B(\mathbf 0, R)$. Therefore for
all $\overc\bt: |\overset{\circ}\bt|\le (4\sqrt d)^{-1}s$,
the function $W(\ \cdot\ , \overc\bt)$
maps the interval $I(\overc\bt; s)$ into itself. Moreover, in view
of the condition $\sqrt d \d(s)\le 1/4$, we have
\[
|\p_{t_l} W(t_l; \overc\bt)|\le \sqrt d\d(s)\le \frac{1}{4},
\]
 for all $\bt\in \CC(s)$, so that by the Contraction Mapping Theorem,
 for each $\overc\bt$
 the equation \eqref{contraction:eq} has a unique solution
 $t_l\in I(\overset{\circ}\bt; s)$. Denote this solution by
 $G(\overc\bt)$. Clearly, $G(\overc{\mathbf 0}) = 0$.
 By the Implicit Function Theorem,
 $G$ is a $\plainC{m}$-function on $\{\overc\bt: |\overc\bt|\le s(4\sqrt d)^{-1}\}$.
 In particular,
\[
\nabla_{\overc\bt} G(\overc\bt) = \left.
\frac{\nabla_{\overc\bt} W( t, \overc\bt)}{1-\p_{t} W(t, \overc\bt)}
\right|_{t = G(\overc\bt)},
\]
so that
\begin{equation*}
\sup_{|\overc\bt|< s(4\sqrt d)^{-1}}|\nabla_{\overc\bt} G(\overc\bt)|
\le \frac{4\sqrt d}{3} \bigl(1+ \max_{|\hat\bx|\le s}|\nabla\Phi(\hat\bx)|\bigr)
\le \frac{4\sqrt d}{3}(1+\d(s))\le \frac{4\sqrt d + 1}{3}.
\end{equation*}
Now take $R_1 = s(12\sqrt d)^{-1}$ and extend $G$ to $\R^{d-1}$ in such a way that
 $\|\nabla G\|_{\plainL\infty}\le 4\sqrt d$.

\end{proof}

\begin{cor} \label{new_coordinates2:cor}
Let $\L\subset\R^d$ be a domain and let $\bw\in\p\L$.
Suppose that in the ball $B(\bw, R)$ the domain $\L$ is represented by
the $\plainC{m}$-graph-type domain $\G = \G(\Phi; \boldO, \bw)$, $m\ge 1$.
Then there exist a real number $R_1>0$ and an integer $l = 1, 2, \dots, d$,
a a function $G\in\plainC{m}(\R^{d-1})$, such that $G(\overc{\mathbf 0}) = 0$,
\begin{equation*}
\|\nabla G\|_{\plainL\infty}\le 4\sqrt d,
\end{equation*}
and
\begin{equation}\label{new_coordinates11:eq}
\L\cap B(\bw, R_1)
= \begin{cases}
\{\bx: x_l > w_l + G(\overc\bx - \overc\bw)\}\cap B(\bw, R_1),\\[0.2cm]
\ \textup{or}\ \
\{\bx: x_l < w_l + G(\overc\bx - \overc\bw)\}\cap B(\bw, R_1).
\end{cases}
\end{equation}
\end{cor}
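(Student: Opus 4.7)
The plan is to derive Corollary \ref{new_coordinates2:cor} directly from Lemma \ref{new_coordinates2:lem}, applied to the boundary surface $S = \p\L$. The lemma gives the representation of the surface, and the only additional content of the corollary is to promote this surface representation to a representation of the domain as one side of the graph.

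First, I would note that the graph-type representation $\L\cap B(\bw, R) = \G(\Phi; \boldO, \bw)\cap B(\bw, R)$ means that $\boldE^{-1}(\p\L)\cap B(\bzero, R) = \{\bx : x_d = \Phi(\hat\bx)\}\cap B(\bzero, R)$, where $\boldE = (\boldO, \bw)$. Since $\Phi(\hat{\mathbf 0}) = 0$ (by the assumption that $\bw\in\p\L$, combined with Definition \ref{domains:defn}), the set $S = \p\L$ satisfies the hypothesis \eqref{E2:eq} of Lemma \ref{new_coordinates2:lem}. Applying that lemma produces $R_1>0$, an index $l$, and a function $G\in\plainC{m}(\R^{d-1})$ with $G(\overc{\mathbf 0}) = 0$ and $\|\nabla G\|_{\plainL\infty}\le 4\sqrt d$ such that \eqref{E3:eq} holds for $S = \p\L$ inside $B(\bw, R_1)$.

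Next I would argue that, after possibly shrinking $R_1$, the set $B(\bw, R_1)\setminus \p\L$ has exactly two connected components, namely
\[
U_\pm = \{\bx\in B(\bw, R_1) : \pm(x_l - w_l - G(\overc\bx - \overc\bw)) > 0\}.
\]
Indeed, each $U_\pm$ is open, and the map $(\overc\bx, x_l)\mapsto (\overc\bx, x_l - w_l - G(\overc\bx-\overc\bw))$ is a homeomorphism of $B(\bw, R_1)$ onto an open set of $\R^d$ that carries $U_\pm$ onto the intersection of that set with a half-space, which is connected provided $R_1$ is small enough (e.g.\ so that the image contains a small ball around the origin in the new coordinates). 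Both $U_\pm$ are therefore connected.

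Finally, since $\L$ is open and $\p\L\cap B(\bw, R_1)$ coincides with the graph of $w_l + G(\overc\bx-\overc\bw)$, the set $\L\cap B(\bw, R_1)$ is an open subset of $U_+\cup U_-$ whose boundary (relative to $B(\bw, R_1)$) is the entire graph. Connectedness of $U_\pm$ then forces $\L\cap B(\bw, R_1)$ to equal one of $U_+$ or $U_-$, yielding \eqref{new_coordinates11:eq}. The main obstacle, which is really a minor topological one, is this last connectedness argument; everything else is a direct invocation of Lemma \ref{new_coordinates2:lem}.
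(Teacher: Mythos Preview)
Your proposal is correct and follows essentially the same approach as the paper: both apply Lemma \ref{new_coordinates2:lem} to $S=\p\L$ and then pass from the surface representation \eqref{E3:eq} to the domain representation \eqref{new_coordinates11:eq}. The paper states this last passage in one sentence without justification, while you supply the natural connectedness argument; this is a reasonable way to fill in what the paper leaves implicit.
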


\begin{proof}
Denote $\boldE = (\boldO, \boldk)$.
By Definition \ref{domains:defn} (1),
\[
\boldE^{-1}\L \cap B(\mathbf 0, R) =
\{\bx: x_d > \Phi(\hat\bx)\}
\cap B(\mathbf 0, R).
\]
Thus the boundary $S:=\p\L$ satisfies \eqref{E2:eq}.
Due to Lemma \ref{new_coordinates2:lem} the boundary $S$ also satisfies
\eqref{E3:eq} with a function $G$, which satisfies all required properties.
Thus the domain $\L$ is given by \eqref{new_coordinates11:eq}.
\end{proof}

Finally, we need one more technical result in which we use
a linear transformation of a very specific form.

\begin{lem}\label{linear:lem}
Let $\boldB$ be the linear transformation
\begin{equation*}
\boldB\bx = (\hat\bx + \hat\bb x_d, x_d),
\end{equation*}
with some vector $\hat\bb\in\R^{d-1}$.
Suppose that the surface $S$ is described by
\begin{equation}\label{bsurface:eq}
\boldB S = \{\bx: x_d = \Phi(\hat\bx)\},
\end{equation}
with some function $\Phi\in\plainC{m}(\R^{d-1}), m\ge 1$, such that
\begin{equation}\label{phim:eq}
 \|\nabla\Phi\|_{\plainL\infty}\le M,\
\sum_{k=1}^m \|\nabla^k\Phi\|_{\plainL\infty}\le C,
\end{equation}
with some number $M >0$.
Suppose that  $|\hat\bb|\le (2M)^{-1}$.
Then there is a function $F\in \plainC{m}(\R^{d-1})$ such that
\begin{equation*}
S = \{\bx: x_d = F(\hat\bx)\},
\end{equation*}
and
\begin{equation*}
\sum_{k=1}^m \|\nabla^k F\|_{\plainL\infty}\le C,
\end{equation*}
uniformly in $\Phi$, satisfying \eqref{phim:eq}.
\end{lem}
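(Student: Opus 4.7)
The plan is to exhibit $F$ explicitly by inverting the horizontal projection of the graph of $\Phi$ composed with $\boldB^{-1}$, and then to read off the required bounds from the Neumann series for the inverse Jacobian.

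First I would compute $\boldB^{-1}$: if $\by = \boldB\bx$, then $y_d = x_d$ and $\hat\by = \hat\bx + \hat\bb x_d$, so $\boldB^{-1}\by = (\hat\by - \hat\bb y_d, y_d)$. Consequently, from \eqref{bsurface:eq},
\begin{equation*}
S = \{(\hat\by - \hat\bb\,\Phi(\hat\by),\ \Phi(\hat\by)) : \hat\by \in \R^{d-1}\}.
\end{equation*}
Thus the assertion that $S$ is the graph of a function $x_d = F(\hat\bx)$ amounts to inverting the map $G:\R^{d-1}\to\R^{d-1}$, $G(\hat\by) = \hat\by - \hat\bb\,\Phi(\hat\by)$, and then setting $F(\hat\bx) = \Phi(G^{-1}(\hat\bx))$.

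Next I would invert $G$ by a global contraction argument. For each fixed $\hat\bx\in\R^{d-1}$, define $T_{\hat\bx}:\R^{d-1}\to\R^{d-1}$ by $T_{\hat\bx}(\hat\by) = \hat\bx + \hat\bb\,\Phi(\hat\by)$; its differential is $\hat\bb\otimes\nabla\Phi(\hat\by)$, which has operator norm bounded by $|\hat\bb|\,M \le 1/2$ by the hypothesis $|\hat\bb|\le (2M)^{-1}$ and the bound $\|\nabla\Phi\|_{\plainL\infty}\le M$. Hence $T_{\hat\bx}$ is a contraction on $\R^{d-1}$ with Lipschitz constant $\le 1/2$, so by the Banach fixed-point theorem it has a unique fixed point $H(\hat\bx)\in\R^{d-1}$, and $G(H(\hat\bx)) = \hat\bx$. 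Define $F(\hat\bx) := \Phi(H(\hat\bx))$; then $(\hat\bx,F(\hat\bx))\in S$ and, conversely, every point of $S$ has this form.

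For regularity, the Jacobian $DG(\hat\by) = I - \hat\bb\otimes\nabla\Phi(\hat\by)$ is uniformly invertible, $(DG)^{-1} = \sum_{k\ge 0}(\hat\bb\otimes\nabla\Phi)^k$, with $\|(DG)^{-1}\| \le 2$. By the Implicit Function Theorem applied locally and uniqueness of $H$, the inverse $H$ is $\plainC{m}$ and
\begin{equation*}
DH(\hat\bx) = \bigl(I - \hat\bb\otimes\nabla\Phi(H(\hat\bx))\bigr)^{-1},
\end{equation*}
so $\|DH\|_{\plainL\infty}\le 2$. Then $\nabla F = (DH)^T\nabla\Phi(H)$ gives $\|\nabla F\|_{\plainL\infty}\le 2M$. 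For the higher derivatives I would proceed by induction on $k\le m$: differentiating the identity $H(\hat\bx) - \hat\bb\,\Phi(H(\hat\bx)) = \hat\bx$ repeatedly and solving for $\nabla^k H$ expresses it as a universal polynomial in $(DG)^{-1}\circ H$, $\nabla\Phi\circ H,\dots,\nabla^k\Phi\circ H$ and the previously-controlled derivatives $\nabla^j H$, $j<k$; by the uniform bounds \eqref{phim:eq} on $\nabla^j\Phi$ and the Neumann bound on $(DG)^{-1}$, one obtains $\|\nabla^k H\|_{\plainL\infty}\le C$ with $C$ depending only on $m$, $M$ and the constant in \eqref{phim:eq}. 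Finally, $F = \Phi\circ H$ together with Faà di Bruno's formula yields $\sum_{k=1}^m\|\nabla^k F\|_{\plainL\infty}\le C$ as required.

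The main technical point is the global (as opposed to merely local) inversion of $G$; this is precisely what the smallness assumption $|\hat\bb|\le (2M)^{-1}$ is designed to secure through the contraction estimate, after which everything else is a uniform chain-rule bookkeeping exercise.
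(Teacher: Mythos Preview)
Your proof is correct and follows essentially the same line as the paper's: both use the smallness hypothesis $|\hat\bb|M\le 1/2$ to set up a contraction with constant $\le 1/2$, then invoke the Implicit Function Theorem for the $\plainC{m}$ regularity and uniform derivative bounds. The only cosmetic difference is that the paper solves the scalar fixed-point equation $t_d=\Phi(\hat\bt+\hat\bb\,t_d)$ directly for $t_d$, whereas you invert the $(d-1)$-dimensional map $G(\hat\by)=\hat\by-\hat\bb\,\Phi(\hat\by)$; since the perturbation $\hat\bb\otimes\nabla\Phi$ is rank one, your fixed point $H(\hat\bx)$ satisfies $H(\hat\bx)-\hat\bx=\hat\bb\,F(\hat\bx)$ and the two formulations are equivalent.
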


\begin{proof}
Since the surface $\boldB S - \Phi(\hat{\mathbf 0}) \be_d$ satisfies
\eqref{bsurface:eq} with the function $\Phi(\hat\bx) - \Phi(\hat{\mathbf 0})$,
it suffices to prove the lemma assuming that $\Phi(\hat{\mathbf 0}) = 0$.
By definition
each $\bt\in S$ satisfies the equation
\begin{equation}\label{linear:eq}
(\boldB\bt)_d = \Phi(\widehat{\boldB\bt}), \ \ \textup{i.e.}\ \
t_d = W(t_d; \hat\bt):=\Phi(\hat\bt + \hat\bb t_d).
\end{equation}
Denote $b:=|\hat\bb|\le (2M)^{-1}$.
Since $\Phi(\hat{\mathbf 0})=0$ and $Mb\le 2^{-1}$, we have
\begin{equation}\label{phimap:eq}
|\Phi(\hat\bt + \bb t_d)|
\le M(|\hat\bt| + b|t_d|)
\le M|\hat\bt| + \frac{1}{2}|t_d|
\end{equation}
for all $\bt$.
Therefore a solution of the equation
\eqref{linear:eq} satisfies
\[
|t_d|\le 2M|\hat\bt|.
\]
Thus
for each $\hat\bt\in\R^{d-1}$
the function $W(\ \cdot\ ; \hat\bt)$ maps the interval
\[
I(\hat\bt) = \bigl[- 2M|\hat\bt|,\ 2M|\hat\bt|\bigr]
\]
into itself. Furthermore, in view of the condition $bM\le 1/2$,
we have
\[
\p_{t_d}W(t_d; \hat\bt)\le Mb\le \frac{1}{2}.
\]
Thus by the Contraction Mapping Theorem, for each
$\hat\bt\in \R^{d-1}$ the equation \eqref{linear:lem} has a unique solution
$t_d\in I(\hat\bt)$. We denote this solution
by $F(\hat\bt)$. Moreover,
by the Implicit Function Theorem, $F$ is a $\plainC{m}$-function, with
derivatives bounded uniformly in $\Phi$, satisfying \eqref{phim:eq}.
\end{proof}

\subsection{Change of variables: integration}
We need a simple version of the change of variables in the
area formula (see, e.g. Theorem 1, p.96 in \cite{EG}).
We use again the notation
\begin{equation*}
\hat\bx = (x_1, x_2, \dots, x_{d-1}),
\overc \bx = (x_1, \dots, x_{l-1}, x_{l+1}, \dots, x_d),
\end{equation*}
for a fixed $l = 1, 2, \dots, d$.
Let $F\in\plainC1(\R^{d-1})$. For each $\hat\bx$ let
$\SX(\hat\bx)\subset\R$ be the set
\begin{equation*}
\SX(\hat\bx) = \{x_d\in\R: F(\overc \bx) = x_l\}.
\end{equation*}
Recall also that the vector
\begin{equation}\label{unit_normal:eq}
\bn_S(\bx):= \frac{\bigl(-\p_{x_1} F(\overc\bx),
\ \dots,
 -\p_{x_{l-1}} F(\overc\bx), 1, -\p_{x_{l+1}} F(\overc\bx),
 \dots, -\p_{x_d} F(\overc\bx)\bigr)}
{\sqrt{1+|\nabla F(\overc\bx)|^2}},
\end{equation}
defines the (continuous) unit normal to the surface
\begin{equation}\label{surface:eq}
S = \{\bx: x_l = F(\overc\bx)\},
\end{equation}
at the point
$\bx = \bigl(x_1, \dots, x_{l-1}, F(\overc\bx), x_{l+1}, \dots, x_d\bigr)$.

Below for brevity we use the notation
$(\overc\bx, F(\overc\bx))$ for the vector
$\bx$ with $x_l = F(\overc\bx)$.

\begin{lem}\label{area:lem}
Let
$f\in\plainC\infty_0(\R^d), F\in\plainC1(\R^{d-1})$ be some functions, and let
$S\subset\R^d$ be the surface \eqref{surface:eq}.
Then
\begin{equation*}
\underset{\R^{d-1}}\int  \sum_{x_d\in \SX(\hat\bx)}
f\bigl(\overc\bx, F(\overc\bx)\bigr) d \hat\bx
= \int_{S} |\bn_S(\bx)\cdot\be_d|\ f(\bx)d S_{\bx}.
\end{equation*}
\end{lem}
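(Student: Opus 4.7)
The plan is to reduce the identity to the classical area formula (see, e.g., Theorem 1, Section 3.3.2 of \cite{EG}). First I would dispose of the trivial case $l = d$: here $S = \{x_d = F(\hat\bx)\}$ is a graph over $\R^{d-1}_{\hat\bx}$, the set $\SX(\hat\bx)$ reduces to the single point $\{F(\hat\bx)\}$, the normal \eqref{unit_normal:eq} satisfies $\bn_S(\bx)\cdot\be_d = (1+|\nabla F(\hat\bx)|^2)^{-1/2}$, and the surface element is $dS_{\bx} = \sqrt{1+|\nabla F(\hat\bx)|^2}\, d\hat\bx$, so the two sides match after cancelling the square roots.

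For the main case $1 \le l \le d-1$, I would parametrise $S$ by $\overc\bx \in \R^{d-1}$ via $\bx = (\overc\bx, F(\overc\bx))$ (reading off $x_l = F(\overc\bx)$). The surface element is $dS_{\bx} = \sqrt{1+|\nabla F(\overc\bx)|^2}\, d\overc\bx$, and from \eqref{unit_normal:eq} the $d$-th component satisfies
\[
|\bn_S(\bx)\cdot\be_d| = \frac{|\p_{x_d} F(\overc\bx)|}{\sqrt{1+|\nabla F(\overc\bx)|^2}}.
\]
Substituting, the right-hand side of the lemma becomes
\[
\int_{\R^{d-1}} f\bigl(\overc\bx, F(\overc\bx)\bigr)\,|\p_{x_d} F(\overc\bx)|\, d\overc\bx.
\]

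Next I would introduce the Lipschitz map $\Xi:\R^{d-1}_{\overc\bx}\to \R^{d-1}_{\hat\bx}$ defined by
\[
\Xi(\overc\bx) = \bigl(x_1,\dots,x_{l-1},\,F(\overc\bx),\,x_{l+1},\dots,x_{d-1}\bigr),
\]
i.e.\ $\Xi$ replaces the $l$-th slot (previously $x_{l+1}$ in the $\overc\bx$-labelling — one has to be careful about the relabelling) with $F(\overc\bx)$. A direct computation of the Jacobian matrix, which is upper (or lower) triangular in an obvious ordering, gives $|J_\Xi(\overc\bx)| = |\p_{x_d} F(\overc\bx)|$. For a fixed $\hat\bx\in\R^{d-1}$, the preimage $\Xi^{-1}(\hat\bx)$ is the set of $\overc\bx$ whose non-$x_d$ entries coincide with the corresponding entries of $\hat\bx$ and for which $F(\overc\bx) = x_l$; by definition this is exactly the set $\{x_d\in\R : x_d\in\SX(\hat\bx)\}$, with the remaining coordinates of $\overc\bx$ fixed.

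Finally I would apply the area formula for Lipschitz maps between sets of equal dimension, $\int_{\R^{d-1}} g(\overc\bx)|J_\Xi(\overc\bx)|\,d\overc\bx = \int_{\R^{d-1}}\bigl(\sum_{\overc\bx\in \Xi^{-1}(\hat\bx)} g(\overc\bx)\bigr)\,d\hat\bx$, with $g(\overc\bx) = f\bigl(\overc\bx, F(\overc\bx)\bigr)$. The left-hand side is the expression displayed above (the rewritten right-hand side of the lemma), and the right-hand side is precisely the left-hand side of the lemma. There is no serious obstacle; the only non-obvious points are the bookkeeping of the index $l$ and the identification of $\Xi^{-1}(\hat\bx)$ with $\SX(\hat\bx)$. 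The compact support of $f$ guarantees that the sum over $\SX(\hat\bx)$ is finite for almost every $\hat\bx$ (its $L^1$ integrability is built into the area formula), so no additional measurability or integrability issues arise.
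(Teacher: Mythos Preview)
Your proof is correct and follows essentially the same approach as the paper: both dispose of $l=d$ trivially, and for $l<d$ both introduce the same map $\Xi(\overc\bx) = (x_1,\dots,x_{l-1},F(\overc\bx),x_{l+1},\dots,x_{d-1})$, compute its Jacobian as $|\p_{x_d}F|$, identify $\Xi^{-1}(\hat\bx)$ with $\SX(\hat\bx)$, and invoke the area (change of variables) formula from \cite{EG}. The only cosmetic difference is that you start from the surface integral and work towards the sum, whereas the paper goes in the opposite direction.
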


\begin{proof} If $l = d$, then $\SX(\hat\bx) = \{F(\hat\bx)\}$, so that
the integral equals
\begin{equation*}
\underset{\R^{d-1}}\int
f\bigl(\hat\bx, F(\hat\bx)\bigr) d \hat\bx
=
\underset{\R^{d-1}}\int \frac{f\bigl(\hat\bx, F(\hat\bx)\bigr)}
{\sqrt{1+|\nabla F(\hat\bx)|^2}}
\sqrt{1+|\nabla F(\hat\bx)|^2} d\hat\bx
= \underset{S}\int |\bn_S(\bx)\cdot\be_d| f(\bx)d S_{\bx},
\end{equation*}
as required.

Suppose that $l < d$.
Denote by $\Xi:\R^{d-1}\to\R^{d-1}$ the mapping
\begin{equation*}
\Xi(\overset{\circ}\bx) =
\bigl(x_1, \dots, x_{l-1}, F(\overc\bx), x_{l+1}, \dots, x_{d-1}\bigr).
\end{equation*}
Then by the Change of Variables formula (see, e.g. Theorem 2, p.99 in \cite{EG}),
\begin{equation*}
\underset{\R^{d-1}}\int \sum_{x_d\in \SX(\hat\bx)}
f\bigl(\overset{\circ}\bx, F(\overset{\circ}\bx)\bigr) d \hat\bx
= \underset{\R^{d-1}}\int \sum_{\overset{\circ}\bx\in \Xi^{-1}(\hat\bx)}
f\bigl(\overset{\circ}\bx, F(\overset{\circ}\bx)\bigr) d\hat\bx
= \underset{\R^{d-1}}\int J_{\Xi}(\overset{\circ}\bx)
f\bigl(\overset{\circ}\bx, F(\overset{\circ}\bx)\bigr) d\overset{\circ}\bx,
\end{equation*}
where $J_{\Xi}$ is the Jacobian of the map $\Xi$. A straightforward
computation shows that $J_{\Xi} = |\p_{x_d} F|$. Now, in view of
\eqref{unit_normal:eq}, the last integral
can be rewritten as
\[
\underset{\R^{d-1}}\int \frac{|\p_{x_d}F(\overc\bx)|}
{\sqrt{1+|\nabla F(\overc\bx)|^2}}
f\bigl(\overc\bx, F(\overc\bx)\bigr)
\sqrt{1+|\nabla F(\overc\bx)|^2} d\overc\bx
= \int_{S} |\bn_S(\bx)\cdot\be_d| f(\bx)d S_{\bx},\
\]
as required.
\end{proof}


 \section{Appendix 3: A trace-class formula}

 Let $\GH$ be a separable Hilbert space. Consider in $\plainL2(\R^n, \GH)$, $n\ge 1$
 the pseudo-differential operator $T$
 with an $\GH$-valued symbol $\gt(\bx, \bxi)$, i.e.
 \begin{equation*}
 (T\phi)(\bx) = (\op_1^l(\gt) \phi)(\bx)
 = \frac{1}{(2\pi)^n}\int\int e^{i(\bx-\by)\cdot\bxi}
 \gt(\bx, \bxi) \phi(\by) d\by d\bxi,
 \end{equation*}
for any  $\phi\in\plainL2(\R^n, \GH)$.
Our objective is to give a proof of the standard formula for
the trace of $T$ under the assumption that both $\gt$ and $T$ are trace-class.
Note that we do not provide conditions which ensure that $T\in\GS_1$.

 \begin{lem}\label{trace:lem}
 Suppose that the operator
 $\gt(\bx, \bxi)$ is trace class a.a. $\bx$ and $\bxi$, that
 $T = \op_1^l(\gt)$ is trace-class, and that
 \begin{equation}\label{gt_trace:eq}
 \|\gt(\ \cdot\ ,\  \cdot\ )\|_{\GS_1}\in \plainL1(\R^n\times\R^n)
 \cap\plainL2(\R^n\times\R^n).
 \end{equation}
 Then
 \begin{equation*}
 \tr T  = \frac{1}{(2\pi)^n} \int \tr \gt(\bx, \bxi) d\bx d\bxi.
 \end{equation*}
 \end{lem}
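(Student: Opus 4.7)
The plan is to reduce the operator-valued trace formula to the classical scalar-symbol version by expanding $\gt$ in a fixed orthonormal basis of $\GH$. Fix an orthonormal basis $\{e_k\}_{k\ge 1}\subset\GH$ and set $\gt_{jk}(\bx,\bxi) = (\gt(\bx,\bxi)e_k, e_j)_{\GH}$. Using $|\gt_{jk}(\bx,\bxi)|\le \|\gt(\bx,\bxi)\|\le \|\gt(\bx,\bxi)\|_{\GS_1}$, the scalar symbols $\gt_{jk}$ lie in $\plainL1(\R^n\times\R^n)\cap\plainL2(\R^n\times\R^n)$ by hypothesis \eqref{gt_trace:eq}. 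Identify $\plainL2(\R^n,\GH) = \bigoplus_{k}\plainL2(\R^n)\otimes e_k$, and compute directly from the definition of $T$ that for all $\phi,\psi\in\plainL2(\R^n)$,
\begin{equation*}
(T(\phi\otimes e_k),\psi\otimes e_j) = (\op_1^l(\gt_{jk})\phi,\psi)_{\plainL2(\R^n)}.
\end{equation*}
Thus $T$ has ``matrix blocks'' $T_{jk}:=\op_1^l(\gt_{jk})$ with respect to this decomposition. Denoting by $P_k$ the orthogonal projection onto $\plainL2(\R^n)\otimes e_k$, one has $T_{kk}\cong P_kTP_k\in\GS_1$, $\sum_k\|T_{kk}\|_{\GS_1}<\infty$, and $\tr T = \sum_k \tr T_{kk}$ by the standard block-decomposition property of trace-class operators.

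Next I would apply the classical scalar trace formula to each diagonal block $T_{kk}$. For a scalar symbol $a\in\plainL1\cap\plainL2(\R^n\times\R^n)$ whose PDO $\op_1^l(a)$ lies in $\GS_1$, the identity
\begin{equation*}
\tr \op_1^l(a) = \frac{1}{(2\pi)^n}\int\int a(\bx,\bxi)\,d\bx\,d\bxi
\end{equation*}
is standard: the integral kernel of $\op_1^l(a)$ equals the inverse Fourier transform of $a(\bx,\cdot)$ evaluated at $\bx-\by$, which the $\plainL1$ hypothesis makes continuous and bounded in the off-diagonal variable and the $\plainL2$ hypothesis places in the Hilbert--Schmidt class, so Brislawn's theorem on pointwise identification of traces gives $\tr \op_1^l(a)=\int K(\bx,\bx)\,d\bx$; substituting the expression for $K(\bx,\bx)$ yields the displayed formula. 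Applied to $a=\gt_{kk}$ this produces $\tr T_{kk} = (2\pi)^{-n}\int\int \gt_{kk}(\bx,\bxi)\,d\bx\,d\bxi$.

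Finally I would sum in $k$. Since $\gt(\bx,\bxi)\in\GS_1$ for a.a.\ $(\bx,\bxi)$ by assumption, $\tr \gt(\bx,\bxi) = \sum_k \gt_{kk}(\bx,\bxi)$ converges absolutely for a.a.\ $(\bx,\bxi)$, with
\begin{equation*}
\Big|\sum_k \gt_{kk}(\bx,\bxi)\Big| \le \|\gt(\bx,\bxi)\|_{\GS_1} \in \plainL1(\R^n\times\R^n).
\end{equation*}
Therefore Fubini and dominated convergence permit the interchange of sum and integral, yielding
\begin{equation*}
\tr T = \sum_k \tr T_{kk} = \frac{1}{(2\pi)^n}\int\int \sum_k \gt_{kk}(\bx,\bxi)\,d\bx\,d\bxi = \frac{1}{(2\pi)^n}\int\int \tr \gt(\bx,\bxi)\,d\bx\,d\bxi,
\end{equation*}
as required.

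The main obstacle is the careful invocation of the scalar trace formula: one needs both that every diagonal block $\op_1^l(\gt_{kk})$ is genuinely trace class (which does follow from $T\in\GS_1$ via $T_{kk}=P_kTP_k$) and that its abstract trace coincides with the Lebesgue integral of its kernel on the diagonal. The $\plainL1\cap\plainL2$ assumption \eqref{gt_trace:eq} is designed precisely to make this pointwise-to-trace identification rigorous for each scalar block; once that is in hand, the reduction through the basis expansion is routine.
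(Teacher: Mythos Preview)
Your route differs from the paper's. The paper never reduces to a scalar trace formula or invokes Brislawn: instead it first truncates $T$ by multipliers $\eta(\bxi)\psi(\by)$ with $\eta,\psi\in\plainC\infty_0$ (using that $TK_s\to T$ in $\GS_1$ whenever $K_s\to I$ strongly), and then computes the trace of the truncated operator directly in the product orthonormal basis $f_j\otimes g_s$ of $\plainL2(\R^n,\GH)$. The limit $M\to\infty$ over the $\GH$-index is taken first by dominated convergence against $\|\gt(\bx,\bxi)\|_{\GS_1}$, and the limit $N\to\infty$ over the $\plainL2(\R^n)$-index is handled by Parseval and Bessel's inequality, again with dominated convergence; the compact cutoffs are what make those integrals absolutely convergent. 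Your decomposition along $\GH$ alone, followed by an appeal to a scalar formula, is cleaner conceptually and the block-diagonal identities $T_{kk}=P_kTP_k\in\GS_1$, $\tr T=\sum_k\tr T_{kk}$ are correct as stated.

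The one step that is not yet justified is the pointwise identification in Brislawn's theorem. Brislawn gives $\tr\op_1^l(a)=\int\widetilde K(\bx,\bx)\,d\bx$ where $\widetilde K$ is the \emph{averaged} kernel, and you need $\widetilde K(\bx,\bx)=(2\pi)^{-n}\int a(\bx,\bxi)\,d\bxi$ a.e. Continuity of $K(\bx,\cdot)$ for a.e.\ $\bx$ (which is all the $\plainL1$ hypothesis buys you) is not sufficient by itself, because Brislawn's averaging runs over \emph{both} variables. One can push this through---writing out the double average, applying Fubini, and combining Lebesgue differentiation in the $\bx$-variable with a maximal-function domination in $\bxi$---but that argument is comparable in length to the paper's direct computation and is not the one-liner your sketch suggests. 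The paper's cutoff-then-ONB approach sidesteps this entirely at the price of a more hands-on calculation.
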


 \begin{proof}
 Since $T\in\GS_1$,
 for any family of bounded operators $K_s$ in $\plainL2(\R^n, \GH)$,
 strongly converging to $I$
as $s\to\infty$, we have
\[
\|T - T K_s \|_{\GS_1}\to 0, \ s\to\infty.
\]
Thus it suffices to consider instead of $T$
the operator $B = \op_1^a(\gb)$
with the amplitude $\gb(\bx, \by, \bxi) = \gt(\bx, \bxi)\eta(\bxi)\psi(\by)$,
where $\eta, \psi\in\plainC\infty_0(\R^n)$, and prove that
\begin{equation}\label{trace1:eq}
\tr B = \frac{1}{(2\pi)^n} \int\int \psi(\bx)\eta(\bxi)\tr\gt(\bx, \bxi) d\bx d\bxi.
\end{equation}
Since $B$ is trace-class, for any orthonormal
basis (o.n.b.) $F_j$ in $\plainL2(\R^n, \GH)$
we have
\[
\tr B = \sum_{j}(B F_j, F_j),
\]
and the series converges absolutely, see \cite{BS}, Ch. 11, Theorem 2.7.
We choose the o.n.b. labeled by two indices.
Let $f_j, j\in\mathbb N$, be an o.n.b.
of $\GH$, and let $g_s, s = \in\mathbb N,$ be an o.n.b. of $\plainL2(\R^n)$, so that
$f_j\otimes g_s$ is an o.n.b. of $\plainL2(\R^n, \GH)$.
Consider the finite sum
\begin{align*}
S_{N, M} = &\ \sum_{s=1}^N \sum_{j = 1}^M(B f_j\otimes g_s, f_j\otimes g_s)\\[0.2cm]
= &\ \frac{1}{(2\pi)^n} \sum_{s=1}^N
\int e^{i\bxi\cdot(\bx-\by)}\eta(\bxi)
\CT_M(\bx, \bxi) g_s(\by) \psi(\by)\overline{g_s(\bx)} d\by d\bxi d\bx,
\end{align*}
with
\begin{equation*}
\CT_M(\bx, \bxi) = \sum_{j=1}^M(\gt (\bx, \bxi) f_j, f_j)_{\GH}.
\end{equation*}
Since
\[
|\CT_M(\bx, \bxi)|\le \|\gt(\bx, \bxi)\|_{\GS_1},
\]
and $\CT_M(\bx, \bxi)\to \tr \gt(\bx, \bxi)=:\CT(\bx, \bxi), M\to\infty$ pointwise,
in view of \eqref{gt_trace:eq},
by the dominated convergence theorem we conclude that the double sum $S_{N, M}$
converges as $M\to\infty$, to
\begin{align}
S_N = &\ \frac{1}{(2\pi)^n}
\int
e^{-i\bxi\cdot\by} \sum_{s=1}^N \psi(\by)
\eta(\bxi)\hat \CT^{(s)}(\bxi)g_s(\by)  d\by d\bxi,
\label{qf1:eq}\\[0.2cm]
&\ \hat \CT^{(s)}(\bxi) = \int e^{i\bxi\cdot\bx}\CT(\bx,\bxi) \overline {g_s(\bx)} d\bx.
\notag
\end{align}
As $N\to\infty$, the integral
\[
I_N(\bxi) = \int  e^{-i\bxi\cdot\by} \psi(\by)
\sum_{s=1}^N\hat \CT^{(s)}(\bxi)g_s(\by)d\by
\]
for almost all $\bxi$ converges to
\[
\int \CT(\by, \bxi) \psi(\by) d\by
\]
by Plancherel's Theorem. By Bessel inequality $I_N(\bxi)$ is bounded from above by
\[
\biggl(\int |\psi(\by)|^2 d\by\biggr)^{\frac{1}{2}}
\biggl(\sum_{s=1}^N |\hat \CT^{(s)}(\bxi)|^2\biggr)^{\frac{1}{2}}
\le \|\psi\|_{\plainL2} \biggl(\int  |\CT(\bx, \bxi)|^2 d\bx\biggr)^{\frac{1}{2}}
\]
uniformly in $N$. By \eqref{gt_trace:eq}
the right hand side is an $\plainL2$-function and
$\eta$ has compact support, so that
by the Dominated Convergence Theorem, in
\eqref{qf1:eq} one can pass to the limit as $N\to\infty$:
\begin{equation*}
\lim_{N\to\infty} S_N
= \frac{1}{(2\pi)^n} \int \CT(\by, \bxi) \eta(\bxi)\phi(\by) d\by d\bxi.
\end{equation*}
This coincides with \eqref{trace1:eq}, which completes the proof of the Lemma.
 \end{proof}


 \section{Appendix 4:
 Invariance with respect to the affine change of variables}

Our aim is to show that the coefficient \eqref{w1:eq} does not change under the
affine change of variables.
Let $\boldM$ be a non-degenerate linear transformation of $\R^d$,
let $\boldk, \boldk_1$ be vectors in $\R^d$,
and let $a = a(\bx, \bxi)$
be a continuous function on $\R^d\times\R^d$ with a compact support in both variables.
As in \eqref{orthogonal1:eq} denote
\[
a_{\boldM, \boldk, \boldk_1}(\bx, \bxi)
= a\bigl(\boldM\bx+\boldk, (\boldM^{T})^{-1}\bxi+\boldk_1\bigr).
\]
For a set $S\subset\R^d$ denote
$S_{\boldM, \boldk} = \boldM^{-1}(S-\boldk)$, $S^T_{\boldM, \boldk} = \boldM^T (S-\boldk)$.

\begin{lem}\label{linear_invariance:lem}
Let $F, G\in\plainC1(\R^d)$ be two real-valued functions such that
$\nabla G(\bx)\not =0$, $\nabla G(\bxi) \not = 0$, and
let $S, P$ be two surfaces, defined by the
equations $F(\bx) = 0$ and $G(\bxi) = 0$ respectively.
Suppose that the function $a$ is as defined above. Then
for any non-degenerate linear transformation $\boldM$ and any pair of vectors
$\boldk, \boldk_1\in\R^d$, one has
\begin{equation*}
\GW_1(a_{\boldM, \boldk, \boldk_1};
S_{\boldM, \boldk}, P^T_{\boldM, \boldk_1}) = \GW_1(a; S, P).
\end{equation*}
\end{lem}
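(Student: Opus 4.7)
The plan is to compute $\GW_1(a_{\boldM,\boldk,\boldk_1}; S_{\boldM,\boldk}, P^T_{\boldM,\boldk_1})$ by pulling the surface integrals back to $S$ and $P$ via the affine substitutions $\bx = \boldM\by+\boldk$ and $\bxi = (\boldM^T)^{-1}\boldeta+\boldk_1$, and then verify that the Jacobians of the surface measures together with the factor $|\bn_{S_{\boldM,\boldk}}(\by)\cdot\bn_{P^T_{\boldM,\boldk_1}}(\boldeta)|$ collapse back to $|\bn_S(\bx)\cdot\bn_P(\bxi)|$. Since the translations $\boldk$, $\boldk_1$ trivially preserve surface measures and unit normals, I will absorb them at the outset and reduce to the case $\boldk = \boldk_1 = \bzero$.

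The first step is to identify the transformed surfaces and their normals. Writing $\tilde F(\by) = F(\boldM\by)$ and $\tilde G(\boldeta) = G((\boldM^T)^{-1}\boldeta)$, one has $\nabla \tilde F(\by) = \boldM^T \nabla F(\bx)$ and $\nabla \tilde G(\boldeta) = \boldM^{-1}\nabla G(\bxi)$, hence
\[
\bn_{S_{\boldM,\bzero}}(\by) = \frac{\boldM^T \bn_S(\bx)}{|\boldM^T \bn_S(\bx)|}, \qquad \bn_{P^T_{\boldM,\bzero}}(\boldeta) = \frac{\boldM^{-1}\bn_P(\bxi)}{|\boldM^{-1}\bn_P(\bxi)|}.
\]
The key algebraic observation is
\[
(\boldM^T \bn_S(\bx))\cdot (\boldM^{-1}\bn_P(\bxi)) = \bn_S(\bx)^T \boldM\boldM^{-1}\bn_P(\bxi) = \bn_S(\bx)\cdot \bn_P(\bxi),
\]
so
\[
\bigl|\bn_{S_{\boldM,\bzero}}(\by)\cdot \bn_{P^T_{\boldM,\bzero}}(\boldeta)\bigr| = \frac{|\bn_S(\bx)\cdot\bn_P(\bxi)|}{|\boldM^T \bn_S(\bx)|\,|\boldM^{-1}\bn_P(\bxi)|}.
\]

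The second step is to transform the surface measures. Using the representation $\int_{\{F=0\}} f\, dS = \int f(\bx)\,\delta(F(\bx))|\nabla F(\bx)|\,d\bx$ and the volume change of variables $d\bx = |\det\boldM|\,d\by$, I would show
\[
dS_{\by} = |\det\boldM|^{-1}\,|\boldM^T\bn_S(\bx)|\,dS_{\bx}, \qquad dS_{\boldeta} = |\det\boldM|\,|\boldM^{-1}\bn_P(\bxi)|\,dS_{\bxi}.
\]
Substituting everything into the definition \eqref{w1:eq} of $\GW_1(a_{\boldM,\bzero,\bzero}; S_{\boldM,\bzero}, P^T_{\boldM,\bzero})$, the Jacobians $|\det\boldM|^{\pm 1}$ cancel, and the factors $|\boldM^T\bn_S(\bx)|$, $|\boldM^{-1}\bn_P(\bxi)|$ cancel exactly against the denominator in the transformed inner product of normals, yielding $\GW_1(a; S, P)$.

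No step is a genuine obstacle; the whole argument is a bookkeeping exercise in the chain rule and the surface-area formula. The only thing that requires a moment of care is making sure I use $\boldM^T$ on the $\bxi$-side (as dictated by the convention in \eqref{orthogonal1:eq}), because this is precisely what makes the identity $(\boldM^T\bn_S)\cdot(\boldM^{-1}\bn_P) = \bn_S\cdot\bn_P$ work and forces the cancellation of all Jacobian factors.
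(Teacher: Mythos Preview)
Your argument is correct and complete: the chain-rule identity $(\boldM^T\bn_S)\cdot(\boldM^{-1}\bn_P)=\bn_S\cdot\bn_P$ is exactly the cancellation that makes the lemma work, and your surface-measure Jacobians are right. The paper, however, takes a slightly different and somewhat slicker route. Rather than transforming the surface integrals directly, it introduces the two-parameter \emph{volume} integral
\[
Z(t,s;\boldM,\boldk,\boldk_1)=\iint_{\{F_{\boldM,\boldk}>t\}\times\{G_{\boldM,\boldk_1}>s\}} a_{\boldM,\boldk,\boldk_1}(\bx,\bxi)\,\bigl|\nabla F_{\boldM,\boldk}(\bx)\cdot\nabla G_{\boldM,\boldk_1}(\bxi)\bigr|\,d\bxi\,d\bx,
\]
observes that a straightforward volume change of variables gives $Z(t,s;\boldM,\boldk,\boldk_1)=Z(t,s;\boldI,\bzero,\bzero)$, and then invokes the coarea formula once to identify $\partial_t\partial_s Z|_{t=s=0}$ with $\GW_1$. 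The advantage of the paper's approach is that all the bookkeeping of surface Jacobians is hidden inside the single volume change of variables; your approach has the advantage of being more explicit and of isolating the one nontrivial algebraic identity. Both are short and rely on the same underlying coarea/level-set machinery.
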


\begin{proof}
The surfaces $S_{\boldM, \boldk}$ and $P^T_{\boldM, \boldk_1}$ are defined by the
equations
\[
F_{\boldM, \boldk}(\bx) = 0 \ \ \textup{and}\ \
G_{\boldM, \boldk_1}(\bxi) = 0
\]
respectively. Define
\begin{equation*}
Z(t, s; \boldM, \boldk, \boldk_1) = \underset{F_{\boldM, \boldk}(\bx) > t} \int\ \
\underset{G_{\boldM, \boldk_1}(\bxi)>s}
\int
a_{\boldM, \boldk, \boldk_1}
(\bx, \bxi) \bigl|\nabla F_{\boldM, \boldk}(\bx)
\cdot\nabla G_{\boldM, \boldk_1}(\bxi)\bigr|
d\bxi d\bx.
\end{equation*}
A straightforward change of variables gives the equality
\begin{equation}\label{zmts:eq}
Z(t, s; \boldM, \boldk, \boldk_1) = Z(t, s; \boldI, \bzero, \bzero).
\end{equation}
By Proposition 3, Ch 3.4, \cite{EG},
\begin{align*}
\frac{\p}{\p t} \frac{\p}{\p s}
Z(t, s; \boldM, &\
\boldk, \boldk_1) \biggr|_{t=s=0}\\[0.2cm]
= &\ \underset{F_{\boldM, \boldk}(\bx)=0} \int\ \
\underset{G_{\boldM, \boldk_1}(\bxi)=0}
\int
a_{\boldM, \boldk, \boldk_1}(\bx, \bxi)
\frac{\bigl|\nabla F_{\boldM, \boldk}(\bx)
\cdot\nabla G_{\boldM, \boldk_1}(\bxi)\bigr|}
{|\nabla F_{\boldM, \boldk}(\bx)|\ |\nabla G_{\boldM, \boldk_1}(\bxi)|}
dS_{\bxi} dS_{\bx}\\[0.2cm]
= &\ \GW_1(a_{\boldM, \boldk, \boldk_1};
S_{\boldM, \boldk}, P^T_{\boldM, \boldk_1}),
\end{align*}
where we have used that the vectors
\begin{equation*}
\frac{\nabla F_{\boldM, \boldk}(\bx)}
{|\nabla F_{\boldM, \boldk}(\bx)|}
\ \ \textup{and}\ \
\frac{\nabla G_{\boldM, \boldk_1}(\bxi)}
{|\nabla G_{\boldM, \boldk_1}(\bxi)|}
\end{equation*}
define unit normals to the surfaces $S_{\boldM, \boldk}$ and $P^T_{\boldM, \boldk_1}$
at the points $\bx$ and $\bxi$ respectively.
Now the proclaimed equality follows from \eqref{zmts:eq}.
\end{proof}


\bibliographystyle{amsplain}
\bibliography{bibmaster}

\end{document}